\newtheorem{thm}{Theorem}[section]
\newtheorem*{thm*}{Theorem}
\newtheorem{dfn}[thm]{Definition} 
\newtheorem*{dfn*}{Definition}
\newtheorem{cor}[thm]{Corollary}
\newtheorem*{cor*}{Corollary}
\newtheorem{prop}[thm]{Proposition} 
\newtheorem*{prop*}{Proposition} 
\newtheorem*{properties*}{Properties} 
\newtheorem{lem}[thm]{Lemma} 
\newtheorem*{lem*}{Lemma}
\newtheorem*{claim*}{Claim} 
\newtheorem*{fact*}{Fact}
\newtheorem*{qst*}{Question}
\newtheorem*{pb*}{Problem}
\theoremstyle{remark}
\newtheorem*{algo*}{Algorithm} 
\newtheorem*{rem*}{Remark}
\newtheorem{rem}[thm]{Remark}
\newtheorem*{example*}{Example}
\newtheorem{example}[thm]{Example}
\newenvironment{preuve}[1][Preuve]{\begin{proof}[#1]}{\end{proof}}
\newcounter{numEnonceTmpInterne}% ce compteur sert a avoir un nom d'environnement unique dans enonce*
\newenvironment{enonce*}[1]{\theoremstyle{plain}\stepcounter{numEnonceTmpInterne}%
\def\a{enoncetmp\alph{numEnonceTmpInterne}}%
\newtheorem*{\a}{#1}\begin{\a}}{\end{\a}}
\edef\@tempa#1#2{\def#1{\mathaccent\string"\noexpand\accentclass@#2 }}
\@tempa\rond{017}
\newcommand{\es}{\emptyset}
\renewcommand{\phi}{\varphi} 
\newcommand{\m} {^{-1}} 
\newcommand{\eps} {\varepsilon}
\newcommand {\ra} {\rightarrow}
\newcommand {\onto} {\twoheadrightarrow}
\newcommand {\into} {\hookrightarrow}
\newcommand{\imp} {\Rightarrow}
\renewcommand{\implies} {\Rightarrow}
\renewcommand{\iff} {\Leftrightarrow}
\newcommand{\ul}[1]{\underline{#1}} 
\newcommand{\ol}[1]{\overline{#1}}
\newcommand{\ie} {i.e.\ }
\newcommand {\calc} {{\mathcal {C}}}   
\newcommand {\cale} {{\mathcal {E}}}   
\newcommand {\calg} {{\mathcal {G}}}   
\newcommand {\calh} {{\mathcal {H}}}
\newcommand {\calk} {{\mathcal {K}}}   
\newcommand {\calm} {{\mathcal {M}}}
\newcommand {\cals} {{\mathcal {S}}}
\newcommand {\bbF} {{\mathbb {F}}}
\newcommand {\bbZ} {{\mathbb {Z}}}   
\newcommand{\grp}[1]{\langle #1 \rangle}
\newcommand{\Out} {{\mathrm{Out}}}
\newcommand{\Aut} {{\mathrm{Aut}}}
\newcommand{\id} {\mathrm{id}}
\newcommand{\Tcan}{T_{\mathrm{can}}}
\newcommand{\Gcan}{\Gamma_{\!\mathrm{can}}}
\newcommand{\tGcan}{\tilde\Gamma_{\!\mathrm{can}}}
\newcommand{\Z}{{\mathbb {Z}}}
\newcommand{\inc}{\subset}
\newcommand {\R} {{\mathbb {R}}}
\newcommand {\F} {{\mathbb {F}}}
\newcommand{\core}{\mathrm{c}}
\newcommand{\ecore}   {\mathrm{\widetilde c}}      %{\operatorname{e-core}}
\newcommand{\bpm}{boundary-preserving map}
\newcommand{\ndbpm}{non-degenerate boundary-preserving map}
\newcommand{\Ndbpm}{Non-degenerate boundary-preserving map}
\newcommand{\oset}[3][-0.1ex]{%
  \mathrel{\mathop{#3}\limits^{
    \vbox to#1{\kern-2\ex@
    \hbox{$\scriptstyle#2$}\vss}}}}
\newcommand{\tgt}{\mathrel{>_t}}
\newcommand{\tge}{\mathrel{\geqslant_t}}
\newcommand{\gtun}{\mathrel{>_1}}
\newcommand{\bo}{\partial}
\begin{document}

\title{Towers and the first-order theory  of hyperbolic groups}
\author{Vincent Guirardel, Gilbert Levitt, Rizos Sklinos\thanks{V.G.\  conducted this work within the framework of the Henri Lebesgue Center ANR-11-LABX-0020-01. R.S.\ was supported by the National Science Foundation under Grant No. 1953784.}}
%\date{\today.\\ \small Fichier \texttt{\jobname.tex}}

\maketitle

\begin{abstract}
  This paper is devoted to the first-order theory of   torsion-free hyperbolic groups. 
 One of its purposes  is to review some   results and to provide precise and correct statements and definitions, as well as some proofs and new results. 
 
A key concept  is that of a tower (Sela) or NTQ system (Kharlampovich-Myasnikov). We discuss them thoroughly.
  
We  state and prove a new general theorem 
which unifies 
several results in the literature:
elementarily equivalent  torsion-free
  hyperbolic groups have isomorphic cores (Sela);
  if $H$ is  elementarily embedded in a  torsion-free  hyperbolic group $G$, then $G$ is a  tower over $H$ relative to $H$ (Perin); free groups (Perin-Sklinos, Ould-Houcine), and more generally  free products of prototypes and free groups, are homogeneous.
 
 The converse to Sela and Perin's  results just mentioned is true. This follows from the solution to Tarski's problem on elementary equivalence of free groups, due independently to Sela and Kharlampovich-Myasnikov, which we treat as a black box throughout the paper.

We  present many examples and counterexamples, and 
 we prove some new model-theoretic results.
We characterize prime models among torsion-free hyperbolic groups, and minimal models among elementarily free groups. 
Using  Fra\"issé's method, we associate to  every torsion-free hyperbolic group $H$ 
a unique homogeneous countable group
$\calm$ in which any hyperbolic group $H'$ elementarily equivalent to $H$ has an elementary embedding.  

In an appendix we give a complete proof of the fact, due to Sela, that towers over a torsion-free hyperbolic group $H$ are $H$-limit groups.

\end{abstract}

%\tableofcontents

\section{Introduction}

Two groups $G,G'$ are \emph{elementarily equivalent} (denoted $G\equiv G'$)
if they satisfy the same first-order sentences in the language of groups $\mathcal{L}:=\{\cdot, ^{-1}, {\bf 1}\}$.
In 1946 Tarski asked whether all non-abelian free groups are elementarily equivalent.
Although the class of free groups had been extensively studied by group theorists and geometers, the existing tools at the time seemed to be inadequate 
and the  question proved very hard to tackle. It was only after more than fifty years, in 2001, that Sela and separately Kharlampovich-Myasnikov answered the question in the positive. Both works required profound tools to be discovered 
and culminated in many voluminous papers that take in total more than a thousand pages. In addition, the techniques developed allowed Sela   to classify torsion-free hyperbolic groups up to elementary equivalence \cite{Sela_diophantine7}. 

One should note that Tarski's question was answered in the following strong way: the chain of non-abelian free groups under the natural embeddings is elementary. This means that not only do non-abelian free groups share the same first-order theory, 
but also elements (or rather finite tuples) of a non-abelian free group $F$ do not change first-order properties when seen as elements of a larger free group $F*F'$: the natural embedding $F\into F*F'$ is elementary. 
As in the case of elementary equivalence, the techniques generalize  
to certain embeddings of  torsion-free hyperbolic groups. 

One purpose of the current paper is to review some of the above results and to provide precise and correct statements and definitions, as well as some proofs and further results. 
A key concept of this work is that of a tower. This notion has been introduced by Sela and is the counterpart of
  Kharlampovich-Myasnikov's NTQ systems (for Non-degenerate Triangular Quasi-quadratic systems). 

It  has been 
used for both the characterization of elementary embeddings and the characterization of elementary equivalence.  
For instance, 
combining results by Sela \cite{Sela_diophantine7} and Perin  \cite{Perin_elementary}, one can almost state that, given a torsion-free hyperbolic group $G$ and a  non-abelian subgroup $H$, the embedding $H\into G$ is elementary if and only if $G$ is a tower over $H$
(precisely: if and only if $G$ is an extended tower over $H$ relative to $H$, see below).

For this to be true, one  must be careful with the definition of a ``tower'' and make additional requirements 
on the tower involved in this statement (see Theorem \ref{thclintro} below);  in particular, the original
definition of hyperbolic $\omega$-residually free towers of \cite{Sela_diophantine1} does not exactly characterize elementarily free groups (and neither do
regular NTQ groups),
as    first found out by the third named author while working on \cite{PeSk_homogeneity}.
As we shall see, these problems come from low-complexity 
surfaces and are  the source of   faulty results in the literature.

 We quickly mention some of the main ideas of the paper, with a detailed overview containing precise statements   appearing in the next section.  All groups $G$ considered in the paper are assumed to be finitely generated and torsion-free.

Following Sela and Perin, we distinguish two kinds of towers  (Section \ref{pr}): simple towers (\cite{Sela_diophantine1,Sela_diophantine7}),
and more general extended towers (\cite{Perin_elementary,Perin_elementary_erratum})   where the base may be ``disconnected''.
The two notions are closely related, as one can transform an extended tower into a simple one by taking a free product with a free group (Proposition \ref{tstab}).

Associated to the notion of a tower is a partial ordering on isomorphism classes of groups: $G\tge H$ if $G$ is an extended tower over $H$.
Following Sela, minimal elements for this ordering are called \emph{prototypes},
and a \emph{core} of $G$ is a prototype $C$ such that $G\tge C$.
 In Section \ref{ordre} we prove the uniqueness of the core (up to isomorphism) for any finitely generated  torsion-free CSA group  $G$. This is due to
Sela \cite{Sela_diophantine7} for hyperbolic groups; we give a different proof.     

 In order to relate towers to model theory, we isolate a result by Sela (Theorem \ref{Tarski} below) which implies the solution to Tarski's problem,
and which we unfortunately have to take as a black box. We refer to it as ``Tarski'' in the paper.
It says that, if  $G$ and $H$ are non-abelian torsion-free hyperbolic groups such that   $G$ is a simple tower
over $H$, then $H$  is  elementarily embedded in $G$.

 In Section \ref{interp} we  state and prove a new general result going in the converse direction (Theorem \ref{lethmintro}  below).
It is stated in terms of partial elementary maps, which can be interpreted as elementary equivalence of two groups in
a language with constants.
It says that a partial elementary maps between torsion-free hyperbolic groups
extends to an isomorphism between their  relative  cores.
This allows us to unify 
several results in the literature:
\begin{itemize}
\item 
 elementarily equivalent  torsion-free
  hyperbolic groups have isomorphic cores (Sela \cite{Sela_diophantine7}, the converse follows from ``Tarski'')
\item   if $H$ is  elementarily embedded in a  torsion-free  hyperbolic group $G$, then $G$ is an extended tower over $H$ relative to $H$ 
  (Perin \cite{Perin_elementary}, here also the converse follows from ``Tarski'')
\item the homogeneity of free groups (due to Perin and Sklinos \cite{PeSk_homogeneity}   and Ould-Houcine
\cite{Houcine_homogeneity}), and more generally of free products of prototypes and free groups
  (Theorem  \ref{thm_homogeneity_intro} below).
\end{itemize}
We devote the full Section \ref{examp} to working out examples and counterexamples.

In the last section we prove some new model-theoretic results.
We characterize prime models among torsion-free hyperbolic groups, and minimal models among elementarily free groups. 
Using Theorem \ref{lethmintro}  and Fra\"issé's method, we associate to  every torsion-free hyperbolic group $H$ 
a unique homogeneous countable group
$\calm$ in which any hyperbolic group $H'$ elementarily equivalent to $H$ has an elementary embedding (see Theorem \ref{universintro}).  Finally, Theorem \ref{lethmintro} allows us to characterize tuples of elements of torsion-free hyperbolic groups with the same first-order properties in terms of towers  (Corollary \ref{type_char}).  

In an appendix we give a complete proof of the fact, due to Sela, that towers over a torsion-free hyperbolic group $H$ are $H$-limit groups.

\tableofcontents

\section{Overview of the paper}

\begin{figure}[ht!]
  \includegraphics{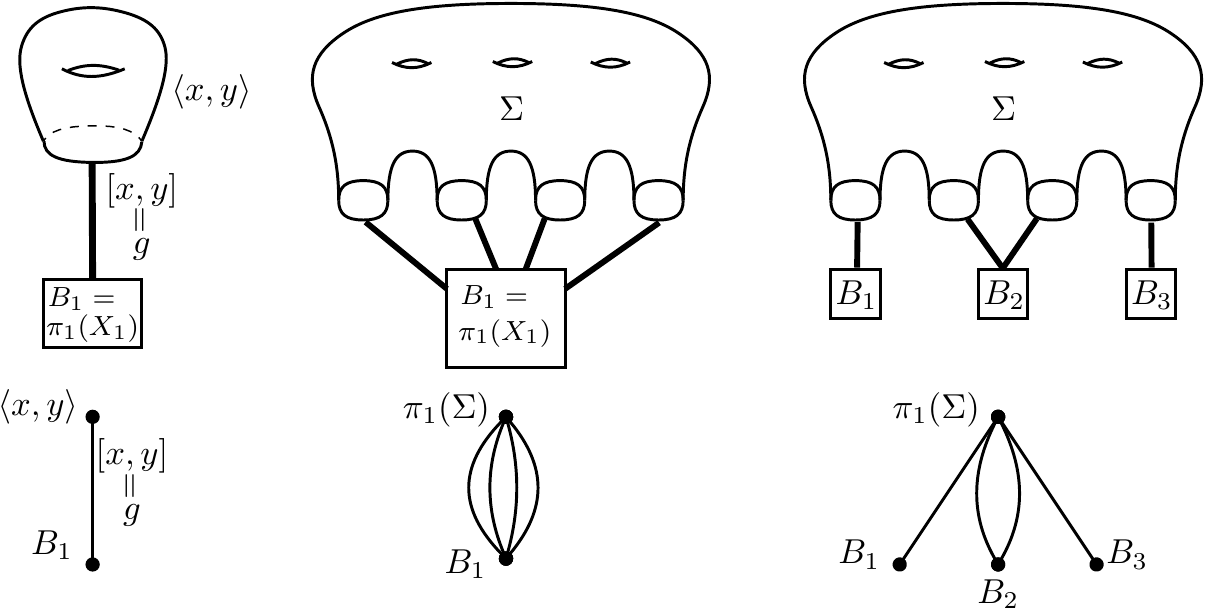}
  \caption{Two simple towers and an extended tower,   with the underlying graphs of groups.}\label{fig_intro}
\end{figure}

\paragraph{Simple towers}

To introduce towers, let us start with an example (see Figure \ref{fig_intro}).
Let $G=B_1*_{g=[x,y]}F(x,y)$ be the cyclic amalgam of a group $B_1$ with a  free group of rank 2, which we view as the fundamental group of a punctured torus; we identify the boundary element $[x,y]$ with some non-trivial $g\in B_1$ (all groups are assumed to be torsion-free, so $g$ has infinite order). 

If $g$ is not a commutator in $B_1$, then clearly the embedding of $B_1$ into $G$ is not elementary: 
the sentence saying that $g$ is a commutator is true or false, depending on whether we   interpret it in $G$ or in $B_1$.

Now suppose that $g$ is a commutator $[a,b]$ in $B_1$. In this case we say that $G$ is a tower over $B_1$;  it follows from ``Tarski'' (Theorem \ref{Tarski})  
that the embedding of $B_1$ into $G$ is elementary if $G$ is hyperbolic.

This example may be generalized, leading to the definition of a simple tower. 
Instead of gluing a once-punctured torus to $B_1$, we may glue any compact   surface $\Sigma$  (possibly non-orientable, with several boundary components).
In topological terms, we are attaching every boundary component $C_i$ of $\Sigma$ to a space $X_1$ with $\pi_1(X_1)=B_1$
along curves representing   non-trivial elements of $B_1$  
(see Figure \ref{fig_intro}).
Van Kampen's theorem expresses the resulting group $ G$ as the fundamental group of a graph of groups $\Gamma$ having two vertices: a central vertex $v$ carrying $\pi_1(\Sigma)$ and a bottom vertex $v_1$ carrying $B_1$; there is one edge  between $v$ and $v_1$ per boundary component of $\Sigma$. 

We shall say that $G$ is a \emph{simple \'etage} over $B_1$ if  $B_1$ is non-abelian and two conditions are satisfied. First, $\Sigma$ must be \emph{non-exceptional}: it must carry a pseudo-Anosov homeomorphism; equivalently, $\Sigma$  is a once-punctured torus or its Euler characteristic satisfies $\chi(\Sigma)\le-2$. 

The second condition is a generalization of $g$ being a commutator in $B_1$ in the example studied above, but it is expressed without referring to a specific element. There are two ways of stating it.  

One may require the existence of a \emph{retraction} $\rho:G\to B_1$ such that $\pi_1(\Sigma)$ has  non-abelian image (in the example, $\rho$ sends $x$ to $a$ and $y$ to $b$  if $g=[a,b]$). One may also require the existence of a map $r$ from $\pi_1(\Sigma)$ to $G$ (such as    the restriction of $\rho$ to $\pi_1(\Sigma)$) which is a \emph{\ndbpm}{}    (called local preretraction in \cite{GLS_finite_index}): it must agree with a conjugation on each boundary subgroup $\pi_1(C_i)$, have  non-abelian image,   not be an isomorphism between  
$\pi_1(\Sigma)$ and some conjugate. 

It is shown in Proposition \ref{equivsimple} that the two requirements are equivalent if $G $ is a   torsion-free CSA group and $B_1$ is not abelian.

If $\Sigma$ is non-exceptional and $\rho$ (equivalently, $r$) exists, with $B_1 $ non-abelian, we say that $G$ (assumed to be CSA)  is a \emph{simple \'etage of surface type} over $B_1$ (we use the French word \'etage rather than the English word floor to emphasize the fact that there are two levels: a single-story house has no \'etage). 

There is another type of \'etage: we say that $G$ is a \emph{simple \'etage of free product type} over $B_1$ if $G=B_1*\Z$.  We mention for completeness another type of \'etage, 
  although it will not be used   in this paper:  
$G$ is a {\em simple \'etage of abelian type}  over $B_1$ if $G$ is an 
  amalgam $G=B_1*_A (A\oplus \bbZ)$,
with ${A}$   a maximal abelian subgroup of $B_1$. Abelian \'etages play a role in obtaining formal solutions/implicit function theorems  
(see \cite{Sela_diophantine2,KhMy_implicit}),
but they are not used in characterizing elementary equivalence and embeddings.

A  {tower} is obtained by stacking  {\'etages} 
on top of each other:   $G$ is a \emph{simple tower} over a subgroup $H$ if there exists a chain of subgroups
$G=G_0>G_1>\dots >G_k=H$ such that $G_i$ is a simple étage over $G_{i+1}$; 
we allow the case $k=0$, so $G$ is always a simple tower over itself.
 We note that $\omega$-residually free towers,  as defined in \cite{Sela_diophantine1}, are precisely
simple towers over a (non-abelian) subgroup which is a    free product of 
cyclic groups and 
 fundamental groups of closed surfaces
of Euler characteristic $\leq -2$ (see Remark \ref{zenbas}).

We can now state the   \emph{main elementary embedding theorem}, which implies the solution to Tarski's problem (we shall refer to it simply as ``Tarski'' and treat it as a black box):

\begin{thm}[``Tarski'', {\cite[Theorem 7.6]{Sela_diophantine7}}]
\label{Tarski}
 Let $G,H$ be non-abelian torsion-free hyperbolic groups. If $G$ is a  \emph{simple} 
 tower
 over  $H$,   then the inclusion $H\into G$ is an elementary embedding.
In particular, $G$ and $H$ are elementarily equivalent.  
 \end{thm}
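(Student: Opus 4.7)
The plan is to induct on the height of the tower and reduce to the single-étage case, then to attack that case by combining Sela-type formal solutions with the retraction provided by the étage. Since the paper explicitly designates this theorem as a black box, my sketch will necessarily be architectural: a complete argument requires essentially all of the Diophantine Geometry over Groups series, and my goal here is to explain the overall shape and locate the hard point.

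First I would reduce to a single étage. Given a simple tower $G = G_0 > G_1 > \cdots > G_k = H$, each intermediate $G_i$ is non-abelian and torsion-free hyperbolic, and elementary embeddings compose, so it suffices to treat $k=1$. Either $G = H * \Z$ (free product type), or $G$ is built from $H$ by gluing a non-exceptional surface $\Sigma$, and by Proposition \ref{equivsimple} we have a retraction $\rho : G \to H$ together with a non-degenerate boundary-preserving map $\pi_1(\Sigma) \to G$. By the Tarski--Vaught test, it is then enough to check that for every existential formula $\exists \bar{y}\, \theta(\bar{y}, \bar{x})$ and every tuple $\bar{h} \in H^n$, whenever $G \models \exists \bar{y}\, \theta(\bar{y}, \bar{h})$ a witness can already be found in $H$.

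The plan for the existential case is as follows. Write $\theta$ as a Boolean combination of equations and inequations. For the equational part, applying $\rho$ turns any witness in $G$ into a witness in $H$, since homomorphisms preserve equations and $\rho$ fixes $H$. The inequational part is the genuinely hard content: it is handled by Merzlyakov's theorem and its generalizations, which produce a large family of ``formal solutions'' --- obtained, roughly, by precomposing $\rho$ with modular automorphisms of the surface vertex group and with powers of pseudo-Anosov homeomorphisms of $\Sigma$. The non-exceptional hypothesis guarantees that this family is rich enough to realize the required inequations in $H$. For formulas of higher quantifier rank one then invokes Sela's quantifier-elimination procedure, which rewrites every formula modulo parameters in $H$ as a Boolean combination of positive existential formulas, reducing to the previous step; the procedure rests on Makanin--Razborov diagrams, the shortening argument, test sequences, sculpted and taut resolutions, and the validation procedure bounding the iterative descent.

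The main obstacle is precisely this quantifier elimination. Inequations defeat any naive use of a single retraction, so one must replace an isolated witness in $G$ by an entire parametric family (a test sequence, or a Diophantine envelope) and argue model-theoretically that some specialization realizes the full formula in $H$. Controlling this descent under alternating quantifiers --- above all, proving that the process terminates --- is the technical heart of Sela's program, and is the reason the present paper treats Theorem \ref{Tarski} as a black box.
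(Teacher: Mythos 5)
The paper does not prove Theorem~\ref{Tarski}: as the surrounding text explicitly says, it is taken as a black box, deduced from the arguments in the proof of Theorem~7.6 of \cite{Sela_diophantine7} following an observation of Perin. So there is no internal proof to compare your sketch against, and your acknowledgment that any complete argument must reproduce large parts of the \emph{Diophantine Geometry over Groups} series is exactly the stance the authors themselves take. Your architectural outline is a fair summary of how the external proof goes: reduce to a single \'etage by transitivity of elementary embeddings and \cite[Remark~\ref{tm}]{}, apply the Tarski--Vaught test, handle the quantifier-free inner formula via the retraction $\rho$ together with Merzlyakov-type formal solutions exploiting the modular group of the non-exceptional surface, and invoke Sela's quantifier-elimination machinery (Makanin--Razborov diagrams, shortening, test sequences, sculpted/taut resolutions, termination of the iterative procedure) for higher rank. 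You correctly locate the irreducible technical core in the quantifier elimination and its termination.

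Two small imprecisions worth flagging. First, the Tarski--Vaught test requires a witness in $H$ for $G\models \exists\bar y\,\theta(\bar y,\bar h)$ where $\theta$ is an \emph{arbitrary} formula, not just a quantifier-free one; your two-tier treatment (quantifier-free via Merzlyakov, then ``higher rank'' via QE) is the right shape, but the phrasing suggests only the outermost existential quantifier matters. Second, Sela's quantifier elimination over non-abelian torsion-free hyperbolic groups reduces formulas, modulo the theory, to Boolean combinations of $\forall\exists$-formulas rather than to positive existential ones; that is precisely why the $\forall\exists$-fragment is the case one must establish directly, rather than a way-station in an inductive descent to existential sentences.
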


This statement 
 does not explicitly appear in \cite{Sela_diophantine7} but, as pointed out in \cite{Perin_elementary},
it follows from the arguments used in the proof of Theorem 7.6 of \cite{Sela_diophantine7}.

The exact  converse to Theorem \ref{Tarski} is not true:
in Subsection \ref{nst} we construct a group which is not a simple \'etage but contains a proper elementarily embedded subgroup. 
 The opposite direction in Theorem \ref {Tarski}, which  is due to Perin \cite{Perin_elementary},  
thus has to be formulated differently. 

We shall rephrase Perin's result as follows (see Theorem \ref{thclintro} below):  if $H\into G$ is an elementary embedding,
  there exists $r\geq 0$ such that the free product
$G*\bbF_r$ of $G$ with the free group of rank $r$ is a simple tower over $H$.
If we want to avoid using an auxiliary free group, we have to use   {extended towers}.

\paragraph{Extended towers.}   These towers were introduced by Perin in \cite{Perin_elementary,Perin_elementary_erratum}. 

We defined a simple \'etage of surface type by attaching a surface $\Sigma$ to a single connected space $X_1$ (see   the first two pictures on Figure \ref{fig_intro}). 
We now consider   connected  spaces $X_1,\dots, X_n$ with $n\ge1$, and we attach each boundary component of $\Sigma$ to one of these spaces so as to get a connected space.  

The associated graph of groups  $\Gamma$, which we call a centered splitting (see Definition \ref{centspl}), has bottom vertices $v_i$ for $i=1,\dots,n$, each carrying a group   $B_i=\pi_1(X_i)$; each $v_i$ is  joined to the central vertex $v$ carrying $\pi_1(\Sigma)$ by one or more edges    (see the last picture on Figure \ref{fig_intro}).

If   the surface $\Sigma$ is not exceptional and there exists a \ndbpm{}  $p:\pi_1(\Sigma)\to G$, or  equivalently a retraction
$\rho:G\to \tilde B_1*\dots*\tilde B_n$  with $\rho(\pi_1(\Sigma))$    non-abelian, for some choices of conjugates $\tilde B_i$ of $B_i$,
we say that $G$ is an \emph{extended \'etage of surface type} over $H=\tilde B_1*\dots*\tilde B_n$ (as pointed out in \cite{Perin_elementary_erratum}, the requirements on   the retraction $\rho$ must be modified when $n=1$ and $B_1$ is abelian).
We then define \emph{extended towers} by stacking extended \'etages of surface type and \'etages of free product type.

If no surface of low complexity is involved, any extended tower may be upgraded to a simple tower:
we prove (Proposition \ref{3simple}) that, if $G$ is an extended \'etage over $H$ with the associated surface $\Sigma$ having Euler characteristic $\chi(\Sigma)\le-3$, then $G$ is a simple \'etage over   some subgroup $H'$ isomorphic to $H$.
We also point out (Proposition \ref{tstab}) that, regardless of $\Sigma$, one obtains a simple \'etage after stabilizing: if $G$ is an extended \'etage over $H$, some $G*\bbF_r$ 
is a simple \'etage over a subgroup   $\tilde H\subset G*\bbF_r$ isomorphic to $H$. 

\begin{figure}[ht!]
  \begin{centering}
    \includegraphics{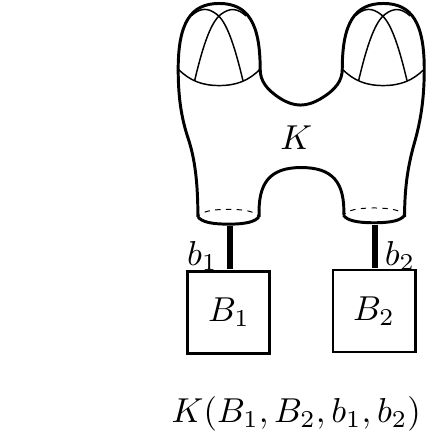}
    \caption{The group $K(B_1,B_2,b_1,b_2)$ constructed by attaching a twice-punctured Klein bottle.  This is an extended étage over $B_1*B_2$
      if $b_1$ and $b_2$ are squares in $B_1$ and $B_2$ respectively.}\label{fig_Klein_intro}
  \end{centering}
\end{figure}

On the other hand, consider as in  Figure \ref{fig_Klein_intro}  and Example \ref{ctrex} a double cyclic amalgam  
$$G=K(B_1,B_2,b_1,b_2)=B_1*_{b_1={g_1}} K *_{ {g_2}=b_2} B_2$$ 
with 
  $K=\langle g_1,g_2,x,y\mid g_1g_2=x^2y^2\rangle$ 
  the fundamental group of a twice-punctured Klein bottle 
 (note that 
it is a non-exceptional surface   since its Euler characteristic is $-2$).
It defines an extended \'etage over $B_1*B_2$ if each $b_i$ is a square  in $B_i$.  
If $B_1$ and $B_2$ have no cyclic splitting,  then $G$ is not a simple \'etage  (see Lemma \ref{pastour}).
  Moreover,   $G$ has no proper elementarily embedded subgroup (see  Proposition  \ref{lebut}):  it  is \emph{minimal}.  Similar examples may be constructed using other surfaces of Euler characteristic  $-2$.

\paragraph{Classifying elementarily embedded subgroups.}
Because of such examples, 
Theorem \ref{Tarski} is not true for extended towers. 
But  it  will be true 
if   we add an 
extra condition. 
One says that an étage is \emph{relative} to a subgroup $H<G$ if $H$ is conjugate into one of the $B_i$'s (\'etage of surface type), into $B_1$ if $G=B_1*\Z$.    
A tower is relative to $H$ if all its étages are (see Definition \ref{relto}).
We can now state the following characterization 
of elementarily embedded subgroups.

\begin{thm}[\cite{Perin_elementary}, see Theorem \ref{thcl}] 
\label{thclintro}
Let $G$ be a non-abelian torsion-free hyperbolic group,  and let $H<G$ be a non-abelian subgroup.
The   following are equivalent:
\begin{enumerate}
\item $H$ is elementarily embedded in $G$; 
\item $G$ is an extended tower over $H$ relative to $H$ (in the sense of Definition \ref{relto});
\item there exists a finitely generated free group $F$ such that $G*F$ is a simple tower over  $H$ (viewed as a   subgroup of $G*F$ in the obvious way).
\end{enumerate}
\end{thm}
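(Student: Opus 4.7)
The plan is to prove the cycle $(3) \Rightarrow (1) \Rightarrow (2) \Rightarrow (3)$. The first two implications are relatively formal, while $(1) \Rightarrow (2)$ is Perin's theorem and is the main obstacle.

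For $(3) \Rightarrow (1)$, I would apply ``Tarski'' (Theorem \ref{Tarski}) twice. Since $G*F$ is a simple tower over $H$, the inclusion $H\into G*F$ is elementary. Independently, $G*F$ is a simple tower over $G$ obtained by stacking $\mathrm{rk}(F)$ simple \'etages of free product type, so $G\into G*F$ is also elementary. A standard model-theoretic triangle argument finishes: if $H\subset G\subset K$ with both $H\into K$ and $G\into K$ elementary, then for any formula $\phi(\bar x)$ and tuple $\bar h$ from $H$ one has $H\models\phi(\bar h)\iff K\models\phi(\bar h)\iff G\models\phi(\bar h)$, so $H\into G$ is elementary.

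For $(2) \Rightarrow (3)$, I would proceed by induction on the height of the extended tower, using Proposition \ref{tstab} at each level to convert an extended \'etage into a simple one after a free product with a free group. The ``relative to $H$'' hypothesis is essential here: it ensures that $H$ remains visible inside one of the base free factors at each step, so the free factors introduced during stabilization can be absorbed as additional simple \'etages of free product type, leaving $H$ itself as the base of the resulting simple tower structure on $G*F$.

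The hard implication is $(1) \Rightarrow (2)$, which follows Perin's strategy. The starting observation is that an elementary embedding $H\into G$ automatically yields a retraction $\rho:G\to H$: writing $G=\langle H, t_1,\dots,t_n\mid R(\bar h,\bar t)\rangle$, the existential sentence $\exists \bar x\, R(\bar h,\bar x)=1$ with parameters in $H$ holds in $G$ (witnessed by $\bar t$), hence holds in $H$ by elementarity. I would then consider the cyclic JSJ decomposition of $G$ relative to $H$ (placing $H$ inside a rigid vertex), and analyze it via Sela's shortening argument applied to sequences of retractions $G\to H$ obtained by precomposing $\rho$ with high powers of modular automorphisms supported on surface and abelian vertex groups. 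This forces each surface vertex group of the JSJ to admit a \ndbpm{} into $G$, and exhibits the top layer of $G$ over the subgroup $G_1$ generated by the rigid and free-factor pieces as an extended \'etage relative to $H$. Since $\rho$ factors through $G_1$, the composition $H\into G\onto G_1$ shows that $H$ is also elementarily embedded in $G_1$, allowing induction on a complexity measure. Termination follows from the noetherian nature of \'etage decompositions over torsion-free hyperbolic groups. The delicate point, where care must be taken, is verifying that the piece extracted is genuinely an extended (and not necessarily simple) \'etage relative to $H$: this accounts for the low-complexity surfaces like the twice-punctured Klein bottle of Example \ref{ctrex}, which explains precisely why the extended (rather than simple) framework is unavoidable in $(2)$.
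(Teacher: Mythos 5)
Your treatment of $(3)\Rightarrow(1)$ (two applications of ``Tarski'' plus the transitivity in Remark \ref{tm}) and of $(2)\Rightarrow(3)$ (stabilization via Proposition \ref{tstab}, Corollary \ref{ett} and Remark \ref{rk_tour_rel}) matches the paper's proof of Theorem \ref{thcl}.

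The genuine gap is in your induction for $(1)\Rightarrow(2)$. You claim that since $\rho$ factors through $G_1$, the composition $H\into G\onto G_1$ shows $H\preceq_e G_1$. This is false: a retraction $G\to G_1$ fixing $H$ does not transfer $H\preceq_e G$ to $H\preceq_e G_1$ --- for that you would need $G_1\preceq_e G$, which a retraction alone does not give. The paper flags this exact failure: Remark \ref{rem_warning} says the intermediate groups $G_i$ of an extended tower over $H$ relative to $H$ are in general \emph{not} elementarily embedded in $G$, and Theorem \ref{contrex} constructs $G$ that is an extended \'etage over a prototype $P$ with $P\equiv G$ yet $P$ not elementarily embedded in $G$. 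So the \'etage-by-\'etage descent you propose breaks precisely at the low-complexity surfaces you rightly flag as ``delicate.'' The paper instead proves $(1)\Rightarrow(2)$ as Corollary \ref{thcl2}, deduced from Theorem \ref{lethm}: one shows directly that the relative core $C=C_1*\dots*C_p$ of $G$ relative to $A=H$ satisfies $C_1=A$ (hence $C=H$), by transferring, via the elementary embedding, the first-order statement ``every morphism $C_1\to H$ is related to one killing an element of a finite test set'' from $H$ to $G$; this yields a non-injective preretraction $C_1\to G$, and Theorem \ref{thm_preretraction} then contradicts the prototype property of $C_1$. This engulfing / co-Hopf argument on the core replaces --- and repairs --- the inductive descent you attempt, sidestepping the need to propagate elementarity down the tower.
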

$1\implies 2$ is Perin's original statement. 
Deducing 1 from 2 or 3 requires ``Tarski'' (Theorem \ref{Tarski}).

\paragraph{Classifying hyperbolic groups up to elementary equivalence.}
Given torsion-free CSA groups $G$ and $H$, we write $G\tge H$  if $G$ is an extended tower over  a subgroup isomorphic to $H$.
This defines a \emph{partial order} on the set of isomorphism classes, and minimal elements are called \emph{prototypes.} 
In Section \ref{ordre} we   prove that, given a CSA group $G$, there is a unique prototype $\core(G)$ such that $G\tge \core(G)$; it is called the \emph{core} of $G$.  

The difficulty here is to prove uniqueness. In the case of hyperbolic groups, Sela deduces it from ``Tarski'', while our arguments in Section \ref{ordre} are purely group-theoretical. We associate a \emph{minimal base} $M$ to any 2-acylindrical cyclic splitting  
of a group $G$, and we use it to show confluence: if $G_0\tge G_1$ and $G_0\tge G_2$, with $G_0 $ a CSA group, there exists $G_3$ such that $G_1\tge G_3$ and $G_2\tge G_3$.

 The core behaves nicely with respect to free products: $c(G_1*G_2)\simeq c(G_1)*c(G_2)$, and a group is a prototype
if and only if it is a free product of one-ended prototypes.

In Section \ref{elemequiv} we use extended towers to present Sela's classification of torsion-free hyperbolic groups up to elementary equivalence. Note that any finitely generated group which is elementarily equivalent to a hyperbolic group is itself hyperbolic (\cite{Sela_diophantine7,Andre_hyperbolicity}).

We shall deduce from  Theorem \ref{lethmintro} below 
 that, if   two hyperbolic groups $G$ and $G'$ are elementarily equivalent (denoted $G\equiv G'$),  their cores are isomorphic. 
Our arguments do not use ``Tarski'', but ``Tarski'' is needed to prove that 
$G$ is elementarily equivalent to its core, 
 thus obtaining Sela's complete classification.

\begin{thm} [\cite{Sela_diophantine7}, see Theorem \ref{classif}]\label{classifintro}
Two non-abelian torsion-free hyperbolic groups are elementarily equivalent if and only if their cores are isomorphic.
\end{thm}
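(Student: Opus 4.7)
The plan is to prove the two implications separately, using Theorem \ref{lethmintro} for the forward direction and ``Tarski'' (Theorem \ref{Tarski}) together with Proposition \ref{tstab} for the converse.

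For the forward direction, assume $G \equiv G'$. The map with empty domain qualifies as a partial elementary map between $G$ and $G'$, because $G\equiv G'$ means precisely that the two structures satisfy the same sentences in the parameter-free language. Theorem \ref{lethmintro} then asserts that any partial elementary map between torsion-free hyperbolic groups extends to an isomorphism between their relative cores. Applied to the empty map, the relative cores coincide with the absolute cores $\core(G)$ and $\core(G')$, yielding $\core(G)\simeq\core(G')$. Notably, this implication does not appeal to ``Tarski''.

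For the converse, assume $\core(G)\simeq\core(G')$. The key intermediate claim is that every non-abelian torsion-free hyperbolic group is elementarily equivalent to its core. By the defining property of the core, $G\tge\core(G)$, so $G$ is an extended tower over a subgroup isomorphic to $\core(G)$. By Proposition \ref{tstab} there exist $r\geq 0$ and a subgroup $\tilde H\subset G*\bbF_r$ isomorphic to $\core(G)$ such that $G*\bbF_r$ is a \emph{simple} tower over $\tilde H$. The group $G*\bbF_r$ remains non-abelian torsion-free hyperbolic, and so does $\tilde H$ (as the base of a simple tower over a hyperbolic group), so ``Tarski'' gives $\tilde H\equiv G*\bbF_r$, whence $\core(G)\equiv G*\bbF_r$. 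Moreover $G*\bbF_r$ is itself a simple tower over $G$ obtained by stacking $r$ étages of free product type, so another application of ``Tarski'' yields $G\equiv G*\bbF_r$. Combining the two equivalences we obtain $G\equiv\core(G)$, and by symmetry $G'\equiv\core(G')$. The assumed isomorphism then gives the chain $G\equiv\core(G)\simeq\core(G')\equiv G'$, as required.

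The main obstacle is the forward direction, since it rests entirely on the substantive content of Theorem \ref{lethmintro}; the converse amounts to a bookkeeping exercise that packages ``Tarski'' with the stabilization procedure of Proposition \ref{tstab}. A small technical point worth checking is that the subgroup $\tilde H\simeq\core(G)$ to which we apply Theorem \ref{Tarski} is non-abelian torsion-free hyperbolic, which follows because these properties are preserved when passing to the base of a simple tower over a non-abelian torsion-free hyperbolic group.
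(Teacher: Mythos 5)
Your forward direction is exactly the paper's argument (apply Theorem \ref{lethmintro} with $A=\{1\}$, so the relative cores are the absolute cores), and this part is fine.

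The converse has a genuine gap. Your intermediate claim that every non-abelian torsion-free hyperbolic group $G$ is elementarily equivalent to $\core(G)$ is \emph{false} when $\core(G)=\{1\}$, i.e.\ when $G$ is elementarily free (for example $G=\F_2$, whose core is trivial, is certainly not elementarily equivalent to the trivial group). Your ``small technical point'' --- that the base of a simple tower over a non-abelian torsion-free hyperbolic group is itself non-abelian --- is exactly where the argument breaks: Remark \ref{pasz} explicitly allows the base of a simple étage of free product type to be abelian or even trivial ($\F_2$ is a simple tower over $\{1\}$), so the base can be trivial and Theorem \ref{Tarski} cannot be applied to it. This is precisely why the paper introduces the elementary core $\ecore(G)$ (Definition \ref{elcore}), replacing $\core(G)$ by $\F_2$ when $\core(G)=\{1\}$, and then proves $G\equiv\ecore(G)$ (Corollary \ref{equivcore}): in the trivial-core case one observes $G*\bbF_2\tge\bbF_2$ via Remark \ref{rk_prodlib} and applies ``Tarski'' to the simple tower over $\bbF_2$, giving $G\equiv G*\bbF_2\equiv\bbF_2$. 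Since $\ecore(G)\simeq\ecore(G')$ if and only if $\core(G)\simeq\core(G')$, the statement of the theorem is unaffected, but your proof as written does not cover the elementarily free case. Also note that Proposition \ref{tstab} treats a single étage; for a tower you want Corollary \ref{ett} (whose statement already signals the abelian-base issue, returning a subgroup isomorphic to $H*\bbF_2$ rather than $H$ when $H$ is abelian).
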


To be precise, it is not quite true that $G$ is always elementarily equivalent to its core: if $G\equiv \F_n$, its core is the trivial group, which is not equivalent to $G$.  To remedy this, we also define the \emph{elementary core} $\ecore(G)$, which is 
equal to $\core(G)$ if $\core(G)\ne\{1\}$ and  to $\F_2$ if $\core(G)=\{1\}$, so that $G\equiv\ecore(G)$ always holds.
  Note however that $\tilde c(G)$ does not behave as well as $c(G)$ with respect to free products.

Corollary \ref{cor_tarski} gives many characterizations of groups which are elementarily free ($G\equiv \F_2$),   correcting
\cite[Theorem 7]{Sela_diophantine6} and \cite[Theorem 41]{KhMy_elementary}.

 The elementary core
$\ecore(G)$ being  isomorphic to a subgroup of $G$   and  elementarily equivalent to $G$, one may wonder whether it  embeds  elementarily  into $G$.
This is the case if $\ecore(G)$ is one-ended
(Corollary \ref{cor_core_unbout})
 but  
the following result,   based on the groups $G=K(B_1,B_2,b_1,b_2) $ introduced above,  shows that this is not true in general,
thus correcting Theorem 7.6 of \cite{Sela_diophantine7}. 
\begin{thm}[See Theorem \ref{contrex}]\label{contrexintro}
  If $P$ is a prototype with more than one end, there is a torsion-free hyperbolic group $G$ such that $G$ is an extended \'etage over $P$
    (so that $P\simeq\ecore(G)$ is the core of $G$ and  $G\equiv P$), but $P$ does not elementarily embed into $G$.
\end{thm}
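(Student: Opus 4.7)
My plan is to produce $G$ as a double cyclic amalgam of $P$ with a twice-punctured Klein bottle, along the lines of Example \ref{ctrex} and Figure \ref{fig_Klein_intro}. Since $P$ is a prototype with more than one end, the characterization of prototypes as free products of one-ended prototypes (recalled in the overview) lets me write $P = B_1 * B_2$ nontrivially. I pick non-trivial $c_i \in B_i$, set $b_i = c_i^2$, and define
\[
G = K(B_1, B_2, b_1, b_2) = B_1 *_{b_1 = g_1} K *_{g_2 = b_2} B_2,
\]
where $K = \langle g_1, g_2, x, y \mid g_1 g_2 = x^2 y^2 \rangle$ is the fundamental group of the twice-punctured Klein bottle. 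The Bestvina--Feighn combination theorem ensures that $G$ is torsion-free hyperbolic (the amalgamating elements $b_i$ are not proper powers in $B_i$ for generic choices, and the edge groups are malnormal in $\pi_1(K)$).

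Next I verify that $G$ is an extended étage of surface type over $P$. The map $\rho : G \to P$ which is the identity on $B_1$ and $B_2$ and sends $x \mapsto c_1$, $y \mapsto c_2$ is well-defined, since $\rho(x)^2 \rho(y)^2 = c_1^2 c_2^2 = b_1 b_2 = \rho(g_1 g_2)$; it is a retraction onto $P$, and its restriction to $\pi_1(K)$ is a \ndbpm{}, being the identity on each boundary subgroup $\langle g_i\rangle$ and having image $\langle c_1, c_2\rangle$, which is free of rank $2$ and in particular non-abelian. Hence $G \tge P$. Since $P$ is a prototype, the uniqueness of the core (Section \ref{ordre}) gives $\core(G) \simeq P$ and therefore $\ecore(G) \simeq P$; the classification Theorem \ref{classifintro} then yields $G \equiv P$.

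The heart of the proof is to rule out any elementary embedding $\phi : P \hookrightarrow G$. If such a $\phi$ existed, then $\phi(P) \leq G$ would be a subgroup isomorphic to $P$ whose inclusion into $G$ is elementary (standard model-theoretic fact: pre-composing $\phi$ with $\phi^{-1}:\phi(P)\to P$ shows that the inclusion $\phi(P)\hookrightarrow G$ is elementary). By Theorem \ref{thclintro}, $G$ would then be an extended tower over $\phi(P)$ relative to $\phi(P)$. I would argue that no such structure exists by analyzing the extended étage decompositions of $G$: up to the action of the modular group, the only such decomposition has the twice-punctured Klein bottle $K$ as surface piece and $B_1, B_2$ as bottom vertex groups. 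The ``relative to $\phi(P)$'' condition then forces $\phi(P)$ to be conjugate into a single $B_i$, which is incompatible with $\phi(P) \simeq P = B_1 * B_2$ having $G$ as an extended tower above it (in particular with $\phi(P) \equiv G \equiv P$).

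The main obstacle is this final step: establishing, in the full generality of prototypes $P$, the JSJ/minimality statement that $K(B_1, B_2, b_1, b_2)$ admits no proper elementarily embedded subgroup. This generalizes Proposition \ref{lebut}, which is stated for $B_i$ without cyclic splittings; here the one-ended factors of $P$ may themselves carry cyclic splittings. The key inputs are the rigidity of the twice-punctured Klein bottle --- a non-orientable surface of Euler characteristic exactly $-2$, too small to admit the extra retractions needed to realize $G$ as a simple étage (Lemma \ref{pastour}) or as a relative extended étage over a copy of $P$ --- together with the uniqueness of the core and careful bookkeeping of what ``relative to $H$'' imposes at each floor of a hypothetical tower.
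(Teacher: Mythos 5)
Your strategy is the right one in broad outline (use the twice-punctured Klein bottle construction $K$), but there are two concrete gaps where the paper does something you have not quite captured.

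First, the choice of $b_i$. You write ``I pick non-trivial $c_i\in B_i$, set $b_i=c_i^2$.'' An arbitrary non-trivial $c_i$ will not do: the hypothesis of Lemma \ref{pastour} is that $B_i$ has \emph{no cyclic splitting relative to $b_i$}, which is what makes $\Gamma$ the unique retractable centered splitting of $G$ via a JSJ argument. An arbitrary square in $B_i$ may be elliptic in a cyclic splitting of $B_i$, in which case $\Gamma$ can be refined and the analysis breaks down. The paper uses Lemma \ref{nospl} (existence, for any one-ended torsion-free hyperbolic $A$, of an element $a$ such that $A$ has no cyclic splitting relative to $a$) precisely to make this choice. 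Note also that you seem to have misread the hypothesis: Proposition \ref{lebut} does not assume the $A_i$ have ``no cyclic splittings'' absolutely, only no cyclic splittings relative to $a_i$ — a much weaker requirement, attainable for every one-ended hyperbolic group, and so your worry about ``one-ended factors of $P$ carrying cyclic splittings'' is already addressed by the relative condition.

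Second, and more seriously, the decomposition $P=B_1*B_2$. You allow $B_1,B_2$ to be arbitrary non-trivial free factors of $P$, but when $P$ has more than two one-ended Grushko factors, at least one $B_i$ is itself a non-trivial free product. Such a $B_i$ automatically has a free (hence cyclic) splitting relative to \emph{any} element, so Lemma \ref{pastour} does not apply; and the ``moreover'' of Lemma \ref{pastour} (which is what actually rules out $G$ being a relative extended étage over a copy of $P$) explicitly requires $A_1,A_2$ \emph{one-ended} in order to invoke the co-Hopf property. Your version of $G$ for $p>2$ is therefore not covered by the lemmas you are citing. The paper sidesteps this by writing $P=A_1*\dots*A_p$ into \emph{one-ended} prototypes and, when $p>2$, defining $G=K(A_1,A_2,a_1,a_2)*A_3*\dots*A_p$ rather than trying to absorb $A_3,\dots,A_p$ into the $B_i$. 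One then needs an extra Grushko blowup argument (via Proposition \ref{Grus} and Lemma \ref{pastour2}) to show that every retractable centered splitting of this free product restricts to $\Gamma_K$ on the $K(A_1,A_2,a_1,a_2)$ factor, which you would also need to supply.

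In short: your plan needs Lemma \ref{nospl} to choose the gluing elements, needs to amalgamate only two one-ended factors into the Klein bottle piece and attach the rest by free product, and needs the Grushko-blowup analysis of retractable splittings of a free product. These are precisely the ingredients the paper supplies in Lemma \ref{nospl}, Lemma \ref{pastour}, Lemma \ref{pastour2}, Proposition \ref{lebut}, and the proof of Theorem \ref{contrex}.
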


 One can get an elementary embedding of the core, however, if one is ready to stabilize (Lemma \ref{coretour}):
  some $G*\bbF_r$ with  $r\geq 0$ has an elementarily embedded subgroup which is isomorphic to $\ecore(G)$.
 
  For example, if      $G=K(B_1,B_2,b_1,b_2) $ with $B_i,b_i$ as above ($b_i$ is a square and $B_i$ has no cyclic splitting), and $P=B_1*B_2$,
 then $P\simeq\ecore(G)$ has no elementary embedding in $G$, but one can show that $B_1*tB_2t\m$ is an elementarily embedded subgroup of $G*\grp{t}$.
 
We also discuss in Section \ref{examp} to what extent $\core(G)$ is unique as a subgroup of $G$, and we study \emph{parachutes}. These are groups having   an arbitrary  
 centered splitting with $n=1$ and $B_1\simeq\Z$.  Some of these groups admit \ndbpm s, but   no retraction from $G$ to $B_1$ may have non-abelian image, so these groups are an exception in the theory. We determine which of these groups are simple \'etages, and which have a proper elementarily embedded subgroup.

\paragraph{Extending partial elementary maps.}  One of the main results of  the paper
is Theorem \ref{lethmintro} below. This result may be viewed as a common generalization of results by Sela and Perin mentioned above,
and it also implies the homogeneity of free groups \cite{PeSk_homogeneity,Houcine_homogeneity}, and  more generally of their free products with prototypes (see Theorem \ref{thm_homogeneity_intro} below). 

We consider hyperbolic groups $G,G'$ with a subgroup  $A\inc G$, and $f:A\to G'$ sending $A$ isomorphically to  a subgroup   $A'\subset G'$.
We say that $f$ is a \emph{partial elementary map} if,
for any
formula $\phi(x_1,\dots,x_n)$ and any $a_1,\dots,a_n$ in $A$, one has $G\models\phi(a_1,\dots,a_n)$ if and only if $G'\models\phi(f(a_1),\dots,f(a_n))$.

 The notion of partial elementary map somehow interpolates between elementary equivalence and elementary embedding:
if $A=\{1\}$, the trivial map is  partial elementary  if and only if  $G\equiv G'$; if $A=G$, a partial elementary  map $f$ defines an elementary embedding of $G$ into $G'$. 

A \emph{core of $G$ relative to $A$} is a subgroup $C\inc G$ containing $A$, such that $G$ is a tower over $C$ relative to $A$, and $C$ is not an \'etage relative to $A$. 
Thus, if $A=\{1\}$, the core of $G$ relative to $A$ is just the core $c(G)$. 

\begin{thm}[See Theorem \ref{lethm}] \label{lethmintro}
  Given $A\inc G$ and $A'\inc G'$, with $G,G'$ torsion-free hyperbolic groups,  let $C$ (resp.\ $C'$) be  a  core  of $G$ relative to $A$ (resp.\ of $G'$ relative to $A'$).
  
Then any partial elementary map $f :A\to G'$ with $f(A)=A'$ extends to an isomorphism $F$ between $C$ and $C'$.
\end{thm}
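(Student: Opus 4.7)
The plan is to prove Theorem \ref{lethmintro} by induction on the height of a tower decomposition $G = G_0 > G_1 > \cdots > G_k = C$ of $G$ over its relative core $C$ (relative to $A$), using partial elementarity of $f$ to transfer the structure of each étage from $G$ to $G'$. At each inductive step I construct a matching étage of $G'$ over a suitable subgroup and extend $f$ accordingly; the iteration terminates thanks to the finite height of the tower and the finite generation of $G$.

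For the inductive step, suppose $G_0 > G_1$ is the topmost étage of $G$ relative to $A$. In the free-product case $G = G_1 * \bbZ$, the existence of such a free factor is captured by an existential formula over $A$; partial elementarity of $f$ transfers it to give a parallel étage $G' = G_1' * \bbZ$ with $A' \subset G_1'$, and $f$ extends to a partial elementary map $A \to G_1'$. The harder case is a surface-type étage with non-exceptional surface $\Sigma$ and retraction $\rho: G \to G_1$ having non-abelian image on $\pi_1(\Sigma)$: here I would apply the Merzlyakov-Sela test-sequence technology to encode the existence of $\rho$ (equivalently, of a \ndbpm{} $p: \pi_1(\Sigma) \to G$) as a first-order formula with parameters in $A$. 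Pushing this formula across $f$ yields a parallel surface-type étage $G' > G_1'$ relative to $A'$, and $f$ extends accordingly after realizing a finite generating set of $G_1$ in $G_1'$ via the transferred existential formulas. The inductive hypothesis applied to $A \subset G_1$ and $A' \subset G_1'$ then produces the desired isomorphism of relative cores.

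At the base $k = 0$ (so $G = C$), finite generation of $G$ allows us to successively extend $f$ to the generators of $G$ using partial elementarity, producing a homomorphism $F: G \to G'$ that is in fact an elementary embedding. Applying Perin's theorem (Theorem \ref{thclintro}, $1 \Rightarrow 2$) to $F(G) \hookrightarrow G'$, we see that $G'$ is an extended tower over $F(G)$ relative to $F(G) \supset A'$; since $G'$ is assumed to coincide with its own relative core $C'$ relative to $A'$, uniqueness of the relative core (Section \ref{ordre}) forces $F(G) = C' = G'$, so $F$ is an isomorphism. Injectivity is automatic from preservation of quantifier-free formulas.

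The main obstacle is the surface-type inductive step: the existence of the retraction or \ndbpm{} is \emph{a priori} not a first-order property, and converting it into one requires the delicate Merzlyakov-style test-sequence machinery developed by Sela and adapted by Perin --- one constructs specific sequences of elements of $G$ whose images under $f$ witness, inside $G'$, an analogous étage structure. A secondary challenge is coordinating the two parallel towers so that the extension of $f$ stays compatible with the tower ordering, which relies crucially on the confluence property of the tower ordering (Section \ref{ordre}) and on a relative version of Theorem \ref{Tarski} to preserve partial elementarity as we descend the towers.
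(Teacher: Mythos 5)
Your plan diverges substantially from the paper's argument, and I think it has at least two fatal gaps.

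\textbf{The base case is not a base case.} You claim that when $G = C$ (so $G$ is already a prototype relative to $A$), ``finite generation of $G$ allows us to successively extend $f$ to the generators of $G$ using partial elementarity, producing a homomorphism $F : G \to G'$.'' This is false: a partial elementary map $f : A \to A'$ says nothing about where to send elements of $G$ outside $A$, and there is no canonical or even consistent way to extend it generator by generator. Indeed, if $A = \{1\}$, partial elementarity is just $G \equiv G'$, and your base case would then assert that any two elementarily equivalent prototypes admit a naively constructed isomorphism — but producing that isomorphism is exactly the content of the theorem. The prototype case is the \emph{entire difficulty}, not a triviality you can leave to the end. The paper's proof recognizes this: even when $G$ is its own core, the argument goes through the engulfing machinery (Lemma \ref{lem_bi_engulf}, Lemma \ref{lem_engulf_ee}, Lemma \ref{lem_proto_engulf}), not through a direct extension of $f$.

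\textbf{The inductive step transfers in the wrong direction and matches the wrong objects.} You propose to encode the existence of an \'etage of $G$ over $A$ as a first-order formula with parameters in $A$ and push it across $f$ to deduce a ``parallel \'etage'' of $G'$. There are two problems. First, Merzlyakov/test-sequence technology (used to prove Theorem \ref{Tarski}) establishes that an \'etage induces an elementary embedding; it does not provide a first-order characterization of ``$G$ is an \'etage over some subgroup containing $A$,'' and in particular does not let you read off, inside $G'$, a matching \'etage. Second, and more structurally, the towers $G > \cdots > C$ and $G' > \cdots > C'$ need not have the same height, the same surfaces, or any correspondence of intermediate stages, so the whole notion of a ``parallel \'etage'' is ill-founded. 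What \emph{is} transferable across $f$ is a very different statement: that $C$ \emph{strongly engulfs} in $G$ (Definition \ref{dfn_strong_engulf}), which is a statement about the non-existence of related morphisms $C_i \to G$ killing prescribed elements, expressible in first order thanks to equational Noetherianity (Lemma \ref{lem_engulf_ee}). This sidesteps any attempt to match tower structures: one shows $C$ strongly engulfs in $G$ (using preretractions and the shortening argument, Subsection \ref{prtor}), transfers this to $G'$, descends into $C'$ via Lemma \ref{relat2}, and concludes by the co-Hopf bi-engulfing Lemma \ref{lem_bi_engulf}.

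A smaller point: you invoke ``a relative version of Theorem \ref{Tarski}'' in the inductive step. The paper deliberately avoids any use of ``Tarski'' in the proof of Theorem \ref{lethm} in this direction; Tarski is needed only for the converse (Subsection \ref{unsens}). Relying on it here would weaken the result and obscure which facts the theorem actually rests on. You should drop the tower-matching induction altogether and build the proof around the engulfing notions: fix distinguished elements $d_{C_i}$, finite sets $D_{C_j \to C_i}$ and shortening sets $F_{C_i, G'}$ (Lemma \ref{lem_choix}); prove $C$ strongly engulfs in $G$ (Lemma \ref{lem_proto_engulf}, via Theorem \ref{thm_preretraction}); transfer to $G'$ by first-order expressibility; descend to $C'$ (Lemma \ref{relat2}); and conclude by symmetry with Lemma \ref{lem_bi_engulf}.
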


\begin{rem}
  Using ``Tarski'', one can see that  the converse is also true: if $f$ extends, it is partial elementary
  (see Section \ref{unsens}).
\end{rem}

This theorem is proved in 
  Section \ref{interp}, building on ideas due to Sela, Perin and Sklinos.

 The argument goes as follows  (see Subsection \ref{pfsensfac}).
Using $f$, one can view $G$ and $G'$ as $A$-groups, i.e.\ groups with a specified embedding of $A$;  morphisms between $A$-groups must commute with these embeddings. 
The goal is then to prove that $C$ and $C'$ are isomorphic as $A$-groups.
We introduce a notion of \emph{(strong) engulfing} between two  $A$-groups, which
can be translated into a first-order statement and
satisfies some kind of a  co-Hopf property: if two  $A$-groups engulf in each other, they are isomorphic   (to be precise, we have to define both engulfing and strong engulfing, see details in Subsection \ref{foc}).
The fact that $C$ is the core of $G$   implies that $C$ strongly engulfs in $G$.
Since strong engulfing is first-order and $f$ is a partial elementary map, $C$ strongly 
engulfs in $G'$.
Using the fact that $G'$ is a tower over $C'$, we deduce that $C$  
engulfs in $C'$.
By symmetry of the argument, $C$ and $C'$ engulf in each other and are therefore isomorphic.

 An important step in this proof consists in proving that the inclusion of $C$ into $G$ is a strong engulfing.
 This step is based on  the notion of preretraction introduced by Perin in \cite{Perin_elementary}.
  By way of contradiction, assume 
 that  $C\hookrightarrow G$ is not a strong engulfing. This  directly translates into the existence
of a non-injective preretraction $r:C\ra G$.
In Subsection \ref{prtor} we   give a rather short proof of Propositions 5.11 and 5.12  from \cite{Perin_elementary},
 which use $r$ to show that $C$ is an \'etage, 
thus contradicting the fact  that $C$ is a prototype.

\paragraph{Homogeneity.}
Theorem \ref{lethmintro} can be used to prove the homogeneity
of groups having very restricted tower structures.
Recall that a countable group $G$ is \emph{homogeneous} if, for every pair of tuples $\bar{a}=(a_1,\dots,a_n)$ and $\bar{b}=(b_1,\dots,b_n)$ in $G^n$
satisfying the same first-order formulas, there exists an automorphism of $G$ that sends $\bar{a}$ to $\bar{b}$.
Equivalently,  given $\bar{a}$ and $\bar{b}$ such that the map $a_i\mapsto b_i$ extends to a partial elementary map $f:\grp{\bar{a}}\ra 
 G$, 
there is an extension of $f$ to an automorphism of $G$.

 Given such an $f$, 
Theorem \ref{lethmintro}   says that $f$ extends to an isomorphism between the cores $C$ and $C'$ of $G$ relative to 
$\grp{\bar{a}}$ and $\grp{\bar{b}}$ respectively.
In particular, if $G$ is a prototype, then $C=C'=G$ and it immediately follows that $G$ is homogeneous.
 If $G$ is free, it  cannot be written as an étage of surface type (\cite{Perin_elementary,PeSk_homogeneity}, see also Corollary \ref{paset}), so the only possibilities for the  relative cores $C$ and $C'$ are free factors.
Homogeneity  of free groups  immediately follows. This argument generalizes naturally to free products of free groups and prototypes:
\begin{thm}[see Corollary \ref{cor_homogeneity}, extending \cite{PeSk_homogeneity,Houcine_homogeneity}]\label{thm_homogeneity_intro}
  If a torsion-free hyperbolic group
  $G=P_1*\dots*P_p*\bbF_r$ is a free product of one-ended prototypes  $P_i$ with a free group,
  then $G$ is homogeneous.  
\end{thm}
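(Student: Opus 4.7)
Let $\bar a = (a_1,\dots,a_n)$ and $\bar b = (b_1,\dots,b_n)$ be tuples in $G^n$ satisfying the same first-order formulas in the language of groups. Equivalently, the map $a_i \mapsto b_i$ extends to a partial elementary map $f : \langle \bar a \rangle \to G$. The plan is to extend $f$ to an automorphism of $G$, which will establish homogeneity.

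Applying Theorem \ref{lethmintro} with $G' = G$, $A = \langle \bar a \rangle$, and $A' = \langle \bar b \rangle$ produces an isomorphism $F : C \to C'$ extending $f$, where $C$ (resp.\ $C'$) is a core of $G$ relative to $\langle \bar a \rangle$ (resp.\ $\langle \bar b \rangle$). It then suffices to extend $F$ to an automorphism of $G$. The central structural claim is that $C$ and $C'$ are free factors of $G$ whose one-ended Grushko factors are conjugate in $G$ to some of the $P_i$'s, and whose free part is a free factor of $\bbF_r$. Indeed, in the tower $G \tge C$ relative to $\langle \bar a \rangle$, no étage of surface type can occur: the one-ended prototypes $P_i$ cannot themselves be étages of any type by minimality in the tower order, the free group part cannot be a surface étage by Corollary \ref{paset}, and a surface étage at any level in the tower would force a cyclic splitting of $G$ with a surface vertex incompatible with the Grushko decomposition $P_1 * \dots * P_p * \bbF_r$. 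Hence every étage in the tower is of free product type, and iteratively peeling them off identifies $C$ as a free factor of $G$ of the stated form; the same analysis applies to $C'$.

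Having identified $C$ and $C'$ as free factors of $G$, one writes $G = C * L = C' * L'$ with $L, L'$ complementary factors, which are necessarily free groups. Since $F$ matches the Grushko decompositions of $C$ and $C'$, the uniqueness of Grushko decomposition for $G$ forces $L \cong L'$. Choosing any isomorphism $\phi : L \to L'$, the map $F * \phi : G \to G$ is an automorphism of $G$ extending $f$, which yields homogeneity.

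The main obstacle is the structural claim on relative cores: one must rule out surface étages in any tower $G \tge C$ when $G$ is a free product of one-ended prototypes and a free group. This generalizes Corollary \ref{paset} from the purely free case and requires a careful analysis of how a surface vertex in a cyclic splitting of $G$ interacts with its Grushko decomposition, using crucially the minimality of the prototypes $P_i$. Once this structural point is in hand, both the application of Theorem \ref{lethmintro} and the Grushko-based extension of $F$ to an automorphism of $G$ are direct.
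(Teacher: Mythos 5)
Your proposal follows the same structure as the paper's proof of Corollary~\ref{cor_homogeneity}: apply Theorem~\ref{lethmintro} to get an isomorphism of relative cores, observe that the relative cores must be co-free free factors of $G$, and extend by any isomorphism of the free complements. The outline is sound, and the final Grushko step is correct: since $C\cong C'$ and $G=C*L=C'*L'$ with $L,L'$ free (because every \'etage in the tower from $G$ to $C$ is of free product type), $L$ and $L'$ have equal rank, so $F$ together with any isomorphism $L\to L'$ gives the desired automorphism.

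The gap is precisely what you flag as ``the main obstacle,'' and you assert it rather than prove it. The claim ``a surface \'etage at any level in the tower would force a cyclic splitting of $G$ with a surface vertex incompatible with the Grushko decomposition'' is not an argument: low-complexity surface vertices do occur in cyclic splittings of groups with the given Grushko decomposition (indeed $\bbF_r$ itself has centered splittings), so mere ``incompatibility'' of a surface vertex with a free-product structure is false. What is actually needed is Corollary~\ref{infbout}: if $G$ had a retractable centered splitting (equivalently, were an extended \'etage of surface type), then so would one of its Grushko factors. But each $P_i$ is a prototype, hence by Lemma~\ref{eqprot} admits no retractable splitting, and a free group admits none either by Corollary~\ref{paset}. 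So $G$ is never an extended \'etage of surface type. Combined with Remark~\ref{zenbas} (one may push surface \'etages to the top of any tower), this shows the tower from $G$ to its relative core has only free-product \'etages, which is exactly what your ``iteratively peeling them off'' step needs. In short: your sketch identifies the right obstacle and reaches for the right tool, but stops short of invoking Corollary~\ref{infbout}, which is a single clean reference in the paper and is what actually makes the structural claim go through.
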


See \cite{DePe_homogeneity} for a characterization of homogeneous hyperbolic groups.

\paragraph{Primality, minimality, universality.}

A group is \emph{prime}  if it elementarily embeds into any other group elementarily equivalent to it. We show:

\begin{thm}[see Theorem \ref{prime}] \label{primeintro}
 A non-abelian torsion-free hyperbolic group is prime if and only if it is a one-ended prototype.
\end{thm}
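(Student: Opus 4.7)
The plan is to prove both implications, drawing on Theorem \ref{classifintro}, Theorem \ref{thclintro}, Corollary \ref{cor_core_unbout}, and Theorem \ref{contrexintro}.

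For the \emph{if} direction, I would take a one-ended prototype $G$ and an arbitrary torsion-free hyperbolic group $G'$ with $G' \equiv G$. Theorem \ref{classifintro} gives $\core(G') \cong \core(G) = G$, which is non-trivial and one-ended, so $\ecore(G') = \core(G')$ is one-ended. Corollary \ref{cor_core_unbout} then supplies an elementary embedding $\core(G') \hookrightarrow G'$, and composing with the isomorphism $G \cong \core(G')$ yields an elementary embedding $G \hookrightarrow G'$, proving primality.

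For the \emph{only if} direction, let $G$ be prime. Since $G \equiv \ecore(G)$, primality provides an elementary embedding $G \hookrightarrow \ecore(G)$, so Theorem \ref{thclintro} gives $\ecore(G) \tge G$. If $\core(G) \ne \{1\}$, then $\ecore(G) = \core(G)$, and combining $G \tge \core(G)$ with $\core(G) \tge G$ and the antisymmetry of $\tge$ on isomorphism classes (Section \ref{ordre}) forces $G \cong \core(G)$, so $G$ is a non-trivial prototype. If instead $\core(G) = \{1\}$, then $\ecore(G) = \F_2$ and $G \hookrightarrow \F_2$ elementarily; so $G$ is a free subgroup $\F_k$ of $\F_2$ with $k \geq 2$, and $\F_2 \tge \F_k$ together with $\F_k \tge \F_2$ (iterating free-product étages) forces $k = 2$ by antisymmetry. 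I would then rule out this sub-case by exhibiting an elementarily free group failing to embed $\F_2$ elementarily: the closed non-orientable surface group $\pi_1(N_4) \cong K(\Z, \Z, 2, 2)$ is elementarily equivalent to $\F_2$ (its core is $\{1\}$), but every centered splitting of $N_4$ has base vertex groups isomorphic to $\Z$ (fundamental groups of Möbius bands or annuli), so $\F_2$ never sits inside a single $B_i$; Theorem \ref{thclintro} then precludes the elementary embedding, contradicting primality. Finally, to conclude that $G$ is one-ended, I would argue by contradiction: if the non-trivial prototype $G$ has more than one end, Theorem \ref{contrexintro} produces a torsion-free hyperbolic group $G' \equiv G$ into which $G$ does not elementarily embed, again contradicting primality.

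The main obstacle will be the $G \cong \F_2$ sub-case, which requires a rigidity statement for the centered splittings of $N_4$. Using $\chi(N_4) = -2$, every subsurface $\Sigma \subset N_4$ with $\chi(\Sigma) \leq -2$ has complementary pieces of non-negative Euler characteristic, hence disks, annuli, or Möbius bands, yielding only cyclic base vertex groups. One must also check multi-level towers: any intermediate base of an extended tower on $N_4$ is a free product of such cyclic groups, so $\F_2$ can never be conjugated into a single vertex group at any level. The remaining steps are clean assemblies of the results developed earlier in the paper.
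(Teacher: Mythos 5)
Your proof of the \emph{only if} direction is essentially sound and broadly follows the paper's route: use primality to embed $G$ elementarily into $\ecore(G)$, then use Theorem~\ref{thclintro} to conclude $G$ is a co-free free factor of $\ecore(G)$ (which, for $\ecore(G)$ a non-trivial prototype, forces $G\cong\ecore(G)$; and for $\ecore(G)=\F_2$ forces $G\cong\F_2$, the case you then exclude via $\pi_1(N_4)$); finally Theorem~\ref{contrexintro} rules out infinitely-ended prototypes. Your antisymmetry detour in the non-free case is a correct but slightly roundabout replacement for the direct observation that a prototype has no proper elementarily embedded subgroup (Corollary~\ref{pasel}); and in the free case, Corollary~\ref{cor_FF} gives $G$ is a free factor of $\F_2$ immediately. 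Your handling of $\pi_1(N_4)$ is fine: since $\pi_1(N_4)$ is one-ended, any extended tower structure relative to a subgroup $H$ begins with a centered (surface-type) \'etage, and all bottom groups of a retractable centered splitting of $\pi_1(N_4)$ are cyclic, so no non-abelian $H$ fits — there is no need to chase towers to deeper levels.

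The genuine gap is in the \emph{if} direction. You let $G'$ range only over \emph{hyperbolic} groups elementarily equivalent to $G$, but primality demands that $G$ elementarily embed into \emph{every} group elementarily equivalent to it, including countable infinitely generated ones (for instance the Fra\"iss\'e limit $\calm_G$ of Theorem~\ref{universintro}). Corollary~\ref{cor_core_unbout} only applies to hyperbolic (in particular, finitely generated) targets, because the tower machinery — cores, centered splittings, JSJ decompositions — lives in the finitely generated world. So your argument, as written, only shows that $G$ embeds elementarily into every \emph{finitely generated} model of its theory, which does not suffice. The paper flags exactly this issue and closes it by invoking an external model-theoretic result (Theorem~5.3 of Perin--Sklinos, \emph{Forking and JSJ decompositions in the free group I}) which upgrades ``embeds elementarily into all finitely generated models'' to primality in this context; you need some such input (or a direct verification of atomicity) that the present paper does not supply.
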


A group is \emph{minimal} if it has no proper elementarily embedded subgroup. The classification of minimal torsion-free hyperbolic groups depends on the nature of their core. 

If $G$ is a one-ended prototype, it is minimal and any   minimal finitely generated
group elementarily equivalent to $G$ is isomorphic to $G$. On the other hand,
given two one-ended prototypes $B_1,B_2$ (so that $B_1*B_2$ is also a prototype), 
different choices of  $b_1,b_2$ in the groups $K(B_1,B_2,b_1,b_2)$ constructed above provide different  groups with  core $B_1*B_2$ which are  minimal and elementarily equivalent   to $B_1*B_2$.  More generally:

\begin{thm}[see Theorem \ref{minimal}]\label{minimalintro}
 Let $G$ be a non-abelian torsion-free hyperbolic group. 
\begin{itemize}

 \item If the core of $G$ is one-ended, then $G$ is minimal if and only if  $G$ is a prototype (i.e.\ $G$ is equal to its core).
 \item If the core of $G$ is infinitely-ended, there are infinitely many minimal hyperbolic groups elementarily equivalent to $G$.
\end{itemize}
\end{thm}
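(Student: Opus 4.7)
The plan has two logically independent pieces: the ``prototype $\Leftrightarrow$ minimal'' equivalence in the one-ended case, and the existence of infinitely many minimal elementarily equivalent groups in the infinitely-ended case.

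\emph{Part 1 (core one-ended).} For the direction ``prototype implies minimal'', suppose $G=\core(G)$ is one-ended and let $H\leq G$ be elementarily embedded. Since $G$ is non-abelian and $H\equiv G$, the subgroup $H$ is non-abelian, so Theorem~\ref{thclintro} applies and gives $G\tge H$; as $G$ is a prototype, minimality in the order $\tge$ (Section~\ref{ordre}) forces $H\simeq G$. The co-Hopf property of one-ended torsion-free hyperbolic groups (Sela) applied to $G\xrightarrow{\sim}H\hookrightarrow G$ then yields $H=G$. For the converse, if $G$ is minimal then $\core(G)$ one-ended (hence nontrivial) means $\ecore(G)=\core(G)$, and Corollary~\ref{cor_core_unbout} supplies an elementary embedding $\ecore(G)\hookrightarrow G$ whose image must be all of $G$, so $G\simeq\core(G)$ is a prototype.

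\emph{Part 2 (core infinitely-ended).} Write $\core(G)\simeq P_1*\cdots*P_k$ with $k\geq 2$ and each $P_i$ a one-ended prototype, and set $Q:=P_3*\cdots*P_k$ (trivial if $k=2$). Following Example~\ref{ctrex}, for each pair $(b_1,b_2)$ with $b_i$ a square in $P_i$ define
\[
G_{b_1,b_2}:=K(P_1,P_2,b_1,b_2)*Q,
\]
which is a torsion-free hyperbolic group, whose $K$-factor is an extended étage over $P_1*P_2$ (Figure~\ref{fig_Klein_intro}). Since taking cores commutes with free products, $\core(G_{b_1,b_2})\simeq P_1*P_2*Q\simeq\core(G)$, and Theorem~\ref{classifintro} gives $G_{b_1,b_2}\equiv G$. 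Proposition~\ref{lebut} applied to the $K$-factor, combined with an analysis via Theorem~\ref{thclintro} of the possible tower structures of $G_{b_1,b_2}$ relative to a putative proper elementarily embedded subgroup (which must respect the free product decomposition and the centered splitting of the $K$-factor), yields minimality of $G_{b_1,b_2}$. Taking $b_i=c_i^{2n}$ for fixed non-trivial $c_i\in P_i$ and varying $n\geq 1$ produces infinitely many non-isomorphic such groups: different $n$'s give amalgamation elements in different $\Aut(P_i)$-orbits, distinguishable by translation lengths in Bass--Serre trees associated to splittings of $P_i$.

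\emph{Main obstacle.} The subtle step is Part 2's minimality assertion, since Proposition~\ref{lebut} assumes $P_1,P_2$ have no cyclic splitting, a hypothesis that need not hold for the prototype factors of $\core(G)$ in general. One must therefore either adapt the proof of Proposition~\ref{lebut} to the more general setting (using the specific structure of the Euler characteristic $-2$ surface and the centered splitting of $G_{b_1,b_2}$), or select a pair of factors $P_i$ of $\core(G)$ satisfying its hypotheses and show such a pair always exists when $\core(G)$ is infinitely-ended, or substitute another surface of Euler characteristic $-2$ whose attachment yields a minimal extended étage. A secondary difficulty is certifying non-isomorphism of infinitely many $G_{b_1,b_2}$; this reduces to controlling $\Aut(P_i)$-orbits of the powers $c_i^{2n}$, which can be done by a JSJ-type argument recovering $(b_1,b_2)$ from the isomorphism type of $G_{b_1,b_2}$ up to the natural symmetries.
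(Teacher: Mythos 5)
Your Part 1 matches the paper's approach (Corollaries \ref{cor_core_unbout} and \ref{pasel}), and the Part 2 construction $G_{b_1,b_2}=K(P_1,P_2,b_1,b_2)*Q$ is exactly the one the paper uses in the proof of Theorem \ref{minimal} (taking $a_i^n$ for fixed $a_i$, which is your $c_i^{2n}$ after writing $a_i=c_i^2$). But the ``main obstacle'' you raise is not a real obstacle, and this is worth clearing up: you have misread the hypothesis of Proposition \ref{lebut}. Lemma \ref{pastour} requires that $A_i$ has no cyclic splitting \emph{relative to} $a_i$, not that $A_i$ has no cyclic splitting at all; and Lemma \ref{nospl} guarantees that any one-ended torsion-free hyperbolic group $A$ contains some non-trivial $a$ with respect to which it is relatively rigid. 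So one always chooses $a_i=b_i^2$ with $b_i$ as in Lemma \ref{nospl}, and the hypothesis is automatically satisfiable for arbitrary one-ended prototypes $P_1,P_2$. There is no need to ``select a pair satisfying the hypotheses'' or to ``adapt the proof'' --- Proposition \ref{lebut} already states it works for arbitrary one-ended $A_1,A_2$. Likewise, the case $k>2$ (your $Q\neq 1$) is not an additional analysis for you to supply: Theorem \ref{contrex} already proves that $K(P_1,P_2,a_1,a_2)*Q$ is minimal (has no proper elementarily embedded subgroup) by checking that its only retractable centered splitting has base isomorphic to the prototype $\core(G)$, and that the last étage of any relative tower would have to be simple or of free-product type.

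Your ``secondary difficulty'' (certifying non-isomorphism) is genuine but the argument you sketch is not quite right. Translation lengths in ``Bass--Serre trees associated to splittings of $P_i$'' make no sense here, since by construction $P_i$ has no cyclic splitting relative to $a_i$. What the paper actually uses (Remark \ref{indice}) is the canonical cyclic JSJ decomposition $\Gcan$ of the ambient group $G_n$: it has a cyclic vertex $v_i$ of valence $2$ carrying the centralizer $\grp{c_i}$ of $a_i^n$, and the two incident edge groups have indices $n_i$ and $1$ in $\grp{c_i}$, where $n_i$ records the exponent. Since $\Gcan$ is canonical (automorphism-invariant), these indices are isomorphism invariants of $G_n$ and distinguish distinct values of $n$. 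This is the clean version of the ``JSJ-type argument recovering $(b_1,b_2)$ up to natural symmetries'' that you alluded to.
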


Theorem \ref{minimal} also  gives a classification of finitely generated minimal elementarily free groups.

We  
use Theorem \ref{lethmintro} to show that the class of all finitely generated groups elementarily equivalent to a  given torsion-free hyperbolic group is an {\em elementary Fra\"iss\'e class} (see \cite{KMS_fraisse} for this notion and \cite{Fraisse} for the original notion of a Fra\"iss\'e class). 
Using standard arguments, this directly implies:

\begin{thm}[see Theorem \ref{univers}] \label{universintro}
Given any  non-abelian torsion-free hyperbolic group $H$,  
there exists a unique countable  (infinitely generated) group $\calm_H$ elementarily equivalent to $H$ with the following properties:
\begin{enumerate}
\item $\calm_H$ is homogeneous;
\item  there exists an elementary chain of finitely generated subgroups $G_1\prec G_2\prec \dots\prec G_n\prec \cdots$
  such that $\calm_H=\bigcup_{n\geq 1}  G_n$;
\item  all torsion-free hyperbolic groups elementarily equivalent to $H$ elementarily embed into $\calm_H$.  
\end{enumerate}
\end{thm}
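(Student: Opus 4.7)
Following the remark preceding the theorem, the strategy is to apply the elementary Fra\"iss\'e machinery of \cite{KMS_fraisse} to the class
\[
\mathcal{K}_H := \{G : G \text{ finitely generated and } G \equiv H\},
\]
whose members are all torsion-free hyperbolic by \cite{Sela_diophantine7,Andre_hyperbolicity} and hence form countably many isomorphism types, with elementary embeddings as morphisms. Properties~(1)--(3) are precisely the standard assertions about a Fra\"iss\'e limit: homogeneity, realisation as a union of an elementary chain of members of the class, and universality. Since every torsion-free hyperbolic group elementarily equivalent to $H$ is finitely generated, hence lies in $\mathcal{K}_H$, property~(3) will follow from universality.

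The crucial point is therefore to verify the elementary amalgamation property; the joint embedding property is handled analogously, amalgamating along the common elementary core (using Lemma~\ref{coretour} to obtain an elementary embedding of $\ecore(H)$ into a free product stabilisation of each $G_i$ if needed). Given elementary embeddings $f_i : A \prec G_i$ with $A, G_1, G_2 \in \mathcal{K}_H$, one first applies Theorem~\ref{lethmintro} to the partial elementary map $f_2 \circ f_1\m : f_1(A) \to f_2(A)$ to obtain an isomorphism between the relative cores $C_i$ of $G_i$ over $f_i(A)$, and identifies the $C_i$ with a common subgroup $C$ containing a single copy of $A$. Upgrading each tower $G_i \tge C$ (relative to $A$) to a \emph{simple} tower by Proposition~\ref{tstab} (replacing $G_i$ by $G_i * \bbF_{r_i}$ if necessary, noting that $G_i \prec G_i * \bbF_{r_i}$ by Theorem~\ref{Tarski}), one constructs $G_3$ by starting from $G_1$ and stacking on top the sequence of simple étages that realises $G_2$ as a tower over $C$: since each such étage attaches to the previous floor via subgroups sitting inside $C \subset G_1$, the attaching data embeds into the current floor of $G_3$ and defines a valid simple étage. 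By symmetry of the construction, $G_3$ is simultaneously a simple tower over $G_2$. Since each étage is either a cyclic amalgamation with a surface group or a $\Z$ free product, hyperbolicity is preserved inductively from the hyperbolic base $G_1$, giving $G_3 \in \mathcal{K}_H$; Theorem~\ref{Tarski} then yields $G_1 \prec G_3$ and $G_2 \prec G_3$, and the embeddings commute with the identification of $A$ by construction.

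Once the amalgamation axiom is established, the Fra\"iss\'e theorem of \cite{KMS_fraisse} produces the required countable homogeneous group $\calm_H$, realised as the union of an elementary chain of members of $\mathcal{K}_H$; uniqueness and universality are standard consequences of the amalgamation property. The main obstacle is the amalgamation step itself: one must carefully sequence the étages of $G_2$ over $C$ so that, when re-based on top of $G_1$, each floor remains a valid simple étage and the tower stays relative to $A$, and one must justify hyperbolicity of the resulting group through an inductive combination argument using only étages of controlled type. Theorem~\ref{lethmintro} is the conceptual engine enabling the whole scheme: without the canonical identification of the two relative cores supplied by that theorem, there would be no common ``ground floor'' on which to perform the amalgamation.
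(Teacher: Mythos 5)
Your proposal follows essentially the same route as the paper: the Fra\"iss\'e machinery of \cite{KMS_fraisse} applied to $\mathcal{K}_H$, with (IP) and (e-HP) obvious, and the joint embedding / amalgamation properties obtained by amalgamating two stabilized simple towers along their common (relative) core, the core identification coming from Theorem~\ref{lethmintro} in the relative case and from Theorem~\ref{sensfac} in the absolute case. This is exactly the paper's strategy in Subsection~8.3, which defers the work to Proposition~\ref{prop_lattice} and its relative analogue.

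One part of your justification is incorrect as written, although the construction itself is fine. You claim that when stacking the \'etages of $G_2$ over $C$ on top of $G_1$, ``each such \'etage attaches to the previous floor via subgroups sitting inside $C\subset G_1$.'' That is false: if $G_2=B_0>B_1>\dots>B_k=C$, the \'etage between $B_i$ and $B_{i+1}$ attaches to $B_{i+1}$, not to $C$; only the bottom \'etage attaches inside $C$. The clean fix is the one the paper uses, namely Remark~\ref{rk_amalgam}: since $G_1$ is a simple tower over $C$ and $C$ embeds in $G_2$, the amalgam $G_1*_C G_2$ is a simple tower over $G_2$ in one step (and symmetrically over $G_1$), with the \'etage structure inherited directly from $G_1\tge C$ rather than re-assembled from $G_2\tge C$. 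Once that is invoked, hyperbolicity follows by the combination theorem exactly as you say, and the commuting-square condition for strong e-AP is automatic because the core identification from Theorem~\ref{lethmintro} extends $f_2\circ f_1^{-1}$, hence restricts to the prescribed map on $A$. So the overall proposal is correct and matches the paper's argument; only the sentence about where the \'etages attach should be replaced by a direct appeal to Remark~\ref{rk_amalgam}.
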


\paragraph{Towers are limit groups}

One of the purposes of this paper is to fill gaps in the literature. In an appendix we give a complete proof of the fact, due to Sela \cite{Sela_diophantine7}, that, if $G$ is a tower over a non-abelian torsion-free  hyperbolic group $H$, then $G$ is an $H$-limit group. 

The key step  (Theorem \ref{thm_courbes})  is to prove that, given a homomorphism $f$  from the fundamental group of a non-exceptional surface $\Sigma$ to $H$ which is injective on each boundary subgroup  and has non-abelian image, one may cut $\Sigma$ into subsurfaces such that the restriction of $f$ to the fundamental group of each subsurface is injective. 
One then uses a standard argument based on Baumslag's lemma \cite{Baumslag_residually} to conclude.

\paragraph{Acknowledgments} We thank Pablo Cubides, Fran\c{c}ois Dahmani, Olga Kharlampovich, Chlo\'e Perin, Zlil Sela, Karen Vogtmann for useful conversations.

\section{Preliminaries}

All groups $G$ considered in this paper will be finitely generated and torsion-free.    From Subsection \ref{stt} on,  $G$ will be assumed to be CSA. In Section \ref{elemequiv} we will start discussing model theory, and $G$ will be hyperbolic.

\subsection{Elementary equivalence and   elementarily embedded subgroups}

 We refer to \cite{TentZiegler, Chang_Keisler_model} for basic notions of model theory.
Two groups $G,G'$ are \emph{elementarily equivalent}, denoted $G\equiv G'$, if they satisfy the same first-order  sentences in the language of groups without constants:
such a sentence (in prenex form) consists  of a string
of quantifiers and variables followed by a boolean combination of word equations and inequations in these variables and their inverses.

For finite groups, and for finitely generated abelian groups (but not for finitely generated nilpotent groups), elementary equivalence is the same as isomorphism. 
On the other hand, all non-abelian free groups are elementarily equivalent (``Tarski's problem'').

A subgroup $H\inc G$  is   \emph{elementarily embedded}, denoted $H\preceq_e G$, if, for 
any first-order sentence $\phi$ in the language of groups with constants in $H$, one has $H\models\phi$ if and only if $G\models \phi$.
Equivalently, this means that, for
every first-order formula $\phi(x_1, \ldots, x_n)$ (in the language of groups, without constants) with free variables $x_1,\dots,x_n$, 
and every $n$-uple   $\bar{a}=(a_1,\dots,a_n)$ 
from $H$, we have $H\models \phi(\bar{a})$ if and only if $G\models\phi(\bar{a})$. 
 In model theory, one usually says that $H$ is an elementary subgroup of $G$, but we will avoid this terminology as it may be confusing
in the context of hyperbolic groups.

Intuitively, a subgroup of 
a group is elementarily embedded if the first-order properties of its tuples do not change if we see them 
as tuples of the ambient group   rather than  of the subgroup. Note that, if $H$ is an elementarily embedded subgroup of $G$, then $G$ and $H$ are 
elementarily equivalent, as sentences without constants are in particular sentences with constants in $H$.

\begin{example} 
\label{pasel0}
  The following are   examples of subgroups $H\inc G$ which are 
elementarily equivalent to $G$ but are not elementarily embedded in $G$. 

\begin{itemize}
\item The subgroup of even integers $2\Z$ of the group of integers $\Z$ is not elementarily embedded: the element $2$ 
has a root in $\Z$, but not in $2\Z$.
\item  
Let $\pi_1(S_g)=\langle e_1, \ldots, e_{2g} \mid [e_1,e_2]\cdots [e_{2g-1}, e_{2g}]=1
\rangle$ be
 the fundamental 
group of the closed orientable surface of genus $g$,   with $g\ge2$.  Let $F=\grp{e_1,e_2,e_3,e_4}$. 
 It is a free group  of rank $4$ if $g\ge3$.
 If $g=3$, then  $F$ 
  is not elementarily embedded   because $[e_1,e_2][e_3,e_4]$ is   a  single commutator $[e_6,e_5]$     in $\pi_1(S_g)$ but not in $F$. On the other hand, $F$ is elementarily embedded if $g\ge4$ (see  \cite{Perin_elementary}).
\end{itemize}
\end{example}

The following facts  are easy consequences of 
the definition.

\begin{rem}\label{tm}
If $H\preceq_eG$ and $K$ is conjugate to $H$, then $K\preceq_eG$.

  If $K\preceq_eH$ and $H\preceq_eG$, then $K\preceq_eG$.  

  If $K\preceq_eG$ and $H\preceq_eG$ with $K\subset H$, then $K\preceq_eH$.  
\end{rem}

\subsection{Surfaces}\label{sec_surfaces}

  We denote by $S_{g,b}$ and $N_{g,b}$  respectively the orientable and non-orientable compact surfaces of genus $g$ with $b$ boundary components (we just write $S_g$ or $N_g$ when
there is no boundary).

We will denote by $Q=\pi_1(\Sigma)$ the fundamental group of a compact (possibly non-orientable) surface $\Sigma$. Subgroups of $\pi_1(\Sigma)$ contained up to conjugacy in  the fundamental group of a boundary component of $\Sigma$ will be called \emph{boundary subgroups}.     The maximal boundary subgroups are the fundamental groups of boundary components.

Recall that the Euler characteristic  $\chi(\Sigma) $ equals $2-2g-b$ if $\Sigma$ is orientable, $2-g-b$ if it is not orientable (with $g$ the genus and $b$ the number of boundary components).

We always assume that $Q$ is   not virtually abelian (equivalently, 
$\chi(\Sigma) $ is negative). This rules out the non-hyperbolic surfaces:   disc,   annulus,  M\"obius band,   sphere,    projective plane, torus, Klein bottle. 

\begin{dfn}[Exceptional surfaces] \label{except}
Four  hyperbolic surfaces with $\chi(\Sigma)=-1$ are considered \emph{exceptional} because their mapping class group is ``too small'' (they do not carry pseudo-Anosov diffeomorphisms):  the pair of pants (thrice-punctured sphere) 
  $S_{0,3}$, the twice-punctured projective plane $N_{1,2}$, the once-punctured Klein bottle $N_{2,1}$,  and   the closed non-orientable surface $N_3$ of genus 3 (the once-punctured torus  $S_{1,1}$ is   contained in $N_3$ but is  not exceptional).
\end{dfn}

\begin{figure}[ht!]
  \centering
  \includegraphics{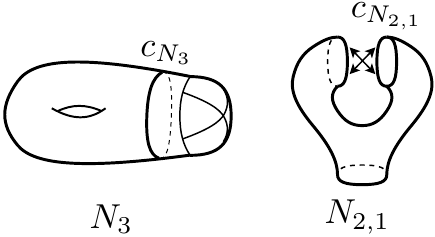} 
  \caption{The   exceptional surfaces with infinite mapping class group, and the corresponding special curves}
  \label{fig_except}
\end{figure}

Two  
exceptional surfaces $\Sigma$ have   infinite mapping class group: 
the once-punctured Klein bottle $N_{2,1}$ and $N_3$, the closed non-orientable surface of genus 3.
Both of them have a \emph{special simple closed curve} $c_\Sigma$ whose isotopy class is invariant under the mapping class group, see Figure \ref{fig_except}.

The punctured Klein bottle $N_{2,1}$ has a unique isotopy class $c_{N_{2,1}}$ of non-separating two-sided simple closed curves whose complement is orientable (hence a pair of pants) \cite[Prop.~A.3]{Stukow_dehn}.

 In the case of $N_3$, the special curve $c_{N_3}$ is the unique (up to isotopy) one-sided simple closed curve    whose complement is orientable \cite[Prop.\ 2.1]{GoMa_homeotopy}. Cutting $N_3$ along  the boundary of a regular neighbourhood of $c_{N_3}$ decomposes $N_3$ into
a once-punctured torus and a M\"obius band, and represents $\pi_1(N_3)$ as an amalgam over 
$\grp{g_{N_3}^2}$ 
(we denote by $g_{N_3}$ a generator of $\pi_1(c_{N_3})$, it is 
well defined up to inversion and conjugacy).

\begin{dfn} [Pinched curve, pinching map] \label{pinc}
Let $p$ be a homomorphism from  $\pi_1(\Sigma)$ to a group. 

 A \emph{pinched curve} is a 2-sided simple closed curve which  is   not null-homotopic  and 
whose fundamental group is   contained in $\ker p$. A   family of pinched curves is a family of disjoint, pairwise non-parallel, pinched curves.

The map $p$ is \emph{pinching} if there is a pinched curve, \emph{non-pinching} if there is none.
\end{dfn}

\subsection{Splittings and trees}
\label{sandt}

Graphs of groups are a generalization of free products with amalgamation and HNN-extensions. 
Recall that a graph of groups   consists of a graph $\Gamma$, the assignment of a group $G_v$ or $G_e$ to each vertex $v$ or non-oriented edge $e$, 
and injections $G_e\to G_v$ whenever $v$ is the origin of an oriented edge (see \cite {Serre_arbres} for precise definitions); the image of $G_e$ is called an incident edge group at $v$. 
This data yields a group, the fundamental group of the graph of groups, with an action   on a simplicial tree $T$ (the Bass-Serre tree).  We usually call the graph of groups $\Gamma$, with the extra data implicit. 

A graph of groups decomposition, or splitting, of a group $G$ is an isomorphism of $G$ with the fundamental group of a graph of groups $\Gamma$. 
The vertex and edge groups $G_v$ and $G_e$ are then naturally viewed as subgroups of $G$. We always assume that $G$ is finitely generated 
and that the graph of groups is minimal (no proper connected subgraph of $\Gamma$ carries the whole of $G$); it follows that $\Gamma$ is a finite graph. 
We allow the trivial splitting, when $\Gamma$ is a single vertex $v$ with $G_v=G$.

A splitting of $G$ yields an action of $G$ on the Bass-Serre tree $T$.   Conversely, an action of $G$ on a tree $T$ defines a quotient graph of groups $\Gamma=T/G$.

We denote by $G_e$ (resp.\ $G_v$) the stabilizer of an edge $e$ of $T$ (resp.\ a vertex $v$); it is conjugate to an edge (resp.\ vertex) group of $\Gamma$.
  We say that  a vertex $v$ of   $\Gamma$  or   $T$   is \emph{cyclic} if $G_v$ is cyclic. 

All splittings considered in this paper will be cyclic splittings: edge groups are cyclic (possibly trivial).  We allow redundant vertices: $\Gamma$ may have a   vertex of valence 2 with both incident edge groups equal to the vertex group;  this   corresponds to vertices of valence 2 in $T$.

A subgroup of $G$ is \emph{elliptic} if it fixes a point in $T$ (equivalently, it is contained in a conjugate of a vertex group of $\Gamma$). The splitting $\Gamma$ and the tree $T$ are \emph{relative to subgroups} $H_1,\dots,H_p$ if each $H_i$ is elliptic. 

If $T,T'$ are trees with an action of $G$, we say that $T$ \emph{dominates} $T'$ (or that $\Gamma=T/G$ dominates $\Gamma'=T'/G$)  if there is a 
$G$-equivariant map from $T$ to $T'$, equivalently if every subgroup which is elliptic in $T$ is also elliptic in $T'$. Two trees (or two splittings) belong to the same \emph{deformation space} if each one dominates the other (equivalently, if they have the same elliptic subgroups).

In particular, $T$ dominates all trees $T'$ obtained by collapsing each edge of $T$ belonging to a given   $G$-invariant set to a point. Such a $T'$ is called a \emph{collapse} of $T$; one passes from $\Gamma=T/G$ to $\Gamma'=T'/G$ by collapsing certain edges and redefining the vertex groups. Conversely, we say that $T$ is a refinement of $T'$. One obtains $\Gamma$ from $\Gamma'$ by blowing up certain vertices $v$ of $\Gamma'$ using a splitting of $G_v$   relative to the incident edge groups.

The (closed) \emph{star} of a vertex $v$ in a graph or a tree is the union of all edges containing $v$. The open star of $v$ is obtained by removing  all vertices $w\ne v$ from the closed star. 

Given an action of  $G$   on  a tree $T$, we say that $T$ is \emph{1-acylindrical near a vertex} $v $ if  $G_e\cap G_{e'}=\{1\}$ whenever $e,e'$ are distinct edges containing $v$.   It is \emph{2-acylindrical} if all segments of length 3 have trivial stabilizer.

\subsection{Grushko decompositions.}\label{gd}

A    finitely generated group $G$ is   either trivial, or  one-ended, or isomorphic to $\Z$, or it has infinitely many ends (equivalently, it is a non-trivial free product); recall that all groups are assumed to be torsion-free.

Any finitely generated group $G$ has a \emph{Grushko decomposition}: it is isomorphic to   a free product $A_1*\cdots*A_n*\mathbb{F}_m$, where each $A_i$ 
is   one-ended 
and $\mathbb{F}_m$ is a free group of rank $m$. Moreover, the numbers $n$ and $m$ are unique, as well as the $A_i$'s up to conjugacy (in $G$) and permutation. 
 We call the $A_i$'s the \emph{Grushko factors} of $G$. 
 
 Decompositions as free products correspond to graph of groups decompositions with trivial edge groups (splittings over the trivial group). If $G$ has
 infinitely many ends, it has 
a 
Grushko decomposition $G=A_1*\cdots*A_n*\mathbb{F}_m$ as above. It may be viewed as the fundamental group 
of a graph of groups with one central vertex $v$ carrying a trivial group, joined to vertices $v_1,\dots, v_n$ carrying $A_1,\dots, A_ n$ by a single edge, with $m$ loops attached to $v$.

If $\calh=\{H_1,\dots,H_p\}$ is a family of non-trivial subgroups of $G$,  we say that $G$ is 
one-ended relative to $\calh$ if it cannot be written as a non-trivial free product with each $H_i$ contained in a  conjugate of one of the factors. 
If $G$ is not one-ended relative to $\calh$, it has a non-trivial
Grushko decomposition relative to $\calh$; it is of the form $G=A_1*\cdots*A_n*\mathbb{F}_m$, with   each $H_i$ contained in some $A_j$ (up to conjugacy), and 
$A_j$ 
one-ended relative to the conjugates of the $H_i$'s which it contains   ($A_j$   free  
  is allowed only 
  if it   contains a conjugate of an $H_i$).
 
\subsection{Surface-type vertices}

Let $\Gamma$ be a splitting of $G$, with Bass-Serre tree $T$.

\begin{dfn} [Surface-type vertex]\label{stype}
A vertex $v$ of $\Gamma$  
is called a \emph{surface-type vertex} if the following conditions hold:
\begin{itemize}
 \item the group $G_v$ carried by $v$ is the fundamental group of a compact   surface $\Sigma$ 	  (usually with boundary, possibly non-orientable),   with   $\chi(\Sigma)<0$;
 
  \item incident edge groups are maximal  boundary subgroups of $\pi_1(\Sigma)$, and this induces a bijection
 between    the set of incident edges and the set of boundary components of $\Sigma$.
\end{itemize}

The vertex $v$ is \emph{exceptional} if $\Sigma$ is exceptional    (see Definition \ref{except}). 

We say that $G_v=\pi_1(\Sigma)$, which we often call $Q$, is a surface-type vertex group, and that $\Sigma$ is a surface of $\Gamma$. If $u$ is any lift of $v$ to 
  $T$, we say that  $u$  
is a surface-type vertex, and its stabilizer $G_u$ (which is conjugate to $Q$) is a surface-type vertex stabilizer.
 \end{dfn}
 
 If $G$ is the fundamental group of a closed hyperbolic surface, it is a surface-type vertex group (with $\Gamma$ the trivial splitting).
 
  Surface-type vertices are QH (quadratically hanging) vertices,  see
  \cite{GL_JSJ}.    
The tree $T$ is 1-acylindrical near any surface-type vertex $u$: if $e$ and $e'$ are distinct edges containing $u$, then $G_e\cap G_{e'}=\{1\}$.

 We will need two standard lemmas about surface groups.

\begin{lem}[compare  Lemma 1.4 of \cite{Sela_diophantine4} and Lemma 3.11 of \cite{Perin_elementary}] \label{Scott}
Let $Q=\pi_1(\Sigma)$ be a surface-type vertex group of a splitting of $G$. Let $J\inc G$ be a one-ended finitely generated group. If $J\cap Q$ is non-trivial and has infinite index in $Q$, then $J\cap Q$ is 
a boundary subgroup of $Q=\pi_1(\Sigma)$, in particular $J\cap Q\simeq\Z$.
\end{lem}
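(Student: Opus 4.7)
I would analyze the action of $J$ on the Bass--Serre tree $T$ of the given splitting, where $Q=G_v$ for a vertex $v$ of $T$. The argument exploits two key features of the surface-type vertex $v$: incident edge stabilizers are maximal boundary subgroups of $Q=\pi_1(\Sigma)$ (in particular infinite cyclic), and $T$ is $1$-acylindrical at $v$.

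A case analysis on how $J$ acts on $T$ dispatches the situation in all but one case. If $J$ fixes a vertex $w\neq v$, then every element of $J\cap Q$ fixes both $v$ and $w$, hence the geodesic $[v,w]$, and in particular its initial edge $e$ at $v$; so $J\cap Q \subseteq G_e$ is contained in a maximal boundary subgroup. If $J$ fixes $v$ only, then $J=J\cap Q\subseteq Q$ is a finitely generated, one-ended subgroup of $\pi_1(\Sigma)$ of infinite index; by Scott's compact core theorem, $J$ must be a free group, but a non-trivial free group has more than one end, so this case does not occur. If $J$ has no global fixed vertex, let $T_J$ be its (unique) minimal invariant subtree. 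When $v\notin T_J$, any $j\in J\cap Q$ preserves $T_J$ and fixes $v$, hence preserves distances to $v$, so $j$ fixes the unique closest point $v'$ of $T_J$ to $v$ and therefore fixes the initial edge of $[v,v']$ at $v$; again $J\cap Q$ lies in a boundary subgroup.

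The main case is $v\in T_J$. Here $J\cap Q$ is the vertex group at $v$ in the graph of groups $T_J/J$ of $J$; since $J$ is one-ended (hence has no non-trivial free splitting), the action on $T_J$ has all edge stabilizers non-trivial. Arguing by contradiction, assume $J\cap Q$ is not a boundary subgroup of $Q$. A Bass--Serre/cocompactness argument (using $J$ finitely generated, cocompact action on $T_J$, and cyclic incident edge groups at $v$) first shows $J\cap Q$ is finitely generated. Applying Scott's theorem, $J\cap Q$ is identified with the fundamental group of a compact subsurface $\Sigma_0$ of a cover of $\Sigma$, with peripheral structure (coming from the incident edges at $v$ in $T_J$) consisting of those boundary components of $\Sigma_0$ that project to $\partial\Sigma$. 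The hypotheses that $J\cap Q$ has infinite index in $Q$ and is not a boundary subgroup force the topology of $\Sigma_0$ to admit a non-trivial \emph{free} decomposition relative to this peripheral structure. Refining the vertex $v$ of $T_J$ along this decomposition produces a non-trivial free splitting of $J$, contradicting one-endedness.

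The main obstacle is the last step of the main case: extracting from the topology of $\Sigma_0$ (an internal boundary circle, an essential non-peripheral arc, etc.) a genuine \emph{free} splitting of $J\cap Q$ compatible with its peripheral structure, rather than merely a cyclic one, as a cyclic splitting would not by itself contradict one-endedness. This requires a careful case analysis according to the topological type of $\Sigma_0$ (e.g.\ surface with or without boundary, genus zero versus positive, peripheral versus non-peripheral boundary components of $\Sigma_0$) together with accurate bookkeeping of how the incident edges at $v$ in $T_J$ sit relative to the peripheral structure of $\Sigma_0$.
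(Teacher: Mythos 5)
Your overall strategy (Scott's theorem, then extract a free splitting of $J$ from the topology of $\Sigma_A$ to contradict one-endedness) is the same as the paper's, and your preliminary case analysis on the action of $J$ on $T$ is a legitimate unpacking of a point the paper compresses into the phrase ``$J$, being one-ended, must have non-trivial intersection with some boundary subgroup of $Q$.'' But you stop exactly at the step the paper's short argument actually pins down, and the doubt you voice there is where the gap lies.

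The paper resolves your ``main obstacle'' with one observation you did not make: because $J\cap Q$ has infinite index in $Q$ and is \emph{not} a boundary subgroup, the incompressible subsurface $\Sigma_A\subset\Sigma_0$ must possess a boundary component $C$ which is \emph{not} parallel to a component of $\partial\Sigma_0$ (if every boundary component of $\Sigma_A$ were parallel to $\partial\Sigma_0$, then $\Sigma_A$ would either be isotopic to $\Sigma_0$, violating infinite index, or be a collar annulus, making $J\cap Q$ a boundary subgroup). Meanwhile the one-endedness of $J$ forces $\Sigma_A$ to also contain a component of $\partial\Sigma_0$, so $C$ is a ``free'' boundary curve disjoint from the boundary components carrying the peripheral structure (the edge groups of $T_J$ at $v$). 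Now take an essential properly embedded arc in $\Sigma_A$ with both endpoints on $C$. Your hesitation about obtaining only a cyclic splitting is misplaced: cutting a compact surface with boundary along a properly embedded \emph{arc} always yields a free product decomposition of its fundamental group via Van Kampen (cyclic splittings come from cutting along simple \emph{closed} curves, not arcs). Because the arc touches only $C$, every other boundary component of $\Sigma_A$ --- in particular every boundary curve carrying an incident edge group of $T_J$ --- lies entirely in one piece, so this free splitting of $J\cap Q$ is relative to all those edge groups. Refining $T_J$ at the orbit of $v$ by this splitting and collapsing the edges with non-trivial stabilizer produces a non-trivial free splitting of $J$, the desired contradiction.

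So the missing idea is concrete: a free splitting comes for free from an arc, and the existence of a usable arc is guaranteed precisely by the two boundary types of $\Sigma_A$ ($C$ non-peripheral in $\Sigma_0$, forced by ``infinite index and not a boundary subgroup''; some $b_0\subset\partial\Sigma_0$, forced by one-endedness). With that in hand, the ``careful case analysis according to the topological type of $\Sigma_0$'' you anticipate is unnecessary.
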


\begin{proof}[Sketch of proof] 
Assume that $J\cap Q$ has infinite index in $Q$, is non-trivial, not a 
boundary subgroup. 
By Scott's Theorem \cite{Scott_almost}, there is a finite covering $\Sigma_0$ of $\Sigma$ such that $J\cap Q$ is the fundamental group of a proper subsurface $\Sigma_A\inc \Sigma_0$. This subsurface contains a boundary component of $\Sigma_0$ because $J$, being one-ended, must have non-trivial intersection with some boundary subgroup of $Q$. It also has a boundary component $C$ which is not parallel to a component of $\partial \Sigma_0$. An essential arc going from $C$ to $C$ within $\Sigma_A$ yields a free splitting of $J$, contradicting one-endedness.
\end{proof}

\begin{lem}\label{lem_pince} 
Let $\Sigma$ be a compact surface. Let $p:\pi_1(\Sigma)\to G_1*G_2$ be a homomorphism to a non-trivial free product such that the image of every boundary subgroup of $\pi_1(\Sigma)$ is contained in a conjugate of $G_1$ or $G_2$, with at least one image contained in a conjugate of $G_1$. If $p$ is non-pinching (Definition \ref{pinc}), its image is contained in a conjugate of $G_1$.
\end{lem}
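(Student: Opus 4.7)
The plan is to translate the hypothesis into an action on a tree and then use a standard surface-theoretic resolution to produce pinched curves, contradicting the non-pinching assumption unless the image is already elliptic.

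Let $T$ be the Bass-Serre tree of the free product decomposition $G_1*G_2$. Its edge stabilizers are trivial, and its vertex stabilizers are the conjugates of $G_1$ and $G_2$. Composing the action of $G_1*G_2$ on $T$ with $p$, the group $\pi_1(\Sigma)$ acts on $T$. By hypothesis, every boundary subgroup of $\pi_1(\Sigma)$ is elliptic, and we fix a distinguished boundary component $C_0$ together with a vertex $v_1\in T$ stabilized by a conjugate of $G_1$ such that $p(\pi_1(C_0))\subset\Stab(v_1)$.

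Next I would build a $\pi_1(\Sigma)$-equivariant continuous map $f:\tilde\Sigma\to T$ from the universal cover of $\Sigma$ to $T$, constructed so that each boundary lift is sent to a vertex of $T$; in particular the chosen lift $\tilde C_0$ is sent to $v_1$ (this can be done because every boundary subgroup is elliptic, and the choice of target vertex is unique whenever the boundary image is non-trivial). After a small equivariant perturbation making $f$ transverse to the midpoints of edges, the preimage $\Gamma\subset\tilde\Sigma$ of the set of midpoints is a properly embedded 2-sided 1-submanifold in the interior of $\tilde\Sigma$; since the boundary was sent into vertices, $\Gamma$ contains no arcs. Its image $\bar\Gamma$ in $\Sigma$ is a finite union of disjoint 2-sided simple closed curves. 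By a standard equivariant modification of $f$ (eliminating innermost null-homotopic components and collapsing parallel annuli), we may assume every component of $\bar\Gamma$ is essential and that no two are parallel.

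For each component $c$ of $\bar\Gamma$, its fundamental group (a lift in $\tilde\Sigma$) is mapped by $p$ into the stabilizer of an edge of $T$, which is trivial; thus every component of $\bar\Gamma$ is a pinched curve of $p$. Since $p$ is non-pinching, $\bar\Gamma$ must be empty, so $f(\tilde\Sigma)$ is contained in a single connected component of $T$ minus the set of midpoints, namely the open star of a single vertex $v$. Because $f(\tilde C_0)=v_1$, that unique vertex must be $v=v_1$. Finally, by $\pi_1(\Sigma)$-equivariance, $p(\pi_1(\Sigma))$ preserves the open star of $v_1$, hence fixes $v_1$, so $p(\pi_1(\Sigma))\subset\Stab(v_1)$, which is a conjugate of $G_1$, as required.

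The main technical obstacle is the equivariant surface-theoretic construction of $f$ together with the minimization step that ensures each component of $\bar\Gamma$ is essential and 2-sided; the rest of the argument is a direct bookkeeping consequence of the triviality of edge stabilizers in a free product.
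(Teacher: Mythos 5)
Your proof is correct and takes essentially the same approach as the paper's own (one-sentence) sketch: construct a $p$-equivariant map to the Bass-Serre tree of $G_1*G_2$, sending boundary lifts to vertices, and observe that the preimage of edge-midpoints projects to pinched curves, which forces the image to be elliptic (and fixing the vertex $v_1$). Your write-up merely spells out the standard equivariant-transversality and curve-minimization steps that the paper leaves implicit.
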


\begin{proof}[Sketch of proof]
If the conclusion is false, consider a $p$-equivariant map $f$ from the universal covering $\tilde \Sigma$ of $\Sigma$ to the Bass-Serre tree of the amalgam $G_1*G_2$,
 with $f$ mapping each boundary component of $\tilde \Sigma$ to a vertex of the tree. The projection to $\Sigma$ of the preimage of   midpoints of edges contains a pinched curve.
\end{proof}

 \subsection{The cyclic JSJ decomposition of   hyperbolic groups and CSA groups }
 \label{JSJ} \label{sec_modified}

Recall that  a finitely generated group $G$ is hyperbolic if its  Cayley graph (with respect to some, hence any, finite generating set) 
is a hyperbolic metric space: there exists $\delta>0$ such that any point on one side of any geodesic triangle is $\delta$-close to one of the other two sides (see \cite{BH_metric,GhHa}).  
Small cancellation groups and fundamental groups of negatively curved closed manifolds are hyperbolic.
In particular
 free groups, fundamental groups of closed surfaces with negative Euler characteristic, and free products of such groups, are hyperbolic. 
 
 Any torsion-free hyperbolic group    is Hopfian \cite{Sela_Hopf}: it cannot be isomorphic to a proper quotient. 
Any one-ended torsion-free hyperbolic group   is co-Hopfian \cite{Sela_structure}: it cannot be isomorphic to a proper subgroup.

A one-ended torsion-free hyperbolic group $G$ 
does not split over a trivial group, but it may have  splittings with edge groups isomorphic to  $\Z$
and it
has a canonical JSJ decomposition $\Gcan$ 
over cyclic groups \cite{Sela_structure, Bo_cut, GL_JSJ}. 
We mention the features  that will be important for this paper. Similar considerations apply to relative JSJ decompositions,   see \cite{GL_JSJ}.

  A subgroup $H\inc G$ is \emph{universally elliptic} if it is elliptic in every cyclic splitting of $G$.  A tree is universally elliptic if its edge stabilizers are. A JSJ tree is a universally elliptic tree which dominates every universally elliptic tree. In general, only the deformation space of JSJ trees is well-defined, but in the case of one-ended hyperbolic groups there is a canonical JSJ tree $\Tcan$ (see \cite{GL_JSJ}); it is the tree of cylinders of any JSJ tree. It may also be constructed from the topology of the Gromov boundary $\bo G$ of $G$ \cite{Bo_cut}.  It is invariant under automorphisms of $G$.

All edge groups of the graph of groups $\Gcan=\Tcan/G$ are infinite cyclic.  
The graph $\Gcan$ is bipartite, with every edge joining a cyclic vertex (i.e.\ a vertex carrying a cyclic group) to a non-cyclic vertex. 
Non-cyclic vertex groups are either rigid (universally  elliptic)
or surface-type (in the sense of Definition \ref{stype}).
The action of $G$ on the associated Bass-Serre tree $\Tcan$ is 
1-acylindrical near every non-cyclic vertex.

The tree   $\Tcan$
 is universally   compatible: if $T$ is any tree with cyclic stabilizers on which $G$ acts, $\Tcan$ and $T$ have a common refinement $\hat T$ (their lcm, see \cite{GL_JSJ} Section A.5): one may obtain each of the trees $\Tcan$ and $T$ from $\hat T$ by collapsing each edge in some $G$-invariant set to a point.  

More generally, recall that a group is CSA if maximal abelian subgroups are malnormal. 
  Equivalently, a group is CSA if it is commutative transitive and,
whenever $[tat\m ,a]=1$ for some $a\neq 1$, then $t$ commutes with $a$. In particular, torsion-free hyperbolic groups are   CSA.

A 
one-ended torsion-free CSA group has a canonical cyclic JSJ decomposition relative to non-cyclic abelian subgroups  (see Theorem 9.5 of \cite{GL_JSJ}).  

  It has properties similar to the hyperbolic case, but since we consider the cyclic decomposition rather than the abelian one it is not bipartite.
  Each edge  joins an abelian vertex (i.e.\ a vertex carrying an abelian group)  to   a non-abelian vertex, or two   non-abelian vertices. Non-abelian vertex groups are rigid or surface-type,  and no edge joins two surface-type vertices. 
  The Bass-Serre tree is 2-acylindrical.

\begin{figure}[ht!]
  \centering
  \includegraphics{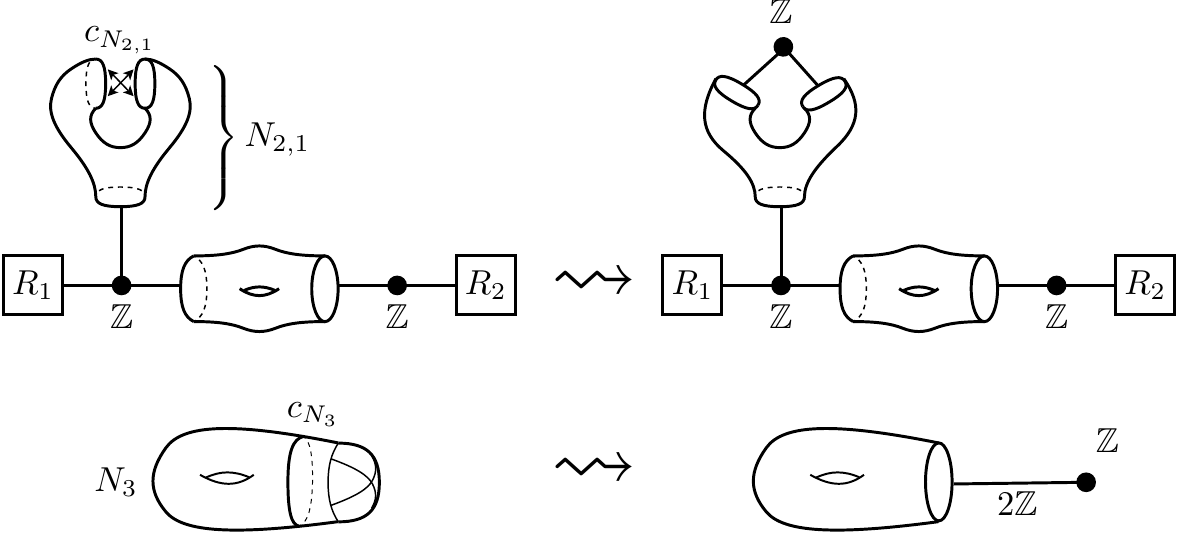}
  \caption{The modified JSJ decomposition $\tGcan$  (with $R_1,R_2$ rigid vertex groups)}\label{fig_JSJ_modifie} 
\end{figure}
 
\paragraph{The modified JSJ decomposition}
 In Section \ref{interp} we will need to modify $\Gcan$ so that all exceptional surfaces
 appearing at surface-type vertices  have finite mapping class group,
 while preserving the 
invariance under automorphisms (see the proof of Lemma \ref{shortening}). 

Recall that the exceptional surfaces with infinite mapping class group are $N_{2,1}$ (the once-punctured Klein bottle) and the closed surface $N_3$ 
  (see Subsection \ref{sec_surfaces}). As explained in \cite[Remark 9.32]{GL_JSJ}, if   $\Gcan$ has vertices which are surface-type with associated surface $N_{2,1}$, one may cut the surfaces along their special curve  $c_{N_{2,1}}$ 
  and thus obtain a splitting $\tGcan$  refining $\Gcan$ (see Figure \ref{fig_JSJ_modifie}).
Since the homotopy class of $c_{N_{2,1}}$ is invariant under the mapping class group of $N_{2,1}$, the splitting $\tGcan$ is still invariant under
the group of automorphisms of $G$.  Similarly, if $G\simeq\pi_1(N_3)$,  we replace $\Gcan$ (which is the trivial splitting) by the splitting dual to the decomposition of $N_3$ into a once-punctured torus and a M\"obius band.

\subsection{Two lemmas about splittings}
The following lemma is folklore (see the proof of  Proposition 4.7 in \cite{GLS_finite_index}).

\begin{lem} 
\label{extid}
Let $A\inc G$, and let $G_v$ be a vertex group of a splitting $\Gamma$ of $A$. Let $p:G_v\to G$ be a homomorphism such that the  restriction of $p$ to any incident edge  group  $G_e\inc G_v$ agrees with conjugation by some element of $G$ (depending on $e$). Then $p$ may be extended ``by the identity'' to $\hat p: A \to G$. The extension $\hat p$ agrees with a conjugation on each vertex group of $\Gamma$ other than $G_v$. \qed
\end{lem}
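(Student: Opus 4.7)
The plan is to write $\hat p$ on a presentation of $A$ given by the standard graph-of-groups machinery: pick a spanning tree $T_0 \subset \Gamma$ and present $A = \pi_1(\Gamma,T_0)$ with generators $\bigcup_w G_w$ together with a stable letter $t_e$ for each edge $e\notin T_0$, and relations given by the vertex groups, by the amalgamation relations $\alpha_e(\gamma)=\beta_e(\gamma)$ for $e\in T_0$, and by the HNN relations $t_e\alpha_e(\gamma)t_e^{-1}=\beta_e(\gamma)$ for $e\notin T_0$ (where $\alpha_e,\beta_e$ are the incident edge injections into $G_{o(e)},G_{t(e)}$). By hypothesis, for every edge $e$ of $\Gamma$ incident to $v$ there is an element $g_e\in G$ such that $p$ agrees with $\ad_{g_e}$ on the corresponding copy of $G_e$ inside $G_v$ (if $e$ is a loop at $v$, one picks such an element for each of the two injections).

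Root $T_0$ at $v$. For every vertex $w\ne v$ let $e(w)$ be the first edge of the unique $T_0$-path from $v$ to $w$; this edge is incident to $v$. Set $h_w := g_{e(w)}$ and define $\hat p$ on generators by
$\hat p|_{G_v} := p,\quad \hat p|_{G_w} := \ad_{h_w}\ \text{for }w\ne v,\quad \hat p(t_e) := h^\sharp_{t(e)}\, t_e\, (h^\sharp_{o(e)})^{-1}\ \text{for }e\notin T_0,$
where $h^\sharp_w := h_w$ if $w\ne v$ and $h^\sharp_v := g_e$ if $v$ is an endpoint of the edge $e$ in question (with the appropriate choice when $e$ is a loop at $v$). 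The point of this $h^\sharp$ convention is that the formula $\hat p(\alpha_e(\gamma)) = h^\sharp_{o(e)}\,\alpha_e(\gamma)\,(h^\sharp_{o(e)})^{-1}$ then holds uniformly in all cases, and similarly at $t(e)$.

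It then remains to check that $\hat p$ respects the relations. Vertex relations hold since each $\hat p|_{G_w}$ is a homomorphism. For an amalgamation relation $\alpha_e(\gamma)=\beta_e(\gamma)$ with $e\in T_0$, applying $\hat p$ yields $h^\sharp_{o(e)}\,x\,(h^\sharp_{o(e)})^{-1} = h^\sharp_{t(e)}\,x\,(h^\sharp_{t(e)})^{-1}$ with $x := \alpha_e(\gamma)=\beta_e(\gamma) \in A\subset G$; the recursive definition of $h_w$, which propagates the same conjugator along $T_0$-paths starting at $v$, guarantees $h^\sharp_{o(e)} = h^\sharp_{t(e)}$ (if neither endpoint is $v$ they share the same initial edge $e(w)$; if one endpoint is $v$ and the other is $w$, then $e(w)=e$ so $h_w = g_e = h^\sharp_v$). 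For an HNN relation $t_e\alpha_e(\gamma)t_e^{-1}=\beta_e(\gamma)$ with $e\notin T_0$, substituting the chosen value of $\hat p(t_e)$ produces a telescoping cancellation that collapses directly to $h^\sharp_{t(e)}\,\beta_e(\gamma)\,(h^\sharp_{t(e)})^{-1} = \hat p(\beta_e(\gamma))$, once we use $t_e\alpha_e(\gamma)t_e^{-1} = \beta_e(\gamma)$ in $A$.

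The only real obstacle is the bookkeeping across the several configurations (edge in $T_0$ or not, incident to $v$ or not, possibly a loop at $v$), but each verification reduces to a one-line computation in $G$. By construction, $\hat p$ restricts to a conjugation on every vertex group $G_w$ with $w\ne v$, which is the second assertion of the lemma.
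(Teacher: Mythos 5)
The paper itself gives no proof of this lemma; it is cited as folklore, with a pointer to Proposition 4.7 of \cite{GLS_finite_index}. Your proof is correct and is exactly the standard argument one would expect: present $A=\pi_1(\Gamma,T_0)$, define $\hat p$ on the generating vertex groups and stable letters, and verify the amalgamation and HNN relations by direct computation. The key structural observation is that, with $T_0$ rooted at $v$, any two vertices $\ne v$ joined by a tree edge lie in the same ``branch'' and hence are assigned the same conjugator $h_w=g_{e(w)}$, while a tree edge $e$ from $v$ to $w$ forces $h_w=g_e$; this is exactly what makes the amalgamation relations go through, and your $h^\sharp$ device then makes the HNN check telescope cleanly (including the loop-at-$v$ case with two conjugators). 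One small slip in wording: your definition $h_w:=g_{e(w)}$ is not actually recursive — it is a direct assignment — though its effect is the same as propagating a conjugator along branches of $T_0$, and the verification you run does not depend on this phrasing.
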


 The next  lemma will be used in Subsection \ref{prtor}.
It
generalizes the injectivity part of Proposition 6.1 of \cite{Perin_elementary}. The argument is the same as in \cite{Perin_elementary}, we give it for completeness.

\begin{lem}  
  \label{sixunm}
  Let $A$ and $G$ be groups acting on trees    $T_A$, $T_G$  with all edge stabilizers  isomorphic to $\Z$.
  Assume that these trees are bipartite, with vertices of type 0 or 1, and 1-acylindricity holds near vertices of type 1. 
  
  Let $f:A\to G$ be a homomorphism sending any vertex stabilizer of type 0 of $T_A$ injectively into a vertex stabilizer of type 0 of $T_G$, and any vertex stabilizer of type 1   bijectively to a vertex stabilizer of type 1.
  
  If $f$ is not injective, there exist   two non-conjugate vertex stabilizers of type 1 of  $T_A$ with the same image. 
\end{lem}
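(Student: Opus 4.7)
The plan is to build an $f$-equivariant simplicial map $\phi\colon T_A\to T_G$ and analyze its restriction to the axis of a minimal-translation-length element of $\ker f$. I will define $\phi$ on vertices by sending each type-1 vertex $v$ of $T_A$ to a type-1 vertex of $T_G$ with stabilizer equal to $f(A_v)$ (which exists by the bijectivity hypothesis), and each type-0 vertex $v$ to a type-0 vertex whose stabilizer contains the non-trivial subgroup $f(A_v)$ (which exists by the injectivity-into-a-type-0-stabilizer hypothesis). A key observation, coming from 1-acylindricity near type-1 vertices of $T_G$, is that $\phi$ automatically sends edges to edges: for an edge $e=[u,v]$ of $T_A$ the non-trivial cyclic group $f(A_e)$ fixes the geodesic $\phi(e)$ pointwise, so if $\phi(e)$ had length $\geq 2$ it would contain an interior type-1 vertex of $T_G$ at which two consecutive edges both share the non-trivial stabilizer $f(A_e)$, contradicting 1-acylindricity.

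Suppose $f$ is not injective. Injectivity of $f$ on every vertex stabilizer forces every non-trivial element of $\ker f$ to act hyperbolically on $T_A$. Pick $c\in\ker f\setminus\{1\}$ of minimal translation length $\ell(c)=2k$, even because $A$ preserves the bipartition, with axis $L$ and successive vertices $(u_n)_{n\in\Z}$. Since $f(c)=1$, the simplicial map $\phi|_L$ factors through the circle $L/\langle c\rangle$ of $2k$ edges, and any simplicial map from a circle to a tree must identify some pair of adjacent edges. Hence there is some $j$ such that $\phi([u_{j-1},u_j])=\phi([u_j,u_{j+1}])$ as edges of $T_G$, and in particular $\phi(u_{j-1})=\phi(u_{j+1})$.

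I rule out backtracking at a type-1 vertex $u_j$ using 1-acylindricity at $u_j$ in $T_A$: the non-trivial cyclic edge stabilizers $A_{[u_{j-1},u_j]}$ and $A_{[u_j,u_{j+1}]}$ would both lie in the cyclic $f$-preimage inside $A_{u_j}$ of the stabilizer of the common image edge, so they would intersect non-trivially, contradicting $A_{[u_{j-1},u_j]}\cap A_{[u_j,u_{j+1}]}=\{1\}$. This also excludes $\ell(c)=2$, whose unique candidate backtracking vertex is of type 1. Thus $2k\geq 4$ and $u_j$ is of type 0, so $u_{j\pm 1}$ are type-1 neighbors with $f(A_{u_{j-1}})=G_{\phi(u_{j-1})}=G_{\phi(u_{j+1})}=f(A_{u_{j+1}})$.

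The main remaining obstacle is to verify that $u_{j-1}$ and $u_{j+1}$ represent distinct $A$-orbits. If some $a\in A$ satisfies $a\cdot u_{j-1}=u_{j+1}$, then by $f$-equivariance $f(a)\cdot\phi(u_{j-1})=\phi(u_{j+1})=\phi(u_{j-1})$, so $f(a)\in G_{\phi(u_{j-1})}=f(A_{u_{j+1}})$. Injectivity of $f|_{A_{u_{j+1}}}$ produces $a_0\in A_{u_{j+1}}$ with $f(a_0)=f(a)$, and then $a':=a_0^{-1}a\in\ker f$ still sends $u_{j-1}$ to $u_{j+1}$. The element $a'$ is thus a non-trivial hyperbolic element of $\ker f$ with $d(u_{j-1},a'\cdot u_{j-1})=2$, and the standard displacement formula $d(x,a'x)=\ell(a')+2\,d(x,L_{a'})$, combined with positivity and evenness of $\ell(a')$, forces $\ell(a')=2<2k$, contradicting the minimality of $c$. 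Hence $u_{j-1}$ and $u_{j+1}$ lie in distinct $A$-orbits, and their stabilizers furnish the required pair of non-conjugate type-1 vertex stabilizers of $T_A$ with the same image in $G$.
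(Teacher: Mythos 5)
Your argument is correct but takes a genuinely different route from the paper's in its key steps. Both proofs construct the same $f$-equivariant simplicial map $\phi$ (the paper's $\varphi$) and use identical 1-acylindricity reasoning to exclude a fold at a type-1 vertex. But you locate the fold by choosing a minimal-translation-length element $c\in\ker f$ and reading the fold off the quotient circle $L/\langle c\rangle$, whereas the paper simply observes that $\phi$ cannot be locally injective (a locally injective simplicial map between trees is injective, and $\phi$ collapses the axis of any hyperbolic kernel element) and takes an arbitrary fold. The more substantive divergence is in ruling out that $u_{j-1}$ and $u_{j+1}$ are $A$-conjugate: the paper picks non-trivial $a_1\in A_{[v,u_1]}$, $a_2\in A_{[v,u_2]}$, notes that $\langle aa_1a^{-1},a_2\rangle\subset A_{u_2}$ is non-cyclic by 1-acylindricity at $u_2$ yet is carried by the injective $f|_{A_{u_2}}$ into a cyclic edge stabilizer of $T_G$ --- an immediate contradiction; you instead build $a'\in\ker f$ with $a'u_{j-1}=u_{j+1}$ and use the displacement formula to force $\ell(a')=2<\ell(c)$, contradicting minimality. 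Both work; the paper's final step is more direct and needs no translation-length bookkeeping, while your minimal-element framing is a standard shortening-style device that buys you nothing extra here. Two small points to state explicitly to make your write-up airtight: for a type-0 vertex $v$, the non-trivial group $f(A_v)$ fixes a \emph{unique} type-0 vertex of $T_G$ (again by 1-acylindricity at type-1 vertices of $T_G$), which is what makes $\phi$ canonical and automatically $f$-equivariant on type-0 vertices; and when you produce $a_0$ you are invoking surjectivity of $f|_{A_{u_{j+1}}}$ onto $G_{\phi(u_{j+1})}$, i.e.\ the full bijectivity hypothesis, not merely injectivity.
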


\begin{proof} 
  By acylindricity, vertex groups of type 1 are not $\Z$ so fix a unique point. Vertex groups of type 0   fix a unique  vertex of type 0 (and possibly vertices of type 1). 
  We denote by $A_v$ the stabilizer of a vertex $v$ of $T_A$.

Given a vertex $v$  of $T_A$, let $\varphi(v)$ be the unique vertex of $T_G$ which has the same type as $v$ and is fixed by   $f(A_v)
$. 
By 1-acylindricity, $\varphi$ maps adjacent vertices   to adjacent vertices.  Assume that  $f$ is not injective. There must then be distinct edges $vu_1$ and $vu_2$ of $T_A$ with $\varphi(u_1)=\varphi(u_2)$. By $1$-acylindricity   and injectivity of $f$ on vertex groups,
 $v$ is of type 0 (and thus $u_1,u_2$ are of type 1). The lemma is proved if $u_1$ and $u_2$ are in different orbits, so assume $u_2=au_1$ with $a\in A$.

Since $f(a)$ fixes $\varphi(u_1)$, which is of type 1, there exists $a'\in A_{u_1}$ such that $f(a')=f(a)$. Replacing $a$ by $aa'{\m}$ lets us assume $u_2=au_1$ with $a\in \ker f$. 
This implies $av\ne v$. 

Let $a_1,a_2$ be non-trivial elements  of $A$ fixing the edges $vu_1$ and $vu_2$ respectively. The element $aa_1a\m$ fixes the edge joining $av$ and $u_2$, so  $\langle aa_1a\m,a_2\rangle$ is not cyclic by 1-acylindricity near $u_2$. On the other hand, $f(aa_1a\m)=f(a_1)$ and $f(a_2)$ both fix the edge joining $\varphi(v)$ to $\varphi(u_1)=\varphi(u_2)$, so generate a cyclic subgroup of $G$. This contradicts the fact that $f$ is injective on $A_{u_2}$.
\end{proof}

\begin{cor}\label{sixunm2}
  Let $G$ be a non-cyclic torsion-free hyperbolic group acting on a tree  $T$  with all edge stabilizers $\Z$. 

  Let $f:G\to G$ be a homomorphism sending any   non-cyclic vertex stabilizer and any edge stabilizer bijectively to a conjugate.
Then $f$ is injective.
\end{cor}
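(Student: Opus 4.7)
The plan is to apply Lemma \ref{sixunm} with $A = G$ and $T_A = T_G$ equal to a suitable bipartite tree $\hat T$ built from $T$. The natural choice for type 1 vertices is the set of non-cyclic vertices of $T$, since by hypothesis $f$ maps their stabilizers bijectively to conjugates. The only obstacle is $1$-acylindricity at these type 1 vertices: in $T$ itself, two distinct edge stabilizers at a non-cyclic vertex can be commensurable. I will fix this by passing to the tree of cylinders of $T$.

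Concretely, I call two edges of $T$ equivalent if their stabilizers are commensurable in $G$. Since $G$ is torsion-free hyperbolic, commensurability of cyclic subgroups coincides with commutation, and each equivalence class (a \emph{cylinder}) $Y$ has stabilizer $G_Y$ equal to a single maximal cyclic subgroup. I take $\hat T$ to be the bipartite graph whose vertices are the non-cyclic vertices of $T$ (type 1) together with the cylinders (type 0), with an edge from $v$ to $Y$ whenever $v$ is incident in $T$ to an edge of $Y$; this is a tree because two distinct cylinders share at most one vertex of $T$. Each edge stabilizer of $\hat T$ is of the form $G_v \cap G_Y$, a non-trivial cyclic group. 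For $1$-acylindricity at a type 1 vertex $v$, two distinct incident cylinders $Y_1 \ne Y_2$ yield
$$(G_v \cap G_{Y_1}) \cap (G_v \cap G_{Y_2}) \subset G_{Y_1} \cap G_{Y_2} = \{1\},$$
since distinct maximal cyclic subgroups of a torsion-free hyperbolic group intersect trivially.

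Next I check the hypotheses on $f$. On type 1 stabilizers, $f$ acts as a bijection onto a conjugate by hypothesis. On a cylinder stabilizer $G_Y = \langle c_Y \rangle$, pick any edge $e \in Y$ with $G_e = \langle c_Y^n \rangle$; the hypothesis gives $f(c_Y^n) = h c_Y^{\pm n} h^{-1} = (h c_Y^{\pm 1} h^{-1})^n$ for some $h \in G$, and uniqueness of roots in a torsion-free hyperbolic group forces $f(c_Y) = h c_Y^{\pm 1} h^{-1}$. Hence $f$ sends $G_Y$ bijectively onto a conjugate of the cylinder stabilizer of the image edge, which is again a type 0 stabilizer of $\hat T$, and in particular the restriction is injective.

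With all hypotheses of Lemma \ref{sixunm} in place, the conclusion follows: if $f$ were not injective, there would exist non-conjugate non-cyclic vertex stabilizers $G_{v_1}, G_{v_2}$ of $T$ with $f(G_{v_1}) = f(G_{v_2})$; writing $f(G_{v_i}) = g_i G_{v_i} g_i^{-1}$, this equality forces $G_{v_1}$ and $G_{v_2}$ to be conjugate in $G$, a contradiction. The main technical difficulty in carrying this out is setting up $\hat T$ cleanly, notably verifying that it is a tree and properly including non-cyclic vertices of $T$ that happen to lie in only one cylinder (they appear as leaves of $\hat T$); once the tree is correctly constructed, the application of Lemma \ref{sixunm} is essentially immediate.
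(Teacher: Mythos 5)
Your proof is correct and follows essentially the same approach as the paper, which also applies Lemma~\ref{sixunm} with $T_A=T_G$ the tree of cylinders of $T$, type~$0$ vertices carrying maximal cyclic stabilizers commensurable with edge stabilizers, and type~$1$ vertices carrying the non-cyclic vertex stabilizers. The paper simply cites the tree-of-cylinders construction rather than rebuilding it, but the verification of $1$-acylindricity via triviality of intersections of distinct maximal cyclic subgroups and the use of uniqueness of roots to control $f$ on cylinder stabilizers are exactly the intended steps.
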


\begin{proof}
We apply the lemma  with $A=G$ and $T_A,T_G$ both equal to
  the tree of cylinders of $T$ (see \cite{GL4} or \cite[Section 7]{GL_JSJ}).
It is bipartite: stabilizers of vertices of type 0 are maximal cyclic and commensurable to an edge stabilizer of $T$,
stabilizers of vertices of type 1 are the non-cyclic vertex stabilizers of $T$.
  The hypotheses of Lemma \ref{sixunm} are satisfied.
\end{proof}

\section{Towers
} \label{pr}

In the beginning of this section, $G$ will be an arbitrary torsion-free finitely generated group. In Proposition \ref{equivsimple}, we will see that subgroups $B<G$ having 
 non-trivial elements $x,y$ such that $x$ commutes with all conjugates of $y$ 
play a special role, and   from then on we shall assume that $G$ is CSA to ensure that such subgroups $B$ are  abelian. 
This is mostly a matter of convenience, as it makes statements simpler. In Subsections \ref{readjsj} and \ref{confl}, however, we will use the canonical cyclic JSJ decomposition of $G$, and this really requires $G$ to be CSA.

\begin{figure}[!ht] 
  \centering
  \includegraphics{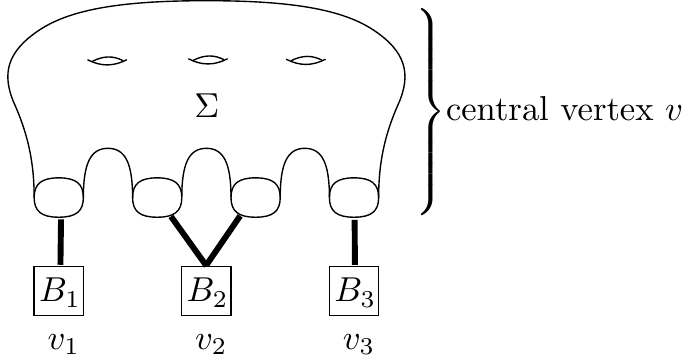}
  \caption{A centered splitting of $G$ with three bottom groups $B_1,B_2,B_3$. Its base is the free product $B_1*B_2*B_3$.}
  \label{fig_centered}
\end{figure}

\subsection{Centered splittings}

\begin{dfn}[Centered splitting] \label{centspl}
A \emph{centered splitting} of $G$ is a  graph of groups decomposition $G=\pi_1(\Gamma)$ such that the vertices of $\Gamma$
are $v,v_1,\dots,v_n$, with $n\ge1$, 
  where $v$ is surface-type 
and every edge joins $v$ to some $v_i$    (see Figure \ref{fig_centered}).  Note that all edge groups are isomorphic to $ \Z$.

The vertex $v$ is called the \emph{central vertex} of $\Gamma$,  the group carried by $v$
is denoted by $Q=\pi_1(\Sigma)$, and $\Sigma$ is the surface associated to $\Gamma$.
The vertices $v_1,\dots,v_n$ are the \emph{bottom vertices}, and we denote by $B_i$ 
 the group carried by $v_i$.   We call $B_i$ a bottom group, and $Q$ is the central group.

The \emph{base} of $\Gamma$ is the abstract free product  $B_\Gamma=B_1*\dots* B_n$.

 The centered splitting $\Gamma$ is \emph{simple} if $n=1$, \emph{non-exceptional} if the surface $\Sigma$ is non-exceptional.
 
 If $\Gamma$ is simple and $B_1\simeq \Z$, we say that $G$ is a \emph{parachute}  (parachutes are studied in Propositions 6.7 and 6.8 of \cite{GLS_finite_index} and in Subsection \ref{parac}).
\end{dfn}

\begin{rem}
   Each $B_i$ is well-defined up to conjugacy in $G$,   but  if $n\ge2$ the subgroup $\grp{B_1,\dots,B_n}$ itself is not well-defined up to conjugacy
(it depends on choices of individual conjugates), and it does not have to be isomorphic to $B_\Gamma$ in general.
See Proposition \ref{equivpassimple}, where particular choices of conjugates are made.
\end{rem}

\begin{rem}
   If $G$ is hyperbolic, the groups $B_i$ are quasiconvex, hence hyperbolic.
\end{rem}

Like all splittings, $\Gamma$ is assumed to be minimal: if $v_i$ belongs to only one edge, the group carried by this edge is a proper subgroup of $B_i$.
  As in every surface-type vertex,   each conjugacy class of boundary subgroups  of $Q$ is carried by  exactly one incident edge 
(all boundary components are used).
We allow redundant vertices: $\Gamma$ may have a cyclic vertex $v_i$ of valence 2 with both incident edge groups equal to the vertex group.

Given any  non-trivial splitting with a surface-type vertex $v$, collapsing all edges not containing $v$ (and subdividing if there is a loop at $v$) yields a centered splitting. 

\begin{rem} \label{malnor} 
By 1-acylindricity near $v$, the family $(B_1,\dots,B_n)$ is malnormal: if $gB_i g\m\cap B_j\neq \{1\}$, then $i=j$ and $g\in B_i$.
In particular, each $B_i$ is malnormal and, if two elements of $B_i$ are conjugate in $G$,   then they are conjugate in $B_i$.
\end{rem}

 See Subsection \ref{sec_grushko} for a few general facts about centered splittings.

\subsection{Boundary-preserving maps, 
pinched quotient}

\begin{dfn}[Boundary-preserving map] \label{locpret}
Let $G$ be a group, and   $Q\simeq\pi_1(\Sigma)$ a subgroup  identified with  the
fundamental group of a compact surface $ \Sigma$.

  A \emph{\bpm}
  (with values in $G$) is a homomorphism $p:Q\to G$ such that, on each boundary subgroup of $\pi_1(\Sigma)$, the map $p$ agrees with the conjugation by some element of $G$.

The \bpm\ is \emph{non-degenerate} if additionally:
\begin{itemize}
\item the image of $p$ is  non-abelian; 
\item   $p$ is not an isomorphism of $Q$ onto some conjugate of $Q$;
\item the surface $\Sigma$ is non-exceptional.
\end{itemize}

If $Q$ is the group carried by a
surface-type vertex $v$ of a splitting $G_1=\pi_1(\Gamma)$ of a subgroup $G_1<G$,
we say that the \bpm\ is \emph{associated to} $\Gamma$, or to $v$  (most often, $G_1$ will be  $G$ or a free factor of $G$).

\end{dfn}

\begin{rem}\label{pasexp}
The first two additional conditions ensure that $p$ carries meaningful information; in particular, it is not the identity  and does not factor through an abelian group.   \Ndbpm s were called   local preretractions in \cite{GLS_finite_index}.

The non-exceptionality assumption is not needed   until Section \ref{elemequiv},
but it is essential for applications to model theory: in particular, ``Tarski'' (Theorem 
 \ref{Tarski})  would not hold without it.
 \end{rem}

\begin{figure}[!ht] 
  \centering
  \includegraphics{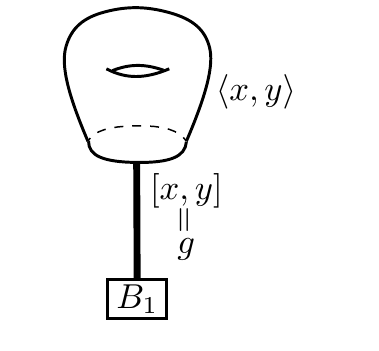}
  \caption{A once-punctured torus glued on  
some non-trivial element   $g\in B_1$.}
\label{torus}
\end{figure}
 
\begin{example}[See Figure \ref{torus}] \label{expt}
Let $G=B_1*_{g=[x,y]}F(x,y)$ be a cyclic amalgam of a group $B_1$ with the free group $F(x,y)$, with $g$ some non-trivial element of $B_1$. The amalgam defines a simple centered splitting $\Gamma$ of $G$, with $\Sigma$ a punctured torus. 

If $g$ is a commutator $[b,b']$ with $b,b'\in B_1$, sending $x$ to $b$ and $y$ to $b'$ defines a \ndbpm\  $p$ associated to $\Gamma$ (Lemma \ref{lemcle4} below implies that there is no such $p$ if $g$ is not a commutator in $B_1$).
\end{example}

\begin{figure}[!ht] 
  \centering
  \includegraphics{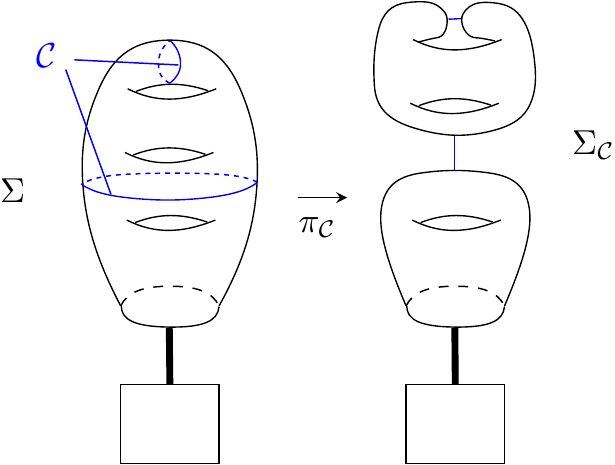}
  \caption{A pinched quotient.}
  \label{fig_pinch}
\end{figure}

Recall (Definition \ref {pinc}) that a curve $C$ is pinched by $p$ if $\pi_1(C)\inc\ker p$.
 In the following definition, we will often take $G_1=G$, or $G_1$ a Grushko factor of $G$.

\begin{dfn}[Pinched quotient, see Figure \ref{fig_pinch} and Definition 5.26 of  \cite{Perin_elementary}] \label{pc}
 Given     a \bpm\ $p:\pi_1(\Sigma)\to G$, let  $\calc$ be a 
 family of pinched curves on $\Sigma$.
  If $G_1$ is a subgroup of $G$ containing $\pi_1(\Sigma)$, the \emph{pinched quotient} of $G_1$ is the quotient $(G_1)_{\calc}$ of $G_1$ obtained by killing all curves in $\calc$.
It can be described as follows.

 For each curve $C\in \calc$, we cut $\Sigma$ along $C$, glue a disc to each of the two boundary curves thus created, and join these two discs by an arc. We get a space $\Sigma_\calc$ consisting of surfaces $\Sigma_\calc^j$ 
 and arcs; these surfaces are   closed or bounded by components of $\partial\Sigma$. 
 
 The surfaces/arcs of $\Sigma_\calc$ correspond to vertices/edges of a graph of groups decomposition $\Delta_\calc$ of $\pi_1(\Sigma_\calc)$ with trivial edge groups.   There is a quotient map   $\pi_\calc: \pi_1(\Sigma)\to \pi_1(\Sigma_\calc)$. 
 The map $p$  factors through $\pi_\calc$ and induces a   map     $p_\calc:  \pi_1(\Sigma_\calc)\to G$.
  
 Now suppose that $p$ is associated to a splitting $G_1=\pi_1(\Gamma)$ of a subgroup $G_1\inc G$.
 Since  $p$ is boundary-preserving, no component of $\partial\Sigma$ is pinched, and we can define a graph of groups $\Gamma_\calc$ by blowing up the vertex $v$ of $\Gamma$ carrying $\Sigma$ using  $\Delta_\calc$. 

 The fundamental group of $\Gamma_\calc$ is isomorphic to the pinched quotient $(G_1)_\calc$.
The quotient map from $G_1$ to $(G_1)_\calc$ is an extension of $\pi_\calc: \pi_1(\Sigma)\to \pi_1(\Sigma_\calc)$ and we also denote it by $\pi_\calc$.   It is injective on vertex groups of $\Gamma$ other than $\pi_1(\Sigma)$.
 \end{dfn}

\subsection{A key lemma}
 \label{kl}
The following key lemma will be used  extensively to study \bpm{}s. 

 \begin{lem} [{\cite[Lemma 3.3]{GLS_finite_index}}]
 \label{lemcle4} 
Let   $p:\pi_1(\Sigma)\to G$ be a \ndbpm\ associated to a centered splitting  $\Gamma$ of $G$.
 Let  $\calc$ be a maximal family of pinched curves on $\Sigma$, and let $\Sigma'$ be a component of the surface obtained by cutting $\Sigma$ along $\calc$.  

 Viewing $\pi_1(\Sigma')$ as a subgroup of $\pi_1(\Sigma)$,
the image of 
 $\pi_1(\Sigma') $ by $p$ is contained in a conjugate of a bottom vertex group $B_i$  of $\Gamma$. 
 \qed
\end{lem}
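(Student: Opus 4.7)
The plan is to combine a $p$-equivariant map to the Bass--Serre tree of $\Gamma$ with the maximality of $\calC$ and the non-degeneracy of $p$. Let $T$ be the Bass--Serre tree of $\Gamma$, with edge midpoints marked. I would first build a continuous $p$-equivariant map $\tilde f:\tilde\Sigma\to T$ sending every boundary component of $\tilde\Sigma$ to a vertex of $T$ (possible because $p$ is boundary-preserving, so each maximal boundary subgroup is sent into a conjugate of an edge stabilizer and hence fixes a vertex) and every lift of a curve in $\calC$ to a vertex (possible because $\pi_1$ of a pinched curve lies in $\ker p$, so its image acts trivially on $T$). After putting $\tilde f$ in general position transverse to midpoints, the preimage of midpoints is a disjoint family of lines in $\tilde\Sigma$ avoiding $\partial\tilde\Sigma$ and the lifts of $\calC$; it descends to a family $\calC'$ of disjoint $2$-sided simple closed curves on $\Sigma$ disjoint from $\calC\cup\partial\Sigma$. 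For every $c\in\calC'$, the subgroup $p(\pi_1(c))$ is contained in a cyclic edge stabilizer of $T$.

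Next I would minimize $\tilde f$ within its equivariant homotopy class so that $\calC'$ is reduced by standard bigon/innermost-disc arguments. After this minimization, every $c\in\calC'$ is essential and not boundary-parallel. If $p(\pi_1(c))=1$ for some such $c$, then $c$ is a pinched curve disjoint from $\calC$ and not isotopic to any curve of $\calC$, so $\calC\cup\{c\}$ would be a strictly larger family of pinched curves, contradicting maximality. Hence every remaining $c\in\calC'$ satisfies $p(\pi_1(c))\neq\{1\}$. The crux is to show $\calC'=\emptyset$: if some $c\in\calC'$ lies in a component $\Sigma'$ of $\Sigma\setminus\calC$, one uses the combination of transversality moves and minimality to homotope $\tilde f|_{\tilde\Sigma'}$ across $c$; a curve that cannot be so removed produces either a new pinched curve (excluded by maximality) or a cyclic splitting of $\pi_1(\Sigma')$ incompatible with the boundary being mapped to a single vertex. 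Once $\calC'=\emptyset$, $\tilde f$ sends each lift of $\Sigma'$ into the open star of a single vertex $u$ of $T$, whence $p(\pi_1(\Sigma'))\subset G_u$.

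It remains to show that $u$ is a bottom vertex of $\Gamma$ rather than a translate of the central vertex $v$. Suppose for contradiction that $u$ lies in the orbit of $v$, so that $p(\pi_1(\Sigma'))$ is contained in a conjugate of $Q$. Doing this simultaneously for every component of $\Sigma_\calC$ and piecing the pieces back together using the boundary-preservation condition (which glues neighboring components along edge stabilizers) would force $p$ as a whole to factor through a map of $\pi_1(\Sigma)$ onto a conjugate of $Q$. Since the target is a surface group of the same surface, a standard rank/Euler characteristic argument (together with Hopficity of $Q$) would make it an isomorphism onto a conjugate of $Q$, or else compress to an abelian image; both options are explicitly forbidden by the three clauses in the definition of \emph{non-degenerate} boundary-preserving map (image non-abelian, not an isomorphism onto a conjugate of $Q$, and $\Sigma$ non-exceptional). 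Hence $u$ must be a bottom vertex, and $p(\pi_1(\Sigma'))$ lies in a conjugate of some $B_i$.

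The main obstacle is the minimization step that kills $\calC'$: the manipulation of the equivariant map is delicate and is exactly where the \emph{maximality} of $\calC$ enters in an essential way. The closing step ruling out the central vertex is also non-trivial and is precisely the reason the three non-degeneracy clauses are packaged together into Definition \ref{locpret}.
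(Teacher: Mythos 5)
The paper cites this result from \cite{GLS_finite_index} and gives no proof, so your blind reconstruction can only be measured against what a correct argument must look like. Your general scaffolding---a $p$-equivariant map to the Bass--Serre tree $T$, transversality to edge midpoints yielding a multicurve $\calC'$ disjoint from $\calC\cup\partial\Sigma$, and an equivariant minimization---is the right framework. But both of the key steps you flag as delicate contain genuine gaps, and the informal arguments you offer at those steps are not correct as stated.

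First, the claim that $\calC'\cap\Sigma'=\emptyset$. You argue that a surviving curve $c\in\calC'$ either is pinched (contradicting maximality of $\calC$), or ``produces a cyclic splitting of $\pi_1(\Sigma')$ incompatible with the boundary being mapped to a single vertex.'' The first alternative is fine, but the second is not an argument: a non-pinched $c$ gives a perfectly legitimate cyclic splitting of $\pi_1(\Sigma')$ relative to its boundary, with cyclic edge group $p(\pi_1(c))$ inside an edge stabilizer of $T$, and there is no a priori reason the boundary of $\Sigma'$ should go to a single vertex. Maximality of $\calC$ rules out only the trivial-image case. To kill a non-pinched $c$, you have to look at the complementary piece $R$ of $\Sigma'\setminus\calC'$ on the central-vertex side of $c$. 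Since $p(\pi_1(R))$ lands in a conjugate $Q'$ of $Q=\pi_1(\Sigma)$ and sends the boundary of $R$ into boundary subgroups of $Q'$, one invokes a result in the spirit of Scott's theorem (cf.\ Lemma~\ref{Scott}, or Lemma~3.10 of \cite{Perin_elementary}): either $p(\pi_1(R))$ is cyclic and contained in a boundary subgroup of $Q'$---in which case all boundary curves of $R$ go to the same edge of $T$ by $1$-acylindricity near $Q'$, and $\tilde f$ can be homotoped off that vertex, shrinking $\calC'$---or $p(\pi_1(R))$ has finite index in $Q'$. The latter is the serious case, and your write-up does not address it.

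Second, ruling out the central vertex once $\calC'$ is empty. You propose to ``piece the pieces back together'' and obtain a map of $\pi_1(\Sigma)$ \emph{onto} a conjugate of $Q$, then apply Hopficity. This does not work: different components of $\Sigma_\calC$ may be sent to \emph{different} conjugates of $Q$ (different central vertices of $T$), and even if they all went to the same one, nothing forces $p$ to be onto, so Hopficity is not applicable. What is actually needed is, again, the dichotomy: if $p(\pi_1(\Sigma'))$ lies in a conjugate $Q'$ of $Q$, then either it is cyclic---hence already in an edge stabilizer, which lies in a bottom vertex group, and the lemma holds---or it has finite index in $Q'$. An Euler-characteristic and covering-space comparison between the incompressible subsurface $\Sigma'\subset\Sigma$ and the finite cover of $\Sigma$ corresponding to $p(\pi_1(\Sigma'))<Q'$ then forces $\Sigma'=\Sigma$, $\calC=\emptyset$, and $p$ an isomorphism onto $Q'$, which is precisely what the non-degeneracy hypothesis excludes. (Note that Lemma~\ref{lemcle2} of the same paper shows this finite-index alternative is genuinely present in the general boundary-into-bottom-groups setting; what saves Lemma~\ref{lemcle4} is the extra rigidity of $p$ being a conjugation on each boundary subgroup, together with clause two of non-degeneracy.) Your outline never states this dichotomy, and it is exactly where both of the ``delicate'' steps you flag bottom out.

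In short: correct architecture, but the two load-bearing steps---eliminating non-pinched curves in $\calC'$, and ruling out the central vertex---require the cyclic-or-finite-index dichotomy for boundary-preserving maps into a surface group and an Euler-characteristic comparison, neither of which appears in your argument.
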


\begin{rem} \label{vi}
By 1-acylindricity, the lemma implies that all edges of $\Gamma$ associated to 
 boundary components of $\Sigma$ contained in $\Sigma'$  join   the   central vertex to the same bottom vertex, namely the vertex $v_i$ carrying $B_i$.
\end{rem}

The following strengthening of Lemma \ref{lemcle4} will be used later.    Recall that a  subsurface is incompressible if the inclusion induces an injection on fundamental groups.

\begin{lem}[{\cite[Remark 3.4]{GLS_finite_index}}] \label{lemcle2}
Let $\Gamma$ be a centered splitting of $G$. We denote by   $\Sigma$ its surface.
Let $S$ be another compact surface, and 
let $p:\pi_1(S)\to G$ be  a homomorphism such that the image of each boundary subgroup of $\pi_1(S)$ 
is contained in a bottom group of $\Gamma$ (up to conjugacy).

Let  $\calc$ be a maximal family of pinched curves on $S$, and let $S'$ be a component of the surface obtained by cutting $S$ along $\calc$.
  Then, up to conjugacy in $G$,  the image of 
 $\pi_1(S') $ by $p$ is contained in a bottom group of $\Gamma$, or there is an incompressible  subsurface $Z\inc S'$ such that $p$ maps boundary subgroups of $\pi_1(Z)$ into boundary subgroups of $\pi_1(\Sigma)$, and $p(\pi_1(Z))$ is a finite index subgroup of $ \pi_1(\Sigma)$. 
 \qed
 \end{lem}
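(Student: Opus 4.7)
The strategy is to mimic the proof of Lemma~\ref{lemcle4} but run it for the restriction $p':=p|_{\pi_1(S')}$ in place of $p$ itself, keeping track of the two possible ``destinations'' in the Bass-Serre tree $T$ of $\Gamma$ (central vs.\ bottom vertex).

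First, I construct a $\pi_1(S')$-equivariant simplicial map $f:\widetilde{S'}\to T$ sending every boundary component to a vertex. This is possible because each boundary subgroup of $\pi_1(S')$ fixes a point of $T$: components of $\partial S'\cap\partial S$ map into a bottom group (hypothesis), hence fix a bottom vertex; the ``new'' boundary components of $S'$ created by cutting along $\calc$ are pinched, so their image is trivial and they fix every vertex. I then make $f$ transverse to midpoints of edges, and simplify the system $\Lambda:=f^{-1}(\text{midpoints})$ of disjoint arcs and simple closed curves, removing components that are null-homotopic, boundary-parallel, or bound a disk with a piece of $\partial S'$.

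Next, I analyse what $\Lambda$ can contain. Every simple closed curve $c$ of $\Lambda$ satisfies $p(\pi_1(c))\subset G_e$ for some edge $e$ of $T$, so $p(\pi_1(c))$ is cyclic; if it were trivial, then $c$ would be an essential two-sided simple closed curve of $S$, not isotopic into $\calc\cup\partial S$, pinched by $p$, contradicting the maximality of $\calc$. So every curve in $\Lambda$ is essential and non-pinched. Cut $S'$ along $\Lambda$; each component $Y$ is incompressible, and the vertex of $T$ corresponding to $Y$ is fixed by $p(\pi_1(Y))$, so $p(\pi_1(Y))$ lies in a conjugate of $Q$ or of some $B_j$.

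Now the dichotomy. If no piece $Y$ has image in a conjugate of $Q$, I argue that $p(\pi_1(S'))$ is entirely in a conjugate of a single $B_j$: two pieces that meet along a curve of $\Lambda$ have bottom-group images sharing a non-trivial cyclic subgroup (the edge stabilizer of $T$ at the crossing), so by malnormality of the family $(B_1,\dots,B_n)$ (Remark~\ref{malnor}) they lie in a common conjugate of a single $B_j$; connectedness of $S'$ propagates this to all of $\pi_1(S')$, giving the first alternative of the lemma. Otherwise, at least one piece $Z$ has $p(\pi_1(Z))\subset Q^g$ for some $g$, and $Z$ is our candidate. Its boundary subgroups come in three flavours: curves of $\Lambda$ (image in an edge stabilizer of $T$ incident to the central vertex, i.e.\ a boundary subgroup of $Q^g$); pieces of $\partial S\cap\partial Z$ (image lies in $B_j^h\cap Q^g$, which is an edge stabilizer, hence a boundary subgroup of $Q^g$, by $1$-acylindricity near the central vertex); and pinched pieces from $\calc$ (trivial image). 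So $p$ sends boundary subgroups of $\pi_1(Z)$ into boundary subgroups of $\pi_1(\Sigma)^g$.

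The main obstacle is the finite-index claim. To prove it, I argue by contradiction: if $[\,Q^g:p(\pi_1(Z))\,]=\infty$, then since the image of each boundary subgroup of $\pi_1(Z)$ sits inside a boundary subgroup of $Q^g$, Scott's compact core theorem (compare Lemma~\ref{Scott}) produces a finite cover $\widehat\Sigma$ of $\Sigma$ in which $p(\pi_1(Z))$ appears as the fundamental group of a proper, incompressible subsurface $W\subsetneq\widehat\Sigma$ with $\partial W$ made of curves that are either boundary-parallel in $\widehat\Sigma$ or not; the existence of $W\subsetneq\widehat\Sigma$ yields an essential simple closed curve $\gamma$ on $W$ whose image under $p_{|\pi_1(Z)}$ lies in an edge stabilizer of $T$. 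Pulling $\gamma$ back to $Z\subset S'$ via $f$ would give an essential simple closed curve in $S'$ either pinched (contradicting maximality of $\calc$) or mappable into an edge stabilizer not already accounted for by $\Lambda$, contradicting the fact that $\Lambda$ was chosen maximal/efficient. This forces $[\,Q^g:p(\pi_1(Z))\,]<\infty$, completing the proof.
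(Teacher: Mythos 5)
Your overall strategy -- map $\widetilde{S'}$ to the Bass--Serre tree $T$, pull back edge midpoints to get a curve system $\Lambda$, and analyse the pieces of $S'\setminus\Lambda$ -- is the natural one, and several of your intermediate observations (curves of $\Lambda$ are essential, two-sided, and non-pinched by maximality of $\calc$; boundary components of a piece adjacent to the central vertex land in edge stabilizers, hence boundary subgroups of $Q$, by $1$-acylindricity) are correct. The paper itself cites this as Remark~3.4 of \cite{GLS_finite_index} without proof, so there is no in-paper argument to compare against, but I see two real problems with your write-up.

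First, the dichotomy is confused by not exploiting that $T$ is \emph{bipartite}: every edge joins the central vertex to a bottom vertex, so adjacent pieces of $S'\setminus\Lambda$ are assigned vertices of opposite types. Consequently ``no piece is assigned a central vertex'' is equivalent to $\Lambda=\es$, and your malnormality-propagation argument (two adjacent pieces with bottom-group images sharing a cyclic subgroup) never actually comes into play in the form you describe -- two adjacent pieces can't both sit at bottom vertices. Malnormality does become relevant, but only after one first disposes of the central pieces with \emph{abelian} image (see below), and that step is missing.

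Second, and more seriously, the finite-index claim is not established. Your contradiction via Scott's theorem contains the step ``pulling $\gamma$ back to $Z\subset S'$ via $f$,'' but $f$ is a map $\widetilde{S'}\to T$, not a map from $Z$ to the cover $\widehat\Sigma$ or to $W$, so this does not type-check; and ``$\Lambda$ was chosen maximal/efficient'' is not a property you set up. Worse, the claim as you state it (for \emph{any} component $Z$ mapping into $Q^g$) is false: such a $Z$ can perfectly well have cyclic image. Since $Z$ has a boundary component in $\Lambda$ with non-trivial image in a boundary subgroup of $Q^g$, a cyclic $p(\pi_1(Z))$ is forced \emph{into} that boundary subgroup (commutation in a free group), hence into the adjacent bottom group -- this is exactly where the piece should be re-assigned and absorbed, and where the malnormality argument belongs. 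Only after collapsing all such abelian-image central pieces should one look at a remaining central piece $Z$ with non-abelian, non-pinching image, for which the finite-index claim requires a genuine argument about non-pinching boundary-preserving maps between surface groups (e.g.\ a degree argument, or Lemma~3.10 of \cite{Perin_elementary}, used in this paper in the proof of Lemma~\ref{fortfaible}). As written, your proof would pick the wrong $Z$ and then fail to show finite index.
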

 
  The next lemma will be used
 in Subsection \ref{sec_choix}.  
 Recall that the closed non-orientable surface of genus 3 contains a 
one-sided simple closed curve    $c_{N_3}$ whose complement is a once-punctured torus (see Section \ref{sec_surfaces});    it is unique up to isotopy. 
We denote by $g_{N_3}$ a generator of $\pi_1(c_{N_3})$  
(well-defined up to  inversion and conjugacy).

 \begin{lem}   \label{lemcle3}
 Let $\Gamma$ and $p$ be as in Lemma \ref{lemcle2}. Assume furthermore that $S$ is exceptional and $\Sigma$ is not.  If the image of $p$ is not contained in a conjugate of a bottom group of $\Gamma$, then  $S$ is   a closed non-orientable surface of genus 3,   $\Sigma$ is a  once-punctured torus, and  
$p(g_{N_3}^2)$ belongs to 
the edge group of $\Gamma$ (up to conjugacy).
 \end{lem}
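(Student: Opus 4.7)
The plan is to apply Lemma \ref{lemcle2} to locate an incompressible subsurface $Z$ of some component $S'$ of $S$ on which $p$ sends $\pi_1(Z)$ onto a finite-index subgroup of $\pi_1(\Sigma)$, and then combine an Euler-characteristic bound, an abelianization argument, and a topological-embedding analysis to force $S=N_3$, $\Sigma=S_{1,1}$, and $Z$ to be the punctured-torus subsurface of $N_3$. After cutting $S$ along a maximal family $\calc$ of pinched curves and picking a component $S'$, the hypothesis that the image of $p$ is not contained in a conjugate of a bottom group should rule out case (a) of Lemma \ref{lemcle2} on every component: otherwise the pieces, glued along trivial images of pinched curves, would be confined to a single bottom-group conjugate (modulo degenerate cyclic situations that must be treated separately). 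Thus case (b) applies on some $S'$, yielding $Z\subset S'$ with $p(\pi_1(Z))$ of finite index in $\pi_1(\Sigma)$ and $p$ sending boundary subgroups of $\pi_1(Z)$ into boundary subgroups of $\pi_1(\Sigma)$.

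Incompressibility gives $\chi(Z)\geq\chi(S)=-1$, while hyperbolicity of $Z$ gives $\chi(Z)\leq -1$, so $\chi(Z)=-1$ and $Z\in\{S_{0,3},N_{1,2},N_{2,1},S_{1,1},N_3\}$. A Nielsen--Schreier rank bound, combined with the fact that finite-index subgroups of closed hyperbolic surface groups are never of small rank, forces $\pi_1(\Sigma)$ to be free of rank $2$; hence $\Sigma=S_{1,1}$, and $p|_{\pi_1(Z)}\colon\pi_1(Z)\to F_2$ is a surjection, indeed an isomorphism by Hopfianity whenever $Z$ has boundary.

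The main obstacle is narrowing $Z$ down to $S_{1,1}$. Every boundary subgroup of $\pi_1(S_{1,1})$ sits inside $[F_2,F_2]$, so the isomorphism on abelianizations induced by $p|_{\pi_1(Z)}$ (when $Z$ has boundary) must send each boundary element of $\pi_1(Z)$ to zero. Computing boundary abelianizations in $S_{0,3}$, $N_{1,2}$, and $N_{2,1}$ always yields nonzero classes, ruling them out, while the boundary of $S_{1,1}$ is $[x,y]$, which is consistent. The remaining case $Z=N_3$ is excluded separately: any homomorphism $\pi_1(N_3)=\langle x,y,z\mid z^2=[x,y]\rangle\to F_2$ forces $p(z)^2=[p(x),p(y)]$, but nontrivial commutators in $F_2$ are not squares, so $[p(x),p(y)]=1=p(z)$ and $p(\pi_1(N_3))$ is cyclic, contradicting finite index. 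Hence $Z=S_{1,1}$, and since among exceptional surfaces only $N_3$ admits $S_{1,1}$ as an incompressible subsurface (the complement of an open M\"obius-band neighbourhood of $c_{N_3}$), we obtain $S=N_3$; the boundary of this subsurface represents $g_{N_3}^2$ in $\pi_1(N_3)$, and case (b) of Lemma \ref{lemcle2} places $p(g_{N_3}^2)$ in a boundary subgroup of $\pi_1(\Sigma)$, that is, in an edge group of $\Gamma$ up to conjugacy.
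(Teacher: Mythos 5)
Your overall architecture mirrors the paper's: apply Lemma \ref{lemcle2} to produce an incompressible subsurface $Z$ with $p(\pi_1(Z))$ of finite index in $\pi_1(\Sigma)$, force $\chi(Z)=-1$, identify $Z$ via the abelianization/derived-subgroup observation about boundary classes, and conclude $S=N_3$. Your rank argument pinning down $\Sigma=S_{1,1}$ and your ``a nontrivial commutator in a free group is not a proper power'' argument to exclude $Z=N_3$ are correct and usefully flesh out steps that the paper leaves terse. The endgame is fine.

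However, there is a genuine gap at the very beginning, in how you get to case (b) of Lemma \ref{lemcle2}. You claim that if case (a) held on every component $S'$ of $S$ cut along $\calc$, then the image of $p$ would be ``confined to a single bottom-group conjugate.'' That is not true when $\calc\ne\es$: the group $\pi_1(S_\calc)$ splits as a free product of the $\pi_1(\Sigma_\calc^j)$'s together with a free factor coming from the arcs in Definition \ref{pc}, and nothing forces the different pieces to land in a \emph{common} conjugate of a bottom group (pinched curves glue only at the identity), nor does anything constrain where the free factor maps. You acknowledge ``degenerate cyclic situations'' but give no argument, and the assertion is simply false as stated. The paper sidesteps the whole issue by first proving there is \emph{no pinching at all}: because $S$ is exceptional with $\chi(S)=-1$, any pinching makes $p$ factor through a group isomorphic to $\Z*\Z/2\Z$, $\Z^2*\Z/2\Z$, or $\pi_1(N_2)*\Z/2\Z$, so (since $G$ is torsion-free and CSA) the image of $p$ is abelian, hence conjugate into a bottom group, contradicting the hypothesis. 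This reduces to $S'=S$, where case (a) is excluded outright by the hypothesis. You need this step or an equivalent; without it, passing to case (b) is unjustified.
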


\begin{proof}
Assume that the image of $p$ is not contained in a conjugate of a bottom group of $\Gamma$ and apply Lemma \ref{lemcle2}.

  We claim that there is no pinching.
 Assume there is. Since $S$ is exceptional,   
$p$ factors through a group isomorphic to $\Z*\Z/2\Z$, $\Z^2*\Z/2\Z$, or   $\pi_1(N_2)*\Z/2\Z$.
Since $G$ is torsion-free, the image of $p$ is abelian or isomorphic to the Klein bottle group $\pi_1(N_2)$. 
This implies that the image of $p$ is
conjugate into a bottom group of $\Gamma$, a contradiction.

The exceptional surface $S$ has Euler characteristic $-1$ and fundamental group $\bbF_2$, so there is an incompressible subsurface $Z\inc S$ such that $p$ induces an isomorphism between $\pi_1(Z)$ and $\pi_1(\Sigma)$, with $\Sigma$ a  once-punctured torus.  
 Up to conjugacy, $p$ maps the boundary subgroups of $Z$
into the boundary subgroups of $\Sigma$. In particular, boundary subgroups of $\pi_1(Z)$ are contained in the derived subgroup 
  of $Z$, and this is possible only if $Z$ is a once-punctured torus. It follows that $S$ is the exceptional closed surface,   the union of the punctured torus $Z$  and a M\" obius band.  
 \end{proof}

 We will also need:
 
 \begin{lem} \label{n=1}
 If $p:Q\to G$ is a non-pinching \ndbpm\ associated to a centered splitting $\Gamma$ of $G$, there is a unique bottom group $B_1$  (i.e.\ $\Gamma$ is simple), 
and the image of $Q$ is contained in a conjugate of $B_1$.

If  $\hat p:G\ra G$ is an extension of $p$ which maps $B_1$ onto a conjugate $B_1^g$, then $B_1^g$
contains the whole image of $\hat p$. In particular, $\hat p$ is not injective.
\end{lem}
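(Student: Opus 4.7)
The plan has three steps: locate $p(Q)$ inside a single bottom-group conjugate via Lemma~\ref{lemcle4}, use malnormality to force all of $\hat{p}(G)$ into that conjugate, and finally use minimality of the centered splitting to deduce non-injectivity.

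First I apply Lemma~\ref{lemcle4} with $\calc = \es$; this is a maximal family of pinched curves since $p$ is non-pinching, so $\Sigma' = \Sigma$ and the lemma yields $p(Q) \subseteq B_i^h$ for some index $i$ and some $h \in G$. By Remark~\ref{vi}, every edge of $\Gamma$ (each corresponding to a boundary component of $\Sigma$, all of which lie in $\Sigma' = \Sigma$) joins $v$ to $v_i$. Minimality of $\Gamma$ forces every bottom vertex to have an incident edge, so no other bottom vertex exists: $n=1$. Relabelling, $p(Q) \subseteq B_1^h$.

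Next, let $\hat{p}: G \to G$ extend $p$ with $\hat{p}(B_1) = B_1^g$. Pick a spanning tree of $\Gamma$ reduced to a single edge $e_1$ (possible since $\Gamma$ has at least one edge by minimality), and let $c$ generate its cyclic edge group, so that $c$ sits in both $Q$ and $B_1$. By the boundary-preserving property, $\hat{p}(c) = p(c)$ is a non-trivial conjugate of $c$ in $G$. Since $\hat{p}(c) \in p(Q) \subseteq B_1^h$ and $\hat{p}(c) \in \hat{p}(B_1) = B_1^g$, the intersection $B_1^h \cap B_1^g$ is non-trivial, so malnormality of $B_1$ (Remark~\ref{malnor}) gives $B_1^h = B_1^g$; hence $\hat{p}(Q) \subseteq B_1^g$. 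For each remaining edge $e_i$ ($i \ge 2$), let $t_i$ be the stable letter and $c_i^Q \in Q$, $c_i^{B_1} \in B_1$ the two embeddings of its edge-group generator. Applying $\hat{p}$ to the relation $t_i\, c_i^Q\, t_i^{-1} = c_i^{B_1}$ shows that $\hat{p}(t_i)$ conjugates the non-trivial element $\hat{p}(c_i^Q) \in B_1^g$ to $\hat{p}(c_i^{B_1}) \in B_1^g$, so $\hat{p}(t_i)\, B_1^g\, \hat{p}(t_i)^{-1} \cap B_1^g \ne \{1\}$; malnormality of $B_1^g$ then gives $\hat{p}(t_i) \in B_1^g$. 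Since $\hat{p}(G)$ is generated by $\hat{p}(Q)$, $\hat{p}(B_1)$ and the $\hat{p}(t_i)$'s, we obtain $\hat{p}(G) \subseteq B_1^g$.

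For the last statement, minimality of $\Gamma$ gives $B_1 \subsetneq G$: otherwise the subgraph $\{v_1\}$ would carry all of $G$, contradicting minimality since $Q$ is non-trivial (as $\chi(\Sigma)<0$). The previous step yields $\hat{p}(G) \subseteq B_1^g = \hat{p}(B_1) \subseteq \hat{p}(G)$, so $\hat{p}(G) = \hat{p}(B_1)$; an injective $\hat{p}$ would then force $G = B_1$, contradicting $B_1 \subsetneq G$. The only delicate point is verifying that $\hat{p}(c_i^Q)$ is non-trivial at each stable-letter step, which is guaranteed by the boundary-preserving property combined with the fact that edge groups in a centered splitting are non-trivial cyclic.
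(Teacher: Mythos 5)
Your proof is correct and follows essentially the same route as the paper's: Lemma~\ref{lemcle4} and Remark~\ref{vi} for the first assertion, then malnormality of $B_1$ (1-acylindricity) to confine the entire image of $\hat p$ to a single conjugate $B_1^g$. The paper carries out the second step by working in the Bass-Serre tree — showing that adjacent vertex stabilizers have cyclic intersection on which $\hat p$ is injective, hence map into the same conjugate of $B_1$, and then that $\hat p(G)$ normalizes $B_1^g$ — whereas you implement the same idea via an explicit spanning-tree-and-stable-letters presentation; you also spell out the final non-injectivity step ($\hat p(G)=\hat p(B_1)$ plus injectivity would give $G=B_1$) which the paper leaves terse.
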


Such extensions $\hat p$ exist by Lemma \ref{extid}.

\begin{proof}
The first assertion follows from 
  Lemma \ref{lemcle4} and Remark \ref{vi}. 
  
  By 1-acylindricity, any two distinct conjugates of $B_1$ have trivial intersection. Now consider two adjacent vertices in the Bass-Serre tree $T$ of $\Gamma$. Their stabilizers are conjugate to $Q$ or $B_1$, and their intersection is a cyclic group on which $\hat p$ is injective, so their images by $\hat p$  are contained in the same conjugate of $B_1$. Thus all vertex stabilizers are mapped into the same conjugate.  This conjugate is invariant under conjugation by every element of $\hat p(A)$, and malnormal, so the lemma follows. 
\end{proof}

\subsection{Simple towers} \label{stt}

  Recall that a centered splitting is simple if it has a single bottom group (Definition \ref{centspl}).

\begin{prop} \label{equivsimple}
  Let $G$ be any torsion-free finitely generated group. 
Let $\Gamma$ be a non-exceptional \emph{simple} centered splitting of $G$, with bottom group $B_1$ and surface group $Q=\pi_1(\Sigma)$.  
Assume that
  any two non-trivial elements of $B_1$ have  conjugates which do not commute 
(this holds in particular if $B_1$ is CSA and non-abelian). 
Then the following are equivalent:
\begin{enumerate}
\item
There exists a non-degenerate \bpm\  
$p:Q\to G$. 
\item There exists a retraction $\rho:G\to B_1$ with $\rho(Q)$   non-abelian.
\end{enumerate} 
\end{prop}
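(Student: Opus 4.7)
I would set $p := \rho|_Q$ and verify the three conditions of non-degeneracy. For boundary-preservation, note that for an edge in a chosen spanning tree of $\Gamma$ the relation $\alpha_i(c_i) = \beta_i(c_i)$ holds in $G$, so $p(\alpha_i(c_i)) = \rho(\beta_i(c_i)) = \beta_i(c_i) = \alpha_i(c_i)$, i.e.\ $p$ agrees with conjugation by $1$ on that boundary subgroup. For a non-tree edge with stable letter $s_i$, one has $\alpha_i(c_i) = s_i^{-1}\beta_i(c_i)s_i$, hence $p(\alpha_i(c_i)) = \rho(s_i)^{-1}\beta_i(c_i)\rho(s_i)$, which agrees with conjugation by $\rho(s_i)^{-1}s_i$ on $\langle\alpha_i(c_i)\rangle$. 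Non-abelianness of $p(Q) = \rho(Q)$ and non-exceptionality of $\Sigma$ are immediate. Finally, if $p$ were an isomorphism onto some conjugate $gQg^{-1}$, then $gQg^{-1}$ would be a non-abelian subgroup of $B_1 = \rho(G)$; as such, it would fix in the Bass--Serre tree both $g\cdot u$ and $u_1$ (lifts of the surface vertex $v$ and the bottom vertex $v_1$), which are vertices of distinct types, hence fix the whole segment between them and sit inside a cyclic edge stabilizer -- contradicting non-abelianness.

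\textbf{Direction $(1) \Rightarrow (2)$:} The strategy is to first establish that $p(Q)$ lies in a single conjugate $gB_1g^{-1}$, and then build the retraction by conjugating $p$ into $B_1$ and extending by identity. In the non-pinching case, Lemma \ref{n=1} delivers the containment immediately (and also forces $\Gamma$ to be simple, consistent with our setting). Replacing $p$ by $g^{-1}p(\cdot)g$ puts its image in $B_1$. By boundary-preservation and malnormality of $B_1$ in $G$ (Remark \ref{malnor}), for each index $i$ there is $b_i \in B_1$ with $p(\alpha_i(c_i)) = b_i\beta_i(c_i)b_i^{-1}$. Fix a spanning tree of $\Gamma$ with tree edge $e_1$; after further conjugating $p$ by $b_1^{-1}$, one has $p(\alpha_1(c_1)) = \beta_1(c_1)$ exactly. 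I then define $\rho|_{B_1} := \operatorname{id}$, $\rho|_Q := p$, and $\rho(s_i) := b_i^{-1}b_1 \in B_1$ for each non-tree stable letter $s_i$. Direct verification against the edge and HNN relations of $\Gamma$ shows that $\rho$ is a well-defined retraction, and $\rho(Q) = p(Q)$ is a conjugate of the original non-abelian $p(Q)$, hence non-abelian.

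\textbf{Main obstacle.} The delicate part is the pinching case of the first step of $(1)\Rightarrow(2)$. If $p$ is pinching, choose a maximal family $\mathcal{C}$ of pinched curves; Lemma \ref{lemcle4} gives, for each component $\Sigma_j$ of $\Sigma\setminus\mathcal{C}$, a conjugate $g_jB_1g_j^{-1}$ containing $p(\pi_1(\Sigma_j))$, but a priori the cosets $g_jB_1$ need not coincide. I would argue that they nonetheless do. By malnormality, two distinct conjugates of $B_1$ have trivial intersection, and no non-trivial element of one commutes with a non-trivial element of the other. Combining this with the standing hypothesis that any two non-trivial elements of $B_1$ admit conjugates which do not commute, and with the boundary-preserving condition (which forces the boundary images of $\Sigma$ to be non-trivial and to sit in specific conjugates of $B_1$), I would show that the existence of two genuinely distinct conjugates among the $g_jB_1g_j^{-1}$, together with the surface relations of $\pi_1(\Sigma)$, would yield a pair of non-trivial elements of $B_1$ all of whose conjugates commute, contradicting the hypothesis. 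Once all cosets $g_jB_1$ are shown to coincide, the argument reduces to the non-pinching case. This interplay between the pinching pattern on $\Sigma$, malnormality of $B_1$, and the algebraic assumption on $B_1$ is the hardest part of the argument, and is precisely what the ``non-commuting conjugates'' hypothesis is engineered for.
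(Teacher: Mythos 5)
Your direction $(2)\Rightarrow(1)$ and the non-pinching case of $(1)\Rightarrow(2)$ are correct and essentially match the paper (which disposes of $(2)\Rightarrow(1)$ in a single sentence). The gap is in the pinching case of $(1)\Rightarrow(2)$, and it is not a matter of missing technical detail: the claim around which you organize the argument --- that the conjugates $g_j B_1 g_j^{-1}$ containing the various $p(\pi_1(\Sigma_j))$ must all coincide --- is simply false. After normalizing so that $p(c_1)=c_1=a_1$, the component of $\Sigma\setminus\calc$ carrying $c_1$ does map into $B_1$; but a different component carrying $c_i$ (for $i\ge 2$) maps into $h_i t_i^{-1} B_1 t_i h_i^{-1}$, where $h_i\in G$ is the essentially arbitrary conjugating element appearing in the boundary-preserving condition for $c_i$. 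Once $\calc$ separates these boundary components, the constraints decouple: nothing ties $h_i t_i^{-1} B_1 t_i h_i^{-1}$ back to $B_1$. So the proposed contradiction (``a pair of non-trivial elements of $B_1$ all of whose conjugates commute'') is not available, and you have misread the role of the ``non-commuting conjugates'' hypothesis, which the paper uses constructively rather than to rule something out.

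The paper's argument in the pinching case takes an entirely different route. One works in the pinched quotient $G_\calc = G'*G''$, where $G'=\pi_1(\Gamma^\Z_\calc)$ carries $B_1$ and the pieces $\Sigma_\calc^j$ that still meet $\partial\Sigma$, attached along their boundaries, while $G''$ is a free product of closed-surface groups and a free group $F$. On $G'$ the retraction $\rho'$ is built piece by piece: each $p_\calc$ restricted to $\pi_1(\Sigma_\calc^j)$ is conjugated \emph{individually} into $B_1$, and the stable letters of the splitting $\Gamma^\Z_\calc$ absorb the per-piece conjugating discrepancies, exactly as in your non-pinching computation. What remains is to make $\rho'(\pi_1(\Sigma_\calc))$ non-abelian, which is nontrivial when $\partial\Sigma$ is distributed across at least two pieces $\Sigma_\calc^{j_1},\Sigma_\calc^{j_2}$ (then $G''\not\subset\pi_1(\Sigma_\calc)$). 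That is precisely where the hypothesis on $B_1$ enters: there is a primitive element $t$ of $F$ whose image under $\rho'$ is free to choose, and given non-trivial $a_1\in\rho'(\pi_1(\Sigma_\calc^{j_1}))$, $a_2\in\rho'(\pi_1(\Sigma_\calc^{j_2}))$ in $B_1$, the hypothesis supplies $x\in B_1$ with $\langle a_1, x a_2 x^{-1}\rangle$ non-abelian, and one sets $\rho'(t)=x$. The hypothesis enables a choice; it does not forbid a configuration.
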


\begin{proof}
 For $2\implies1$, we simply let $p$ be the restriction of $\rho$ to $Q$.  We now prove $1\implies2$. 
Denoting by $b$ the number of components of $\partial\Sigma$,
  we may present $G$ as $$G=\langle  \pi_1(\Sigma),  B_1,t_2,\dots,t_b\mid c_1=a_1, t_ic_it_i\m=a_i \ \text{for}\  i=2,\dots, b\rangle$$ where $a_i\in   B_1$ 
  and the $c_i$'s are generators of maximal boundary subgroups $C_i$ of $\pi_1(\Sigma)$,   see Figure \ref{fig_retraction}.

Since $p$ maps $c_1$ to a conjugate, we may assume after composing $p$ with a conjugation that $p(c_1)=c_1$. 
Now assume that  there is no pinched curve.
 By Lemma \ref{lemcle4}, the image of $\pi_1(\Sigma)$ by $p$ is contained in  a conjugate of $B_1$. This conjugate contains  $p(c_1)=a_1$, so must be $  B_1$   (by malnormality, see Remark \ref{malnor}).
 
We define $\rho$ as being the identity on $B_1$, and equal to $p$ on $\pi_1(\Sigma)$. This ensures that $\pi_1(\Sigma)$ has   non-abelian image. We must now define $\rho(t_i)\in  B_1$ in a way which is compatible with the relation $t_ic_it_i\m=a_i $. We know that $p(c_i)$ belongs to $B_1$ and is conjugate to $c_i$, hence to $a_i$, in $G$. 
 By malnormality,
 $p(c_i)$ is conjugate to $a_i$ in $ B_1$, so 
  $p(c_i)=g_ia_ig_i\m$   for some $g_i\in B_1$. We then define $\rho(t_i)=g_i\m$.

  \begin{figure}[ht!]
    \includegraphics[width=\linewidth]{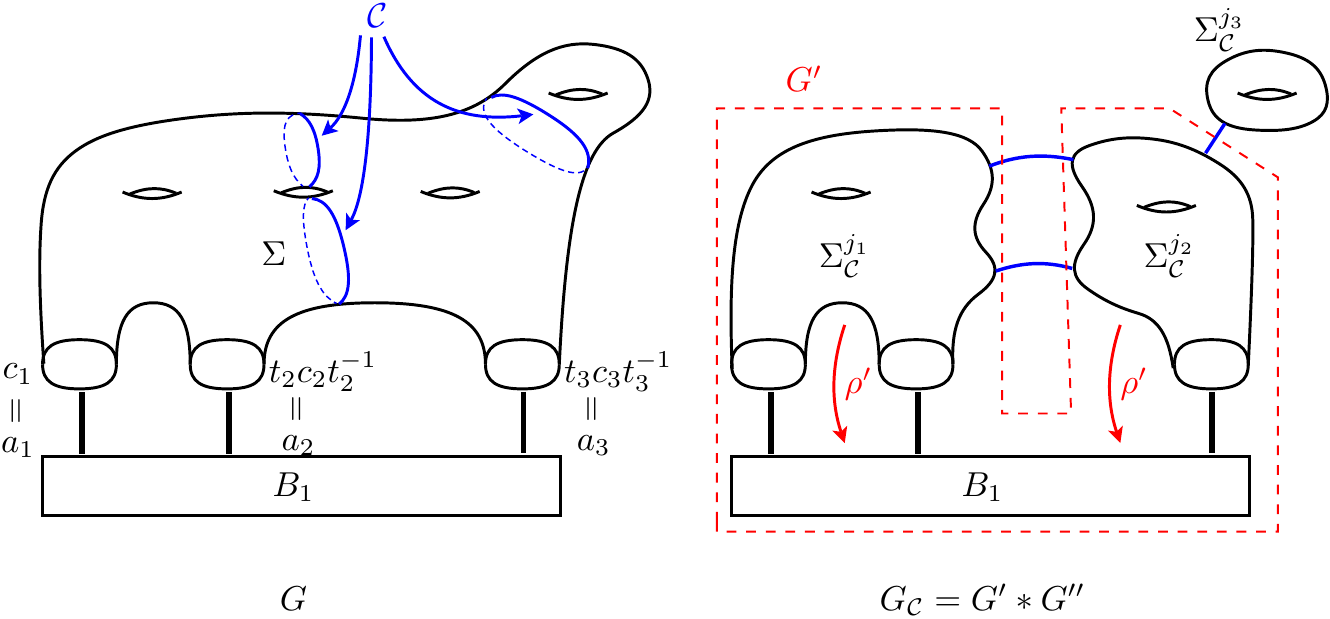}
    \caption{Proof of Proposition  \ref{equivsimple}.}\label{fig_retraction}
  \end{figure}

We now allow pinching (see Figure \ref{fig_retraction}).
We consider a pinched quotient $\pi_\calc:G\to G_\calc$ 
as in Definition \ref{pc} (with $G_1=G$),  with its splitting $\Gamma_\calc$. 
We identify $B_1$ with its image in $G_\calc$.
We shall define a retraction $\rho':G_\calc\to B_1$, and $\rho$ will be $\rho'\circ\pi_\calc$.

Let $G'$ be  the fundamental group  of the maximal subgraph  $\Gamma^\Z_\calc$ of $\Gamma_\calc$ containing no edge with trivial group (the star of the bottom vertex $v_1$). It is a free factor of $G_\calc$, and we can write $G_\calc\simeq G'*G''$, with $G''$   isomorphic to the free product of 
closed surface groups (corresponding to surfaces $\Sigma_\calc^j$ disjoint from $\bo \Sigma$) and a free group $F$ (the topological fundamental group of the graph obtained from $\Gamma_\calc$ by collapsing 
  $\Gamma^\Z_\calc$ 
to a point,  of rank 2 on Figure \ref{fig_retraction}). Note that $G''$  either maps onto $\Z$ or is a (possibly trivial) free product of  cyclic groups of order 2   (fundamental groups of projective planes).

First suppose that  all components of $\bo \Sigma$ belong to the same surface $\Sigma_\calc^{j_0}$.
We can define $\rho'$ on $G'$ in the same way as we defined $\rho$ in the non-pinching case, and   define $\rho'$ arbitrarily on $G''$. We just have to make sure that the image of $\pi_1(\Sigma_\calc)$ is not   abelian. 

This is easy to do, using the assumption on $B_1$, if $G''$ maps onto $\Z$, since $\pi_1(\Sigma_\calc)$ may be identified with $\pi_1(\Sigma_\calc^{j_0})*G''$.  If $G''$    is a free product of  cyclic groups of order at most 2, it is mapped trivially by the map $p_\calc: \pi_1(\Sigma_\calc)\to G$ because $G$ is torsion-free, and the image   of $\pi_1(\Sigma_\calc)$ by $\rho'$ is   non-abelian because it is isomorphic to the image of $\pi_1(\Sigma)$ by $p$.

Now suppose that there are at least two surfaces $\Sigma_\calc^{j_1}$  and $\Sigma_\calc^{j_2}$ containing a component of $\bo\Sigma$. The key difference with the previous case is that we can no longer assume $G''\inc \pi_1(\Sigma_\calc)$, so making the image of $\pi_1(\Sigma_\calc)$   non-abelian is less direct. 
 
We can still define $\rho'$   on  $G'$: 
for each   $\Sigma_\calc^{j}$ containing a component of $\bo \Sigma$, we change the restriction of $p$ to $\pi_1(\Sigma_\calc^{j})$ by a conjugation   so that 
it fixes some boundary subgroup of   $\pi_1(\Sigma_\calc^j)$, just like we ensured $p(c_1)=c_1$ in the non-pinching case, and we define $\rho'$ on the whole of $G'$ as above. We now explain how to use $G''$ to make the image of $\pi_1(\Sigma_\calc)$   non-abelian.

In defining $\rho'$ on $G'$, we have   viewed $\pi_1(\Sigma_\calc^{j_1})$  and $\pi_1(\Sigma_\calc^{j_2})$ as subgroups of $G'$ (vertex groups of $\Gamma^\Z_\calc$). When we view them as subgroups of $\pi_1(\Sigma_\calc)$, hence as vertex groups of the free splitting $\Delta_\calc$  of $\pi_1(\Sigma_\calc)$, they get replaced by conjugates.
More precisely, 
connectedness of $\Sigma$ implies that the free group $F$ appearing as a free factor in   $G''$ is non-trivial, and we can find a primitive element $t$ of $F$ such that  a conjugate of $\pi_1(\Sigma_\calc)$ contains $\pi_1(\Sigma_\calc^{j_1})$ and $t\pi_1(\Sigma_\calc^{j_2})t\m$. 

We may choose $\rho'(t)$ arbitrarily, and we shall define it so that the image of  $\pi_1(\Sigma_\calc)$ by $\rho'$  is   non-abelian.
  Since the images of $\pi_1(\Sigma_\calc^{j_1})$ and $\pi_1(\Sigma_\calc^{j_2})$ by $\rho'$ contain some non-trivial elements $a_1,a_2$,
our assumption on $B_1$ ensures that there exists $x\in B_1$ such that $\grp{a_1,xa_2x\m}$ is non-abelian,
and we can define $\rho'(t)=x$.
We conclude by extending $\rho'$ from $\grp{t}$ to $G''$ arbitrarily.
\end{proof}

  If $G$ is CSA (in particular, if $G$ is hyperbolic), the  hypothesis on $B_1$ simply asks that  $B_1 $ be non-abelian. \emph{From now on, we assume that $G$ is a torsion-free finitely generated CSA group.}

\begin{rem} \label{fact}
Assume that  
   $B_1 $  is abelian 
   and 1 holds. Then $p$  is pinching   by Lemma \ref{lemcle4} and  there exists an epimorphism $\pi'':G''\onto\Z$. Using $\pi''$,  we    can construct a  surjective map $\tilde\rho :G\to B_1*\Z$, equal to the identity on $B_1$, such that $\tilde\rho(Q)$ is  non-abelian. 
    If we let  $t\in G$ be any preimage 
   of a generator of $\Z$ under $\tilde \rho$,
   then $B_1$ and $\grp{t}$ generate their free product in $G$, and we can view $\tilde \rho$ as 
  a retraction from $G$ to  its subgroup $B_1*\grp{t}$.
   This will be used in the  proof of Proposition \ref{equivpassimple}.
\end{rem}

\begin{dfn}[Simple étage]\label{dfn_simple_etage}
  A   CSA group   $G$ is a \emph{simple étage of surface type} over a     non-abelian subgroup $H $ if $G$ has 
  a non-exceptional simple centered splitting $\Gamma$, with bottom group  $B_1=H$, 
  and the equivalent conditions of Proposition \ref{equivsimple} are satisfied. 

  We say that $G$ is a \emph{simple étage of free product type} over   an arbitrary subgroup $H$ if $G$ splits as a free product $G=H*\Z$. 

In either case, we say that $G$ is a \emph{simple étage} over $H$.
\end{dfn}

\begin{rem} \label{pasz}
 We do not allow $H $   to be abelian in the surface-type case (in particular, the splitting defining a parachute as   in Definition \ref{centspl} is not a simple \'etage, though it is an extended \'etage in the sense of Definition \ref{dfn_extended}).
 In a simple étage of free product type, we do allow  $H$   to be   abelian, in particular $G$ may be $\F_2$ or $\Z$.
In some statements (such as Theorem \ref{Tarski}), we  
specifically assume that $H$ is non-abelian also in this case.

\end{rem}

\begin{dfn}[Simple tower]\label{dfn_simple}
A 
CSA group $G$ is a simple tower over a subgroup $H$ if there exists a chain of subgroups
$G=G_0>G_1>\dots >G_k=H$ such that $G_i$ is a simple étage over $G_{i+1}$.
The tower is \emph{trivial} if $k=0$.
\end{dfn}

We allow the case $k=0$, so $G$ is always a simple tower over itself.
On the other hand, not every group is a simple étage. 

\begin{rem}\label{rk_amalgam} 
  If $G$ is a simple tower over $H$, and we have an embedding $H\hookrightarrow H'$,
  then $G*_H H'$ is a simple tower over $H'$.
  In particular, $G*K$ is a simple tower over $H*K$.
\end{rem}

\begin{rem}\label{rk_tour_malnormal}
  If $G$ is a simple tower over $H$, then $H$ is malnormal in $G$ 
(see   Remark \ref{malnor}). 
\end{rem}

\subsection{Extended towers and retractable splittings} \label{ext}

  As mentioned earlier, we   assume from now on  that $G$ is CSA.

  We   want to extend Proposition \ref{equivsimple} to non-simple splittings. The statement given below is more complicated than Proposition \ref{equivsimple}.  One reason is the following. If $\Gamma$ is a simple centered splitting, its base $B_\Gamma=B_1$ is well-defined up to conjugacy as a subgroup of $G$; if $\Gamma$ is not simple, each 
  bottom group $B_i$ is a subgroup of $G$ well-defined up to conjugacy, but   their abstract free product $B_\Gamma$ is not. In terms of logic, ``Tarski''
  (Theorem 
  \ref{Tarski})   about elementary embeddings is valid for simple \'etages, but not   always for extended \'etages (see Theorem \ref{contrex}).

\begin{prop} \label{equivpassimple}
  Let $\Gamma$ be a non-exceptional   centered splitting of $G$, with bottom groups $B_1,\dots,B_n$ and surface group $Q=\pi_1(\Sigma)$.
  The following are equivalent:
\begin{enumerate}
\item
There exists a non-degenerate \bpm{}  
$p:Q\to G$. 
\item There exist:
  \begin{itemize}
  \item conjugates $\tilde B_1,\dots,\Tilde B_n$ of the bottom groups generating their free product
    $\Tilde B_\Gamma=\tilde B_1*\dots*\Tilde B_n$,  
    
    and: 
  \item a retraction $\rho:G\ra \tilde B_\Gamma^e$ with $\rho(Q)$   non-abelian,
     where $\tilde B_\Gamma^e=\tilde B_\Gamma$  if $\tilde B_\Gamma$ is non-abelian, and  $\tilde B_\Gamma^e=\tilde B_\Gamma*\grp{t}$   
     for some non-trivial extra element $t\in G$   if  $n=1$ and $B_1$ 
     is abelian.
  \end{itemize}
\end{enumerate}
\end{prop}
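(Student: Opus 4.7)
The plan is to derive $(2)\Rightarrow(1)$ by direct restriction, and to prove $(1)\Rightarrow(2)$ by a construction modeled on the proof of Proposition~\ref{equivsimple}, using the pinched quotient of Definition~\ref{pc} to separate the contributions of the different bottom groups.

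For $(2)\Rightarrow(1)$, set $p=\rho|_Q$. The same computation used in Proposition~\ref{equivsimple} shows that $p$ agrees on each boundary subgroup of $Q$ with a conjugation in $G$, so $p$ is boundary-preserving. Non-abelianness of $p(Q)=\rho(Q)$ and non-exceptionality of $\Sigma$ are part of the hypotheses. The last clause of non-degeneracy — that $p$ is not an isomorphism of $Q$ onto a conjugate of itself — follows since $p(Q)\subset \tilde B_\Gamma^e$, a subgroup built from conjugates of bottom groups (plus possibly a single free factor $\langle t\rangle$): by the $1$-acylindricity of the Bass-Serre tree of $\Gamma$ near $v$ (see Remark~\ref{malnor}), no conjugate of the surface-type vertex group $Q$ can sit inside $\tilde B_\Gamma^e$.

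For $(1)\Rightarrow(2)$, fix a \ndbpm\ $p:Q\to G$ associated to $\Gamma$ and a maximal family $\calc$ of pinched curves on $\Sigma$. The pinched quotient gives a surjection $\pi_\calc:G\onto G_\calc$ together with a refinement $\Gamma_\calc$ of $\Gamma$ obtained by blowing up the central vertex $v$ using the free splitting $\Delta_\calc$ of $\pi_1(\Sigma_\calc)$. By Lemma~\ref{lemcle4} combined with Remark~\ref{vi}, for each component $\Sigma_\calc^j$ of $\Sigma_\calc$ that contains a boundary component of $\Sigma$, the image $p(\pi_1(\Sigma_\calc^j))$ lies in a conjugate of a single bottom group $B_{i(j)}$, and all edges of $\Gamma$ attached to those boundary components join $v$ to $v_{i(j)}$. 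Using appropriate lifts of the $v_i$'s to the Bass-Serre tree of $\Gamma_\calc$, choose conjugates $\tilde B_1,\dots,\tilde B_n$ that generate their free product $\tilde B_\Gamma$ inside $G$. One then builds $\rho_\calc:G_\calc\to\tilde B_\Gamma^e$ vertex-group by vertex-group: as the identity on each $\tilde B_i$, as the (appropriately conjugated) restriction of $p$ on each $\pi_1(\Sigma_\calc^j)$ meeting $\partial\Sigma$ (the compatibility with the edge-group identifications being arranged exactly as in Proposition~\ref{equivsimple}, using malnormality of each $B_i$), and by arbitrary images on stable letters and on the free factor $G''$ coming from closed components of $\Sigma_\calc$ and from the graph-fundamental-group contribution. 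The desired retraction is $\rho=\rho_\calc\circ\pi_\calc$.

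The main obstacle is ensuring that $\rho(Q)$ is non-abelian. If $n\geq 2$, or $n=1$ with $B_1$ non-abelian, the group $\tilde B_\Gamma$ contains enough non-commuting pairs to arrange the arbitrary data (images of stable letters and of the free factor $G''$) so that the images under $\rho$ of generators of $Q$ generate a non-abelian subgroup, mirroring the final paragraph of the proof of Proposition~\ref{equivsimple} where one chooses $\rho'(t)=x$ to defeat commutation. The truly delicate case is $n=1$ with $B_1$ abelian: then $\tilde B_\Gamma=\tilde B_1$ is itself abelian, and non-abelianness of $\rho(Q)$ forces one to enlarge the target by a free factor $\langle t\rangle$, precisely as sketched in Remark~\ref{fact}, using an epimorphism $G''\onto\bbZ$ to upgrade $\rho_\calc$ into a retraction $\tilde\rho:G\to \tilde B_1*\langle t\rangle$ with non-abelian image of $Q$.
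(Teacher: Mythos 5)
Your outline follows the paper's: pass to the pinched quotient $\pi_\calc\colon G\to G_\calc$, retract vertex-group by vertex-group, lift back, and invoke Remark~\ref{fact} for the simple case with abelian base. The gap is in the non-abelianness step for $n\geq 2$.

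You propose to ``arrange the arbitrary data (images of stable letters and of the free factor $G''$) \dots mirroring the final paragraph of the proof of Proposition~\ref{equivsimple}.'' That argument is needed in Proposition~\ref{equivsimple} precisely because all pieces of $\Sigma_\calc$ meeting $\partial\Sigma$ land in the \emph{single} bottom group $B_1$, so one needs free data to defeat commutation. When $n\geq 2$ the situation is structurally different, and the correct observation — which the paper uses and your proposal misses — is that non-abelianness is automatic: minimality of $\Gamma$ gives boundary components attached to $v_1$ and to $v_2$, and by Lemma~\ref{lemcle4} together with Remark~\ref{vi} these lie in distinct pieces of $\Sigma_\calc$, one mapping into a conjugate of $\tilde B_1$ and the other into a conjugate of $\tilde B_2$. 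Two such non-trivial elements never commute in the free product $\tilde B_\Gamma=\tilde B_1*\cdots*\tilde B_n$, so $\rho(Q)$ is non-abelian no matter how the free data is chosen. This is not a cosmetic point: there may be nothing to arrange. If $\Sigma=N_{2,2}$ is pinched along a single separating curve, with $n=2$ and one boundary component per bottom vertex, then $\Sigma_\calc$ has two pieces, no stable letters and no closed components; your ``choose $\rho(t)$ cleverly'' step would have nothing to act on, even though the conclusion still holds.

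A smaller issue of bookkeeping: the composition $\rho_\calc\circ\pi_\calc$ takes values in $G_\calc$, but a retraction must land in the subgroup $\tilde B_\Gamma^e\subset G$. As in the paper's proof, one must first choose lifts $\tilde B_i\subset G$ mapped isomorphically onto $\tilde B'_i$ by $\pi_\calc$ (this is also what guarantees that the $\tilde B_i$'s generate their free product in $G$), and then postcompose $\rho'\circ\pi_\calc$ with the inverse of the induced isomorphism $\tilde B_\Gamma\to\langle\tilde B'_1,\dots,\tilde B'_n\rangle$. Your formula $\rho=\rho_\calc\circ\pi_\calc$ elides this step. Finally, in your $(2)\Rightarrow(1)$ argument, the ``same computation used in Proposition~\ref{equivsimple}'' that you invoke does not exist — that direction there is a one-liner — and the 1-acylindricity justification you give for the non-isomorphism clause is not quite the right mechanism; it is cleaner to note that for $n\geq 2$ the image $\rho(Q)$ meets two distinct conjugates of bottom groups, which a conjugate of $Q$ cannot do.
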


 Proposition \ref{equivpassimple} is equivalent to Proposition \ref{equivsimple} when $\Gamma$ is simple and $B_1$ is non-abelian, but 
it extends it
in two ways: we allow non-simple splittings, and     simple splittings with $B_1$   abelian. 

 We shall say that  $\Gamma$ is retractable if it satifies the equivalent conditions above (see Definition \ref{dfn_retractable}).
 Note that, when $n=1$ and $B_1$ is  abelian, requiring $\rho(Q)$ to be non-abelian forces us to   enlarge $B_\Gamma$.

\begin{rem}\label{Error}
The extra condition when  $n=1$ and $B_1$ is abelian   has been added in the erratum \cite{Perin_elementary_erratum} after the third named author, while working   on \cite{PeSk_homogeneity}, observed that the first condition in Proposition \ref{equivpassimple} does not always imply the second. For example,  $\pi_1(N_4)$ admits a non-exceptional centered splitting  with central group $\pi_1(N_{3,1})$ and a single bottom group $B_1$ isomorphic to $\mathbb{Z}$, and there is an obvious \ndbpm{}
 $p: \pi_1(N_{3,1})\rightarrow \pi_1(N_4)$, but since $B_1$ is abelian no retraction $\rho:\pi_1(N_4)\rightarrow B_1$ with $\rho(\pi_1(N_{3,1}))$ non-abelian can be found.  
 \end{rem}

\begin{proof} 
  First assume that $\Gamma$ is not simple, i.e.\ $n\geq 2$. Let $\pi_\calc:G\to G_\calc$ be  a pinched quotient   (Definition \ref{pc}).
We denote by $v_1,\dots, v_n$ the bottom vertices of $\Gamma$ and also  the corresponding vertices of $\Gamma_\calc$.
Note that 
a vertex    of $\Gamma_\calc$ carrying some $\pi_1(\Sigma_\calc^j)$ cannot be joined to two distinct $v_i$'s by Remark \ref{vi}. We may therefore write $G_\calc$ as a  free product $G_\calc=G'_1*\dots*G'_n*G''$, with $G'_i$ the fundamental group of the star of the vertex $v_i$ in $\Gamma_\calc$. 

As in the proof of Proposition \ref{equivsimple},
we may retract each $G'_i$ onto a subgroup $\tilde B'_i<G'_i$ conjugate to the stabilizer of $v_i$ in $\Gamma_\calc$. 
These retractions extend to a retraction 
$\rho':G_\calc\to \langle \tilde B'_1,\dots,\tilde B'_n\rangle\simeq  B_1 *\dots * B_n\simeq   B_\Gamma$.
We choose  a lift $\tilde B_i\inc G$ which  is mapped isomorphically onto $\tilde B'_i$ by $\pi_\calc$.
Since $\tilde B'_1,\dots \tilde B'_n$ generate their free product, so do $\tilde B_1,\dots,\tilde B_n$, and $\pi_\calc$
induces an isomorphism from  $\langle \tilde B_1*\dots*\tilde B_n\rangle$ to $\langle \tilde B'_1,\dots,\tilde B'_n\rangle$.   

We define $\rho$ by postcomposing  
$\rho'\circ\pi_\calc$ with the inverse of this isomorphism. 
  The image of $Q$ 
is automatically non-abelian because it contains a free group generated by the image of two boundary subgroups of $\pi_1(\Sigma)$ contained  in $B_1$ and $B_2$ respectively (up to conjugacy). 

If $\Gamma$ is simple and $B_1 $   is abelian, then 2 $\imp$ 1 is clear and 1 $\imp$ 2 follows from Remark \ref{fact}.
\end{proof}

Proposition \ref{equivpassimple} explains the terminology in the following definition. 

\begin{dfn}[Retractable surface, retractable splitting] \label{dfn_retractablesurf}\label{dfn_retractable}
  Let   
  $Q$ be a subgroup of $G$   isomorphic to 
  the fundamental group of a non-exceptional surface $\Sigma$. 
We say that  $\Sigma$, and $Q$, are \emph{retractable (in $G$)} if there exists a \ndbpm\ $p:Q\ra G$. 

  A non-exceptional centered splitting $\Gamma$ satisfying the equivalent conditions of Proposition \ref{equivpassimple} 
  is called a \emph{retractable} centered splitting,  and we say that its central vertex is retractable.

  More generally, if a \ndbpm\ $p:Q\ra G$ is associated to a surface-type vertex $v$ of a splitting $\Gamma$ of a subgroup $G_1<G$, we say that $\Gamma$ and  $v$  
are  \emph{retractable in $G$} (when $G_1=G$, we just say retractable).
\end{dfn}

\begin{rem}\label{rem_retractable}
  If   a  cyclic splitting $\Gamma$ is retractable, so is the centered splitting obtained by collapsing all edges not containing the vertex carrying $\Sigma$ (and subdividing if needed).  In particular, $G$ has a retractable centered splitting as soon as some cyclic splitting has a surface-type vertex group $Q$ with a non-degenerate \bpm{}  
$p:Q\to G$.
  
  Also note that retractable centered splittings  have to be  non-exceptional. 
\end{rem}

\begin{rem} \label{ineg}
  Recall (Corollary 3.5 of \cite{GLS_finite_index}) that, if a centered splitting $\Gamma$ is retractable, then $g\ge n_1$, with $g$ the genus of $\Sigma$  and $n_1$ the number of bottom vertices of $\Gamma$ having valence 1. 
\end{rem}

The following definition was introduced by Perin \cite{Perin_elementary,Perin_elementary_erratum}. 

\begin{dfn}[Extended étage, extended tower]\label{dfn_extended}
  If 
$G$ has a non-exceptional centered splitting $\Gamma$ 
satisfying the equivalent conditions of Proposition \ref{equivpassimple}, we say that  $G$ is an \emph{extended étage of surface type} over
 the subgroup $\Tilde B_\Gamma=\tilde B_1*\dots *\tilde B_n$ appearing in assertion 2  (always isomorphic to the base of $\Gamma$,  even if $n=1$ and $B_1$ is abelian).

An \emph{extended étage of free product type} over $H$ is 
a free product $G=H*\Z$ (as in Definition \ref{dfn_simple_etage}).

 A group $G$ is an \emph{extended tower} over a subgroup  $H$ if there exists a sequence of  subgroups
$G=G_0> G_1> \dots > G_k=H$ such that $G_i$ is an extended étage over $G_{i+1}$.
\end{dfn}

As in Definitions \ref{dfn_simple_etage} and \ref{dfn_simple}, we   allow 
$H$ to be abelian in \'etages of free product type, and we allow towers with no \'etage, so that $G$ is always an extended tower over itself. 

\begin{rem} \label{passimp}
   Beware  that  an extended étage of surface type with a single bottom group   $H $  (i.e.\ a simple retractable centered  splitting) 
 does not define   a simple \'etage if $H$ is abelian (see Remark \ref{pasz});   in particular,  a parachute is not always a simple \'etage.
\end{rem}

\begin{rem} \label{quot}
  If $G$ is an   extended tower
 over $H$, there 
  is a retraction from $G$ to $H$, so one may view $H$ both as a subgroup and a   quotient of $H$.  
 In general, however, $H$ is not canonically defined as a subgroup of $G$; for instance, changing the choice of the conjugates $\tilde B_i$'s in Proposition  \ref{equivpassimple} expresses $G$ as an \'etage 
 over isomorphic, but non-conjugate,  subgroups.
   What is canonical is the family of  homomorphisms 
$B_\Gamma\ra G$ which agree with a conjugation on each $B_i$.
Depending on context, one may want to think
of $H$ either as a subgroup of $G$, or as an abstract group, forgetting about its embedding in $G$. Roughly speaking, Sections \ref{ordre} and \ref{elemequiv} are about abstract groups, Section \ref{ee} about subgroups.
\end{rem}

\begin{rem} \label{surfdeux}
When $n=1$ and   $B_1\simeq\Z$    ($G$ is a parachute), 
the existence of the retraction $G\ra \tilde B_1*\grp{t}$ in  Proposition \ref{equivpassimple} is equivalent to the existence of a 
retraction  $\tilde\rho:G*\Z\to B_1*\Z$ such that  $\tilde\rho(Q)$ is not abelian. 
This is the way Perin defines it in   \cite{Perin_elementary_erratum}. 
Note in particular that $G*\Z$ is a simple \'etage of surface type over    $B_1*\Z\simeq\F_2$.
\end{rem}

The following remarks apply to both simple and extended \'etages/towers.

\begin{rem}\label{rk_prodlib}
If $G$ is an   étage over $H$, then $G*G'$ is an étage over $H*G'$ for any $G'$ (replace the bottom group  $B_1$ of $\Gamma$ by $B_1*G'$). 
It follows that, if $G$ is a   tower over $H$ and $G'$ is a   tower over $H'$, then $G*G'$ is a   tower over $H*H'$.
\end{rem}

\begin{rem} \label{zenbas}
In a tower, one can switch étages of surface type and of free product type as follows: 
if $G_{i-1}=G_i*\Z$ and $G_i$ is an  étage of surface type over $G_{i+1}$,
then $G_{i-1}$ is an   étage of surface type over $G'_i=G_{i+1}*\Z$, which is an   étage of free product type over $G_{i+1}$. This allows us to assume that all \'etages of surface type are at the top of the tower.  
\end{rem}

 Proposition \ref{equivpassimple} is about single \'etages. The following is about towers.

\begin{prop}\label{prop_retractions}
  Assume that $G$ is an extended tower over a subgroup $H$.
  
  Then there are subgroups $G=G_0> G_1>\dots> G_k= H'$,
  a (possibly trivial) free group $F\subset H'$ such that $H'=H*F$, and for $i<k$ non-exceptional
   retractable centered splittings   $\Gamma_i$ 
  of $G_i$, and
  retractions $\rho_i:G_i\ra G_{i+1}$ such that:
  \begin{itemize}
  \item the image under $\rho_i$ of the central vertex group $Q_i$ of   $\Gamma_i$  is
      non-abelian;
  \item 
  for $i>0$, the group    $G_{i}$ is the free product of some conjugates of the bottom groups of  $\Gamma_{i-1}$, plus an extra cyclic factor if $i=k$ and  $\Gamma_{k-1}$ is simple with an abelian base.
  \end{itemize}
\end{prop}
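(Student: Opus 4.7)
The strategy is to combine the rearrangement trick of Remark \ref{zenbas} with iterated applications of Proposition \ref{equivpassimple}. By definition of an extended tower, there is a chain $G = G^o_0 > G^o_1 > \cdots > G^o_{k_0} = H$ in which each step is an extended étage of either surface type or free product type. First I would use Remark \ref{zenbas} repeatedly to swap any free-product étage sitting immediately above a surface-type étage; iterating produces a rearranged tower
$$G = G_0 > G_1 > \cdots > G_m > G_{m+1} > \cdots > G_{k_0} = H$$
in which all surface-type étages occupy the top (steps $i < m$) and all free-product étages occupy the bottom (steps $i \geq m$). Composing the free-product étages at the bottom identifies $G_m \simeq H * F_0$ for a (possibly trivial) free group $F_0$ of rank $k_0 - m$.

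A crucial observation is that in this rearranged tower, a parachute étage (a simple centered splitting with abelian base) can only occur as $\Gamma_{m-1}$, the very last surface-type step. Indeed, if $\Gamma_i$ were a parachute for some $i < m - 1$, then its base $G_{i+1}$ would be abelian cyclic; but $G_{i+1}$ is itself an extended étage of surface type over $G_{i+2}$, which forces $G_{i+1}$ to contain a non-abelian surface group $\pi_1(\Sigma_{i+1})$ with $\chi(\Sigma_{i+1}) < 0$, a contradiction. This matches precisely the clause in the second bullet of the statement allowing an extra cyclic factor only at $i = k$.

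To finish, for each $i < m$ I would apply Proposition \ref{equivpassimple} to the surface-type étage $G_i > G_{i+1}$, obtaining a non-exceptional retractable centered splitting $\Gamma_i$ of $G_i$, a specific choice of conjugates $\tilde B_{i,1}, \ldots, \tilde B_{i,n_i}$ of the bottom groups generating their free product, and a retraction $\rho_i: G_i \to \tilde B_{\Gamma_i}^e$ whose restriction to the central vertex group $Q_i$ has non-abelian image. I would then take $G_{i+1}$ (as a subgroup of $G_i$) to be this image $\tilde B_{\Gamma_i}^e$, which equals $\tilde B_{\Gamma_i}$ outside the parachute case and $\tilde B_{\Gamma_i} * \langle t \rangle$ in it. Finally I set $k := m$ and $H' := G_k$; the free factor $F$ is simply $F_0$, together with the extra generator $t$ absorbed into it when the parachute occurs at step $k - 1$. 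The main technical point requiring attention is propagating the extended étage structure down through the new chain: each subgroup $G_{i+1} \subset G_i$ is isomorphic via $\rho_i$ to the abstract base at that level, and therefore inherits its own extended étage structure over the next group, so the iteration is well-defined and produces the required data.
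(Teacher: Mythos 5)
Your proposal is correct and follows the same strategy as the paper's proof: push the free-product étages to the bottom via Remark~\ref{zenbas}, absorb them into a free factor $F$, and then read off the splittings and retractions of the remaining surface-type étages directly from Proposition~\ref{equivpassimple}, enlarging the final subgroup by a cyclic factor in the parachute case. The paper's proof is considerably terser and does not spell out your ``crucial observation'' that a parachute étage can only appear as the last surface-type step (since otherwise the next group in the chain would be cyclic, hence could not be an extended étage of surface type over anything); this is indeed what justifies the clause ``if $i=k$'' in the second bullet, so your elaboration is a correct and useful addition rather than a deviation.
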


\begin{proof}
  Using Remark \ref{zenbas}, starting from a sequence of étages defining the tower, one
    can move  the étages of free product type  down to get 
  a sequence of groups
  $G=G_0, G_1, \dots, G_{k}=H'$, 
  where each $G_i$ is an étage of surface type over $G_{i+1}$ and $H'=H*F$ with $F$ free. 
We conclude by applying  Proposition \ref{equivpassimple}.   If   $G_k$ is abelian, we must enlarge $G_k=H'$ and $F$ by taking their free product with $\grp{t}$ as in Proposition \ref{equivpassimple}.
\end{proof}

\begin{rem}
 Assume  for simplicity that $G$ is hyperbolic (or more generally that all abelian subgroups of $G$ are cyclic).
The subgroups appearing in this proposition are almost the same as those in Definition \ref{dfn_extended}, but not exactly, because of
  parachutes: a parachute is an étage over $\Z$, but in order to have a retraction such that the surface group has  non-abelian  image one must retract to a free group of rank 2.
Conversely, if   there are subgroups $G_i$ as in 
the proposition, then $G$ is an extended tower over $H$,  except possibly if $H=G_k\simeq \F_2$: in this case it may happen that  $G$ is   a tower over a cyclic free factor of $H$, but   not over $H$ itself.
\end{rem}

\begin{figure}[ht!]
  \centering
  \includegraphics{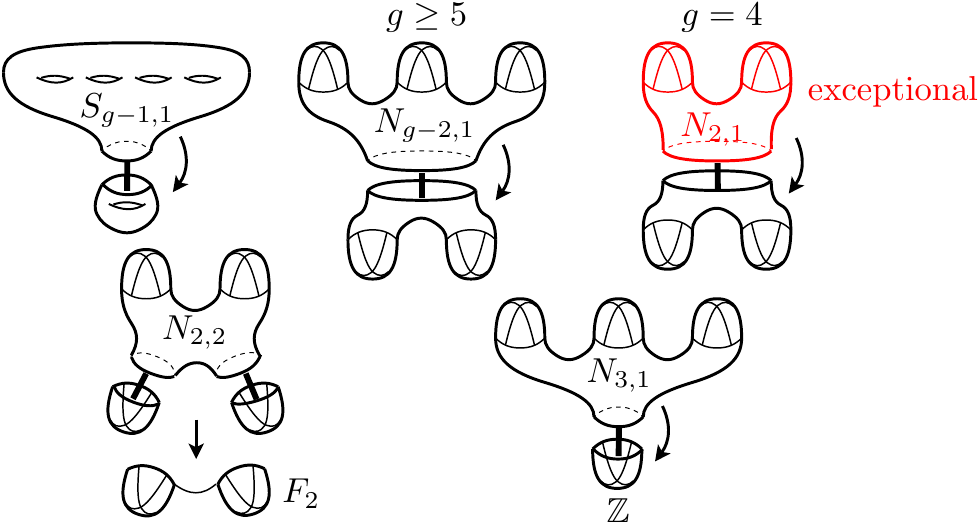}
  \caption{Top part: non-exceptional surfaces except $N_4$ are simple étages over $\bbF_2$ (the top right picture is not a valid étage because $N_{2,1}$ is exceptional). 
  \newline
   Bottom part: $N_4$ is an extended étage in two different ways.}
\label{fig_N4}
\end{figure}

The following lemma implies that a given $G$ cannot be a tower of arbitrary length.

\begin{lem}\label{dccg}
   Let $G$ be a torsion-free CSA group. Denote by $b_1^2(G)$ the first Betti number modulo 2 (the dimension of $\mathrm{Hom}(G,\Z/2\Z)$ over $\Z/2\Z$).
If $G$ is an extended \'etage over $H$, then $b_1^2(G)>b_1^2(H)$.
\end{lem}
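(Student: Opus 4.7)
The plan is to exhibit a non-trivial homomorphism $\phi:G\to\Z/2\Z$ vanishing on $H$, using that $H$ is always a retract of $G$. I distinguish the two kinds of étage.

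If $G=H\ast\Z$ is a free product étage, abelianization gives $G^{\mathrm{ab}}\cong H^{\mathrm{ab}}\oplus\Z$, so $b_1^2(G)=b_1^2(H)+1$ and we are done.

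For a surface-type extended étage, let $\Gamma$ be the centered splitting, with central vertex group $Q=\pi_1(\Sigma)$, bottom groups $B_1,\dots,B_n$ and $b$ edges (one per boundary component of $\Sigma$). Proposition~\ref{equivpassimple} produces a retraction $\rho:G\to\tilde B_\Gamma^e$; composing with the obvious projection $\tilde B_\Gamma\ast\grp{t}\twoheadrightarrow\tilde B_\Gamma$ in the parachute case, we get a retraction $G\to H$. Hence the restriction map $\mathrm{Hom}(G,\Z/2\Z)\to\mathrm{Hom}(H,\Z/2\Z)$ is split surjective, and it is enough to produce a non-zero $\phi:G\to\Z/2\Z$ with $\phi|_H=0$.

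Fixing a spanning tree of $\Gamma$, the group $G$ is presented by the relations of the $B_i$'s and of $Q$, plus one stable letter $t_e$ for each of the $b-n$ non-tree edges, subject to edge relations whose abelianizations read $[c_e]=[b_e]$, with $c_e\in Q$ a boundary generator and $b_e\in B_{i(e)}$. Imposing $\phi|_H=0$ forces $\phi|_{B_i}=0$ for every $i$, since a homomorphism to an abelian group is conjugation invariant; the abelianized edge relations then force $\phi|_Q$ to kill every boundary subgroup of $\pi_1(\Sigma)$, while the values $\phi(t_e)\in\Z/2\Z$ remain arbitrary. Thus the kernel of the restriction map has dimension
\[
\dim_{\Z/2\Z}H^1(\hat\Sigma;\Z/2\Z)+(b-n),
\]
where $\hat\Sigma$ is obtained from $\Sigma$ by capping off every boundary component with a disc.

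The main point---in fact the only subtle point---is to check that this dimension is positive. If $\Sigma$ is not orientable of genus $0$, then $\hat\Sigma$ is not the sphere and $H^1(\hat\Sigma;\Z/2\Z)\neq 0$. Otherwise $\Sigma=\Sigma_{0,b}$ with $b\geq 4$ by non-exceptionality, and Remark~\ref{ineg} applied with $g=0$ gives $n_1=0$, meaning every bottom vertex has valence at least $2$; counting incidences we get $b\geq 2n$, hence $b-n\geq n\geq 1$, and we obtain a non-trivial $\phi$ by sending one stable letter to $1$. In both cases a non-zero $\phi:G\to\Z/2\Z$ vanishing on $H$ exists, proving $b_1^2(G)>b_1^2(H)$.
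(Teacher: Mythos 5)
Correct, and this is essentially the paper's argument: reduce via the retraction to exhibiting a nontrivial $\phi:G\to\Z/2\Z$ vanishing on $H$, observe that such $\phi$ come either from $H^1$ of the capped-off surface (non-planar case) or from a stable letter ($\Gamma$ not a tree), and use Remark~\ref{ineg} ($g\geq n_1$) to rule out the case where both contributions vanish. You have simply written out explicitly the dimension count and the edge-counting that the paper's proof leaves as ``easy to do.''
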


\begin{proof}
 The result is clear if the \'etage is of free product type. Assume it is of surface type, associated to a surface $\Sigma$ and a centered splitting $\Gamma$. Since $H$ is a retract of $G$, it suffices to find a homomorphism from $G $ onto $\Z/2\Z$
  vanishing on $H$.
 This is easy to do 
 if $\Sigma$ is not planar or if $\Gamma$ is not a tree of groups.
 In the remaining case ($\Sigma$ is planar and $\Gamma$ is a tree), $\Gamma$ cannot be retractable \cite{GLS_finite_index}: a retractable $\Gamma$ with $\Gamma$ planar has no terminal vertex   (see Remark \ref{ineg}).
\end{proof}

 Recall that  $S_{g,b}$ and $N_{g,b}$ denote the    orientable/non-orientable 
surfaces of genus $g$ with $b$ boundary components, and $S_g=S_{g,0}$, $N_g=N_{g,0}$.

\begin{example} [Closed surface groups, see Figure \ref{fig_N4}]\label{csurf}
\emph{All non-exceptional closed surface groups except $\pi_1(N_4)$ are simple étages over $\bbF_2$, and $\pi_1(N_4)$ is an extended étage over $\Z$  and $\bbF_2$.
}

 We sketch the proof (compare \cite{Perin_elementary, LPS_hyperbolic}).
  Every $S_g$ with $g\geq 2$ is non-exceptional and $\pi_1(S_g)$  is a simple étage over $\bbF_2$ (see Figure \ref{fig_N4}).
For $g\geq 5$,  decomposing $N_g$ into $N_{g-2,1}$ and $N_{2,1}$  makes $\pi_1(N_g)$  a simple étage over $\bbF_2$.  
When $g=4$, this does not yield an étage structure because $N_{2,1}$ is exceptional.

To express $\pi_1(N_4)$ as an extended étage, 
 view  
 $N_4$ as a   twice-punctured Klein bottle $N_{2,2}$ with a M\"obius band  
glued to each boundary component.  
This  yields a retractable splitting of its fundamental group 
$\pi_1(N_4)$ with base $\bbF_2$, so $\pi_1(N_4)$ is an extended \'etage over $\bbF_2$ (the associated retraction maps $\pi_1(N_4)=\langle a_1,a_2,b_1,b_2\mid a_1^2a_2^2=b_1^2b_2^2\rangle$ to $\langle a_1,a_2\rangle$ by sending $b_i$ to $a_i$). 
One may also decompose $N_4$ as the union of 
$N_{3,1}$ and a M\"obius band, so $\pi_1(N_4)$ has a simple retractable splitting with base $\Z$ (but is not a simple \'etage,   see Remark \ref{pasz}). 

The surface $N_3$ is exceptional, its fundamental group is not an extended \'etage. 
\end{example}

\subsection{Relative extended towers}

Recall that, if $G$ is an extended \'etage of surface type over $H$, associated to a   centered splitting $\Gamma$ of $G$,  the bottom groups are by definition elliptic in $\Gamma$ and are well-defined up to conjugacy as subgroups of $G$. On the other hand, if the \'etage is not simple, there is no canonical embedding of $H$ into $G$. 
This motivates the following definition,   which is important for applications.

\begin{dfn}[Relative   extended tower  \cite{Perin_elementary}]\label{relto}
Let $G=G_0>G_1>\dots >G_k=H$ be an extended tower. 
The tower is \emph{relative to a subgroup $A\inc H$} if
     each \'etage of surface type is relative to $A$ in the following sense:   
the group $A$ is contained, up to conjugacy, in a bottom group of the centered splitting defining the étage.
\end{dfn}

\begin{example} \label{subtil}
Any simple tower over $H$ is relative to $H$, or to any subgroup $A\subset H$.
Now suppose that $G$ has a retractable centered splitting  
with two bottom groups $B_1,B_2$ and  base $G_1=B_1*B_2$.
Also suppose that $G_1$ is a   simple  \'etage over $H$. The tower $G>G_1>H$ is relative to $H$ if and only if $H$ is contained in a conjugate of $B_1$ or $B_2$.
\end{example}

\begin{rem} 
  If $G$ is an étage of free product type over $H<G$, i.e. $G=H*\bbZ$, then the  
    \'etage is automatically relative to $A$ for any subgroup $A<H$.
\end{rem}

\begin{rem}\label{prop_retractions_rel}
Proposition \ref {prop_retractions} holds if $G$ is an extended tower over $H$ relative to $A$, with $A$ contained in a conjugate of a bottom group of  $\Gamma_i$ for $i=0,\dots,k-1$.
\end{rem}

\begin{rem} \label{ttrel}
  If $G_i$  as in Definition \ref{relto} is an extended \'etage of surface type over $G_{i+1}$, then $G_{i+1}$ is the free product of the bottom vertex groups of $\Gamma_i$. Thus, 
if $A<H$ is a one-ended subgroup,
the tower is automatically relative to $A$.
More generally, if 
$A$ is 
one-ended relative to a subgroup $A'$, and the tower is relative to $A'$,  then the tower is also relative to $A$.
\end{rem}

\begin{rem}\label{rem_last_simple}
 An  extended \'etage of surface type over $H$ is relative to $H$ if and only if it is simple. 
If $G$ is an extended tower over $H$ relative to $H$, then the last étage   (between $G_{k-1}$ and $G_k$) has to be an  étage of free product type or a simple étage.
\end{rem}

\subsection{Relation with other definitions}\label{sec_defs}
Sela introduced hyperbolic $\omega$-residually free towers in \cite{Sela_diophantine1}.
They are defined as simple towers (involving only étages of surface type) over a (non-abelian)
free product of (maybe 0) non-exceptional closed surface groups and a (possibly trivial) free group.
Because étages of free product type may be moved to the bottom by Remark \ref{zenbas},
one does not change the definition if one allows étages of free product type in this definition.
Moreover, 
since fundamental groups of non-exceptional  surfaces except $N_4$ are simple towers over $\F_2$  (Example \ref{csurf}),
and since $\pi_1(N_4)*\pi_1(N_4)$ and $\pi_1(N_4)*\bbZ$ are simple towers over $\bbF_2$,
$\omega$-residually free towers coincide with simple towers over $\bbF_2$ or over $\pi_1(N_4)$.

 Our extended towers are almost identical to the extended hyperbolic towers defined by Perin in \cite{Perin_elementary_erratum}.

Perin defined hyperbolic floors/\'etages in \cite{Perin_elementary} allowing the possibility of 
  a non-simple centered splitting, \ie  a splitting having  several bottom vertices.
In \cite[Section 4]{Perin_elementary_erratum} Perin alludes to the possibility that Sela also thinks of   non-simple floors/\'etages, 
but this definitely does not appear in print.

As mentioned   in Remark \ref{Error}, Perin's definition needed a fine tuning,   hence the introduction of extended hyperbolic floors in \cite{Perin_elementary_erratum}.    We do not 
use the same retraction as Perin  
in the case of parachutes, but this makes no difference 
 (see Remark \ref{surfdeux}).

 Unlike Perin, we insist that only one surface appears in each \'etage (we only use centered splittings). This does not affect the definition of   towers, though it increases the number of \'etages in a given tower.   We do not use Bass-Serre presentations, but we select conjugates of the $B_i$'s in Proposition  \ref{equivpassimple}.
 
Perin does not consider étages of free product type, but by Remark \ref{zenbas} 
\emph{$G$ is an extended   tower over $H$ in our sense if and only if $G$ is   an extended hyperbolic tower over $H*F$   in the sense of Perin \cite{Perin_elementary_erratum} for some
free subgroup $F$.
}

We  now quickly review the definition of regular NTQ systems and NTQ groups from \cite[Section 7.6]{KhMy_implicit}.
If $\cals=\{S_1(x_1,\dots,x_n),\dots,S_k(x_1,\dots,x_n)\}$ is a set of equations in the variables $x_1,\dots, x_n$ (maybe with constants in $G$),
the coordinate group $G_{R(\cals)}$ of $\cals$ is the largest residually-$G$ quotient of $\grp{G,x_1,\dots,x_n\mid \cals }$,
i.e. the quotient of
$G*\grp{x_1,\dots,x_n}$ by 
the set of words  $r(x_1,\dots,x_n)$ (with constants in $G$) such that 
any  $g_1,\dots, g_n\in G$ satisfying $S_i(g_1,\dots,g_n)=1$ also satisfies $r(g_1,\dots,g_n)=1$.
We will restrict to the case where $G$ is a limit group (over $\bbF_2$).

The building blocks of regular NTQ systems are quadratic systems of equations (corresponding to surface groups),
and empty equations (corresponding to the free product with a free group).
More generally, a TQ system, as defined in Definition 43 of \cite{KhMy_implicit},
also allows commutation equations (corresponding to extensions of centralizers, or abelian étages),
but these are ruled out in the definition of \emph{regular} NTQ systems.
In regular NTQ systems, one requires \cite[Def.~6]{KhMy_implicit} that the quadratic equations correspond to non-exceptional surfaces
and that the equations have a non-commutative solution in $G$ (corresponding to retractions such that the surface groups  have  non-abelian image).
Thus, a group is the coordinate group $G_{R(\cals)}$ of a regular quadratic system of equations over $G$
if and only if it is a simple étage of surface type over $G$.
 A group $L$ is the coordinate group of a regular NTQ system over $G$ (including constants) if and only if it
is a simple tower over $G$. 
We note that, for $G=\bbF_r$ with $r\geq 2$, this class of groups depends on $r$ (all such groups have an epimorphism to $\bbF_r$).
For $r=2$, this class includes the fundamental group of all non-exceptional surfaces except $N_4$.

If a system of equations $\cals$  does not involve constants in $G$, one can additionally define 
a \emph{constant-free} coordinate group $G^*_{R(S)}$  
as the largest residually-$G$ quotient of $$\grp{x_1,\dots,x_n\mid  \cals}.$$
A group $L$ is the constant-free coordinate group of a regular NTQ system without constants over the free group $\bbF_r$
if and only if $L*\bbF_r$ is a simple tower over $\bbF_r$. 
One can easily check that this notion does not depend on $r\geq 2$.
This yields a larger class of groups, containing all non-exceptional surface groups  (including the fundamental group of $N_4$), elementarily free parachutes,
$\bbZ$ and the trivial group.

\section{Some properties of towers} \label{prto}

  In this section we assume that $G$ is CSA, torsion-free, finitely generated.
\subsection{Making an \'etage simple}

Simple towers are better than extended towers; in particular, Theorem \ref{Tarski} about elementary embeddings only applies to simple towers. 
It is therefore desirable to make an \'etage simple. 
  We show that this is   possible if the surface is complicated enough, or if we are prepared to enlarge $G$  
 by taking a free product with a free group (Corollary \ref{ett}).

\begin{prop} \label{3simple}
 Assume that $G$ has a   \emph{non-simple}  
retractable centered splitting $\Gamma$ 
such that 
the associated surface $\Sigma$ has Euler characteristic satisfying $\chi(\Sigma)\le -3$. 

  Then $G$ has a retractable \emph{simple} splitting $\hat \Gamma$ whose base is isomorphic to that of $\Gamma$ (so $G$ is a simple \'etage of surface type).
\end{prop}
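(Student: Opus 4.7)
The plan is to produce $\hat\Gamma$ by a topological surgery on $\Sigma$. I would choose a subset $I \subseteq \{1, \dots, n\}$ of size $m \geq 1$ and, for each $i \in I$, one boundary component $\partial_i$ of $\Sigma$ attached to $B_i$. Pick an embedded tree $T \subset \Sigma$ with exactly $m$ endpoints, one lying on each $\partial_i$, and let $\Sigma_2 \subset \Sigma$ be a regular neighborhood of $T \cup \bigcup_{i \in I} \partial_i$. Then $\Sigma_2$ is orientable and planar with $m + 1$ boundary components: the $m$ chosen components of $\partial \Sigma$ together with one new simple closed curve $c$ in the interior of $\Sigma$. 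Set $\Sigma_1 = \overline{\Sigma \setminus \Sigma_2}$; this will be the central surface of $\hat\Gamma$.

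Using that the $m$ boundary loops $\partial_i$ freely generate $\pi_1(\Sigma_2)$, a direct application of van Kampen shows that attaching $\Sigma_2$ to the spaces $X_i$ with $\pi_1(X_i) = B_i$ (for $i \in I$) yields the free product of the $B_i$ for $i \in I$, in which $c$ is represented by an explicit product of the chosen attaching elements. Hence $G$ is the fundamental group of a simple centered splitting $\hat\Gamma$ whose central vertex carries $\pi_1(\Sigma_1)$, whose unique bottom vertex carries $H = B_1 * \cdots * B_n$, and whose edges attach each boundary of $\Sigma_1$ to an element of $H$: the $b - m$ original boundaries of $\Sigma$ remaining in $\Sigma_1$ are attached to their original elements $g_j \in B_j \subset H$, and $c$ is attached to the product above viewed inside $H$. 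By construction, the base of $\hat\Gamma$ is isomorphic to $B_\Gamma$.

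The parameter $m$ must be chosen so that $\Sigma_1$ is non-exceptional, which is where the hypothesis $\chi(\Sigma) \leq -3$ enters. Since $\Sigma_2$ is orientable and planar, $\Sigma_1$ inherits the orientability of $\Sigma$, and has the same genus as $\Sigma$ in the orientable case, with $b - m + 1$ boundary components. If $\Sigma$ has positive (orientable or non-orientable) genus, I take $m = n$; the resulting $\Sigma_1$ still has positive genus and at least one boundary, hence is non-exceptional. If $\Sigma = S_{0, b}$ is planar, the hypothesis forces $b \geq 5$, and taking $m = \min(n, b - 3) \geq 1$ yields $\Sigma_1 = S_{0, b - m + 1}$ with at least four boundary components, avoiding the only exceptional possibility $S_{0, 3}$. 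The non-orientable case is analogous, adjusting $m$ in the few extreme configurations to avoid $N_{1,2}, N_{2,1}, N_3$; in every instance the slack provided by $\chi(\Sigma) \leq -3$ is sufficient.

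It remains to show that $\hat\Gamma$ is retractable, for which I plan to apply Proposition \ref{equivpassimple} to the restriction $\bar p$ of the \ndbpm\ $p : \pi_1(\Sigma) \to G$ associated to $\Gamma$. The map $\bar p$ is boundary-preserving with respect to $\hat\Gamma$, since each boundary subgroup of $\pi_1(\Sigma_1)$ is either a boundary subgroup of $\pi_1(\Sigma)$ or is generated by $c$, whose image under $p$ is conjugate in $G$ to its specified attaching element in $H$ by construction. I expect the main technical obstacle to be verifying the non-degeneracy of $\bar p$, namely that its image is non-abelian and that $\bar p$ is not an isomorphism onto a conjugate of $\pi_1(\Sigma_1)$. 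Both should follow from Lemma \ref{lemcle4} applied to $p$ together with the non-exceptionality of $\Sigma_1$: the lemma controls the image of $\pi_1$ of each piece in a maximal pinched decomposition, and the room afforded by $\chi(\Sigma) \leq -3$ together with the freedom in the choice of $I$ and $T$ ensure that at least one piece in $\Sigma_1$ contributes non-abelianly, while preventing $\bar p$ from being an isomorphism onto a conjugate.
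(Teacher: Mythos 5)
Your topological reduction is a reasonable one-shot compression of the paper's construction (the paper instead iterates, removing one pair of pants at a time), and your Euler characteristic bookkeeping is essentially sound (with $g+b\ge 2n$ from retractability ruling out the bad configurations). But there is a genuine gap at the heart of the argument, which the paper's iterative approach is precisely designed to handle.

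The crucial claim that $\bar p$ is boundary-preserving for $\hat\Gamma$ is not justified. The map $p$ agrees with a conjugation on each boundary subgroup $\pi_1(\partial_i)$, but the conjugators on \emph{different} boundary subgroups are in general different: they agree only when the boundary components lie in the same piece $\Sigma_\calc^j$ of a maximal pinched decomposition. The new curve $c=\partial\Sigma_2\setminus\partial\Sigma$ represents a product of conjugates of the chosen $\partial_i$'s in $\pi_1(\Sigma)$, so $p(c)$ is a product $\prod_i h_i g_i^{\pm 1}h_i^{-1}$ with \emph{independent} conjugators $h_i\in G$. That is not automatically conjugate to the element of $H=B_1*\cdots*B_n$ to which $c$ is attached in $\hat\Gamma$, so the phrase ``conjugate \dots\ by construction'' is doing unjustified work. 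The paper handles exactly this point: it chooses $C_1,C_2$ in two different pinched pieces $\Sigma^1,\Sigma^2$, chooses $\gamma$ meeting a single pinched curve $\delta$ exactly once, and then \emph{modifies} $p$ by post-composing with conjugations on $\pi_1(\Sigma^1_\calc)$ and $\pi_1(\Sigma^2_\calc)$ (possible since these are free factors of $\pi_1(\Sigma_\calc)$) so that the resulting $p'$ is literally the identity on $\pi_1(C_1)$, $\pi_1(C_2)$, hence on $\pi_1(P)$. Only after this normalization is the restriction to $\pi_1(\hat\Sigma)$ a \bpm. Iterating lets the paper re-run the pinched-decomposition analysis at each step; your one-shot tree $T$ has no control on how it crosses the pinched curves, and you would need a multi-piece version of the normalization argument, which you have not supplied. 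Relatedly, the final paragraph acknowledges rather than proves the non-degeneracy of $\bar p$; in the paper this comes concretely from the curves $C'_1,C'_2$ in the 4-holed sphere neighborhood of $C_1\cup\gamma\cup C_2\cup\delta$, whose images lie in conjugates of $B_1$ and $B_2$ — an argument that makes essential use of $\delta$ being pinched.

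A smaller point: for $\hat\Gamma$ to be \emph{simple}, you must take $m=n$; with $m<n$ some $B_j$'s remain as separate bottom vertices. Your planar case writes $m=\min(n,b-3)$, which does turn out to equal $n$ under $\chi(\Sigma)\le -3$ and $g+b\ge 2n$, but this coincidence is not noted, and the non-orientable case is left as ``adjusting $m$ in the few extreme configurations,'' which again quietly assumes $m=n$ throughout.
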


The proposition applies unless the non-exceptional surface $\Sigma$ is   a 4-punctured sphere  $S_{0,4}$, a   thrice-punctured projective plane $N_{1,3}$, or a   twice-punctured Klein bottle $N_{2,2}$  
(the twice-punctured torus cannot appear in a  non-simple retractable splitting by    \cite{GLS_finite_index}, see Remark \ref{ineg}). 
 These surfaces cause pathologies that will be studied in Section \ref{examp}.

\begin{figure}[ht!]
  \centering
  \includegraphics[width=\linewidth]{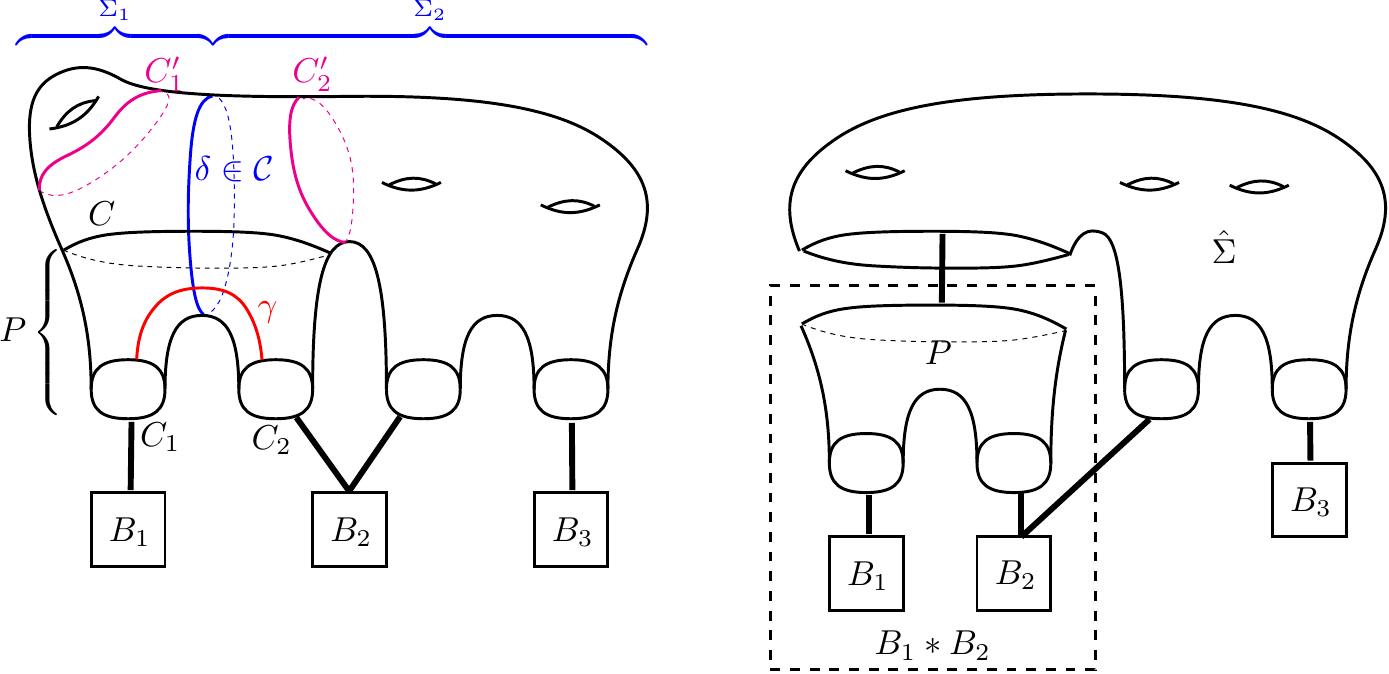}
  \caption{Proof of Proposition \ref{3simple}.}
  \label{fig_3simple}
\end{figure}

\begin{proof}
 Let $B_1,\dots,B_n$ be the bottom groups of $\Gamma$, with $n\ge2$, and $Q=\pi_1(\Sigma)$ the central group.
 Let $p: Q\to G$ be a non-degenerate \bpm. Let $C_1$ and $C_2$ be two boundary components of $\Sigma$ whose fundamental groups are contained in conjugates of $B_1$ and $B_2$ respectively (see Figure \ref{fig_3simple}).
 Using Lemma \ref{lemcle4}, we may find a  (possibly non-maximal) family $\calc$ of pinched curves separating $\Sigma$ into two   connected surfaces $\Sigma^1,\Sigma^2$ containing $C_1$ and $C_2$ respectively  (on Figure \ref{fig_3simple},   the family $\calc$ consists of the single curve $\delta$). 

We choose an arc $\gamma\inc \Sigma$ joining $C_1$ to $C_2$ and meeting $\calc$ exactly once,  and we call $\delta$ the curve of $\calc$ which meets $\gamma$.    
We view $Q$ as $\pi_1(\Sigma, x)$, with a basepoint $x\in \gamma$, and using $\gamma$ to join $C_1$ and $C_2$ to $x$ we can  view $\pi_1(C_1) $ and $\pi_1(C_2) $ as subgroups of $Q$ which are well-defined, not just up to conjugacy (this is similar to the choice of subgroups  $\tilde B_i$ in the proof of Proposition \ref{equivpassimple}).

Let $P\inc \Sigma$ be a pair of pants obtained as a regular neighborhood of $C_1\cup\gamma\cup C_2$, with $\bo  P$ consisting of $C_1,C_2$, and a third curve $C$. 
We shall   define a \bpm\  $\hat p$ from $\pi_1(\hat \Sigma)$ to $G$, where $\hat\Sigma$ denotes the surface obtained from $\Sigma$ by removing $P$.  Note that  $\chi(\hat \Sigma)=\chi(  \Sigma)+1\le-2$, so $\hat\Sigma$ is non-exceptional.

 A regular neighbourhood of $C_1\cup\gamma\cup C_2\cup\delta$ is a $4$-punctured sphere containing $P$;  its boundary consists of $C_1,C_2$ and 
curves $ C'_1,C'_2$ with $C'_i\subset \hat \Sigma\cap\Sigma^i$.
Since $\delta$ is pinched, the image of  $\pi_1(C'_i)$ by $p$ is conjugate to that of $\pi_1(C_i)$, hence contained in a conjugate of $B_i$.

We cannot directly use the restriction of $p$  to $\pi_1(\hat \Sigma)$ because it does not have to  map $\pi_1(C)$ to a conjugate, so we first have to modify $p$. 
Denote by $\Sigma_\calc$ the pinched space (see Definition \ref{pc}), so that $p$ factors through $ p_\calc:\pi_1(\Sigma_\calc)\ra G$. Let $\Sigma^1_\calc,\Sigma^2_\calc$  be the images of $\Sigma^1,\Sigma^2$ in $\Sigma_\calc$, and denote by  $p_\calc^i$ the restriction of $p_\calc$ to $\pi_1(\Sigma^i_\calc)$.
Since $\pi_1(\Sigma^1_\calc)*\pi_1(\Sigma^2_\calc)$  is a free factor
of  $\pi_1(\Sigma_\calc)$, one can   modify $p_\calc$ by composing each $p_\calc^i$ with a  conjugation so that the modified map $p'=p_\calc\circ \pi_\calc$  from $\pi_1(\Sigma)$ to $G$ is a  \bpm\   
which is the identity on $\pi_1(C_1) $ and $\pi_1(C_2)$, hence on $\pi_1(P)$.

The restriction $\hat p$ of $p'$ to $\pi_1(\hat \Sigma)$  is a \bpm. It is associated to a splitting $\hat \Gamma$ having one less vertex group than $\Gamma$: the vertex groups $B_1$ and $B_2$ have been replaced by  a single vertex group isomorphic to $B_1*B_2$, in particular the base of $\hat \Gamma$ is isomorphic to  that of $\Gamma$. 
  The   non-degeneracy of $\hat p$ follows from the fact that its image contains those of the groups $\pi_1(C'_i)$, which are non-trivial and contained in conjugates of $B_i$.

If $\hat \Gamma$ is not simple, we can 
  iterate this process and obtain a \ndbpm\ associated to a simple splitting. There remains to check that the corresponding surface $\Sigma_f$ is non-exceptional when $\chi(\Sigma)\le-3$.  
  
 We denote by $g,b,n$ the genus of $\Sigma$, the number of boundary components of $\Sigma$, and the number of bottom vertices of $\Gamma$  respectively. Retractability of $\Gamma$ implies $g+b\ge2n$ (\cite{GLS_finite_index}, see Remark \ref{ineg}).
   By construction $\Sigma_f$ has genus $g$ and $b-n+1$ boundary components. We assume that $\Sigma_f$ is exceptional (so $\chi(\Sigma_f)=-1$ and $g\le2$), 
   and we deduce that $\chi(\Sigma)=-2$.
  
  First suppose that $\Sigma$ is orientable. This implies  $g=0$  
  since non-planar orientable surfaces are non-exceptional. Then $b\ge 2n$ and 
  $1=| \chi(\Sigma_f) | = b-n-1\ge   n-1$, so $n=2$,  $b=4$, and $\Sigma$ is a 4-punctured sphere.
  
  If $\Sigma$ is non-orientable with $g=1$, we have  $1=| \chi(\Sigma_f) | = b-n\ge   n-1$, so $n=2$, $b=3$, and $\Sigma$ is a   thrice-punctured projective plane.
  
  If $\Sigma$ is non-orientable with $g=2$, we have  $1=| \chi(\Sigma_f) | = b-n+1\ge   n-1$, so $n=b=2$ and $\Sigma$ is a   twice-punctured Klein bottle.
\end{proof}

\begin{figure}[ht!]
  \centering
  \includegraphics{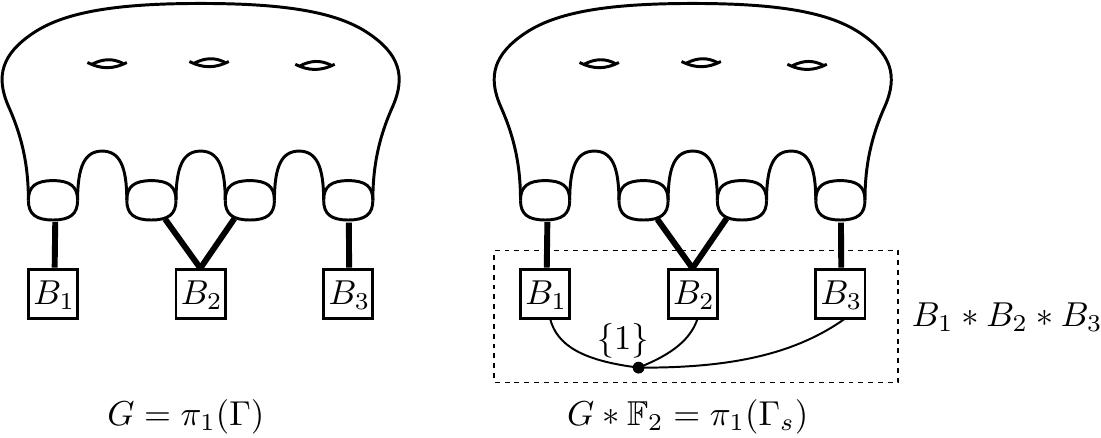}
  \caption{If $G$ is an extended étage with bottom groups $B_1$, $B_2$, $B_3$, then $G*\bbF_2$ is a simple étage over a subgroup isomorphic to $B_1*B_2*B_3$.}
  \label{fig_ajout_arc}
\end{figure}

\begin{prop}[See Figure \ref{fig_ajout_arc}] \label{tstab}
Let $\Gamma$ be a centered splitting of $G$. 
  If 
it is  retractable, 
  there exists a (possibly trivial) 
  free group $F$ and a  {simple} retractable splitting $\Gamma_s$ of  $G*F$,
  which makes $G*F$ a simple \'etage.  
  
  The splittings $\Gamma$ and $\Gamma_s$ are associated to  the same surface.   The base of $\Gamma_s$ is isomorphic to the base of $\Gamma$, except that $\Gamma_s$ has base   $B_1*\Z$ if   $\Gamma$ is simple with an abelian base $B_1$. 
\end{prop}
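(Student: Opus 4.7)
I will separate cases by the number $n$ of bottom vertices of $\Gamma$.

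\emph{Simple case $n=1$.} If $B_1$ is non-abelian, $\Gamma$ itself is already simple and retractable; I take $F$ trivial and $\Gamma_s=\Gamma$. If $B_1$ is abelian (the parachute case), the conclusion is exactly Remark \ref{surfdeux}: $G*\bbZ$ is a simple étage of surface type over $B_1*\bbZ$ on the same surface $\Sigma$, and I take $F=\bbZ$.

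\emph{Main case $n\ge 2$.} I will take $F$ to be free of rank $n-1$ with basis $t_2,\dots,t_n$, and construct $\Gamma_s$ by two combinatorial moves on $\Gamma$. First, I enlarge $\Gamma$ to a graph of groups $\Gamma''$ by adjoining $n-1$ new edges with trivial edge groups, the $i$-th such edge joining $v_1$ to $v_i$ for $i=2,\dots,n$. Since adding an edge with trivial edge group between two existing vertices of a connected graph of groups produces a free product with $\bbZ$, one has $\pi_1(\Gamma'')=G*F_{n-1}=G*F$. Next, I collapse each of these $n-1$ new edges; collapsing an edge with trivial edge group joining two distinct vertices merges them into a single vertex whose group is the free product of the two original vertex groups. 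After the $n-1$ collapses, the bottom vertices fuse into a single vertex carrying $B_1*\cdots*B_n=B_\Gamma$, while the central vertex $v$ (with group $Q=\pi_1(\Sigma)$) and the original edges with their cyclic edge groups are untouched. The resulting $\Gamma_s$ is thus a simple centered splitting of $G*F$ on the same surface $\Sigma$ with base $B_\Gamma$.

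\emph{Retractability of $\Gamma_s$.} To verify the retractability condition, I apply Proposition \ref{equivpassimple} to $\Gamma$ to choose conjugates $\tilde B_i$ of the $B_i$'s generating their free product $\tilde B_\Gamma=\tilde B_1*\cdots*\tilde B_n$, together with a retraction $\rho:G\ra \tilde B_\Gamma$ such that $\rho(Q)$ is non-abelian. Since each $B_i$ is defined only up to conjugacy in $G$, I reselect it so that $B_i=\tilde B_i$, whence $\rho$ fixes each $B_i$ pointwise. In $G*F$, the bottom group of $\Gamma_s$ identifies with
\[
H_s:=B_1*t_2B_2t_2\m*\cdots*t_nB_nt_n\m,
\]
which is abstractly isomorphic to $B_\Gamma$. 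Let $\psi:\tilde B_\Gamma\to H_s$ be the isomorphism sending each factor $B_i$ onto $t_iB_it_i\m$ via $b\mapsto t_ibt_i\m$ (with the convention $t_1:=1$), and define $\rho_s:G*F\to H_s$ by $\rho_s|_G:=\psi\circ\rho$ and $\rho_s(t_i):=1$ for $i\ge 2$. A short computation using $\rho|_{B_i}=\id$ gives $\rho_s|_{H_s}=\id$, and $\rho_s(Q)=\psi(\rho(Q))$ is non-abelian since $\psi$ is injective. Proposition \ref{equivpassimple} then yields retractability of $\Gamma_s$.

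The only conceptual step is the realization that the new free generators of $F$ serve to reconnect the different bottom vertices through arcs with trivial edge groups; once this is seen, everything else is routine verification of the graph-of-groups manipulations and of the retraction formula.
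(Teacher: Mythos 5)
Your proof is correct and follows essentially the same approach as the paper: take $F$ free of rank $n-1$, conjugate the bottom groups $\tilde B_i$ by the new generators $t_i$ to make them generate their free product inside $G*F$, and define the retraction by composing $\rho$ with this conjugation and killing the $t_i$'s. The graph-of-groups manipulation (adjoin trivial edges, collapse them) that you use to construct $\Gamma_s$ explicitly is a fine way to make precise what the paper states in one line, but it is the same construction.
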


Conversely, we will see 
  (Corollary 
  \ref{redu}) that $G$ has a retractable splitting whenever some  $G*F$ has one.

\begin{proof}
There is nothing to prove if $n=1$, except if $\Gamma$   has an abelian base   $B_1 $. But then $G*\Z$ is a simple \'etage over $B_1*\Z$.

Now assume $n\ge2$ (see Figure \ref{fig_ajout_arc}). Let $\rho:G\to H=\tilde B_1*
\dots*\tilde B_n $ be a retraction provided by Proposition \ref{equivpassimple}. 
Write $\bbF_{n-1}=\langle t_2,\dots, t_{n}\rangle$  and define $\varphi:H\to G*\bbF_{n-1}$ as being the identity on $\tilde B_1$ and conjugation by $t_i$ on $\tilde B_i$. It induces an isomorphism (also denoted by $\varphi$) between $H$ and a subgroup $H'= 
\tilde B_1 *\tilde B_2^{t_2}*\cdots*\tilde B_{n}^{t_{n}} \inc G*\bbF_{n-1}$, with $\tilde B_i^{t_i}=t_i\tilde B_it_i\m$. 

The group $H'$ is the base of a simple 
centered splitting of $G*\bbF_{n-1}$, whose restriction to $G$ is $\Gamma$. The map 
$\rho':G*\bbF_{n-1}\to H'$ equal to $\varphi\circ\rho$ on $G$ and killing every $t_i$
is a retraction     from $G*\bbF_{n-1}$ to $H'$.
\end{proof}

\begin{rem} \label{bassimp}
If  $\Gamma$  
is not simple,   this construction makes $G*F$ a  simple  \'etage over a  subgroup 
$H'$ which is isomorphic to $H$ but is not contained in $G$.
\end{rem}

\begin{cor} \label{ett}
 If $G$ is an extended tower over a group $H$,  
  there exists a free group $F$ such that $G*F$ is a simple tower over   a subgroup $H'$ isomorphic to $H*\bbF_2$.  If $H$ is not abelian, one can take $H'\simeq H$.
\end{cor}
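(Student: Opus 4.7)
The plan is to induct on the length $k$ of the extended tower $G=G_0>G_1>\dots>G_k=H$. The base case $k=0$ is trivial, with $F=\{1\}$ and $H'=H$. For the inductive step, I would apply induction to the shorter tower $G_1>\dots>G_k=H$ to produce a free group $F_1$ and a subgroup $H'_1\subset G_1*F_1$ such that $G_1*F_1$ is a simple tower over $H'_1$, with $H'_1\simeq H$ if $H$ is non-abelian and $H'_1\simeq H*\bbF_2$ in general.

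Next I would make the top \'etage simple and glue the two pieces together. If $G_0$ is an \'etage of free product type over $G_1$ it is already simple; set $F_0=\{1\}$, $H'_0=G_1$. Otherwise it is of surface type, and Proposition \ref{tstab} produces a free group $F_0$ and a subgroup $H'_0\subset G_0*F_0$ making $G_0*F_0$ a simple \'etage over $H'_0$, with $H'_0\simeq G_1$ outside the abelian parachute case. Fixing an isomorphism $\phi:G_1\xrightarrow{\sim}H'_0$, Remark \ref{rk_amalgam} applied with $K=F_1$ ensures that $G_0*F_0*F_1$ is a simple \'etage over $H'_0*F_1$. The induced isomorphism $G_1*F_1\xrightarrow{\sim}H'_0*F_1$ (given by $\phi$ on $G_1$ and the identity on $F_1$) transports the inductive tower structure of $G_1*F_1$ over $H'_1$ to a simple tower of $H'_0*F_1$ over $\phi(H'_1)$. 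Stacking the top \'etage then exhibits $G_0*F_0*F_1$ as a simple tower over $\phi(H'_1)\simeq H'_1$, completing the step with $F:=F_0*F_1$ and $H':=\phi(H'_1)$.

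To handle the non-abelian refinement $H'\simeq H$, I would observe that each $G_i$ retracts onto $G_{i+1}$, hence onto $H$; since a retract embeds, $H\hookrightarrow G_i$ for every $i$, so if $H$ is non-abelian no $G_i$ can be isomorphic to $\bbZ$. This rules out the abelian parachute case at every level, so the induction yields $H'_0\simeq G_1$ throughout, and hence $H'\simeq H$. For the general statement, whenever an intermediate step produces a base in a slightly different isomorphism class (for example $H$ itself, or $H*\bbZ$ coming from a parachute), one further appeal to Remark \ref{rk_amalgam} and a free product with $\bbZ$ bring it to $H*\bbF_2$.

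The main obstacle I foresee is the bookkeeping of identifications: the subgroup $H'_0\subset G_0*F_0$ produced by Proposition \ref{tstab} is only abstractly isomorphic to $G_1$, not equal to the original copy $G_1\subset G_0\subset G_0*F_0$ as a subgroup, and so the inductive tower structure on $G_1*F_1$ must be transported to $H'_0*F_1$ via the chosen isomorphism $\phi$ before it can be stacked beneath the top \'etage. This is also the source of the extra free factor when $H$ is abelian: a parachute \'etage inherently inserts an extra $\bbZ$ into the base through Proposition \ref{tstab}, and this contribution cannot be removed unless the non-abelianness of $H$ forbids parachutes from appearing anywhere in the tower.
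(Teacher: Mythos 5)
Your induction on the tower length is a legitimate alternative to the paper's route. The paper first normalizes the tower via Proposition~\ref{prop_retractions} (pushing all free-product \'etages to the bottom, leaving $G=G_0>\dots>G_k=H*F'$ with only surface \'etages), then applies Proposition~\ref{tstab} level by level and collects the free factors with Remark~\ref{rk_prodlib}. You instead induct directly on the unnormalized tower, applying Proposition~\ref{tstab} only to the top \'etage and transporting the inductive tower structure on $G_1*F_1$ to $H'_0*F_1$ through an isomorphism $\phi*\mathrm{id}$. For non-abelian $H$ this is correct: since $H\subset G_i$ for all $i$, no $G_{i+1}$ is abelian, so Proposition~\ref{tstab} always gives $H'_0\simeq G_1$ and the transport goes through.

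The gap is in the abelian case, precisely in the scenario you flag. When $G_0$ is a parachute over $G_1\simeq\bbZ$, Proposition~\ref{tstab} returns $H'_0\simeq G_1*\bbZ\simeq\bbF_2$, and then $G_1*F_1\simeq\bbF_{1+r_1}$ while $H'_0*F_1\simeq\bbF_{2+r_1}$ are free of \emph{different} ranks. There is no isomorphism between them, so the transport step---which is the engine of your inductive argument---simply cannot be run. Your proposed repair (``one further appeal to Remark~\ref{rk_amalgam} and a free product with $\bbZ$'') does not fix this: freely multiplying $G_1*F_1$ by $\bbZ$ to match $H'_0*F_1$ would shift the transported base to $H*\bbF_2*\bbZ\simeq H*\bbF_3$, overshooting the target $H*\bbF_2$. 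What actually saves the day is that in this scenario $G_1\simeq\bbZ$ forces $H\in\{\{1\},\bbZ\}$, so $H'_0*F_1$ is a free group of rank $\geq\mathrm{rk}(H*\bbF_2)$ and is therefore a simple tower over a co-free free factor $\simeq H*\bbF_2$ by free-product \'etages alone---a direct argument replacing the transport, not an extension of it. The cleaner route, and the one the paper takes, is to dispose of the abelian case up front: replace the whole tower by $G*\bbF_2>\dots>H*\bbF_2$ using Remark~\ref{rk_prodlib}, making the base non-abelian, and then run the non-abelian argument without ever meeting a parachute. I'd recommend restructuring the proof that way rather than trying to patch the transport.
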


\begin{proof}
By  Proposition \ref{prop_retractions},
there exists  a tower  $G=G_0> G_1\dots> G_k=H*F'$ where $G_i$ is an extended étage of surface type over $G_{i+1}$ and $F'$ is free.

  Assume first that $H$ is not abelian.
Then no $G_i$ is   abelian, so for all $i<k$ there is a free group $F_i$ such that $G_i*F_i$ is a simple tower   over a subgroup $G'_{i+1}$ isomorphic to $G_{i+1}$.
Applying Remark \ref{rk_prodlib} inductively, one finds a free group $F$ such that $G*F$ is a simple tower over a   subgroup isomorphic to $H*F'$, hence over 
a  subgroup isomorphic to $H$.   

If   $H $ is abelian, then 
 $G*\bbF_2$
is an extended tower over   the non-abelian group $H*\bbF_2$ 
by Remark \ref{rk_prodlib},  and the argument given above applies.
\end{proof}

\begin{rem}\label{rk_tour_rel}
 In Corollary \ref{ett}, 
  assume  that the tower is relative to a subgroup $A<H$,
   and let $H=H_1*\dots *H_k* \F_r$ be a Grushko decomposition of $H$ relative to $A$. At each \'etage, every $H_j$ is contained in a conjugate of a bottom group.
   It follows from the way that $\varphi$ was defined in the proof of Proposition \ref{tstab} that there is an isomorphism $\psi:H\to H'$ whose restriction to each $H_j$ agrees with the conjugation by some element of $G*F$ (depending on $i$);   
in particular $H'$ contains a conjugate of $A$   (and we may choose $H'$ so that it contains $A$).  If the tower is relative to $H$ itself, then $H'$ is conjugate to $H$ in 
$G*F$.

 By Remark \ref{ttrel},
 the tower is automatically relative to $A$ if $A<H$ is one-ended; thus \emph{any 
one-ended $A<H$ is  contained in  $H'$ up to conjugacy.} 
This also holds if $A$ is one-ended relative to a subgroup $A'<A$ and the tower is relative to $A'$.
\end{rem}

The following lemma will be used in Section \ref{interp}.

\begin{lem}\label{lem_malnormal_tour} 
 Assume that $G$ is an extended tower over a subgroup $H<G$  relative to $A<H$, and let $H=H_1*\dots *H_k*\bbF_r$ be the Grushko decomposition of $H$  relative to $A$.

Then  $(H_1,\dots,H_k)$ is a malnormal family in $G$: if $H_i^g\cap H_j\neq \{1\}$, then $i=j$ and $g\in H_i$.
\end{lem}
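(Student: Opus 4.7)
The plan is to reduce to the case of a simple tower and then exploit the malnormality of simple tower bases. Since each $H_j$ is one-ended relative to $A$ (either as an absolute one-ended factor, or as a relative factor containing a conjugate of $A$), Remark \ref{ttrel} shows that the tower is relative to each $H_j$. By Corollary \ref{ett} there exists a free group $F$ such that $G*F$ is a simple tower over a subgroup $H' \subset G*F$ isomorphic to $H$ (or to $H*\bbF_2$ when $H$ is abelian); by Remark \ref{rk_tour_rel}, every $H_j$ is conjugate inside $G*F$ to a subgroup $\tilde H_j \subset H'$, and the $\tilde H_j$'s are exactly the one-ended Grushko factors of $H'$ relative to $A$.

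The next step is to establish malnormality of the family $(\tilde H_j)$ in $G*F$. This uses two ingredients. First, $H'$ is malnormal in $G*F$: by induction along the étages of the simple tower, Remark \ref{malnor} handles surface-type étages (where the next lower group is, up to conjugacy, the unique bottom group) and free-product étages are handled by the malnormality of free factors; the conclusion follows from the transitivity principle that if $M$ is malnormal in $K$ and $K$ is malnormal in $L$, then $M$ is malnormal in $L$. Second, the family $(\tilde H_j)$ is malnormal in $H'$: the Grushko decomposition of $H'$ relative to $A$ realizes $H'$ as acting on a Bass-Serre tree with trivial edge stabilizers in which each $\tilde H_j$ fixes a unique vertex $v_j$ and no adjacent edge; consequently every non-trivial element of $\tilde H_j$ has fixed-point set $\{v_j\}$, and a non-trivial element of $g\tilde H_i g^{-1}\cap \tilde H_j$ forces $v_j = g\cdot v_i$, hence $i=j$ and $g\in \Stab(v_j)=\tilde H_j$. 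Combining the two ingredients, any non-trivial element of $g\tilde H_i g^{-1}\cap \tilde H_j$ in $G*F$ lies in $gH'g^{-1}\cap H'$, forcing $g\in H'$ and then $i=j$, $g\in \tilde H_i$.

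Finally I would transfer back to $(H_j)$. Writing $H_j = a_j \tilde H_j a_j^{-1}$ with $a_j \in G*F$, suppose $gH_ig^{-1}\cap H_j \neq \{1\}$ with $g \in G$; then $(a_j^{-1} g a_i)\tilde H_i(a_j^{-1} g a_i)^{-1}\cap \tilde H_j \neq \{1\}$, so the preceding step gives $i=j$ and $a_j^{-1} g a_i \in \tilde H_i$. When $i=j$, both $a_i$ and $a_j$ conjugate $\tilde H_i$ onto $H_i$, so $a_i^{-1} a_j$ normalizes $\tilde H_i$ in $H'$; since $\tilde H_i$ is a non-trivial factor in the non-trivial free product $H'$, its normalizer in $H'$ equals $\tilde H_i$ itself, and a direct computation places $g$ inside $a_i \tilde H_i a_i^{-1} = H_i$. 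As the malnormality condition involves only elements of $G$, this yields the desired malnormality of $(H_j)$ in $G$.

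I expect the main obstacle to be the careful bookkeeping of conjugators during the transfer from $(\tilde H_j)$ back to $(H_j)$, in particular ensuring that $g$ lands inside $H_i$ itself and not merely inside some conjugate. A secondary point to verify is that the application of Remark \ref{rk_tour_rel} genuinely places each $H_j$ inside $H'$ up to conjugacy rather than only up to isomorphism --- this is precisely where the relativity of the tower to $A$, passed through Remark \ref{ttrel}, is essential.
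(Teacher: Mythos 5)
Your proposal follows essentially the same route as the paper: reduce to the simple-tower case via Corollary \ref{ett} and Remark \ref{rk_tour_rel}, then combine malnormality of each étage (Remark \ref{malnor} plus transitivity of malnormality) with malnormality of the family of Grushko factors of $H'$, and finally transfer back to the $H_j$ using the fact that the isomorphism $H\to H'$ restricts to a conjugation on each factor. You have filled in more bookkeeping (notably the normalizer computation in the transfer step) than the paper's terse version, but the argument is the same.
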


\begin{proof}
  First assume that the tower $G=G_0> G_1 > \dots> G_n=H$ is simple.
  Then $G_i$ is malnormal in $G_{i-1}$ for each $i\geq 1$  by Remark \ref{malnor}. It follows that $H$ is malnormal in $G$,
  and the lemma follows from the malnormality of the family of Grushko factors of $H$.

  We now turn to the general case.  
  By Corollary \ref{ett} and Remark \ref{rk_tour_rel}, $G*F$ is a simple tower over a group $H'$, and there is an isomorphism  from $H$ or $H*\F_2$ to $H'$ 
  whose restriction to each $H_j$ is inner. The result follows from the case of a simple tower.
\end{proof}

\subsection{Narrowing the range}
\label{rff}

In this subsection we consider a subgroup   $H\inc G$ and a     splitting of $H$. We assume that the splitting  is retractable in $G$   (Definition \ref{dfn_retractable}), and we wish to show that it is retractable in $H$. Our results are based on the following lemma.

\begin{lem} \label{propschloe2} 
Let  $\pi_1(\Sigma)\inc 
H\inc G$, with 
  $H$ malnormal. 
Let  
  $p:\pi_1(\Sigma)\to G$ be a non-degenerate \bpm, 
  and let $\calc $ be a maximal family of pinched curves, with $\calc\ne\es$.  
  
Assume that, whenever $\Sigma_\calc^j$ (as in Definition \ref{pc}) contains a component of $\partial \Sigma$, the image of $\pi_1(\Sigma_\calc^j)$ is contained in a conjugate of $H$. Then there is a pinching non-degenerate \bpm\
$p':\pi_1(\Sigma)\to H$.
\end{lem}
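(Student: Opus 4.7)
The plan is to factor $p$ through the pinched quotient $\pi_\calc$ and modify it component by component to land in $H$, using malnormality of $H$ to handle the boundary components and the free-product structure of $\pi_1(\Sigma_\calc)$ to secure non-degeneracy. Since each curve in $\calc$ lies in $\ker p$, we can write $p = \bar p \circ \pi_\calc$ for some $\bar p \colon \pi_1(\Sigma_\calc) \to G$. The graph of groups $\Delta_\calc$ yields a free-product decomposition $\pi_1(\Sigma_\calc) = \pi_1(\Sigma_\calc^1) * \dots * \pi_1(\Sigma_\calc^m) * F$, where $F$ is the free group whose rank equals the first Betti number of the graph underlying $\Delta_\calc$.

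\emph{Pushing the boundary-containing pieces into $H$.} Let $J$ be the set of indices $j$ for which $\Sigma_\calc^j$ meets $\bo\Sigma$. For each $j \in J$, fix $g_j \in G$ with $\bar p(\pi_1(\Sigma_\calc^j)) \inc g_j H g_j\m$. Pick a boundary component $C$ of $\Sigma$ contained in $\Sigma_\calc^j$, with generator $c$, and write $p(c) = h c h\m$ with $h \in G$ (boundary-preservation). Since $c \in \pi_1(\Sigma) \inc H$ is non-trivial and both $c$ and $(g_j\m h) c (g_j\m h)\m$ lie in $H$, malnormality of $H$ in $G$ forces $g_j\m h \in H$. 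Consequently $\tilde p_j(x) := g_j\m \bar p(x) g_j$ defines a homomorphism $\pi_1(\Sigma_\calc^j) \to H$ which agrees with conjugation by an element of $H$ on every boundary subgroup of $\pi_1(\Sigma)$ contained in $\Sigma_\calc^j$.

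\emph{Assembling $p'$.} Define $\bar p' \colon \pi_1(\Sigma_\calc) \to H$ by setting $\bar p' = \tilde p_j$ on $\pi_1(\Sigma_\calc^j)$ for $j \in J$, and choosing arbitrary homomorphisms into $H$ on the remaining free factors (the closed pieces $\pi_1(\Sigma_\calc^j)$ for $j \notin J$, and the free group $F$). Put $p' := \bar p' \circ \pi_\calc$. By construction $p'$ takes values in $H$, is boundary-preserving with conjugators in $H$, and factors through $\pi_\calc$ whose kernel is non-trivial since $\calc \ne \es$; in particular $p'$ is pinching and cannot be an isomorphism onto any conjugate of $\pi_1(\Sigma)$, while non-exceptionality of the associated surface is inherited from $\Sigma$.

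\emph{The main obstacle.} The delicate point is to choose the arbitrary homomorphisms so that the image of $p'$ in $H$ is non-abelian. Since $\pi_1(\Sigma) \inc H$ is non-abelian and $H$ inherits CSA from $G$, the group $H$ is torsion-free CSA with trivial centre, so it contains non-commuting pairs $a, b$. The freedom on the non-boundary free factors (namely $F$ and the closed pieces $\pi_1(\Sigma_\calc^j)$ for $j \notin J$) is used to place such a pair in the image; note that sphere pieces can arise from pair-of-pants components of $\Sigma \setminus \calc$ and contribute nothing, but any non-trivial $F$ or any non-sphere closed piece suffices (mapping either generators of the piece directly to $a,b$ when possible, or combining a well-chosen element of $H$ with the existing boundary-piece images via the trivial centre of $H$). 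In the remaining extreme case where $F$ is trivial and $J$ exhausts all pieces, one combines the non-abelianness of $p(\pi_1(\Sigma))$ with the malnormal/CSA dichotomy (distinct cosets $g_{j_1} H \ne g_{j_2} H$ force $g_{j_1} H g_{j_1}\m \cap g_{j_2} H g_{j_2}\m = \{1\}$ by malnormality, and commuting pairs in distinct conjugates of $H$ are heavily restricted by the CSA property of $G$) to conclude that the pushed maps $\tilde p_{j_1}, \tilde p_{j_2}$ themselves generate a non-abelian subgroup of $H$.
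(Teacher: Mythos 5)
Your overall skeleton matches the paper's proof: factor through the pinched quotient $\pi_\calc$, use malnormality to force the conjugators of boundary subgroups into $H$, and adjust the remaining free factors to obtain an image in $H$. Up to and including the ``Pushing the boundary-containing pieces into $H$'' step, this is exactly the argument in the paper.

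The gap is in the ``extreme case'' of your non-abelianness argument. You fix a single conjugator $g_j$ for each piece and then claim that malnormality and CSA ``heavily restrict'' the commuting possibilities between the images of $\tilde p_{j_1}$ and $\tilde p_{j_2}$. That is not enough: the images $g_{j_1}^{-1}\bar p(\pi_1(\Sigma_\calc^{j_1}))g_{j_1}$ and $g_{j_2}^{-1}\bar p(\pi_1(\Sigma_\calc^{j_2}))g_{j_2}$ are both subgroups of $H$, and malnormality of $H$ in $G$ (which controls intersections of \emph{distinct} conjugates of $H$ inside $G$) tells you nothing about whether two subgroups \emph{of $H$ itself} commute. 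They can certainly be abelian and even coincide. The missing ingredient is the extra freedom you still have: after pushing a piece into $H$, you may still compose with an arbitrary inner automorphism of $H$, and this is what the paper exploits. Concretely, if two boundary components $C_1,C_2$ of $\Sigma$ lie in different pieces $\Sigma_\calc^{j_1},\Sigma_\calc^{j_2}$, then $p(\pi_1(C_i))$ is a conjugate of $\pi_1(C_i)\subset\pi_1(\Sigma)\subset H$, and one can normalize the two maps by inner automorphisms of $H$ so that $\pi_1(C_1)$ and $\pi_1(C_2)$ map \emph{to themselves}; since they already fail to commute in $\pi_1(\Sigma)$, the resulting image is non-abelian. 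Your ``CSA dichotomy'' claim does not produce this normalization, and without it the argument does not close.

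Two lesser issues. First, you exclude only sphere pieces from the ``usable'' closed factors, but a $P^2$ piece can arise (a pinched curve bounding a M\"obius band), and $\pi_1(P^2)\simeq\Z/2\Z$ is killed in the torsion-free group $H$ just like $\pi_1(S^2)$, so $P^2$ must be excluded as well. Second, the paper organizes the case analysis differently: if all of $\partial\Sigma$ lies in a single piece whose image is already non-abelian, one is done immediately; if that piece has abelian image, non-abelianness of the original $p$ forces the existence of a $\Z$ factor or a closed surface factor other than $S^2,P^2$ (mapping onto $\Z$), which one then exploits. Your dichotomy by ``$J$ exhausts all pieces'' is close in spirit but does not quite line up with this, and the precise bookkeeping matters for seeing that every configuration is actually covered.
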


\begin{proof}
 We factor $p$ into $p=p_\calc\circ\pi_\calc$ with $\pi_\calc:\pi_1(\Sigma)\to\pi_1(\Sigma_\calc)$ and $p_\calc:\pi_1(\Sigma_\calc)\to G$ (see Definition \ref{pc}). 
The group $\pi_1(\Sigma_\calc)$ is  isomorphic to a free product whose factors are closed   surface groups, cyclic groups, and the $\pi_1(\Sigma_\calc^j)$'s for which  $\Sigma_\calc^j$ contains a component of $\partial \Sigma$; we fix such an isomorphism. 
We shall modify $p_\calc$ on each free factor, and $p'$ will be $p_\calc\circ\pi_\calc$.

  We can modify $p_\calc$ on each $\pi_1(\Sigma_\calc^j)$, by composing with a conjugation of 
$G$, so that the image of $\pi_1(\Sigma_\calc^j)$ is contained in $H$ (rather than a conjugate);   by malnormality of $H$, the modified  $p_\calc \circ\pi_\calc$
 acts on each boundary subgroup of $\Sigma$ as 
conjugation by some element of $H$.
We can also modify $p_\calc$ on the other free factors so that the image of $\pi_1(\Sigma_\calc)$ is contained in $H$. 
We now have to make sure that this
 image 
  is   non-abelian.

If $\Sigma$ has two boundary components $C_1,C_2$ belonging to   different surfaces $\Sigma_\calc^{j_1} $, $\Sigma_\calc^{j_2} $, the images of  $A_1=\pi_1(C_1)$ and $A_2=\pi_1(C_2)$ are conjugate to subgroups of   $\pi_1(\Sigma)$ (in $G$ hence in $H$ by malnormality);
  we 
can modify $p_\calc$ on $\pi_1(\Sigma_\calc^{j_1} )$
so that  $p_\calc(A_1)$ and $p_\calc(A_2)$ are two non-commuting conjugates of $A_1$ and $A_2$ in $H$
(for instance   by mapping   $A_1$ and $A_2$ to themselves in $\pi_1(\Sigma)\subset H$,  since they do not commute there).

If $\partial \Sigma$ is contained in a single $\Sigma_\calc^j$, and $\pi_1(\Sigma_\calc^j)$ has   abelian image,    we argue as in the proof of Proposition \ref{equivsimple}.  Recall that $G$ is torsion-free and  $\pi_1(\Sigma_\calc)$ has   non-abelian image under the original $p_\calc$. 
Consider the factors in the decomposition of $\pi_1(\Sigma_\calc)$ as a free product. Besides $\pi_1(\Sigma_\calc^j)$, there must be at least a $\Z$ or the group of a closed surface different from $S^2$ and $P^2$. Such a surface group maps onto $\Z$, and we can modify $p_\calc$ to make its image   non-abelian.
\end{proof}

We can now state:

\begin{prop}
\label{cinqdouze2}
Assume that 
$H$ is a free factor of $G$. 
If a splitting of $H$ is retractable in $G$, then it is retractable in $H$.   

\end{prop}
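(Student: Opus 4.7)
The plan is a case analysis according to whether the given \ndbpm\ is pinching, using Lemmas \ref{lem_pince} and \ref{propschloe2} respectively. Since $H$ is a free factor of $G$, I write $G=H*H''$, which makes $H$ malnormal in $G$. Let $p:Q\to G$ be the \ndbpm\ witnessing retractability of $\Gamma$ in $G$, with $Q=\pi_1(\Sigma)$ the surface-type vertex group. Because $p$ is boundary-preserving, for each boundary component $C_i$ of $\Sigma$ there is $g_i\in G$ with $p(\pi_1(C_i))=g_i\pi_1(C_i)g_i\m\inc g_iHg_i\m$; in particular, every boundary subgroup of $Q$ is sent by $p$ into a conjugate of the free factor $H$ (and not merely of $H''$).

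If $p$ is non-pinching, Lemma \ref{lem_pince} applied to $G=H*H''$ (with $G_1=H$, $G_2=H''$) immediately gives $p(Q)\inc gHg\m$ for some $g\in G$. I then define $p'(x):=g\m p(x)g$, which takes values in $H$. Boundary-preservation of $p'$ in $H$ follows from malnormality of $H$ in $G$: for a non-trivial $c\in\pi_1(C_i)$, the element $p'(c)=(g\m g_i)c(g\m g_i)\m$ lies in $H$ and is conjugate in $G$ to $c\in H$, forcing $g\m g_i\in H$. Non-degeneracy of $p'$ transfers from that of $p$: the image $g\m p(Q)g$ is non-abelian since $p(Q)$ is, and if $p'$ were an isomorphism of $Q$ onto some $hQh\m\inc H$, then $p$ would be an isomorphism of $Q$ onto $(gh)Q(gh)\m$ in $G$, contradicting non-degeneracy of $p$.

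If $p$ is pinching, I pick a maximal family $\calc\ne\es$ of pinched curves and verify the hypothesis of Lemma \ref{propschloe2}: that for each component $\Sigma_\calc^j$ of the cut surface containing a component of $\bo\Sigma$, the image $p(\pi_1(\Sigma_\calc^j))$ lies in a conjugate of $H$. The restriction $p|_{\pi_1(\Sigma_\calc^j)}$ is non-pinching, because any pinched simple closed curve in $\Sigma_\calc^j$ would be disjoint from $\calc$ and non-parallel to $\calc\cup\bo\Sigma$, contradicting maximality of $\calc$; its boundary subgroups map either into conjugates of $H$ (pieces of $\bo\Sigma$) or trivially (curves of $\calc$). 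Lemma \ref{lem_pince} then places each $p(\pi_1(\Sigma_\calc^j))$ in a conjugate of $H$, and Lemma \ref{propschloe2} produces the desired \ndbpm\ $p':Q\to H$. The main obstacle I expect is the non-degeneracy verification in the non-pinching case --- specifically the ``not an isomorphism onto a conjugate'' clause --- which nevertheless transfers cleanly from $p$ to $p'$ via the conjugation by $g$, after which the two lemmas do the remaining work.
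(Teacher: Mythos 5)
Your proof is correct and follows the same route as the paper's: split on whether $p$ pinches, handle the non-pinching case by Lemma \ref{lem_pince} and an inner conjugation, and handle the pinching case by verifying the hypothesis of Lemma \ref{propschloe2} component-by-component (again via Lemma \ref{lem_pince}). The paper phrases this via the slightly more precise Lemma \ref{cinqdouze}, which also records that $p'$ is either pinching or an inner modification of $p$; your argument recovers that dichotomy implicitly, and it usefully spells out the boundary-preservation and non-degeneracy checks in the non-pinching case (using malnormality of the free factor $H$) which the paper treats as obvious.
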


We view 
this as a special case of the following  more precise result, which will be needed later.
 
\begin{lem}\label{cinqdouze}
Let  $\pi_1(\Sigma)\inc   H\inc G$, with 
$H$ a free factor of $G$. 
If   there is a non-degenerate \bpm\ 
$p:\pi_1(\Sigma)\to G$, 
there is a non-degenerate \bpm\  
$p':\pi_1(\Sigma)\to H$. 
Moreover,    $p'$ is pinching or $p'=\iota\circ p$   with $\iota$ an inner automorphism of $G$.

\end{lem}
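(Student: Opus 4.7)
The plan is to exploit the free product structure in two ways, depending on whether $p$ is pinching or not. Writing $G = H * K$ using that $H$ is a free factor, the vertex stabilizers of the Bass-Serre tree of this splitting have trivial edge stabilizers, so $H$ is malnormal in $G$. Since $\pi_1(\Sigma) \subset H$, every boundary subgroup $B_i$ of $\pi_1(\Sigma)$ lies in $H$, so $p(B_i)$ lies in a conjugate of $H$ for every $i$ (as $p$ is boundary-preserving in $G$).

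In the non-pinching case, I would apply Lemma \ref{lem_pince} directly to the splitting $G = H * K$: all boundary subgroups of $\pi_1(\Sigma)$ map under $p$ into conjugates of $H$, and $p$ is non-pinching, so $p(\pi_1(\Sigma)) \subset gHg^{-1}$ for some $g \in G$. I then set $\iota = \mathrm{ad}(g^{-1})$ and $p' = \iota \circ p$, which takes values in $H$. To check that $p'$ is boundary-preserving \emph{in $H$}: for each $B_i$, $p|_{B_i}$ agrees with $\mathrm{ad}(h_i)$ for some $h_i \in G$, hence $p'|_{B_i} = \mathrm{ad}(g^{-1} h_i)$; since $B_i \subset H$, $p'(B_i) \subset H$ and $B_i \neq 1$, malnormality of $H$ forces $g^{-1} h_i \in H$. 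Non-degeneracy of $p'$ transfers from $p$ because $\iota$ is an automorphism of $G$: the image remains non-abelian, and $p'$ is an isomorphism onto a conjugate of $\pi_1(\Sigma)$ iff $p$ is.

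In the pinching case, I would take a maximal family $\calc$ of pinched curves (non-empty by assumption) and aim to apply Lemma \ref{propschloe2}, whose hypotheses I must verify. The key observation is that for any component $\Sigma_\calc^j$ of the pinched surface containing a component of $\partial \Sigma$, the restriction $p_\calc|_{\pi_1(\Sigma_\calc^j)}$ is non-pinching: a pinched simple closed curve on $\Sigma_\calc^j$ could be isotoped off the capping discs, lifting to a simple closed curve on $\Sigma$ disjoint from $\calc$, non-null-homotopic, and pinched by $p$, which would contradict maximality of $\calc$. Since the boundary subgroups of $\pi_1(\Sigma_\calc^j)$ come from $\partial\Sigma$ and thus map into conjugates of $H$, Lemma \ref{lem_pince} applied to the splitting $G = H * K$ gives that $p_\calc(\pi_1(\Sigma_\calc^j))$ is contained in a conjugate of $H$. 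Using the malnormality of $H$, Lemma \ref{propschloe2} then yields a pinching non-degenerate boundary-preserving map $p' : \pi_1(\Sigma) \to H$.

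The main obstacle is the bookkeeping in Case 1 that translates the condition "boundary-preserving in $G$" into "boundary-preserving in $H$"; this is exactly where malnormality of $H$ (which uses crucially that $H$ is a \emph{free} factor, not just a retract) is needed. A secondary technical point is the non-pinching verification of $p_\calc$ on each piece $\Sigma_\calc^j$ in Case 2, which is elementary but requires a careful check that a putative additional pinched curve does lift to one on $\Sigma$ outside $\calc$.
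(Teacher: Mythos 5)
Your proof is correct and follows the same route as the paper's: split into non-pinching (apply Lemma \ref{lem_pince} to $G=H*K$, conjugate back, define $p'=\iota\circ p$) and pinching (apply Lemma \ref{propschloe2}, using Lemma \ref{lem_pince} on each piece $\Sigma_\calc^j$). You just spell out two verifications the paper leaves implicit, namely the malnormality bookkeeping for boundary-preservation in $H$ and the non-pinching of $p_\calc$ on each $\Sigma_\calc^j$; both are correct.
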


\begin{proof}
If $p$ is non-pinching, its image is contained in a conjugate of $H$ by Lemma \ref{lem_pince} and we take $p'=\iota\circ p$ for some
inner automorphism $\iota$ of $G$. Otherwise 
we apply Lemma \ref{propschloe2}, noting that
the image of every $\pi_1(\Sigma_\calc^j)$ 
is contained in a conjugate of $H$ by Lemma \ref{lem_pince}.
 \end{proof}

We   now consider the following situation.
Let $\Gamma$ be a retractable  centered splitting of   
$G$, 
and let $B$ be a bottom vertex group.
  Given a surface subgroup $\pi_1(\Sigma )\subset B$, and a \ndbpm\ $p:\pi_1(\Sigma)\ra G$, we wish to construct a \ndbpm\   $p'$ with values in $B$.

  \begin{figure}[ht!]  
    \centering
    \includegraphics{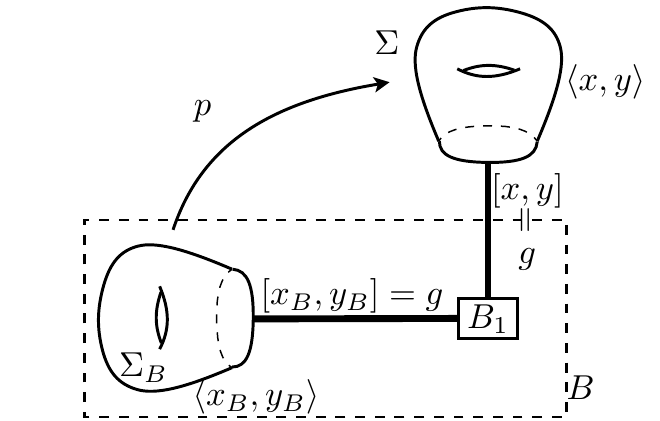}
    \caption{Example \ref{bete}: there is a \ndbpm\ $p:\pi_1(\Sigma_B)\ra G$, but none from $\pi_1(\Sigma_B)$ to $  B$.}
    \label{fig_twin0}
  \end{figure}

  The following simple example shows that  additional assumptions are required.

\begin{example}[see Figure \ref{fig_twin0}] \label{bete}
As in Example \ref{expt}, we consider a group   $B =B_1*_{g=[x_B,y_B]}F(x_B,y_B)$  obtained by attaching a punctured torus $\Sigma_B$ to a group $B_1$, but we now require that   $g$   is \emph{not} a commutator in $B_1$. Construct $G=B*_{g=[x,y]}F(x,y)$ by attaching a second punctured torus $\Sigma$ to $B_1$ in the same way as $\Sigma_B$. 
There is a \ndbpm\   $p:\pi_1(\Sigma_B)\to G$ with image $\pi_1(\Sigma)$, but no \ndbpm\   $p:\pi_1(\Sigma_B)\to B$ since $g$ is not a commutator in $B_1$. 
\end{example}

 \begin{lem} \label{desc}
Let   $\Gamma $ be a retractable centered splitting of $G$, 
 with central vertex group   $\pi_1(\Sigma )$,
and let 
  $B$ be a bottom vertex group. 
Let $\pi_1(\Sigma_{B})<B$ be   a surface subgroup, and let
  $p:\pi_1(\Sigma_{B})\to G$ be a \ndbpm.

 If $p$ satisfies one of the following conditions, there is a \ndbpm\ $p':\pi_1(\Sigma_{B})\to B$: 
\begin{enumerate}
\item no conjugate of the image of $p$  contains a finite index subgroup of $\pi_1(\Sigma )$;
\item $p$ is pinching. 
\end{enumerate}

Moreover,    $p'$ is pinching or $p'=\iota\circ p$   with $\iota$ an inner automorphism of $G$.
\end{lem}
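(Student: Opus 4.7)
The plan is to apply Lemma \ref{propschloe2} with $H=B$; since $B$ is malnormal in $G$ by Remark \ref{malnor}, this yields a pinching non-degenerate \bpm\ $p':\pi_1(\Sigma_B)\to B$ as soon as its hypothesis is verified. When $p$ is non-pinching we take $\calc=\es$ so that $\Sigma_\calc^j=\Sigma_B$, and the same verification directly produces $p'=\iota\circ p$ for some inner automorphism $\iota$ of $G$.

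The central task is to show that for each piece $\Sigma_\calc^j$ of $\Sigma_{B,\calc}$ containing a boundary component of $\Sigma_B$, one has $p(\pi_1(\Sigma_\calc^j))\subset gBg\m$ for some $g\in G$. For this I apply Lemma \ref{lemcle2} to $p:\pi_1(\Sigma_B)\to G$ and the centered splitting $\Gamma$: its hypothesis is satisfied because the \bpm\ $p$ sends each boundary subgroup of $\pi_1(\Sigma_B)$ to a conjugate of a subgroup of $B$, hence into a conjugate of a bottom group of $\Gamma$. The lemma gives, for each $\Sigma_\calc^j$, one of two alternatives: either (A) the image lies in a conjugate of some bottom group $B_i$, or (B) there is an incompressible subsurface $Z\subset\Sigma_\calc^j$ with $p(\pi_1(Z))$ of finite index in $\pi_1(\Sigma)$.

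Whenever (A) holds and $\Sigma_\calc^j$ contains a boundary component $C$ of $\Sigma_B$, the image $p(\pi_1(C))=h\pi_1(C)h\m$ is a non-trivial subgroup both of $hBh\m$ and of the conjugate of $B_i$ containing $p(\pi_1(\Sigma_\calc^j))$; malnormality of the family $(B_1,\dots,B_n)$ (Remark \ref{malnor}) then forces $B_i$ to be conjugate to $B$, giving the required inclusion. Under condition 1, alternative (B) is excluded by hypothesis since $p(\pi_1(Z))$ would be a finite-index subgroup of $\pi_1(\Sigma)$ inside the image of $p$, so only (A) occurs and the plan concludes: Lemma \ref{propschloe2} yields a pinching $p'$ when $\calc\neq\es$, and $p'=\iota\circ p$ otherwise.

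The hard part will be condition 2, where $p$ is pinching but (B) may genuinely arise. The plan is to exploit the tension between the boundary constraint (forcing $p(\pi_1(C))$ into $hBh\m$) and the finite-index constraint (forcing $p(\pi_1(Z))$ to fill a finite-index subgroup of $\pi_1(\Sigma)$): by 1-acylindricity near the central vertex of $\Gamma$, the intersection $\pi_1(\Sigma)\cap hBh\m$ is either trivial or a single edge group, which severely restricts how $Z$ can sit inside $\Sigma_\calc^j$ relative to the boundary components of $\Sigma_B$. Combining this with the assumed pinching of $p$ should allow us to either reduce to alternative (A) or to directly modify $p$ on the offending piece so as to still produce a pinching \ndbpm\ into $B$, at which point Lemma \ref{propschloe2} applies and yields the desired $p'$, automatically pinching as required by the ``moreover'' clause.
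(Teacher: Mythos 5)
Your treatment of condition 1 is correct and matches the paper's approach: apply Lemma \ref{lemcle2} to $p$ with the maximal pinched family $\calc$ on $\Sigma_B$, observe that condition 1 rules out the second alternative of that lemma, use Remark \ref{malnor} plus the boundary-preserving property of $p$ to pin the image of each boundary-containing piece $\Sigma^j_\calc$ into a conjugate of $B$ (not just of some $B_i$), and then invoke Lemma \ref{propschloe2} (or, when $\calc=\es$, directly take $p'=\iota\circ p$). This is essentially what the paper does, and the pinching conclusion from Lemma \ref{propschloe2} gives the ``moreover'' clause.

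However, your handling of condition 2 when condition 1 fails is a genuine gap, not a proof. You acknowledge this is the hard case, and what you offer is a plan whose key step --- ``directly modify $p$ on the offending piece so as to still produce a pinching \ndbpm{} into $B$'' --- is not carried out, and would not obviously work. The ``tension'' you invoke is not a contradiction: when alternative (B) of Lemma \ref{lemcle2} holds, the boundary subgroups of the subsurface $Z$ go into boundary subgroups of $\pi_1(\Sigma)$, which up to conjugacy lie inside the bottom groups, so they are perfectly compatible with the constraint that $p(\pi_1(C))\subset hBh\m$ for components $C$ of $\partial\Sigma_B$. The 1-acylindricity fact that $\pi_1(\Sigma)\cap hBh\m$ is at most cyclic does not prevent $p(\pi_1(Z))$ from being a finite-index subgroup of $\pi_1(\Sigma)$ sitting entirely outside $hBh\m$, nor does it indicate how to ``modify $p$'' to land in $B$.

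The step you are missing is the retraction trick. When condition 1 fails, the paper defines $\hat p=\rho\circ p$, where $\rho:G\to\tilde B_1*\dots*\tilde B_n$ is a retraction given by Proposition \ref{equivpassimple} (this is exactly where retractability of $\Gamma$ is used). The map $\hat p$ is still a \bpm\ (retractions act by conjugation on conjugates of bottom groups, in particular on the images of boundary subgroups of $\Sigma_B$), it is pinching (since $p$ is), and it is non-degenerate because its image contains a finite-index subgroup of $\rho(\pi_1(\Sigma))$, which is non-abelian by the definition of retractability, so $\Im\hat p$ is non-abelian in the CSA group $G$. Since $B$ is conjugate to a free factor $\tilde B_i$ of $\tilde B_1*\dots*\tilde B_n$, Lemma \ref{cinqdouze} then yields a \ndbpm\ $p':\pi_1(\Sigma_B)\to B$, and it is pinching since $\hat p$ is. Without this composition-with-$\rho$ step, your argument does not close.
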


\begin{proof}

Assume 1.
If $p$ does not pinch, then  
 it  
takes values in a conjugate of $B$ by  Lemma  \ref {lemcle2}, 
so we can   take   $p'=\iota\circ p$ for some inner automorphism $\iota$. 
If $p$ is pinching, 
consider a maximal collection $\calc$  of pinched curves on $\Sigma_B$.
Lemma \ref{lemcle2} ensures that   the fundamental group of every connected component 
of $\Sigma_B\setminus \calc$
is mapped into a conjugate of a bottom group of $\Gamma $.
One can then apply Lemma \ref{propschloe2}:
 if such a component contains a component of   $\partial \Sigma_B$,   Remark \ref{malnor}
implies that this bottom group has to be $B$ since $\pi_1(\Sigma_B)\subset B$ and  $p$ agrees with conjugations 
on the boundary subgroups.

If  1 does not hold but $p$ is pinching, we define  $\hat p=\rho\circ p$, with $\rho:G\to 
\tilde B_1*\dots * \tilde B_n$ a retraction associated to $\Gamma $  
provided by Proposition \ref{equivpassimple}. The  image of $\hat p$  is   non-abelian because it is CSA and
contains a finite index subgroup of the non-abelian group  $\rho(\pi_1(\Sigma ))$,  
so $\hat p$ is a pinching \ndbpm\ with values in $\tilde B_1*\dots * \tilde B_n$.
The group $B$ is conjugate to some $\tilde B_i$, and 
we   apply  Lemma 
\ref{cinqdouze} to find $p'$ with values in  
$B$.
\end{proof}

 \subsection{Retracting a Grushko factor}\label{sec_grushko}

In this subsection we introduce a blowup of a centered splitting that will   enable us to understand how a  freely decomposable group may be an \'etage of surface type. In particular, we will show that, if $G$ is an extended \'etage of surface type, so is one of its Grushko factors.
  As a corollary,  we will see that a free group   has no retractable splitting (this is a main step in the proof
of homogeneity of free groups \cite{PeSk_homogeneity}).

 \begin{figure}[htbp] 
  \centering
  \includegraphics[width=.8\textwidth]{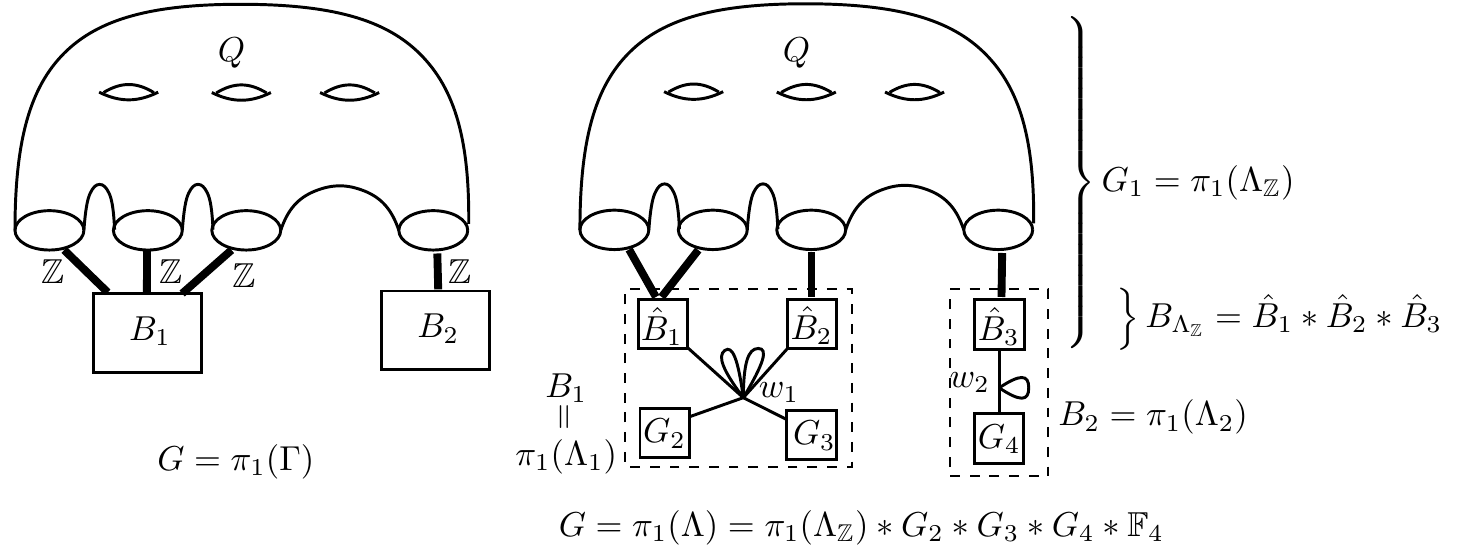}
 \caption{The Grushko blowup $\Lambda$ of $\Gamma$,
and the resulting centered splitting $\Lambda_\Z$ with fundamental group $G_1$ and   base   $B_{\Lambda_\Z}$.}
\label{fig_lesGrushko} 
\end{figure}

\begin{dfn}[Grushko blowup of a centered splitting, see Figure \ref{fig_lesGrushko}] \label{leslambda}
Let $\Gamma$ be a centered splitting of $G$ with bottom vertex groups $B_1,\dots, B_n$ and central group $Q$.
The \emph{Grushko blowup} of $\Gamma$ is the splitting $\Lambda$ obtained by
blowing up every bottom vertex of $\Gamma$ using a Grushko decomposition $\Lambda_i$ of $B_i$ relative to the incident edge groups (see Subsection \ref{gd}). 

We denote 
by $\Lambda_\Z\subset \Lambda$ the union of all edges   carrying a non-trivial group 
(the star of the central vertex).
The fundamental group of $\Lambda_\Z$ is a free factor $G_1$ of $G$ containing $Q$. 
\end{dfn}

The definition is illustrated in Figure \ref{fig_lesGrushko},
 which depicts a case where there are two bottom  vertex groups $B_1,B_2$. 
If, for  
some $i$, the group $B_i$ is freely decomposable relative to the incident edge groups, we choose a relative Grushko decomposition 
$\Lambda_i$  with a central vertex $w_i$ carrying a trivial group joined by a single edge to each  vertex with non-trivial group. 
Some of these vertices (carrying $\hat B_1$, $\hat B_2$, $\hat B_3$ on the figure) have  at least one  edge of $\Lambda_\Z$ attached to them, 
the base $B_{\Lambda_\Z}$  of $\Lambda_\Z$ is the free product of their groups. The other vertex groups of the $\Lambda_i$'s ($G_2$,\dots,$G_4$ on the figure) are Grushko factors of $G$. There may  also be loops attached at $w_i$, corresponding to the free group in the relative Grushko
decomposition of $B_i$. The edges with trivial group induce a decomposition of the form $G=G_1*G_2* \dots*G_k*F$
with $G_1=\pi_1(\Lambda_\Z)$  and $F$ free (of rank 4 on Figure \ref{fig_lesGrushko}).

Note that the splitting $\Lambda_\Z$  fails to be 
minimal if,
 for some $i$, 
  an edge $e$ of $\Gamma$ containing $v_i$ carries a group $A_e \simeq\Z$ such that  $B_i$ is a free product $B_i=A_e*B'_i$, and the groups carried by   all other   edges of $\Gamma$ containing $v_i$ are contained (up to conjugacy) in the complementary free factor $B'_i$.
If this does not happen, $\Lambda_\Z$ is minimal and is a centered splitting of $G_1$. 

The main result  of this subsection is   the following.

\begin{prop} \label{Grus}
Let $\Gamma$ be a   centered splitting of $G$, with central vertex $Q$ and base $B$. 
If $\Gamma$ is retractable, then: 
\begin{enumerate}
\item the group $G_1=\pi_1(\Lambda_\Z)$ is a Grushko factor of $G$ containing $Q$ 
(i.e.\  $G_1$ is one-ended);
\item $\Lambda_\Z$ is minimal, and is a retractable centered splitting of $G_1$;

\item there is a Grushko decomposition $G=G_1*G_2*\dots* G_k*F$ 
such that $$B\simeq  B_{\Lambda_\Z}*G_2*\dots* G_k*F',$$
with $ B_{\Lambda_\Z} $ the  base of $\Lambda_\Z$ 
 (possibly not   one-ended) and
  $F'$ a free factor of the free group  $F$. 

\end{enumerate}
\end{prop}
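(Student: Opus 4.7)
The plan is to use the Grushko blowup to isolate the essential surface-attached piece of $G$, namely $G_1=\pi_1(\Lambda_\Z)$, and to descend retractability from $G$ to $G_1$ via Lemma~\ref{cinqdouze}. First, I would read off the free product decomposition of part~3 directly from the Grushko blowup: writing each relative Grushko decomposition as $B_i=A_{i,1}*\cdots *A_{i,m_i}*F_i$, collapsing all trivial edges of $\Lambda$ yields a graph of groups whose only non-trivial edges are those of $\Lambda_\Z$. The resulting free product decomposition of $G$ has one component $G_1=\pi_1(\Lambda_\Z)$ containing $Q$, the one-ended $A_{i,j}$'s with no incident $\Gamma$-edge as the remaining one-ended factors $G_2,\dots,G_k$, and a free factor $F$ assembled from the $F_i$'s together with the unmatched cyclic $A_{i,j}$'s. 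Taking the abstract free product $B=B_1*\cdots *B_n$ then gives $B\simeq B_{\Lambda_\Z}*G_2*\cdots *G_k*F'$, where $F'$, the free group generated by the $F_i$'s and unmatched cyclic $A_{i,j}$'s, is manifestly a free factor of $F$.

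For parts~1 and~2, Proposition~\ref{equivpassimple} provides a \ndbpm\ $p:Q\to G$ because $\Gamma$ is retractable. Applying Lemma~\ref{cinqdouze} to $G_1\subset G$ (a free factor) gives a \ndbpm\ $p':Q\to G_1$, which is automatically associated to $\Lambda_\Z$ since $\Lambda_\Z$ has the same central vertex and boundary-edge assignments as $\Gamma$; this yields retractability of $\Lambda_\Z$ in $G_1$ once minimality is established. For one-endedness of $G_1$, suppose $G_1=H_1*H_2$ non-trivially with Bass--Serre tree $T_0$. One-ended bottom vertex groups of $\Lambda_\Z$ are elliptic in $T_0$. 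A standard compatibility argument, exploiting that cyclic bottom vertex groups of $\Lambda_\Z$ are elliptic iff their edge subgroups are (by torsion-freeness and commutativity) and that $\pi_1(\Sigma)$ is one-ended relative to all its boundary subgroups (which are precisely the edge groups of $\Lambda_\Z$), forces $Q$ and every vertex/edge group of $\Lambda_\Z$ to be elliptic in $T_0$. By connectedness of $\Lambda_\Z$, $G_1$ is then contained in a single factor, a contradiction.

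The remaining minimality claim is the hardest step. The only way $\Lambda_\Z$ can fail to be minimal is the pathology described after Definition~\ref{leslambda}: for some $i$, $B_i=A_e*B'_i$ with $A_e$ the edge group of a unique $\Gamma$-edge $e$ at $v_i$, creating a valence-one cyclic bottom vertex with vertex group equal to edge group. Collapsing this redundant vertex yields an equivalent graph-of-groups presentation $\Lambda'_\Z$ of $G_1$ in which the boundary component $C$ of $\Sigma$ corresponding to $e$ becomes unattached. A general surface-theoretic fact (for any non-exceptional compact surface with boundary, $\pi_1(\Sigma)$ is freely decomposable relative to any proper subset of its boundary subgroups) provides a non-trivial relative free decomposition of $Q$; blowing up the central vertex of $\Lambda'_\Z$ along this decomposition gives a splitting of $G_1$ with a trivial edge, hence a non-trivial free decomposition of $G_1$, contradicting part~1. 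Thus the pathology cannot occur and $\Lambda_\Z$ is minimal. The main obstacle is precisely this surface-theoretic relative free-decomposition fact, which must be verified uniformly across the non-exceptional surfaces $\Sigma$ that may appear in a retractable centered splitting.
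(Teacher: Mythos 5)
Your part~3 (reading off the free product decomposition from the Grushko blowup) matches the paper and is fine, and your use of Lemma~\ref{cinqdouze} to descend retractability to $G_1$ is also the paper's step.  The problem is your handling of parts~1 and~2, which inverts the paper's logical order and thereby opens a circularity.

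The paper establishes \emph{minimality first} (Lemma~\ref{sgrush}), by a direct computation with a maximal pinching family:  if $\Lambda_\Z$ were non-minimal, the key Lemma~\ref{lemcle4} forces $\pi_1(\Sigma')$ into $B_i$, and projecting $B_i\twoheadrightarrow A_e\simeq\Z$ by killing $B'_i$ exhibits a generator of $\Z$ as a product of commutators or squares, a contradiction.  This uses only retractability of $\Gamma$, not any one-endedness of $G_1$.  With minimality in hand, one-endedness of $G_1$ then follows from Lemma~\ref{facgru}, whose proof rests on a non-trivial input:  Horbez's lemma, which produces a Grushko factor $J$ of $G$ \emph{not} elliptic in $\Gamma$; Lemma~\ref{passurf} then shows $J$ contains $Q$, and ellipticity of $J$ in any free splitting of $G$ propagates to $Q$, to the edge groups, and then to the $B_i$'s by relative one-endedness.

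Your argument reverses this.  You try to prove one-endedness of $G_1$ first, by showing that all vertex groups of $\Lambda_\Z$ are elliptic in an arbitrary Bass--Serre tree $T_0$ of a putative free decomposition $G_1=H_1*H_2$.  But the ``standard compatibility argument'' you invoke does not close:  the non-cyclic bottom vertex groups $\hat B_j$ of $\Lambda_\Z$ are only one-ended \emph{relative to} the incident edge groups, not absolutely one-ended (e.g.\ $\hat B_j$ could be $F(a,b)$ with edge groups $\langle a\rangle$, $\langle b\rangle$), so they are elliptic in $T_0$ only if their incident edge groups already are.  You have no way to start the propagation of ellipticity -- that is exactly what the auxiliary Grushko factor $J$ in the paper's proof supplies.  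And your minimality argument then appeals to part~1 (one-endedness) to conclude, so the two arguments presuppose each other.  The surface-theoretic fact you cite (relative free decomposability after dropping a boundary component) is true and would suffice once one-endedness is known, but as structured the proof does not establish either claim independently; you would need to prove minimality directly from the \ndbpm\ as the paper does, or supply a genuine external input (such as Horbez's lemma) to break into the chain of ellipticities.
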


\begin{rem}\label{rem_Grushko}
By Assertion 1,  collapsing all edges of $\Lambda$ with non-trivial stabilizer yields a Grushko decomposition of $G$.
\end{rem}

We derive several consequences of the proposition before proving it, starting with two obvious corollaries. 

\begin{cor} \label{infbout}
If $G$ has a retractable splitting, so does one of its Grushko factors. \qed
\end{cor}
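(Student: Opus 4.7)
The plan is to read this off directly from Proposition \ref{Grus}, of which it is an immediate consequence. Given a retractable centered splitting $\Gamma$ of $G$, I would form its Grushko blowup $\Lambda$ (Definition \ref{leslambda}) and isolate the subgraph $\Lambda_\Z\subset\Lambda$ consisting of edges carrying non-trivial groups, that is, the star of the central vertex in $\Lambda$.

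Part (1) of Proposition \ref{Grus} tells us that $G_1:=\pi_1(\Lambda_\Z)$ is a Grushko factor of $G$ (it is one-ended and contains the central surface group $Q$). Part (2) tells us that $\Lambda_\Z$ is a bona fide minimal centered splitting of $G_1$ and that it is retractable in $G_1$. This is precisely the statement that the Grushko factor $G_1$ of $G$ has a retractable splitting, so the corollary follows with no further work.

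If the original splitting $\Gamma$ that one starts from is not already centered (say it is an arbitrary cyclic splitting with a surface-type vertex carrying a non-degenerate boundary-preserving map), I would first pass to a centered splitting by collapsing all edges not containing the surface vertex, as in Remark \ref{rem_retractable}, and then apply the argument above. The main (and only) ``work'' is entirely contained in Proposition \ref{Grus}, whose proof is presumably the substantive step; the corollary itself is a one-line deduction and presents no independent obstacle.
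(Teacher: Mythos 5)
Your proof is correct and is exactly the intended deduction: the paper marks this corollary with \qed precisely because it is an immediate consequence of Proposition \ref{Grus}, parts (1) and (2). The only preliminary step you add---passing from an arbitrary retractable splitting to a retractable centered splitting via Remark \ref{rem_retractable} before invoking the proposition---is correct and a sensible thing to make explicit.
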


\begin{cor}[\cite{Perin_elementary, PeSk_homogeneity}] \label{paset}
  A free group is not an extended   étage of surface type. 
\qed
\end{cor}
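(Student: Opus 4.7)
The plan is to derive Corollary \ref{paset} as an essentially immediate consequence of Corollary \ref{infbout} (which itself rests on Proposition \ref{Grus}). Suppose, for contradiction, that a free group $G$ is an extended \'etage of surface type. Then by definition (Definition \ref{dfn_extended}), $G$ carries a retractable centered splitting, i.e.\ a non-exceptional centered splitting satisfying the equivalent conditions of Proposition \ref{equivpassimple}.

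By Corollary \ref{infbout}, at least one Grushko factor of $G$ also admits a retractable centered splitting. But the Grushko factors of a non-trivial free group are all isomorphic to $\Z$ (and, if $G = \{1\}$, there are none, and a trivial group is clearly not an extended \'etage of surface type since an \'etage requires a non-trivial central surface group). So it remains to observe that $\Z$ admits no retractable centered splitting: in any such splitting, the central vertex group $Q = \pi_1(\Sigma)$ is, by Definition \ref{centspl} together with the non-exceptionality requirement, the fundamental group of a compact surface $\Sigma$ with $\chi(\Sigma) < 0$, hence non-abelian; as $Q$ embeds as a vertex subgroup of $G$, this is incompatible with $G \simeq \Z$.

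There is really no obstacle here: the entire content of the corollary is packaged into Corollary \ref{infbout}, and all that remains is the trivial verification that a (non-abelian) surface-type vertex group cannot sit inside $\Z$. The only point to be slightly careful about is the degenerate case $G = \{1\}$, ruled out directly from Definition \ref{dfn_extended}. No further case analysis, and no appeal to \textquotedblleft Tarski\textquotedblright{} or to any deeper structural result, is needed.
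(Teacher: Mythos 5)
Your proof is correct and follows the same route the paper intends: the paper gives no explicit argument for Corollary \ref{paset} (it has an immediate \qed), treating it as a direct consequence of Proposition \ref{Grus} / Corollary \ref{infbout}, which is exactly what you do. The one thing worth flagging is a terminological slip: in this paper's usage (Subsection \ref{gd}), the \emph{Grushko factors} are by definition the one-ended factors $A_i$ in the decomposition $A_1 * \cdots * A_n * \mathbb{F}_m$; a free group has $n=0$, so it has \emph{no} Grushko factors at all --- they are not ``all isomorphic to $\Z$'' as you write, since $\Z$ is excluded from the one-ended case (it has two ends). Once you use the paper's convention, the contradiction is immediate: Proposition \ref{Grus}(1) asserts that a retractable centered splitting of $G$ produces a one-ended free factor $G_1$ of $G$ containing the surface group, and a free group has no one-ended free factors. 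Your extra step ruling out retractable centered splittings of $\Z$ is harmless but unnecessary with the right reading of ``Grushko factor.'' Handling the trivial group explicitly, as you do, is fine.
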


If $G$ has a Grushko decomposition $G=G_1*G_2*\dots* G_k*F$, define $r(G)$ as $G_1*G_2*\dots* G_k$.

\begin{cor} \label{redu2}
If $G$ is an extended tower over $B$, then
$r(G)$ is an extended tower over a subgroup  isomorphic to $r(B)$.
 \end{cor}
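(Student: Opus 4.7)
The approach is induction on the length $n$ of the tower $G = G_0 > G_1 > \dots > G_n = B$; the base case $n = 0$ is immediate since $r(G) = r(B)$. For the inductive step, view $G$ as an extended étage over $G_1$ and apply the induction hypothesis to $G_1 > \dots > G_n = B$ to obtain that $r(G_1)$ is an extended tower over a subgroup isomorphic to $r(B)$. It therefore suffices to prove that $r(G)$ is itself an extended tower over a subgroup isomorphic to $r(G_1)$, for then the two towers concatenate (transporting the inner tower structure along the isomorphism between $r(G_1)$ and the base of the outer one).

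If the étage $G$ over $G_1$ is of free product type, so that $G = G_1 * \bbZ$, then a Grushko decomposition of $G$ differs from one of $G_1$ only by enlarging the free part, so the one-ended Grushko factors coincide and one may take $r(G) = r(G_1)$ as subgroups, giving a trivial tower. Suppose instead that the étage is of surface type, associated to a retractable centered splitting $\Gamma$ of $G$ with base $B_\Gamma \simeq G_1$. Applying Proposition \ref{Grus} to $\Gamma$ yields a Grushko decomposition $G = G_1' * G_2' * \dots * G_l' * F$ in which $G_1' = \pi_1(\Lambda_\Z)$ is one-ended and carries the retractable centered splitting $\Lambda_\Z$ (with the same, hence non-exceptional, surface as $\Gamma$) with base $B_{\Lambda_\Z}$; it also provides an isomorphism $G_1 \simeq B_\Gamma \simeq B_{\Lambda_\Z} * G_2' * \dots * G_l' * F'$ for some free factor $F'$ of $F$.

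Thus $G_1'$ is an extended étage of surface type over $B_{\Lambda_\Z}$, and Remark \ref{rk_prodlib} upgrades this to: the free product $r(G) = G_1' * G_2' * \dots * G_l'$ is an extended étage of surface type over $B' := B_{\Lambda_\Z} * G_2' * \dots * G_l'$. Writing a Grushko decomposition $B_{\Lambda_\Z} = D_1 * \dots * D_m * F''$, one has $r(B') = D_1 * \dots * D_m * G_2' * \dots * G_l'$ and $B' \simeq r(B') * F''$, so $B'$ is an extended tower over $r(B')$ consisting of $\mathrm{rk}(F'')$ étages of free product type. Concatenating with the surface-type étage $r(G) > B'$ exhibits $r(G)$ as an extended tower over $r(B')$. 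Since $G_1 \simeq B' * F'$ and $r$ is idempotent (it only kills free parts and the remaining one-ended factors are already Grushko factors), one has $r(G_1) \simeq r(B')$, closing the induction.

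No substantive obstacle arises; the only point requiring attention is to keep track of the subgroups at each level of the tower rather than mere abstract isomorphism classes, which is straightforward given the flexibility in the choice of the $\tilde B_i$'s recorded in Remark \ref{quot} and the fact that Proposition \ref{Grus} describes $G_1'$ explicitly as a subgroup of $G$.
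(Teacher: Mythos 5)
Your proof is correct and follows essentially the same route as the paper's: reduce to a single étage (the paper phrases this reduction as "it suffices to prove the result for an étage," relying on the transitivity of $\tge$ built into Proposition \ref{dcc2}, while you phrase it as an explicit induction, but these are the same thing), dispatch the free product case immediately, and for the surface case invoke Proposition \ref{Grus} plus Remark \ref{rk_prodlib} to show that $r(G)$ is an extended étage over $B_{\Lambda_\Z}*G_2'*\dots*G_l'$, which is $r(G_1)*(\text{free})$ so that $r(G) \tge r(G_1)$. The only cosmetic difference is that you spell out the Grushko decomposition of $B_{\Lambda_\Z}$ and the computation $r(G_1)\simeq r(B')$ explicitly, where the paper compresses this into "which is of the form $r(B)*F''$ with $F''$ free."
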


\begin{proof}
 It suffices to prove the  result for an \'etage. It is clear if $G=B*\Z$.  If $G$ is an extended \'etage of surface type over $B$,   Proposition \ref{Grus} says that $G_1$ is an extended \'etage over $B_{\Lambda_\Z}$,
so by Remark  \ref{rk_prodlib}  $r(G)=G_1*G_2*\dots* G_k$ is 
an extended \'etage of surface type over $B_{\Lambda_\Z}*G_2*\dots* G_k $, which is of the form $r(B)*F''$ with $F''$ free.

\end{proof}

  The following result is a partial converse to Proposition \ref{tstab}.
  
\begin{cor} \label{redu}
If   $G*\bbF_r$ is an   
extended tower  over a group $B$ for some $r\geq 1$, 
then $G$ is an extended 
tower   over a group $B'$  such that $B\simeq B'*\bbF_s$ for some $s\geq 0$. \qed
\end{cor}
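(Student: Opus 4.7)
The idea is to combine Corollary \ref{redu2} with Remark \ref{rk_prodlib}, adjusting the free parts of the base via free product \'etages.

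Write Grushko decompositions $G \cong r(G) * \F_a$ and $B \cong r(B) * \F_b$ with $a,b\geq 0$. Since $G*\F_r \cong r(G)*\F_{a+r}$ shares its one-ended Grushko factors with $G$, we have $r(G*\F_r) = r(G)$. Applying Corollary \ref{redu2} to the hypothesis that $G*\F_r$ is an extended tower over $B$ yields that $r(G)$ is an extended tower over a subgroup isomorphic to $r(B)$. Then by Remark \ref{rk_prodlib}, taking the free product with $\F_a$ (viewed as a trivial tower over itself) produces an extended tower structure on $G \cong r(G)*\F_a$ with base isomorphic to $r(B)*\F_a$.

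The main subtlety is matching the free parts: the naive base $r(B)*\F_a$ satisfies $B \cong (r(B)*\F_a)*\F_s$ with $s \geq 0$ only when $a \leq b$. To treat the remaining case uniformly, set $c := \min(a,b)$. Since $r(B)*\F_a \cong (r(B)*\F_c) * \F_{a-c}$ can be stripped of its $\F_{a-c}$ part by $a-c$ successive free product \'etages, the group $r(B)*\F_a$ is itself an extended tower over $r(B)*\F_c$. Concatenating towers, $G$ is an extended tower over $B' := r(B)*\F_c$.

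Finally, setting $s := b - c \geq 0$ gives $B'*\F_s \cong r(B)*\F_{c+s} = r(B)*\F_b \cong B$, as required. No serious obstacle is anticipated; the only delicate point is the choice $c = \min(a,b)$, which simultaneously ensures that $G$ remains a tower over $B'$ (since $c \leq a$) and that $B \cong B'*\F_s$ with $s\geq 0$ (since $c \leq b$).
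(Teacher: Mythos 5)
Your proof is correct, and it rests on the same key ingredient as the paper's, namely Corollary \ref{redu2}. But you take a small detour that the paper avoids: rather than first passing from $r(G)\tge r(B)$ to $r(G)*\F_a \tge r(B)*\F_a$ and then deciding whether to strip some of the free part (the $c=\min(a,b)$ bookkeeping), the paper simply observes that $G\tge r(G)$ (via $a$ free product \'etages) and $r(G)\tge r(B)$, hence $G\tge r(B)$, and takes $B'=r(B)$ and $s=b$ directly. The corollary only asks for \emph{some} $B'$ with $B\simeq B'*\F_s$, $s\geq 0$; there is no need to make the free part of $B'$ as large as possible, which is what drives your case analysis. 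So your choice $c=\min(a,b)$ is harmless but unnecessary, and the ``remaining case'' $a>b$ that you handle by peeling off $\F_{a-c}$ never needs to be treated separately.
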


\begin{proof}
 By the previous corollary, $r(G)=r(G*\F_r)$  is an extended tower over $B':=r(B)$, and $G$ is an extended tower over $r(G)$.
\end{proof}

The rest of this subsection is devoted to the proof of Proposition \ref{Grus}. 
The key step is the following fact.

\begin{prop}\label{unbout}
Let $\Gamma$ be a   centered splitting of $G$. If it is retractable, 
 the group $G_1$ of Definition \ref{leslambda} is one-ended.
\end{prop}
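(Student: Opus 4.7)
The plan is to argue by contradiction. Assume $G_1 = \pi_1(\Lambda_\Z)$ is not one-ended; since $G_1$ is finitely generated and torsion-free, it admits a minimal non-trivial action on a simplicial tree $T'$ with trivial edge stabilizers and no global fixed point.

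The first step would be to show that every vertex group of the centered splitting $\Lambda_\Z$ of $G_1$ is elliptic in $T'$ provided its incident edge groups in $\Lambda_\Z$ are elliptic in $T'$. For the central vertex $Q = \pi_1(\Sigma)$ this rests on the standard topological fact that $\pi_1$ of a connected compact surface of negative Euler characteristic is one-ended relative to its maximal boundary subgroups. For a bottom vertex $\hat B_j$, this follows directly from the construction of the Grushko blowup: by definition $\hat B_j$ is a vertex group of the \emph{relative} Grushko decomposition of some $B_i$ with respect to the incident $\Gamma$-edge groups, hence $\hat B_j$ is one-ended relative to those incident edges it contains. Now if \emph{all} edge groups of $\Lambda_\Z$ were elliptic in $T'$, then so would be all vertex groups, and this would yield a $G_1$-equivariant map from the Bass-Serre tree $T_{\Lambda_\Z}$ to $T'$. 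Since edges of $T_{\Lambda_\Z}$ carry non-trivial $\bbZ$ stabilizers while edges of $T'$ have trivial stabilizers, every edge of $T_{\Lambda_\Z}$ would have to be collapsed by this map, making it constant and producing a global fixed point in $T'$ for $G_1$, contradicting non-triviality of $T'$. Hence some boundary subgroup $E = \pi_1(C)$ of $Q$, corresponding to a boundary component $C$ of $\Sigma$, must act hyperbolically on $T'$.

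The hard part is then to derive a contradiction from this hyperbolic action using the retractability of $\Gamma$, via the \ndbpm\ $p : Q \to G$ provided by Proposition \ref{equivpassimple}. I would take a $Q$-equivariant transverse map from $\tilde\Sigma$ to $T'$ (using $Q \hookrightarrow G_1$) and consider the image $\calc' \subset \Sigma$ of the preimage of midpoints of edges. Any closed component of $\calc'$ would have $\pi_1$ mapping into a trivial edge stabilizer of $T'$ and hence be null-homotopic, so up to isotopy $\calc'$ consists only of essential properly embedded arcs. The hyperbolicity of $E$ forces arcs of $\calc'$ with both endpoints on $C$ to appear (since $E$ acts by translations on its axis in $T'$, lifts of $C$ in $\tilde\Sigma$ cross infinitely many midpoints). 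Combining $\calc'$ with a maximal family $\calc$ of pinched curves associated to $p$, and applying Lemma \ref{lemcle4} to the components of $\Sigma\setminus\calc$, one concludes that the $p$-image of each such component lies in a conjugate of a bottom group of $\Gamma$. On the other hand, $\calc'$ records a non-trivial free-factor decomposition of $G_1$ reflected on $\Sigma$, which constrains these same images inside proper free factors. Matching the two constraints forces the non-degenerate image condition of $p$ to fail, or equivalently forces $E$ to be elliptic in $T'$ after composition with the retraction $\rho$, contradicting the hyperbolicity of $E$.

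The main obstacle is precisely this concluding step: a careful analysis of how the arcs of $\calc'$ interact with the pinched curves $\calc$ and with the boundary of $\Sigma$, including a case distinction according to whether $E$ is itself pinched by $p$ or not. The topological combinatorics of arcs and pinched curves must be matched against the algebraic constraints coming from $\rho$, using the non-degeneracy of $p$ and 1-acylindricity near the central vertex of $\Gamma$, to extract the contradiction.
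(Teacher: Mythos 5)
Your setup — arguing by contradiction with a nontrivial $G_1$-tree $T'$ having trivial edge stabilizers, using relative one-endedness of $Q$ and of the $\hat B_j$'s (both built into the Grushko blowup) to show that some boundary subgroup $E=\pi_1(C)$ of $Q$ must act hyperbolically on $T'$ — is correct, and already a different route from the paper, which never brings in an auxiliary tree.

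The gap is exactly where you flag it. You propose to superimpose the arc system $\calc'$ dual to the action of $Q\leq G_1$ on $T'$ with a maximal pinched family $\calc$ for the \ndbpm\ $p:Q\to G$, and then "match the two constraints"; but $\calc'$ and $\calc$ are dual to two \emph{different} homomorphisms out of $Q$, and there is no reason the constraints they impose interact in the way you hope. Your closing sentence — "forces $E$ to be elliptic in $T'$ after composition with the retraction $\rho$" — does not parse: $\rho$ retracts $G$ onto a free product of conjugates of the $B_i$'s, and there is no meaningful composition with the $G_1$-action on $T'$. The contradiction that retractability actually buys is algebraic, not dynamical, and the paper delivers it in two separate steps. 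First, Lemma \ref{sgrush} shows retractability forces $\Lambda_\Z$ to be minimal: if it weren't, some boundary component $C$ would have edge group $A_e\simeq\Z$ a free factor of the adjacent $B_i$, and (using $p$, pinched curves, and Lemma \ref{lemcle4}) killing the complementary factor would exhibit a generator of $\Z$ as a product of commutators or squares, which is absurd. Second, Lemma \ref{facgru} deduces one-endedness from this minimality plus relative one-endedness of the vertex groups, invoking Lemma 6.11 of \cite{Horbez_boundary}: if all Grushko factors of $G_1$ were elliptic in the minimal $\Lambda_\Z$, some vertex group would have to be freely decomposable relative to its incident edge groups, which by construction it is not. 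That cited result is precisely the non-trivial input your concluding step would need to replace, and your sketch does not supply a substitute for it.
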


\begin{proof}[Proof of Proposition \ref{Grus} from Proposition \ref{unbout}]
 Since 
 $G_1$ is one-ended, 
 it is a Grushko factor of $G$. 
 
 Minimality of $\Lambda_\Z$ will be proved in Lemma \ref{sgrush}.  Since $\Gamma$ is retractable, there is a \ndbpm\ from $Q$ to $G$. By Lemma \ref{cinqdouze} there is one with values in $G_1$, so $\Lambda_\Z$ is a retractable splitting of $G_1$.

The structure of $B$ follows from the above description of the Grushko blowup;
the rank  of $F'$ is the sum of the first Betti numbers of the graphs $\Lambda_i$ (the total number of   loops at the vertices $w_i$);  in the rank of $F$ there is  
an extra term equal  to  the number of bottom groups of $\Lambda_\Z$  minus the number of bottom groups of $\Gamma$.
\end{proof}

We now prove Proposition \ref{unbout}. This
 requires several lemmas.

\begin{lem} \label{sgrush}

If a   centered splitting $\Gamma$ of $G$ is retractable,
  the splitting  $\Lambda_\Z$ of Definition \ref{leslambda} is minimal.
\end{lem}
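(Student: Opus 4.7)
I will prove the contrapositive: if $\Lambda_\Z$ fails to be minimal, then $\Gamma$ cannot be retractable. The non-minimality criterion recalled just before the lemma provides a bottom vertex $v_i$, an incident edge $e$ with edge group $A_e \simeq \Z$, and a free product decomposition $B_i = A_e \ast B'_i$ such that every other incident edge group at $v_i$ lies, up to conjugacy in $B_i$, in $B'_i$. I first rule out the exceptional case ``$n = 1$ with $B_1$ abelian'' of Proposition \ref{equivpassimple}: an abelian $B_1$ admits no non-trivial free product, forcing $A_e = B_1$ and $B'_1 = \{1\}$; a valence-one vertex $v_1$ with $A_e = B_1$ then violates minimality of $\Gamma$, while valence $\geq 2$ leaves no room in $B'_1$ for the other (non-trivial) edge groups.

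Assuming $\Gamma$ is retractable, fix via Proposition \ref{equivpassimple} conjugates $\tilde B_j$ of the bottom groups generating their free product, together with a retraction $\rho : G \to \tilde B_\Gamma = \tilde B_1 \ast \cdots \ast \tilde B_n$. After conjugating $\rho$ globally if necessary, I may assume $\tilde B_i = B_i$, so that $B_i = A_e \ast B'_i$ appears as a free-factor decomposition of the free factor $\tilde B_i$ inside $\tilde B_\Gamma$. Let $K \subset \tilde B_\Gamma$ be the subgroup generated by $B'_i$ and the $\tilde B_j$ for $j \ne i$, so that $\tilde B_\Gamma = A_e \ast K$; let $\pi : \tilde B_\Gamma \to A_e$ be the associated projection, and set $\sigma := \pi \circ \rho : G \to A_e \simeq \Z$.

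The main computation is the evaluation of $\sigma$ on the boundary subgroups of $Q = \pi_1(\Sigma)$. Since $\rho$ is the identity on $\tilde B_\Gamma$ and $A_e \subset B_i = \tilde B_i \subset \tilde B_\Gamma$, $\sigma$ restricts to the identity on $A_e = \pi_1(C_e)$, sending a generator to $\pm 1 \in \Z$. For any other boundary subgroup $A_{e'} \subset Q$ attached to an edge at $v_i$, the non-minimality hypothesis together with malnormality of $B_i$ gives $A_{e'} \subset k B'_i k^{-1}$ for some $k \in B_i \subset \tilde B_\Gamma$; normality of $\ker \pi$ in $\tilde B_\Gamma$, together with $B'_i \subset K \subset \ker \pi$, yields $\rho(A_{e'}) = A_{e'} \subset \ker \pi$, so $\sigma(A_{e'}) = 1$. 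For a boundary subgroup attached to an edge at some $v_j$ with $j \ne i$, $A_{e'}$ sits in $B_j$; a short computation using that $\rho$ is the identity on $\tilde B_j$ shows $\rho(A_{e'})$ lies in a conjugate of $\tilde B_j$ in $\tilde B_\Gamma$, hence in $\ker \pi$, so again $\sigma(A_{e'}) = 1$.

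The contradiction now comes from $H_1(\Sigma; \Z)$: the surface relation expresses $\sum_C [C]$ as $0$ (orientable case) or as twice a sum of cross-cap classes (non-orientable case), so $\sum_C \sigma([C]) \in 2\Z$ in either case. But $\sigma([C_e]) = \pm 1$ and $\sigma([C]) = 0$ for every other boundary component $C$, so $\sum_C \sigma([C]) = \pm 1 \notin 2\Z$, a contradiction. The main technical subtlety is the handling of conjugates: after normalizing $\tilde B_i = B_i$, the edge groups $A_{e'}$ attached to $v_j$ for $j \ne i$ are subgroups of $B_j$ and not literally of $\tilde B_j$, but normality of $\ker \pi$ in $\tilde B_\Gamma$ renders this harmless.
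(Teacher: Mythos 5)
Your argument is correct, and it takes a genuinely different route from the paper's. The paper works directly with the \ndbpm{} $p$: it fixes a maximal family $\calc$ of pinched curves, restricts attention to the single component $\Sigma'$ of $\Sigma\setminus\calc$ containing the boundary curve $C_e$, uses Lemma \ref{lemcle4} and Remark \ref{vi} to place $p(\pi_1(\Sigma'))$ in a conjugate of $B_i$, and then projects $B_i\to A_e\simeq\Z$ to derive the parity contradiction from the surface relation of $\Sigma'$ (the pinched boundary curves of $\Sigma'$ contribute $0$ automatically). You instead pass through the retraction $\rho:G\to\tilde B_\Gamma$ of Proposition \ref{equivpassimple}, globally post-compose with the projection $\pi:\tilde B_\Gamma\to A_e$ killing $B'_i$ and all $\tilde B_j$ for $j\neq i$, and read off the parity contradiction in $H_1(\Sigma;\Z)$ for the whole surface $\Sigma$. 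This avoids any mention of pinched curves or Lemma \ref{lemcle4} in the body of the proof (that machinery is of course still buried in the proof of Proposition \ref{equivpassimple}, so the underlying tools are the same), and it is somewhat cleaner in that there is no case analysis on where the other boundary components of $\Sigma'$ land. The small bookkeeping you flag — normalizing $\tilde B_i=B_i$ by conjugating $\rho$, and the fact that for $j\neq i$ one only has $\rho(A_{e'})$ in a $\tilde B_\Gamma$-conjugate of $\tilde B_j\subset\ker\pi$ rather than in $\tilde B_j$ itself — is handled correctly via normality of $\ker\pi$. Your preliminary reduction dispensing with the ``$n=1$ and $B_1$ abelian'' case is also correct: a cyclic $B_1$ cannot split as $A_e\ast B'_1$ with $A_e\simeq\Z$ in a way compatible with the non-minimality criterion and the minimality of $\Gamma$.
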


\begin{proof}
We assume that $\Lambda_\Z$ is not minimal and we argue towards a contradiction. 
As explained above, there is an edge $e=vv_i$ of $\Gamma$ 
  carrying a group $A_e$ such that  $B_i=A_e*B'_i$, and the groups carried by   all other   edges of $\Gamma$ containing $v_i$ are contained (up to conjugacy) in $B'_i$.

Let $C$ be the  boundary component   of $\Sigma$ associated to the edge $e$. 
Fixing  a   \ndbpm\ $p$, we let $\calc$ be  a maximal family of pinched curves on $\Sigma$, and we consider    the component 
$\Sigma'$ of $\Sigma\setminus\calc$ containing $C$. By Lemma \ref{lemcle4}   (and Remark \ref{vi}) 
$p$ maps $\pi_1(\Sigma')$ into  $B_i$ (up to conjugacy).
Moreover, $\pi_1(C)$ is mapped surjectively onto a conjugate of  $A_e$, 
and if $C'$ is another   component of $\Sigma'\cap\partial\Sigma$   then $\pi_1(C')$ is mapped into a conjugate of  $B'_i$. Projecting $B_i$ to
 $A_e 
\simeq \Z$ by killing $B'_i$, we 
  deduce that a generator of $\Z$ is a product of commutators or squares (depending on whether $\Sigma'$ is orientable or not), a contradiction.
\end{proof}

 The following remark will be used in Subsections  \ref{mbs} and \ref {agpe}.

\begin{rem}  \label{bolib}  
Assume that a cyclic splitting $\Gamma$ has a vertex $v$ which is as in Definition \ref{stype} (surface-type vertices), except that the map from the set of incident edges to the set of boundary components of $\Sigma$ is injective but not surjective; in other words, $\Sigma$ has at least one ``free'' boundary component
 (this  may occur 
at a flexible vertex of a  JSJ decomposition relative to a cyclic subgroup $A$, 
with a free boundary component 
whose  
fundamental group 
contains a conjugate of $A$).
The   argument used to prove the lemma then says that there is no \ndbpm\ $p:\pi_1(\Sigma)\ra G$.
In particular, if $\Gamma$ is a  centered splitting which is not minimal (thus contradicting our standing assumption), it is not retractable.
\end{rem}

\begin{lem} \label{passurf}
Let $\Gamma$ be a centered splitting of $G$ with central vertex group $Q$. 

If $J$ is a finitely generated one-ended subgroup of $G$, then some conjugate of $J$ either is contained in one of the bottom vertex groups, 
or contains a finite index subgroup of $Q$.
\end{lem}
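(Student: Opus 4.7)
The plan is to let $J$ act on the Bass-Serre tree $T$ of $\Gamma$ and analyze the action. Recall that $T$ is bipartite: each edge joins a central vertex (with stabilizer a conjugate $Q_v$ of $Q$) to a bottom vertex (stabilizer conjugate to some $B_i$); edge stabilizers are cyclic, and the action is $1$-acylindrical near every central vertex.

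If $J$ fixes a vertex $v$ of $T$, I am done immediately: if $v$ is a bottom vertex, a conjugate of $J$ lies in some $B_i$; if $v$ is central, then $J\subset Q_v$, and since $J$ is one-ended (hence infinite and not isomorphic to $\bbZ$), Lemma \ref{Scott} forces $J$ to have finite index in $Q_v$, which gives the second alternative after conjugation.

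Suppose instead that $J$ does not fix a vertex. I will consider the minimal $J$-invariant subtree $T_J\subset T$, in which every vertex has valence at least $2$. For a central vertex $v$ of $T_J$, Lemma \ref{Scott} applied to $J_v:=J\cap Q_v$ leaves three possibilities: $J_v$ is trivial, infinite cyclic (a boundary subgroup), or of finite index in $Q_v$. If the last case occurs for some $v$, then some conjugate of $J$ contains a finite-index subgroup of $Q$, and we are done.

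The main obstacle is to rule out the case where $J_v$ is trivial or infinite cyclic at \emph{every} central vertex of $T_J$; I will derive a contradiction with the one-endedness of $J$. The key point is that $1$-acylindricity near $v$ in $T$ forces distinct edges at $v$ to have edge stabilizers intersecting trivially, while any two nontrivial subgroups of the cyclic group $J_v$ have nontrivial intersection. Hence at most one edge $e$ at $v$ in $T$ can satisfy $J_v\cap G_e\ne \{1\}$, and since $v$ has valence at least $2$ in $T_J$, some edge at $v$ in $T_J$ has stabilizer $J_e=J_v\cap G_e=\{1\}$. The resulting trivial edge in the graph of groups $T_J/J$ gives a nontrivial free splitting of $J$ (since $J$ does not globally fix a vertex of $T_J$), contradicting one-endedness.
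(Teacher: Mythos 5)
Your argument is correct and follows essentially the same route as the paper's: pass to the minimal $J$-subtree, apply Lemma~\ref{Scott} at a central vertex, and use $1$-acylindricity together with one-endedness (no free splittings) to rule out the trivial/cyclic possibilities for $J\cap Q_v$. The organization differs slightly — the paper first invokes one-endedness to show all $J$-edge stabilizers are non-trivial and then deduces $J\cap Q_v$ is not a boundary subgroup, whereas you case-split on the type of $J\cap Q_v$ first — but the ingredients and the logic are identical.
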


\begin{proof}

If $J$ is elliptic in the Bass-Serre tree $T$ of $\Gamma$, it is contained (up to conjugacy) in a bottom group $B_i$ (not in $Q$, which is a free group). If not, the action of $J$ on its minimal subtree defines a non-trivial splitting $\Gamma_J$. Using conjugations, we may assume that $\Gamma_J$ has a vertex group $J\cap Q$, with an incident edge group of the form $J\cap C$ where $C$ is a maximal boundary subgroup of $Q=\pi_1(\Sigma)$. Since $J$ is one-ended, $J\cap C$ is non-trivial. It is maximal cyclic in $J\cap Q$ 
because $C$ is maximal cyclic in $Q$.
By 1-acylindricity of $T$ and minimality of $\Gamma_J$, we have $J\cap C\ne J\cap Q$. Lemma \ref{Scott} then implies that $J\cap Q$ has finite index in $Q$. 
\end{proof}

\begin{rem} \label{passurf2}
If $J$ is not elliptic in $T$, the proof shows that $J$ has a cyclic splitting with a surface-type vertex whose associated  surface   is  a finite cover of $\Sigma$.
\end{rem}

\begin{lem} \label{facgru}
Let $\Gamma$ be a   centered splitting of $G$. 
  If each bottom group $B_i$ is one-ended 
  relative to the incident edge groups,
then $G$ is one-ended. 
\end{lem}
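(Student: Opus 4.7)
The plan is to reduce the statement to a general Bass--Serre criterion: if a graph of groups has all edge groups non-trivial and every vertex group is one-ended relative to its incident edge groups, then its fundamental group is one-ended. The hypothesis of the lemma gives this for the bottom vertices, so the key additional step is to verify the same for the central vertex, \ie that $Q=\pi_1(\Sigma)$ is one-ended relative to its incident edge groups, which correspond bijectively to the maximal boundary subgroups of $\pi_1(\Sigma)$. This is a standard topological fact: a non-trivial free decomposition of $\pi_1(\Sigma)$ relative to the boundary would give, via Dunwoody's theorem (or directly by a transversality argument in the universal cover), an essential properly embedded arc $\alpha\inc\Sigma$; since $\alpha$ has endpoints on $\partial\Sigma$, the boundary component meeting $\alpha$ is cut in two and the corresponding conjugacy class splits between factors, ruling out such a decomposition.

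For the general criterion I would argue by contradiction. Suppose $G$ is not one-ended, so $G$ acts non-trivially and minimally on a tree $T'$ with trivial edge stabilizers. Each edge group $G_e\simeq\Z$ of $\Gamma$ acts on $T'$; since $\Z$ has no non-trivial free product decomposition, $G_e$ acts elliptically, and because $T'$ has trivial edge stabilizers its fixed set is reduced to a single vertex $p_e\in T'$. Next, each vertex group $G_v$ of $\Gamma$ (which is $Q$ or some $B_i$, hence non-trivial and containing some $G_e\simeq\Z$) acts on $T'$ with trivial edge stabilizers. If $G_v$ were not elliptic, its minimal subtree would yield a non-trivial free decomposition of $G_v$ over the trivial group in which every incident edge group, being elliptic, is conjugated into a factor; this would contradict the hypothesis that $G_v$ is one-ended relative to its incident edge groups. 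Hence $G_v$ is elliptic, and since its fixed set $F_v$ is non-empty and contained in $\{p_e\}$ for any incident edge $e$, we get $F_v=\{p_e\}$.

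Consequently, adjacent vertices in the Bass--Serre tree $T$ of $\Gamma$ have the same image under the assignment $v\mapsto F_v$, so by connectedness of $T$ this assignment is constant, equal to some $p_*\in T'$. Since $v\mapsto F_v$ is $G$-equivariant, $p_*$ is fixed by all of $G$, contradicting the non-triviality of the action on $T'$. The main delicate point is the relative one-endedness of $\pi_1(\Sigma)$: one must rule out arcs having both endpoints on boundary components that are entirely in one factor, as well as non-separating arcs producing an HNN rather than amalgam structure; in each case, however, the boundary component meeting the arc is split between the factors, which is the desired contradiction. Once this is established, the rest is a routine application of Bass--Serre theory, using crucially that $\Z$ fixes at most one vertex in any tree with trivial edge stabilizers.
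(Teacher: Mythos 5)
Your argument breaks down at the very first step, where you assert that each edge group $G_e\simeq\Z$ must act elliptically on the tree $T'$ ``since $\Z$ has no non-trivial free product decomposition.'' This inference is false: $\Z$ can perfectly well act on a tree with trivial edge stabilizers by translation along an axis. Such an action is hyperbolic, not elliptic, and corresponds to the (trivial) HNN decomposition $\Z=1*_1$ rather than to an amalgam; the fact that $\Z$ is freely indecomposable as an amalgamated product does not prevent it from being a hyperbolic element of a free splitting of $G$. Once this step fails, the whole ping-pong argument collapses, because everything downstream (ellipticity of the vertex groups $G_v$, the conclusion $F_v=\{p_e\}$, and constancy of $v\mapsto F_v$ along $T$) is derived from the supposed ellipticity of $G_e$. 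Moreover your proposed ``general criterion'' (all edge groups non-trivial plus all vertex groups relatively one-ended implies $G$ one-ended) is not correct as stated: glue a pair of pants $\Sigma$ to three cyclic bottom groups each equal to its boundary subgroup; then every vertex group is one-ended relative to its incident edge groups, every edge group is non-trivial, yet $G=\pi_1(\Sigma)\simeq\bbF_2$ has infinitely many ends. (This is a non-minimal centered splitting, which is exactly why the paper's remark after the lemma stresses that minimality of $\Gamma$ is essential -- a hypothesis your argument never invokes.)

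What is missing, and what the paper supplies, is a reason why $Q$, and consequently the edge groups inside $Q$, \emph{must} be elliptic in any free $G$-tree. The paper gets this by a different route: if $G$ were freely decomposable, then by \cite[Lemma 6.11]{Horbez_boundary} some Grushko factor $J$ of $G$ must fail to be elliptic in $\Gamma$ (the alternative, that a vertex group of $\Gamma$ is freely decomposable relative to its incident edge groups, is excluded by the hypothesis for bottom vertices and by surface topology for the central vertex, as you correctly observe). Lemma~\ref{passurf} then forces $J$ to contain $Q$ up to conjugacy, so $Q$ lives inside a one-ended Grushko factor and is therefore elliptic in any free $G$-tree $S$. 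Only then can one deduce ellipticity of the edge groups (as subgroups of $Q$) and of the $B_i$ (using relative one-endedness), and conclude by your final argument that adjacent vertex stabilizers of the Bass--Serre tree of $\Gamma$ fix a common vertex of $S$. The step you omit --- controlling $Q$ via Grushko factors and Lemma~\ref{passurf} --- is the actual content of the proof.
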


\begin{rem}
  In this lemma, the implicit assumption that $\Gamma$ is a minimal splitting is essential.
\end{rem}

\begin{proof}
By \cite[Lemma 6.11]{Horbez_boundary}, 
if all Grushko   factors of $G$ are elliptic in $\Gamma$, then some vertex group of $\Gamma$ is freely decomposable 
relative to its incident edge groups,
contradicting our assumption.
Thus, there is a Grushko factor $J$ of $G$ which is not elliptic in $\Gamma$. Lemma \ref{passurf} implies that (up to conjugacy) $J$ contains a finite index subgroup of $Q$, hence contains $Q$. 
 This easily  implies that $G$ is one-ended.
 
 Indeed, $Q$ is elliptic in any   $G$-tree $S$ with trivial edge stabilizers, and so are edge groups of $\Gamma$,   as well as  the groups $B_i$ because of the assumption of the lemma. Since edge stabilizers of $S$ are trivial, adjacent vertex stabilizers of the Bass-Serre tree of $\Gamma$ must fix the same vertex of $S$, so $S$ must be trivial.
\end{proof}

Proposition \ref{unbout} follows by applying Lemma \ref{facgru} to the splitting $\Lambda_\Z$ of $G_1$, which is minimal by Lemma \ref{sgrush}.

\subsection{Reading an étage in the JSJ}
\label{readjsj}

  In this subsection it is important to assume that $G$ is a CSA group. We also assume that $G$ is one-ended. It then has a  
canonical cyclic JSJ splitting $\Gcan$ (associated to 
 the tree $(T_a)^*_c$ of Theorem 9.5 of  \cite{GL_JSJ},  see Subsection \ref{JSJ}  and \cite{GL_JSJ}).   

Now let  $\Gamma$ be a centered splitting of $G$, with central vertex $v$ and central vertex group $Q=\pi_1(\Sigma)$. The main result of this subsection is Lemma \ref{preconf}, saying that  $\Gcan$ is retractable if $\Gamma$ is retractable (provided that $G$ is not a closed surface group).
Recall (Definition \ref{dfn_retractable}) that a splitting  is retractable if one of its non-exceptional surface-type vertex  groups has a non-degenerate \bpm.

We first explain how to obtain $\Gamma$ from $\Gcan$  (see Figure \ref {fig_sous-surface}).
Because $G$ is one-ended, the central vertex group $Q=\pi_1(\Sigma)$ is elliptic in $\Gcan$ (see  Theorem 5.27 of \cite{GL_JSJ}). It is contained (up to conjugacy) in a vertex group $G_w$ of  $\Gcan$ which is neither rigid 
nor abelian,  so is surface-type. Let $S$ be the associated surface, so that $G_w=\pi_1(S)$.

\begin{figure}[ht!]
  \centering
  \includegraphics{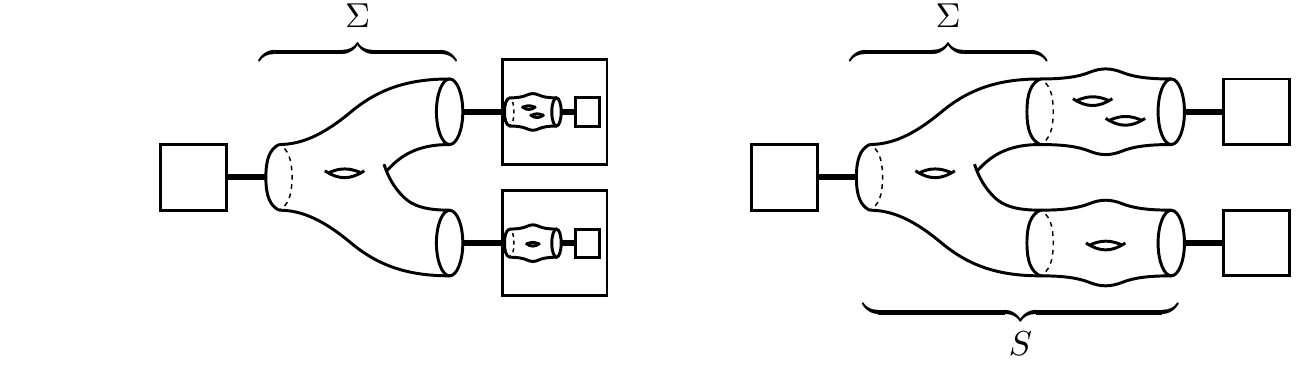}
  \caption{The surface   $\Sigma$ of the centered splitting $\Gamma$  (pictured on the left) is contained in the surface   $S$ of the JSJ splitting $\Gcan$  (on the right).}
  \label{fig_sous-surface}
\end{figure}

   The splitting $\Gcan$ is universally compatible (see \cite{GL_JSJ}): there is a splitting   $\hat \Gamma$ which collapses onto both $\Gamma$ and $\Gcan$. We may assume that $\hat \Gamma$
is  the lcm of $\Gamma$ and $\Gcan$: 
no edge is collapsed in both (see \cite{GL_JSJ}, Section A.5). The group $Q$ is elliptic in $\Gamma$ and $\Gcan$,  hence contained in a vertex group $G_{\hat v}$ of $
\hat \Gamma$. We claim that edges  of $\hat \Gamma$ incident on $\hat v$ are not collapsed in $\Gamma$.

To prove the claim, we work with the Bass-Serre trees and we denote by $\pi:T_{\hat\Gamma}\ra T_\Gamma$ 
the collapse map. We view $\hat v, v$ as vertices of $T_{\hat\Gamma}, T_\Gamma$, with $\pi(\hat v)=v$.
Consider the subtree $Y=\pi\m(v)$ of $T_{\hat \Gamma}$.
Let $\cale$ be the set of edges $e=v_ew_e$ 
 of $T_{\hat \Gamma}$  with $v_e\in Y$ and $w_e\notin Y$ which are not collapsed by $\pi$  (it may be identified with the star of $v$ in $T_\Gamma$).
By minimality of $T_{\hat \Gamma}$, $Y$ is the convex hull of $\{v_e\mid e\in\cale\}$.

 Note that $G_e$ fixes the segment $[\hat v,v_e]$ for $e\in \cale$.
Since distinct edges of $T_\Gamma$ incident on $v$ have non-commuting stabilizers,
this is also the case for any pair of distinct edges $e,e'\in \cale$, so 
$e,e'$ cannot be in the same connected component of $T_{\hat\Gamma}\setminus \{\hat v\}$.
If $e\in\cale$ and $v_e\neq \hat v$, minimality of $T_{\hat \Gamma}$ implies that 
there is no edge incident on the open segment $(\hat v,v_e)$,
and $e$ is the only edge incident on $v_e$ not contained in $[\hat v,v_e]$.
Since there is no redundant vertex in $T_{\hat \Gamma}$, it follows that   $v_e=\hat v$ for all $e\in \cale$, so $Y=\{\hat v\}$. This    proves the claim: edges   of $\hat \Gamma$ incident on $\hat v$ are not collapsed in $\Gamma$.

On the other hand, the     preimage of $w$ in $\hat \Gamma$ is a graph of groups decomposition of $\pi_1(S)$, and the  minimal subgraph of groups  is dual to a family of disjoint simple closed curves on $S$ by a standard result (\cite[Theorem III.2.6]{MS_valuationsI}). The group $Q$ is a vertex group of this splitting (it is carried by $\hat v$), so $\Sigma$ may be identified with a subsurface of $S$  (see Figure \ref{fig_sous-surface}). In particular, $S$ is non-exceptional if $\Sigma$ is non-exceptional  (unless $G=\pi_1(N_3)$). 

To sum up:

\begin{lem} \label{compajsj}
One may obtain any centered splitting
$\Gamma$ from $\Gcan$ as follows:  one selects a surface-type vertex $w$ of $\Gcan$ and  an incompressible subsurface $\Sigma$ of the associated surface $S$, one refines $\Gcan $ by blowing up $w$ using a splitting of $G_w=\pi_1(S)$ having a vertex $v$ carrying $\pi_1(\Sigma)$, and one collapses all edges of the refined splitting not containing $v$. \qed
\end{lem}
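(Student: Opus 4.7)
The plan is to formalize the construction that is laid out in the three paragraphs immediately preceding the lemma; what remains to do is essentially to organize those observations and verify the ``incompressibility'' clause. Let $\Gamma$ be an arbitrary centered splitting of $G$ with central vertex $v$ carrying $Q=\pi_1(\Sigma)$. First I would show that $Q$ sits, up to conjugacy, inside a surface-type vertex group of $\Gcan$. Since $G$ is one-ended torsion-free CSA, every vertex group of $\Gcan$ is universally elliptic, and in fact the surface of Theorem 5.27 of \cite{GL_JSJ} implies that $Q$ is elliptic in $\Gcan$. The group $Q$ is non-abelian and carries non-trivial cyclic splittings (coming from $\Gamma$), so it is neither abelian nor rigid; hence $Q$ is conjugate into a surface-type vertex group $G_w=\pi_1(S)$.

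Next I would use universal compatibility of $\Gcan$ to take the lcm $\hat\Gamma$ of $\Gamma$ and $\Gcan$ (in the sense of Section A.5 of \cite{GL_JSJ}): no edge of $\hat\Gamma$ is collapsed both in $\Gamma$ and in $\Gcan$. Since $Q$ is elliptic in both, it fixes a unique vertex $\hat v$ of $T_{\hat\Gamma}$, which maps to $v$ in $T_\Gamma$ and lies in the preimage of $w$ in $T_{\Gcan}$. The crucial step is to check that every edge of $\hat\Gamma$ incident on $\hat v$ is preserved when one collapses $\hat\Gamma$ onto $\Gamma$; this is exactly the claim proved in the paragraph before the lemma. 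I would repeat that argument: writing $Y$ for the preimage of $v$ in $T_{\hat\Gamma}$ and $\mathcal{E}$ for the set of uncollapsed edges of $T_{\hat\Gamma}$ with one endpoint in $Y$, $1$-acylindricity of $T_\Gamma$ near $v$ forces any two such edges to lie in distinct components of $T_{\hat\Gamma}\setminus\{\hat v\}$, and absence of redundant vertices plus minimality of $T_{\hat\Gamma}$ then yields $Y=\{\hat v\}$.

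With this in hand, I would restrict attention to the full preimage in $\hat\Gamma$ of the vertex $w$: by construction this preimage is a cyclic splitting of $G_w=\pi_1(S)$ in which each incident edge group (a maximal boundary subgroup of $S$) is elliptic. A standard result on splittings of surface groups (Theorem III.2.6 of \cite{MS_valuationsI}) shows that any such cyclic splitting is dual to a family of disjoint two-sided simple closed curves on $S$; in particular each vertex group is the fundamental group of an incompressible subsurface. Since $Q$ is a vertex group of this splitting, $\Sigma$ identifies with an incompressible subsurface of $S$ (so $S$ is non-exceptional whenever $\Sigma$ is, unless $G=\pi_1(N_3)$, which is handled by the modification of the JSJ discussed in Section \ref{sec_modified}).

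To conclude, I would observe that the prescribed construction -- refining $\Gcan$ at $w$ by the cyclic splitting of $\pi_1(S)$ dual to the curves above, then collapsing every edge not containing the vertex $v$ which carries $\pi_1(\Sigma)$ -- produces exactly the centered splitting obtained from $\hat\Gamma$ by collapsing every edge not incident on $\hat v$, and the preceding step says that this is $\Gamma$. The only non-routine ingredient is the acylindricity argument that every edge of $\hat\Gamma$ at $\hat v$ survives in $\Gamma$; the rest is bookkeeping around the universal compatibility of $\Gcan$ and the classical description of cyclic splittings of surface groups.
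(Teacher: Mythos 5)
Your proof follows the paper's own argument step for step (the three paragraphs before the lemma): ellipticity of $Q$ in $\Gcan$ via Theorem 5.27 of \cite{GL_JSJ}, the lcm $\hat\Gamma$, the acylindricity argument giving $Y=\{\hat v\}$, and Theorem III.2.6 of \cite{MS_valuationsI} to realize the preimage of $w$ as dual to a multicurve on $S$. One slip worth noting: the assertion ``every vertex group of $\Gcan$ is universally elliptic'' is false (surface-type vertex groups are precisely the flexible ones), and the rigid/abelian/surface-type trichotomy should be applied to the vertex group $G_w$ containing $Q$, not to $Q$ itself; neither slip affects the argument, since you invoke Theorem 5.27 for ellipticity of $Q$, and $G_w$ cannot be abelian (it contains the non-abelian $Q$) nor rigid (else $Q\subset G_w$ would be elliptic in the refinement of $\Gamma$ obtained by blowing up $v$ along an essential curve of $\Sigma$, which it is not).
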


Since the set of incompressible surfaces of a given surface $S$ is finite up to the action of the modular group of $S$, we obtain the following finiteness result. 

\begin{cor} \label{fini-1}
 Let $G$ be a one-ended torsion-free CSA group. Up to isomorphism, the set of groups $B$ such that $G$ has a centered splitting with base isomorphic to $B$ is finite. \qed
\end{cor}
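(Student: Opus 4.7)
The plan is to combine Lemma \ref{compajsj} with a classical finiteness statement from surface topology. By Lemma \ref{compajsj}, every centered splitting of $G$ is obtained from the canonical JSJ $\Gcan$ by selecting a surface-type vertex $w$ together with an incompressible subsurface $\Sigma \subset S_w$, refining $\Gcan$ using a splitting of $\pi_1(S_w)$ dual to $\bo\Sigma$, and collapsing every edge not containing the central vertex $v$. Since $\Gcan$ has only finitely many vertices, it will suffice to fix a surface-type vertex $w$ and bound the number of isomorphism types of base as $\Sigma$ ranges over incompressible subsurfaces of $S = S_w$.

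For each such $\Sigma$, the bottom groups of the resulting centered splitting are indexed by the components $S_1,\dots,S_k$ of $S\setminus\Sigma$: the $j$-th bottom group is built from $\pi_1(S_j)$ by amalgamating, along each boundary component of $S$ contained in $S_j$, the fundamental group of the connected component of $\Gcan$ attached to $w$ through the corresponding edge. Thus the base $B$ is determined, as an abstract free product, by the isotopy class of $\Sigma$ together with the combinatorial pattern specifying, for each boundary component $C$ of $S$, which $S_j$ contains $C$.

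A homeomorphism of $S$ fixing each boundary component setwise and sending $\Sigma$ to $\Sigma'$ induces compatible isomorphisms between the $\pi_1(S_j)$'s and the $\pi_1(S'_{\sigma(j)})$'s that preserve the attaching data, hence produces isomorphic bases. The pure mapping class group $\mathrm{PMCG}(S)$, consisting of mapping classes fixing each boundary component, has finite index in $\mathrm{MCG}(S)$; by the classification of compact surfaces (the change-of-coordinates principle), $\mathrm{PMCG}(S)$ acts with only finitely many orbits on isotopy classes of systems of disjoint, pairwise non-parallel, essential simple closed curves in $S$, hence on isotopy classes of incompressible subsurfaces. Combined with the finiteness of the set of surface-type vertices of $\Gcan$, this yields the desired bound.

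The main obstacle I anticipate is verifying that $\mathrm{PMCG}(S)$-equivalent subsurfaces indeed yield isomorphic bases: one must check that a mapping class fixing each boundary component of $S$ setwise induces a well-defined bijection between the components of $S\setminus\Sigma$ and $S\setminus\Sigma'$ as well as isomorphisms $\pi_1(S_j)\simeq\pi_1(S'_{\sigma(j)})$ sending each boundary subgroup of $S_j$ to the corresponding one of $S'_{\sigma(j)}$ up to conjugation. This is essentially a bookkeeping exercise, after which the finiteness follows immediately from the classical surface-topology statement.
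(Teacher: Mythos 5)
Your proposal follows the paper's argument exactly: the paper proves the corollary in one line, invoking Lemma~\ref{compajsj} together with the fact that ``the set of incompressible surfaces of a given surface $S$ is finite up to the action of the modular group of $S$.'' You have unpacked the ellipsis, in particular the observation that one should really work with the \emph{pure} mapping class group (so that each boundary component of $S$, hence the corresponding edge of $\Gcan$, is fixed), and that finiteness survives this refinement because $\mathrm{PMCG}(S)$ has finite index in $\mathrm{MCG}(S)$. The verification that $\mathrm{PMCG}$-equivalent subsurfaces give isomorphic bases is indeed the bookkeeping exercise you describe.

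One small inaccuracy worth fixing: the bottom groups of the resulting centered splitting are \emph{not} in bijection with the components $S_1,\dots,S_k$ of $S\setminus\Sigma$. A single bottom group is the fundamental group of a connected component of the refined graph $\hat\Gamma$ minus the open star of $v$, and such a component may contain \emph{several} of the $S_j$'s if they are linked through $\Gcan\setminus w$ (compare the proof of Lemma~\ref{bases}, where a component of $X\setminus\Sigma$ is described as a union of components of $X\setminus S$ joined by surface pieces). This does not hurt your argument: a homeomorphism in $\mathrm{PMCG}(S)$ sending $\Sigma$ to $\Sigma'$ fixes each boundary component of $S$ and hence gives a bijection between the components of $\hat\Gamma\setminus\mathrm{star}(v)$ for $\Sigma$ and for $\Sigma'$, compatible with the (unchanged) part of $\Gcan$ outside $w$, so the bases are isomorphic. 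Just state the indexing by components of the refined graph rather than by components of $S\setminus\Sigma$.
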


Keeping the same notations as in Lemma \ref{compajsj}, we now compare the base    $B_\Gamma$ of  the   centered splitting $\Gamma$ to   $B_{S}$, 
the base  of the centered splitting    obtained from   $\Gcan$  by collapsing all edges not containing   $w$. 

\begin{lem} \label{bases} 
$B_\Gamma$ is   isomorphic to the free product of $B_{S}$ with a free group. 
\end{lem}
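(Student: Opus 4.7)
The plan is to exhibit both bases as free products of fundamental groups of sub-graphs-of-groups living in a common refinement of $\Gcan$. Using Lemma \ref{compajsj}, I would take the refinement $\hat\Gamma$ of $\Gcan$ obtained by blowing up $w$ using the graph-of-groups decomposition $\Delta$ of $\pi_1(S)$ dual to the subsurface decomposition $S=\Sigma\cup Y_1\cup\cdots\cup Y_m$, where $Y_1,\dots,Y_m$ are the closures of the connected components of $S\setminus\Sigma$. Then $\Gamma$ arises from $\hat\Gamma$ by collapsing all edges not incident to the vertex $v$ carrying $\pi_1(\Sigma)$, while the centered splitting whose base is $B_S$ arises from $\Gcan$ by collapsing all edges not incident to $w$. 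A key structural observation is that every edge of $\Delta$ is incident to $v$, since each internal boundary curve of $\Sigma$ separates $\Sigma$ from a single $Y_l$.

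Let $K_1,\dots,K_p$ denote the connected components of $\Gcan$ minus the open star of $w$, viewed as sub-graphs-of-groups, so that $B_S=\pi_1(K_1)*\cdots*\pi_1(K_p)$. Similarly, let $U_1,\dots,U_q$ be the connected components of $\hat\Gamma$ minus the open star of $v$, so that $B_\Gamma=\pi_1(U_1)*\cdots*\pi_1(U_q)$. Each $U_l$ is either a single $K_j$, an isolated vertex $v_{l'}$ (when all boundary components of $Y_{l'}$ are shared with $\Sigma$), or consists of some $K_j$'s together with one or more vertices $v_{l'}$ carrying $\pi_1(Y_{l'})$, joined by the edges of $\Gcan$ attached to these $v_{l'}$'s in $\hat\Gamma$ (those corresponding to boundary curves of $S$ lying in $\partial Y_{l'}$). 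Since each $Y_{l'}$ is a compact surface with boundary sharing at least one boundary curve with $\Sigma$, the group $\pi_1(Y_{l'})$ is free and the number of its boundary components on $\partial S$ is strictly less than $|\partial Y_{l'}|$; the corresponding boundary elements can therefore be extended to a basis, giving a decomposition $\pi_1(Y_{l'})\cong\langle c_{i_1}\rangle*\cdots*\langle c_{i_a}\rangle * F_{l'}$ with $F_{l'}$ free.

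The main step is then to eliminate each vertex $v_{l'}$ from the graph of groups presenting $\pi_1(U_l)$. Along a spanning tree of $U_l$, each incident tree-edge amalgam of the shape $A*_{\langle c\rangle}(\langle c\rangle*R)$ collapses the $\langle c\rangle$ factor into $A$, so that $\pi_1(Y_{l'})$ is absorbed into its neighbors while $F_{l'}$ survives as a free factor; for each non-tree incident edge with stable letter $t$ and relation $tct^{-1}=c'$ (with $c'$ already in the free product coming from an adjacent $K_j$), the substitution $c=t^{-1}c't$ eliminates $\langle c\rangle$ and introduces $\langle t\rangle$ as an additional free factor. After eliminating every $v_{l'}$ appearing in $U_l$, the resulting graph of groups is just the disjoint union of the $K_j$'s contained in $U_l$, together with a free complement $F_l$, so $\pi_1(U_l)\cong \pi_1(K_{j_1})*\cdots*\pi_1(K_{j_r})*F_l$ where $K_{j_1},\dots,K_{j_r}$ are the $K_j$'s in $U_l$. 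Taking the free product over all components yields $B_\Gamma\cong B_S*F$ with $F=F_1*\cdots*F_q$ free. The delicate point is the bookkeeping when $U_l$ has cycles: one must choose the spanning tree and the free-product decomposition of each $\pi_1(Y_{l'})$ consistently so that each non-tree edge is correctly absorbed via the substitution above, and so that the boundary elements used in tree amalgams and in HNN substitutions together arise from a single basis of $\pi_1(Y_{l'})$.
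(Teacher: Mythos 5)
Your proof is correct and takes essentially the same route as the paper's, just rephrased in combinatorial terms. The paper views $\hat\Gamma$ as a graph of spaces $X$ and argues directly that each component of $X\setminus\Sigma$ is obtained from the components of $X\setminus S$ it contains by attaching the surfaces $Y_{l'}$, each of which has a ``free'' boundary curve (one lying on $\partial\Sigma$, not on $\partial S$), so its fundamental group contributes a free complement. Your argument is the graph-of-groups translation of this: you identify the same components $U_l$ and $K_j$, the same crucial fact that each $Y_{l'}$ meets $\partial\Sigma$, and you make explicit the Tietze transformations that absorb the boundary-cyclic free factors of $\pi_1(Y_{l'})$ into the adjacent $K_j$'s while leaving a free complement.

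The ``delicate point'' you flag is not actually problematic, and you should not leave it hanging: once you fix any spanning tree of $U_l$ and, for each $v_{l'}$, any free-product decomposition $\pi_1(Y_{l'})\cong\langle c_{i_1}\rangle*\cdots*\langle c_{i_a}\rangle*F_{l'}$ in which $c_{i_1},\dots,c_{i_a}$ are the boundary elements of $Y_{l'}$ carried by edges of $\Gcan$, every edge relation at $v_{l'}$ (tree or non-tree) sets exactly one of the $\langle c_{i_j}\rangle$ equal to a conjugate of a subgroup living in some $K_j$ (by the identity, or by a stable letter), and so eliminates that cyclic factor; the stable letters of the non-tree edges then survive as free generators since nothing else constrains them. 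Since all such decompositions of $\pi_1(Y_{l'})$ exist --- because $a<|\partial Y_{l'}|$, as $Y_{l'}$ has at least one boundary curve on $\partial\Sigma$, and any proper subset of the boundary elements of a surface with boundary extends to a basis --- there is no consistency constraint to worry about. One minor remark: your case analysis ``a single $K_j$, an isolated $v_{l'}$, or a mixture'' should not obscure that the edges incident to $v_{l'}$ inside $U_l$ always go to cyclic vertices of $\Gcan$ (never to another $v_{l''}$), because the edges of $\Delta$ are all incident to $v$; this is what makes the substitutions land in groups already expressed as part of $\pi_1(K_j)$.
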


\begin{proof}
   See Figure \ref{fig_sous-surface}.
View   $\hat\Gamma$ as a graph of spaces $X$, so that $B_\Gamma$ (resp.\ $B_{S}$) is the free product of the fundamental groups
of the connected components of $X\setminus \Sigma$ (resp.\ $X\setminus S$).
Each connected component of $X\setminus \Sigma$ is a union of connected components of $X\setminus S$,
attached by surfaces with boundary. Since the closure of each component of $S\setminus\Sigma$ contains a component of $\partial \Sigma$, 
the fundamental group of each connected component of $X\setminus \Sigma$ is the free product of the fundamental groups
of the connected components of $X\setminus S$ which it contains, and a free group.
 \end{proof}

\begin{lem}\label{preconf}
Let $G$ be a one-ended torsion-free CSA group,   not a closed surface group. If  $G$ has a retractable cyclic splitting $\Gamma$, then the canonical cyclic JSJ splitting 
$\Gcan$ of $G$ 
 is retractable. 
\end{lem}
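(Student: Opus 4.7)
The plan is to extend the non-degenerate boundary-preserving map $p:Q\to G$ witnessing retractability of $\Gamma$ to a non-degenerate boundary-preserving map on the surface associated to a surface-type vertex of $\Gcan$. Using Remark \ref{rem_retractable}, I may assume $\Gamma$ is a centered cyclic splitting with central vertex group $Q=\pi_1(\Sigma)$ admitting such a $p$. Lemma \ref{compajsj} provides a surface-type vertex $w$ of $\Gcan$ whose surface $S$ contains $\Sigma$ as an incompressible subsurface; the common refinement $\hat\Gamma$ of $\Gamma$ and $\Gcan$ restricts over $w$ to a cyclic splitting $\hat\Gamma_S$ of $\pi_1(S)$ in which one vertex $\hat v$ carries $Q$.

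I first observe that $S$ is non-exceptional: since $G$ is one-ended and not a closed surface group, $\Gcan$ must be non-trivial at $w$, so $S$ has non-empty boundary, and the only incompressible embedding of a non-exceptional $\Sigma$ into an exceptional $S$ is the decomposition $N_3 = S_{1,1}\cup$ (M\"obius band), which would force $G=\pi_1(N_3)$. Next, since $p$ agrees with a conjugation of $G$ on every boundary subgroup of $Q$---in particular on each edge group of $\hat\Gamma_S$ incident to $\hat v$---Lemma \ref{extid} yields an extension $P:\pi_1(S)\to G$ of $p$ that is a conjugation on each vertex group of $\hat\Gamma_S$ other than $Q$. It follows immediately that $P$ is boundary-preserving with respect to the centered collapse of $\Gcan$ around $w$ (each boundary subgroup of $\pi_1(S)$ lies in some vertex group of $\hat\Gamma_S$ on which $P$ is a conjugation), and that $P(\pi_1(S))\supset p(Q)$ is non-abelian.

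The main obstacle is the last non-degeneracy condition: $P$ must not be an isomorphism of $\pi_1(S)$ onto a conjugate of itself. I argue by contradiction: if $P$ were such an isomorphism then $p=P|_Q$ would be injective, so by Lemma \ref{lemcle4} $p$ would be non-pinching and $p(Q)$ would lie in a conjugate of a bottom vertex group $B_i$ of $\Gamma$, while simultaneously $p(Q)\subset g\pi_1(S)g^{-1}$ for some $g\in G$. Conjugating by $g^{-1}$ gives an automorphism $\Phi$ of $\pi_1(S)$ which, using the boundary-preserving property of $P$, permutes the conjugacy classes of boundary subgroups; by a Dehn--Nielsen--Baer-type theorem it is induced by a homeomorphism $\phi$ of $S$, and $\pi_1(\phi(\Sigma))$ is then a non-abelian subgroup of $\pi_1(S)$ whose image in $G$ lies in a conjugate of $B_i$. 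Combining this containment with the universal compatibility of $\Gcan$ and the 1-acylindricity of its Bass-Serre tree near the non-cyclic vertex $w$ (Section \ref{JSJ}) should force $p$ to be an isomorphism onto a conjugate of $Q$, contradicting the non-degeneracy of $p$. This compatibility-acylindricity step is the delicate one, and is where the non-exceptionality of $\Sigma$ (built into non-degeneracy) enters essentially.
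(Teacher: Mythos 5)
Your setup is correct and matches the paper: invoke Lemma~\ref{compajsj} to locate the surface $S$ of $\Gcan$ containing $\Sigma$, observe that $S$ is non-exceptional, and use Lemma~\ref{extid} to extend the \ndbpm{} $p$ to a boundary-preserving map on $\pi_1(S)$ with non-abelian image. The problem is entirely in the last non-degeneracy condition, which you yourself flag as the ``delicate'' step and never actually close.

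The Dehn--Nielsen--Baer route you sketch is not what is needed, and it leaves a real gap. Concretely: you extend $p$ only to a map $P:\pi_1(S)\to G$ via the splitting $\hat\Gamma_S$, whereas the paper extends $p$ ``by the identity'' to a self-map $\hat p:G\to G$ using the centered splitting $\Gamma$ of the whole group; the restriction of that global $\hat p$ to $\pi_1(S)$ is the boundary-preserving map one wants. The reason the global extension matters is that the finish goes through Lemma~\ref{n=1}, which is stated precisely for an extension $\hat p:G\to G$. If $\hat p|_{\pi_1(S)}$ were an isomorphism onto a conjugate, then $p$ would be injective, hence non-pinching, and Lemma~\ref{n=1} says that $\Gamma$ is simple with a single bottom group $B_1$ and that the entire image $\hat p(G)$ lies inside one conjugate of $B_1$. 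In particular $\hat p(\pi_1(S))$ is contained in a conjugate of $B_1$, which by $1$-acylindricity of $\Gamma$ cannot contain any conjugate of $Q$; but an isomorphism from $\pi_1(S)$ onto a conjugate of itself would have to hit a conjugate of $Q\subset\pi_1(S)$. That is the contradiction. Your version of the argument correctly identifies that injectivity of $p$ plus Lemma~\ref{lemcle4} forces $p(Q)$ into a conjugate of a bottom group, and that this should clash with $p(Q)$ sitting inside a conjugate of $\pi_1(S)$, but the mechanism you propose to extract the clash --- building a homeomorphism $\phi$ of $S$ and appealing to ``compatibility-acylindricity'' --- is not developed into an actual argument, and it bypasses the tool (Lemma~\ref{n=1}) the paper introduced exactly for this purpose. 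To repair the proof, extend $p$ to $\hat p:G\to G$ and invoke Lemma~\ref{n=1} instead.
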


\begin{proof}
  We may assume that $\Gamma$ is a centered splitting of $G$, with central vertex   group $Q=\pi_1(\Sigma)$. Let $w$ and $S$ be as in Lemma \ref{compajsj}.

 Let   $p:Q\to G$ be a \ndbpm\ associated to  $\Gamma$, and 
let $\hat p:G\to G$ be an extension of   $p $ ``by the identity'' as in Lemma \ref{extid} (viewing $Q$ as a vertex group of $\Gamma$). Its restriction to $\pi_1(S)$ is a \bpm, because 
any maximal boundary subgroup of $S$ (i.e.\ an incident edge group near the vertex $w$ of $\Gcan$) is an incident edge group of $Q$ or is contained in 
a conjugate of a bottom vertex group of $\Gamma$
(because edges of $\hat \Gamma$ containing $\hat v$ are not collapsed in $\Gamma$).

We show that the \bpm\ $\hat p_{|\pi_1(S)}$
is non-degenerate. Clearly its image is    non-abelian. 
If $\hat p$ sends $\pi_1(S) $ isomorphically onto a conjugate, then $p$  is non-pinching, so Lemma \ref{n=1} implies that 
$\Gamma$ has a single bottom vertex group $B_1$, 
and that   $\hat p(G)$ is contained in a conjugate of $B_1$.
In particular, the image of $\pi_1(S)$ contains no conjugate of $Q$, a contradiction.
\end{proof}

\section{Towers as an ordering: core and prototypes
}
\label{ordre}

In this section $G$ is a CSA group.   We will define prototypes, and associate a core $\core(G)$ to $G$. We will see in Section \ref{elemequiv} that, when $G$ is a non-abelian hyperbolic group, it is elementarily equivalent to $\core(G)$ or $\F_2$, and two  non-abelian hyperbolic groups are elementarily equivalent if and only if their cores are isomorphic.

\subsection{Defining an order} \label{do}

  Let $G $ be a torsion-free CSA group. 
  
    Recall from Subsection \ref{ext} that a centered splitting $\Gamma$ is retractable if there is a \ndbpm;  equivalently (see Proposition \ref{equivpassimple})  there is a retraction $\rho$ from $G$ to a product of conjugates $\tilde B_i$ of the $B_i$'s  (to  $\tilde B_1*\Z$ if $\Gamma$ is simple  and $B_1$ is abelian)  with $\rho(Q)$   non-abelian.
 We then say that $G$ is an extended \'etage of surface type over $H=\tilde B_1*\dots*\tilde B_n$.

We shall use extended \'etages to define an order on the set of isomorphism classes of CSA groups.

\begin{dfn} \label{ord} Let $   G,H$ be 
torsion-free CSA groups. 

We define $G\gtun  H$ if  
$G $ is an extended   \'etage over $H$. 
In other words, $G> _1H$ if   $G\simeq H*\Z$, or $G$ has a retractable splitting with base   isomorphic to $H$.

We then define 
$G\tge H$ if $G$ is an extended tower over $H$, i.e.\  if 
there is a sequence $G=G_0, G_1,\dots,G_p  \simeq H$, with $p\ge0$,    such that $G_i\gtun G_{i+1} $  for each $i$. We write $G\tgt H$ if $p>0$. 

\end{dfn}

  In particular, the free group $\bbF_r$ satisfies $\bbF_r\tge \{1\}$.

\begin{rem} \label {glim2}
If $G\tgt  H$, then $H$ 
may be viewed as a proper quotient of $G$, and also as  a subgroup of $G$. 
  If $G$ is hyperbolic, $H$  is   quasiconvex, so is a hyperbolic group.

Also note that $G*G'\tge H*H'$ if $G \tge H $ and $ G'\tge  H'$, and $G*\bbF_r\tge G$.
\end{rem}

  \begin{lem} \label{lem_etages}
 $G\tge H$ if and only if there is a sequence $G=G_0, G_1,\dots,G_p$ such that $G_i$ is an 
 \'etage of surface type over $G_{i+1}$, and   $G_p\simeq H*F$ with $F$ free (possibly trivial or cyclic).

 If $G\tge H$ with $H$   non-abelian,   there exists a  free group $F$ such that $G*F$ is a simple tower over   a subgroup isomorphic to $H$.
 The converse is true if $H$ 
 has no  cyclic free factor.
\end{lem}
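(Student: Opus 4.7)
My plan for the first claim is to use Remark \ref{zenbas} as a sorting tool. Starting from any tower $G = G_0 \gtun G_1 \gtun \cdots \gtun G_m \simeq H$ realizing $G\tge H$, I would repeatedly apply the swapping described in Remark \ref{zenbas} to any adjacent pair consisting of a surface-type étage sitting above a free product étage, replacing it with a free product étage on top of a surface-type étage. A standard bookkeeping argument (for instance, tracking the sum of positions of the free product étages) shows this process terminates, and the resulting sequence has all surface-type étages above all free product étages. If $p$ is the index where the transition occurs, then $G_p$ is obtained from $H$ by a chain of free product étages, so $G_p \simeq H * \bbF_r$ for some $r \geq 0$, which is exactly of the required form. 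Conversely, any sequence of the stated form defines a tower since each free product étage below $G_p$ adds a $\Z$-factor, proving $G_p \tge H$, and the surface-type étages above give $G \tge G_p$.

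For the second claim, the forward implication is exactly Corollary \ref{ett}: when $H$ is non-abelian, there is a free group $F$ such that $G*F$ is a simple tower over a subgroup isomorphic to $H$. For the converse, assume such an $F$ exists and that $H$ has no cyclic free factor. If $F$ is trivial there is nothing to prove, so assume $F$ has positive rank. By Corollary \ref{redu} applied to the extended tower $G*F \tge H$, there exists $B'$ such that $G$ is an extended tower over $B'$ and $H \simeq B' * \bbF_s$ for some $s \geq 0$. Since $H$ has no cyclic free factor, $s$ must vanish, so $B' \simeq H$ and $G \tge H$.

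The main obstacle is essentially bookkeeping rather than substantive: the hard work has already been done in Remark \ref{zenbas}, Corollary \ref{ett}, and Corollary \ref{redu}. The only subtle point is the role of the no-cyclic-free-factor hypothesis in the converse of the second claim, which is precisely what is needed to rule out the parasitic free factor of rank $s$ produced by Corollary \ref{redu}; dropping this hypothesis would only yield $G$ as a tower over a group of the form $H'$ with $H \simeq H' * \bbF_s$.
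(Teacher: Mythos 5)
Your approach is exactly the one the paper uses: Remark~\ref{zenbas} for the first claim, and Corollaries~\ref{ett} and~\ref{redu} for the forward and converse directions of the second, so you have reconstructed the paper's proof (which is just a pointer to these three references). One verbal slip to correct: you describe the swap in Remark~\ref{zenbas} backwards. That remark takes a \emph{free-product} étage sitting above a \emph{surface-type} étage and turns it into a surface-type étage above a free-product étage; this is what pushes surface-type étages upward so that $G_0,\dots,G_p$ are all of surface type and the remaining étages down to $H$ are all of free-product type. As you wrote it, the swap would move surface-type étages \emph{down}, contradicting your stated goal; the bookkeeping and the conclusion you draw ($G_p\simeq H*\bbF_r$) are nonetheless correct, so this is only a misstatement of which adjacent pair is being reordered, not a gap in the argument.
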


\begin{proof}
The first assertion follows from  Remark \ref{zenbas}. The second one follows from  
Corollary \ref{ett}, and its converse from  Corollary   \ref{redu}. 
\end{proof}

\begin{cor} 
If we restrict to non-abelian   CSA groups  with no 
cyclic free factor, 
then $G\tge H$ if and only if some $G*F$ with $F$ free  is a simple tower over $H$.
\qed
\end{cor}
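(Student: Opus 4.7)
The plan is to deduce this corollary directly from Lemma \ref{lem_etages}, invoking its two halves in turn. Both hypotheses of the restriction (non-abelian, no cyclic free factor) are tailored precisely to activate both directions of that lemma's second assertion, so the work is already done upstream.

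For the forward direction, I would assume $G\tge H$ with $H$ non-abelian. The second sentence of Lemma \ref{lem_etages} then immediately furnishes a free group $F$ such that $G*F$ is a simple tower over a subgroup isomorphic to $H$. The only hypothesis needed here is that $H$ be non-abelian, which is guaranteed by our restriction.

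For the backward direction, I would assume that $G*F$ is a simple tower over $H$ for some free group $F$, and I want to conclude $G\tge H$. This is precisely the converse clause of Lemma \ref{lem_etages}, whose applicability requires $H$ to have no cyclic free factor; this is also guaranteed by our restriction. Ultimately this converse rests on Corollary \ref{redu}, which says that if $G*\bbF_r$ is an extended tower over a group $B$, then $G$ is an extended tower over a group $B'$ with $B\simeq B'*\bbF_s$; the absence of a cyclic free factor in $H$ forces $s=0$ and $B'\simeq H$.

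There is no real obstacle: the corollary is essentially a repackaging of Lemma \ref{lem_etages} under the stated standing hypotheses, made to emphasize the clean equivalence available once one excludes the two degenerate situations (abelian $H$ and $H$ with a cyclic free factor) that could break either implication.
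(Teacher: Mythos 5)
Your proposal is correct and matches the paper's (implicit) approach: the corollary is a direct repackaging of the second paragraph of Lemma \ref{lem_etages}, and the paper gives no separate argument (the \qed appears immediately after the statement). Your unpacking of the backward direction via Corollary \ref{redu}, noting that the absence of a cyclic free factor in $H$ forces the auxiliary free group $\bbF_s$ to vanish, is exactly the right verification.
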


\begin{prop} \label{dcc2}
The relation $\tge$ defines a partial order on the set of isomorphism classes of  
torsion-free CSA groups.  There is no infinite sequence $G=G_0\tgt G_1\tgt \cdots \tgt G_i\tgt \cdots$.
\end{prop}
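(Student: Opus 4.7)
The plan is to verify the three axioms of a partial order together with the descending chain condition, with the key input being Lemma \ref{dccg} which says that each extended étage strictly increases the mod-2 first Betti number $b_1^2$.

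Reflexivity is immediate by taking $p=0$ in the definition. For transitivity, given $G\tge H$ via $G=G_0,\dots,G_p$ with $G_p\simeq H$ and $H\tge K$ via $H=H_0,\dots,H_q$ with $H_q\simeq K$, fix an isomorphism $\phi:H\xrightarrow{\sim} G_p$ and define $G_{p+i}:=\phi(H_i)$ (or transport the étage structure of $H_{i-1}\gtun H_i$ to $G_{p+i-1}\gtun G_{p+i}$ via $\phi$, noting that extended étage structures are preserved under isomorphism). Concatenation then yields $G\tge K$.

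For antisymmetry, suppose $G\tge H$ and $H\tge G$, and assume for contradiction that $G\not\simeq H$. Since both relations must then use sequences of strict length at least $1$, we may concatenate them as in the proof of transitivity to obtain a sequence
\[
G=G_0\gtun G_1\gtun \cdots\gtun G_p\simeq H\gtun H_1\gtun\cdots\gtun H_q\simeq G
\]
with $p,q\geq 1$. Applying Lemma \ref{dccg} to each $\gtun$ step gives
\[
b_1^2(G)=b_1^2(G_0)>b_1^2(G_1)>\cdots>b_1^2(G_p)=b_1^2(H)>\cdots>b_1^2(H_q)=b_1^2(G),
\]
a contradiction. (Here we use that $b_1^2$ is an isomorphism invariant, so $G_p\simeq H$ forces $b_1^2(G_p)=b_1^2(H)$, and likewise $b_1^2(H_q)=b_1^2(G)$.) Hence $G\simeq H$, which proves that $\tge$ descends to a partial order on isomorphism classes.

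Finally, for the descending chain condition, any infinite sequence $G=G_0\tgt G_1\tgt\cdots$ would, after unpacking each $\tgt$ into its constituent étages, yield an infinite strictly decreasing sequence of non-negative integers $b_1^2(G_i)$ by Lemma \ref{dccg}, which is impossible since $b_1^2(G)$ is finite (bounded by the rank of $G$). The only mildly subtle point in the whole argument is the bookkeeping for transitivity and antisymmetry to account for the fact that $\tge$ is defined up to isomorphism rather than equality of the endpoints; this is handled by transporting étage structures through isomorphisms, which is legitimate since being an extended étage is an isomorphism-invariant property of the pair $(G,H)$.
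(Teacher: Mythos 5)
Your proof is correct and follows the same route as the paper: both rest entirely on Lemma \ref{dccg} (each extended étage strictly decreases $b_1^2$), with the paper giving only the one-line citation and you spelling out the verification of the order axioms and the descending chain condition. The paper also mentions an alternative argument for hyperbolic groups via $G$-limit groups, which you did not pursue, but the $b_1^2$ argument you give is exactly the one the paper endorses in the general CSA case.
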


\begin{proof}   This follows from Lemma \ref{dccg}:
if $G\tgt H$, then $H$  has smaller first Betti number mod 2.
When $G$ is hyperbolic, one may also argue
that the  $G_i$'s are $G$-limit groups,
and there is no infinite sequence of $G$-limit groups $G_i$ such that $G_{i+1}$ is a proper quotient of $G_i$ \cite{Sela_diophantine7}. 
\end{proof}

\begin{prop} \label{fini1} 
Given a torsion-free   CSA group $G$, the set of  groups $H$ 
such that $G\tge H$ is finite up to isomorphism.
\end{prop}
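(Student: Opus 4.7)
The plan is to prove Proposition \ref{fini1} by well-founded induction, using the fact from Proposition \ref{dcc2} that the strict order $\tgt$ admits no infinite descending chain. Let $\mathcal{T}(G)$ denote the set of isomorphism classes of torsion-free CSA groups $H$ with $G \tge H$. Since $\tge$ is the reflexive–transitive closure of $\gtun$, one has
\[
\mathcal{T}(G) \;=\; \{[G]\} \;\cup\; \bigcup_{[H]\,:\,G \,\gtun\, H} \mathcal{T}(H),
\]
so it suffices by the inductive hypothesis to prove that the set $\mathcal{I}(G)$ of isomorphism classes of immediate successors $H$ (that is, $G \gtun H$) is finite.

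An immediate successor arises either from an étage of free product type, giving $G \simeq H * \mathbb{Z}$, or from an étage of surface type, giving a retractable centered splitting of $G$ with base isomorphic to $H$. In the free product case, the uniqueness of the Grushko decomposition of $G$ determines $H$ up to isomorphism (one removes a single $\mathbb{Z}$ factor), so at most one isomorphism class arises.

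For the surface-type case I reduce to the one-ended setting via Proposition \ref{Grus}. Fix a Grushko decomposition $G = G_1 * \cdots * G_k * F$ with each $G_i$ one-ended and $F$ free; this data is unique up to isomorphism and permutation. If $G$ has a retractable centered splitting with base $B$, then Proposition \ref{Grus} tells us that one Grushko factor, say $G_j$, itself carries a retractable centered splitting whose base $B_{\Lambda_\Z}$ satisfies
\[
B \;\simeq\; B_{\Lambda_\Z} * G_1 * \cdots * \widehat{G_j} * \cdots * G_k * F',
\]
where $F'$ is a free factor of $F$. Thus the isomorphism class of $B$ is determined by: the index $j \in \{1,\dots,k\}$ (finitely many choices); the isomorphism class of $B_{\Lambda_\Z}$, which lies in a finite set by Corollary \ref{fini-1} applied to the one-ended group $G_j$; and the rank of $F'$, which is an integer in $\{0,\dots,\mathrm{rk}(F)\}$. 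Hence $\mathcal{I}(G)$ is finite.

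Combining both cases, $\mathcal{I}(G)$ is finite, and the induction yields $\mathcal{T}(G)$ finite as claimed. The genuine content is concentrated in the surface-type case; the main obstacle is that $G$ need not be one-ended, so Corollary \ref{fini-1} cannot be applied directly, and everything hinges on Proposition \ref{Grus} which localizes the retractability to a single Grushko factor and controls the contribution of the free part.
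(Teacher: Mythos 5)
Your proof is correct and takes essentially the same route as the paper: both reduce, by the descending chain condition of Proposition \ref{dcc2}, to showing finiteness of the bases of extended étages of surface type, which in turn rests on Corollary \ref{fini-1} for one-ended groups and on Proposition \ref{Grus} to localize a retractable splitting to a single Grushko factor in the general case. The paper handles the free-product-type étages once, via Lemma \ref{lem_etages} (pushing them to the bottom of the chain), whereas you absorb them into each inductive step, but the content is the same.
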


\begin{proof}
By  Lemma 
  \ref{lem_etages} and Proposition \ref{dcc2}, 
it suffices to prove finiteness for the set of  groups $H$ such that $G$ is an extended \'etage of surface type over $H$.  

The result follows from Corollary \ref{fini-1} if $G$ is one-ended.
If $G$ is infinitely-ended, Proposition \ref{Grus} expresses $H$ as $H=B_{\Lambda_\Z}*G_2*\cdots *G_k*F'$, 
where $G=G_1*G_2*\cdots * G_k*F $ is a Grushko decomposition of $G$, the group $F'$ is   isomorphic to a free factor of $F$, 
and  $G_1 $  is an extended \'etage of surface type over $ B_{\Lambda_\Z}$. Finiteness follows from the one-ended case.
\end{proof}

Proposition \ref{fini1} will imply that, given a  torsion-free hyperbolic group $G$, there are only finitely many
isomorphism classes of groups that elementarily embed in $G$ (Corollary \ref{fini2}).

\begin{dfn}[Prototype] \label{dfn_proto} 
A torsion-free   CSA group $G$ is a \emph{prototype} if it is minimal for $\tge$. 
\end{dfn}

The trivial group, and groups with no cyclic splitting, in particular     groups with Kazhdan's property (T), are prototypes.
  On the other hand, $\bbF_2$ is not a prototype.

\begin{lem} \label{eqprot}
Let $G$ be a 
 torsion-free   CSA group. 
The following are equivalent:
\begin{enumerate}
\item $G$ is a  prototype;
\item  
$G$ has no cyclic free factor and is not an extended \'etage of surface type (i.e.\ it 
 has no   retractable splitting); 
\item 
$G$ is a free product of one-ended groups which are not extended \'etages of surface type.
\end{enumerate}
\end{lem}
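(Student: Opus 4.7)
The plan is to prove $1\iff 2$ by directly unwinding definitions, and $2\iff 3$ by combining the behavior of retractable splittings under Grushko decomposition (Corollary \ref{infbout}, which itself relies on Proposition \ref{Grus}) with the free-product stability of étages (Remark \ref{rk_prodlib}).

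For $1\iff 2$, I would observe that $G$ is a prototype iff there is no $H$ with $G\tgt H$. By Lemma \ref{dccg}, every extended étage step $G\gtun H$ strictly decreases the first mod-$2$ Betti number, so $H\not\simeq G$ whenever $G\gtun H$. Consequently $G$ is a prototype iff $G$ is not an extended étage at all. By Definition \ref{dfn_extended}, this means $G$ is neither of free product type (equivalent to having no cyclic free factor, by the uniqueness of Grushko decompositions) nor of surface type (equivalent to having no retractable centered splitting). This gives $1\iff 2$.

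For $2\implies 3$, I would take the Grushko decomposition $G=G_1*\cdots*G_k*\bbF_r$; condition $2$ forces $r=0$. It remains to verify that no Grushko factor $G_i$ is an extended étage of surface type. If some $G_i$ were such an étage over a subgroup $H_i$, Remark \ref{rk_prodlib} (together with the fact that enlarging the bottom group preserves the type of the étage) would promote this to an extended étage of surface type structure on $G=G_i*(G_1*\cdots*\widehat{G_i}*\cdots*G_k)$ over $H_i*(G_1*\cdots*\widehat{G_i}*\cdots*G_k)$, contradicting $2$. Conversely, for $3\implies 2$, no $G_i$ is $\bbZ$, so $G$ has no cyclic free factor; and if $G$ had a retractable (centered) splitting, Corollary \ref{infbout} would yield a Grushko factor carrying a retractable splitting, which by Remark \ref{rem_retractable} may be turned into a retractable centered splitting, making that factor an extended étage of surface type and contradicting $3$.

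There is no real obstacle: all the work has been done in Section \ref{prto}. The only point to double-check is that the surface-type nature of an étage is preserved under the free product operation of Remark \ref{rk_prodlib}, which is immediate from the construction (the surface $\Sigma$ and the central vertex of the centered splitting are unchanged; only the bottom group is replaced by its free product with $G_1*\cdots*\widehat{G_i}*\cdots*G_k$).
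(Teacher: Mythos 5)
Your proof is correct and follows essentially the same route as the paper's: $1\iff2$ by unwinding the definition of prototype (where the paper's one-liner ``by minimality'' is exactly your observation that $\gtun$ is strict via Lemma \ref{dccg}), and $2\iff3$ via Remark \ref{rk_prodlib} in one direction and Corollary \ref{infbout} (plus Remark \ref{rem_retractable}) in the other. The only cosmetic difference is that you prove two biconditionals while the paper runs the circular chain $1\Rightarrow2\Rightarrow3\Rightarrow1$; the ingredients and the content are identical.
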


 \begin{proof}
Suppose that $G$ is a prototype. By minimality, it cannot be written $G=H*\Z$  and cannot be an extended \'etage of surface type.
This proves $1\implies2$.

 For  $2\implies3$, we simply note that, if $G$ is not an extended \'etage, neither are its Grushko factors by Remark \ref{rk_prodlib}. 

If $G$ is a free product of one-ended groups which are not extended \'etages of surface type, it has no cyclic free factor, and is not an extended \'etage of surface type by Corollary \ref{infbout}. It is   minimal for the order, hence is a prototype.
 \end{proof}

 \begin{rem}\label{genprot}
In particular, if $G$ is not a prototype, it has a cyclic free factor or one of its Grushko factors is an extended  \'etage (necessarily of surface type).  

A one-ended group is a prototype if and only if it is not an extended \'etage  (of surface type).  
A 
group 
is a prototype if and only if it is a free product of one-ended prototypes. 

If $G$ is a prototype, no $G*F$ with $F$ free is an extended \'etage of surface type (this follows from Corollary \ref{redu2}).

The fundamental group of a closed hyperbolic  surface $S$ is a prototype if and only $S$ is exceptional (the non-orientable surface of genus 3), see Example \ref{csurf}.

\end{rem}

  The following theorem   was proved by Sela \cite{Sela_diophantine7} for hyperbolic groups, using their classification 
  up to elementary equivalence.
We will give a proof independent of this theory.

\begin{thm}
\label{thm_confluence}
Let $G$ be a 
 torsion-free CSA group. Up to isomorphism, there exists a unique prototype $\core(G)$  such that $G\tge \core(G)$. 
\end{thm}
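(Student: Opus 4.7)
The plan: Existence is straightforward from the descending chain condition of Proposition \ref{dcc2}. Starting from $G$, I build a strictly decreasing chain $G = G_0 \tgt G_1 \tgt G_2 \tgt \cdots$, picking at each step a proper extended \'etage structure if one exists. Proposition \ref{dcc2} forces termination at some $G_n$, and the terminal group is by construction minimal for $\tge$, hence a prototype (Definition \ref{dfn_proto}). Set $\core(G) := G_n$.

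For uniqueness, I reduce to proving \emph{confluence} of $\tge$: if $G_0 \tge G_1$ and $G_0 \tge G_2$, then there exists $G_3$ with $G_1 \tge G_3$ and $G_2 \tge G_3$. Once confluence is known, uniqueness follows formally. Suppose $G \tge P_1$ and $G \tge P_2$ with both $P_1, P_2$ prototypes. By confluence there exists $P_3$ with $P_1 \tge P_3$ and $P_2 \tge P_3$; the minimality of $P_1$ and $P_2$ (Definition \ref{dfn_proto}) then forces $P_3 \simeq P_1$ and $P_3 \simeq P_2$, so $P_1 \simeq P_2$.

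To prove confluence, I would reduce first to the single-\'etage case: it suffices to show that, whenever $G \gtun H_1$ and $G \tge H_2$, there is a group $K$ with $H_1 \tge K$ and $H_2 \tge K$, and then iterate along the chain of \'etages defining $G \tge H_2$. The main tool is the \emph{minimal base} $M$ associated to a $2$-acylindrical cyclic splitting of $G$, as announced in the introduction; in the CSA setting such splittings are available through the canonical cyclic JSJ decomposition of each Grushko factor (Subsection \ref{JSJ}) combined with Grushko's theorem, and the freely decomposable case is handled via the Grushko blow-up of Definition \ref{leslambda} and Proposition \ref{Grus}. The key technical claim is that $M$ is preserved (up to isomorphism) by a single extended \'etage step: if $G \gtun H$, then the minimal base of $G$ agrees with the minimal base of $H$, and for a prototype $P$ the minimal base is $P$ itself (by Lemma \ref{eqprot}). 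Confluence then reduces to comparing the two descents of $M(G)$ to $M(H_1)$ and $M(H_2)$ and producing a common lower bound.

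The main obstacle is in fact the construction of the minimal base and the verification of its invariance under a single \'etage. The delicate point is that, in a non-simple centered splitting, Proposition \ref{equivpassimple} only determines the base $B_\Gamma$ as an abstract free product of conjugates of the bottom groups, so one must carefully track an isomorphism class rather than a subgroup (compare Remarks \ref{quot} and \ref{bassimp}). I would handle this by working at the level of the modified JSJ $\tGcan$ of Subsection \ref{sec_modified} (so that every exceptional surface that may appear has finite mapping class group), using Lemma \ref{compajsj} and Lemma \ref{bases} to read a centered splitting inside the JSJ, together with Lemma \ref{preconf} to propagate retractability back to the JSJ. The one-ended case is then controlled by the finiteness statement of Corollary \ref{fini-1}, and the general case is reduced to it via the Grushko blow-up (Proposition \ref{Grus}) together with Corollary \ref{redu2}. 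This last reduction is what makes the statement work for arbitrary finitely generated torsion-free CSA groups, not just the one-ended or hyperbolic ones treated by Sela.
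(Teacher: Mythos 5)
Your high-level strategy is the same as the paper's: existence via the descending chain condition (Proposition~\ref{dcc2}), uniqueness via confluence of $\tge$, and confluence via a ``minimal base'' $M$ read off a $2$-acylindrical cyclic splitting (the canonical JSJ), with the freely decomposable case reduced to the one-ended case via the Grushko blowup (Proposition~\ref{Grus} and Corollary~\ref{redu2}). You have also identified the correct auxiliary lemmas (\ref{compajsj}, \ref{bases}, \ref{preconf}, \ref{fini-1}).

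The gap is in your key technical claim: \emph{``if $G \gtun H$, then the minimal base of $G$ agrees with the minimal base of $H$.''} This is strictly stronger than what the paper proves, and you give no argument for it. The Minimal Base Lemma (Lemma~\ref{lem_confluence2}) only asserts the weaker fact that there is a single $M$ with $G \tge M$ and $H \tge M$ for \emph{every} immediate descendant $H$ of $G$ --- it does \emph{not} assert $M(H) \simeq M(G)$. Indeed $M(G)$ is the base of the subset $\cals_0$ of non-(strongly-)retractable surfaces of $\Gcan$; when you pass to $H$, the surviving surfaces sit in the JSJs of the Grushko factors of $H$ (Lemma~\ref{bases}), but their retractability status must be re-examined \emph{inside $H$}, and $M(H)$ is built from $H$'s own JSJ, not from $\Gcan$. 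The bridge results available (Lemma~\ref{fortfaible}, Lemma~\ref{bbdef}) show that (strong) retractability persists going \emph{down from $G$}, which is enough for $H \tge M(G)$, but they do not rule out that $H$ acquires new retractable surfaces invisible in $\Gcan$, which would make $M(H)$ strictly smaller. Were your claim true, confluence would be immediate (take $G_3 = M(G)$), yet your own final sentence concedes there is still something to ``compare,'' which signals the inconsistency. The paper avoids the issue entirely by proving only the weaker Lemma~\ref{lem_confluence2} and then running a noetherian induction on $\tge$: given $G \tgt G_1$ and $G \tgt G_2$, pick intermediate $A_i$ with $G \gtun A_i \tge G_i$; the lemma gives $A_i \tge M$; the induction hypothesis (confluence for all strict descendants of $G$, in particular $A_i$ and $M$) then produces the common lower bound. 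You should either prove the invariance $M(G) \simeq M(H)$ --- which would require a separate and nontrivial argument --- or replace it by the weaker statement of Lemma~\ref{lem_confluence2} together with the inductive scheme. A small secondary point: the minimal base is built from the unmodified JSJ $\Gcan$; the modified $\tGcan$ is only needed for the shortening argument in Section~\ref{interp}, not here.
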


\begin{dfn}\label{dcore}
$\core(G)$ is called the \emph{core} of $G$.
\end{dfn} 

\begin{rem}\label{cpl}
For instance   $\core(G)\simeq G$ if $G$ is a prototype, and $\core(G)=\{1\}$ if $G$ is a free group or a non-exceptional closed surface group.    The  core of $G$   embeds into $G$ as a retract, 
 but not in a canonical way in general (see Subsection \ref{core}). 
 Also note that  $\core(G_1*G_2)\simeq 
\core(G_1)*\core(G_2)$ by Remark \ref{rk_prodlib}. 
\end{rem}

\begin{rem}\label{rem_ecore}
   The equality
  $\core(G_1*G_2)\simeq\core(G_1)*\core(G_2)$ requires  
  $\core(\F_n)$ to be trivial.    
    This has a drawback 
  when considering elementary equivalence: 
  we will see in Section
  \ref{elemequiv} that   a hyperbolic group $G$ is equivalent to $c(G)$ if $c(G)\ne\{1\}$,
  to $\bbF_2$ if $G$ is non-abelian with $c(G)=\{1\}$.  We will
  therefore introduce (Definition \ref{elcore}) 
  the    \emph{elementary core}
  $\ecore(G)$, which for $G$
a  non-abelian hyperbolic group is always elementarily equivalent to $G$ (in particular,  $\ecore(\F_2)=\F_2$), but 
  at the cost of losing the nice behaviour under free products.
 \end{rem}

The existence of $\core(G)$  in Theorem \ref{thm_confluence} follows   directly from the descending chain condition (Proposition \ref{dcc2}).
Our proof  of uniqueness relies on 
the next lemma, which will be proved in Subsection \ref{conflu}.   When $G$ is hyperbolic, uniqueness may also be deduced from the classification up to elementary equivalence (see \cite{Sela_diophantine7}).

We denote by 
$\calg$ the class of all 
torsion-free CSA groups.

\begin{lem}[Minimal base] \label {lem_confluence2}
Given $G\in\calg$, there exists   $M\in\calg$ such that $G\tge M$ and, if $G\gtun  G'$ with $G'\in\calg$,
 then $G'\tge M$.
\end{lem}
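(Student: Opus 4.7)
The plan is to proceed by induction on $b_1^2(G)$, the first Betti number mod $2$, which strictly decreases under $\gtun$ by Lemma \ref{dccg}. If $G$ is a prototype (which is forced when $b_1^2(G)=0$), take $M = G$; there is then no $G'\ne G$ with $G \gtun G'$, so the confluence condition is vacuous.

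For the inductive step, first reduce to the case where $G$ is one-ended and not a closed surface group. From the Grushko decomposition $G = G_1 * \cdots * G_n * \bbF_m$, together with Corollary \ref{redu2} and Proposition \ref{Grus}, every étage of $G$ either strips off a free $\bbZ$-factor or acts on exactly one one-ended Grushko factor, leaving the others intact. Hence if $M_i$ is a minimal base for each one-ended $G_i$, then $M := M_1 * \cdots * M_n$ serves as a minimal base for $G$ by Remark \ref{glim2}. Closed surface groups are handled directly via Example \ref{csurf}: $\pi_1(N_3)$ is a prototype and other non-exceptional closed surface groups are simple towers over $\bbF_2$ or $\bbZ$.

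Now assume $G$ is one-ended and not a closed surface group, with canonical cyclic JSJ $\Gcan$. By Lemma \ref{compajsj}, every centered splitting $\Gamma$ of $G$ arises by choosing a surface-type vertex $w$ of $\Gcan$ and an incompressible subsurface $\Sigma \subset S_w$; the argument of Lemma \ref{preconf} shows that retractability of $\pi_1(\Sigma)$ propagates to retractability of $\pi_1(S_w)$. Lemma \ref{bases} then gives $B_\Gamma \simeq B_w * F$ for some free group $F$, so $B_\Gamma \tge B_w$ via étages of free product type. Thus it suffices to produce $M$ with $G \tge M$ and $B_w \tge M$ for every $w$ in the finite, canonical set $W \subset V(\Gcan)$ of surface-type vertices of $\Gcan$ whose associated surface group admits a \ndbpm{} to $G$. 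Pick any $w_0 \in W$, let $G_0 := B_{w_0}$ (so $G \gtun G_0$ and $b_1^2(G_0) < b_1^2(G)$), apply the induction hypothesis to $G_0$ to obtain a minimal base $M_0$, and set $M := M_0$. Then $G \gtun G_0 \tge M$ is immediate.

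The main obstacle is verifying $B_{w'} \tge M$ for every $w' \in W \setminus \{w_0\}$. The retraction $\rho_0 : G \to G_0$ is the identity up to conjugation on every vertex group of $\Gcan$ other than $\pi_1(S_{w_0})$, so after a conjugation $\pi_1(S_{w'}) \subset G_0$ and the edge groups of $\Gcan$ incident on $w'$ are fixed by $\rho_0$. Composing any \ndbpm{} $p_{w'} : \pi_1(S_{w'}) \to G$ with $\rho_0$ yields a \bpm{} $\pi_1(S_{w'}) \to G_0$; the subtle step is verifying its non-degeneracy (non-abelian image, and not an isomorphism onto a conjugate of $\pi_1(S_{w'})$). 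This check should use Lemma \ref{desc} (or a close variant) applied to $\rho_0 \circ p_{w'}$, together with the fact that the bottom groups of the centered splitting at $w'$ sit inside $G_0$. Granting non-degeneracy, we obtain an étage $G_0 \gtun B'$, and by the inductive confluence hypothesis for $G_0$ we get $B' \tge M_0 = M$. Finally, $B_{w'}$ and $B'$ are bases obtained from $G$ by retracting the pair $\{w_0, w'\}$ in the two possible orders, and a comparison via Lemma \ref{bases} gives $B_{w'} \tge B'$, whence $B_{w'} \tge M$. The two delicate points are thus the non-degeneracy verification and the identification of the two-step bases obtained in the two orders of retraction.
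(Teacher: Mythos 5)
Your inductive framing — retract one étage to $G_0 = B_{w_0}$, apply the lemma inductively to $G_0$, then verify confluence for the other retractable surfaces — is a genuinely different route from the paper's, which constructs $M$ directly as the ``minimal base'' $M(\Gcan)$ obtained by discarding from $\Gcan$ every \emph{strongly} retractable surface and all but one representative of each \emph{twinning} class, and then verifies $G\tge M(\Gcan)$ (Proposition \ref{toursurm}) and independence of the choices (Lemma \ref{bbdef}). Your reduction to the one-ended case via Proposition \ref{Grus} and the use of Lemmas \ref{compajsj} and \ref{bases} to restrict attention to surfaces of $\Gcan$ are sound and parallel the paper's.

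However, the two ``delicate points'' you flag at the end are not side issues: together they are the entire technical content of the lemma, and as written you do not close either one. For the non-degeneracy of $\rho_0\circ p_{w'}$, Lemma \ref{desc} is not enough: its hypotheses require either that $p_{w'}$ pinch or that no conjugate of its image contain a finite-index subgroup of $\pi_1(S_{w_0})$, and neither holds automatically. The dangerous case — $p_{w'}$ non-pinching with image a conjugate of $\pi_1(S_{w_0})$ — is precisely the \emph{twinning} phenomenon, where $\rho_0\circ p_{w'}$ can degenerate into an isomorphism onto a conjugate of $\pi_1(S_{w'})$. The paper separates this into two cases: if $w'$ is \emph{strongly} retractable, Lemma \ref{fortfaible} shows it stays strongly retractable after retracting $w_0$ (and its proof needs a complexity comparison in addition to Lemma \ref{desc} to handle the non-pinching subcase); if $w'$ is merely a twin of $w_0$, then $B_{w'}\simeq B_{w_0}=G_0\tge M_0$ directly by Lemma \ref{bbdef}, with no further retraction needed. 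Your argument never distinguishes these cases and never invokes either lemma. The same persistence-of-retractability issue underlies your second gap (whether $w_0$ stays retractable inside $B_{w'}$, so that $B_{w'}\tge B'$): again this is Lemma \ref{fortfaible} in the strong case and Lemma \ref{bbdef} in the twin case. To repair the proof you would need the full statement of Lemma \ref{fortfaible} and an explicit case split on whether $w_0$ and $w'$ are twins, at which point the argument converges with the paper's.
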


The group $M$ will be constructed   in Subsection \ref{confl} from the minimal base of a splitting, introduced in Subsection \ref{mbs}.

\begin{preuve}[Proof of  Theorem \ref{thm_confluence} from Lemma \ref{lem_confluence2}]
Noetherianity (Proposition \ref{dcc2}) guarantees the existence of  a prototype $P$ such that $G\tge P$. 
We show that confluence holds for $G$: if $G\tgt G_1$ and $G\tgt  G_2$, there exists $G'$ such that $G_1\tge G'$ and $G_2\tge G'$. This clearly implies uniqueness of $P$,  which is what we need to prove.

By noetherianity, we may assume that confluence holds for all $\tilde G$ such that $G\tgt  \tilde G$. 
Since $G\tgt  G_i$,  there exists $A_i$ such that $G\gtun  A_i\tge G_i$. The group $M$ provided by Lemma \ref{lem_confluence2} satisfies $A_i\tge M$ for $i=1,2$. We assume that confluence holds for $A_i$, so there exists $C_i$ such that $G_i\tge C_i$ and $M\tge C_i$. As confluence holds for $M$, there exists $G'$ such that 
$C_i\tge G'$, hence $G_i\tge G'$.
\end{preuve}

\subsection{Proof of the Minimal Base  Lemma \ref{lem_confluence2}} 
\label{conflu}
\subsubsection{The minimal base of a splitting} \label{mbs}

 We fix a non-trivial cyclic splitting $\Gamma$ of $G$. 
   We assume that  its Bass-Serre tree  is 2-acylindrical, and no edge joins two surface-type vertices   or two 
    abelian vertices. 
 We will apply this subsection to the   canonical cyclic JSJ splitting   $\Gcan$ of a one-ended CSA group, but the CSA property  is not  really necessary here.

Recall (Definition \ref{dfn_retractablesurf}) that a non-exceptional surface-type vertex group $\pi_1(\Sigma)$ of $\Gamma$
is retractable (in $G$)  if there exists a \ndbpm\ $p:\pi_1(\Sigma)\to G$. By definition, $p$ acts as a conjugation on each boundary subgroup, has non-abelian image, and is not an isomorphism onto a conjugate of $\pi_1(\Sigma)$. 

\begin{figure}[ht!] 
  \centering
  \includegraphics{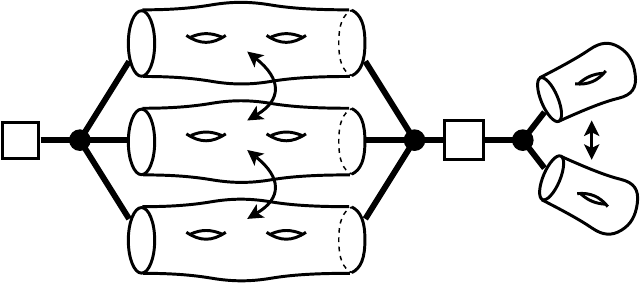}
  \caption{Twinnings between surfaces of a splitting}\label{fig_twinning}
  \label{tw}
\end{figure}

As in Example \ref{bete}, a  \ndbpm\  $p:\pi_1(\Sigma)\to G$  may be an isomorphism onto (a conjugate of) some 
 other 
surface-type vertex group $\pi_1(\Sigma')$ of $\Gamma$. We then say that $p$ is a \emph{twinning} of $\Sigma$ with $\Sigma'$, and that $\Sigma'$ is a  \emph{twin} of $\Sigma$   (see Figure \ref{tw}). On the other hand, if $p$ is not a twinning, 
we say that 
it  is a \emph{strong \bpm} 
and that 
$\Sigma$, or $\pi_1(\Sigma)$, is \emph{strongly retractable}.

Let $p$ be a twinning. Boundary subgroups of $\pi_1(\Sigma)$ are mapped to conjugates, hence to boundary subgroups of $\pi_1(\Sigma')$. This implies that $\Sigma$ and $\Sigma'$ have the same number of boundary components   and are homeomorphic  (see Lemma 3.13 of \cite{Perin_elementary}).
In particular, $p\m$ is a twinning. 
Twinning thus induces an equivalence relation on the set $\cals_\Gamma$ consisting of all  surfaces associated to surface-type vertices
of $\Gamma$ (by convention, $\Sigma$ is twinned with itself). This relation is compatible with   retractability and strong retractability.

\newcommand{\GiS}{\Gamma_{\! i}(S)}

Let $\cals$ be any subset of   $\cals_\Gamma$. If we remove  from $\Gamma$ the open stars of all vertices associated to  surfaces not  
in $\cals$, we get a graph $\Gamma(\cals)$ whose components we denote by $\GiS$. 

The graphs of groups $\GiS$   may be points. They are not necessarily minimal: they may have   terminal cyclic vertices $v$ with the  incident edge group equal to the vertex group.
 If this happens, we  remove $v$ and the open edge (this does not change the fundamental group of the graph of groups). If the edge joins $v$ to a surface-type vertex $w$, this vertex is no longer surface-type because the 
surface has gained a free boundary component; this will not be a problem because by Remark \ref{bolib}  $w$  cannot be retractable as a vertex of  $\GiS$ (see the proof of Proposition \ref{toursurm}   below).

 From now on, we shall denote by $\GiS$ the minimal graph of groups obtained by removing edges as just explained (it may consist of a single vertex, cyclic or not). 
Generalizing Definition \ref{centspl}, we denote by $B_i(\cals)$ the  fundamental group
of 
$\GiS$, and by $B(\cals)$ the free product of the groups $B_i(\cals)$, which we call   the \emph{base} of $\cals$.

Given $\Gamma$, 
  let $M(\Gamma)$ be the base 
 of a subset $\cals_0\inc\cals_{\Gamma}$  satisfying the following conditions:  
 \begin{itemize}
\item  it contains all non-retractable surfaces; 
\item it contains no strongly retractable surface; 
\item  if $\Sigma$ is retractable but not strongly retractable, $\cals_0$ contains exactly one surface in the  twinning equivalence class of $\Sigma$ (which cannot be a singleton). 
\end{itemize}

The next lemma says that $M(\Gamma)$ is well-defined   up to isomorphism, we call it the \emph{minimal base}  of $\Gamma$. 

\begin{lem} \label{bbdef}
Up to isomorphism, $M(\Gamma)$ is independent of the choice of $\cals_0$: it does not change if we replace a surface $\Sigma$ which is  retractable but not strongly retractable by a twin surface $\Sigma'$.  

\end{lem}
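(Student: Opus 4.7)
The plan is to build an explicit isomorphism $B(\cals_0)\simeq B(\cals_1)$ from the twinning between $\Sigma$ and $\Sigma'$, where $\cals_1:=(\cals_0\setminus\{\Sigma\})\cup\{\Sigma'\}$, via an isomorphism of the underlying graphs of groups $\Gamma(\cals_0)$ and $\Gamma(\cals_1)$. I would first fix a twinning $p\colon \pi_1(\Sigma)\to G$, that is, a non-degenerate boundary-preserving map that is an isomorphism onto a conjugate of $\pi_1(\Sigma')$; composing with an inner automorphism of $G$, I may assume $p$ is an isomorphism onto $\pi_1(\Sigma')$. Since $p$ is boundary-preserving and an isomorphism of homeomorphic surface groups, it induces a bijection $\phi$ between the boundary components of $\Sigma$ and $\Sigma'$ with $p(\pi_1(C_i))=\pi_1(\phi(C_i))$, and on each $\pi_1(C_i)$ the map $p$ agrees with conjugation by some element $h_i\in G$.

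The crucial step is a Bass-Serre-tree argument matching the neighbours of $v$ and $v'$ in $\Gamma$: for each $i$, the vertex $w_i$ of $\Gamma$ which is the other endpoint of the edge $e_i$ incident to $v$ coincides with the vertex $w'_{\phi(i)}$ which is the other endpoint of $e'_{\phi(i)}$. I would fix lifts $\tilde v,\tilde v'$ in the Bass-Serre tree $T$ with $G_{\tilde v}=\pi_1(\Sigma)$ and $G_{\tilde v'}=\pi_1(\Sigma')$, so that the identity $p|_{\pi_1(C_i)}=\mathrm{conj}_{h_i}$ translates into $G_{h_i\tilde e_i}=G_{\tilde e'_{\phi(i)}}$. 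The common edge stabilizer is a non-trivial cyclic group, and $2$-acylindricity bounds the diameter of its fixed subtree by $2$, forcing the two edges $h_i\tilde e_i$ and $\tilde e'_{\phi(i)}$ to share an endpoint. Of the four possible endpoint identifications, the assumption that no edge of $\Gamma$ joins two surface-type vertices (so the $w_i$'s and $w'_j$'s are non-surface, and in particular distinct from both $v$ and $v'$) rules out all except $h_i\tilde w_i=\tilde w'_{\phi(i)}$, yielding $w_i=w'_{\phi(i)}$ in $\Gamma$.

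Once this matching is established, $\Gamma(\cals_0)$ and $\Gamma(\cals_1)$ have the same underlying abstract graph (identifying $v$ with $v'$ and $e_i$ with $e'_{\phi(i)}$) and the same non-surface vertex groups $B_{w_i}$; they differ only in whether the surface-type vertex carries $\pi_1(\Sigma)$ or $\pi_1(\Sigma')$, which we identify via $p$. The two attaching cyclic subgroups $\pi_1(C_i)$ and $\pi_1(\phi(C_i))$ of $B_{w_i}$ are $G$-conjugate via $h_i$, and the CSA hypothesis, through the malnormality of the maximal abelian subgroup containing $\pi_1(C_i)$, forces them to be $B_{w_i}$-conjugate as well. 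This shows that $\Gamma(\cals_0)$ and $\Gamma(\cals_1)$ are isomorphic as abstract graphs of groups, and hence $B(\cals_0)\simeq B(\cals_1)$.

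The main obstacle is the Bass-Serre-tree analysis establishing $w_i=w'_{\phi(i)}$: without this, the two bases would attach $\Sigma$, respectively $\Sigma'$, to genuinely different parts of $\Gamma$ and could well differ in isomorphism type. Both hypotheses—$2$-acylindricity and the absence of edges between two surface-type vertices—are essential for this step. Promoting the vertex matching to an explicit isomorphism of graphs of groups then reduces to CSA-based bookkeeping ensuring that the residual conjugations by the $h_i$ can be confined to the individual vertex groups $B_{w_i}$.
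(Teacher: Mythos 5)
Your proof takes essentially the same approach as the paper's: establish, via the acylindricity of the Bass--Serre tree of $\Gamma$, that the twinned surfaces $\Sigma$ and $\Sigma'$ are attached along the same vertices, and then use the twinning isomorphism $p$ to build an isomorphism between the two bases. The matching step you carry out in detail — $h_i\tilde e_i$ and $\tilde e'_{\phi(i)}$ have the same non-trivial cyclic stabilizer, $2$-acylindricity forces a shared endpoint, and non-adjacency of surface-type vertices rules out all identifications except $h_i\tilde w_i=\tilde w'_{\phi(i)}$ — is a correct expansion of the paper's one-line claim.

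The justification of your last step, however, is off. You assert that CSA (via malnormality of the maximal abelian subgroup containing $\pi_1(C_i)$) forces the $G$-conjugate attaching subgroups of $B_{w_i}$ to be $B_{w_i}$-conjugate. CSA alone does not yield such a descent of conjugacy to a subgroup; in general two cyclic subgroups of a subgroup $B$ can be conjugate in the ambient group without being conjugate in $B$. The fact you need is already a consequence of your Bass--Serre analysis, and you should extract it rather than appeal to CSA: once you know $h_i\tilde w_i=\tilde w'_{\phi(i)}$, fix once and for all an element $g_i$ with $g_i\tilde w_i=\tilde w'_{\phi(i)}$ (this is the coset representative implicit in identifying the abstract vertex group with $G_{\tilde w_i}$ and $G_{\tilde w'_{\phi(i)}}$); then $g_i^{-1}h_i$ fixes $\tilde w_i$, hence lies in $G_{\tilde w_i}\simeq B_{w_i}$, and this is precisely the element of $B_{w_i}$ conjugating one attaching subgroup to the other. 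The paper avoids this bookkeeping altogether: after matching the vertices, it extends $p$ and $p^{-1}$ simultaneously to an automorphism of $G$ ``by the identity'' (Lemma~\ref{extid}, or topologically by a boundary-fixing homeomorphism $\Sigma\to\Sigma'$ extended by the identity on the rest of the graph of spaces), which acts on $\Gamma$ by the graph automorphism swapping $v$ and $v'$ and fixing the other vertices, and hence restricts to an isomorphism $B(\cals_0)\simeq B(\cals_1)$ at one stroke. Either way of finishing is fine; the CSA step as you wrote it is not.
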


\begin{proof}  
Let $v$ and $v'$ be the vertices of $\Gamma$ carrying $ \pi_1(\Sigma)$ and $ \pi_1(\Sigma')$ respectively.
By assumption, there is an isomorphism   $p:\pi_1(\Sigma)\to  \pi_1(\Sigma')$ 
 which acts as a conjugation on each boundary subgroup of $\pi_1(\Sigma)$.
If $C$ is an incident edge group at $v$, its image is an incident edge group at $v'$, and by   2-acylindricity the edges of $\Gamma$ carrying $C$ and $p(C)$ join $v$ and $v'$ to the same   vertex.
This implies that there is an automorphism of $\Gamma$ (viewed as a graph with no extra structure) sending $v$ to $v'$ and fixing all other vertices. 

 The twinning isomorphism $p:\pi_1(\Sigma)\ra \pi_1(\Sigma')$  and  its inverse 
act by conjugations on the boundary subgroups
of $\Sigma$ and $\Sigma'$.
 They may  therefore be extended to an automorphism of $G$
``by the identity'' as in Lemma \ref{extid}.  

 This may also be viewed topologically. 
The group $G$ is  
the fundamental group of a graph of spaces containing
two homeomorphic surfaces $\Sigma,\Sigma'$ attached to the rest of the space with the same attaching maps.
The twinning isomorphism is 
represented by a   pair of inverse homeomorphisms between $\Sigma$ and $\Sigma'$  which fix the boundary. These homeomorphisms 
may be extended by the identity to the rest of the space. 
This yields an automorphism of $G$ exchanging $ \pi_1(\Sigma)$ and $ \pi_1(\Sigma')$
and acting as a conjugation on each 
$B_i(\cals_\Gamma\setminus\{\Sigma,\Sigma'\})$. 
\end{proof}

If $S$ is a  retractable surface  of $\Gamma$, we write 
$\cals_{-S}$ for $\cals_\Gamma\setminus {\{S\}}$.

\begin{lem}\label{fortfaible} Let $S$ and $\Sigma$ be distinct surfaces of a splitting $\Gamma$. 
Let $\Gamma_{\! i}(\cals_{-S} )$ be the component  of $\Gamma(\cals_{-S} )$ containing the vertex carrying $\Sigma$, and $B_i(\cals_{-S} )$ its fundamental group.

If $S$ is retractable and $\Sigma$ is strongly retractable (in $G$), then $\Sigma$ remains strongly retractable in $B_i(\cals_{-S} )$.
\end{lem}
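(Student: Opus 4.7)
The plan is to apply Lemma~\ref{desc} to a centered splitting of $G$ built from $S$, producing a \ndbpm{} $p':\pi_1(\Sigma)\to B_i(\cals_{-S})$, and then to upgrade this conclusion from non-degenerate to strong using the assumption on $p$.

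First I would collapse all edges of $\Gamma$ not incident to the vertex $v_S$ carrying $\pi_1(S)$ to form a centered splitting $\Gamma_S$ of $G$; its central group is $\pi_1(S)$ and its bottom vertex groups are precisely $B_1(\cals_{-S}),\dots,B_m(\cals_{-S})$. By Remark~\ref{rem_retractable}, $\Gamma_S$ is a retractable centered splitting because $S$ is retractable in $G$. After conjugating we may arrange $\pi_1(\Sigma)\subset B_i(\cals_{-S})$, so Lemma~\ref{desc} applies with $B=B_i(\cals_{-S})$, surface subgroup $\pi_1(\Sigma_B)=\pi_1(\Sigma)$, and the strong \ndbpm{} $p:\pi_1(\Sigma)\to G$ (witnessing strong retractability of $\Sigma$ in $G$) as input.

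Next I verify that $p$ satisfies condition~(1) or (2) of Lemma~\ref{desc}. If $p$ is pinching, (2) is immediate. Otherwise, Lemma~\ref{lemcle4} applied to $\Gamma_S$ forces $p(\pi_1(\Sigma))$ into a conjugate of some bottom vertex group $B_k(\cals_{-S})$ of $\Gamma_S$. By $1$-acylindricity of $\Gamma$ (hence of $\Gamma_S$) near the surface-type vertex $v_S$, the intersection of $\pi_1(S)$ with any conjugate of $B_k(\cals_{-S})$ fixes a non-trivial segment in the Bass–Serre tree starting at $v_S$, hence is contained in a cyclic edge stabilizer; it therefore cannot contain a finite-index subgroup of the non-cyclic group $\pi_1(S)$, so condition~(1) holds. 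Lemma~\ref{desc} then produces a \ndbpm{} $p':\pi_1(\Sigma)\to B_i(\cals_{-S})$ that is either pinching, or of the form $\iota\circ p$ for some inner automorphism $\iota$ of $G$.

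Finally I upgrade non-degeneracy to strongness. If $p'$ is pinching it is not injective, hence not a twinning. If instead $p'=\iota\circ p$ and $p'$ were a twinning of $\Sigma$ with some surface-type vertex $\Sigma''$ of $\Gamma_i(\cals_{-S})$, then $\Sigma''$ would also be a surface-type vertex of $\Gamma$, and $p=\iota^{-1}\circ p'$ would be a twinning of $\Sigma$ with $\Sigma''$ in $\Gamma$ --- contradicting the strongness of $p$. Thus $p'$ is a strong \ndbpm, proving that $\Sigma$ is strongly retractable in $B_i(\cals_{-S})$. The main technical step is the verification of condition~(1) in the non-pinching case, which is where the acylindricity of $\Gamma$ at the surface-type vertex $v_S$ enters.
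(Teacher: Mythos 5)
Your framework (collapse to $\Gamma_S$, feed a strong \bpm\ $p:\pi_1(\Sigma)\to G$ into Lemma~\ref{desc}, and upgrade the resulting \ndbpm\ to a strong one via the moreover clause) is the right start and coincides with the paper's first step. The upgrade argument in your final paragraph is also sound: a pinching $p'$ cannot be a twinning, and if $p'=\iota\circ p$ were a twinning in $\Gamma_i(\cals_{-S})$ then $p$ would be one in $\Gamma$, contradicting strongness.

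The gap is in your verification of condition~(1) in the non-pinching case. You invoke Lemma~\ref{lemcle4} applied to $\Gamma_S$, but that lemma requires $p$ to be a \bpm\ \emph{associated to} $\Gamma_S$, i.e.\ its domain must be the central vertex group $\pi_1(S)$ of $\Gamma_S$. Your map $p$ has domain $\pi_1(\Sigma)$, which sits inside a bottom group of $\Gamma_S$, so Lemma~\ref{lemcle4} does not apply. The relevant statement is Lemma~\ref{lemcle2}, and its conclusion has a second branch: even when $p$ is non-pinching, it may happen that an incompressible subsurface $Z\inc\Sigma$ has $p(\pi_1(Z))$ a finite-index subgroup of $\pi_1(S)$ (up to conjugacy). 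In that case a conjugate of the image of $p$ \emph{does} contain a finite-index subgroup of $\pi_1(S)$, condition~(1) of Lemma~\ref{desc} fails, and your argument has nothing to say. That is precisely the substantive case: the paper handles it by composing $p$ with a retraction $\rho$ onto a copy of $B(\cals_{-S})$ (available because $S$ is retractable), applying Lemma~\ref{cinqdouze}, and then ruling out that $\rho\circ p$ is a twinning via a complexity computation (the complexity $k$ of Lemmas 3.10 and 3.12 of \cite{Perin_elementary}). Without addressing this branch your proof is incomplete, and the acylindricity observation---while correct as far as it goes---only settles the branch of Lemma~\ref{lemcle2} in which the image is already bottom-elliptic.
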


 \begin{proof}  Let $p:\pi_1(\Sigma)\to G$ be a strong \bpm. If it satisfies one of the conditions of Lemma \ref{desc}, we get  a strong \bpm\  $p:\pi_1(\Sigma)\to B_i(\cals_{-S} )$ as required.

 We may therefore assume that $p$ is non-pinching, and by Lemma \ref{lemcle2} that $\Sigma$ has an incompressible subsurface $Z $ whose fundamental group is mapped to 
   a finite index subgroup $K$ of $\pi_1(S)$. 

Since $S$ is retractable,  $G$ is an extended \'etage over $B(\cals_{-S} )$ and there is a  retraction $\rho
 $ from $G$ to a   subgroup $\tilde B$ isomorphic
 to $B(\cals_{-S} )$ (see Proposition  \ref{equivpassimple}). As in the proof of Lemma \ref{desc}, we define $p'=\rho\circ p$ 
  and we apply  Lemma   \ref{cinqdouze} 
   to get retractability in $B_i(\cals_{-S} )$ rather than just in $\tilde B$. 
   We only have to make sure that $p'$ is not an isomorphism between  $\pi_1(\Sigma)$ and a conjugate of $\pi_1(\Sigma')$ 
   for some surface $\Sigma'\ne S$ (possibly equal to $\Sigma$). 

 If it is, $\rho$ is injective on $K$, hence on $\pi_1(S)$ because $G$ is torsion-free. 
Since $\rho(K)\subset \pi_1(\Sigma')$ and $\rho(\pi_1(S))$ contains $\rho(K)$ with finite index,
it follows that $\rho(\pi_1(S))\subset \pi_1(\Sigma')$, 
because  no subgroup of $G$ contains $\pi_1(\Sigma')$ as a proper subgroup of finite index.
Boundary subgroups of $\pi_1(S)$ are mapped to conjugates, hence into boundary subgroups of $\pi_1(\Sigma')$.  
Lemma 3.10 of \cite{Perin_elementary} implies that the image of $\pi_1(S)$ has finite index in $\pi_1(\Sigma')$. Using the same complexity $k $ as in  Lemma 3.12 of \cite{Perin_elementary}, we now write $k(\Sigma)\ge k(Z)\ge k(S)\ge k(\Sigma')=k(\Sigma)$. All these complexities are equal, so $Z=\Sigma$ and $p$ is an isomorphism between $\pi_1(\Sigma)$ and $\pi_1(S)$, hence not a strong \bpm.
 \end{proof}

\begin{prop} \label{toursurm}
$G$ is an extended tower over $M(\Gamma)$.
\end{prop}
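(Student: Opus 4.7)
The plan is to iteratively remove the surfaces in $\cals_\Gamma\setminus\cals_0$ one at a time, starting from $B(\cals_\Gamma)=G$ (a single connected component since $\Gamma$ is connected) and ending at $B(\cals_0)=M(\Gamma)$. For any intermediate $\cals$ with $\cals_0\subsetneq\cals\subseteq\cals_\Gamma$ and any $\Sigma\in\cals\setminus\cals_0$ sitting in a component $\Gamma_i(\cals)$, collapsing all edges of $\Gamma_i(\cals)$ not incident to $\Sigma$ produces a centered splitting of $B_i(\cals)$ with at least one bottom vertex: indeed, no edge of $\Gamma$ joins two surface-type vertices, so every neighbour of $\Sigma$ in $\Gamma$ is non-surface-type and hence survives in $\Gamma(\cals)$. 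Once this centered splitting is shown to be retractable, Definition \ref{dfn_extended} makes $B_i(\cals)$ an extended \'etage of surface type over the free product of its bottom groups, and applying Remark \ref{rk_prodlib} with the untouched factors $B_j(\cals)$ for $j\neq i$ yields $B(\cals)\gtun B(\cals\setminus\{\Sigma\})$. Chaining over all removals gives $G\tge M(\Gamma)$.

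The crux is thus to exhibit, for every such pair $(\cals,\Sigma)$, a \ndbpm{} $\pi_1(\Sigma)\to B_i(\cals)$, and I split into two cases according to whether $\Sigma$ is strongly retractable in $G$. \emph{If $\Sigma$ is retractable but not strongly retractable in $G$}, then $\Sigma$ has a twin, and by the defining property of $\cals_0$ there is a twin $\Sigma'\in\cals_0\subseteq\cals$. The topological swap of $\Sigma$ and $\Sigma'$ constructed in the proof of Lemma \ref{bbdef} relies only on 2-acylindricity, which forces $\Sigma$ and $\Sigma'$ to be attached to the same vertices by the same edge groups; hence $\Sigma'\in\Gamma_i(\cals)$ and the swap restricts to an automorphism of $B_i(\cals)$ whose restriction to $\pi_1(\Sigma)$ is a twinning into $\pi_1(\Sigma')\subset B_i(\cals)$, providing the desired \ndbpm{}.

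\emph{If $\Sigma$ is strongly retractable in $G$}, I claim it remains strongly retractable in $B_i(\cals)$, proved by induction on $|\cals_\Gamma\setminus\cals|$ (the base case $\cals=\cals_\Gamma$ being trivial). For the inductive step, pick $S\in\cals_\Gamma\setminus\cals$ and set $\cals'=\cals\cup\{S\}$. The inductive hypothesis places $\Sigma$ strongly retractable in $B_{i'}(\cals')$, where $i'$ is the index of the component of $\Gamma(\cals')$ containing $\Sigma$. If $S$ lies in a different component of $\Gamma(\cals')$, removing $S$ does not affect $\Sigma$'s component; otherwise Lemma \ref{fortfaible}, applied in the ambient group $B_{i'}(\cals')$, upgrades strong retractability of $\Sigma$ from $B_{i'}(\cals')$ to $B_i(\cals)$, provided $S$ itself is retractable in $B_{i'}(\cals')$. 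This last retractability is supplied either by the inductive hypothesis applied to $S$ (when $S$ is strongly retractable in $G$) or by the twinning argument of the previous paragraph applied to $S$ and $\cals'$ (when $S$ is only non-strongly retractable). The main subtlety lies in exactly this interlocking: the inductive argument for $\Sigma$ at step $\cals$ consumes retractability of the removed $S$ in $B_{i'}(\cals')$, forcing the two cases -- strong and non-strong retractability -- to be handled by a single combined argument rather than in sequence.
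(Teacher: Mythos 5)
Your proof is correct and rests on the same two mechanisms as the paper's: twinnings to handle surfaces that are retractable but not strongly retractable, and Lemma~\ref{fortfaible} to propagate strong retractability through the removals. The organization is different, though. The paper first eliminates all twinning pairs in batches (for each retractable-but-not-strongly-retractable $S\in\cals_0$, it retracts every twin of $S$ at once using the twinnings with $S$; after this, every remaining retractable surface is strongly retractable), and only then removes the strongly retractable surfaces one by one via Lemma~\ref{fortfaible}. You instead run a single induction on $|\cals_\Gamma\setminus\cals|$ that interleaves the two cases at every step, and you correctly identify the interlocking: retractability of the removed $S$ in the intermediate group $B_{i'}(\cals')$ must be supplied either by the inductive hypothesis (if $S$ is strongly retractable) or by the twinning into a partner $S'\in\cals_0\subset\cals'$. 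Both arguments work; the paper's batch approach is a bit shorter because it decouples the two phenomena, while yours is more uniform but requires the mutual recursion you flag.

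One point you leave implicit (and the paper only briefly alludes to): when a surface is removed, the graph $\GiS$ is made minimal by deleting terminal cyclic vertices whose group coincides with the incident edge group, and a surviving surface $\Sigma$ can thereby acquire a free boundary component and cease to be a surface-type vertex (Remark~\ref{bolib}). Your argument does silently rule this out for the surfaces you retract: for strongly retractable $\Sigma$, the IH plus Lemma~\ref{fortfaible} conclude strong retractability in $B_i(\cals)$, which presupposes and delivers that $\Sigma$ is still surface-type; and for $\Sigma$ with a twin $\Sigma'\in\cals_0\subset\cals$, the 2-acylindricity argument of Lemma~\ref{bbdef} shows $\Sigma$ and $\Sigma'$ are attached to exactly the same cyclic vertices, so none of those becomes terminal while both twins remain. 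It would strengthen the write-up to say this explicitly, since it is exactly the hypothesis needed before you can claim a valid centered splitting of $B_i(\cals)$ around $\Sigma$.
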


\begin{proof} 
If $S\in\cals_\Gamma\setminus\cals_0$, it  is retractable and $G$ is an extended étage over   $B(\cals_{- S})$. Retracting   surfaces iteratively 
 will  then express $B(\cals_{- S})$  as an extended étage over some $B(\cals_\Gamma\setminus\{S,S'\})$, 
and eventually 
 $G$ as an extended tower over $M(\Gamma)=B(\cals_{0})$. We just have to be careful, as the retractability type of surfaces might change during the process.

First suppose that there is a surface $S\in\cals_0$ which is retractable (but not strongly retractable). Let $\cals_1$ be its set of twins (excluding $S$ itself). We may retract the surfaces of $\cals_1$ using the twinnings with $S$, so as to express $G$ as an extended  tower over  $B(\cals_\Gamma\setminus\cals_1)$. Note that $\Gamma(\cals_\Gamma\setminus\cals_1)$ is connected, because the vertices carrying $S$ and its twins are adjacent to the same   
vertices. 

By Lemma \ref{fortfaible}, surfaces not in $\cals_0\cup \cals_1$ remain   
retractable in $\Gamma(\cals_\Gamma\setminus\cals_1)$ 
 (because they have a twin or   they are strongly retractable). The surface $S$ itself either remains a surface but becomes non-retractable, or it stops being a surface (it gains a free boundary component when we make $\Gamma(\cals_\Gamma\setminus\cals_1)$ minimal,   see Remark \ref{bolib}).  

Performing this operation for every retractable 
surface   $S\in\cals_0$ lets us assume that every retractable surface is strongly retractable. Using Lemma \ref{fortfaible} again, we can now retract them one by one.
\end{proof}

\subsubsection{Proof of confluence}\label{confl}

We can now prove Lemma \ref{lem_confluence2}.
We   first assume that $G$  (a CSA group) is one-ended. Let $\Gcan$ be  its canonical cyclic JSJ decomposition (see Subsection \ref{JSJ}).        It satisfies the assumptions of Subsection \ref{mbs}.

If  $\Gcan$
 is non-trivial, 
 we   define 
   $M=M(\Gcan)$ 
 as the minimal  base of $\Gcan$. 
 We have $G\tge M$ by Proposition \ref{toursurm}. We now suppose that $G$ is an extended étage of surface type over $G'$  and we show $G'\tge M$. 
 
 If the étage is associated to a surface $S$ of $\Gcan$, this surface is retractable;   we can choose $\cals_0$ with $S\notin\cals_0$, and we have seen in the proof of Proposition  \ref{toursurm} that $G'=B(\cals_{-S} )$ is an extended tower over   a subgroup isomorphic to $M$.
  In general,    Lemmas \ref{compajsj} and \ref{bases} yield  a surface $S$ of $\Gcan$ such that $G'$ is the free product of a free group with $B(\cals_{-S} )$, so $G'\tge B(\cals_{-S} )\tge M$.
  
    Still assuming $G$ one-ended, suppose that $\Gcan$
 is trivial. There are two cases. If $G$ is either   rigid (it has no cyclic splitting) or $\Z^2$ or the exceptional closed surface group, then $G$ has no retractable splitting, 
and there is no $G'$ such that $G\gtun G'$; 
we define $M=G$ in this case.
Otherwise, $G$ is  the fundamental group of  a non-exceptional closed surface  (see \cite{GL_JSJ}).
 As explained in Example \ref{csurf}, $G$ is an extended tower over 
$\{1\}$, and we define $M =\{1\}$. 
Any $G'$ such that $G\gtun G'$ is free hence $\tge \{1\}$.

The lemma is clear if $G$ is cyclic, so  suppose that  $G$ has infinitely many ends.
If $G$ is free, we can take $M=\{1\}$. 
Otherwise, we consider a Grushko decomposition $G=G_1*\cdots *G_k*F$  with $F$ free. 
Since $G_i$ is one-ended, it has a minimal base $M_i$ and 
we define $M$ as the free product of the 
$M_i$'s.  

Using Remark \ref{glim2}, we can write 
$G\tge G_1*\cdots * G_k\tge M_1*\cdots * M_k =M$. Similarly, $G'\tge M$ if $G=G'*\Z$. If $G$ is an extended \'etage of surface type over $G'$, 
  we get  after reordering $G'=  B_{\Lambda_\Z}*G_2*\dots*G_k*F'$ as in Proposition \ref{Grus}, with $G_1 $    an extended \'etage of surface type over $ B_{\Lambda_\Z}$. 
We then have $B_{\Lambda_\Z}\tge M_1$ by the one-ended case, hence  $G'\tge M_1*\cdots * M_k$.

\section{Elementary equivalence}
\label{elemequiv}

Though motivated by model theory, the paper so far is purely group theory. We now give Sela's classification of hyperbolic groups up to elementary equivalence. 

  From now on, we restrict to hyperbolic groups.
Recall that we associated a core $\core(G)$  to any  torsion-free hyperbolic group $G$ (Definition \ref{dcore}). 
\begin{thm} [\cite{Sela_diophantine7}]\label{classif}
Two non-abelian torsion-free hyperbolic groups are elementarily equivalent if and only if their cores are isomorphic.
\end{thm}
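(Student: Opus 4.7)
The plan is to package this as a short application of two results already available in the paper: the Partial Elementary Extension Theorem \ref{lethmintro} (for the forward direction) and ``Tarski'' (Theorem \ref{Tarski}) combined with the fact that $G$ is an extended tower over its core (for the backward direction).

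\textbf{Forward direction.} Suppose that $G\equiv G'$, with both groups non-abelian torsion-free hyperbolic. Apply Theorem \ref{lethmintro} with $A=A'=\{1\}$ and $f$ the trivial map. Since the map is vacuously partial elementary (this is exactly $G\equiv G'$), and since the core of $G$ relative to $\{1\}$ is just $\core(G)$ (and similarly for $G'$), the theorem produces an isomorphism $\core(G)\simeq \core(G')$. This step requires no further work.

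\textbf{Backward direction.} The core is either trivial or non-abelian: indeed, $\core(G)$ is a prototype, hence by Lemma \ref{eqprot} either trivial or a free product of one-ended prototypes; being a subgroup of the torsion-free hyperbolic group $G$, any abelian subgroup of $\core(G)$ is cyclic, ruling out the case $\core(G)\simeq\Z$ (since $\Z=\{1\}*\Z$ is an étage of free product type, not a prototype). The main claim is then that $G$ is elementarily equivalent to $\core(G)$ when $\core(G)\ne\{1\}$, and to $\bbF_2$ when $\core(G)=\{1\}$.

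I would prove this claim as follows. Since $G$ is an extended tower over $\core(G)$, Corollary \ref{ett} furnishes a finitely generated free group $F$ such that $G*F$ is a \emph{simple} tower over a subgroup $H'$ of $G*F$ isomorphic to $\core(G)$ (if $\core(G)$ is non-abelian) or to $\bbF_2$ (if $\core(G)=\{1\}$, using $\{1\}*\bbF_2=\bbF_2$). On the other hand, $G*F$ is itself a simple tower over $G$ via $\dim F$ étages of free product type. Apply ``Tarski'' (Theorem \ref{Tarski}) twice: since $G$ is non-abelian (and so is $H'$, being either $\core(G)$ or $\bbF_2$), both inclusions $G\hookrightarrow G*F$ and $H'\hookrightarrow G*F$ are elementary. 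Hence $G\equiv G*F\equiv H'$, giving the claim. Now if $\core(G)\simeq \core(G')$, either both cores are trivial and $G\equiv\bbF_2\equiv G'$, or both are isomorphic and non-abelian and $G\equiv \core(G)\simeq\core(G')\equiv G'$.

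\textbf{Main obstacle.} The entire content is essentially already carried by Theorem \ref{lethmintro} on one side and by ``Tarski'' on the other, so there is no real difficulty in the argument itself; the only subtle point is keeping track of the case $\core(G)=\{1\}$, where $\core(G)$ fails to be elementarily equivalent to $G$ and must be replaced by $\bbF_2$ (this is precisely why the paper introduces the elementary core $\ecore(G)$ in Definition \ref{elcore}). A clean exposition would first verify that, for $G$ torsion-free hyperbolic and non-abelian, the core is either trivial or non-abelian hyperbolic, and then handle the two cases uniformly.
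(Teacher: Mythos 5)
Your proof is correct and takes essentially the same approach as the paper's: the paper proves the forward direction as Corollary \ref{cor_eqv_elem} by applying Theorem \ref{lethm} with $A=\{1\}$, and proves the backward direction as Corollary \ref{equivcore} via Corollary \ref{equivtour} (which, exactly as you do, invokes Corollary \ref{ett} to stabilize the extended tower to a simple one and then applies ``Tarski'' twice), handling the case $\core(G)=\{1\}$ by replacing the core with $\bbF_2$. Your observation that the core of a non-abelian torsion-free hyperbolic group is never $\Z$ (since $\Z$ is an étage of free product type over $\{1\}$, hence not a prototype) is a clean way of saying what the paper implicitly relies on when it introduces $\ecore$.
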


We split this statement into two. 

\begin{thm}\label{sensfac}
 Let $G,H$ be non-abelian 
 torsion-free hyperbolic groups. If $G$ and $H$ are 
   elementarily equivalent,  their cores are isomorphic.
\end{thm}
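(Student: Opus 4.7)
The plan is to derive Theorem \ref{sensfac} as a direct specialization of Theorem \ref{lethmintro} to the case where the subgroup $A$ is trivial. Given $G\equiv H$, I would first observe that the unique map $f:\{1_G\}\to\{1_H\}$ is a partial elementary map: when $A=\{1\}$, the defining condition on $f$ reduces to asking that $G$ and $H$ satisfy the same first-order sentences (with no constants), which is exactly elementary equivalence. Thus the hypothesis of Theorem \ref{lethmintro} is met, and it remains only to identify the conclusion with the desired statement.

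The next step is to match the relative core appearing in Theorem \ref{lethmintro} with the absolute core of Definition \ref{dcore}. Because the relation "relative to $\{1\}$" is vacuous, a tower (resp.\ \'etage) relative to $\{1\}$ is just a tower (resp.\ extended \'etage). Hence a core of $G$ relative to $\{1\}$ is a subgroup $C\inc G$ such that $G\tge C$ and $C$ is not an extended \'etage over any subgroup; by Lemma \ref{eqprot} the latter condition is precisely that $C$ be a prototype. Theorem \ref{thm_confluence} then asserts that such a $C$ is unique up to isomorphism and realizes the isomorphism class $\core(G)$. The same discussion applies to $H$.

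Finally, applying Theorem \ref{lethmintro} to the partial elementary map $f$ gives an isomorphism $F$ between a core of $G$ relative to $\{1\}$ and a core of $H$ relative to $\{1\}$; by the previous paragraph this yields $\core(G)\simeq\core(H)$, proving the theorem.

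The entire work in this plan is loaded onto Theorem \ref{lethmintro}, whose proof (via strong engulfing, preretractions, and the fact that the core strongly engulfs in $G$) is the real obstacle; the derivation of Theorem \ref{sensfac} from it is essentially formal, and the only small verification required is the identification of the relative notion of core with the absolute one when $A=\{1\}$, which follows immediately from the definitions and from the uniqueness statement in Theorem \ref{thm_confluence}. Note that non-abelianness of $G$ and $H$ is used implicitly through the hypotheses of Theorem \ref{lethmintro}.
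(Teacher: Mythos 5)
Your proof is correct and matches the paper's own derivation: the paper proves Theorem \ref{sensfac} as Corollary \ref{cor_eqv_elem} of Theorem \ref{lethm} by applying it with $A=\{1\}$ and noting that the core of $G$ relative to $\{1\}$ is $\core(G)$. Your extra verification (via Lemma \ref{eqprot} and Theorem \ref{thm_confluence}) that the relative core specializes to the absolute one when $A=\{1\}$ is exactly what the paper leaves implicit in its one-line proof.
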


We shall give a proof of this theorem (the ``only if'' direction in Theorem   \ref{classif}) in Section \ref{interp} (see Corollary 
\ref{cor_eqv_elem}).   
  As we will see, the converse direction is an easy consequence of Theorem \ref{Tarski} (``Tarski''),
which contains the fact that 
$H*F\equiv H$ 
if $F$ is free and $H$ is a non-abelian torsion-free hyperbolic group, and in particular a solution to Tarski's problem,   so will
 ---unfortunately--- not be proved here. 
We recall its statement. 

\begin{enonce*}{Theorem \ref{Tarski} (``Tarski'' \cite{Sela_diophantine7})}
   Let $G$ be a non-abelian torsion-free hyperbolic group. If $G$ is a  \emph{simple} 
 tower
 over a non-abelian subgroup $H<G$,   then the inclusion $H\hookrightarrow G$ is an elementary embedding.
In particular, $G$ and $H$ are elementarily equivalent. \qed
 \end{enonce*}

 It is important here   that the
surfaces appearing in the tower   be non-exceptional.
 We shall also see   (Theorem \ref{contrex})
 that the conclusion may be wrong if $G$ is a \emph{non-simple}   tower 
 over $H$.

 \begin{cor}
 \label{equivtour}
  Let $G,H$ be non-abelian torsion-free hyperbolic groups. 
     If $G\tge H$, then $G$ and $H$  are 
   elementarily equivalent.
   \end{cor}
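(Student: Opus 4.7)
The plan is to reduce everything to the ``simple'' case so that Theorem \ref{Tarski} (``Tarski'') applies directly. The obstacle we must bypass is that $G \tge H$ only guarantees an \emph{extended} tower structure, whereas ``Tarski'' only gives an elementary embedding in the case of a \emph{simple} tower over a non-abelian subgroup. The bridge is provided by Corollary \ref{ett}, which turns an extended tower into a simple one at the cost of taking a free product with a free group.

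Concretely, I would proceed as follows. Since $H$ is non-abelian and $G \tge H$, Corollary \ref{ett} yields a finitely generated free group $F$ and a subgroup $H' \subset G*F$ with $H' \simeq H$ such that $G*F$ is a \emph{simple} tower over $H'$. Because $H$ is non-abelian, so is $H'$, so ``Tarski'' (Theorem \ref{Tarski}) applies and gives an elementary embedding $H' \hookrightarrow G*F$; in particular $G*F \equiv H' \simeq H$.

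In the other direction, the free product $G*F$ is a simple tower over $G$: one stacks rank$(F)$ many simple \'etages of free product type, each one adjoining a single $\bbZ$-factor (see Definition \ref{dfn_simple_etage} and Remark \ref{rk_amalgam}). Since $G$ is non-abelian by hypothesis, ``Tarski'' applies once more to give $G \preceq_e G*F$, and in particular $G \equiv G*F$. Combining the two elementary equivalences yields
\[
G \;\equiv\; G*F \;\equiv\; H,
\]
which is the claimed conclusion. The whole argument is a direct combination of Corollary \ref{ett} with two applications of Theorem \ref{Tarski}, and the only subtlety to watch is that the non-abelian hypothesis is used both for $H$ (to apply Corollary \ref{ett} with $H'\simeq H$ and then invoke ``Tarski'' for $H' \hookrightarrow G*F$) and for $G$ (to invoke ``Tarski'' for $G \hookrightarrow G*F$).
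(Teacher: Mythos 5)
Your proof is correct and follows exactly the paper's own argument: apply Corollary \ref{ett} to upgrade the extended tower to a simple tower on $G*F$, then use Theorem \ref{Tarski} twice (once for $H'\hookrightarrow G*F$ and once for $G\hookrightarrow G*F$) to conclude $G\equiv G*F\equiv H$. No discrepancies to report.
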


\begin{proof}
  Recall (Corollary \ref{ett}) that   there exists $r\geq 0$ such that $G*\bbF_r$ is a simple tower over a subgroup $\tilde H$
  isomorphic to $H$. 
Since $G*\bbF_r$ is a simple tower over $G$ (with étages of free product type), Theorem \ref{Tarski} implies
 $G\equiv G*\bbF_r\equiv \tilde H\equiv H$.
\end{proof}

\begin{example}[see Example \ref{expt}] \label{expt2}
Let $G$ be a non-abelian torsion-free hyperbolic group. Suppose that $g\in G$ is non-trivial and is a commutator. 
Then $\langle G,x,y\mid g=[x,y] \rangle$ is elementarily equivalent to $G$. 
\end{example}

\begin{example}[see Example \ref{csurf}] \label{csurf2}
The fundamental group of a closed   surface $\Sigma$ with $\chi(\Sigma)\le-2$ 
is elementarily equivalent to $\bbF_2$. 
\end{example}

  In order to prove    the ``if'' direction in Theorem \ref{classif},
we  want to deduce from  Theorem  \ref{Tarski}
  that $G$ is elementarily equivalent to its core $\core(G)$. Since the theorem does not apply when $H=\core(G)$ is abelian (i.e.\ when $\core(G)=\{1\}$), we  
introduce a slightly modified definition. 
  
Recall that a group is a prototype if it is minimal for $\tge$, and the core $\core(G)$ is the unique prototype such that $G\tge \core(G)$. 
When $G$ is a non-abelian free group, $\core(G)$ is the trivial group, and in particular is not elementarily equivalent to $G$.
As we will see, the case where $\core(G)=\{1\}$ is the only exception, and in this case $G$ is elementarily equivalent to $\bbF_2$.
To overcome this exception, we   give the following definition.

\begin{dfn}[Elementary core] \label{elcore}
 Let $G$ be a non-abelian torsion-free hyperbolic group. We define its \emph{elementary core} 
 as the group $\ecore(G)$ well-defined up to isomorphism by
$$ \begin{cases}
   \ecore(G)=\core(G)& \text{if $\core(G)\ne 1$}\\
   \ecore(G)=\bbF_2 & \text{if $\core(G)= 1$.}\\
 \end{cases}$$
\end{dfn}

Note that $\ecore(G)$ and $\ecore(H)$ are isomorphic if and only if $\core(G)$ and $\core(H)$ are isomorphic, so this change does not affect the validity of Theorems \ref{classif} and \ref{sensfac}. On the other hand, it is not always true that $G\tge \ecore (G)$ (see Example \ref{example_not_extended_tower}).
  As noted in Remark \ref{rem_ecore}, although $\core(G_1*G_2)=\core(G_1)*\core(G_2)$, this is no more true for  $\ecore$.

  We 
    can now state
the following   consequence of ``Tarski'', which, together  with
Theorem \ref{sensfac} (proved in Subsection \ref{interp}), clearly implies Theorem \ref{classif}.  

 \begin{cor}
 \label{equivcore}
Any non-abelian torsion-free hyperbolic group $G$ is 
   elementarily equivalent
   to its elementary core $\ecore(G)$.
\end{cor}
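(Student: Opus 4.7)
The plan is to reduce directly to Theorem \ref{Tarski} (``Tarski'') via the tower $G\tge\core(G)$, splitting into two cases according to whether $\core(G)$ is trivial or not.

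In the case $\core(G)\ne\{1\}$, the key preliminary observation I would establish is that $\core(G)$ must be non-abelian. By Remark \ref{glim2}, $\core(G)$ is a torsion-free hyperbolic group, so its abelian subgroups are cyclic; if $\core(G)$ were abelian it would be isomorphic to $\Z$, but $\Z\simeq\{1\}*\Z$ is an \'etage of free product type over the trivial group, so $\Z\tgt\{1\}$, contradicting the minimality of $\core(G)$ as a prototype. Once $\core(G)$ is known to be non-abelian torsion-free hyperbolic, Corollary \ref{equivtour} applied to the tower $G\tge\core(G)$ yields directly $G\equiv\core(G)=\ecore(G)$.

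For the case $\core(G)=\{1\}$, where $\ecore(G)=\bbF_2$ by definition, I would apply Corollary \ref{ett} to the extended tower $G\tge\{1\}$. Since $\{1\}$ is abelian, this produces a free group $F$ such that $G*F$ is a \emph{simple} tower over a subgroup $H'$ isomorphic to $\{1\}*\bbF_2\simeq\bbF_2$. At the same time, $G*F$ is tautologically a simple tower over $G$ (stacking $\mathrm{rk}(F)$ étages of free product type). Both simple towers have non-abelian torsion-free hyperbolic bases and tops, so Theorem \ref{Tarski} applies in each, giving elementary embeddings $G\hookrightarrow G*F$ and $H'\hookrightarrow G*F$. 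Hence $G\equiv G*F\equiv H'\simeq\bbF_2=\ecore(G)$.

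The main obstacle is essentially bookkeeping: this is a direct corollary of ``Tarski'' once one has set up the right tower and verified the non-abelianity hypotheses. The only genuinely non-formal step is the small argument that the core of a non-abelian torsion-free hyperbolic group cannot be a nontrivial abelian group, which is exactly what guarantees that Corollary \ref{equivtour} is available in the first case and that the detour through $\bbF_2$ is the only one needed in the second.
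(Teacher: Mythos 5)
Your proof is correct and follows essentially the same route as the paper: split on whether $\core(G)$ is trivial, apply Corollary \ref{equivtour} (equivalently ``Tarski'' plus Corollary \ref{ett}) in the first case, and pass through $G*\bbF_2\tge\bbF_2$ in the second. The one small addition worth noting is your explicit justification that $\core(G)\ne\{1\}$ forces $\core(G)$ non-abelian: this is needed for Corollary \ref{equivtour} to apply, the paper treats it as implicit, and your argument (a non-trivial abelian torsion-free hyperbolic group is $\Z$, which is not a prototype since $\Z=\{1\}*\Z$ is an étage of free product type) is exactly right. In the trivial-core case you invoke Corollary \ref{ett} directly rather than the paper's Remark \ref{rk_prodlib}, but this is only bookkeeping — the two statements encode the same reduction to a simple tower over $\bbF_2$, and the final chain $G\equiv G*F\equiv\bbF_2$ via ``Tarski'' is identical.
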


\begin{proof}
 This  is clear if $\core(G)\ne\{1\}$   using  Corollary \ref{equivtour}, since $G\tge\core(G)=\ecore(G)$. If $\core(G)=\{1\}$, we have $G\tge\{1\}$, hence $G*\bbF_2\tge \bbF_2$ by Remark \ref{rk_prodlib}, and $G\equiv G*\bbF_2\equiv \bbF_2=\ecore(G)$.
\end{proof}

\begin{rem} Although $\ecore(G)$ always embeds into $G$ as a retract,
there are examples of hyperbolic groups for which $\core(G)$  is non-trivial and does not embed elementarily in $G$ (see Theorem \ref{contrex}).
\end{rem}

  The following lemma will be used in  Section \ref{ee}. 

\begin{lem}\label{stab_core}
  Let $G$ be a non-abelian torsion-free hyperbolic group.
  There exists $r\geq 0$ such that $G*\bbF_r$ is a simple tower over a subgroup isomorphic to $\ecore(G)$.
\end{lem}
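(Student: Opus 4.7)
The plan is to reduce the statement to Corollary \ref{ett} by distinguishing two cases according to whether $\core(G)$ is trivial, with a small preliminary observation that takes care of an abelianness hypothesis.

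First I would verify that whenever $\core(G) \neq \{1\}$, the group $\core(G)$ is non-abelian. Indeed, by Lemma \ref{eqprot} a prototype is a free product of one-ended groups that are not extended étages of surface type; since abelian subgroups of a torsion-free hyperbolic group are cyclic, hence two-ended, any non-trivial one-ended torsion-free hyperbolic group is non-abelian. Thus a non-trivial prototype is non-abelian, and in particular $\core(G)$ is non-abelian as soon as it is non-trivial.

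In the first case, where $\core(G) \neq \{1\}$, we have $\ecore(G) = \core(G)$ and $G \tge \core(G)$ by the definition of the core. Applying the non-abelian version of Corollary \ref{ett} with $H = \core(G)$ produces a free group $F$ such that $G * F$ is a simple tower over a subgroup isomorphic to $\core(G) = \ecore(G)$. Taking $r$ to be the rank of $F$ gives the desired conclusion.

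In the remaining case $\core(G) = \{1\}$, we have $\ecore(G) = \bbF_2$ and $G \tge \{1\}$. By Remark \ref{rk_prodlib}, forming a free product with $\bbF_2$ on both sides yields $G * \bbF_2 \tge \bbF_2$. Since $\bbF_2$ is non-abelian, Corollary \ref{ett} applies to $G * \bbF_2$ with $H = \bbF_2$, producing a free group $F$ such that $(G * \bbF_2) * F$ is a simple tower over a subgroup isomorphic to $\bbF_2 = \ecore(G)$. Writing $r = 2 + \mathrm{rank}(F)$ and using $(G*\bbF_2)*F \simeq G*\bbF_r$ finishes the proof.

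There is no real obstacle here: everything follows mechanically from Corollary \ref{ett} once one notices that the degenerate case $\core(G)=\{1\}$ is handled by absorbing the auxiliary $\bbF_2$ coming from the definition of $\ecore$ into the $\bbF_r$ factor.
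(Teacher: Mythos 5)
Your proof is correct and follows essentially the same route as the paper: the same case split on whether $\core(G)$ is trivial, the same appeal to Corollary \ref{ett} in the non-trivial case, and the same use of Remark \ref{rk_prodlib} to pass to $G*\bbF_2 \tge \bbF_2$ in the trivial case. Your preliminary observation that a non-trivial prototype is automatically non-abelian (so the non-abelian version of Corollary \ref{ett} applies) is a point the paper leaves implicit, and spelling it out is a welcome addition.
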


\begin{proof}
  By definition (see Subsection \ref{do}),   $G$ is an extended tower over $\core(G)$.
  If $\core(G)\ne\{1\}$, then  Corollary \ref{ett} says that there exists $r\geq 0$ such that
  $G*\bbF_r$ is a simple tower over a subgroup isomorphic to $\core(G)=\ecore(G)$.
  If $\core(G)=\{1\}$, then  $G*\bbF_2\tge \bbF_2$ by Remark \ref{rk_prodlib}, so there exists $r\geq 0$ such that $G*\bbF_2*\bbF_r$ is a simple tower over a subgroup isomorphic to $\bbF_2=\ecore(G)$.
\end{proof}

It follows from Theorem \ref{Tarski}
that many non-free finitely generated groups (in particular surface groups, see Example \ref{csurf2}) are elementarily equivalent to $\F_2$. They may be characterized as follows.

\begin{cor}[Elementarily free groups] 
\label{cor_tarski}
Let $G$ be a finitely generated non-abelian torsion-free group.
The following are equivalent:
\begin{enumerate}
\item $G$ is elementarily equivalent to $\bbF_2$; 
\item $G$ is hyperbolic and $\ecore(G)=\bbF_2$;
\item $G$ is hyperbolic and $\core(G)=\{1\}$;
\item $G$ is hyperbolic and $G\tge \{1\}$;
\item $G$ is an extended tower over  $\Z$;

\item  
 there exists a free group $ \bbF_r=\grp{a_1,\dots ,a_r}$ with $r\ge2$ such that $G*\grp{a_1,\dots,a_r}$ is a simple tower over $\grp{a_1,a_2}$;
\item there exists a free group $ \bbF_r$ such that $G* \bbF_r$ is a simple tower over a  non-abelian free subgroup;
\item 
 there exists $r\geq 2$ such that the obvious embedding
  $\bbF_2\hookrightarrow  \bbF_r*G$ is elementary (see Section \ref{ee}).
 \end{enumerate}
\end{cor}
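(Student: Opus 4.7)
The plan is to prove the cycle of implications $(1) \Rightarrow (2) \Leftrightarrow (3) \Leftrightarrow (4) \Leftrightarrow (5) \Rightarrow (6)$, together with $(6) \Rightarrow (7)$, $(6) \Rightarrow (8)$, $(7) \Rightarrow (1)$, and $(8) \Rightarrow (1)$. Most steps will be direct consequences of results already proved earlier in this section; the only substantive step will be $(5) \Rightarrow (6)$, where one must upgrade a simple tower over an \emph{abstract} subgroup isomorphic to $\bbF_2$ into a simple tower over the \emph{literal} free factor $\langle a_1,a_2\rangle$ of $G*\bbF_r$.

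For $(1)\Rightarrow(2)$, I would first invoke the Sela/André result (quoted at the start of Section~\ref{elemequiv}) to deduce that $G\equiv\bbF_2$ forces $G$ to be hyperbolic, then apply Theorem~\ref{sensfac} to get $\core(G) \simeq \core(\bbF_2)=\{1\}$, hence $\ecore(G)=\bbF_2$ by Definition~\ref{elcore}. The equivalence $(2)\Leftrightarrow(3)$ is immediate from the same definition; $(3)\Leftrightarrow(4)$ follows from the uniqueness of $\core(G)$ in Theorem~\ref{thm_confluence}; and for $(4)\Leftrightarrow(5)$ I would observe that any extended tower $G=G_0>\dots>G_k=\{1\}$ with $G$ non-abelian must have $G_{k-1}\simeq\mathbb{Z}$ at the last step, since a surface-type étage requires a non-abelian base, while conversely $\mathbb{Z}=\{1\}*\mathbb{Z}$ provides an étage of free product type showing $\mathbb{Z}\gtun\{1\}$.

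The crux will be $(5)\Rightarrow(6)$. Given $G\tge\{1\}$, so $\ecore(G)=\bbF_2$, Lemma~\ref{stab_core} furnishes $r_0\geq 0$ and a subgroup $H_0\subset G*\bbF_{r_0}$ isomorphic to $\bbF_2$ such that $G*\bbF_{r_0}$ is a simple tower over $H_0$. Setting $r=r_0+2$ and writing $G*\bbF_r=(G*\bbF_{r_0})*\langle a_1,a_2\rangle$, Remark~\ref{rk_amalgam} turns the tower over $H_0$ into a simple tower from $G*\bbF_r$ down to $H_0*\langle a_1,a_2\rangle$; this is a genuine free product inside $G*\bbF_r$ since $H_0$ and $\langle a_1,a_2\rangle$ sit in distinct free factors. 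Choosing any basis $\{h_1,h_2\}$ of $H_0$, the subgroup $H_0*\langle a_1,a_2\rangle$ is itself a simple tower over $\langle a_1,a_2\rangle$ via two étages of free product type (adjoining $h_1$, then $h_2$). Concatenating yields~(6). The implication $(6)\Rightarrow(7)$ is trivial, and $(6)\Rightarrow(8)$ follows by a direct application of ``Tarski'' (Theorem~\ref{Tarski}) to the simple tower produced.

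To close the loop, both $(7)\Rightarrow(1)$ and $(8)\Rightarrow(1)$ will use two instances of ``Tarski''. In (7), if $F$ is the non-abelian free subgroup over which $G*\bbF_r$ is a simple tower, Theorem~\ref{Tarski} gives $F\preceq_e G*\bbF_r$, so $G*\bbF_r\equiv F\equiv \bbF_2$; the case $r=0$ already yields $G\equiv\bbF_2$, while for $r\geq 1$ the free-product-type simple tower $G*\bbF_r>G$ (valid since $G$ is non-abelian) combined with ``Tarski'' gives $G\preceq_e G*\bbF_r$, hence $G\equiv\bbF_2$. The argument for~(8) is parallel, using that $\bbF_2\preceq_e \bbF_r*G$ implies $\bbF_r*G\equiv\bbF_2$ and then applying ``Tarski'' to $G*\bbF_r>G$. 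The only genuine obstacle is thus $(5)\Rightarrow(6)$: passing from an abstract $\bbF_2$-base of the tower to the canonical subgroup $\langle a_1,a_2\rangle$, which the stabilization-by-$\bbF_2$ trick via Remark~\ref{rk_amalgam} resolves cleanly.
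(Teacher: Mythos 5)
Your proof traces the same cycle as the paper, and most steps ($1\Rightarrow 2$ via Sela/André plus Theorem~\ref{sensfac}; $2\Leftrightarrow 3\Leftrightarrow 4$; $4\Rightarrow 5$ via the observation that the last \'etage over $\{1\}$ must be of free-product type; $6\Rightarrow 7$; $7\Rightarrow 1$; $6\Rightarrow 8$ and $8\Rightarrow 1$ via ``Tarski'') are correct. The paper uses $6\Leftrightarrow 8$ via Theorems~\ref{Tarski} and~\ref{thcl}, while you prove $6\Rightarrow 8$ and $8\Rightarrow 1$, which closes the same cycle; that difference is harmless.

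The genuine gap is in your $(5)\Rightarrow(6)$. You write ``Given $G\tge\{1\}$, so $\ecore(G)=\bbF_2$, Lemma~\ref{stab_core} furnishes $r_0\geq 0$\ldots''. But both $\ecore(G)$ (Definition~\ref{elcore}) and Lemma~\ref{stab_core} require $G$ to be \emph{hyperbolic}, whereas item $(5)$ only provides that $G$ is a finitely generated torsion-free CSA group which is an extended tower over~$\Z$; hyperbolicity is part of the conclusion here, not the hypothesis. Your auxiliary observation for $(5)\Rightarrow(4)$ likewise only shows $G\tge\{1\}$ and not the hyperbolicity clause of~$(4)$, so you cannot route through~$(4)$ either. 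The paper avoids this by applying Corollary~\ref{ett} directly to the CSA group $G*\Z$ (an extended tower over $\bbF_2$ by Remark~\ref{rk_prodlib}), which requires no hyperbolicity assumption; the stabilization argument you give afterwards (Remark~\ref{rk_amalgam}, and $H_0*\grp{a_1,a_2}\simeq\bbF_4$ a simple tower over $\grp{a_1,a_2}$) then goes through unchanged. Alternatively, you could first deduce $(5)\Rightarrow(\text{hyperbolic})$ by noting that Corollary~\ref{ett} gives $G*\bbF_s$ a simple tower over a non-abelian free group, hence hyperbolic by Bestvina--Feighn combination, hence $G$ hyperbolic --- precisely the argument the paper uses in $(7)\Rightarrow(1)$, and which you should also quote there before invoking Theorem~\ref{Tarski} for $G*\bbF_r$.
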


\begin{proof}
2, 3, 4 are clearly equivalent. $4\implies 5$ and 
$6\implies 7$ are  clear. We prove $1\implies2$, 
$5\implies6$,
$7\implies1$, and 
$6\iff8$.

If $G\equiv \bbF_2$, it is hyperbolic by a result of Sela \cite{Sela_diophantine7} (see also \cite{Andre_hyperbolicity} for the case with torsion) 
 saying that being hyperbolic is invariant under elementary equivalence among finitely generated torsion-free groups;
  more simply, one can use \cite[Corollary 4.4]{Sela_diophantine1} 
   saying that limit groups containing no $\Z^2$ are hyperbolic.
The implication 
$1\implies 2$ then follows from Theorem \ref{sensfac}.

  To prove $5\implies 6$, note that  $G*\Z$ is an extended tower over $\bbF_2$ by Remark \ref{rk_prodlib}. By Corollary \ref{ett}, there exists $s$ such that $G*\bbF_s$ is a simple tower over a subgroup $H<G*\bbF_s$ isomorphic to $\bbF_2$. By Remark \ref{rk_prodlib}, $G*\bbF_s*\grp{a_1,a_2}$ is a 
simple tower over   $H*\grp{a_1,a_2}$, hence over $\grp{a_1,a_2}$.

We now prove $7\implies1$.
If $G*\bbF_r$ is a simple tower over $\bbF_2$, then $G*\bbF_r$ (hence also $G$) is hyperbolic by standard combination theorems \cite{BF_combination}, and $G\equiv G*\bbF_r \equiv \bbF_2$ by Theorem \ref{Tarski}.

The equivalence between 6 and 8 follows from  Theorem   \ref{Tarski} (``Tarski''
) 
and Theorem \ref{thcl} below.
\end{proof}

We will see in Subsections \ref{nst} and \ref{core} that it is necessary to use a free group $\bbF_r$ in items 6, 7, 8.

\section{Elementary embeddings}\label{ee}

  The previous section was devoted to elementary equivalence of abstract hyperbolic groups. In this section we consider elementary embeddings. In particular, we discuss Perin's theorem about elementarily embedded subgroups of hyperbolic groups \cite{Perin_elementary}, and related facts about cores and prototypes.

\subsection{Characterizing elementarily embedded subgroups} \label{ees}

   Recall the following main result by  Sela \cite{Sela_diophantine7}, 
which implies a solution to Tarski's problem.

\begin{enonce*}{Theorem \ref{Tarski} (``Tarski''
)}
   Let $G$ be a non-abelian torsion-free hyperbolic group. If $G$ is a  \emph{simple} 
 tower
 over a non-abelian subgroup $H<G$,   then the inclusion $H\hookrightarrow G$ is an elementary embedding.
In particular, $G$ and $H$ are elementarily equivalent. \qed
 \end{enonce*}

Recall  that, if $G$ is a  
simple tower over $H$ or, more generally, an extended tower over $H$ relative to $H$, then 
$H$ is well-defined as a subgroup of $G$ (up to conjugacy); see Definition \ref{relto}  for relative towers.

 The main result of this subsection is the following   theorem, where $3\implies 1$ relies on ``Tarski'' and $1\implies2$ is the main result of Perin's thesis \cite{Perin_elementary}.

\begin{thm}[\cite{Perin_elementary}
]
\label{thcl}
Let $G$ be a non-abelian torsion-free hyperbolic group,  and let $H<G$ be a   non-abelian 
subgroup.
The   following are equivalent:
\begin{enumerate}
\item $H$ is elementarily embedded in $G$; 
\item $G$ is an extended tower over $H$ relative to $H$;
\item there exists a finitely generated free group $F$ such that $G*F$ is a simple tower over  $H$ (viewed as a   subgroup of $G*F$ in the obvious way).
\end{enumerate}
\end{thm}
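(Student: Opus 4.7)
The plan is to prove $2 \Ra 3$, $3 \Ra 1$, and $1 \Ra 2$, completing the cycle. I expect the substantive work to be in $1 \Ra 2$, while the other two implications are relatively quick consequences of tools already developed in the paper.

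For $2 \Ra 3$, assume $G$ is an extended tower over $H$ relative to $H$. Corollary \ref{ett} provides a free group $F$ and a subgroup $H' \inc G*F$ isomorphic to $H$ such that $G*F$ is a simple tower over $H'$. Since the original tower is relative to $H$ itself, Remark \ref{rk_tour_rel} guarantees that $H'$ may be chosen conjugate to $H$ in $G*F$. After conjugating the simple tower structure, one obtains statement 3, with $H$ embedded in $G*F$ in the obvious way.

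For $3 \Ra 1$, I would use ``Tarski'' (Theorem \ref{Tarski}) twice. On the one hand, $G*F$ is a simple tower over $G$ by iterated \'etages of free product type (one per generator of $F$), so $G \preceq_e G*F$. On the other hand, $G*F$ is by hypothesis a simple tower over the non-abelian $H$, so $H \preceq_e G*F$. The transitivity property from Remark \ref{tm}, applied to $H \inc G \inc G*F$ with both $H$ and $G$ elementarily embedded in the ambient group, then gives $H \preceq_e G$.

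The implication $1 \Ra 2$ is the hard one and rests on Theorem \ref{lethmintro}. Assume $H \preceq_e G$; then $H$ is torsion-free hyperbolic, by invariance of hyperbolicity under elementary equivalence for finitely generated torsion-free groups. I would apply Theorem \ref{lethmintro} with $G' := H$, $A := A' := H$, and $f := \mathrm{id}_H$; partial elementariness of $f$ is exactly the hypothesis $H \preceq_e G$. Pick a core $C \supset H$ of $G$ relative to $H$, whose existence is guaranteed by the descending chain condition (Proposition \ref{dcc2}) applied to the order on extended towers relative to $H$. Now observe that $H$ itself is the unique core of $H$ relative to $H$: the only subgroup $C' \inc H$ containing $H$ is $H$ itself, the trivial tower $H \tge H$ works, and $H$ cannot be an \'etage over a proper subgroup $D \subsetneq H$ relative to $H$. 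Indeed, a free-product \'etage $H = D * \bbZ$ relative to $H$ would require $H \inc D$, contradicting $D \subsetneq H$, while a surface-type \'etage would require $H$ to sit in a conjugate of a proper bottom group, also impossible. Theorem \ref{lethmintro} then yields an isomorphism $F : C \to H$ with $F|_H = \mathrm{id}_H$. Since $F$ is bijective, one has $F(C) = H = F(H)$, and the injectivity of $F$ forces $C = H$. Hence $G$ is an extended tower over $H$ relative to $H$, which is statement 2. The main obstacle throughout is securing Theorem \ref{lethmintro}, whose proof is the central technical contribution of Section \ref{interp}; once that is granted, the argument above is essentially the observation that the only core of $H$ relative to itself is $H$.
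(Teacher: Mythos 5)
Your proof follows the same overall architecture as the paper's: $2\Rightarrow3$ via Corollary \ref{ett} and Remark \ref{rk_tour_rel}; $3\Rightarrow1$ via two applications of ``Tarski'' and the third item of Remark \ref{tm}; and $1\Rightarrow2$ by applying Theorem \ref{lethm} with $G'=H$, $A=A'=H$, $f=\id_H$, and observing that $H$ is its own core relative to $H$. The implications $2\Rightarrow3$ and $3\Rightarrow1$ are correct and essentially identical to the paper's argument.

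There is, however, a gap in $1\Rightarrow2$. You assert that $H\preceq_e G$ makes $H$ torsion-free hyperbolic ``by invariance of hyperbolicity under elementary equivalence for finitely generated torsion-free groups,'' and then apply Theorem \ref{lethm} with $G'=H$. But the hypothesis is only that $H<G$ is a non-abelian subgroup: $H$ is not assumed finitely generated, and finite generation is exactly what is required before Sela's invariance result (or Theorem \ref{lethm}, whose two groups must be hyperbolic) can be invoked. The paper explicitly flags this point right after Corollary \ref{thcl2}, noting that the statement implies $H$ is finitely generated and hyperbolic and that this requires an extra argument, given at the end of Subsection \ref{pfsensfac}. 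That argument works in the $A$-group framework with $A=H$: taking the relative Grushko decomposition $C=C_1*\dots*C_p$ of a core of $G$ relative to $H$ with $H\subset C_1$, one shows $C_1=H$ directly by a first-order transfer argument using preretractions (Lemma \ref{lem_choix}, Lemma \ref{lem_engulf_ee}, Theorem \ref{thm_preretraction}); this certifies that $H$ is a Grushko factor of a finitely generated group, hence finitely generated and hyperbolic, and only then does your application of Theorem \ref{lethm} become legitimate. As written, your proof assumes the finite generation and hyperbolicity of $H$ in order to identify it with the relative core, which is precisely what would establish those properties --- so this step is circular unless you restrict a priori to finitely generated $H$.
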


\begin{rem}[Warnings]\label{rem_warning}

   If $G$ is an extended tower over $H$ relative to $H$, 
the intermediate groups $G_i$ are not elementarily embedded in $G$
in general: although $G$ is an extended tower over $G_i$, this tower is not necessarily relative to $G_i$.

   Similarly, Assertion $3$ gives a chain of subgroups $G*F>G_1>\dots>G_n=H$ with $H<G$. 
     The intermediate groups $G_i$ 
   are elementarily embedded in $G*F$.
They are elementarily embedded in $G$ if they are contained in $G$, 
  but this is not the case in general.

\end{rem}

\begin{proof}[Proof of Theorem \ref{thcl}]
  We prove $2\implies 3 \implies 1$.  The implication $1\implies2$  
  (Perin's 
  theorem \cite{Perin_elementary}) will be proved in the next section (see Corollary \ref{thcl2}).

The implication  $2\implies3$ 
  follows from Corollary \ref{ett} and Remark \ref{rk_tour_rel},
saying that $G*F$ is a simple tower over a subgroup $H'<G*F$  conjugate 
to $H$.

 The implication $3\implies 1$ is a simple application of ``Tarski'' (Theorem \ref{Tarski}):
if $G*F$ is a simple tower over $H$, 
the embeddings of $H$ and $G$ into $G*F$ are elementary by   ``Tarski'', so $H\preceq_e G$ by Remark \ref{tm}. 
\end{proof}

\begin{cor}[\cite{Perin_elementary}]\label{cor_FF} 
  If $H$ is an elementarily embedded subgroup of a free group $F$, then $H$ is a free factor of $F$.
\end{cor}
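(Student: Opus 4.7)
The plan is to reduce the corollary to a direct application of Theorem \ref{thcl} combined with Corollary \ref{paset} (free groups are not extended étages of surface type).

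First, I would dispense with the abelian cases. If $F$ is trivial or cyclic, then $F$ is abelian, hence any subgroup $H\preceq_e F$ is also abelian and satisfies the same first-order theory as $F$. If $F=\{1\}$ then $H=\{1\}$ is trivially a free factor. If $F=\bbZ$, then $H=n\bbZ$ for some $n\geq 1$; as in the first bullet of Example \ref{pasel0}, the sentence $\exists x\,(\underbrace{x+\dots+x}_{n}=h)$ with constant $h=n$ is true in $\bbZ$ but false in $n\bbZ$ when $n\geq 2$, forcing $n=1$ and $H=F$.

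So assume $F$ is a non-abelian free group. Then $H$ is non-abelian as well (since $H\equiv F$ and being abelian is a first-order property). Applying the implication $1\implies 2$ of Theorem \ref{thcl}, which is Perin's theorem and is proved in the next section of the paper, we obtain a chain of subgroups
\[
F=G_0>G_1>\dots>G_k=H
\]
such that each $G_i$ is an extended étage over $G_{i+1}$ and the tower is relative to $H$. By the Nielsen–Schreier theorem every $G_i$ is a subgroup of the free group $F$, hence free. By Corollary \ref{paset}, a free group is never an extended étage of surface type, so each étage $G_i>G_{i+1}$ must be of free product type: $G_i\simeq G_{i+1}*\bbZ$ as subgroups of $F$, with $G_{i+1}$ a free factor of $G_i$.

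Iterating, $F\simeq H*\bbF_k$ with $H$ sitting inside $F$ as a free factor, which is the desired conclusion. The argument is essentially mechanical once Theorem \ref{thcl} is available; the only substantive ingredient beyond that is Corollary \ref{paset}, which forbids surface-type étages over a free group and thereby collapses the tower to a sequence of free-product extensions. There is no real obstacle here — the corollary is meant to illustrate how sharp Theorem \ref{thcl} is in the purely free case.
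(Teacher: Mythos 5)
Your proof is correct and takes essentially the same route as the paper: apply $1\implies 2$ of Theorem \ref{thcl}, then invoke Corollary \ref{paset} to rule out surface-type étages (since all intermediate groups are free by Nielsen--Schreier), forcing every étage to be of free product type. The explicit treatment of the trivial and cyclic cases is a harmless refinement that the paper leaves implicit.
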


\begin{proof}
  Indeed,   $F$ is   an extended tower over $H$   (relative to $H$).  But a free group has no structure of   extended étage of surface type by Corollary \ref{paset},  so $H$ is a free factor of $F$.
\end{proof}

Combining $1\implies 2$ in Theorem \ref{thcl} with Theorem \ref{fini1} yields:

\begin{cor} \label{fini2}
Given a torsion-free hyperbolic group $G$, there are only finitely many isomorphism classes of groups that embed elementarily in $G$. \qed
\end{cor}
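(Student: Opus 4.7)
The plan is to deduce Corollary~\ref{fini2} directly by combining Theorem~\ref{thcl} (the implication $1\Rightarrow 2$) with Proposition~\ref{fini1}, after a trivial reduction to the case where $G$ is non-abelian. There is essentially no computation to carry out.

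First I would dispose of the degenerate case. Since $G$ is a torsion-free hyperbolic group, if $G$ is abelian then $G$ is either trivial or infinite cyclic, and any subgroup $H\preceq_e G$ is itself either trivial or infinite cyclic. This contributes at most two isomorphism types and so is harmless for a finiteness statement. Henceforth assume $G$ is non-abelian. Since the sentence $\exists x\,\exists y\ (xy\neq yx)$ is first-order, any $H\preceq_e G$ is also non-abelian.

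Now the main step: given such a non-abelian $H\preceq_e G$, Theorem~\ref{thcl} (the implication $1\Rightarrow 2$, due to Perin) tells us that $G$ is an extended tower over $H$ relative to $H$. Forgetting the ``relative'' decoration and viewing things up to isomorphism, this yields $G\tge H$ in the sense of Definition~\ref{ord}. Therefore the collection of isomorphism classes of groups admitting an elementary embedding into $G$ is contained in the set
\[
\bigl\{[H']\ \big|\ H'\text{ torsion-free CSA group with }G\tge H'\bigr\},
\]
and by Proposition~\ref{fini1} this set is finite, which gives the desired conclusion.

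There is no genuine obstacle: both ingredients are already in hand in the paper (Theorem~\ref{thcl}'s direction $1\Rightarrow 2$ being promised for Section~\ref{interp} via Corollary~\ref{thcl2}, and Proposition~\ref{fini1} being proved using the Grushko blowup together with the finiteness of incompressible subsurfaces up to the mapping class group). The only small point to flag is that Theorem~\ref{thcl} is stated for non-abelian $H$, which forces the separate (but trivial) treatment of abelian $G$ above; nothing more is required.
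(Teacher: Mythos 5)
Your proof is correct and follows exactly the paper's one-line derivation: combine the implication $1\implies 2$ of Theorem~\ref{thcl} (Perin) with Proposition~\ref{fini1}. The only addition you make is the explicit handling of the abelian case, which the paper leaves implicit; this is a reasonable and harmless bit of bookkeeping given that Theorem~\ref{thcl} assumes $G$ non-abelian.
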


We will see in Example   \ref{imee} 
that, in general,
the number of elementarily embedded subgroups $H\preceq_e G$ may be infinite, even up to automorphisms of $G$.
This cannot happen, however, for one-ended subgroups $H$   (even without assuming that the embeddings are elementary), since   in this case
$G$   only contains finitely many conjugacy classes of subgroups isomorphic to $H$ \cite[Thm 5.3.C']{Gromov_hyperbolic}. 

\begin{cor}
\label{ajf}
Let $G_1$, $G_2$ be torsion-free hyperbolic groups. If $H_i$ is elementarily embedded in $G_i$, then $H_1*H_2$ is elementarily embedded in $G_1*G_2$. 
\end{cor}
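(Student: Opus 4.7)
The plan is to use the equivalence $1\iff 3$ in Theorem~\ref{thcl} and the compatibility of (simple) towers with free products provided by Remark~\ref{rk_prodlib}.

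First I would observe that the non-trivial case is when both $G_i$ are non-abelian. If some $G_i$ is trivial or cyclic, then $H_i=G_i$ (a torsion-free hyperbolic group elementarily embeds in itself only) and the statement degenerates; so assume $G_1,G_2$ non-abelian. Since being non-abelian is a first-order property and $H_i\preceq_e G_i$, each $H_i$ is also non-abelian, and hence so is $H_1*H_2$. Applying the implication $1\implies 3$ of Theorem~\ref{thcl} to each embedding $H_i\hookrightarrow G_i$, we obtain finitely generated free groups $F_1,F_2$ such that $G_i*F_i$ is a simple tower over $H_i<G_i*F_i$ for $i=1,2$.

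Next I would combine these two simple towers. By Remark~\ref{rk_prodlib}, if $G_i*F_i$ is a simple tower over $H_i$ for $i=1,2$, then the free product
\[
(G_1*F_1)*(G_2*F_2)
\]
is a simple tower over $H_1*H_2$ (the simple étages of one tower can be stacked on top of those of the other, each remaining a simple étage after free product with a fixed factor). Rewriting this free product as $(G_1*G_2)*F$ with $F:=F_1*F_2$ (again a finitely generated free group), we conclude that $(G_1*G_2)*F$ is a simple tower over $H_1*H_2$, where $H_1*H_2$ is embedded in $G_1*G_2$, hence in $(G_1*G_2)*F$, in the obvious way.

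Finally, since $G_1*G_2$ is a non-abelian torsion-free hyperbolic group (hyperbolicity of a free product of hyperbolic groups being a standard combination result, and non-abelianity being immediate from it being a non-trivial free product), we are in the setting of Theorem~\ref{thcl} and may apply the implication $3\implies 1$ to conclude that $H_1*H_2$ is elementarily embedded in $G_1*G_2$. There is no real obstacle here: the proof is essentially a formal combination of Theorem~\ref{thcl} with Remark~\ref{rk_prodlib}, the only point requiring a moment of care being to verify the non-abelianity hypotheses needed to invoke Theorem~\ref{thcl}.
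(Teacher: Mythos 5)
Your proof is correct and follows essentially the same route as the paper's (which simply cites $1\iff 3$ of Theorem~\ref{thcl} together with Remark~\ref{rk_prodlib}); you have merely fleshed out the details.
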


\begin{proof}
Follows from $1\iff 3$ in Theorem \ref{thcl} and Remark \ref
{rk_prodlib}.
\end{proof}

This may be compared to the following result, which has been extended without assuming hyperbolicity by Sela in \cite{Sela_diophantineX}. It follows    from Theorem \ref{classif} and Remark \ref{cpl}.

\begin{thm}[Sela] 
  Let $G_1,G_2,G'_1,G'_2$ be   non-abelian torsion-free hyperbolic groups. If $G_i$ is elementarily equivalent to $G'_i$,
then $G_1*G_2$ is elementarily equivalent to $G'_1*G'_2$.\qed
\end{thm}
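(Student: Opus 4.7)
The plan is to apply the classification theorem (Theorem \ref{classif}) twice, exploiting the good behavior of cores under free products (Remark \ref{cpl}). First I would verify that $G_1*G_2$ and $G'_1*G'_2$ are non-abelian torsion-free hyperbolic groups, which is required to apply Theorem \ref{classif} to them. Non-abelianity and torsion-freeness are immediate from the corresponding properties of the factors, and hyperbolicity of a free product of hyperbolic groups is a standard combination result (e.g.\ a direct case of \cite{BF_combination}, which is already cited elsewhere in the paper).

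Next, Theorem \ref{classif} applied to the hypothesis $G_i\equiv G'_i$ yields isomorphisms $\core(G_i)\simeq\core(G'_i)$ for $i=1,2$. Using Remark \ref{cpl}, which asserts multiplicativity of the core under free products, I compute
\[\core(G_1*G_2)\simeq\core(G_1)*\core(G_2)\simeq\core(G'_1)*\core(G'_2)\simeq\core(G'_1*G'_2).\]
Applying the other direction of Theorem \ref{classif} to this isomorphism then gives $G_1*G_2\equiv G'_1*G'_2$, as desired.

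The main difficulty is not in this deduction itself---once the core and the classification theorem are available, the argument is essentially a three-line diagram chase---but is rather hidden inside Theorem \ref{classif}, which depends on ``Tarski'' (Theorem \ref{Tarski}) and on the proof of Theorem \ref{sensfac} carried out in Section \ref{interp}. No new ideas are needed here; this corollary should be presented as an illustration of how well the notion of core interacts with free-product decompositions, a property \emph{not} shared by the elementary core $\ecore$ (see Remark \ref{rem_ecore}), which is why we insist on stating the result using $\core$ rather than $\ecore$.
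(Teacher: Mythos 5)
Your proof is correct and is exactly the paper's intended argument: the authors prove this statement by invoking Theorem \ref{classif} and Remark \ref{cpl} (multiplicativity of the core under free products), which is precisely the three-step computation you give. The preliminary check that $G_1*G_2$ is again non-abelian torsion-free hyperbolic, and the closing remark that this is why one must use $\core$ rather than $\ecore$, are both sound and in the spirit of the paper.
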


\subsection{Prototypes}

  The core $\core(G)$ of a non-abelian torsion-free hyperbolic group $G$  is a prototype, but $\core(G)$ is  not elementarily equivalent to $G$ when it is trivial. This is why we have   introduced the elementary core $\ecore(G)$   (Definition \ref{elcore}), equal to $\F_2$ when $\core(G)=1$;
 it is always elementarily equivalent to $G$ (Corollary \ref{equivcore}).
The elementary core embeds elementarily, provided that we take the free product of $G$ with a free group.

\begin{cor}
\label{coretour} 
If $G$ is a non-abelian torsion-free hyperbolic group, 
there exists $r\geq 0$  such that     $\ecore(G)$ has an elementary embedding   
 into $G*\bbF_r$. 
\end{cor}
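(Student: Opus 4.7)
The plan is to deduce this corollary directly from Lemma \ref{stab_core} combined with Theorem \ref{Tarski} (``Tarski''), with the only real verification being that the subgroup given by Lemma \ref{stab_core} is non-abelian, so that ``Tarski'' actually applies.

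First, I would invoke Lemma \ref{stab_core} to obtain an integer $r\geq 0$ and a subgroup $H'<G*\bbF_r$ with $H'\simeq\ecore(G)$ such that $G*\bbF_r$ is a \emph{simple} tower over $H'$. The group $G*\bbF_r$ is torsion-free hyperbolic, as it is the free product of two torsion-free hyperbolic groups. To apply Theorem \ref{Tarski} and conclude $H'\preceq_e G*\bbF_r$, I just need to check that $H'$ is non-abelian.

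The verification of non-abelianness splits into two cases according to the definition of the elementary core (Definition \ref{elcore}). If $\core(G)=\{1\}$, then $\ecore(G)=\bbF_2$, which is plainly non-abelian. Otherwise $\ecore(G)=\core(G)$ is a non-trivial prototype; being a subgroup of the torsion-free hyperbolic group $G$ (up to isomorphism, via Remark \ref{cpl}), it is itself torsion-free hyperbolic. Its only possibility of being abelian would be $\core(G)\simeq\bbZ$, but $\bbZ$ is \emph{not} a prototype: by Definition \ref{dfn_simple_etage} the decomposition $\bbZ=\{1\}*\bbZ$ exhibits $\bbZ$ as a simple étage of free product type over the trivial group, so $\bbZ\tgt\{1\}$ and $\bbZ$ is not minimal for $\tge$ (see Definition \ref{dfn_proto}). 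Therefore $\core(G)$, and hence $\ecore(G)$, is non-abelian in this case as well.

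Having checked that $H'\simeq\ecore(G)$ is a non-abelian subgroup of the torsion-free hyperbolic group $G*\bbF_r$ and that $G*\bbF_r$ is a simple tower over $H'$, Theorem \ref{Tarski} yields that the inclusion $H'\hookrightarrow G*\bbF_r$ is an elementary embedding. Composing with the fixed isomorphism $\ecore(G)\simeq H'$ provides an elementary embedding of $\ecore(G)$ into $G*\bbF_r$, as required. There is no real obstacle in this argument: the work has already been done in Lemma \ref{stab_core} (via the tower-flattening arguments of Section \ref{prto}) and in Theorem \ref{Tarski}. The only subtlety worth flagging is the small case analysis ensuring $\ecore(G)$ is non-abelian, which is what rules out applying ``Tarski'' to an abelian base and justifies upgrading $\core(G)$ to $\ecore(G)=\bbF_2$ when $\core(G)=\{1\}$.
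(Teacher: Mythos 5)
Your proof is correct and takes essentially the same approach as the paper, which dispatches the corollary in one line as a direct consequence of Lemma \ref{stab_core} and ``Tarski''. The additional verification that $\ecore(G)$ is non-abelian (so that Theorem \ref{Tarski} applies) is a sensible flesh-out of what the paper leaves implicit, and your argument that $\bbZ$ is not a prototype because $\bbZ = \{1\}*\bbZ$ is a simple étage of free product type is the right way to rule out the abelian case.
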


\begin{proof}
This is a direct consequence of   Lemma \ref{stab_core}  
and   ``Tarski''. 
\end{proof}

We will see in Subsection \ref{core} that taking a free product with a free group is necessary in general (see Theorem \ref{contrex}).
When the core is one-ended, however, we deduce from   ``Tarski'' 
that $\ecore(G)$   embeds  elementarily   into $G$.

\begin{cor}
\label{cor_core_unbout}
  Let $G$ be a non-abelian torsion-free hyperbolic group.
  If its core $\core(G)$  is one-ended   (hence equal to 
  the  elementary core $\ecore(G)$), 
it admits an elementary embedding into $G$.
\end{cor}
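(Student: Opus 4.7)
The plan is to combine Corollary \ref{coretour} (or rather its underlying Lemma \ref{stab_core}) with the Kurosh subgroup theorem to push the elementary embedding into $G*\bbF_r$ down into $G$ itself, using one-endedness to control the conjugacy class.

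More precisely, I would start by invoking Lemma \ref{stab_core}: there exists $r\geq 0$ and a subgroup $H\subset G*\bbF_r$ with $H\simeq\ecore(G)$ such that $G*\bbF_r$ is a simple tower over $H$. Under our hypothesis, $\core(G)$ is one-ended and non-trivial, so $\ecore(G)=\core(G)$ is a non-abelian one-ended torsion-free hyperbolic group; in particular $H$ is non-abelian and one-ended. Applying Theorem \ref{Tarski} (``Tarski'') to the simple tower $G*\bbF_r$ over $H$, we obtain $H\preceq_e G*\bbF_r$. A second application of ``Tarski'' to the obvious simple tower $G*\bbF_r$ over $G$ (built from étages of free product type) yields $G\preceq_e G*\bbF_r$.

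Next I would locate $H$ inside $G$ up to conjugacy. Write a Grushko decomposition $G=G_1*\dots*G_k*\bbF_m$ with each $G_i$ one-ended, so that $G*\bbF_r=G_1*\dots*G_k*\bbF_{m+r}$ is a Grushko decomposition of $G*\bbF_r$. Since $H$ is finitely generated and one-ended, the Kurosh subgroup theorem forces $H$ to be conjugate (in $G*\bbF_r$) into one of the one-ended Grushko factors $G_i$, hence into $G$. Replacing $H$ by a conjugate, we may assume $H\subset G$; elementary embedding is preserved under conjugation (first point of Remark \ref{tm}), so we still have $H\preceq_e G*\bbF_r$.

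Finally, I would apply the third point of Remark \ref{tm}: since $H\preceq_e G*\bbF_r$ and $G\preceq_e G*\bbF_r$ with $H\subset G$, we conclude $H\preceq_e G$. As $H\simeq\ecore(G)$, this gives the desired elementary embedding of $\ecore(G)$ into $G$. There is no real obstacle to overcome here beyond recognising that the one-endedness hypothesis is exactly what is needed to make Kurosh produce a conjugate inside $G$ rather than inside the auxiliary free factor $\bbF_r$; without it (as in the counterexamples alluded to in Theorem \ref{contrexintro}) the embedding genuinely requires the stabilisation by $\bbF_r$.
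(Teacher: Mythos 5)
Your proof is correct and follows essentially the same route as the paper: stabilize via Corollary \ref{ett}/Lemma \ref{stab_core} to realize $\core(G)$ as a subgroup $H$ of $G*\bbF_r$ with $G*\bbF_r$ a simple tower over $H$, use one-endedness to conjugate $H$ into $G$ (the paper states this directly; you spell it out with Kurosh), and then conclude with ``Tarski'' together with Remark \ref{tm}, which is exactly the alternative the paper itself offers in place of invoking Theorem \ref{thcl}.
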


\begin{proof}
  Using Corollary \ref{ett}, write $G*\bbF_r$ as a simple tower over a subgroup $P<G*\bbF_r$ isomorphic to $\core(G)$.
Since $P$ is one-ended, it is automatically conjugate to a subgroup of   $G$.
  By Theorem \ref{thcl} (or Theorem \ref{Tarski} and Remark \ref{tm}), $P$ is an elementarily embedded subgroup of $G$.
\end{proof}

Theorem \ref{thcl} also implies:

\begin{cor} \label{pasel}
A  hyperbolic prototype $P$ contains no proper elementarily embedded subgroup.
\end{cor}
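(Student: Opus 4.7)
My plan is to argue by contradiction, reducing everything to structural results already established in the excerpt. Suppose $P$ is a hyperbolic prototype containing a proper elementarily embedded subgroup $H \subsetneq P$. I first dispose of the abelian cases. Since all groups in the paper are torsion-free, a hyperbolic group is either trivial, infinite cyclic, or non-abelian; but $\Z$ fails to be a prototype (it is an étage of free product type over $\{1\}$, equivalently has a cyclic free factor), so by Lemma \ref{eqprot} the only abelian hyperbolic prototype is the trivial group, which has no proper subgroup at all. Hence $P$ must be non-abelian. Similarly $H$ cannot be abelian: the sentence $\exists x\,\exists y\,\bigl([x,y]\neq 1\bigr)$ holds in $P$ but not in $H$, contradicting $H \preceq_e P$.

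Now $H$ is a non-abelian proper elementarily embedded subgroup of $P$, so Theorem \ref{thcl} (specifically the implication $1\Rightarrow 2$, which is Perin's theorem) supplies an extended tower decomposition
\[
P = G_0 > G_1 > \cdots > G_k = H
\]
of $P$ over $H$ relative to $H$. Since $H \subsetneq P$, we have $k \geq 1$, so $P = G_0$ is itself an extended étage over $G_1$, either of surface type or of free product type. Both possibilities are excluded by the characterization of prototypes in Lemma \ref{eqprot}: a prototype is not an extended étage of surface type, and has no cyclic free factor (so $P \not\simeq G_1 * \Z$). This contradiction completes the argument.

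There is no real obstacle: all the difficulty has been packaged into the results invoked, above all Perin's implication $1\Rightarrow 2$ of Theorem \ref{thcl} and the structural characterization of prototypes in Lemma \ref{eqprot}. The only slight subtlety is handling the abelian edge cases, which Theorem \ref{thcl} does not cover directly but which follow from the elementary remark about commutators together with the fact that $\Z$ is not a prototype.
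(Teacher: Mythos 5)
Your proof is correct and takes essentially the same route as the paper's, which simply cites $1 \Leftrightarrow 2$ of Theorem \ref{thcl} together with Lemma \ref{eqprot}. The extra care you take with the abelian edge cases (showing $P$ must be non-abelian or trivial, and that $H$ cannot be abelian via the commutator sentence) is formally necessary since Theorem \ref{thcl} is stated for non-abelian groups, and fills in details the paper leaves implicit.
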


\begin{proof}
Follows from $1\iff 2$   in Theorem \ref{thcl} and Lemma \ref{eqprot}.
\end{proof}

The converse of Corollary \ref{pasel} is not true: in 
  Subsections \ref{nst} and \ref{core} we will see   hyperbolic  groups which have no 
proper elementarily embedded subgroup but are not   prototypes.
To get an equivalence, we must consider groups of the form $P*F$   with $F$ free.

If $G=G_1*G_2$, we say that the   free factor $G_1$ is \emph{co-free} if $G_2$ is free. By   ``Tarski'', 
non-cyclic co-free free factors are elementarily embedded.

\begin{prop} \label{ssgpeprot}
A  hyperbolic  group $P$ is a prototype if and only if it has no cyclic free factor and, for any free group $F$,  any elementarily embedded subgroup $H\preceq_e  P*F$ is a co-free free factor.
\end{prop}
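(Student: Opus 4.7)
My plan rests on three main tools: Lemma \ref{eqprot} (which says a prototype is exactly a free product of one-ended prototypes, so it has no cyclic free factor and no Grushko factor that is an extended étage of surface type), Theorem \ref{thcl} (translating elementary embeddings into extended towers relative to $H$), and Proposition \ref{Grus} (which locates any retractable centered splitting inside a single one-ended Grushko factor).

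For the direction $(\Rightarrow)$, I assume $P$ is a prototype. By Lemma \ref{eqprot}, $P$ has no cyclic free factor and each of its one-ended Grushko factors is a one-ended prototype; in particular the one-ended Grushko factors of $G := P * F$, which coincide with those of $P$, are prototypes and so are not extended étages of surface type. Given an elementarily embedded subgroup $H \preceq_e G$, Theorem \ref{thcl} supplies an extended tower $G = G_0 > G_1 > \cdots > G_n = H$ relative to $H$. I will prove by induction on $n$ the stronger statement: \emph{for any torsion-free hyperbolic group $\tilde G$ whose one-ended Grushko factors are all prototypes, and any extended tower $\tilde G > \tilde G_1 > \cdots > \tilde G_n = H$ relative to $H$, one has $\tilde G = H * K$ with $K$ free.} The base case $n=0$ is immediate. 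For the inductive step I examine the top étage $\tilde G > \tilde G_1$: in the free-product case, $\tilde G = \tilde G_1 * \bbZ$, so $\tilde G_1$ has the same one-ended Grushko factors as $\tilde G$ and the induction hypothesis yields $\tilde G_1 = H * K_1$, whence $\tilde G = H * (K_1 * \bbZ)$; in the surface case, Proposition \ref{Grus} produces a one-ended Grushko factor of $\tilde G$ that itself carries a retractable centered splitting, i.e.\ is an extended étage of surface type, contradicting the hypothesis on one-ended Grushko factors.

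For the direction $(\Leftarrow)$, I assume the right-hand side and argue by contradiction: suppose $P$ is not a prototype. By Lemma \ref{eqprot} and hypothesis (a), $P$ must be an extended étage of surface type over some base $B$. Corollary \ref{ett} then yields a free group $F$ and a subgroup $H' \leq P * F$, isomorphic to $B$ if $B$ is non-abelian and to $B * \bbF_2 \simeq \bbF_3$ if $B$ is abelian (the parachute case), such that $P * F$ is a simple tower over $H'$; ``Tarski'' gives $H' \preceq_e P * F$, and hypothesis (b) forces $P*F = H' * K$ with $K$ free. In the parachute case, $P$ is one-ended and non-trivial, so $P*F$ has a non-trivial one-ended Grushko factor, contradicting the fact that $P*F = \bbF_3 * K$ is free. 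In the non-abelian case, I apply Proposition \ref{Grus} to the retractable centered splitting of $P$ witnessing $P \gtun B$: it produces a one-ended Grushko factor $G_1$ of $P$ carrying its own retractable centered splitting with base $B_{\Lambda_\Z}$, together with decompositions $P = G_1 * G_2 * \cdots * G_k * F_P$ and $B \simeq B_{\Lambda_\Z} * G_2 * \cdots * G_k * F'_P$. Uniqueness of the Grushko decomposition applied to $P*F = H' * K$ identifies the one-ended Grushko factors on both sides, forcing $B_{\Lambda_\Z}$ to have a single one-ended Grushko factor isomorphic to $G_1$; hence $b_1^2(B_{\Lambda_\Z}) \geq b_1^2(G_1)$, contradicting Lemma \ref{dccg} which yields $b_1^2(G_1) > b_1^2(B_{\Lambda_\Z})$ since $G_1$ is an extended étage over $B_{\Lambda_\Z}$. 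The main delicate point throughout is keeping careful track of Grushko decompositions across étages of both types; Proposition \ref{Grus} is the key technical input that makes this bookkeeping tractable.
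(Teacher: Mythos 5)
Your proof is correct and follows essentially the same line as the paper's: both directions rest on Theorem \ref{thcl}, Lemma \ref{eqprot}, Corollary \ref{ett}/Proposition \ref{tstab} with ``Tarski'', the locating of the retractable splitting in a one-ended Grushko factor via Proposition \ref{Grus}/Corollary \ref{infbout}, and the Betti-number drop of Lemma \ref{dccg} for the final contradiction. The only stylistic differences are that you peel off top étages inductively where the paper invokes Remark \ref{zenbas} to push all free-product étages down at once, and in the $(\Leftarrow)$ direction you spell out the Grushko-uniqueness comparison that the paper compresses into the phrase ``$P'_1 \not\simeq P_1$ by Lemma \ref{dccg}''.
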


\begin{proof}
Suppose that $P$ has no cyclic free factor and is not a prototype. 
By Lemma \ref{eqprot}, $P$ has a Grushko factor $P_1$ which is an extended \'etage over 
 some $P'_1$, so by 
 Corollary \ref{ett} some $P_1*F$ is a simple \'etage over  a group   isomorphic to $P'_1*\bbF_2$. Writing $P=P_1*Q_1$, we see that 
  $P*F$ is a simple \'etage over a subgroup  $H$ isomorphic to $P'_1*Q_1*\bbF_2$. This group $H$ is elementarily embedded in   $P*F$  by   ``Tarski'',
   but is not a co-free free factor because   $P'_1\not\simeq P_1$  by Lemma \ref{dccg}.  
  
  Conversely,  assume that $P$ is a prototype (so has no cyclic free factor), and let $H$ be elementarily embedded in $P*F$. If $H$ is not a co-free free factor, then $P*F$ is an \'etage   of surface type by Theorem \ref{thcl}   and Remark \ref{zenbas},   contradicting Corollary \ref{infbout}.
\end{proof}

\subsection{Lattice-like properties}\label{llp}

The following result shows that the set of hyperbolic groups that are elementarily equivalent to a
given torsion-free hyperbolic group has lattice-like properties.

\begin{prop}
\label{prop_lattice} 
  Let $G_1,G_2$ be   non-abelian torsion-free hyperbolic groups,   with $G_1\equiv G_2$.
  \begin{itemize}
  \item There exists   a hyperbolic group  $G$ in which both $G_1$ and $G_2$ elementarily embed.
  \item There exists $G'$ (namely $\ecore(G_1)\simeq \ecore(G_2)$) and $r$ such that $G'$ elementarily embeds in $G_1*\bbF_r$ and $G_2*\bbF_r$.
  \end{itemize}
\end{prop}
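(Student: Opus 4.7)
Since $G_1\equiv G_2$, Theorem~\ref{classif} gives an isomorphism $\ecore(G_1)\simeq\ecore(G_2)$, and I set $G':=\ecore(G_1)$. My overall strategy is to use a simple tower structure with base $G'$ over each $G_i$ (after stabilization by a free group) and combine them by amalgamation. Recall that in the hyperbolic context, $\ecore(G_i)$ is either $\bbF_2$ or a free product of one-ended prototypes, hence in all cases non-abelian, so that ``Tarski'' (Theorem~\ref{Tarski}) is applicable with base $G'$.

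For the second item, I would invoke Lemma~\ref{stab_core} to obtain $r_1,r_2\geq 0$ and subgroups $H_i<G_i*\bbF_{r_i}$ isomorphic to $G'$ such that each $G_i*\bbF_{r_i}$ is a simple tower over $H_i$. Setting $r:=\max(r_1,r_2)$, the group $G_i*\bbF_r$ is still a simple tower over $H_i$ (stacking additional \'etages of free product type on top), and ``Tarski'' gives the elementary embedding $G'\simeq H_i\preceq_e G_i*\bbF_r$ for $i=1,2$.

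For the first item, I would glue the two enlarged groups along the common core. Fixing an isomorphism $\varphi:H_1\to H_2$ and using the notation of the previous step, form the amalgam
\[
K=(G_1*\bbF_r)*_\varphi(G_2*\bbF_r).
\]
By Remark~\ref{rk_amalgam}, applied to the embedding $H_1\simeq H_2\hookrightarrow G_2*\bbF_r$, the group $K$ is a simple tower over $G_2*\bbF_r$; since $G_2*\bbF_r$ is itself a simple tower over $G_2$ via $r$ \'etages of free product type, concatenation shows that $K$ is a simple tower over $G_2$. Writing the amalgam in the opposite order yields, by symmetry, that $K$ is also a simple tower over $G_1$. Provided $K$ is torsion-free hyperbolic, ``Tarski'' then gives $G_1\preceq_e K$ and $G_2\preceq_e K$, and I take $G:=K$.

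The main obstacle is to check that $K$ is torsion-free hyperbolic, which is needed before ``Tarski'' can be applied. This reduces to the general principle that any simple tower over a torsion-free hyperbolic group is torsion-free hyperbolic: each \'etage of free product type preserves hyperbolicity, while each \'etage of surface type is an amalgam along infinite cyclic subgroups with a non-exceptional surface vertex group, for which hyperbolicity is preserved by the Bestvina--Feighn combination theorem (the $1$-acylindricity near surface-type vertices ensures the required acylindrical condition). Iterating this along the tower built above yields hyperbolicity of $K$, completing the argument.
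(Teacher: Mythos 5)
Your proof is correct and follows essentially the same route as the paper: use Lemma~\ref{stab_core} to make each $G_i*\bbF_r$ a simple tower over a copy $H_i$ of the elementary core, amalgamate over $H_1\simeq H_2$, apply Remark~\ref{rk_amalgam} to see the amalgam as a simple tower over each factor, and conclude with ``Tarski''. The only difference is that you make explicit the hyperbolicity of the amalgam via the combination theorem, a point the paper leaves implicit (it invokes combination theorems for simple towers elsewhere, e.g.\ in the proof of Corollary~\ref{cor_tarski}).
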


\begin{rem}
  On the other hand, there exist $G_1\equiv G_2$ such that there is no $G'$ that elementarily embeds in both $G_1$ and $G_2$.
  Indeed, one may take $G_1=\bbF_2$ and $G_2=\pi_1(N_4)$, the fundamental group of the non-orientable surface of genus 4 (see Lemma \ref{lem_N4}).
\end{rem}

\begin{proof}
 By Theorem    \ref{sensfac}   we have $\core(G_1)\simeq \core(G_2)$, hence $\ecore(G_1)\simeq \ecore(G_2)$.

  The second assertion is a direct consequence of  
  Corollary \ref{coretour}.
 We now prove the first one. 
Replacing $G_i$ by $G_i*\bbF_{r_i}$ (in which $G_i$ embeds elementarily),
  Lemma \ref{stab_core} lets us assume without loss of generality 
  that $G_i$ is a simple tower over a subgroup $H_i$ isomorphic to $\ecore(G_i)$.
Choose an isomorphism between $H_1$ and $H_2$.
  By Remark \ref{rk_amalgam}, the amalgam $G=G_1*_{H_1=H_2}G_2$ is a simple tower over $G_2$, and symmetrically $G$
  is a simple tower over $G_1$. By   ``Tarski'', 
  $G_1$ and $G_2$ are elementarily embedded in $G$.
\end{proof}

\section{Partial elementary maps extend to the relative core}
\label{interp}

The main result of this section is Theorem \ref{lethm}, from which we shall deduce:
\begin{itemize}
\item
 Sela's characterization of elementary equivalence among hyperbolic groups;
\item Perin's theorem characterizing elementarily embedded subgroups of hyperbolic groups;
\item the homogeneity of free groups, proved by  Perin and the third named author, and  independently by Ould-Houcine  
\cite{PeSk_homogeneity,Houcine_homogeneity},   and more generally of free products of prototypes and free groups;
\item a characterization of tuples of elements of torsion-free hyperbolic groups with the same first-order properties in terms of towers, generalizing the characterization of tuples with the same first-order properties as  
bases of non-abelian free groups \cite{PeSk_homogeneity}. 
\end{itemize}
 The first two items are if and only if statements, 
  with one direction  relying on ``Tarski''.
     Dente-Byron and Perin  \cite{DePe_homogeneity} recently gave a complete characterization of homogeneous torsion-free hyperbolic groups,
   and  S.\ André proved that virtually free groups are almost homogeneous \cite{Andre_homogeneity}.
  
  The proof of Theorem \ref{lethm}  uses preretractions, introduced in \cite{Perin_elementary}. In the last subsection we  give a simpler proof
  of Propositions 5.11 and 5.12 of \cite{Perin_elementary}, stating   that preretractions yield retractions.

\begin{dfn}[Partial elementary map] \label{pem}
  Let $G,G'$ be two groups.

A \emph{partial elementary map} from $G$ to $G'$ consists of
  a subgroup   $A \subset G$ and a  map
  $f:A\rightarrow G'$ 
such that, for any
formula $\phi(x_1,\dots,x_n)$ and any $a_1,\dots,a_n$ in $A$, one has
$$G\models\phi(a_1,\dots,a_n)\text{ if and only if }G'\models\phi(f(a_1),\dots,f(a_n)).$$

 Note that $f$ is   then an isomorphism between $A$ and $f(A)$, and that $f\m:f(A)\ra G$ defines a partial elementary map from $G'$ to $G$.

When no confusion is possible, we denote   a partial elementary map  simply by $f:A\ra G'$, with $G$ implicit.
\end{dfn}

\begin{rem}
    The existence of a partial elementary map $f:A\ra G'$ implies $G\equiv G'$ (even
    if $A=\{1\}$).
  \end{rem}

  \begin{rem}\label{rem_inverse}
  One often defines  a partial elementary map with $A$    any subset of $G$ (possibly not  a subgroup). But such a map extends uniquely to a partial elementary map from $\grp{A}$ to $ G'$, so there is no loss of generality in requiring $A$ to be  a subgroup.
\end{rem}

\begin{rem} \label{extrem}
Partial elementary maps interpolate between elementary equivalence and elementary embeddings in the following sense:
\begin{itemize}
\item  When $A=\{1\}$, the 
  map $f:\{1\}\ra G'$   is a partial elementary map from $G$ to $G'$ if and only if $G\equiv G'$.

\item At the other extreme, if $A=G$, then $f:G\ra G'$
is a partial elementary map from $G$ to $G'$ 
if and only if it is 
an elementary embedding. 
\end{itemize}

\begin{rem}\label{typ}
 A partial elementary map may also be defined using the notion of a type.  
Recall that the \emph{type} of an  $n$-uple   $\bar{a}=(a_1,\dots,a_n)$ of elements of $G$, denoted $tp^{G}(\bar{a})$, is the set of all first-order formulas $\phi(x_1,\dots,x_n)$ in $n$ variables (without constants)
such that $G\models \phi(a_1,\dots,a_n)$.
Thus   $f:\grp{a_1,\dots,a_n}\rightarrow G'$ is a partial elementary map if and only if  $tp^{G}(\bar{a})=tp^{G'}(f(\bar{a}))$.

\end{rem}
\end{rem}

  \begin{dfn}[Relative prototype, relative core] \label{relprot}
    Let $G$ be a group, and let $A\subset G$ be a subgroup.
   We say that $G$ is a \emph{prototype relative to $A$} if it is not an extended étage relative to $A$ (see Definition \ref{relto}).
    
   A subgroup $C<G$ is a \emph{core of $G$ relative to $A$} if $C$ contains $A$,
   is a prototype relative to $A$, and $G$ is an extended tower over $C$ relative to $A$. 
   Relative cores exist by Lemma \ref{dccg}.  \end{dfn}

  We can now state the main result of this section.
 
\begin{thm}\label{lethm}
  Let $G,G'$ be non-abelian torsion-free hyperbolic groups,  and let $f:A\ra A'$ be an isomorphism between two subgroups of $G$ and $G'$ respectively.
  Let $C<G$ and $C'<G'$ be cores of $G$ and $G'$ relative to $A$ and $A'$ respectively. 
  
   If $f$ defines a partial elementary map from $G$ to $G'$, it  extends to an isomorphism $C\ra C'$  (the converse   follows from ``Tarski'' (Theorem \ref{Tarski}), as explained in Subsection \ref{unsens} below,  and implies uniqueness of relative cores).
\end{thm}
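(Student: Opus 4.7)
The plan is to follow the strategy outlined in Subsection~\ref{pfsensfac} of the overview. Using $f$, view $G$ and $G'$ as \emph{$A$-groups}: the natural inclusion $A\hookrightarrow G$ and the composition $A\xrightarrow{f}A'\hookrightarrow G'$ equip both groups with a distinguished copy of $A$, and morphisms of $A$-groups are required to commute with these embeddings. The goal is then to produce an $A$-isomorphism $C\to C'$.

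The backbone of the proof will be a notion of \emph{engulfing} and a stronger variant \emph{strong engulfing} between $A$-groups, designed to satisfy two properties:
\begin{enumerate}
\item (First-order expressibility) ``$H$ strongly engulfs into $K$'' is equivalent to the existence of elements in $K$ satisfying a first-order formula with parameters in $A$, so that strong engulfing of one $A$-group into another transfers along partial elementary maps.
\item (Co-Hopf property) If two $A$-groups engulf into each other (with at least one of the two being a strong engulfing), then they are $A$-isomorphic.
\end{enumerate}
Given these, the proof proceeds as follows. First, show that the inclusion $C\hookrightarrow G$ is a strong engulfing of $A$-groups; this is the heart of the argument and is discussed below. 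By (1), since $f$ is a partial elementary map, $C$ also strongly engulfs, as an $A$-group, into $G'$ (the $A$-structure on $G'$ being given by $f$). Since $G'$ is an extended tower over $C'$ relative to $A'$, there is a retraction $\rho':G'\to C'$ compatible with $A'$ (Proposition~\ref{prop_retractions} and Remark~\ref{prop_retractions_rel}); composing the strong engulfing $C\hookrightarrow G'$ with $\rho'$ yields an engulfing $C\hookrightarrow C'$ as $A$-groups. Applying the symmetric argument to $f^{-1}$ gives an engulfing $C'\hookrightarrow C$. Property~(2) then yields an $A$-isomorphism $C\to C'$ extending $f$.

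The main obstacle is establishing that $C\hookrightarrow G$ is a strong engulfing. The idea, following Perin~\cite{Perin_elementary}, is to argue by contradiction: the failure of strong engulfing should precisely translate into the existence of a non-injective \gp{} $r:C\to G$ that is the identity on $A$, and more generally acts as a conjugation on each non-cyclic/edge vertex group of a JSJ-type decomposition of $C$ relative to $A$. Using Sela's shortening argument together with the modified JSJ~$\tGcan$ (needed to handle the exceptional surfaces $N_{2,1}$ and $N_3$, see Subsection~\ref{sec_modified}), one may assume $r$ is ``short'' in a suitable sense, and then Lemma~\ref{sixunm}/Corollary~\ref{sixunm2} applied to the tree of cylinders of $\tGcan$ forces the non-injectivity of $r$ to come from a surface-type vertex group $Q=\pi_1(\Sigma)$ of $C$'s JSJ, on which $r$ restricts to a \ndbpm{}. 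Lemma~\ref{desc} and Proposition~\ref{equivsimple}/\ref{equivpassimple} then upgrade this \ndbpm{} into a retractable centered splitting of $C$ realizing $C$ as an extended étage of surface type over a proper subgroup, and one checks (using Remark~\ref{ttrel} and the fact that $A$ sits in a Grushko factor of the base by Lemma~\ref{lem_malnormal_tour}) that this étage is relative to $A$. This contradicts the assumption that $C$ is a prototype relative to $A$.

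The hard part is therefore twofold: (a) carefully formulating strong engulfing so that it is simultaneously first-order over $A$ and strong enough to yield, via the preretraction machinery, a genuine relative-étage structure on $C$; and (b) pushing Perin's Propositions~5.11 and~5.12 through in the relative setting, controlling what happens at exceptional surface-type vertices. Point~(b) is addressed in Subsection~\ref{prtor} using the modified JSJ~$\tGcan$, and the full translation between preretractions and étages in the relative setting relies on the descent lemmas of Subsection~\ref{rff} (Lemmas~\ref{cinqdouze} and~\ref{desc}) to bring the \ndbpm{} down to a suitable subgroup of $C$ where Proposition~\ref{equivpassimple} applies. Once all of this is in place, the proof is concluded by combining the two engulfings via the co-Hopf property, as described above.
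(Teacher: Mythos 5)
Your high-level strategy matches the paper's: view $G,G'$ as $A$-groups, define a first-order-expressible notion of strong engulfing that satisfies a co-Hopf-type property, show $C$ strongly engulfs in $G$ via the preretraction machinery, transfer to $G'$, descend to $C'$, and symmetrize. However there are two genuine gaps that would prevent this outline from becoming a proof.

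First, you never pin down what (strong) engulfing actually is, and without the paper's specific formulation your items (1) and (2) are not simultaneously achievable in any obvious way. The paper's Definition~\ref{dfn_engulf}/\ref{dfn_strong_engulf} is crucially phrased at the level of the \emph{Grushko factors} $C=C_1*\dots*C_p$: an engulfing of $C$ into $H$ is a family of injective morphisms $\mu_i:C_i\to H$ together with a discreteness condition (the element $\mu_{i_1}(d_{C_{i_1}})$ is never conjugate to an element of $\mu_{i_0}(D_{C_{i_1}\to C_{i_0}})$). This factor-by-factor structure is what makes the co-Hopf argument (Lemma~\ref{lem_bi_engulf}) work: one-ended torsion-free hyperbolic groups are co-Hopfian, but $C$ itself need not be, so one must track the one-ended pieces separately and use the discreteness condition to prevent two distinct $C_i$'s from collapsing onto the same $C'_j$. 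It is also what makes strong engulfing first-order expressible over $A$ via the chosen finite sets $F_{C_i,H}$ (Lemma~\ref{lem_engulf_ee}). Without this structure the ``co-Hopf property'' in your item (2) is simply an unproved assertion, and your proposal has no route to it.

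Second, your step~3 (``composing the strong engulfing $C\hookrightarrow G'$ with a retraction $\rho':G'\to C'$ yields an engulfing $C\hookrightarrow C'$'') is incorrect as stated. The composed maps $\rho'\circ\mu_i$ may well be non-injective, and nothing a priori rules this out. The paper's actual argument (via Lemma~\ref{relat2}) composes the $\mu_i$ with the step-by-step retractions of the tower, showing at each stage that the resulting morphism is \emph{related} to $\mu_i$ with respect to $\tGcan_{C_i}$; it then invokes the strong engulfing hypothesis (no morphism related to $\mu_i$ kills an element of $F_{C_i,G'}$) to rule out the non-injective alternative. So injectivity of the descended maps is exactly what strong engulfing buys you, and it is not free. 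Absent this, the transfer from ``engulfing into $G'$'' to ``engulfing into $C'$'' fails. (A minor additional point: Lemma~\ref{lem_bi_engulf} only needs weak engulfing in both directions, not the ``at least one strong'' version you state.)
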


Note that the subgroups $A$ and $A'$ are  not assumed to be finitely generated, and are allowed to be trivial.
 See Proposition 6.2 of \cite{PeSk_homogeneity} for 
 the 
 one-ended case,
 and Theorem 3.10 of \cite{DePe_homogeneity} for the case where $G=G'$ and $A,A'$ are finitely generated.

Sela's characterization of elementary equivalence (Theorem \ref{sensfac})
  and Perin's description of elementary embedded subgroups (Theorem \ref{thcl}) 
both appear as extreme cases of this theorem (see Remark \ref{extrem});  in both theorems,    the ``if'' direction relies on ``Tarski''.

\begin{cor}[\cite{Sela_diophantine7}]\label{cor_eqv_elem}
 Let $G,H$ be non-abelian 
 torsion-free hyperbolic groups.   Then $G,H$ are 
   elementarily equivalent if and only if  their cores are isomorphic.
\end{cor}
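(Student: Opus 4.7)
The plan is to derive this corollary almost immediately from Theorem \ref{lethm} in one direction and from Corollary \ref{equivcore} (which relies on ``Tarski'') in the other direction.

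For the forward direction, assume $G\equiv H$. I would apply Theorem \ref{lethm} to the trivial subgroups $A=\{1\}\subset G$ and $A'=\{1\}\subset H$ with $f$ the trivial map. By Remark \ref{extrem}, the condition ``$G\equiv H$'' is exactly the statement that this trivial $f$ is a partial elementary map from $G$ to $H$. The next point to check is that a core of $G$ relative to $\{1\}$, in the sense of Definition \ref{relprot}, coincides with $\core(G)$ as defined in Definition \ref{dcore}: being an extended étage (resp.\ an extended tower) relative to the trivial subgroup imposes no constraint beyond being an extended étage (resp.\ tower), since the trivial subgroup is conjugate into every bottom group. Thus a core relative to $\{1\}$ is a minimal element for $\tge$ below $G$, which by Theorem \ref{thm_confluence} is $\core(G)$ (up to isomorphism). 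Theorem \ref{lethm} then yields an isomorphism extending $f$, which is precisely an isomorphism $\core(G)\simeq\core(H)$.

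For the converse direction, assume $\core(G)\simeq\core(H)$. From Definition \ref{elcore} it follows that $\ecore(G)\simeq\ecore(H)$. By Corollary \ref{equivcore} (a consequence of ``Tarski''), $G\equiv\ecore(G)$ and $H\equiv\ecore(H)$. Combining these, $G\equiv\ecore(G)\simeq\ecore(H)\equiv H$, as desired.

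The main obstacle is of course hidden inside Theorem \ref{lethm}, which is the heart of the section; granted that theorem and Corollary \ref{equivcore}, the argument is purely bookkeeping, the only subtle point being the identification of relative cores over $\{1\}$ with the absolute cores defined in Section \ref{ordre}.
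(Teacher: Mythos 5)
Your proposal matches the paper's proof: the forward direction is exactly the paper's argument (apply Theorem \ref{lethm} with $A=\{1\}$, noting that a core relative to the trivial subgroup is just $\core(G)$, which you justify with the observation that relativization to $\{1\}$ imposes no constraint plus Theorem \ref{thm_confluence}), and the converse direction via Corollary \ref{equivcore} is precisely how the paper establishes the ``if'' direction of Theorem \ref{classif}. The paper's stated proof of Corollary \ref{cor_eqv_elem} is more terse (it only spells out the ``only if'' direction, leaving the converse to Corollary \ref{equivcore} proved earlier in Section \ref{elemequiv}), but your bookkeeping is the same in substance and fills in exactly the details the paper leaves implicit.
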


\begin{proof} 
 Apply the theorem with $A=\{1\}$. The core of $G$ relative to $A$ is $\core(G)$.
\end{proof}

\begin{cor}[\cite{Perin_elementary} + ``Tarski'']
   \label{thcl2}
Let $G$ be a non-abelian torsion-free hyperbolic group,  and let $H<G$ be a    
non-abelian subgroup.
Then $H$ is elementarily embedded in $G$   if and only if $G$ is an extended tower over $H$ relative to $H$.
\end{cor}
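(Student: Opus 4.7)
The plan is to obtain Corollary \ref{thcl2} as a direct consequence of Theorem \ref{lethm}, treating the two directions separately. The backward direction (2 $\Rightarrow$ 1) is essentially already in the paper: if $G$ is an extended tower over $H$ relative to $H$, then Corollary \ref{ett} together with Remark \ref{rk_tour_rel} produces a free group $F$ such that $G*F$ is a simple tower over a subgroup of $G*F$ conjugate to $H$; then ``Tarski'' (Theorem \ref{Tarski}) implies that both $H$ and $G$ embed elementarily into $G*F$, and Remark \ref{tm} gives $H \preceq_e G$. This is identical to the argument $2 \Rightarrow 3 \Rightarrow 1$ already carried out in the proof of Theorem \ref{thcl}, so I would simply invoke it.

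For the forward direction ($H \preceq_e G \Rightarrow G$ is an extended tower over $H$ relative to $H$), the key observation is that elementary embedding is exactly the special case of a partial elementary map in which $A$ is the full ambient subgroup on one side. More precisely, I would set $G' := H$, $A := H \subset G$, $A' := H \subset G'$, and $f := \mathrm{id}_H$. The hypothesis $H \preceq_e G$ says exactly that $f$ is a partial elementary map from $G$ to $G' = H$. Before applying Theorem \ref{lethm} I must check that $H$ is itself a non-abelian torsion-free hyperbolic group; it is non-abelian and torsion-free by assumption, and it is finitely generated and hyperbolic because $H \equiv G$ and hyperbolicity is preserved under elementary equivalence among finitely generated torsion-free groups (the result of Sela invoked in the proof of Corollary \ref{cor_tarski}).

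Now I identify the relative cores on both sides. A core of $G' = H$ relative to $A' = H$ is just $H$ itself: the only candidate is $C' \supset H$ inside $H$, so $C' = H$, and $H$ is trivially a prototype relative to itself (the tower is trivial, so there is no étage to rule out). On the $G$ side, let $C$ be any core of $G$ relative to $H$, so $H \subset C \subset G$ and $G$ is an extended tower over $C$ relative to $H$. Theorem \ref{lethm} then produces an isomorphism $F : C \to H$ extending $\mathrm{id}_H$. Since $F|_H = \mathrm{id}_H$ and $F$ is injective on $C$, we must have $C = H$: any $c \in C$ satisfies $F(c) \in H$ and $F(F(c)) = F(c)$, so $c = F(c) \in H$ by injectivity. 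Therefore $H$ is a relative core of $G$ over $H$, which is precisely the statement that $G$ is an extended tower over $H$ relative to $H$.

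I do not expect any serious obstacle here: the whole content of the corollary is packaged inside Theorem \ref{lethm} and ``Tarski'', and the only subtlety is the bookkeeping that identifies the two relative cores $C$ and $C'$ with $H$ via the extended identity map. As a bonus, the same argument, applied to two different relative cores of $G$ over $H$, gives uniqueness of the relative core up to an isomorphism fixing $H$ pointwise.
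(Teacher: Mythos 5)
Your backward direction and the overall plan for the forward direction are correct, and the setup you chose is equivalent to the paper's: you take $G'=H$, $A=A'=H$, $f=\id_H$, while the paper applies Theorem~\ref{lethm} with the roles of $G$ and $G'$ swapped (to the inclusion $H\hookrightarrow G$, a partial elementary map from $H$ to $G$); since partial elementary maps are symmetric, this makes no difference. The identification of the two relative cores and the derivation $C=H$ are also fine.

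There is, however, a genuine gap in your justification that $H$ is a non-abelian torsion-free hyperbolic group, which you need in order to invoke Theorem~\ref{lethm}. You write that $H$ ``is finitely generated and hyperbolic because $H\equiv G$ and hyperbolicity is preserved under elementary equivalence among finitely generated torsion-free groups.'' This is circular: the Sela/Andr\'e result you cite applies only to groups already known to be finitely generated, and the statement of the corollary puts no finite generation hypothesis on $H$ --- it only assumes $H<G$ is a non-abelian subgroup. In fact the paper explicitly flags this point (``This statement implies in particular that any elementarily embedded subgroup of $G$ is finitely generated and hyperbolic. We give a proof assuming that we know this in advance. The general case requires an extra argument...'') and the missing piece is supplied at the end of Subsection~\ref{pfsensfac}: one works in the $A$-group framework with $A=H$, takes the relative Grushko decomposition $C=C_1*\cdots*C_p$ of a relative core of $G$ with $A\subset C_1$, and proves $C_1=A$ via a first-order preretraction argument; this identifies $H$ with a Grushko factor of a quasiconvex subgroup of $G$, hence $H$ is hyperbolic, and only then can one safely apply Theorem~\ref{lethm}. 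Your proof does not reproduce this step, so as written it only establishes the corollary under the unstated extra hypothesis that $H$ is finitely generated (or hyperbolic).
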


This statement implies in particular that  any elementarily embedded subgroup of $G$ is finitely generated and hyperbolic. We give a proof assuming that we know this in advance. The general case requires an extra argument that will be given at the end of Subsection  \ref{pfsensfac}.

\begin{proof}[Proof of Corollary \ref{thcl2} (assuming  $H$ to be hyperbolic)]
  For the ``only if'' direction,   we take $A=H$ and we apply Theorem \ref{lethm} to the inclusion $A\hookrightarrow G$, which is a partial elementary map from $H$  (assumed to be hyperbolic) to $G$. 
We deduce that $H$ is the core of $G$ relative to $H$. In particular, by definition of the relative core, $G$ is an extended tower over $H$ relative to $H$, as required.  
The ``if'' direction was proved in Theorem \ref{thcl}  using ``Tarski'' (without assuming that $H$ is hyperbolic).
\end{proof}

Recall that a countable group $G$ is \emph{homogeneous} if, for every tuples $\ul u,\ul u'$
of elements of $G$   having the same  cardinality and the same type in $G$, there is an automorphism of $G$ that sends $\ul u$ to $\ul u'$.

  It immediately follows from Theorem \ref{lethm} that prototypes are homogeneous  (see Remark 6.9 of \cite{PeSk_homogeneity}).
More generally, we have:

\begin{cor}\label{cor_homogeneity}
    If a torsion-free hyperbolic group
  $G=P_1*\dots*P_p*\bbF_r$ is a free product of one-ended prototypes  $P_i$ with a free group,
  then $G$ is homogeneous.   In particular \cite{PeSk_homogeneity,Houcine_homogeneity},
  the free group $\bbF_r$ is homogeneous  for all  $r\geq 1$.
\end{cor}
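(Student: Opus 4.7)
The plan is to apply Theorem \ref{lethm} to reduce the problem to an isomorphism between relative cores, and then extend this isomorphism to an automorphism of $G$ using Grushko's theorem and the fact that $G$ admits no extended étage of surface type. We may assume $G$ is non-abelian, since the remaining cases ($G=\{1\}$ and $G=\bbZ$) are trivially homogeneous.

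Given tuples $\ul u=(u_1,\dots,u_n)$ and $\ul u'=(u'_1,\dots,u'_n)$ in $G^n$ with the same type, the assignment $u_i\mapsto u'_i$ extends to a partial elementary map $f:A\to A'$ between the subgroups $A=\grp{\ul u}$ and $A'=\grp{\ul u'}$. By Theorem \ref{lethm}, $f$ extends to an isomorphism $F:C\to C'$ between cores of $G$ relative to $A$ and $A'$ respectively. The goal is to extend $F$ further to an automorphism of $G$.

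The key observation is that $G$ is not an extended étage of surface type: by Corollary \ref{infbout}, such a structure would imply the same for one of its Grushko factors, but the one-ended Grushko factors of $G$ are the prototypes $P_i$ (since they are one-ended and non-cyclic), and by Lemma \ref{eqprot} no prototype admits a retractable splitting. The same reasoning applies relatively, so every étage in the tower $G>\dots>C$ (and $G>\dots>C'$) must be of free product type. Consequently there exist internal free subgroups $F_s=\grp{t_1,\dots,t_s}$ and $F'_{s'}=\grp{t'_1,\dots,t'_{s'}}$ (generated by the stable letters of the free product type étages) such that $G=C*F_s$ and $G=C'*F'_{s'}$ as internal free product decompositions. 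Grushko's theorem, applied to the decomposition $G=P_1*\cdots *P_p*\bbF_r$, forces $C\cong P_1*\cdots*P_p*\bbF_{r-s}$ and similarly $C'\cong P_1*\cdots *P_p*\bbF_{r-s'}$; since $F:C\to C'$ is an isomorphism we conclude $s=s'$.

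Choosing the isomorphism $F_s\to F'_s$ sending $t_i$ to $t'_i$ and combining it with $F:C\to C'$ via the universal property of the free product $G=C*F_s$ yields a homomorphism $\hat F:G\to G$. Its image contains $C'$ and $F'_s$, hence all of $G=C'*F'_s$, and injectivity follows by constructing an inverse in the same manner from $F^{-1}$. Thus $\hat F$ is an automorphism of $G$ extending $f$, and in particular $\hat F(\ul u)=\ul u'$, proving homogeneity. The main obstacle is Theorem \ref{lethm} itself; once it is available, the only remaining work is the Grushko bookkeeping above, which is straightforward.
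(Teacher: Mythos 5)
Your proof is correct and follows essentially the same approach as the paper: reduce via Theorem \ref{lethm} to an isomorphism between relative cores, observe that $G$ admits no extended étage of surface type because its Grushko factors are prototypes (Corollary \ref{infbout}), conclude that $C$ and $C'$ are co-free free factors, and extend to an automorphism via Grushko. Your exposition just spells out the Grushko bookkeeping (the ranks $s=s'$) slightly more explicitly than the paper does.
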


\begin{proof}[Proof of Corollary \ref{cor_homogeneity}]
  We may assume that $G$ is not cyclic, as the result is clear in this case.
  Consider two $n$-uples $\bar{u}=(u_1,\dots,u_n)$ and $\bar{v}=(v_1,\dots,v_n)$ in $G$ having the same type.
  Let  $A=\grp {u_1,\dots,u_n}$ and $A'=\grp{v_1,\dots,v_n}$ be the subgroups  that they generate. 
  The fact that $\bar{u}$ and $\bar{v}$ have the same type means that the map $u_i\mapsto v_i$
  extends uniquely to an isomorphism $A\ra A'$ which is a partial elementary map from $G$ to itself (see Remark \ref{typ}).
  
  By Theorem \ref{lethm}, this map extends
  to an isomorphism $\phi:C\ra C'$, where $C$ and $C'$ are the cores of $G$ relative to $A$ and $A'$ respectively.  Since the Grushko factors $P_i$ of $G$ are prototypes, Corollary \ref{infbout} implies that $G$ is not an étage of surface type. 
It    is an extended tower over both $C$ and $C'$, so
  $C$ and $C'$ are co-free free factors of $G$,
  say $G=C*F$ and $G=C'*F'$ for some free subgroups $F,F'$ of $G$. Since $C$ and $C'$ are isomorphic, 
  $F$ and $F'$ have the same rank and
  $\phi$ extends to an automorphism of $G$.
  By construction $\phi(\bar{u})=\bar{v}$, so $G$ is homogeneous.
\end{proof}

 The following  immediate corollary   of Theorem \ref{lethm} generalizes Proposition 7.1 in \cite{PeSk_homogeneity}.    

\begin{cor}\label{type_char}
Let $\bar{a}$ be a tuple in a torsion-free hyperbolic group $G$, and let $C$ be the  
core of $G$   
  relative to $\langle\bar{a}\rangle$. Let $G'$ be a finitely generated group,   and $\bar{b}$ a tuple in $G'$. 
Then $tp^G(\bar{a})=tp^{G'}(\bar{b})$ if and only if     there is an isomorphism from $C$ to the core of $G'$ relative to $\bar{b}$ which sends $\bar{a}$ to $\bar{b}$.  
\qed
\end{cor}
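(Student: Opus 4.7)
The plan is to derive this corollary from Theorem \ref{lethm}, with the reverse direction also invoking ``Tarski'' (Theorem \ref{Tarski}). Recall from Remark \ref{typ} that $tp^G(\bar a)=tp^{G'}(\bar b)$ amounts to the assignment $\bar a\mapsto\bar b$ extending to an isomorphism $f:\langle\bar a\rangle\to\langle\bar b\rangle$ that defines a partial elementary map from $G$ to $G'$.

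The forward direction is almost immediate. Given the partial elementary map $f$, we have $G\equiv G'$ (restrict to sentences without free variables). Since $G$ is a non-abelian torsion-free hyperbolic group, so is $G'$: non-abelianness is a single first-order sentence, torsion-freeness is expressible by the schema $\forall x\ (x^n=1\Rightarrow x=1)$ for $n\geq 1$, and hyperbolicity is preserved by elementary equivalence among finitely generated groups by \cite{Sela_diophantine7,Andre_hyperbolicity}. Theorem \ref{lethm} applied to $f$ then produces an isomorphism $F:C\to C'$ extending $f$, hence $F(\bar a)=\bar b$.

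For the backward direction, suppose an isomorphism $\varphi:C\to C'$ with $\varphi(\bar a)=\bar b$ is given. The strategy is to establish the intermediate identities
\begin{equation*}
tp^G(\bar a)=tp^C(\bar a)\qquad\text{and}\qquad tp^{G'}(\bar b)=tp^{C'}(\bar b),
\end{equation*}
which, together with the equality $tp^C(\bar a)=tp^{C'}(\bar b)$ that is immediate from $\varphi$, give the desired $tp^G(\bar a)=tp^{G'}(\bar b)$. The two intermediate identities are proved symmetrically; consider the first. Since $G$ is an extended tower over $C$ relative to $\langle\bar a\rangle$, Corollary \ref{ett} together with Remark \ref{rk_tour_rel} furnishes an integer $r\geq 0$, a subgroup $\tilde C\subset G*\bbF_r$ over which $G*\bbF_r$ is a simple tower, and an isomorphism $\sigma:C\to\tilde C$ whose restriction to $\langle\bar a\rangle$ coincides with conjugation by some element $g\in G*\bbF_r$. (To get this last property, one chooses a Grushko decomposition of $C$ relative to $\langle\bar a\rangle$ so that $\langle\bar a\rangle$ sits directly in one of the Grushko factors, to which Remark \ref{rk_tour_rel} applies.) Two applications of ``Tarski'' to $G*\bbF_r$---namely the embeddings $G\hookrightarrow G*\bbF_r$ (a simple tower of free-product-type étages) and $\tilde C\hookrightarrow G*\bbF_r$ are both elementary---together with invariance of types under conjugation and isomorphism yield
\begin{equation*}
tp^G(\bar a)=tp^{G*\bbF_r}(\bar a)=tp^{G*\bbF_r}(g\bar a g^{-1})=tp^{\tilde C}(g\bar a g^{-1})=tp^{\tilde C}(\sigma(\bar a))=tp^C(\bar a).
\end{equation*}

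The main technical point is precisely the inner behaviour of $\sigma$ on $\langle\bar a\rangle$: it is what allows the conjugation step to preserve the type in $G*\bbF_r$, and it is exactly the refinement of Corollary \ref{ett} recorded in Remark \ref{rk_tour_rel} for towers relative to a subgroup. Once this is in place the chain of type equalities is routine, so the whole corollary reduces to a packaging of Theorem \ref{lethm} and the standard type invariance properties that ``Tarski'' supplies for simple towers.
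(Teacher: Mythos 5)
Your argument follows essentially the same route as the paper: the forward direction is exactly the paper's (apply Theorem~\ref{lethm} after establishing hyperbolicity of $G'$ via $G'\equiv G$), and the backward direction unpacks the converse of Theorem~\ref{lethm} together with Proposition~\ref{prop_pem}, which the paper treats as already proved. That unpacking is correct as far as it goes—your use of Corollary~\ref{ett} and Remark~\ref{rk_tour_rel} to produce $\tilde C$ with $\sigma$ inner on $\langle\bar a\rangle$, followed by the chain of type equalities, is exactly how Proposition~\ref{prop_pem} is established.

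There is, however, a genuine gap in the backward direction. All the machinery you invoke for $G'$—Corollary~\ref{ett}, Remark~\ref{rk_tour_rel}, and above all ``Tarski'' (Theorem~\ref{Tarski})—requires $G'$ to be a non-abelian torsion-free hyperbolic group, but the hypotheses only give that $G'$ is finitely generated. You establish hyperbolicity of $G'$ in the forward direction (from $G'\equiv G$), but in the backward direction you silently assume it. The paper's proof addresses this explicitly: since $C$ is hyperbolic (being a retract of the hyperbolic $G$) and $C'\simeq C$, the group $G'$ is an extended tower over the hyperbolic group $C'$ and is therefore itself hyperbolic (étages preserve hyperbolicity by combination theorems, and the free-product case is clear). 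Without this step the chain $tp^{G'}(\bar b)=tp^{C'}(\bar b)$ has no justification.

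A smaller omission: your chain of equalities $tp^G(\bar a)=\dots=tp^C(\bar a)$ requires $\tilde C$ (hence $C$) to be non-abelian in order to apply ``Tarski'' to the embedding $\tilde C\hookrightarrow G*\bbF_r$, and Corollary~\ref{ett} only gives an isomorphism $\sigma:C\to\tilde C$ in the non-abelian case—when $C$ is abelian, $\tilde C\simeq C*\bbF_2$. The paper handles this by replacing $C$, $C'$ by $C*\bbF_2$, $C'*\bbF_2$ in the argument (see the remark at the end of Subsection~\ref{unsens} and the second case of Proposition~\ref{prop_pem}); your write-up should do the same.
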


\begin{proof}
  Apply Theorem \ref{lethm}, noting that $G'$ may be assumed to be hyperbolic:
if  $tp^G(\bar{a})=tp^{G'}(\bar{b})$, then $G'\equiv G$, so $G'$ is hyperbolic by \cite{Sela_diophantine7}; on the other hand,  $C$ is hyperbolic because $G$ is hyperbolic, so $G'$ is hyperbolic if its relative core is isomorphic to $C$.  
\end{proof}

\subsection{Isomorphic cores yield partial elementary maps}
\label{unsens}

 We start by proving the converse   of Theorem \ref{lethm}. It is an easy consequence of the following fact (based on ``Tarski'').

\begin{prop}\label{prop_pem}
  Let $G$ be a non-abelian torsion-free hyperbolic group, and  $A<H<G$ two subgroups.
  Assume that $G$ is an extended tower over  $H$ relative to $A$.
  
  If $H$ is non-abelian,   the identity map $\id_A:A\ra A$  defines  a partial elementary map from $H$ to $G$;
  in other words, every tuple of elements of $A$ has the same type in $H$ as in $G$.
  
  If $H$ is abelian (possibly trivial), then $\id_A$ defines a partial elementary map from $H*\bbF_2$ to $G$.
\end{prop}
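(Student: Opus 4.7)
The plan is to reduce Proposition~\ref{prop_pem} to two applications of ``Tarski'' (Theorem~\ref{Tarski}), separated by a bookkeeping step from Remark~\ref{rk_tour_rel} that tracks how $A$ sits inside the various groups.

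\textbf{The non-abelian case.} Assume first that $H$ is non-abelian. I would invoke Corollary~\ref{ett} together with Remark~\ref{rk_tour_rel} to produce a finitely generated free group $F$ and a subgroup $H'\subset G*F$ such that $G*F$ is a simple tower over $H'$, together with an isomorphism $\psi:H\to H'$ whose restriction to each Grushko factor of $H$ relative to $A$ is conjugation by some element of $G*F$. Since $A$ is contained (after reindexing) in a single Grushko factor $H_1$, the restriction $\psi|_A$ is conjugation by some element $g\in G*F$. Replacing $H'$ by $g^{-1}H'g$ and $\psi$ by $h\mapsto g^{-1}\psi(h)g$ then gives $A\subset H'$ with $\psi|_A=\id_A$. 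Now $G*F$ is a simple tower over $H'$ and also a simple tower over $G$ (of free product type), and all three groups are non-abelian torsion-free hyperbolic. ``Tarski'' therefore yields $H'\preceq_e G*F$ and $G\preceq_e G*F$. For any formula $\phi(x_1,\dots,x_n)$ and any $a_1,\dots,a_n\in A$, the chain $H\models\phi(\bar a)\iff H'\models\phi(\bar a)\iff (G*F)\models\phi(\bar a)\iff G\models\phi(\bar a)$ (using the isomorphism $\psi$ for the first step and the two elementary embeddings for the others) completes the non-abelian case.

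\textbf{The abelian case.} If $H$ is abelian, I would stabilize by $\bbF_2$. The group $H*\bbF_2$ is non-abelian, and by Remark~\ref{rk_prodlib} the group $G*\bbF_2$ is an extended tower over $H*\bbF_2$; this tower remains relative to $A$, because at each étage of surface type the bottom group containing a conjugate of $A$ is either untouched or replaced by its free product with $\bbF_2$, and still contains a conjugate of $A$. Applying the non-abelian case to this tower, $\id_A$ defines a partial elementary map from $H*\bbF_2$ to $G*\bbF_2$. Since $G$ is non-abelian and $G*\bbF_2$ is a simple tower over $G$ of free product type, ``Tarski'' gives $G\preceq_e G*\bbF_2$, and composing shows that $\id_A$ defines a partial elementary map from $H*\bbF_2$ to $G$, as required.

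\textbf{The key point} is ensuring that the identification $\psi:H\to H'$ supplied by Corollary~\ref{ett} is compatible with the fixed embeddings $A\subset H$ and $A\subset G\subset G*F$; equivalently, that $\psi$ sends $A$ to a subgroup of $G*F$ that is at least conjugate to $A$ (since conjugation is a formula-preserving inner automorphism of $G*F$, one could also close the argument by inserting a conjugation step in the chain rather than normalizing $\psi|_A$ to the identity). This compatibility is precisely what the hypothesis ``relative to $A$'' guarantees through Remark~\ref{rk_tour_rel}, and it is what allows the two elementary embeddings coming from ``Tarski'' to be glued along $A$.
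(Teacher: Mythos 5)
Your proof is correct and follows essentially the same route as the paper: Corollary~\ref{ett} together with Remark~\ref{rk_tour_rel} to realize $H$ as a subgroup $H'$ of some $G*F$ containing $A$ with a compatible isomorphism, then two applications of ``Tarski'' to transfer types, and stabilization by $\bbF_2$ for the abelian case. You spell out a couple of steps the paper leaves implicit (normalizing $\psi|_A$ to the identity via a conjugation, and checking that the stabilized tower remains relative to $A$), but the underlying argument is the same.
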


\begin{rem}
The statement that  $\id_A$ defines a partial elementary map from $H*\bbF_2$ to $G$ is also valid if $H$ is non-abelian.
\end{rem}

\begin{proof}
  We first assume that $H$ is non-abelian.  By Corollary \ref{ett} and 
   Remark  \ref{rk_tour_rel}, 
  there exists a free group $F$ such that
  $G*F$ is a simple tower over a subgroup $H'<G*F$ isomorphic to $H$, and the isomorphism $\phi:H\ra H'$ is 
  the identity on $A$. In particular, tuples in $A$ have the same type in $H$ and in $H'$.

  By ``Tarski'' (Theorem \ref{Tarski}) $H'$ is elementarily embedded in $G*F$, and $G$ is elementarily embedded in $G*F$.
   This implies that tuples in $H'\cap G$ (in particular, tuples in $A$) have the same type in $H'$(hence in $H$)  as in $G$.

  In the  general case, 
  $G*\bbF_2$ is an extended tower over $H*\bbF_2$, so by the previous case
  $\id_A$ defines  a partial elementary map
  from $H*\bbF_2$ to $G*\bbF_2$, hence from   $H*\bbF_2$ to $G$ since the embedding $G\hookrightarrow G*\bbF_2$ is elementary.
\end{proof}

\begin{proof}[Proof of the converse of 
Theorem \ref{lethm}.]
We assume that $f$ extends to an isomorphism $C\to C'$, and we show that it is partial elementary.
  Consider any $n$-uple $\bar{a}=(a_1,\dots,a_n)$ of elements of $A$.
  The type of $\bar{a}$ in $C$ agrees with the type of $\bar{b}=f(\bar{a})$ in $C'$ because $f$ extends to an isomorphism  (if $C$ is abelian, we use types in $C*\bbF_2$ and $C'*\bbF_2$ rather than $C$ and $C'$ in this argument).
  Now the type of $\bar{a}$ in $C$ 
  agrees with the type of $\bar{a}$ in $G$ by Proposition \ref{prop_pem},
  and similarly for the type of $\bar{b}$.
This  implies 
that $f$ is a partial elementary map from $G$ to $G'$.
\end{proof}

The next subsections are devoted to the proof of Theorem \ref{lethm}.

\subsection{$A$-groups and plain groups}\label{agpe}
Theorem \ref{lethm} is about  hyperbolic groups $G,G'$ with an isomorphism $f:A\ra A'$ between subgroups $A<G$ and  $A'<G'$.
Using $f$, we shall view both $G$ and $G'$ 
as groups with a specified embedding of $A$, which we call \emph{$A$-groups} (see for instance \cite{BMR_algebraicI}). 

The  core $C$ of $G$ 
relative to $A$   is an $A$-group. It has a Grushko decomposition $C=C_1*\dots * C_p$   relative to $A$, and we may assume 
  $A\inc C_1$ (there is no   extra free group in the decomposition because $C$ is a core, but $C_1$ may   be free
  with $A$ not contained in  a proper free factor,
  or even cyclic if $A\simeq\Z$). Thus $C_1$ is an $A$-group, but we 
  will also work with the other factors $C_i$. These factors are not $A$-groups,  they have no extra structure, we call them \emph{plain groups}.  We will always assume $A\ne \{1\}$, and the reader interested only in Corollary \ref{cor_eqv_elem} may forget about $A$-groups (and skip this subsection),  thinking that all groups (including $C_1$) are plain groups. 

    All groups considered in this proof will be either $A$-groups or plain groups. 
      To unify notation, we will view a plain group (i.e.\ a group with no extra structure) as a $\{1\}$-group,  and everything will be understood to be \emph{relative} (to $A$, or to $\{1\}$, i.e.\ non-relative). This applies in particular to splittings, one-endedness, Grushko decompositions, JSJ decompositions.

        Flexible vertices of the canonical  cyclic JSJ decomposition relative to $A$ are relative QH vertices (see \cite[Def.~5.13, Th.~9.5]{GL_JSJ}).
      Because of the acylindricity properties of the tree of cylinders defining the canonical JSJ decomposition, they are of \emph{relative surface type}
      in the following sense.
      
First, a surface-type vertex group $G_v=\pi_1(\Sigma)$ in the sense  of  Definition \ref{stype} remains  surface-type in the relative sense provided that  its intersection with any conjugate of $A$ is contained in a boundary subgroup of $\pi_1(\Sigma)$. 

   If $A\simeq\Z$, another possibility is that  $G_v=\pi_1(\Sigma)$,  with $\Sigma$ a compact surface having a free boundary component $C\inc \bo \Sigma$ which ``carries'' $A$; more precisely, 
$\pi_1(C)$ contains a conjugate of $A$ (as a subgroup of finite index),
incident edge groups are fundamental groups of   components of $\bo \Sigma$ other than $C$, 
and this induces a bijection between the set of incident edges and the set of boundary components of $\Sigma$ other than $C$. 

\begin{rem}\label{rem_retractable_relatif}
    If $G$ has a splitting with a relative surface-type vertex $v$ of the second kind, there exists no \ndbpm\ $\pi_1(\Sigma)\ra G$ by Remark \ref{bolib}.
This implies that, if $v$ is a retractable relative surface-type vertex, then collapsing all edges not adjacent to $v$ yields a retractable centered splitting, 
with no need to change the definition of a centered splitting.
\end{rem}

      If $G$  is an $A$-group, 
      we say that it is an extended \'etage or an extended tower over a subgroup $H$  only if $H$ contains $A$ and the \'etages are relative to $A$.
      A plain group cannot be an \'etage over an $A$-group.
      As in Definition \ref{relprot}, an $A$-group is a prototype if it is not an extended \'etage, 
      and the core of an $A$-group is as in Definition \ref{relprot}.

      If $G$ is an $A$-group, a \emph{morphism} $g:G\to G'$ is a homomorphism equal to the identity on $A$. In particular, $G'$ has to be an $A$-group: there is no morphism from  an $A$-group  to a plain group. Embeddings, isomorphisms, automorphisms defined on $A$-groups always have to be morphisms.
 We note that, since the étages are relative to $A$, the retractions associated to \'etages are the identity on $A$, hence are morphisms.

\begin{rem}\label{rk_inverse} 
If  $G$ is an $A$-group and $G'$ is a plain group,
there is no restriction on morphisms $g:G'\ra G$, but if $g$ is bijective $g\m$ is not a  morphism.
 \end{rem}

For $A$-groups, formulas with constants in $A$ make sense, and
we   say that two $A$-groups $G$ and $G'$ are \emph{elementarily equivalent as $A$-groups},  denoted  $G\equiv_A G'$,
if they satisfy the same formulas with constants in $A$.
Similarly, if $G,G'$ and $f:A\to A'$ are as in Theorem \ref{lethm}, we view $G'$ as an $A$-group by identifying $A'$ to $A$ thanks to $f$, and 
$G\equiv_A G'$ if and only if $f$ is a partial elementary map from $G$ to $G'$.

One can then reformulate 
Theorem \ref{lethm} as follows (with $G,G'$ non-abelian hyperbolic groups).

\begin{thm}\label{lethm2}
  Let $G,G'$ be two plain groups, or two $A$-groups with $A\neq\{1\}$.
  Let $C,C'$ be their cores. 
  If $G$ is elementary equivalent to $G'$, 
  then $C$ and $C'$ are isomorphic. 
\end{thm}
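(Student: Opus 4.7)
The plan is to follow the engulfing strategy sketched in the overview. Using the isomorphism $f:A\to A'$, view $G'$ as an $A$-group, so the hypothesis becomes the partial-elementary-equivalence statement $G\equiv_A G'$. The goal is to produce an isomorphism $C\simeq C'$ in the appropriate category (of $A$-groups, resp.\ plain groups).

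The key technical devices will be two relations on $A$-groups, \emph{engulfing} and \emph{strong engulfing}. One fixes a finite generating tuple and a finite presentation of $H$, together with a finite list of ``test inequations'' read off the modified cyclic JSJ $\tGcan$ of $H$ (one inequation per rigid or surface-type vertex group, designed so that any witnessing tuple in $K$ cannot agree with a non-injective preretraction of $H$ pushed into $K$). Then ``$H$ engulfs in $K$'' and ``$H$ strongly engulfs in $K$'' can each be written as a first-order formula with constants in $A$, asserting the existence of a tuple in $K$ satisfying the presentation (and, for strong engulfing, the test inequations as well). The co-Hopf-type target statement is: if $H$ and $H'$ are prototypes relative to $A$ (resp.\ plain prototypes) and each engulfs in the other, then $H\simeq H'$; in particular, any strong engulfing of a prototype into itself is an isomorphism.

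The main step — and the main obstacle — is to show that the inclusion $C\hookrightarrow G$ is a strong engulfing. Arguing by contradiction, a shortening argument (applied to suitable sequences of morphisms $C\to G$ precomposed by modular automorphisms of $C$ relative to $A$, cf.\ the invariance of $\tGcan$ under $\Aut(C)$) produces a non-injective preretraction $r:C\to G$ in the sense of \cite{Perin_elementary}: a morphism agreeing with conjugations on every rigid or abelian vertex group of $\tGcan$ and preserving boundary subgroups on every surface-type vertex group. Then Perin's Propositions 5.11 and 5.12, reproved in Subsection \ref{prtor} via Lemma \ref{sixunm} and Corollary \ref{sixunm2}, force $C$ to be an extended étage of surface type or of free product type relative to $A$, contradicting that $C$ is a prototype relative to $A$. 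Using the modified JSJ $\tGcan$ rather than $\Gcan$ is essential here in order to handle the two exceptional surfaces $N_{2,1}$ and $N_3$ with infinite mapping class group; this is where the delicacy of the definitions really bites.

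Once $C\hookrightarrow G$ is known to be a strong engulfing, the rest is essentially formal. Since strong engulfing is first-order with constants in $A$ and $G\equiv_A G'$, the group $C$ also strongly engulfs in $G'$. Composing with the retraction $G'\twoheadrightarrow C'$ supplied by the relative tower structure $G'\tge C'$ (Proposition \ref{prop_retractions} together with Remark \ref{prop_retractions_rel}) yields an engulfing of $C$ into $C'$. The symmetric argument, with $G$ and $G'$ swapped, yields an engulfing of $C'$ into $C$. The co-Hopf property for prototypes relative to $A$ then produces the desired isomorphism $C\simeq C'$, completing the proof.
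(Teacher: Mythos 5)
Your overall plan --- establish that $C\hookrightarrow G$ is a strong engulfing via preretractions on the modified JSJ, transfer strong engulfing to $G'$ by first-order logic, descend into $C'$, and conclude by a co-Hopf argument --- matches the paper's four-step proof in Subsection \ref{pfsensfac}. However, two of the steps are wrong or incomplete as written, and the first is a genuine gap.

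Your formulations of engulfing and strong engulfing omit the cross-factor compatibility condition, and without it your ``co-Hopf-type target statement'' is false. The core $C$ is in general a free product $C_1*\cdots*C_p$ of one-ended prototypes; each $C_i$ is co-Hopfian, but the free product is not. If ``$C$ engulfs in $H$'' meant only that each $C_i$ admits an injective morphism into $H$, then two non-isomorphic prototypes could engulf in each other: take a one-ended prototype $A$ embedded in a non-isomorphic one-ended prototype $B$; then $A*B$ and $B*B$ would engulf in each other (send both copies of $B$ in $B*B$ into the single $B$-factor of $A*B$), yet $A*B\not\simeq B*B$. The paper's Definitions \ref{dfn_engulf} and \ref{dfn_strong_engulf} carry the extra requirement that for $i_0\ne i_1$ the element $\mu_{i_1}(d_{C_{i_1}})$ not be conjugate to an element of $\mu_{i_0}(D_{C_{i_1}\to C_{i_0}})$, where $d_{C_i}$ is the distinguished element of Subsection \ref{sec_choix} (chosen so that its $\Aut$-orbit is a finite union of conjugacy classes and it is universally elliptic). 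This is exactly what forces the index map $\alpha$ in Lemma \ref{lem_bi_engulf} to be injective, so that co-Hopfianity of the one-ended factors can be applied along a cycle. A secondary imprecision: your description of engulfing as ``a tuple satisfying the presentation'' does not encode injectivity, and indeed plain engulfing is \emph{not} first-order; only strong engulfing is (Lemma \ref{lem_engulf_ee}). Injectivity is only recovered \emph{a posteriori} through the shortening sets $F_{C_i,H}$ and Lemma \ref{rem_strong}, not by writing it into the formula.

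The passage from ``$C$ strongly engulfs in $G'$'' to ``$C$ engulfs in $C'$'' is also not a straightforward composition with the tower retraction. The retraction can pinch a surface of $\tilde\Gamma_{C_i}$, so $\rho\circ\mu_i$ may fail to be a preretraction, or even to be injective. The paper descends the tower one étage at a time (Lemma \ref{relat2}): either some intermediate composite becomes non-injective --- and since it is shown to be related to $\mu_i$, this contradicts the strong engulfing hypothesis --- or all composites stay injective and eventually land in $C'$. You also need the ``moreover'' clause of Lemma \ref{relat2} to check that the resulting maps $\lambda_i$ inherit the cross-factor condition and so genuinely constitute an engulfing.
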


In the case of $A$-groups, core, elementary equivalence, isomorphism must be understood in the context of $A$-groups, as explained above.

\subsection{Preliminary choices}\label{sec_choix}
In this subsection, we make some particular choices  in the Grushko factors of the  cores of $G$ and $G'$.
More generally, we consider a plain group or an $A$-group $J$ which satisfies the following condition:

\begin{quote}
$(*)$ $J$ is a one-ended torsion-free hyperbolic group; if it is a plain group, it     is not isomorphic to the fundamental group of a non-exceptional closed surface
\end{quote}
(recall that, if $J$ is an $A$-group,   one-endedness is understood as relative to $A$).

We denote by $\Gamma_J$ the canonical cyclic JSJ decomposition of $J$ (relative to $A$ if $J$ is an $A$-group). 
We will also use   the modified JSJ decomposition $\tilde\Gamma_J$
described in Subsection \ref{sec_modified}. 
 The key difference with 
 $\Gamma_J$ is     that all exceptional surfaces
 appearing in surface-type vertices of $\tilde\Gamma_J$ have finite mapping class group.
 This will be important in 
  the proof of Lemma \ref{shortening}.

We now explain how to choose certain elements 
 $d_J\in J $ and finite subsets  $D_{J'\to J} \inc J$, 
for  $J'$ also satisfying $(*)$.
 In the next subsection we will use Sela's shortening argument to choose finite subsets 
$F_{J,H} \inc J$,  for $H$ hyperbolic. The key properties of these elements and subsets will be summed up in Lemma \ref{lem_choix}.

We  first claim that 
  there is  a non-trivial  
  element $d_J\in J$ such that the orbit of $d_J$ under $\Aut(J)$ is  a finite union of conjugacy classes, with $d_J\in A$ if $J$ is an $A$-group;
  moreover, unless $J$ is a plain group isomorphic to $\pi_1(N_3)$, the fundamental group of the   closed non-orientable surface of genus 3, 
  $d_J$ is elliptic in any cyclic splitting of $J$.

This is obvious if $J$ is an $A$-group, 
as one can take for $d_J$ any non-trivial element of $A$  (recall that automorphisms and splittings   are relative to $A$  if $J$ is an $A$-group).
Otherwise, $J$ is a one-ended plain group, so consider   its   (non-modified) canonical cyclic JSJ decomposition $\Gamma_J$. 

If  $ \Gamma_J$ is non-trivial, any non-trivial element contained in an edge group of $ \Gamma_J$ has the desired properties because $ \Gamma_J$ is 
universally elliptic and $\Out(J)$-invariant, so automorphisms preserve the set of conjugacy classes of edge groups. 

If $ \Gamma_J$ is  trivial, there are  two possibilities. If  $J$ is rigid (it has no cyclic splitting), then $\Out(J)$ is finite by  Paulin's theorem \cite{Pau_arboreal} and   Rips' theory of $\R$-trees \cite{BF_stable},
so any $d_J\in J\setminus\{1\}$ has the required properties. The other possibility is that $J$  is a closed surface group. 
By condition $(*)$,
$J=\pi_1(N_3)$  and we take $d_J=g_{N_3}^2$, the square of the generator of $\pi_1(c_{N_3})$, with $c_{N_3}$ the special curve of $N_3$   (see Subsection \ref{sec_surfaces}). 

This shows the existence of $d_J$.
We choose such an element $d_J$ in an arbitrary way for every group $J$ 
satisfying $(*)$. 

We note that, if $H$ is hyperbolic, then, up to conjugacy in $H$, there are only finitely many possibilities
for the  images of $d_J$ under injective morphisms $\mu:J\to H$.
This is obvious if $J$ is an $A$-group because  $\mu$ being a 
 morphism means that $H$ is an $A$-group and $\mu$ is the identity on 
 $A$.
Otherwise, $J$ is a one-ended plain group and  there are finitely many possibilities for $\mu(J)$   up to conjugacy
by a result due to Gromov   (Thm.\ 5.3.C' of \cite{Gromov_hyperbolic}, \cite{Delzant_image}); 
moreover, if  two embeddings of $J$ into $H$ have the same image, they differ by  an automorphism of $J$, and finiteness follows from the   choice of $d_J$.

If $J$ satisfies $(*)$ 
and $H$ is hyperbolic, 
we define a finite set  $D_{J\ra H}\subset H\setminus\{1\}$ by choosing a representative of the conjugacy class of each image of $d_J$ by an injective morphism $J\ra H$ 
(we define $D_{J\to H}=\es$ if $J$ does not embed in $H$, in particular if $J$ is an $A$-group and $H$ is a plain group).

We consider in particular $D_{J'\to J}\subset J$, for $J,J'$ satisfying $(*)$,  with $D_{J'\to J}=\es$ if $J'$ is an $A$-group and $J$ is a plain group.

\begin{lem} \label{aufond}
Let  $J$ be a group satisfying $(*)$. 
 Assume that $J$ is contained in a group with a non-exceptional centered splitting $\Gamma$ (relative to $A$ if $J$ is an $A$-group). 
Then  the following elements of $J$ are contained in a conjugate of a bottom group of $\Gamma$: 
\begin{enumerate}\renewcommand{\theenumi}{(\arabic{enumi})}\renewcommand{\labelenumi}{\theenumi}
\item  the element $d_J$; 
\item the edge groups of $\tilde\Gamma_J$, and its vertex groups except those which are non-exceptional surface-type;
\item    each element of $D_{J'\to J}$, for any $J'$ satisfying $(*)$ and embedding into $J$. 
\end{enumerate}
\end{lem}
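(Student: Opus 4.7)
The plan is to exploit the dichotomy provided by Lemma \ref{passurf} applied to the one-ended group $J$ sitting inside the ambient group $H$ carrying $\Gamma$: either some conjugate of $J$ is contained in a bottom group of $\Gamma$, in which case (1), (2) and (3) are all immediate, or, up to conjugating $J$, it contains a finite-index subgroup of $Q=\pi_1(\Sigma)$. In the second case, if $J$ is an $A$-group then (1) is already automatic because $\Gamma$ being relative to $A$ puts $A$, and hence $d_J\in A$, in a conjugate of a bottom group. The real substance of the proof is (2), from which (1) for plain $J$ and (3) will both follow.

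The proof of (2) I would do in two passes. For edge groups of $\tilde\Gamma_J$: those inherited from $\Gamma_J$ are infinite cyclic and universally elliptic in $J$, hence elliptic in the Bass-Serre tree $T$ of $\Gamma$, and Lemma \ref{Scott} forces an edge group fixing a type-1 vertex to be a boundary subgroup of the corresponding conjugate of $Q$, hence contained in an adjacent edge group of $\Gamma$ and so in a conjugate of some $B_i$. The new edge groups come from cutting vertex surfaces $N_{2,1}$ or $N_3$ along their special curves; for these I would apply Lemma \ref{lemcle3} to the inclusion $\pi_1(N_{2,1})\hookrightarrow H$ (resp.\ $\pi_1(N_3)\hookrightarrow H$): boundary subgroups of $\pi_1(N_{2,1})$ are edge groups of $\Gamma_J$ already handled, and $\pi_1(N_3)$ is closed so the boundary condition is vacuous. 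Since the forbidden configuration of Lemma \ref{lemcle3} is precisely $N_3$, this places all of $\pi_1(N_{2,1})$ in a conjugate of a bottom group, and for $N_3$ gives either the same conclusion for the whole vertex group or that $g_{N_3}^2$ lies in an edge group of $\Gamma$. The new Möbius-band vertex group $\langle g_{N_3}\rangle$ of $\tilde\Gamma_J$ then lies in the same conjugate of $B_i$ by malnormality of bottom groups (Remark \ref{malnor}), its square being there. The bookkeeping around these low-complexity surfaces is the main obstacle of the proof, and the whole point of working with $\tilde\Gamma_J$ rather than $\Gamma_J$ is precisely so that Lemma \ref{lemcle3} is never called on $N_3$ itself.

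Still for (2), the remaining vertex groups of $\tilde\Gamma_J$ split into three types. Cyclic vertices are forced elliptic in $T$ because each of their non-trivial adjacent edge groups is elliptic by the previous step, and a hyperbolic generator of a cyclic vertex group would force every non-trivial power to be hyperbolic as well; Scott's argument then concludes as before. Rigid non-cyclic vertices are universally elliptic in $J$, hence elliptic in $T$, and are one-ended by standard cyclic JSJ theory for one-ended hyperbolic groups; Lemma \ref{Scott} then rules out a type-1 fixed point, because such a fixed point would force them to be either a cyclic boundary subgroup or a finite-index surface subgroup of a conjugate of $Q$, contradicting rigid non-surface-type status. Exceptional surface-type vertices of $\tilde\Gamma_J$ are $S_{0,3}$ or $N_{1,2}$ (never $N_{2,1}$ or $N_3$, which have been cut), and Lemma \ref{lemcle3} applies directly with the boundary data placed into bottom groups in the previous step.

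With (2) in place, (1) for plain $J$ follows by case analysis on the construction of $d_J$: if $\Gamma_J$ is non-trivial, $d_J$ lies in an edge group covered by (2); if $\Gamma_J$ is trivial and $J$ has no cyclic splitting at all, then $J$ fixes a vertex of $T$ and the same rigidity/Scott argument as above excludes type 1, placing $J$ itself in a conjugate of a bottom group; if $J=\pi_1(N_3)$, then $d_J=g_{N_3}^2$ is precisely the new edge group of $\tilde\Gamma_J$ covered by (2). Finally, for (3), I would apply statement (1) of the present lemma to $\mu(J')\subset H$ for every injective morphism $\mu:J'\to J$: this is legitimate because $\mu(J')\cong J'$ still satisfies $(*)$, and is an $A$-group whenever $J$ is because $\mu$ is a morphism; choosing $d_{\mu(J')}:=\mu(d_{J'})$ (a valid choice, since $\mu(d_{J'})$ inherits the defining properties of $d_{\mu(J')}$) then yields $\mu(d_{J'})\in gB_ig^{-1}$, and the claim for $D_{J'\to J}$ follows as it consists of finitely many conjugacy-class representatives of such images.
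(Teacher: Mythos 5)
Your overall strategy — send universally elliptic pieces of $\tilde\Gamma_J$ into the tree $T$ of $\Gamma$ and handle exceptional surfaces via Lemma \ref{lemcle3} — is the same as the paper's, and your treatment of (3) by applying (1) to $\mu(J')$ is exactly what the paper does. The two real issues are the following.

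The opening appeal to Lemma \ref{passurf} is a detour the paper avoids, and it is not quite available when $J$ is an $A$-group: condition $(*)$ only gives one-endedness \emph{relative to $A$}, whereas Lemma \ref{passurf} as stated requires $J$ to be (absolutely) one-ended. One can repair this (the induced action of $J$ on $T$ has no trivial edge stabilizers because such an edge would give a free splitting of $J$ relative to $A$, since $\Gamma$ is relative to $A$), but you would need to say so. More importantly, once you invoke Lemma \ref{passurf} and land in the case where $J$ contains a finite-index subgroup of $Q$, you no longer get any mileage out of it for proving (2); the paper simply skips this dichotomy and goes directly to universal ellipticity, which is cleaner.

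The genuine gap is your repeated use of Lemma \ref{Scott} for the \emph{cyclic} pieces (inherited edge groups of $\Gamma_J$ and the cyclic vertex groups). Lemma \ref{Scott} governs $J\cap Q$ for a \emph{one-ended} subgroup $J$, and in your second case $J\cap gQg^{-1}$ has finite index, so Lemma \ref{Scott} says nothing about a cyclic subgroup $K\subset J\cap gQg^{-1}$. In particular, ``Lemma \ref{Scott} forces an edge group fixing a type-1 vertex to be a boundary subgroup'' is not a valid application: $K\cong\Z$ is not one-ended, and one-endedness is a hypothesis of Lemma \ref{Scott}, not something you can drop. What is actually needed is the JSJ fact that a universally elliptic subgroup of $J$ contained in a QH vertex group of (the induced splitting of) $J$ is cyclic and conjugate into a boundary subgroup; the paper cites \cite[Prop.\ 5.21]{GL_JSJ} precisely for this. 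Your argument for the rigid non-cyclic vertex groups is fine, since those \emph{are} one-ended (free indecomposability follows from one-endedness of $J$) and Lemma \ref{Scott} then rules out both a finite-index surface subgroup and a cyclic boundary subgroup; but the cyclic pieces need the separate QH input, which your proposal does not supply.
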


 \begin{proof}
  Given an edge group or a rigid vertex group of $  \Gamma_J$, or the group $A$, it is elliptic in every (relative) cyclic splitting of $J$.
   Thus (up to conjugacy) it is contained in a bottom group of $\Gamma$, or it is cyclic and contained in a maximal boundary subgroup of  the central vertex group  (see \cite{GL_JSJ}, Prop.\  5.21), hence also contained in a bottom group.
Unless $J$ is a plain group isomorphic to $\pi_1(N_3)$, this argument also applies to $d_J$, which is universally elliptic, so (1) holds.
If $J'$ is not a plain group isomorphic
   to $\pi_1(N_3)$, this argument applied in $J'$  proves 
   (3).
 
   Edge or vertex groups of the   modified JSJ decomposition $\tilde\Gamma_J$  as in (2)
   are edge groups or rigid vertex groups of $  \Gamma_J$, or are contained in an exceptional surface-type vertex group of $  \Gamma_J$. 
Lemma \ref{lemcle3} then implies that (2) holds.

If $J$ is a plain group isomorphic to $\pi_1(N_3)$, the splitting $\tilde\Gamma_J$ is dual to the unique decomposition of $N_3$ into a punctured torus and a M\"obius band, and $d_J=g_{N_3}^2$ 
is conjugate into a bottom group by Lemma \ref{lemcle3}, so (1) holds. This argument also applies if $J'$ is a plain group isomorphic to $N_3$
so  (3)   holds in all cases. This concludes the proof.
\end{proof}
  
\subsection{Using the shortening argument}
Given an arbitrary hyperbolic group $H$, we are now going to choose another finite subset $F_{J,H}\subset J$
thanks to  Sela's shortening argument. To state it, we need the following definition.

\begin{dfn} [Related morphisms, \cite{Perin_elementary} Definition 5.15] \label{related}  
Let $H_1$, $H_2$ be plain groups or $A$-groups.
  Let $\Gamma$ be a splitting of  $H_1$.
  We say that two morphisms $\mu,\mu':H_1\to H_2$ are \emph{related} (with respect to $\Gamma$) 
if:
\begin{itemize}
\item for each vertex group $G_v$ of $\Gamma$ which is a non-exceptional surface-type vertex group,
  $\mu(G_v)$ is non-abelian if and only if $\mu'(G_v)$ is non-abelian;
\item if  $K$ is another vertex group    of $\Gamma$, or an edge group of $\Gamma$,  there is  $h\in H_2$   
such that  $\mu'$ agrees  with  $ad_h\circ \mu$  on $K$ (with $ad_h$ denoting conjugation by $h$).
\end{itemize}
\end{dfn}

\begin{rem}
If $H_1$ is an $A$-group, the splitting $\Gamma$ is relative to $A$, and  surface-type is to be understood in the    relative sense (as defined in  Subsection \ref{agpe}).
  In this case, the definition is empty if $H_2$ is a plain group, as there is no morphism from an $A$-group to a plain group.
\end{rem}

  In the following lemma, we do not assume that $H_2$ is a hyperbolic group, only that $H_2$ is a subgroup (maybe not finitely generated)
of a hyperbolic group. This generality will be useful in the proof of Corollary \ref{thcl2}.

\begin{lem}[{\cite[Th.\ 1.26]{Sela_diophantine7}, see also \cite[Prop.\ 4.3, Prop.\  4.13]{Perin_elementary}, \cite[Prop.\ 5.1]{GLS_finite_index}}] \label{shortening}\ \newline  
  Let $H_1$, $H_2$ be plain groups or $A$-groups, with $H_1$ a   one-ended torsion-free hyperbolic group
  and $H_2$   a subgroup of a torsion-free hyperbolic group.

There exists a non-empty finite set $F_{H_1,H_2}\subset H_1\setminus\{1\}$ such that,   given any non-injective 
morphism $\mu :H_1\ra H_2$,
there exists $\mu':H_1\ra H_2$ with the following properties: it is  related to $\mu$ with respect to the modified JSJ decomposition $\tilde \Gamma_{H_1}$
of $H_1$,  
and $\ker \mu'\cap F_{H_1,H_2}\neq\es$.
\end{lem}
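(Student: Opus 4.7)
The plan is to argue by contradiction and run Sela's shortening argument on a sequence of non-injective morphisms. Assume no such finite $F_{H_1,H_2}$ exists. Then, exhausting $H_1\setminus\{1\}$ by an increasing sequence of finite subsets $F_n$, we obtain non-injective morphisms $\mu_n:H_1\to H_2$ such that every morphism $\mu'_n:H_1\to H_2$ related to $\mu_n$ with respect to $\tilde\Gamma_{H_1}$ is injective on $F_n$. Viewing $H_2$ as a subgroup of the ambient torsion-free hyperbolic group $H$, and fixing a finite generating set $S$ of $H_1$, we shall derive a contradiction by producing an $H_1$-tree from the $\mu_n$ on which Rips' machinery combined with the modular group of $\tilde\Gamma_{H_1}$ yields a genuine shortening.

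The next step is to introduce the modular group $\Mod(\tilde\Gamma_{H_1})$: the subgroup of $\Aut(H_1)$ generated by inner automorphisms, Dehn twists along edges of $\tilde\Gamma_{H_1}$, and mapping class group elements of the non-exceptional surfaces appearing at surface-type vertices of $\tilde\Gamma_{H_1}$. The key observation --- and the whole point of replacing $\Gcan$ by $\tilde\Gcan$ in Subsection \ref{sec_modified} --- is that every surface-type vertex of $\tilde\Gamma_{H_1}$ has finite mapping class group \emph{or} is non-exceptional, so modular automorphisms act as inner automorphisms on rigid and abelian vertex groups and on edge groups, and preserve the property of having abelian image on each surface-type vertex group. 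Thus, for any $\sigma\in\Mod(\tilde\Gamma_{H_1})$, the composition $\mu_n\circ\sigma$ is related to $\mu_n$; by our standing assumption, $\mu_n\circ\sigma$ is still injective on $F_n$, hence nontrivial, hence non-injective since $\mu_n$ was. Replacing $\mu_n$ by the $\mu_n\circ\sigma$ that minimizes $\max_{s\in S}|\mu_n(\sigma(s))|_H$ (well-defined up to inner automorphisms of $H$), we may assume each $\mu_n$ is ``shortest in its related class''.

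Now I apply the Bestvina--Paulin construction \cite{Pau_arboreal} to the sequence $\mu_n$. If the displacements are bounded, one extracts a subsequence factoring through finitely many conjugacy classes of morphisms, and the Hopf property for torsion-free hyperbolic groups forces some subsequential limit to be injective --- contradicting the fact that every $\mu_n$ kills some non-trivial element eventually contained in $F_n$. Otherwise, after rescaling, the $\mu_n$ converge to a non-trivial isometric action of $H_1$ on an $\bbR$-tree $T$, with arc stabilizers that are virtually cyclic, hence cyclic. Rips' theory \cite{BF_stable} applied to $T$ decomposes it into surface-type, axial, and simplicial pieces, each giving rise to a non-trivial cyclic splitting of $H_1$. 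By universal ellipticity of $\tilde\Gamma_{H_1}$ (inherited from $\Gcan$) together with the fact that flexible vertices of $\Gcan$ are surface-type, this $\bbR$-tree decomposition is compatible with $\tilde\Gamma_{H_1}$: axial and simplicial pieces yield Dehn twists supported on edges or cyclic vertex groups of $\tilde\Gamma_{H_1}$, and surface-type pieces yield pseudo-Anosov homeomorphisms of surfaces attached to non-exceptional surface-type vertices of $\tilde\Gamma_{H_1}$.

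The last step is the actual shortening: these elements of $\Mod(\tilde\Gamma_{H_1})$, when applied to $\mu_n$ for $n$ large, strictly decrease $\max_{s\in S}|\mu_n(\sigma(s))|_H$, contradicting our choice of $\mu_n$ as shortest in its related class. The main obstacle --- and the reason for working with $\tilde\Gamma_{H_1}$ rather than $\Gcan$ --- is precisely this last compatibility between the $\bbR$-tree decomposition and the modular group: an exceptional surface with infinite mapping class group (the once-punctured Klein bottle or $N_3$) would contribute modular elements acting non-trivially on subgroups that are \emph{edge or rigid} groups of $\Gcan$, which would spoil the ``related'' equivalence and prevent us from replacing $\mu_n$ by $\mu_n\circ\sigma$. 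Cutting along the special curves $c_{N_{2,1}}$ and $c_{N_3}$ kills exactly this obstruction, making every modular automorphism of $\tilde\Gamma_{H_1}$ preserve the ``related'' class and thus closing the argument.
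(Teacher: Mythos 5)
Your proof takes a genuinely different route from the paper's. The paper cites the shortening argument as a black box — it produces a finite $F$ such that every non-injective $\mu$ becomes non-injective on $F$ after precomposing by \emph{some} $\sigma\in\Aut(H_1)$ — and then fixes the relatedness issue by passing to a finite-index subgroup $\Aut^1(H_1)\subset\Aut(H_1)$ whose elements automatically preserve relatedness, enlarging $F$ by the union $\bigcup_i\sigma_i(F)$ over coset representatives. You instead re-run the shortening argument from scratch, integrating relatedness at the source by minimizing length over the modular group $\Mod(\tilde\Gamma_{H_1})$ rather than over all of $\Aut(H_1)$. Both work; the paper's route is more economical because it only has to verify a statement about finite-index subgroups of $\Aut(H_1)$, whereas you must independently verify that every shortening move produced by Rips theory on the limiting $\bbR$-tree actually lands in $\Mod(\tilde\Gamma_{H_1})$, a step you assert but do not justify — and which is precisely what the paper's finite-index trick is designed to sidestep.

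Two points in your writeup are off. First, in the bounded-displacement case no Hopf property is needed: after passing to a subsequence the $\mu_n$ coincide with a fixed non-injective $\mu$, which therefore kills some non-trivial $x$; since $\bigcup_n F_n=H_1\setminus\{1\}$, eventually $x\in F_n$, and this directly contradicts the assumption that no morphism related to $\mu_n$ (in particular $\mu_n$ itself) kills an element of $F_n$. Second, your explanation of what $\tilde\Gamma_{H_1}$ fixes is misattributed: the problematic modular elements (the Dehn twists along $c_{N_{2,1}}$ or $c_{N_3}$) do not act on \emph{edge or rigid} groups of $\Gcan$; they act non-trivially \emph{inside the exceptional surface vertex group itself}. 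Since an exceptional surface-type vertex group falls under the ``other vertex groups'' clause of Definition \ref{related} (requiring agreement up to conjugation), precomposing by such a twist destroys relatedness w.r.t.\ $\Gcan$. After refining to $\tilde\Gcan$ the special curve becomes an edge, the exceptional vertex group disappears as a vertex, and the same twist becomes an edge-twist acting as conjugation on the new vertex groups — hence preserving relatedness.
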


\begin{proof}
We explain how to deduce this statement from  the shortening argument (\cite[Prop.\ 4.13]{Perin_elementary} or \cite[Th.\ 4.4]{PeSk_homogeneity}) which says
that, if  $H_1,H_2$ are  hyperbolic with  $H_1$ one-ended, there exists a finite set  $F_{H_1,H_2}\subset H_1\setminus\{1\}$
such that, for any non-injective morphism $\mu:H_1\ra H_2$, there exists an automorphism $\sigma$ of $H_1$ such that
$\mu'=\mu\circ\sigma$ kills an element of $F_{H_1,H_2}$ (with $\sigma_{|A}=\id$ if $H_1$ is an $A$-group);  note that this fact also holds   if $H_2$ is only assumed to be a subgroup of a hyperbolic group.

 We have to prove that $\mu$ and $\mu'$ are related (see Definition \ref{related}).
The first condition clearly holds, so we
check the second one.

The modified JSJ decomposition $\tilde\Gamma_{H_1}$ being invariant under automorphisms,
there exists a finite index subgroup of $\Aut^0(G)\subset \Aut(G)$ consisting of automorphisms that act as a conjugation
on all edge groups and all surface-type vertex groups whose mapping class group is finite  (see \cite[Th.~1.9]{Sela_structure}, \cite[Th.~1.1]{Lev_automorphisms}).
By Paulin's argument  \cite{Pau_arboreal}  and Rips theory \cite{BF_stable}, there is a further finite index subgroup $\Aut^1(G)
\subset \Aut^0(G)$
consisting of automorphisms acting as a conjugation on 
rigid vertex groups.
The point of using the modified JSJ decomposition $\tilde \Gamma_{H_1}$ (see Section \ref{sec_modified}, or \cite[Prop.\ 5.1]{GLS_finite_index}) is precisely to ensure that all exceptional surface-type vertex groups have finite mapping class group, so any $\mu\circ\sigma$ with  $\sigma\in \Aut^1(G)$ is related to $\mu$. After enlarging $F_{H_1,H_2}$, we may require  $\sigma\in \Aut^1(G)$ in the shortening argument and we obtain $\mu'$ related to $\mu$.
\end{proof}

\begin{rem}
  The lemma still holds if $H_1$ is any one-ended CSA group, using the fact that it is enough to use the modular group of $H_1$
  in the shortening argument.
\end{rem}

\begin{rem}
Being related to $\mu$ is weaker than being of the form $\mu\circ \sigma$, but
it may be expressed in first-order logic  (see \cite{Perin_elementary}, Lemma 5.18).
\end{rem}

The lemma allows us to choose $F_{J,H}\inc J$, for $J$ satisfying $(*)$ and $H$ a   subgroup of a hyperbolic group.

To sum up: 
\begin{lem}\label{lem_choix}
Let  $J,J'$ satisfy $(*)$, and let $H$ be a subgroup of a hyperbolic group. The element  $d_J\in J\setminus\{1\}$     and the   finite subsets $D_{J'\to J}$,  $F_{J,H}$ of $J\setminus\{1\}$ 
satisfy the following properties:
\begin{enumerate}
\item  $F_{J,H}\ne\es$.
\item
  For any injective morphism $\mu:J'\ra J$,
the element   $\mu(d_{J'})$ is conjugate to an element of $D_{J'\to J}$. 
\item 
Any non-injective morphism $J\ra H$
is related (with respect to $\tilde \Gamma_J)$ to one that kills an element of $F_{J,H}$. 
\qed
\end{enumerate}
\end{lem}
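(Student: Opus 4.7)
The plan is to observe that Lemma \ref{lem_choix} is a packaging statement: each of the three conclusions is already built into the constructions carried out in Subsection \ref{sec_choix} together with the shortening argument (Lemma \ref{shortening}), so the proof reduces to verification.

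For items (1) and (3), I would simply take $F_{J,H}$ to be the finite set $F_{H_1,H_2}$ produced by Lemma \ref{shortening} applied with $H_1=J$ and $H_2=H$: note that $J$ is one-ended torsion-free hyperbolic by assumption $(*)$, and $H$ is (a subgroup of) a hyperbolic group, so the lemma applies. Non-emptiness of $F_{J,H}$ is part of the conclusion of that lemma, yielding (1), and the relatedness/kernel property in (3) is exactly what the lemma provides (with the JSJ replaced by the modified JSJ $\tilde\Gamma_J$, which is what Lemma \ref{shortening} uses).

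For item (2), I would recall that $D_{J'\to J}\subset J$ was defined (in the paragraphs preceding the lemma) as follows: when $J'$ is an $A$-group and $J$ is a plain group there is no morphism $J'\to J$ and one sets $D_{J'\to J}=\emptyset$ (so (2) is vacuous), and in all other cases one chooses one representative in $J$ of the conjugacy class of $\mu(d_{J'})$ for each injective morphism $\mu:J'\to J$. Property (2) then holds tautologically from this definition, provided the set is finite. The only point requiring a genuine argument — and the sole mild obstacle in the proof — is therefore the finiteness of $D_{J'\to J}$. This finiteness was in fact already justified before the lemma was stated, and proceeds as follows: by Gromov's theorem \cite[Thm~5.3.C']{Gromov_hyperbolic} (see also \cite{Delzant_image}), a one-ended torsion-free hyperbolic group admits only finitely many conjugacy classes of embeddings into the hyperbolic group $J$ (in the $A$-group case this is even easier: there are finitely many ways of completing the identity on $A$ to an embedding up to conjugacy by elements centralizing $A$); two injective morphisms with the same image differ by an automorphism of $J'$; and by the choice of $d_{J'}$ made in Subsection \ref{sec_choix}, the orbit of $d_{J'}$ under $\mathrm{Aut}(J')$ consists of finitely many $J'$-conjugacy classes. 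Composing these three observations, the map sending an injective morphism $\mu$ to the $J$-conjugacy class of $\mu(d_{J'})$ has finite image, so $D_{J'\to J}$ is finite, completing the verification.
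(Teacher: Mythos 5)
Your proposal is correct and matches the paper's own treatment: the lemma is a packaging statement, proved by unwinding the definitions of $F_{J,H}$ (from Lemma \ref{shortening}) and of $D_{J'\to J}$ (via Gromov's finiteness theorem for images of one-ended subgroups, the observation that embeddings with a common image differ by an automorphism, and the choice of $d_{J'}$ with finite $\mathrm{Aut}$-orbit up to conjugacy), which is why the paper simply appends a \qed. One small remark on the $A$-group branch of your finiteness argument: you invoke finiteness of completions of $\mathrm{id}_A$ up to conjugacy by centralizing elements, which is more than is needed and not obviously justified; the paper's shortcut is cleaner, namely that $d_{J'}\in A$ and any morphism $\mu$ restricts to the identity on $A$, so $\mu(d_{J'})=d_{J'}$ outright and there is exactly one possible conjugacy class.
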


We end this subsection with the following lemma.

\begin{lem}\label{relat2}
  Assume that $G$ is an extended tower over $H$, and $J$ satisfies $(*)$. 
  Any injective morphism $\mu:J \ra G$  is related (with respect to $\tilde \Gamma_J$) to  a morphism $\lambda:J\ra G$ which is either non-injective,
  or 
    injective 
   with values in $H$.
  Moreover, $\mu(y)$ and $\lambda(y)$ are conjugate if $y=d_J$ or $y\in D_{J'\to J}$ with $J'$ satisfying $(*)$.
\end{lem}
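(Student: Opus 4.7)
My plan is to proceed by induction on the length $k$ of the tower $G = G_0 > G_1 > \cdots > G_k = H$ supplied by the definition of extended tower (relative to $A$ when $J$ is an $A$-group). The base case $k = 0$ is immediate with $\lambda = \mu$, so I will focus on the inductive step of peeling off a single étage $G > G_1$. In the free-product case $G = G_1 * \grp{t}$, condition $(*)$ on $J$ (one-endedness, relative to $A$) forces $\mu(J)$ into a conjugate $g G_1 g\m$; in the $A$-group case, malnormality of free factors together with $\mu|_A = \id$ and $A \subset gG_1g\m \cap G_1$ forces $g \in G_1$, whereas in the plain case I pass to $\mu' := \ad_{g\m} \circ \mu$. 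The resulting $\mu'$ is an injective morphism $J \to G_1$ related to $\mu$ (conjugation respects the relatedness condition) that sends $d_J$ and each element of $D_{J' \to J}$ to a conjugate of its $\mu$-image, and the induction hypothesis applied to $\mu'$ produces $\lambda$.

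The surface-type case requires more work. Let $\Gamma$ be the associated centered splitting of $G$ with central surface group $Q = \pi_1(\Sigma_\Gamma)$, and let $\rho: G \to G_1$ (or $G_1 * \grp{t}$ in the parachute case) be the retraction provided by Proposition \ref{equivpassimple}, whose image $\rho(Q)$ is non-abelian. Set $\mu_1 := \rho \circ \mu$. Applying Lemma \ref{aufond} to $\mu(J) \subset G$ with respect to $\Gamma$, the element $\mu(d_J)$, each element of $\mu(D_{J' \to J})$, and the image $\mu(K)$ of every edge group or non-surface-type vertex group $K$ of $\tilde{\Gamma}_J$ all lie in conjugates of bottom groups of $\Gamma$. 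Since $\rho$ acts as a conjugation on any conjugate of a bottom group, $\mu_1$ coincides with $\mu$ up to conjugation on each such $K$, on $d_J$, and on each element of $D_{J' \to J}$.

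The hard part will be verifying relatedness of $\mu_1$ and $\mu$ at non-exceptional surface-type vertices $v$ of $\tilde{\Gamma}_J$. Since $\mu$ is injective, the restriction $\mu|_{G_v}: \pi_1(\Sigma_v) \to G$ has no pinched curve, and Lemma \ref{lemcle2} applied to $\mu|_{G_v}$ with respect to $\Gamma$ yields two alternatives: either $\mu(G_v)$ lies in a conjugate of a bottom group of $\Gamma$, in which case $\mu_1$ and $\mu$ agree on $G_v$ up to conjugation, or there is an incompressible subsurface $Z \subset \Sigma_v$ with $\mu(\pi_1(Z))$ of finite index in $Q$, so that $\mu_1(\pi_1(Z)) = \rho(\mu(\pi_1(Z)))$ has finite index in the non-abelian group $\rho(Q)$. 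In this second case $\mu_1(\pi_1(Z))$ is itself non-abelian, since a finite-index subgroup of a non-cyclic finitely generated subgroup of a torsion-free hyperbolic group contains a non-abelian free subgroup. In either case $\mu_1(G_v)$ is non-abelian if and only if $\mu(G_v)$ is, so $\mu_1$ is related to $\mu$ with respect to $\tilde{\Gamma}_J$. If $\mu_1$ is non-injective, I set $\lambda := \mu_1$; otherwise I invoke the induction hypothesis on the injective morphism $\mu_1: J \to G_1$, and transitivity of relatedness together with composition of the conjugacies at each step yields $\lambda$ with all the required properties.
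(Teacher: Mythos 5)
Your proof is correct and takes essentially the same approach as the paper: you phrase the argument as an induction on the tower length, whereas the paper sets up the whole chain $G_0 > \cdots > G_k = H*F$ via Proposition \ref{prop_retractions} (pushing free-product étages to the bottom) and proves a single claim that $\mu_{i+1}$ is related to $\mu_i$ whenever $\mu_i$ is injective, but that claim is precisely your inductive step. The key ingredients are identical: Lemma \ref{aufond} controls edge groups, rigid vertex groups, exceptional surface groups, $d_J$ and $D_{J'\to J}$, while Lemma \ref{lemcle2} plus non-abelianity of $\rho(Q)$ handles the non-exceptional surface-type vertices.
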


\begin{proof}
  Let $G=G_0> G_1 > \dots > G_k=H*F$ be a sequence of subgroups
  associated to étages of surface type as in Proposition \ref{prop_retractions} and Remark  \ref{prop_retractions_rel}, with $F$ free and with retractions
 $\rho_i:G_i\ra G_{i+1}$.
   If $J$ is an $A$-group, 
 so are  $G$ and $H$ 
 (otherwise, there is no $\mu$),
 the tower is relative to $A$,  and   the  retractions $\rho_i$ are the identity on $A$, so   are  morphisms.  
   
 Let  $\mu_i=\tau_i\circ\rho_{i-1}\circ\dots \circ \rho_0 \circ \mu: J\ra G$, with $\tau_i:G_i\into G$ the inclusion and $\mu_0=\mu$.
 It is a morphism.
  We claim that, \emph{if $\mu_i$ is injective, then $\mu_{i+1}$ and $\mu_i$ are related,
and   $\mu_{i+1}(y)$, $\mu_{i}(y)$ are conjugate  for any $y\in \{d_J\}\cup D_{J'\to J}$.}

 This claim implies the lemma.   Indeed, if $\mu_k$ is injective,   applying the claim repeatedly shows that it  is related to $\mu$.
 Since $J$ is  (relatively) one-ended, its image is contained in $H$ up to conjugacy 
and the lemma follows in this case.
If $\mu_k$ is not injective,   there is a first index $i_0\geq 1$ such that $\mu_{i_0}$ is not injective,
and the claim implies that $\mu_{i_0}$ is related to $\mu$.
The moreover part of the statement immediately follows from the claim.

We now prove the claim. We let $\Lambda_i$  be the centered splitting of $G_i$ associated to the étage,
with central vertex group $Q_i$.
We also consider $y\in \{d_J\}\cup D_{J'\to J}$.
We check both conditions of relatedness (Definition \ref{related}), starting with the second one.

Since the bottom groups of $\Lambda_i$ are contained in $G_{i+1}$ up to conjugacy,
the retraction $\rho_i$ agrees with a conjugation on any conjugate of a bottom group of $\Lambda_i$, 
so the second condition holds thanks to Lemma \ref{aufond} saying that the image of rigid and non-exceptional surface groups of $\tilde \Gamma_J$ under $\mu_i$
are contained (up to conjugacy) in bottom groups of $\Lambda_i$.
This lemma also says that $\mu_i(y)$ is contained in a conjugate of a bottom group of $\Lambda_i$ so
$\mu_{i+1}(y)$ and $\mu_{i}(y)$ are conjugate in $G$.

If now $Q_J=\pi_1(S)$ is a non-exceptional surface-type vertex group of $\tilde\Gamma_J$,
Lemma \ref{lemcle2} says that (up to conjugacy) its image by $\mu_i$ is contained in a bottom group of $\Lambda_i$ or contains a finite index subgroup of $Q_i$.
In either case its image by $\rho_i$ is non-abelian  
and the first condition is satisfied. This proves the claim, hence also the lemma.
\end{proof}

\subsection{A first-order criterion for isomorphism}\label{foc}

Let $C,C'$ be the cores of $G,G'$,   as in Theorem \ref{lethm2}. They are plain groups or   $A$-groups with $A\neq\{1\}$ (they may be trivial or equal to $A$). Recall that, in $A$-groups, everything is relative to $A$. 

We denote by $C=C_1*\dots*C_p$ and $C'=C'_1*\dots*C'_{p'}$ their   Grushko decompositions.
In the relative case, we assume 
  $A<C_1$ and $A<C'_1$, so that  $C_1$ and $C'_1$ are $A$-groups; all other factors are plain groups.  All groups $C_i$ and $C'_j$ 
 satisfy  $(*)$.  They are prototypes, but this will not be used  in this subsection.

Our goal is to prove that $C$ and $C'$ are isomorphic. 
We shall use cohopfianity of one-ended torsion-free hyperbolic groups $H$ (\cite{Sela_structure},  \cite[Corollary 4.2]{PeSk_homogeneity} for the relative case):
any injective morphism $H\ra H$ is an isomorphism.
In particular,
if $C_i$ embeds into $C'_j$ and $C'_j$ embeds into $C_i$, then both embeddings are isomorphisms. 

Cohopfianity does not hold when the groups decompose as a free product, so
we cannot conclude that $C$ and $C'$ are isomorphic if each  embeds into the other.
We introduce a technical strengthening of the notion of embedding that will allow us to conclude that $C$ and $C'$ are isomorphic.

\begin{dfn}[Engulfing]\label{dfn_engulf}
Let $H$ be a plain group or an $A$-group.
  Say that $C=C_1*\dots*C_p$ \emph{engulfs} in   $H$
  if there exist injective morphisms $\mu_i:C_i\ra H$, for  $i\in\{1,\dots,p\}$, such that,
  for each $i_0\neq i_1\in\{1,\dots,p\}$, 
  $$\mu_{i_1}(d_{C_{i_1}})\text{ is not conjugate to an element in } \mu_{i_0}(D_{C_{i_1}\to C_{i_0}}).$$
\end{dfn}

A main feature of this definition is that it prevents the images of $\mu_{i_0}$ and $\mu_{i_1}$ 
from being conjugate in $H$. 

\begin{rem}\label{remeng}
  Recall that $D_{C_{i_1}\to C_{i_0}}$ is empty if       $C_{i_1}$ does not embed into $C_{i_0}$, so  
  the condition is automatically  satisfied in this case. Also note that the trivial group engulfs in any $H$, and $A$ (viewed as an $A$-group) engulfs in any $A$-group. If $C$ is one-ended ($p=1$), engulfing is equivalent to embedding.
\end{rem}

\begin{lem}\label{lem_bi_engulf} 
  If $C,C'$ engulf in each other, then $C$ and $C'$ are isomorphic.
\end{lem}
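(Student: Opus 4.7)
\emph{Plan.} Write the Grushko decompositions $C = C_1 * \cdots * C_p$ and $C' = C'_1 * \cdots * C'_{p'}$; in the $A$-group case $A \subset C_1 \cap C'_1$ while all other factors are plain groups, and each $C_i$ and $C'_j$ is a one-ended torsion-free hyperbolic group satisfying $(*)$. Since $\mu_i : C_i \to C'$ is an injective morphism with relatively one-ended domain, $\mu_i(C_i)$ is conjugate in $C'$ into a unique Grushko factor $C'_{\sigma(i)}$; after this conjugation one obtains an injective morphism $\alpha_i : C_i \to C'_{\sigma(i)}$, well-defined up to conjugation in $C'_{\sigma(i)}$. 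Symmetrically I would define $\tau : \{1,\dots,p'\} \to \{1,\dots,p\}$ and $\beta_j : C'_j \to C_{\tau(j)}$. In the relative case, malnormality of $C'_1$ in $C'$ combined with $\mu_1$ being the identity on $A$ forces $\sigma(1) = 1$ with $\alpha_1$ an $A$-morphism, and similarly $\tau(1) = 1$.

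Next I would exploit the composition $\beta_{\sigma(i)} \circ \alpha_i : C_i \to C_{\tau\sigma(i)}$, an injective morphism between one-ended torsion-free hyperbolic groups. Since $\tau\sigma$ is a self-map of the finite set $\{1,\dots,p\}$, some $i_0$ lies on a cycle of $\tau\sigma$ (in the relative case, $i_0 = 1$ works, being fixed by $\tau\sigma$). Going once around the cycle yields an injective endomorphism of $C_{i_0}$, hence an isomorphism by cohopfianity (relative cohopfianity via Corollary~4.2 of \cite{PeSk_homogeneity}); the elementary observation that if $g \circ f$ is an isomorphism and $f, g$ are injective then both $f$ and $g$ must be bijective, applied around the cycle, implies that $\alpha_{i_0} : C_{i_0} \to C'_{j_0}$ and $\beta_{j_0} : C'_{j_0} \to C_{i_0}$ are both isomorphisms, where $j_0 := \sigma(i_0)$.

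The crux is to show $\sigma^{-1}(j_0) = \{i_0\}$, using the non-conjugacy clause of engulfing. Were there some $i_1 \neq i_0$ with $\sigma(i_1) = j_0$, the composition $\alpha_{i_0}^{-1} \circ \alpha_{i_1} : C_{i_1} \to C_{i_0}$ would be an injective morphism, so Lemma~\ref{lem_choix}(2) would produce $d \in D_{C_{i_1} \to C_{i_0}}$ with $\alpha_{i_0}^{-1} \circ \alpha_{i_1}(d_{C_{i_1}})$ conjugate in $C_{i_0}$ to $d$. Applying $\alpha_{i_0}$ gives $\alpha_{i_1}(d_{C_{i_1}})$ conjugate in $C'_{j_0}$ to $\alpha_{i_0}(d)$, and re-conjugating by the $C'$-elements implicit in the definition of $\alpha_{i_0}$ and $\alpha_{i_1}$ yields $\mu_{i_1}(d_{C_{i_1}})$ conjugate in $C'$ to $\mu_{i_0}(d)$---contradicting the engulfing of $C$ in $C'$. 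Thus $\sigma^{-1}(j_0) = \{i_0\}$, and by the symmetric argument $\tau^{-1}(i_0) = \{j_0\}$.

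I would then conclude by induction on $p + p'$, the base case $p = p' = 0$ (or the degenerate relative case $C = C' = A$) being trivial. By the previous paragraph, the remaining morphisms $\mu_i$ for $i \neq i_0$ and $\mu'_j$ for $j \neq j_0$ land up to conjugation in the free products $\tilde C' = *_{j \neq j_0} C'_j$ and $\tilde C = *_{i \neq i_0} C_i$ respectively; after adjusting by such conjugations, they yield engulfings between these two plain groups (non-conjugacy conditions transfer from $C'$ to $\tilde C'$ via malnormality of the free factor $\tilde C' \subset C'$). The induction hypothesis gives $\tilde C \cong \tilde C'$, and combined with the isomorphism $C_{i_0} \cong C'_{j_0}$---which is an $A$-isomorphism when $i_0 = 1$---this yields $C \cong C'$ (as $A$-groups in the relative case). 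The main obstacle is the third paragraph: the non-conjugacy clause in the definition of engulfing is tailored precisely to interlock with Lemma~\ref{lem_choix}(2), but the conjugation bookkeeping (first in $C_{i_0}$, then transported into $C'_{j_0}$ via $\alpha_{i_0}$, and finally into $C'$) must be executed carefully to produce the contradiction.
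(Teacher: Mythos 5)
Your proof is correct and rests on the same two pillars as the paper's: the co-Hopf property applied to a periodic orbit of the index self-map, and the incompatibility with the engulfing non-conjugacy clause via Lemma~\ref{lem_choix}(2). The organization differs slightly: the paper dichotomizes (if both index maps $\alpha,\alpha'$ are bijective, one concludes directly by co-Hopf; otherwise one produces a periodic $i_0$ and a distinct $i_1$ in the same $\alpha$-fiber and derives a contradiction), whereas you fix any periodic $i_0$, show that the fiber over $j_0=\sigma(i_0)$ is a singleton by the same $d/D$ argument, and then peel off $C_{i_0}$ and $C'_{j_0}$ and induct on $p+p'$. Your variant sidesteps a small combinatorial point that the paper leaves implicit --- namely that non-bijectivity of $(\alpha,\alpha')$ forces the existence of a periodic $i_0$ sharing its $\alpha$-image with some $i_1\neq i_0$, possibly after swapping the roles of $C$ and $C'$ --- at the price of having to verify that the restricted maps still define an engulfing between $\tilde C$ and $\tilde C'$; you handle this correctly (conjugating each $\mu_i$, $i\neq i_0$, into $\tilde C'$ preserves the $C'$-conjugacy class of $\mu_i(d_{C_i})$, and non-conjugacy in $C'$ a fortiori gives non-conjugacy in the subgroup $\tilde C'$). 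So this is essentially the paper's proof, rearranged as an induction.
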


\begin{rem} 
  We do not claim that the morphisms $\mu_i$ from the definition of engulfing extend to an isomorphism $C\ra C'$,
 though they do after changing each of them by a conjugation.
\end{rem}

\begin{proof}
Consider $\mu_i:C_i\ra C'$ and $\mu'_j:C'_j\ra C$ as in Definition \ref{dfn_engulf}. 
The image $\mu_i(C_i)$ is isomorphic to $C_i$, hence one-ended
(as a plain group or an $A$-group accordingly)
  and contained in a conjugate of a unique $C'_j$.  After composing $\mu_i$ with a conjugation we may assume $\mu_i(C_i)\inc C'_j$.
  
  Define
 $\alpha:\{1,\dots,p\}\ra \{1,\dots,p'\}$  by saying that $\mu_i(C_i)\inc  
  C'_{\alpha(i)}$, and
$\alpha':\{1,\dots,p'\}\ra \{1,\dots,p\}$ similarly.
  Note that $\alpha(1)=\alpha'(1)=1$ in the context of $A$-groups since $\mu_1,\mu'_1$ are the identity on $A$.

If $\alpha$ and $\alpha'$ are bijective, every index $i\in\{1,\dots,p\}$ is $(\alpha'\circ\alpha)$-periodic and
the co-Hopf property implies that $C$ and $C'$ are isomorphic.

Thus assume that one of them, at least, is not injective, and argue towards a contradiction.
Up to exchanging the roles of $C$ and $C'$, 
there exist  $i_0\neq i_1\in \{1,\dots,p\}$ having the same image $j$ under $\alpha$, with 
$i_0$   periodic under $\alpha'\circ\alpha$.

The  co-Hopf property applied to the sequence of   
embeddings $$C_{i_0}\hookrightarrow C'_{\alpha(i_0)}\hookrightarrow C_{(\alpha'\circ\alpha)(i_0)}\hookrightarrow \cdots \hookrightarrow C_{i_0}$$
shows that $\mu_{i_0}$ is a bijection between $C_{i_0}$ and 
$ C'_{\alpha(i_0)}$.
 In view of Remark \ref{rem_inverse} note that, in the relative case, $\alpha(i_0)=1$ implies $i_0=1$, so that
$\mu_{i_0}\m$ is a morphism. 

Now  $\mu_{i_0}\m \circ \mu_{i_1}$ is an embedding of $C_{i_1}$ 
into
  $C_{i_0}$, hence maps $d_{C_{i_1}}$ into $D_{C_{i_1}\to C_{i_0}}$ up to conjugacy by the definition of $D_{C_{i_1}\to C_{i_0}}$
  (see Lemma \ref{lem_choix}).
Thus $\mu_{i_1}(d_{C_{i_1}})$ is conjugate to an element of $\mu_{i_0}(D_{C_{i_1}\to C_{i_0}})$, contradicting the definition of engulfing.
\end{proof}

As we cannot express injectivity in first-order logic,
the engulfing property is not a first-order property.
We therefore introduce another strengthening which, as we will see in Lemma \ref{lem_engulf_ee}, has a first-order interpretation.

\begin{dfn}[Strong engulfing]\label{dfn_strong_engulf}
  Say that  $C=C_1*\dots*C_p$ \emph{strongly engulfs} in $H$ with respect to  non-empty finite sets $F_i\subset C_i\setminus\{1\}$
  if there exist 
  morphisms $\mu_i:C_i\ra H$ such that:
  \begin{enumerate}
  \item there is no morphism $\lambda_i$ related to $\mu_i$ (with respect to the modified JSJ decomposition $\Tilde \Gamma_{C_i}$, see Definition \ref{related}) such that
    $\ker \lambda_i$ contains an element of $F_i$;
  \item for all $i_0\neq i_1\in\{1,\dots,p\}$, the element $\mu_{i_1}(d_{C_{i_1}})$ is not conjugate to an element of  $\mu_{i_0}(D_{C_{i_1}\to C_{i_0}})$.
  \end{enumerate}
\end{dfn}

 The following lemma explains the terminology.
 The fact that the core of $G$ strongly engulfs in $G$ 
is  a key point in the proof of Theorem \ref{lethm} and will be proved in  Lemma \ref{lem_proto_engulf}.

\begin{lem}\label{rem_strong}
  If $C $ strongly engulfs in $H$ with respect to the finite sets $F_{C_i,H}$, then $C$ engulfs in $H$. 
\end{lem}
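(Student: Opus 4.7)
The plan is to show that the morphisms $\mu_i$ witnessing strong engulfing are automatically injective, so they already witness engulfing. The second condition in Definition \ref{dfn_strong_engulf} is literally the condition appearing in Definition \ref{dfn_engulf}, so nothing needs to be done there.

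Fix $i$ and assume for contradiction that $\mu_i : C_i \to H$ is not injective. Since $C_i$ satisfies condition $(*)$ from Subsection \ref{sec_choix} (it is a one-ended torsion-free hyperbolic group, or a non-trivial one-ended $A$-group) and $H$ is a subgroup of a torsion-free hyperbolic group (namely $G$ or $G'$), Lemma \ref{lem_choix}(3) (which is the content of Lemma \ref{shortening} applied with the set $F_{C_i, H}$ chosen in Subsection \ref{sec_choix}) produces a morphism $\lambda_i : C_i \to H$ which is related to $\mu_i$ with respect to $\tilde\Gamma_{C_i}$ and whose kernel meets $F_{C_i, H}$. This directly contradicts condition (1) of strong engulfing with $F_i = F_{C_i, H}$. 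Hence $\mu_i$ is injective.

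Having established injectivity of each $\mu_i$, the morphisms $\mu_1, \dots, \mu_p$ satisfy both clauses of Definition \ref{dfn_engulf}: injectivity, and the same non-conjugacy condition on $\mu_{i_1}(d_{C_{i_1}})$ versus $\mu_{i_0}(D_{C_{i_1} \to C_{i_0}})$ which is already part of Definition \ref{dfn_strong_engulf}. Therefore $C$ engulfs in $H$.

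There is essentially no obstacle here: the lemma is exactly the observation that the shortening-argument choice of $F_{C_i, H}$ was tailored so that clause (1) of strong engulfing forces injectivity. The real work lies in later results of the section, which will show that the inclusion $C \hookrightarrow G$ is a strong engulfing (Lemma \ref{lem_proto_engulf}), and that strong engulfing can be expressed in first-order logic (Lemma \ref{lem_engulf_ee}), so that strong engulfing transfers from $G$ to $G'$ via the partial elementary map, whereupon the present lemma plus Lemma \ref{lem_bi_engulf} yield $C \simeq C'$.
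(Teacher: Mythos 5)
Your proof is correct and uses the same argument as the paper: the sets $F_{C_i,H}$ were chosen precisely so that clause (1) of strong engulfing forces injectivity of each $\mu_i$ via Lemma \ref{lem_choix}(3), and the remaining clause of engulfing is literally clause (2) of strong engulfing. Your write-up is slightly more explicit than the paper's one-sentence proof, but the content is identical.
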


\begin{proof}
The choice of the sets $F_{C_i,H}$ (see Lemma \ref{lem_choix}) guarantees   that any non-injective morphism $C_i\ra H$ is related to one that kills an element of $F_{C_i,H}$, so the first condition of strong engulfing implies that the $\mu_i$'s are  injective. 
\end{proof}

We now explain why   strong engulfing is a first-order property. 

\begin{lem}\label{lem_engulf_ee} 
Fix $C$ and finite subsets $F_i\subset C_i\setminus\{1\}$ for $1\le i\leq p$. Let $H$ and $H'$ be elementarily equivalent  plain groups,  or $A$-groups which are elementarily equivalent  as $A$-groups (i.e.\ 
$H\equiv_A H'$).
  If $C$ strongly engulfs in $H$ with respect to the $F_i$'s, 
  then $C$ strongly engulfs in $H'$ with respect to the $F_i$'s. 
\end{lem}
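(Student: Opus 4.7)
The plan is to prove that the assertion ``$C$ strongly engulfs in $H$ with respect to the $F_i$'' can be expressed by a single first-order sentence $\Phi$ (using constants in $A$ in the relative case). The lemma will then follow immediately: $H\equiv H'$ (resp.\ $H\equiv_A H'$) applied to $\Phi$ gives strong engulfing in $H'$.

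To set this up, I would fix a finite presentation $\langle \bar c_i \mid R_i\rangle$ of each $C_i$ (as a plain group, or as an $A$-group in the case of $C_1$ in the relative case). Each $C_i$ is a finitely generated torsion-free hyperbolic group, being a Grushko factor of the core $C$, which is itself finitely generated as a retract of the hyperbolic group $G$; hence each $C_i$ is finitely presented, and $C_1$ is finitely presented as an $A$-group in the relative case. A morphism $\mu_i:C_i\to H$ is then encoded by a tuple $\bar x_i\in H^{|\bar c_i|}$ satisfying all relations of $R_i$, which is a quantifier-free first-order condition on $\bar x_i$ (with constants from $A$ used for $\mu_1$ in the relative case). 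Every element of the finite data $F_i$, $\{d_{C_i}\}$, and $D_{C_{i_1}\to C_{i_0}}$ is then represented by a fixed word in the generators $\bar c_i$, and evaluated at $\bar x_i$ via $\mu_i$.

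With this encoding, condition $(2)$ of strong engulfing becomes a finite conjunction of sentences of the form $\forall g\in H,\; g\cdot w_d(\bar x_{i_1})\cdot g\m \neq w_y(\bar x_{i_0})$, one for each pair $i_0\ne i_1$ and each $y\in D_{C_{i_1}\to C_{i_0}}$, which is first-order thanks to the finiteness of these $D$-sets. For condition $(1)$, I would invoke the first-order expressibility of the relation ``$\lambda_i$ is related to $\mu_i$ with respect to $\tilde\Gamma_{C_i}$'' established in Lemma~5.18 of \cite{Perin_elementary}; this is available because the vertex and edge groups of the finite graph of groups $\tilde\Gamma_{C_i}$ are themselves finitely generated, so the conjugation conditions in the definition of ``related'' reduce to equations on finite generating sets, and non-abelianness of the image of a surface-type vertex group is a simple existential formula in its generators. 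Condition $(1)$ then reads $\forall \bar y_i\bigl[(\bar y_i \text{ codes a morphism related to }\mu_i)\Rightarrow \bigwedge_{f\in F_i} w_f(\bar y_i)\ne 1\bigr]$, first-order since $F_i$ is finite. Existentially quantifying over all the tuples $\bar x_i$ subject to $R_i$ yields the desired sentence $\Phi$.

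The only nontrivial part of the bookkeeping will be to verify, in the relative case with $A$ possibly infinitely generated, that $A$-constants are indeed enough to encode morphisms out of $C_1$ and to state the relatedness condition, rather than some infinite amount of extra data; this follows from finite presentability of $C_1$ as an $A$-group, which itself comes from the tower structure of $G$ over $C$ relative to $A$. Once this is in place, $\Phi$ is a genuine first-order sentence (with constants in $A$ in the relative case), and elementary equivalence transports it from $H$ to $H'$, completing the proof.
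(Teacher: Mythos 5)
Your proposal follows the same general strategy as the paper's proof: encode morphisms $C_i\to H$ by tuples of elements of $H$ satisfying defining equations, observe that conditions (1) and (2) of strong engulfing translate into first-order formulas on these tuples (invoking \cite[Lemma 5.18]{Perin_elementary} for relatedness), and transport the resulting sentence from $H$ to $H'$ by elementary equivalence. That much matches the paper.

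There is, however, a genuine gap in your treatment of the relative case. You correctly flag that when $A$ is infinitely generated (which is allowed here), an $A$-morphism $\mu:C_1\to H$ a priori imposes infinitely many equations $\mu(a)=a$, $a\in A$. You then claim this is resolved because ``$C_1$ is finitely presented as an $A$-group, which itself comes from the tower structure of $G$ over $C$ relative to $A$.'' This assertion is unsubstantiated and is not how the difficulty is actually overcome. Finite presentability of $C_1$ as an abstract group does not yield finite presentability relative to an arbitrary, possibly infinitely generated, subgroup $A$; and the extended tower structure of $G$ over $C$ relative to $A$ gives no control over the relative presentation of the Grushko factor $C_1$ of $C$. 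More importantly, what is actually needed is not that the kernel of $A*\bbF_r\onto C_1$ be finitely normally generated, but rather that the solution set in $H^r$ of the infinite system of equations ``$w_n(\bar x)=a_n$'' (where the $a_n$ generate $A$) agree with that of some finite subsystem. That is precisely equational noetherianity of torsion-free hyperbolic groups (\cite[Th.~1.22]{Sela_diophantine7}), which is what the paper invokes; the paper also mentions an alternative route via a large finitely generated $A_0\inc A$ using Lemma 4.20 of \cite{Perin_elementary} and Corollary 4.5 of \cite{PeSk_homogeneity}. Neither alternative is a statement about relative finite presentability. If you replace your unproved relative finite-presentation claim with an appeal to equational noetherianity, the rest of your argument goes through.
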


\begin{proof}
  This  is shown as in   \cite[Proposition 6]{Sela_diophantine6} or \cite[Lemma 5.18]{Perin_elementary}.
  We  sketch  the argument for completeness.

Given a finite presentation $$J=\grp{s_1,\dots,s_r \mid R_1(s_1,\dots,s_r)= \dots=R_l(s_1,\dots,s_r)=1}$$ 
of a group $J$, 
homomorphisms $\mu:J\ra H$ can be encoded by $r$-uples 
$(h_1,\dots,h_r)\in H^r$
satisfying the equations $R_j(h_1,\dots,h_r)=1$ via the one-to-one correspondence $$\mu\mapsto h_\mu=(\mu(s_1),\dots, \mu(s_r)).$$
If $J$ and $H$ are $A$-groups, let $a_n=w_n(s_1,\dots,s_r)$ be a (finite or infinite) sequence of generators of $A$,
written as words in the generators of $J$.
Then $A$-morphisms can be encoded by imposing additional equations with constants in  the subgroup $A\inc H$, namely $w_n(h_1,\dots,h_r)=a_n$.
Equational noetherianity of hyperbolic groups \cite[Th.\ 1.22]{Sela_diophantine7} says that this set of equations
is equivalent to a finite subset if $H$ is hyperbolic.

Now, given elements $z,z'\in J$ or a finitely generated subgroup $Z<J$, the facts that
$\mu$ kills $z$, that $\mu(Z)$ is abelian,   that $\mu(z)$ is conjugate to $\mu(z')$, all translate into first-order formulas on the tuple $h_\mu$.
Similarly, the fact that two morphisms $\mu$ and $\mu'$ differ by a conjugation on $Z$ is expressed by a first-order formula on
 $(h_\mu,h_{\mu'})$.

It follows that relatedness translates into a first-order property, and
that  $C=C_1*\dots*C_p$ strongly engulfs  in a hyperbolic group $H$ (with respect to the $F_i$'s)  if and only if some first-order property with coefficients in $A$
is satisfied by $H$.
Since $H$ and $H'$ are elementarily equivalent, and    $C$ strongly engulfs 
in $H$,
 it also does in $H'$.
  \end{proof}

\begin{rem}
  Instead of using equational notherianity, one may introduce a large finitely generated subgroup $A_0\inc A$ and use Lemma 4.20 of \cite{Perin_elementary} and Corollary 4.5 of \cite{PeSk_homogeneity}.
\end{rem}

\subsection{Proof of  Theorem \ref{lethm2} and Corollary \ref{thcl2}}
\label{pfsensfac}

Let $G,G'$ be two plain groups, or two $A$-groups with $A\neq\{1\}$.  Recall that, in $A$-groups, everything is relative to $A$. 

We first show Theorem \ref{lethm2}:
we assume that $G$ and $G'$ are elementarily equivalent (as plain or $A$-groups accordingly), and we show that their cores $C=C_1*\dots*C_p$ and $C'=C'_1*\dots*C'_{p'}$
  are isomorphic. 

  There are four successive steps in the proof, among which steps 2 and 4 have already been treated.
\begin{enumerate}
\item Prove that $C$ strongly engulfs in $G$ with respect to the sets $F_{C_i,G'}$, using  the fact that $C$ is a prototype.

\item It follows that $C$ strongly engulfs in $G'$ with respect to the sets $F_{C_i,G'}$ because engulfing of $C$ is a first-order property (Lemma \ref{lem_engulf_ee}).

\item Deduce that $C$ (weakly) engulfs in the core $C'$.

\item 
  One concludes that $C$ is isomorphic to $C'$: by symmetry of the argument, $C$ and $C'$ engulf in each other, so Lemma \ref{lem_bi_engulf} applies.
\end{enumerate}

There remains to prove steps 1 and 3.
Note that step 3 follows from Lemma \ref{rem_strong} in the special case when  $G'$ is a prototype ($C'=G'$).

Step 1 (the core of $G$ strongly engulfs in $G$)  relies on the fact that $C$ is a prototype and uses results of Perin about preretractions explained in   Subsection \ref{prtor}.

\begin{lem}\label{lem_proto_engulf}
  Let $G$ be an extended tower over  a prototype $C=C_1*\dots*C_p$. 
  For all $i\leq p$, consider any non-empty finite subset $F_i\subset C_i\setminus\{1\}$.
  Then $C$ strongly engulfs in $G$ with respect to the sets $F_i$. 
\end{lem}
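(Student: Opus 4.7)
The plan is to take $\mu_i$ to be the natural inclusion of $C_i$ into $G$ arising from the tower structure. More precisely, by Proposition \ref{prop_retractions} (and Remark \ref{prop_retractions_rel} in the $A$-group case), the chain of retractions associated to the tower provides an embedding $\mu:C\hookrightarrow G$ as a retract, and I set $\mu_i:=\mu|_{C_i}$ for each $i\in\{1,\dots,p\}$.

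The second condition of strong engulfing follows at once from malnormality: Lemma \ref{lem_malnormal_tour} says that the family $(\mu_1(C_1),\dots,\mu_p(C_p))$ is malnormal in $G$, so for $i_0\neq i_1$ no non-trivial element of $\mu_{i_1}(C_{i_1})$ is conjugate in $G$ to any element of $\mu_{i_0}(C_{i_0})$. Since $d_{C_{i_1}}\neq 1$, this gives the required non-conjugacy with any element of $\mu_{i_0}(D_{C_{i_1}\to C_{i_0}})\subset \mu_{i_0}(C_{i_0})$.

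For the first condition I will argue by contradiction. Suppose there exist $i$ and a morphism $\lambda_i:C_i\to G$ related to $\mu_i$ with respect to $\tilde\Gamma_{C_i}$ such that some element of $F_i$ lies in $\ker\lambda_i$. Being related to the inclusion $\mu_i$ means that $\lambda_i$ coincides with $\mu_i$ up to conjugation on each edge group, each rigid vertex group, and each exceptional surface-type vertex group of $\tilde\Gamma_{C_i}$, and that on every non-exceptional surface-type vertex group the image is non-abelian (because $\mu_i$ is injective, so its image on such a vertex group is always non-abelian). Thus $\lambda_i$ is a non-injective \emph{preretraction} of $C_i$ in $G$ in the sense introduced by Perin.

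The key tool will then be the preretraction theorem (Propositions 5.11 and 5.12 of \cite{Perin_elementary}, which will be reproven in Subsection \ref{prtor}): the existence of a non-injective preretraction of the one-ended group $C_i$ (one-ended relative to $A$ when $i=1$ in the relative setting) forces $C_i$ to be an extended étage of surface type. By Remark \ref{rk_prodlib}, this makes $C=C_1*\cdots *C_p$ an extended étage of surface type too, contradicting the assumption that $C$ is a prototype. The main obstacle in carrying out this plan lies precisely in invoking the preretraction theorem in the relative setting, and therefore in the independent work of Subsection \ref{prtor}; the rest of the argument will be essentially formal, modulo the verification that the structure obtained on $C_i$ indeed lifts to the étage structure of $C$ forbidden by the prototype hypothesis.
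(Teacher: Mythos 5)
Your proof is correct and follows essentially the same route as the paper: take the $\mu_i$ to be the inclusions, invoke Lemma \ref{lem_malnormal_tour} for the second (conjugacy) condition, and derive the first from the fact that a failure of strong engulfing would yield a non-injective preretraction $C_i\to G$, which Theorem \ref{thm_preretraction} forbids because $C_i$ (hence $C$) would then be an extended étage of surface type, contradicting that $C$ is a prototype. The only superficial difference is that you route the contradiction through Remark \ref{rk_prodlib} to make $C$ itself an étage, whereas the paper notes directly that $C_i$ being an étage already contradicts the prototype hypothesis via Lemma \ref{eqprot}; these are the same observation.
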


\begin{proof}
  We show that the inclusions  $\mu_i:C_i\ra G$ define a strong engulfing.
  By the malnormality property stated in Lemma \ref{lem_malnormal_tour}, conjugates of $C_{i_0}$ and $C_{i_1}$ in $G$ intersect trivially  for $i_0\ne i_1$,
  so the second condition of Definition \ref{dfn_strong_engulf} is satisfied.
  Assume by way of contradiction that, for some $i\leq p$, there exists  a morphism $\lambda_i:C_i\ra G$
  related to the inclusion $\mu_i$ (with respect to $\tilde \Gamma_{C_i}$), which kills an element of $F_i$.
  In particular, $\lambda_i$ is non-injective.
 
 A map $\lambda_i$ which is related  to the inclusion is  called a \emph{preretraction} (\cite{Perin_elementary}, see Definition \ref{defpreretr}). 
  The existence of a non-injective preretraction $C_i\ra G$ (with respect to $\tilde \Gamma_{C_i}$) implies that  either $C_i$  is a non-exceptional closed surface group or $\tilde \Gamma_{C_i}$ is    non-trivial   and 
  retractable
  (in $C_i$):
this  follows from Propositions 5.11 and 5.12 of \cite{Perin_elementary}, we will give a   rather short proof 
in  Subsection \ref{prtor}  (see Theorem \ref{thm_preretraction}). 
This contradicts the fact that $C_i$ is a prototype.
\end{proof}

There remains to prove step 3.
Assuming that $C$ strongly engulfs in $G'$ with respect to the sets $ F_{C_i,G'} $,
 consider morphisms $\mu_i:C_i\ra G'$ as in Definition \ref{dfn_strong_engulf}. 
The choice of $F_{C_i,G'}$ (see Lemma \ref{lem_choix}) ensures that each $\mu_i$ is injective.
By Lemma \ref{relat2}, $\mu_i$ is  related to a morphism $\lambda_i$ which is  either non-injective  or injective with values in $C'$. The definition of strong engulfing rules out the first possibility.
The \emph{moreover} part of Lemma \ref{relat2} shows that the injective morphisms $\lambda_i:C_i\ra C'$
define an engulfing of $C$ into $C'$: the element $\lambda_{i_1}(d_{C_{i_1}})$ is conjugate to $\mu_{i_1}(d_{C_{i_1}})$,
and  elements of $\lambda_{i_0}(D_{C_{i_1}\to C_{i_0}})$ are conjugate to elements of $\mu_{i_0}(D_{C_{i_1}\to C_{i_0}})$, 
so
   $\lambda_{i_1}(d_{C_{i_1}})$ cannot be conjugate 
  to an element of  $\lambda_{i_0}(D_{C_{i_1}\to C_{i_0}})$ (in $G'$ hence   in $C'$). 

This completes the proof of Theorem \ref{lethm2}

We end this section by proving   Corollary
\ref{thcl2} without assuming that $H$ is hyperbolic. In particular, we do not assume that $H$ is finitely generated.
 
\begin{proof}[Proof of Corollary \ref{thcl2}] We view $H$ and $G$ as $A$-groups, with
  $A=H$, and everything is relative to $A$. 
  As above,   let $C$ be a core of $G$  and  $C=C_1*\dots*C_p$ its  Grushko decomposition  
  with $A\subset C_1$. We show $C_1=A$. This implies that $H$ is hyperbolic, and the corollary follows from Theorem \ref{lethm} as explained at the beginning of Section \ref{interp}.
 
  We assume $C_1\ne A$ and work towards a contradiction. Then the following statement is true:
 \emph {any  morphism $\mu$ from $C_1$ to $H=A$ 
  is related to a morphism $\mu'$ whose kernel meets   $F_{C_1,H}$.}
Indeed, $\mu$ cannot be injective (it is the identity on $A$, and $C_1\ne A$)
and $H$ is a subgroup of the torsion-free hyperbolic group $G$, so the
statement holds by construction of $F_{C_1,H}$ (see Lemma \ref{lem_choix}).

As in Lemma \ref {lem_engulf_ee}, 
  the statement may be expressed by the fact that $H$ satisfies some  first-order formula with constants in   $A=H$. 
  Since $H$ elementarily embeds in $G$, this formula holds in $G$ and says that 
  any  morphism $\mu$ from $C_1$ to $G$ is related to a morphism $\lambda$ whose kernel meets $F_{C_1,G}$.
Applied to  the inclusion  $\mu :C_1\into  G$, this yields a non-injective preretraction  $\lambda:C_1\ra G$. As in the proof of Lemma \ref{lem_proto_engulf},
 Theorem \ref{thm_preretraction}   contradicts the fact that $C_1$ is a prototype (relative to $A$).
\end{proof}

\subsection{Preretractions} \label{prtor}

Preretractions 
were introduced by Perin in her work \cite{Perin_elementary} on elementarily embedded subgroups.   
 She proved the important fact that they lead to étage structures
 (Propositions 5.11 and 5.12 of \cite{Perin_elementary},  Theorem \ref{thm_preretraction} below). 
After recalling the definition, we will give a simpler proof of this fact.

As in the previous subsections, all groups in this subsection are either plain groups or $A$-groups for a given group $A\neq \{1\}$. In the relative case, 
recall that all morphisms are the identity on $A$.
Splittings, Grushko decompositions, JSJ decompositions, surface-type vertex groups, \'etages, towers  
are 
all relative to $A$. 
 The groups $G$ and $J$ considered below are assumed to be hyperbolic. 

When proving Theorem \ref{lethm}, the following definition   was used with $J=C_i$, one of the Grushko factors of the core of $G$,
and $\Gamma=\tilde \Gamma_{C_i}$, the modified JSJ decomposition of $J$.

\begin{dfn}[Preretractions, \cite{Perin_elementary} Definition 5.9] \label{defpreretr}
  Let $J$ be a plain group or an $A$-group, and let $G$ be a group containing $J$.
  Let $\Gamma$ be a splitting of  $J$.
  
   A morphism $r:J\to G$ is a \emph{preretraction} (with respect to $\Gamma$, with values in $G$) if it is related to the inclusion   (in the sense of Definition \ref{related}).
 
   In other words, non-exceptional surface-type vertex groups of  $\Gamma$ have non-abelian image, 
   and $r$ agrees with a conjugation on the other vertex groups (including exceptional surface-type vertex groups) and on edge groups. 
  \end{dfn}
  
  \begin{rem}\label{rem_bpm} 
 If $Q$ is a non-exceptional surface-type vertex group of $\Gamma$ (in the relative sense if $J$ is an $A$-group),  then $r_{|Q}$ is a \bpm{} because every boundary subgroup has a conjugate which is an edge group or contains $A$ with finite index. It is non-degenerate if it is not an isomorphism onto a conjugate of $Q$.
  \end{rem}

  \begin{thm}[Perin]\label{thm_preretraction}
  Let $G$ be an extended tower over a subgroup $C$,
  and let   $C=C_1*\dots *C_p*F$ be the   Grushko decomposition of $C$.
  
   If,  for some $i$,  there is a non-injective preretraction $r:C_i\ra G$ with respect to the modified JSJ decomposition $\tilde \Gamma_{C_i}$ of $C_i$,
   then $C_i$ is an extended étage (hence not a prototype). 
 \end{thm}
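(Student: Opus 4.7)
The plan is to reduce $r$ to a preretraction $\bar r\colon C_i\to C_i$, show that the non-injectivity of $r$ descends to $\bar r$, and then use Corollary \ref{sixunm2} to locate a non-exceptional surface-type vertex of $\tilde\Gamma_{C_i}$ on which $\bar r$ restricts to a \ndbpm, which immediately makes $C_i$ an extended étage of surface type.

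First, using Proposition \ref{prop_retractions} followed by the projection $C*F\to C$, fix a retraction $\rho\colon G\to C$ built from the étages of the tower. I claim that $\rho\circ r\colon C_i\to C$ is again a preretraction with respect to $\tilde\Gamma_{C_i}$. On edge groups, rigid vertex groups, and exceptional surface-type vertex groups of $\tilde\Gamma_{C_i}$, Lemma \ref{aufond}, iterated along the tower, places their $r$-images (up to conjugacy) inside the bottom groups of each étage's centered splitting, where the associated retraction acts as a conjugation; thus $\rho\circ r$ agrees with conjugations on these subgroups. For a non-exceptional surface-type vertex group $Q$, Lemma \ref{lemcle2} applied at each étage keeps the image of $Q$ non-abelian. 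Since $C_i$ is relatively one-ended, it cannot act non-trivially on the Bass--Serre tree of the Grushko decomposition of $C$, and the malnormality of the Grushko factors (Lemma \ref{lem_malnormal_tour}) places $\rho(r(C_i))$ in a single conjugate of $C_i$ inside $C$; conjugating back yields the desired preretraction $\bar r\colon C_i\to C_i$. Non-injectivity of $\bar r$ follows from that of $r$: if $r(x)=1$, then $\rho(r(x))=1$, so $\bar r(x)=1$, and injectivity of $\bar r$ would force injectivity of $r$.

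If $\tilde\Gamma_{C_i}$ is trivial, then $C_i$ is either rigid or isomorphic to $\Z$ (in which case every preretraction is a conjugation, contradicting non-injectivity of $\bar r$), or $C_i$ is a non-exceptional closed surface group, already an extended étage by Example \ref{csurf}. Assume therefore that $\tilde\Gamma_{C_i}$ is non-trivial, and apply Corollary \ref{sixunm2} to $\bar r$ via the tree of cylinders of $\tilde\Gamma_{C_i}$. The preretraction property ensures that $\bar r$ acts by conjugation on every edge group, every rigid vertex group, and every exceptional surface-type vertex group, so it sends each such subgroup bijectively onto a conjugate of a vertex stabilizer. The non-injectivity of $\bar r$ therefore forces some non-exceptional surface-type vertex group $Q=\pi_1(\Sigma)$ of $\tilde\Gamma_{C_i}$ to fail this bijectivity: either $\bar r|_Q$ has non-trivial kernel, or it is injective but $\bar r(Q)$ is not a conjugate of any vertex stabilizer; in both cases $\bar r|_Q$ is not an isomorphism onto a conjugate of $Q$. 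Being a boundary-preserving map with non-abelian image on the non-exceptional surface $\Sigma$, it is then a \ndbpm{} from $Q$ to $C_i$. Collapsing the edges of $\tilde\Gamma_{C_i}$ not incident to the vertex carrying $Q$ yields a retractable centered splitting of $C_i$ by Remark \ref{rem_retractable}, whence $C_i$ is an extended étage of surface type.

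The main obstacle is the reduction step: ensuring that composing with $\rho$ preserves the preretraction structure along the whole tower and that its image lies in a single conjugate of $C_i$ inside $C$. The iterative use of Lemmas \ref{aufond} and \ref{lemcle2}, combined with the malnormality afforded by Lemma \ref{lem_malnormal_tour}, is tailored for this. Once this is in place, Corollary \ref{sixunm2} essentially dictates the appearance of a \ndbpm{} on some non-exceptional surface-type vertex of $\tilde\Gamma_{C_i}$.
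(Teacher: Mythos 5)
Your reduction to a self-map $\bar r\colon C_i\to C_i$ contains a gap at the very step you flag as the main obstacle: the claim that ``Lemma~\ref{lemcle2} applied at each étage keeps the image of $Q$ non-abelian,'' so that $\rho\circ r$ is again a preretraction. This is false in general. The argument you have in mind is the one used in the proof of Lemma~\ref{relat2}, but there it is applied only to \emph{injective} morphisms $\mu$, where non-pinching lets one conclude from Lemma~\ref{lemcle2} that $\mu(Q)$ is either in a bottom group or contains a finite index subgroup of the étage's surface group. Your $r$ is non-injective, so it may pinch $Q$, and after retracting down the tower the image of $Q$ can indeed become abelian; the paper's proof of Proposition~\ref{propsdechloe} explicitly treats this possibility (``If $r_k$ is not a preretraction, $\Gamma$ has a non-exceptional surface-type vertex group $\pi_1(\Sigma)$ whose image under $r_k$ is abelian...''). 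Once the image is abelian, $\rho\circ r$ is not a preretraction, your argument that the image lies in a single conjugate of $C_i$ collapses (that argument also relied on $\rho\circ r$ being a preretraction), and Corollary~\ref{sixunm2} cannot be applied.

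The missing idea is what the paper does in this case: take the largest index $i$ such that the image of $\pi_1(\Sigma)$ under $r_i$ is still non-abelian, observe that the restriction $p_i$ of $r_i$ to $\pi_1(\Sigma)$ is then a \emph{pinching} \ndbpm{} (it is boundary-preserving, non-abelian, not onto a conjugate of $\pi_1(\Sigma)$ because the next retraction abelianizes it, and pinching by Lemma~\ref{lemcle2} applied at the étage $\Lambda_i$), and then propagate this pinching \ndbpm{} down the tower via repeated applications of Lemma~\ref{desc}, finishing with Lemma~\ref{cinqdouze} to land in $C_i$. This pinching route is the essential content of the proof and is absent from your proposal. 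The second half of your argument (using Corollary~\ref{sixunm2} to extract a \ndbpm{} from a genuine non-injective self-preretraction $\bar r\colon C_i\to C_i$) does match the paper's ``special case $G=C$,'' but the reduction step you rely on to reach that case is precisely where the work lies.
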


 The theorem is a special case of the following proposition, applied with $J=C_i$ and $\Gamma=\tilde \Gamma_{C_i}$.
 Note that $\tilde \Gamma_{C_i}$
satisfies the assumptions of the proposition 
  because it is a tree of cylinders.

\begin{prop} \label{propsdechloe}
  Let $J$ be a plain group or an $A$-group.
Let $\Gamma$ be a   (possibly trivial) splitting of $J$ such
 that all edge groups  are isomorphic to $\Z$, all edges join a cyclic vertex to a non-cyclic one, and the Bass-Serre tree of $\Gamma$ is 1-acylindrical near vertices with non-cyclic stabilizer.

 Assume that
 $J$  is a one-ended
 free factor of a group $C$, that $G$ is an extended tower over $C$, 
 and that there is a non-injective preretraction
 $r:J\to G$
 with respect to $\Gamma$.
 
Then 
 $J$ is an extended \'etage of surface type.
\end{prop}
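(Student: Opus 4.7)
\emph{Strategy.} The plan is to produce a non-exceptional surface-type vertex $v$ of $\Gamma$, with stabilizer $Q = G_v = \pi_1(\Sigma)$, together with a non-degenerate \bpm\ $p \colon Q \to J$. Given such $p$, collapsing $\Gamma$ to keep only the edges incident on $v$ (subdividing any loop if needed) produces a centered splitting $\Gamma_v$ of $J$ with central vertex group $Q$; the existence of $p$ makes $\Gamma_v$ retractable in $J$, so Proposition \ref{equivpassimple} (combined with Remark \ref{rem_retractable_relatif} in the relative setting) exhibits $J$ as an extended \'etage of surface type, as required.

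\emph{Selecting the vertex $v$.} Composing the retraction $G \to C$ provided by Proposition \ref{prop_retractions} with the retraction $C \to J$ (available because $J$ is a free factor of $C$, and one that contains $A$ in the $A$-group case) gives a retraction $\rho \colon G \to J$. Set $\tilde r := \rho \circ r \colon J \to J$. Since $\rho$ is a retraction and $r$ is non-injective, so is $\tilde r$. On every edge group of $\Gamma$ and on every cyclic or exceptional surface-type vertex group, $r$ agrees with conjugation by some $g \in G$ (by Definition \ref{defpreretr}), so $\tilde r$ agrees with conjugation by $\rho(g) \in J$. Now apply Corollary \ref{sixunm2} to $\tilde r$ with the Bass-Serre tree of $\Gamma$: if $\tilde r|_{G_v}$ were an isomorphism onto a conjugate of $G_v$ in $J$ for every non-exceptional surface-type vertex $v$, then $\tilde r$ would send every non-cyclic vertex stabilizer and every edge stabilizer bijectively to a conjugate, and would therefore be injective---a contradiction (note that $J$ is non-cyclic, as the cyclic case contradicts non-injectivity of $r$). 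Hence there is a non-exceptional surface-type vertex $v$ such that $\tilde r|_{G_v}$ is not an isomorphism onto a conjugate of $G_v$ in $J$. Pulling back, $r|_{G_v} \colon Q \to G$ cannot be an isomorphism onto a conjugate in $G$ either: any isomorphism $r|_{G_v} \colon G_v \to g G_v g^{-1}$ with $g \in G$ would descend via $\rho$ to an isomorphism $\tilde r|_{G_v} \colon G_v \to \rho(g) G_v \rho(g)^{-1}$ in $J$. Combined with the non-abelian image guaranteed by the preretraction property, $p := r|_{G_v}$ is a non-degenerate \bpm\ from $Q$ to $G$.

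\emph{Descending $p$ to $J$.} I next descend $p \colon Q \to G$ to a non-degenerate \bpm\ with values in $J$, using the tower $G = G_0 > G_1 > \cdots > G_k = C$. At each level, since $Q \subset J$ and $J$, being relatively one-ended, lies in a single Grushko factor of the base of the \'etage up to conjugacy, the subgroup $Q$ is contained up to conjugacy in a bottom group of the centered splitting defining that \'etage. Iterating Lemma \ref{desc}---which applies whenever $p$ is pinching (condition~(2)) or no conjugate of the image of $p$ contains a finite-index subgroup of the \'etage's central surface (condition~(1))---yields successive non-degenerate \bpm s $p_i \colon Q \to G_i$, culminating in $p_C \colon Q \to C$. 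Finally, since $J$ is a free factor of $C$, Lemma \ref{cinqdouze} produces the desired non-degenerate \bpm\ $Q \to J$.

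\emph{Main obstacle.} The principal difficulty is the descent: ensuring that the hypotheses of Lemma \ref{desc} are met at every level of the tower. The delicate configuration is when, at some \'etage, $p$ is non-pinching while its image contains a finite-index subgroup of the corresponding central surface; then Lemma \ref{desc} does not apply directly. In this case one uses Lemma \ref{lemcle2} to locate the incompressible subsurface $Z \subset \Sigma$ whose image has finite index, and a careful case analysis then either yields the required descent or exhibits $J$ directly as an extended \'etage of surface type via the parachute mechanism of Remark \ref{fact} together with the second clause of Proposition \ref{equivpassimple} (possibly with an abelian base). Once this subtlety is handled, the non-degenerate \bpm\ $p \colon Q \to J$ is in hand and the proposition follows from the strategy above.
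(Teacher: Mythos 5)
Your first step — applying Corollary~\ref{sixunm2} to $\tilde r = \rho\circ r$ to locate a non-exceptional surface-type vertex $v$ of $\Gamma$ with $r|_{G_v}$ not an isomorphism onto a conjugate — is correct and essentially matches what the paper does in the base case $G=C$. The genuine gap is the descent. You want to push $p := r|_{G_v}$ down the tower via Lemma~\ref{desc}, but that lemma only applies when $p$ is pinching, or when no conjugate of its image contains a finite-index subgroup of the \'etage's central surface $Q_i$; for the $v$ you have chosen, neither condition is guaranteed, and the output $p'$ of each application may again fail both conditions at the next level. You flag this yourself, but the proposed fix — a ``careful case analysis'' invoking ``the parachute mechanism of Remark~\ref{fact}'' and the abelian-base clause of Proposition~\ref{equivpassimple} — is unrelated to the obstruction: that machinery is about simple centered splittings with cyclic base, and says nothing useful about a surface of $\Gamma$ whose image under $r$ covers some $Q_i$ up to finite index.

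The paper circumvents this by not fixing the vertex (or the level) in advance. Setting $r_i := \rho_{i-1}\circ\cdots\circ\rho_0\circ r : J\to G_i$, one notes that every $r_i$ still agrees with conjugations on edges and non-surface vertex groups, so the only property that can be lost is non-abelianity of surface images. If $r_k:J\to C$ is still a preretraction, one is reduced to the base case $G=C$: Corollary~\ref{sixunm2} applied to $\pi\circ r_k$ (with $\pi:C\to J$ the free-factor retraction) produces a vertex whose image under $r_k$ is not a conjugate, and Lemma~\ref{cinqdouze} — which needs no pinching or finite-index hypothesis, only that $J$ be a free factor of $C$ — does the descent in one stroke. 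Otherwise some non-exceptional surface $\pi_1(\Sigma)$ of $\Gamma$ acquires abelian image under $r_k$; taking the largest $i$ with $r_i(\pi_1(\Sigma))$ non-abelian, one shows that $p_i := r_i|_{\pi_1(\Sigma)}$ is automatically a \emph{pinching} \ndbpm: if its image were inside a bottom group of $\Lambda_i$, or contained a finite-index subgroup of $Q_i$, then $\rho_i\circ p_i$ would be non-abelian (using Lemma~\ref{lemcle2} and the injectivity/non-abelianity properties of $\rho_i$), contradicting maximality of $i$. From there Lemma~\ref{desc}(2) iterates cleanly down to $C$, and Lemma~\ref{cinqdouze} finishes. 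This transition-point argument — the map becomes pinching precisely at the level where its image is about to collapse to abelian — is the key idea missing from your proposal.
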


\begin{proof}
  To show that $J$ is an extended étage, it suffices to find a non-exceptional surface-type vertex group $Q=\pi_1(\Sigma)$ of $\Gamma$ and a 
  \bpm\ $p:Q\to J$  with $p(Q)$   not abelian and not conjugate to $Q$   (see Remark \ref{rem_retractable},  and Remark \ref{rem_retractable_relatif} in the relative case).

  We may assume
  that $J$ is not a plain group isomorphic to the fundamental group of a non-exceptional closed surface group, since
  $J$ is an extended \'etage in this case (see Example \ref{csurf}).
  
We first consider the case when  $G=C$ (i.e.\ $J$ is a free factor of $G$). 
We argue as in the proof of 
Proposition 4.7   of \cite{GLS_finite_index} (where furthermore   $C=J$).

We may assume that $r$ maps every  non-exceptional  surface-type vertex group $Q$ of $\Gamma$ isomorphically to a conjugate: otherwise,
by Remark \ref{rem_bpm}, the restriction of $r$ to $Q$ is a \ndbpm. It takes values in $G$, but by Lemma 
\ref{cinqdouze} there is one 
with values in $J$, and we are done.

By definition  of a preretraction, the other vertex groups and the edge groups of $\Gamma$ are also mapped isomorphically to conjugates. 
Define  $\tau:J\to J$ by composing $r$ with a retraction of $G$ onto its free factor $J$. 
Corollary    \ref{sixunm2} 
implies that $\tau$ is injective, a contradiction since $r$ is not injective. 

We now consider the general case, so let 
 $G=G_0>G_1>\dots >G_k=C*F$ be a chain of subgroups,   with $F$ free and
 with retractions $\rho_i: G_i\to G_{i+1}$ associated to the surface-type étages $G_i> G_{i+1}$ as in
 Proposition \ref{prop_retractions} and Remark  \ref{prop_retractions_rel}. Without loss of generality, we may redefine $C=G_k$.
We denote by   $\Lambda_i$  the associated centered  splitting of $G_i$, with surface group $Q_i$.

Let $r_i:J\to G_{i}$ be $\rho_{i-1}\circ\cdots\circ \rho_0\circ r$ (with $r_0=r$).
By definition of a preretraction, $r$  is a morphism which agrees with a conjugation on edge groups of $\Gamma$ and on  
every vertex group 
 which is not a non-exceptional surface group. 
Since the retractions are the identity on $J$, 
these properties also hold for $r_i$.

If $r_k:J\to C$ is a (non-injective)  preretraction, we   get retractability 
 by the special case above since $J$ is a free factor of $C$.
If $r_k$ is not a   preretraction,  $\Gamma$ has a non-exceptional surface-type vertex group $\pi_1(\Sigma)$ 
whose image under $r_k$ is   abelian. 
Since the image of $\pi_1(\Sigma)$ under $r$ is not   abelian, there is a largest  index $i$ 
such that the image of $\pi_1(\Sigma)$ under
$r_i:J\to G_i$ is   non-abelian.

  We claim that the restriction $p_i$ of $r_i$ to $\pi_1(\Sigma)$ is a pinching \ndbpm.
Indeed,  $p_i$ is boundary-preserving because   
$r_{|\pi_1(\Sigma)}$ is and each $\rho_j$ is the identity on $J$.
Its image is   non-abelian  by our choice of $i$, and cannot be conjugate to $\pi_1(\Sigma)$
because $\rho_i$ is injective on $J$ and $\rho_i\circ p_i$ has   abelian 
image. Thus $p_i$ is non-degenerate. To prove that $p_i$ is pinching, we apply  Lemma \ref{lemcle2}   to $\Lambda_i$. 
 Since $G$ is CSA, the image of $p_i$ cannot contain a finite index subgroup of $Q_i$, because $\rho_i(Q_i)$ is non-abelian, or be contained in a bottom vertex group of $\Lambda_i$   because $\rho_i$ is injective on each bottom vertex group. 
Lemma \ref{lemcle2} then concludes that $p_i$ is pinching.

Applying   Lemma \ref{desc} $k-i$ times, we   get a pinching \ndbpm\ from $\pi_1(\Sigma)$ to $G_{k}=C$
(note that, being one-ended, $J$ is contained in a bottom group of each $\Lambda_i$, as required in the lemma). 
This shows that $\Gamma$ is retractable in $C$, hence in $J$ by  Lemma 
\ref{cinqdouze}.
\end{proof}

 \begin{rem} \label{tourestrel}
   The   one-endedness assumption  on $J$ 
   is used only to guarantee that  the tower is relative to $J$, i.e.
  that   $J$ is contained in a single bottom group of the splittings $\Lambda_i$.  This assumption is not needed in \cite{Perin_elementary}.
 \end{rem}

\section{Examples} \label{examp}

In this paper we have used two kinds of towers, namely simple towers and extended towers  (possibly relative). Both are important:  for instance, 
prototypes are defined using extended \'etages (see Section \ref{ordre}), and  
simple towers induce elementary embeddings (Theorem \ref{Tarski}). 

When all surfaces appearing are complicated enough, there is no difference between simple and extended towers (Proposition \ref{3simple}). On the other hand, surfaces $\Sigma$ with $\chi(\Sigma)=-2$ are a source of  pathologies; for instance:
\begin{itemize}
\item not being a prototype (i.e.\ being an extended \'etage) is not equivalent to  being a simple \'etage, and  none of these properties is 
equivalent to having a proper elementarily embedded subgroup;
\item
being elementarily free is not equivalent to being an $\omega$-residually free   hyperbolic tower, and is not equivalent to being a regular NTQ group;
\item
the elementary core of $G$ does not always embed elementarily into $G$.
\end{itemize}

We shall give counterexamples to these and other statements. Most examples are provided by $N_4$, the non-orientable closed surface of genus 4, or by  an extended \'etage constructed using a   twice-punctured Klein bottle, a 4-punctured sphere, or a thrice-punctured projective plane
(see Subsection \ref {constr}).
At the end we will   compare elementary equivalence with quasi-isometry and study \emph{parachutes}, groups that   have a simple centered splitting with base  $\Z$.

\subsection{Simple towers vs extended towers}\label{sec_simple_vs_ext}

We have seen   (Corollary \ref{ett}) that an extended \'etage may be upgraded to a simple \'etage by taking a free product with a free group.
One does not   even need to change the group 
if the associated surface is complicated enough (Proposition \ref{3simple}). 
But we shall now see that extended \'etages with low-complexity surfaces behave differently than   simple \'etages.

To put things into perspective, we first note:

\begin{prop}
\label{implications}
 Let $G$ be a non-abelian
  torsion-free hyperbolic group.
Each of the following assertions implies the next one:
 \begin{enumerate}
\item  $G$ is a simple \'etage over some non-abelian group   (i.e.\   $G=G'*\bbZ$ with $G'$ non-abelian, 
or $G$ has a retractable simple centered splitting with non-cyclic base);
\item $G$ contains a proper elementarily embedded subgroup (equivalently, by Theorem \ref{thcl}, some $G*F$ with $F$ free is a simple tower over some proper non-cyclic subgroup $H\inc G$);
\item $G$ is not a prototype,   i.e.\ $G$ is an extended étage 
(equivalently,  $G$ has a cyclic free factor or $G$ has a   retractable centered splitting).   
\end{enumerate}
 If $G\not\simeq \bbF_2$ has a cyclic free factor or a retractable centered splitting  whose underlying surface $\Sigma$ satisfies $\chi(\Sigma)\le-3$, 
then $G$ satisfies 1.
\end{prop}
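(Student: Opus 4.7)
The plan is to treat the two alternatives in the hypothesis separately, subdividing the retractable centered splitting case according to whether the splitting is simple and, if so, whether its base is abelian. If $G$ has a cyclic free factor $G=G'*\bbZ$, I would note that since $G$ is torsion-free hyperbolic, every abelian subgroup is cyclic, so $G'$ is trivial, cyclic, or non-abelian; the first possibility gives $G\simeq\bbZ$ (contradicting non-abelianity of $G$) and the second gives $G\simeq\bbF_2$ (excluded by hypothesis). Hence $G'$ is non-abelian and $G$ is a simple \'etage of free product type over $G'$.

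If $G$ has a non-simple retractable centered splitting $\Gamma$ with $\chi(\Sigma)\le-3$, I would apply Proposition \ref{3simple} directly to obtain a retractable simple splitting $\hat\Gamma$ of $G$ whose base is isomorphic to $B_1*\dots*B_n$. Since $n\ge 2$ and each $B_i$ is non-trivial (being a vertex group of a minimal splitting), this base is a non-trivial free product of at least two non-trivial groups, hence non-abelian, and $G$ is a simple \'etage of surface type over it. If instead $\Gamma$ is simple with non-abelian base $B_1$, then $G$ is directly a simple \'etage of surface type over $B_1$, and there is nothing further to do.

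The remaining and hardest subcase is when $\Gamma$ is simple with $B_1\simeq\bbZ$ (the parachute case). Here I would exploit $\chi(\Sigma)\le -3$ to choose a two-sided simple closed curve $\gamma$ in the interior of $\Sigma$ whose complement has a non-exceptional component $\Sigma_b$ (and the other component $\Sigma_a$ has non-trivial fundamental group). Refining $\Gamma$ along $\gamma$ and collapsing to a centered form with central vertex carrying $\pi_1(\Sigma_b)$ yields either a simple splitting with non-abelian bottom group (the amalgam of $\pi_1(\Sigma_a)$ with $B_1$ over the edges of $\Gamma$ coming from boundary components of $\Sigma$ contained in $\Sigma_a$, which is non-abelian for combinatorial reasons), or a non-simple splitting with two non-trivial bottom vertices whose base $\pi_1(\Sigma_a)*\bbZ$ is non-abelian; in the latter situation, the preceding case reduces back to a simple \'etage via Proposition \ref{3simple}. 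The condition $\chi(\Sigma)\le -3$ is essential here to guarantee the existence of such a $\gamma$ and the non-exceptionality of the central piece, ruling out the pathological surfaces $S_{0,4},N_{1,3},N_{2,2}$. The main obstacle in this subcase is to establish retractability of the new splitting: this requires pulling back the retraction $G\to\tilde B_1*\langle t\rangle$ provided by the parachute structure (see Remark \ref{surfdeux}) to a suitable retraction onto the new bottom group, which is the kind of detailed analysis carried out in Subsection \ref{parac}.
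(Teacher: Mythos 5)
Your treatment of the easy cases (cyclic free factor, non-simple retractable splitting via Proposition \ref{3simple}, simple splitting with non-abelian base) matches the paper's, but there are two real issues.

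First, a minor omission: the proposition also asserts $1\implies 2\implies 3$, which you do not address at all. The paper disposes of this in one line via Theorems \ref{Tarski} and \ref{thcl}; still, it should be stated.

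Second, and more importantly, your parachute subcase contains a genuine gap that your own closing sentence half-acknowledges but does not close. Cutting $\Sigma$ along an \emph{arbitrary} two-sided curve $\gamma$ with a non-exceptional complementary piece gives a refinement of the splitting, but there is no reason the resulting centered splitting should be retractable -- the existence of a \ndbpm{} associated to $\Gamma$ says nothing a priori about the curve $\gamma$ you chose. The paper's Proposition \ref{3simple2} (precisely the result you gesture at with ``the kind of detailed analysis in Subsection \ref{parac}'') circumvents this by working with a maximal family of \emph{pinched} curves of the given \ndbpm{} and then, when no two boundary components are separated, constructing a retraction onto $\bbF_2$ by explicit case-by-case formulas depending on the topology of $\Sigma$; these explicit constructions are exactly what certifies retractability. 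Moreover, your proposed fallback in case (ii) -- applying Proposition \ref{3simple} to the non-simple splitting produced by the cut -- fails when $\chi(\Sigma)=-3$: cutting off a piece $\Sigma_a$ with $\chi(\Sigma_a)=-1$ leaves a central surface $\Sigma_b$ with $\chi(\Sigma_b)=-2$, below the threshold Proposition \ref{3simple} requires. So the parachute case really does need to be cited to Proposition \ref{3simple2} rather than reproved by the sketch you give.
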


 The last assertion is a partial converse, showing that $3\Rightarrow 1$ holds in many cases.

\begin{proof}
  $1\implies2\implies3$ follows from Theorems \ref{Tarski} and \ref{thcl}, the partial converse   from Proposition \ref{3simple}  (Proposition \ref{3simple2} below if $G$ is a parachute).  
 \end{proof}

We will show in   Subsections \ref{nst} and \ref{core} that $2\implies 1$ and $3\implies 2$ are not always true.
In fact:
\begin{itemize}
\item  non-prototypes, in particular elementarily free groups, are not always simple \'etages, and do not always contain a proper elementarily embedded subgroup;
\item a group containing a proper elementarily embedded subgroup is not always a simple \'etage;
\item 
$G$ being an extended tower over a non-abelian group $H$ does not imply that $H$   embeds elementarily in $G$; 
 the core of   $G$ does not always embed elementarily into $G$.
 \end{itemize}

\subsection{A construction} \label{constr}

\begin{figure}[ht!]
  \centering
  \includegraphics{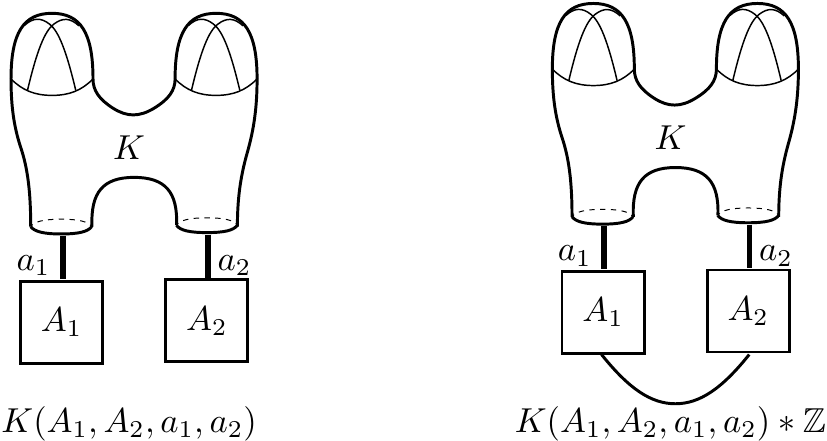} 
  \caption{The group $K(A_1,A_2,a_1,a_2)$.}
  \label{fig_Klein}
\end{figure}

Most of our examples are based on the following construction. 

\begin{example}[see Figure \ref{fig_Klein}] \label{ctrex}
Fix two torsion-free hyperbolic groups $A_1,A_2$, and two non-trivial elements $a_i\in A_i$  which are squares.
Let $K=\langle g_1,g_2,x,y\mid g_1g_2=x^2y^2\rangle$ be a free group of rank 3, viewed as the fundamental group of a twice-punctured Klein bottle   $N_{2,2}$.

Define $K(A_1,A_2,a_1,a_2)$ as the double cyclic amalgam  
$$K(A_1,A_2,a_1,a_2)=A_1*_{a_1={g_1}} K *_{ {g_2}=a_2} A_2.$$ 
 It is hyperbolic by \cite{BF_combination}. 

The centered splitting $\Gamma$ defined by the double  amalgam is retractable because $a_i$ is a square in $A_i$: the group $G=K(A_1,A_2,a_1,a_2)$ is a (non-simple) extended \'etage over $A_1*A_2$; in particular, $G\equiv A_1*A_2$ by Corollary  \ref{equivtour} (based on ``Tarski''). 
The group $G*\Z$ is a simple \'etage over a subgroup isomorphic to $A_1*A_2$ (see Proposition \ref{tstab}).
\end{example}

\begin{rem}
Viewing the twice-punctured Klein bottle as the union of a once-punctured Klein bottle and a pair of pants, we get a  simple centered splitting of $G$ whose surface is a once-punctured Klein bottle attached to a vertex carrying $A_1*A_2$. But this does not define a simple \'etage because,  unlike the twice-punctured Klein bottle, the once-punctured Klein bottle is an exceptional surface.
\end{rem}

\begin{rem}
 One may construct similar examples using a 4-punctured sphere $S_{0,4}$ instead of $N_{2,2}$,  with the central vertex of $\Gamma$  joined to each of the two bottom vertices by two edges, for instance   groups of the form $$\grp{A_1, A_2, g_1,g'_1,g_2,g'_2, t_1,t_2\mid g_1g'_1g_2g'_2=1,g_1=a_1, t_1g'_1t_1\m=a_1\m,g_2=a_2,t_2g'_2t_2\m=a_2\m}.$$
One may also construct examples based on a thrice-punctured projective plane.
\end{rem}

\begin{lem} \label{pastour}
Let $G=K(A_1,A_2,a_1,a_2)$ be as in Example \ref
{ctrex}  (with $a_i$ a square in $A_i$).
Suppose that, for $i=1,2$, the group $A_i$ is not   free   and has no cyclic splitting relative to $a_i$. Then $G$ is one-ended and   $\Gamma$   is its only retractable centered splitting; 
in particular, $G$ is not a simple étage over any subgroup. 

Moreover,   if $A_1$ and $A_2$ are one-ended,  then $G$ is not an extended étage relative to any subgroup $H$ isomorphic to $A_1*A_2$.
\end{lem}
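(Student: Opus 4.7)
My plan is to prove the lemma in three steps: show that $G$ is one-ended, classify retractable centered splittings via the canonical cyclic JSJ, and then deduce that $G$ is not a simple \'etage and establish the moreover statement.

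For one-endedness I would apply Lemma \ref{facgru} to $\Gamma$. Each bottom vertex $v_i$ carries $A_i$ with single incident edge group $\grp{a_i}$, and the hypothesis that $A_i$ is not free and has no cyclic splitting relative to $a_i$ implies $A_i$ has no free splitting at all, so $A_i$ is one-ended (hence one-ended relative to $\grp{a_i}$); Lemma \ref{facgru} then yields that $G$ is one-ended.

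Next I would argue that $\Gamma$ is the only retractable centered splitting. Since each $A_i$ is rigid (universally elliptic relative to $\grp{a_i}$) and $K=\pi_1(N_{2,2})$ is surface-type, the canonical cyclic JSJ decomposition $\Gcan$ of $G$ has a unique surface-type vertex, carrying $K$. By Lemma \ref{preconf}, any retractable centered splitting $\Gamma'$ forces $\Gcan$ itself to be retractable, and Lemma \ref{compajsj} then identifies $\Gamma'$ with the choice of an incompressible subsurface $\Sigma'\subseteq N_{2,2}$. An Euler-characteristic and orientability analysis restricts the non-exceptional proper candidates for $\Sigma'$ to $N_{1,3}$ (complement a M\"obius band) and $S_{0,4}$ (complement two M\"obius bands); the orientable $S_{1,1}$ is ruled out because its complement would have to be an orientable $S_{0,3}$, contradicting the non-orientability of $N_{2,2}$. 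For each remaining $\Sigma'$, writing down the presentation of $G$ induced by $\Gamma'$ and applying Proposition \ref{equivpassimple}, retractability would require $a_1 a_2 m^2$ to be a square in $A_1*A_2*\grp{m}$ (for $\Sigma'=N_{1,3}$), or $a_1 a_2 m_1^2 m_2^2 = 1$ in $A_1*A_2*\grp{m_1}*\grp{m_2}$ (for $\Sigma'=S_{0,4}$); neither can hold by a normal-form argument, since elements of length at least $3$ with syllables in pairwise distinct factors have a restricted form that excludes these equalities.

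Finally, $G$ is not a simple \'etage of free product type since it is one-ended, and not a simple \'etage of surface type since $\Gamma$ is non-simple and unique. For the moreover statement, suppose $G$ is an extended \'etage over some $G_1<G$ relative to $H\simeq A_1*A_2$; by uniqueness the associated centered splitting must be $\Gamma$, so $G_1\simeq A_1*A_2$ as well. Since $H\subseteq G_1$ and both are isomorphic to the free product of the one-ended torsion-free hyperbolic groups $A_1$ and $A_2$, co-Hopfianity of $A_1*A_2$ (deduced from the individual co-Hopfianity of $A_1,A_2$ via uniqueness of Grushko decompositions) forces $H=G_1$. The relative condition of Definition \ref{relto} would then require $G_1=\tilde A_1 * \tilde A_2$ to sit inside a conjugate of $A_1$ (say), putting $\tilde A_2$ inside a conjugate of $A_1$ and contradicting the malnormality of the family $(A_1,A_2)$ as bottom groups of $\Gamma$ (Remark \ref{malnor}). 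The main technical obstacle is the case analysis in the second paragraph, which hinges on explicit normal-form computations in free products to exclude the subsurfaces $N_{1,3}$ and $S_{0,4}$.
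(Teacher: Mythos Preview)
Your overall architecture (one-endedness via Lemma~\ref{facgru}, then classifying retractable centered splittings through the JSJ and Lemma~\ref{compajsj}, then deducing the two final claims) matches the paper's. The differences lie in how you dispatch the proper subsurfaces and how you argue the ``moreover''.

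\textbf{Ruling out $N_{1,3}$ and $S_{0,4}$.} Here the paper takes a much shorter route. Once one knows the candidate centered splitting $\Lambda$ has central surface $\Sigma'\in\{N_{1,3},S_{0,4}\}$, the bottom vertices of $\Lambda$ include $A_1$, $A_2$, and at least one cyclic vertex coming from the M\"obius band(s); in particular $A_1$ and $A_2$ are terminal (valence~$1$). Remark~\ref{ineg} then kills retractability in one line, since the genus of $\Sigma'$ is at most $1$ while $n_1\ge 2$. Your normal-form approach can be made to work, but as stated it is incomplete: Proposition~\ref{equivpassimple} only gives a retraction onto \emph{some} product of conjugates $\tilde A_1*\tilde A_2*\grp{\tilde m}$, so the relation you must violate is not literally ``$a_1a_2m^2$ is a square'' but rather ``(a conjugate of $a_1$)(a conjugate of $a_2$)(a conjugate of $m^2$) is a square'', with the conjugating elements ranging over the whole free product. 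You would need to normalise carefully and then run the normal-form argument; this is doable but noticeably more work than invoking Remark~\ref{ineg}.

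\textbf{The ``moreover''.} Your detour through co-Hopfianity of the free product $A_1*A_2$ is unnecessary (and that co-Hopfianity, while true here, is not entirely trivial). The paper argues directly: since $\Gamma$ is the unique retractable centered splitting, an \'etage relative to $H$ would force $H$ to be conjugate into a bottom group of $\Gamma$, i.e.\ into $A_1$ or $A_2$. But $H\simeq A_1*A_2$ cannot embed into a one-ended $A_i$ (e.g.\ because $A_i$ is co-Hopfian and $A_1*A_2$ has both $A_1$ and $A_2$ as subgroups), a contradiction. This avoids introducing $G_1$ and the claim $H\subseteq G_1$, which in your formulation conflates ``$H$ conjugate into a bottom group'' with ``$H$ contained in the base''.

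\textbf{Minor point.} You assert that $\Gcan$ has a unique surface-type vertex carrying $K$ without justification. The paper earns this by showing that $\Gamma$ and $\Gcan$ lie in the same deformation space: $K$ is elliptic in $\Gcan$ (being QH), hence so are $a_1,a_2$, hence so are $A_1,A_2$ (they have no cyclic splitting relative to $a_i$); conversely $A_i$, being non-free, cannot sit in a QH vertex of $\Gcan$, so $a_i$ is universally elliptic and $\Gamma$ is a JSJ splitting.
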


  As in Definition \ref{relto}, saying that the \'etage is relative to $H$ means that $H$ is contained in a conjugate of a bottom group.

\begin{proof}
 The group $G$ is one-ended by Lemma \ref{facgru} (because $A_i$ is freely indecomposable relative to $a_i$). Let $\Gcan$ be its canonical cyclic JSJ decomposition. 

The   twice-punctured Klein bottle group $K$ is elliptic in $\Gcan$ (see  \cite [Proposition 5]
{GL_JSJ}),
and therefore so are $a_1$, $a_2$,  and also  $A_1$, $A_2$ by assumption. 
Thus $\Gamma$ dominates $\Gcan$.
The group $A_i$ cannot be conjugate  into a  QH vertex group of $\Gcan$ because we assumed $A_i$ not free,
so $A_i$ is conjugate into a rigid group and  $a_i$ is universally elliptic (\ie elliptic in all cyclic splittings of $G$).
This means that $\Gamma$ is universally elliptic, hence dominated by $\Gcan$, so $\Gamma$ and $\Gcan$ are in the same deformation space. 
 In particular they have the same non-cyclic vertex groups (up to conjugacy), so 
$K=\pi_1(N_{2,2})$ is the only surface-type vertex group of $\Gcan$.

Now suppose that $\Sigma$ is a retractable surface of some   centered splitting $\Lambda\ne\Gamma$. As explained in   
Lemma 
\ref{compajsj}, it may be identified with a subsurface  of  a surface  $\Sigma'$ appearing in $\Gcan$.  We have shown that $\Sigma'$ must be a twice-punctured Klein bottle.

  If $\Sigma=\Sigma'$, then $\Lambda=\Gamma$. Otherwise, $\Sigma$ is a proper non-exceptional subsurface 
  of a twice-punctured Klein bottle, 
  and the only possibilities are  a sphere with 4 punctures or a projective plane with 3 punctures. It follows that $\Gamma$ has at least 3 bottom vertices (carrying $A_1$, $A_2$, and $\Z$), and this prevents $\Lambda$ from being retractable by 
\cite{GLS_finite_index}   (see Remark \ref{ineg}).

  It immediately follows that $G$ has no simple étage structure. Since $\Gamma$ is its only structure of extended étage, the  ``moreover''  follows from the fact 
that no subgroup isomorphic to $A_1*A_2$ is conjugate into $A_1$ or $A_2$ because 
$A_1$ and $A_2$, being assumed to be one-ended, are co-Hopfian \cite{Sela_structure}.
\end{proof}

\begin{rem}[This remark will be used in Subsection \ref{minima}]\label{indice}
  We have seen that $\Gamma$ and $\Gcan$ are in the same deformation space. It follows that the Bass-Serre tree $\Tcan$ of $\Gcan$ is the tree of cylinders of the tree $T$ of $\Gamma$. It is not just a subdivision of $T$, because $a_i$ has roots in $A_i$.
  
 Indeed, for $i=1,2$ define $c_i\in A_i$ such that $a_i=c_i^{n_i}$ with $n_i>0$ and $n_i$ maximal ($c_i$ generates the centralizer of  $a_i$). The splitting $\Gcan$ has vertices $v_i$ of valence 2 carrying $\grp{c_i}$, joined to the vertex $v$ carrying $K$ by an edge carrying $\grp{a_i}$, and to the vertex carrying $A_i$ by an edge carrying $\grp{c_i}$. At the vertex $v_i$, the incident edge groups have indices $n_i$ and 1 in the vertex group,   so $n_1,n_2$ may be recovered from $\Gcan$. 
\end{rem}

The fact that $G$ is not a simple \'etage is not changed by ``adding constants''.

\begin{lem} \label{pastour2}
Let $G=K(A_1,A_2,a_1,a_2)$  
be as in Lemma \ref{pastour}.
Let $H$ be a torsion-free hyperbolic group, 
and $\hat G=G*H$.

Then $\hat G$ 
is not a simple étage over any subgroup  containing $H$.
\end{lem}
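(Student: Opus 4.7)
The plan is to argue by contradiction: suppose $\hat G$ is a simple étage over some subgroup $B$ with $H\subseteq B$, and consider the two cases of Definition \ref{dfn_simple_etage}.

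In the free product case $\hat G=B*\Z$, the projection $\phi:\hat G\onto \Z$ killing $B$ satisfies $\phi(H)=0$ (since $H\subseteq B$) and $\phi(G)=0$ (since $G$ is one-ended by Lemma \ref{pastour}, hence conjugate into $B$ by Kurosh applied to the free product $\hat G=B*\Z$). As $G\cup H$ generates $\hat G$, this makes $\phi$ trivial, contradicting its surjectivity.

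In the surface type case, I apply Proposition \ref{Grus} to the simple retractable centered splitting of $\hat G$ (with base $B$ and central surface $\Sigma'$). This produces a one-ended Grushko factor $\hat G_1$ of $\hat G$ containing $\pi_1(\Sigma')$, a retractable centered splitting of $\hat G_1$ with base $B_0\subsetneq\hat G_1$ (strict by Lemma \ref{dccg}), and a decomposition $B\simeq B_0*(\text{other Grushko factors of }\hat G)*F'$. Writing the Grushko decomposition $H=H'_1*\cdots*H'_k*\bbF_r$, the one-ended Grushko factors of $\hat G=G*H$ are $G$ and the $H'_j$, so $\hat G_1$ is one of these. If $\hat G_1=G$, then Lemma \ref{pastour} forces the induced splitting of $G$ to equal $\Gamma$; in particular $\Sigma'$ is the twice-punctured Klein bottle, and its two boundary components must attach (after Grushko blowup of the unique bottom vertex of $\hat G$'s splitting) to two distinct Grushko factors $A_1,A_2$ of $B$. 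But simplicity forces both boundaries to give edges to the same single bottom vertex, creating a loop in the underlying graph. Hence $\pi_1$ is an HNN extension whose stable letter, realized as an element $t\in\hat G$, must satisfy $tg_2t^{-1}=a_2$; since $g_2=a_2$ in $\hat G$, this forces $t$ to centralize $a_2$, and the centralizer is cyclic (by CSA) and contained in the vertex group $B$, making $t$ elliptic on the Bass-Serre tree and contradicting that it corresponds to a loop.

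The main obstacle is the remaining subcase $\hat G_1=H'_i$, where I would rely on co-Hopfness of one-ended torsion-free hyperbolic groups. Here $H\subseteq B$ forces each one-ended Grushko factor of $H$ to conjugate, inside $B$, into a Grushko factor of $B$; these are $B_0$'s Grushko factors, the Grushko factors of $\hat G$ other than $H'_i$ (namely $G$ and the $H'_j$ for $j\ne i$), and a free group. Distinctness of the conjugacy classes of Grushko factors of $\hat G$ rules out $H'_i$ conjugating into $G$ or into any $H'_j$ with $j\ne i$, and non-cyclicity of $H'_i$ rules out the free part. The only remaining option, $gH'_ig^{-1}\subseteq B_0\subseteq\hat G_1=H'_i$, would define an injective endomorphism of $H'_i$; co-Hopfness (Sela) forces it to be surjective, whence $H'_i\subseteq B_0$, contradicting $B_0\subsetneq H'_i$.
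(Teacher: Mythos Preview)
Your free-product case is fine (and simpler than the paper's phrasing).  The surface-type case, however, has a real gap in your Case~A argument, and your Case~B is handled by a harder route than necessary.

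\textbf{Case~B is immediate.}  If $\hat G_1$ were conjugate to some $H'_i$, then the central surface group $Q\subset\hat G_1$ would be conjugate into $H'_i\subset H\subset B$.  But $Q$ is the \emph{other} vertex group of $\Gamma_{\hat G}$, so $Q$ would fix both a type-$v$ vertex and a type-$v_1$ vertex in the Bass--Serre tree, hence an edge; this is impossible since $Q$ is not cyclic.  Your co-Hopf argument is not needed.

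\textbf{Case~A has a gap.}  You write that the HNN relation from the non-tree edge $e_2$ is ``$tg_2t^{-1}=a_2$'' and then use $g_2=a_2$ to conclude that $t$ centralises $a_2$.  But the relation coming from $e_2$ is $tc_2t^{-1}=g_2$, where $c_2\in B$ is the edge inclusion into the bottom group --- you have not shown $c_2=a_2$.  Knowing that $\Lambda_{\bbZ}=\Gamma$ tells you the bottom vertex groups $D_1,D_2$ of $\Lambda_{\bbZ}$ are $A_1,A_2$ \emph{as vertex stabilisers in the $G$-tree}, but in the $\hat G$-tree $\tilde\Lambda$ the lift $\tilde u_2$ may sit in the blow-up of $t\tilde v_1$ rather than of $\tilde v_1$ itself, so that $A_2\subset tBt^{-1}$ rather than $A_2\subset B$.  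In that situation $c_2=t^{-1}a_2^{\pm1}t$ and the relation $tc_2t^{-1}=a_2$ becomes vacuous; no centraliser contradiction follows.

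\textbf{How the paper avoids this.}  The paper takes the Grushko decomposition of $B$ relative to the incident edge groups \emph{and to $H$}.  Then $\pi_1(\Lambda_{\bbZ})$ is a Grushko factor of $\hat G$ relative to $H$, and since it contains $Q\not\subset B\supset H$ it must be (conjugate to) $G$.  Because the relative Grushko decomposition of $\hat G=G*H$ has a single edge, $\Lambda_B$ has a single separating edge giving $B=B'*H$, and $\Lambda_{\bbZ}$ is then a \emph{simple} retractable centered splitting of $G$ with bottom group $B'$.  This directly contradicts Lemma~\ref{pastour} (whose only retractable centered splitting $\Gamma$ has two bottom groups), with no HNN bookkeeping required.
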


\begin{proof}
  Clearly, the étage cannot be of free product type because $G$ is one-ended, so the decomposition $\hat G=G*H$ is the only non-trivial decomposition
of $\hat G$ as a free product with $H$ contained in a factor.

Let therefore $\Gamma_{\hat G}$ be a   retractable centered splitting of $\hat G$ with a single bottom group $B$ containing $H$, and a  central group $Q$.
As in Subsection \ref{sec_grushko}  we consider the Grushko decomposition   $\Lambda_B$ of $B$ relative to the  incident edge groups,    but now also to $H$. 
Let
$\Lambda$ be  the splitting of  $\hat G$ obtained by blowing up   $\Gamma_{\hat G}$ using this decomposition of $B$, and let  $\Lambda_\bbZ$ be the 
union
of the edges of $\Lambda$ with non-trivial edge group.

By a relative version
of Proposition \ref{Grus}, $B_{\Lambda_\bbZ}=\pi_1(\Lambda_\bbZ)$ is a Grushko factor of $\hat G$   relative to $H$, hence is  conjugate to $G$ (we may assume $B_{\Lambda_\bbZ}=G$).
Since   the splitting $\hat G=G*H$ is the unique non-trivial minimal decomposition of $\hat G$ as a graph of groups with trivial edge groups,
$\Lambda_B$ has a single edge $e$ with trivial edge group, this edge is separating, and decomposes $B$ into a free product $B'*H$.

The splitting $B_{\Lambda_\bbZ}$ is therefore a centered splitting of $G$
with central vertex group $Q$ and a single bottom group $B'$.
  Since $\Gamma_{\hat G}$
is retractable, there is a non-degenerate boundary-preserving map from $Q$ to $\hat G$, hence   one from $Q$ to $G$ by 
 Proposition \ref{cinqdouze2}. This contradicts  Lemma \ref{pastour} 
 because $B_{\Lambda_\bbZ}$ is a retractable splitting of  
$G=K(A_1,A_2,a_1,a_2)$  with only one bottom group. 
\end{proof}

\begin{prop}\label{lebut}
Given arbitrary one-ended torsion-free hyperbolic groups $A_1,A_2$, there exists  a   one-ended hyperbolic group $G=K(A_1,A_2,a_1,a_2)$ satisfying the assumptions of  Lemma \ref{pastour}. 
In particular:
\begin{itemize}
\item $G$ is not a simple étage;
more generally,   by Lemma \ref{pastour2}, $G*H$ is not a simple étage over
  any subgroup containing $H$   (for $H$ an arbitrary torsion-free hyperbolic group);
\item   $G$ is an extended \'etage over $A_1*A_2$, so $G\equiv A_1*A_2$, but $G$  is not an extended étage relative to a subgroup isomorphic to $A_1*A_2$;
\item   if $A_1$ and $A_2$ are prototypes, then $G$ is minimal (it has no proper elementarily embedded subgroup).
\end{itemize}
\end{prop}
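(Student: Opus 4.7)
The strategy is to exhibit suitable elements $a_1, a_2$ and then derive the three bullet points from Lemmas \ref{pastour} and \ref{pastour2} together with Perin's characterization of elementarily embedded subgroups (Theorem \ref{thcl}).

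\emph{Step 1: construction of $a_i$.} For each $i\in\{1,2\}$ we seek a non-trivial square $a_i\in A_i$ such that $A_i$ admits no non-trivial cyclic splitting with $a_i$ elliptic. Since $A_i$ is torsion-free, $c^2$ is elliptic in any action on a tree if and only if $c$ is, so it suffices to produce $c_i\in A_i$ loxodromic in every non-trivial cyclic splitting of $A_i$ and set $a_i:=c_i^2$. If $A_i$ has no non-trivial cyclic splitting, any non-trivial $c_i$ works. Otherwise, we use the canonical cyclic JSJ decomposition $\Gcan(A_i)$ (Subsection \ref{JSJ}). By the structure theory of cyclic JSJs, every vertex group of every non-trivial cyclic splitting of $A_i$ is conjugate into a rigid vertex group of $\Gcan(A_i)$, a maximal cyclic subgroup commensurable to an edge stabilizer, or the fundamental group of an essential subsurface of a QH vertex surface. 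We build $c_i$ via ping-pong as a product of an element that is loxodromic on $T_{\Gcan(A_i)}$ (not conjugate into any rigid vertex group, nor commensurable to an edge stabilizer) with a filling element in each QH vertex group (which exists because the QH surfaces are non-exceptional); taking sufficiently large translation length at each step ensures $c_i$ is loxodromic in every non-trivial cyclic splitting of $A_i$.

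\emph{Step 2: first two bullets.} With $a_i$ as above, Example \ref{ctrex} produces a torsion-free hyperbolic group $G=K(A_1,A_2,a_1,a_2)$ equipped with a retractable centered splitting $\Gamma$; hence $G$ is an extended étage over $A_1*A_2$, giving $G\equiv A_1*A_2$ by Corollary \ref{equivtour}. The assumptions of Lemma \ref{pastour} hold by construction, so $G$ is one-ended, $\Gamma$ is its unique retractable centered splitting, and $G$ admits neither a simple étage structure nor an extended étage structure relative to a subgroup isomorphic to $A_1*A_2$. Lemma \ref{pastour2} extends the non-simple-étage conclusion to $G*H$ over any subgroup containing $H$.

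\emph{Step 3: minimality when $A_1,A_2$ are prototypes.} Let $H<G$ be a non-abelian subgroup with $H\preceq_e G$. By Theorem \ref{thcl}, $G$ is an extended tower over $H$ relative to $H$. Its first étage $G>G_1$ cannot be of free product type (as $G$ is one-ended), hence is of surface type; by Lemma \ref{pastour} it coincides with $\Gamma$, so $G_1\simeq A_1*A_2$. Since $A_1$ and $A_2$ are one-ended prototypes, their free product is a prototype by Remark \ref{genprot}, so $G_1$ is not an extended étage over any proper subgroup; the remainder of the tower is therefore trivial, and either $H=G$ or $H=G_1$. In the latter case $H\simeq A_1*A_2$ would be conjugate in $G$ into a bottom group of $\Gamma$ (by relativity of the tower to $H$), contradicting the moreover part of Lemma \ref{pastour} (which is rooted in malnormality of the bottom groups, Remark \ref{malnor}). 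Hence $H=G$, proving minimality.

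The main obstacle is Step 1: producing, for an arbitrary non-rigid one-ended torsion-free hyperbolic $A_i$, an element loxodromic in every non-trivial cyclic splitting. This demands a careful ping-pong construction combining the structure of $\Gcan(A_i)$ with filling elements in its QH vertex groups, controlling loxodromicity under arbitrary Dehn-twist refinements at the JSJ level.
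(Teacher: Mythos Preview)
Your proposal is correct and follows the same route as the paper. Two small remarks. For Step~1, the paper simply invokes Lemma~\ref{nospl} (whose proof is deferred to \cite{GL_alea}); your sketch is in the right spirit, though the assertion that ``every vertex group of every non-trivial cyclic splitting of $A_i$ is conjugate into a rigid vertex group, a maximal cyclic group commensurable to an edge group, or a subsurface group'' is false as stated (a vertex group of a collapse of $\Gcan$ can contain several rigid pieces glued by subsurfaces)---what you need is the corresponding statement for \emph{elements}, namely that an element elliptic in some cyclic splitting is either elliptic in $\Gcan$ or becomes elliptic after refining a QH surface along a non-filling multicurve. For Step~3, the paper reaches the contradiction via Remark~\ref{rem_last_simple} (the last \'etage of a tower relative to $H$ must be simple or of free product type, while $\Gamma$ is neither); your route through the ``moreover'' clause of Lemma~\ref{pastour} is an equivalent unpacking of the same fact.
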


\begin{proof}
  Lemma \ref{nospl} below provides elements $b_i\in A_i$ such that $A_i$ has no cyclic splitting relative to $b_i$.
The element $a_i=b_i^2$ satisfies the requirements of  
Lemma \ref{pastour}, and the first two assertions follow.

 Now note that the only  non-trivial extended tower structure of $G$ is a single étage given by $\Gamma$. Indeed, $G$ is not an étage of free product type (it has no cyclic free factor), and by Lemma \ref{pastour} the only retractable centered splitting of $G$  is $\Gamma$, whose base $P \simeq A_1*A_2$ is a prototype, hence not an extended \'etage.
 If $H<G$ is a  proper elementarily embedded subgroup, then $G$ is an extended tower over $H$ relative to $H$ by Theorem \ref{thcl}.
This is impossible since the last étage  has to be simple   or of free product type by Remark \ref{rem_last_simple}.
\end{proof}

\begin{lem} \label{nospl}
If $A$ is a one-ended torsion-free hyperbolic group, there exists   a non-trivial element $a\in A$ such that $A$ has no cyclic splitting relative to $a$.
\end{lem}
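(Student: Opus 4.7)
The plan is to use the canonical cyclic JSJ decomposition $\Gcan$ of $A$ (well-defined by Subsection \ref{JSJ} since $A$ is one-ended, torsion-free hyperbolic) together with its universal compatibility property. The conclusion is equivalent to finding a non-trivial $a\in A$ that is hyperbolic (not elliptic) in the Bass-Serre tree of every non-trivial cyclic splitting of $A$: indeed, ``$A$ has no cyclic splitting relative to $a$'' means that no non-trivial cyclic splitting of $A$ makes $a$ elliptic.

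I would first dispose of two easy subcases. If $A$ has no non-trivial cyclic splitting at all, any non-trivial $a\in A$ works. If $A$ is the fundamental group of a non-exceptional closed hyperbolic surface $\Sigma$, I would take $a$ represented by a \emph{filling} closed geodesic on $\Sigma$---one having positive geometric intersection with every essential simple closed curve; such elements are classical and abundant. Since any non-trivial cyclic splitting of $\pi_1(\Sigma)$ is dual to a family of disjoint essential simple closed curves, and its elliptic elements are precisely those conjugate into the fundamental group of a complementary subsurface, a filling $a$ is never elliptic in such a splitting.

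The main case is when $\Gcan$ is non-trivial. I would exploit that the cyclic edge stabilizers of $\Tcan$ are \emph{universally elliptic} (elliptic in every cyclic splitting of $A$) and that $\Tcan$ is \emph{universally compatible}: any cyclic splitting $T'$ of $A$ admits a common refinement with $\Tcan$ (see \cite{GL_JSJ}). I would pick generators $c_1,c_2$ of two edge stabilizers of $\Tcan$ fixing edges $e_1,e_2$ in distinct $A$-orbits and at combinatorial distance at least $4$ in $\Tcan$, and set $a=[c_1,c_2^N]$ for a large integer $N$. Then $a$ is hyperbolic in $\Tcan$ by a direct computation, and for any other non-trivial cyclic splitting $T'$, the standard ``product of two elliptic isometries of a tree'' argument shows $a$ is hyperbolic in $T'$ as soon as $\mathrm{Fix}_{T'}(c_1)\cap\mathrm{Fix}_{T'}(c_2)=\emptyset$; otherwise $\langle c_1,c_2\rangle$ would lie in a vertex stabilizer of $T'$.

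The hard part---and the main obstacle---is ruling out this last eventuality for a suitable choice of $c_1,c_2$: one must show that $\langle c_1,c_2\rangle$ is not contained (up to conjugation) in any proper vertex stabilizer appearing in a non-trivial cyclic splitting of $A$. I would attack this by combining the 2-acylindricity of $\Tcan$---which tightly constrains common fixed subtrees of non-commensurable universally elliptic elements---with the fact that, by Lemma \ref{compajsj}, any cyclic splitting of $A$ arises from $\Gcan$ by refining a surface-type vertex using simple closed curves and then collapsing; consequently the $\Aut(A)$-orbits of proper vertex groups of cyclic splittings of $A$ are controlled by finitely many combinatorial types. A genericity argument on $c_1,c_2$, using that each proper quasiconvex subgroup of $A$ has exponential growth rate strictly smaller than that of $A$, then ensures that for some choice of $c_1,c_2$ the required non-containment holds.
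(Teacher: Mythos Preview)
Your easy subcases (no cyclic splitting; closed surface group via a filling curve) match the paper exactly, and the paper's own argument for the general case is only a sketch: it says ``consider the canonical cyclic JSJ decomposition'' and otherwise refers to \cite{GL_alea} for a random-walk proof of a stronger statement. So you are attempting to flesh out what the paper leaves as an exercise.

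There is, however, a genuine gap in your main-case construction. Taking $a=[c_1,c_2^N]$ with $c_1,c_2$ generating edge groups of $\Tcan$ cannot succeed when $\Gcan$ has a surface-type vertex. The elements $c_1,c_2$ are universally elliptic, so in any cyclic tree $T'$ they each fix a vertex; the question is only whether they fix the \emph{same} vertex. Now take a surface $\Sigma_v$ of $\Gcan$ and a non-separating essential simple closed curve $C\subset\Sigma_v$. The one-edge splitting $T_C$ dual to $C$ is an HNN extension with a \emph{single} vertex group, and that vertex group contains every rigid vertex group and every edge group of $\Gcan$ (these are all elliptic in $T_C$ and there is only one vertex orbit to land in). Hence $\langle c_1,c_2\rangle$ lies in this vertex group and your $a$ is elliptic in $T_C$. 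The growth-rate/genericity remark at the end does not touch this: the failure is structural, not about the particular choice of $c_1,c_2$ among edge-group generators.

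What the JSJ ``exercise'' actually requires is an element that is hyperbolic in $\Tcan$ \emph{and} whose axis, after refining each surface-type vertex by any family of essential curves, still crosses the new edges---equivalently, an element whose interaction with each surface $\Sigma_v$ is filling. One way to build such an $a$ is to take, for each surface-type vertex $v$, a filling element $f_v\in\pi_1(\Sigma_v)$, together with an element $h$ hyperbolic in $\Tcan$ whose axis meets every edge orbit, and form a suitable long product of $h$ with the $f_v$'s so that the resulting axis passes through each surface along a filling path. Your commutator of two universally elliptic elements carries no such filling information in the surfaces, which is precisely why it fails.
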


\begin{proof}  If $G$ is a closed surface group, one can take any $a$ represented by a closed curve filling the surface. In general, one can prove the lemma by considering the canonical cyclic JSJ decomposition of $G$. We leave this as an exercise and we refer the reader to   Theorem 1.5 of \cite{GL_alea}, which proves  a stronger result  using random walks. 
\end{proof}

\subsection{$\omega$-residually free towers and NTQ groups}
\label{nst}
 In this subsection we will show that $\omega$-residually free towers fail to characterize elementarily free groups in two different ways. 
 
Let us first apply  Proposition \ref{lebut} with  $A_1,A_2$ one-ended torsion-free hyperbolic groups which are elementarily free (for instance, non-exceptional closed surface groups).
 The resulting  group $G=K(A_1,A_2,a_1,a_2)$ is elementarily equivalent to $A_1*A_2$, hence elementarily free. Thus:
 \begin{example} 
  There exists a one-ended hyperbolic group $G$ which is  elementarily free, 
   but is not a surface group and is not a simple tower over any proper subgroup. 
Moreover, $G*\bbF_r$ is not a simple tower over any proper subgroup containing $\bbF_r$.
\end{example}

 A  hyperbolic $\omega$-residually free tower  is defined in \cite{Sela_diophantine1} as a simple tower over a free product of free groups and non-exceptional surface groups. The group $G$ constructed above 
  is elementarily free, but is clearly not a   hyperbolic $\omega$-residually free hyperbolic tower.
   This shows that \cite[Theorem 7]{Sela_diophantine6} (in fact Proposition 6 therein) has to be corrected. 
It is not a regular NTQ group  (with constants) in the sense of \cite{KhMy_implicit,KhMy_elementary}, and 
the fact that $G*\bbF_r$ is not a simple tower over $\bbF_r$ means that $G*\bbF_r$ is not a
 constant-free regular NTQ group (see Section \ref{sec_defs}) so \cite[Theorem 41]{KhMy_elementary} has to be corrected.

The above example can be fixed if one considers {\em hyperbolic towers} in the sense of Perin \cite{Perin_elementary}, i.e.\ if one allows    centered splittings 
with several bottom vertices. 
Indeed, the group $G$ is a hyperbolic tower over $A_1*A_2$ in   Perin's sense, and thus elementarily free. 

  But there is yet another way that $\omega$-residually free towers fail to characterize elementarily free groups. The following 
example is elementarily free, but not even a hyperbolic tower in the sense of \cite{Perin_elementary}. It shows that, to characterize correctly elementarily free groups, one needs to   use extended towers as   defined in \cite{Perin_elementary_erratum}.

 \begin{figure}[ht!]
   \centering
   \includegraphics{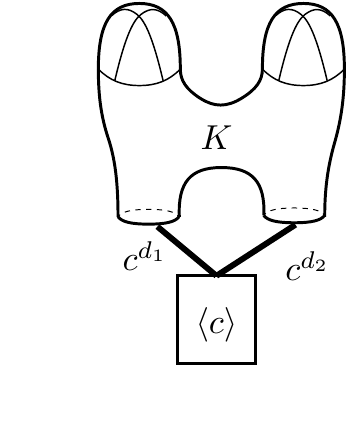}
   \caption{A group that is elementarily equivalent to $\bbF_2$ but is not an extended tower over $\bbF_2$  ($d_1,d_2$ even, with $ | d_1 | \ne  | d_2 |$).  By construction, it is an extended étage over $\bbZ$.}
   \label{fig_not_ext_tower}
 \end{figure}

\begin{example}[See Figure \ref{fig_not_ext_tower}]\label{example_not_extended_tower}
 For $d_1,d_2\ge2$ consider the group $$G=\grp{a_1,a_2,b_1,b_2,t,c\mid a_1^2a_2^2b_1b_2=1, b_1=c^{d_1},tb_2t\m=c^{d_2}}$$
  obtained from a cyclic group $\grp {c}$ by gluing the two boundary components of a twice-punctured Klein bottle   onto powers of $c$ (this group is a parachute, see Section \ref{parac}).

The defining splitting is retractable if $d_1$ and $d_2$ are even. 
In this case, $G$ is an extended étage over $\bbZ$, it is elementarily free, 
 but 
 it is not an extended tower over  
 $\F_2$ when $ | d_1 | \ne  | d_2 |$  (see Remark \ref{exte} below).\\
\end{example}

We have seen (Example \ref{csurf})  that
the fundamental group  $\pi_1(S_g)$ of the orientable surface of genus $g$ for $g\geq 2$, and the fundamental group  $\pi_1(N_g)$ of the non-orientable surface of genus $g$ for $g\geq 5$, are  simple \'etages over $\bbF_2$.   This implies  that any hyperbolic $\omega$-residually free hyperbolic tower is in fact a 
simple tower over $\bbF_2$ or over $\pi_1(N_4)$ (to see this, simply note that $\pi_1(N_4)*H$ is a simple tower over $\F_2$ if $H$ is $\Z$ or a non-exceptional surface group).

Unlike  the other non-exceptional surface groups, 
$\pi_1(N_4)$ is not a simple étage (see Example \ref{csurf}), although it   is an extended étage over $\bbF_2$.
The following lemma shows that $\pi_1(N_4)$ has no proper elementarily embedded subgroup,
so  $3\not\implies 2$ in Proposition \ref{implications}.

\begin{lem}[  {\cite[Cor.~3.13]{LPS_hyperbolic}}]\label{lem_N4}
  The non-orientable surface group $\pi_1(N_4)$  has no proper elementarily embedded subgroup.
\end{lem}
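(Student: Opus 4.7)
My plan is to combine Perin's characterization of elementarily embedded subgroups (Theorem~\ref{thcl}) with a classification of the possible extended étage structures of $\pi_1(N_4)$. Suppose $H \preceq_e \pi_1(N_4)$ is a proper subgroup. Since $\pi_1(N_4) \equiv \bbF_2$ is non-abelian, so is $H$, and Theorem~\ref{thcl} produces a non-trivial extended tower $\pi_1(N_4) > G_1 > \dots > G_k = H$ relative to $H$. Because $\pi_1(N_4)$ is one-ended, the first étage $\pi_1(N_4) > G_1$ cannot be of free product type, so it must come from a retractable centered splitting $\Gamma$ with non-exceptional central surface $\Sigma$ and bottom groups $B_1,\dots,B_n$. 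The key claim to prove is that every $B_i$ is cyclic; since the tower is relative to $H$, this will force $H$ to be conjugate into some $B_i$, hence cyclic, contradicting non-abelianity.

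To establish the claim I invoke Lemma~\ref{compajsj}: as $\pi_1(N_4)$ is a closed non-exceptional surface group with trivial canonical JSJ (a single QH vertex whose associated surface is $N_4$), every centered splitting arises from an incompressible subsurface $\Sigma \subset N_4$, and the bottom groups are the fundamental groups of the complementary pieces $\Sigma_1,\dots,\Sigma_n$. Additivity of Euler characteristic under circle gluings gives $\chi(\Sigma) + \sum_i \chi(\Sigma_i) = \chi(N_4) = -2$. Essentiality of the cut curves together with minimality of $\Gamma$ force every $\chi(\Sigma_i) \leq 0$, hence $\chi(\Sigma) \geq -2$; non-exceptionality of $\Sigma$ gives $\chi(\Sigma) \leq -1$, so $\chi(\Sigma) \in \{-1,-2\}$.

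If $\chi(\Sigma) = -2$, then $\sum \chi(\Sigma_i) = 0$ with each summand non-positive, forcing each $\Sigma_i$ to be an annulus or a Möbius band, and thus $B_i \simeq \bbZ$. The delicate case, which I expect to be the main obstacle, is $\chi(\Sigma) = -1$, forcing $\Sigma = S_{1,1}$. Then there is a unique complementary piece $\Sigma_1$ with $\chi(\Sigma_1) = -1$ and one boundary component: either $\Sigma_1 = S_{1,1}$, excluded because $S_{1,1} \cup_\partial S_{1,1} = S_2 \neq N_4$, or $\Sigma_1 = N_{2,1}$, the once-punctured Klein bottle. I will rule out this last possibility by applying Proposition~\ref{equivsimple} to the simple splitting with non-abelian bottom $B_1 = \pi_1(N_{2,1}) \simeq \bbF_2$: retractability would yield a retraction $\rho : \pi_1(N_4) \to \bbF_2$ with $\rho(\pi_1(S_{1,1}))$ non-abelian. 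Writing $\pi_1(S_{1,1}) = \langle a,b \rangle$ and $\pi_1(N_{2,1}) = \langle x,y \rangle$ with gluing relation $[a,b] = x^2 y^2$, such a $\rho$ would satisfy $[\rho(a),\rho(b)] = x^2 y^2$; but $x^2 y^2$ has non-trivial image $(2,2)$ in $\bbF_2^{\mathrm{ab}} = \bbZ^2$ and therefore is not a commutator in $\bbF_2$, a contradiction. This exhausts the case analysis and completes the proof.
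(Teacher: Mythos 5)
Your proof is correct and follows the same route as the paper: invoke Theorem~\ref{thcl} to reduce to showing that every retractable centered splitting of $\pi_1(N_4)$ has all bottom groups cyclic. The paper simply cites Figure~\ref{fig_N4} for the classification of retractable splittings, whereas you supply the justification explicitly (Euler characteristic bound, then the abelianization obstruction showing $[a,b]=x^2y^2$ rules out the $S_{1,1}\cup_\partial N_{2,1}$ decomposition), which is a useful fleshing-out of the same argument.
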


\begin{proof}
 Up to automorphism, the group $\pi_1(N_4)$ has exactly two retractable splittings, shown on Figure \ref{fig_N4}. But all the bottom groups of these decompositions are cyclic, so $\pi_1(N_4)$ has no extended tower structure  relative to $H$ with $H$ non-abelian, and  $\pi_1(N_4)$ has
no proper elementarily embedded subgroup by Theorem \ref{thcl}.
\end{proof}

Theorem \ref{contrex} below
gives other examples showing that $3\not\implies 2$   in Proposition \ref{implications}.

We may also apply Proposition \ref{lebut} with $A_1$ elementarily free and $A_2$ a prototype. In this case $G$ is not a simple \'etage, but   $A_2$ (which is the core of $G$) is an elementarily embedded subgroup by Theorem \ref{thcl} because $G$ is an extended tower over $A_2$ relative to $A_2$. This shows that $2\not\implies 1$ in Proposition \ref{implications}.

\subsection{The core} \label{core}

Let $G$ be a non-abelian torsion-free hyperbolic group.

\subsubsection{Groups with no elementarily embedded core}
Recall that the core $\core(G)$ of $G$ is the unique prototype such that $G\tge \core(G)$,
  and that we defined $\ecore(G)$ as $\core(G)$, unless   $\core(G)=1$ in which case we set $\ecore(G)=\bbF_2$. 
We have $\ecore(G)\equiv G$   (Corollary \ref{equivcore}), and by Corollary \ref{coretour} there exists $r\geq 0$ such that $\ecore(G)$ embeds elementarily in $G*\bbF_r$.

If  $\core(G)$ is one-ended,   it has an elementary embedding into $G$ by Corollary \ref{cor_core_unbout},
though as pointed out in the previous subsection     $G$ is not always a simple \'etage. 

On the other hand:

\begin{thm}\label{contrex}
  If $P$ is a prototype with more than one end, there is a torsion-free hyperbolic group $G$ such that $G$ is an extended \'etage over $P$
  (so that $P\simeq\core(G)$ is the core of $G$ and  $G\equiv P$), but $P$ does not elementarily embed into $G$.

  Moreover, $G$   has no proper elementarily embedded subgroup (but is not a prototype).
\end{thm}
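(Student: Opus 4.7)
The plan is to extend the construction of Proposition \ref{lebut} from the case $k=2$ to arbitrary $k \geq 2$. Since $P$ is a prototype with more than one end, Lemma \ref{eqprot} provides a Grushko decomposition $P = A_1 * \cdots * A_k$ with $k \geq 2$, each $A_i$ a one-ended torsion-free hyperbolic prototype. Using Lemma \ref{nospl}, I would choose non-trivial $b_i \in A_i$ ($i = 1, 2$) such that $A_i$ admits no cyclic splitting relative to $b_i$, and set $a_i = b_i^2$. With $K = K(A_1, A_2, a_1, a_2)$ as in Example \ref{ctrex}, I would take
\[G = K * A_3 * \cdots * A_k\]
(so $G = K$ when $k = 2$).

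The easy properties should follow directly: $G$ is torsion-free hyperbolic as a free product of such groups (using that $K$ is hyperbolic by Example \ref{ctrex}); combining Example \ref{ctrex} with Remark \ref{rk_prodlib}, $G$ is an extended étage of surface type over $P$, and hence is not a prototype; the core is multiplicative (Remark \ref{cpl}), giving $\core(G) \simeq P$; and $G \equiv P$ by Corollary \ref{equivcore}. Comparing Grushko decompositions one sees $P \not\simeq G$.

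The heart of the proof is to show that $G$ admits no proper elementarily embedded subgroup --- this would in particular imply that $P$ does not embed elementarily. Suppose for contradiction that $H \preceq_e G$ is proper. By Theorem \ref{thcl}, $G$ is an extended tower $G = G_0 > G'_1 > \cdots > G'_n = H$ relative to $H$, with $n \geq 1$. Since the Grushko factors of $G$ are the one-ended groups $K, A_3, \ldots, A_k$, the group $G$ has no cyclic free factor, so the first étage $G > G'_1$ must be of surface type. The key claim would be that every retractable centered splitting of $G$ has exactly two bottom vertices and base isomorphic to $P$: by Proposition \ref{Grus} any such splitting comes from a retractable centered splitting of a one-ended Grushko factor of $G$; since $A_3, \ldots, A_k$ are prototypes the only possibility is the unique splitting $\Gamma$ of $K$ (Lemma \ref{pastour}), whose surface $N_{2,2}$ has two boundary components. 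If both incident edges at the central vertex were to meet a single bottom vertex of the splitting $\Gamma_G$ of $G$, the underlying graph would carry a cycle, and the abelianization of $\pi_1(\Gamma_G)$ would acquire an extra $\bbZ$-summand from the HNN stable letter --- producing a group of strictly larger first Betti number than $G$, a contradiction. Proposition \ref{Grus}(3) then identifies the base as $\simeq P$ (the free part $F'$ is trivial since every $A_i$ is one-ended), so $G'_1 \simeq P$ and no retractable centered splitting of $G$ is simple.

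This would yield the desired contradiction in both cases. If $n \geq 2$, then $G'_1 > G'_2$ is an extended étage over $G'_1 \simeq P$; but $P$ is a prototype with no cyclic free factor, ruling out both types of extended étage. If $n = 1$, then $H = G'_1$ and the étage $G > H$ is of surface type relative to $H$, hence must be simple by Remark \ref{rem_last_simple} --- contradicting the absence of simple retractable centered splittings of $G$. The hard part will be the complete analysis of retractable centered splittings of $G$: it relies crucially on the uniqueness in Lemma \ref{pastour} for $K$, on the Grushko reduction of Proposition \ref{Grus}, and on the combinatorial argument ruling out a single-bottom-vertex splitting.
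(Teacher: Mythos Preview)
Your construction and overall strategy match the paper's exactly: take $G = K(A_1,A_2,a_1,a_2) * A_3 * \cdots * A_k$, use Proposition~\ref{Grus} together with the uniqueness in Lemma~\ref{pastour} to show that every retractable centered splitting of $G$ has base isomorphic to the prototype $P$, and finish via Remark~\ref{rem_last_simple}.

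The step that needs repair is your argument that $\Gamma_G$ cannot be simple. As written, ``the abelianization of $\pi_1(\Gamma_G)$ would \dots\ [have] strictly larger first Betti number than $G$'' does not parse, since $\pi_1(\Gamma_G) = G$ tautologically. The intended argument---comparing two computations of $b_1(G)$, one from the hypothetical simple splitting with $B_1 \simeq P$ and an HNN stable letter, one from the known structure $G = K * A_3 * \cdots * A_k$---can be made to work, but you would still need to verify that the edge-group relations contribute the same rank in both presentations (this uses the identification of the edge embeddings coming from $\Lambda_\bbZ \simeq \Gamma_K$), and you do not do this. The paper handles the point more cleanly and without any abelianization computation: once $\Lambda_\bbZ \simeq \Gamma_K$ is established, the rank formula in the proof of Proposition~\ref{Grus}(3) says that the free rank of $G$ equals (a non-negative term) plus the number of bottom vertices of $\Lambda_\bbZ$ minus that of $\Gamma_G$; since $G$ has no free Grushko factor and $\Lambda_\bbZ$ has two bottom vertices, $\Gamma_G$ must have two as well. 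Equivalently, the blowup $\Lambda$ is a tree (collapsing its non-trivial edges gives a Grushko decomposition of $G$ with no free part), hence so is $\Gamma_G$.
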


This shows that Theorem 7.6 of \cite{Sela_diophantine7} has to be corrected.

\begin{proof}
 We write  $P$ as a free product of  one-ended prototypes $P=A_1*\dots*A_p$ with $p\ge2$. 
 If $p=2$, we have seen that the group $G=K(A_1,A_2,a_1,a_2)$ constructed in Subsection \ref{constr} has the required properties.
 
If $p>2$, we just take $G=K(A_1,A_2,a_1,a_2)*A_3*\dots*A_p$.   It is an extended \'etage over $P$ by Remark \ref{rk_prodlib}.
  Denote by $\Gamma_K$ the unique retractable  centered splitting of $K(A_1,A_2,a_1,a_2)$.
 If $\Gamma_G$ is any retractable splitting of $G$, we claim that it is obtained from $\Gamma_K$ by replacing the two bottom vertex groups
$A_1,A_2$ by their free product with some conjugates of $A_3,\dots,A_p$,  in such a way  that the base of $\Gamma_G$ is isomorphic to $P$.

Indeed, consider the  Grushko blowup $\Lambda$ of $\Gamma_G$ (Definition \ref{leslambda}), so that collapsing edges with trivial stabilizer in $\Lambda$ yields
back $\Gamma_G$. 
Applying Proposition \ref{Grus} shows that $G_1=\pi_1(\Lambda_\Z)$ is conjugate to $K(A_1,A_2,a_1,a_2)$,
and by uniqueness that $\Lambda_\Z$ is isomorphic to $\Gamma_K$.

This proves the claim and shows that all extended tower structures on $G$ have a single étage with two bottom groups and   base
isomorphic to the prototype $P$.
One concludes as 
 in the proof of Proposition \ref{lebut} 
that $G$ has no proper elementarily embedded subgroup.
 \end{proof}

 If $G$ is elementarily free, we have defined $c(G)=\{1\}$ and $\ecore(G)=\bbF_2$. Lemma \ref{lem_N4} shows that, in this case also,  $\ecore(G) $ does not always elementarily embed into $G$. In   Example \ref{example_not_extended_tower} above,
 $G$ is not an extended tower over $\ecore(G)=\bbF_2$, so $G\not\tge \ecore(G)$
 (though  by definition     $G\tge\core(G)$ always holds).

\subsubsection{Non-uniqueness of the core as a subgroup}

We have seen (Corollary \ref{fini2}) that, up to isomorphism, only finitely many groups $H$ embed elementarily into a given
  torsion-free hyperbolic group $G$.
But, given $H$ and $G$,  the number  of elementarily embedded subgroups  of   $G$ isomorphic to  $H$  
  may be infinite   up to conjugacy, and even up to automorphisms of $G$. Here are exemples with $H=\ecore(G)$.

\begin{example}[Infinitely many  elementary embeddings of the core] \label{imee}
 Let $$F=\grp{e_1,e_2,e_3,e_4}\simeq \bbF_4$$ be the fundamental group of the subsurface of genus 2 of the orientable surface $S_4$ 
considered  in Example \ref{pasel0}.
It is an elementarily embedded subgroup of   $\pi_1(S_4)$, and so is any non-abelian free factor of   $F$. 
In particular,  $\pi_1(S_4)$ has infinitely many non-conjugate elementarily embedded subgroups isomorphic to its elementary core $\F_2$.
  One can check that this family of subgroups is also infinite up to automorphisms of $\pi_1(S_4)$.
For instance, consider the image of $\grp{e_1,e_2}$ by automorphisms of $F$ sending the commutator $[e_1,e_2]$
to elements represented by curves with many self-intersections.

If $A$ and $B$ are prototypes, the group $G=A*B*\bbF_n$ has infinitely many conjugacy classes of free factors of the form $A^g*B^h$ with $g,h\in G$,
and these are elementarily embedded subgroups isomorphic to $\ecore(G)$.
  This collection is finite up to automorphisms of $G$ 
but, by gluing a retractable surface to $G$ in a complicated way, one can construct a larger group $G'$ 
such that this collection still consists of elementary embedded cores,
and is infinite up to automorphisms of $G'$.
 \end{example}

If $\core(G)$ is one-ended, it elementarily embeds into $G$ (Corollary \ref{cor_core_unbout}), 
and $G$ only contains finitely many conjugacy classes of subgroups isomorphic to $\core(G)$ by
a theorem of Gromov \cite[Th. 5.3.C']{Gromov_hyperbolic}, but there is no uniqueness in general.

\begin{example}[Several  elementary embeddings of a one-ended core]\label{example_nonunique}
Consider Example \ref{bete}  with a group $B_1$ having no cyclic splitting. Then $A=\grp{B_1,F(x,y)}$ and  $A_B=\grp{B_1,F(x_B,y_B)}$ are   non-conjugate  one-ended elementarily embedded subgroups isomorphic to $\core(G)$.  As above, one may modify this example by gluing a surface to $G$ so that no automorphism sends $A$ to $A_B$. 
\end{example}

This   example relies on the existence of  a surface in the JSJ decomposition of $\ecore(G)$. 
If there is no surface, the embedding of a one-ended core into $G$ is indeed unique  up to conjugacy. 

\begin{prop}
Let $G$ be a non-abelian torsion-free hyperbolic group. Suppose that its core  $\core(G)$ is one-ended,   not isomorphic to $\pi_1(N_3)$, and the cyclic JSJ decomposition of $\core(G)$ has no non-exceptional surface-type vertex. Then all subgroups  $H$ of $G$ isomorphic to $\core(G)$ are conjugate.
\end{prop}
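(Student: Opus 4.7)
Set $C=\core(G)$ and, using Corollary \ref{cor_core_unbout}, fix a subgroup $C_0\subseteq G$ that is elementarily embedded in $G$ and isomorphic to $C$. Given another subgroup $H\subseteq G$ with $H\simeq C$, choose an isomorphism $\mu:C\to G$ with $\mu(C)=H$; the goal is to produce $h\in G$ such that $C_0=hHh^{-1}$. The plan is to use Lemma \ref{relat2} to construct an embedding $\lambda:C\to G$ with image $C_0$ and related to $\mu$ with respect to $\tilde\Gamma_C$, and then to promote this relatedness to genuine conjugacy using the hypothesis on $\tilde\Gamma_C$.

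The group $C$ satisfies condition $(*)$ of Subsection \ref{sec_choix}---it is one-ended and torsion-free hyperbolic, and by hypothesis not isomorphic to $\pi_1(N_3)$---and $G$ is an extended tower over $C_0$, so Lemma \ref{relat2} provides a morphism $\lambda:C\to G$ related to $\mu$ with respect to $\tilde\Gamma_C$, either non-injective or injective with image in $C_0$. The non-injective alternative would contradict $C$ being a prototype: tracing through the proof of Lemma \ref{relat2}, non-injectivity arises when some retraction $\rho_{i_0-1}$ of an étage acts non-injectively on the one-ended subgroup $\mu_{i_0-1}(C)\subseteq G_{i_0-1}$. Lemma \ref{passurf} would then force $\mu_{i_0-1}(C)$ to contain a finite-index subgroup of the non-exceptional central surface $Q_{i_0-1}$ of that étage; transporting this surface subgroup back to $C$ and analyzing its position in $\tilde\Gamma_C$ via a further application of Lemma \ref{passurf} would produce a non-exceptional surface-type vertex in $\tilde\Gamma_C$, contradicting the hypothesis. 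Hence $\lambda$ is injective, and by co-Hopfness of the one-ended torsion-free hyperbolic group $C_0\simeq C$, it is an isomorphism onto $C_0$.

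There remains to show that two embeddings $\mu,\lambda:C\to G$ related with respect to $\tilde\Gamma_C$ have conjugate images. Since $\tilde\Gamma_C$ carries no non-exceptional surface-type vertex, relatedness amounts to the data of an element $h_K\in G$ for every vertex or edge group $K$ of $\tilde\Gamma_C$, with $\lambda|_K=\ad_{h_K}\circ\mu|_K$. The crucial input is that $C_0\preceq_e G$: the edge groups $\lambda(K_e)$ of $\tilde\Gamma_{C_0}$ are maximal cyclic in $C_0$, and since maximality of a cyclic subgroup is a first-order property they remain maximal cyclic in $G$. Combined with the CSA property of $G$, this gives $Z_G(\lambda(K_e))=\lambda(K_e)$; conjugating by $h_{v_0}$ for any endpoint $v_0$ of $e$ gives likewise $Z_G(\mu(K_e))=\mu(K_e)\subseteq\mu(K_{v_0})$. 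Fix a vertex $v_0$ of the Bass-Serre tree of $\tilde\Gamma_C$ and set $h:=h_{v_0}$. For any adjacent vertex $v$ via an edge $e$, the compatibility on $K_e$ forces $h^{-1}h_v\in Z_G(\mu(K_e))=\mu(K_e)\subseteq\mu(K_v)$, so that
\[
\lambda(K_v)=h_v\mu(K_v)h_v^{-1}=h(h^{-1}h_v)\mu(K_v)(h^{-1}h_v)^{-1}h^{-1}=h\mu(K_v)h^{-1}\subseteq hHh^{-1}.
\]
Iterating along the connected Bass-Serre tree, and handling the stable letters of any non-tree edges by the same centralizer computation (their $\lambda$-images lie in $hHh^{-1}$ after adjustment by an element of $\mu(K_e)\subseteq H$), we obtain $C_0=\lambda(C)\subseteq hHh^{-1}$. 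Both groups being isomorphic to the one-ended torsion-free hyperbolic group $C$, co-Hopfness yields $C_0=hHh^{-1}$.

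The main obstacle is the first step, namely excluding the non-injective alternative of Lemma \ref{relat2}: unlike in the proof of Theorem \ref{lethm}, where Proposition \ref{propsdechloe} applies directly to Grushko factors of the tower base, here one must adapt Perin's argument to the subgroup $H\simeq C$, which is not a priori a free factor of $C_0$. The hypothesis on $\tilde\Gamma_C$ enters both there and in the last step, where it collapses the Dehn-twist ambiguity of the relatedness condition via the maximal-cyclicity of edge groups in $G$.
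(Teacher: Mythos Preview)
Your approach diverges from the paper's, which is considerably more direct: the paper passes to $G*F$, written (via Corollary~\ref{ett}) as a simple tower $G*F=A_0\supset\cdots\supset A_p\simeq\core(G)$ with $A_p<G$, and shows by induction that the subgroup $H$ itself is conjugate into each $A_i$. At a surface-type étage, if $H$ is not conjugate into the bottom group $A_{i+1}$, then Lemma~\ref{passurf} and Remark~\ref{passurf2} produce in $H$ a cyclic splitting with a non-exceptional surface-type vertex, and Lemma~\ref{compajsj} forces the JSJ of $H\simeq\core(G)$ to contain a non-exceptional surface (or $H\simeq\pi_1(N_3)$), contradicting the hypothesis. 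Co-Hopfness then gives $H$ conjugate to $A_p$. Your first step---ruling out the non-injective alternative in Lemma~\ref{relat2}---is essentially this same argument rephrased through the retractions, so you are reproving the paper's key step and then adding a second, unnecessary one.

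That second step (relatedness of $\mu$ and $\lambda$ implies conjugate images) contains a genuine error. Edge groups of the canonical JSJ are \emph{not} maximal cyclic in $C_0$ in general: in the tree of cylinders the cyclic \emph{vertex} groups $K_u$ are maximal cyclic, but the edge group $K_e$ joining a cyclic vertex $u$ to a non-cyclic vertex $v$ is $K_u\cap K_v$, which may sit with index ${>}1$ in $K_u$ (e.g.\ $C=A*_{a=b^2}B$ with $a,b$ primitive in $A,B$ respectively). Hence $Z_G(\mu(K_e))=\mu(K_u)$, not $\mu(K_e)$, and since $K_u\not\subseteq K_v$ your displayed conclusion $h^{-1}h_v\in\mu(K_v)$ fails. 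The propagation can be repaired---$\mu(K_u)\subseteq H$ still yields $h_v\in h\cdot H$ along a spanning tree, and the stable letters then fall to a further centralizer computation using $C_0\preceq_e G$---but neither fix appears in your write-up. The paper sidesteps this entire issue by tracking $H$ directly rather than introducing a related map $\lambda$.
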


\begin{rem}
  The subgroups $H$ in the proposition need not   be elementarily embedded, but it is crucial that they be isomorphic to $\core(G)$.
\end{rem}

\begin{proof}
    By definition, $G$ is an extended tower over  $
    \core(G)$, so by Corollary \ref{ett} there exists a free group $F$ such that
  $G*F$ is a simple tower over  a subgroup isomorphic to $\core(G)$. 
  Consider subgroups $G*F=A_0\supset\cdots \supset A_p\simeq\core(G)$ such that, for each $i$, either $A_i=A_{i+1}*\Z$ or $A_i$ is a simple \'etage of surface type over $A_{i+1}$. Being one-ended, $A_p$ is conjugate to a subgroup of  $G$, so we may assume without loss of generality that $A_p<G$.

  Let $H<G<A_0$ be a subgroup isomorphic to   $A_p$. 
  We will show by induction   that, for all $i$,  the subgroup $H$ is conjugate   (in $G*F$) to a subgroup of $A_i$. 
  Co-hopfianity   of $A_p$ then implies that $H$ is conjugate to $A_p$, in $G*F$ hence in $G$ by malnormality of $G$, concluding the proof.

 We therefore assume that $H$ is conjugate into $A_i$. If $A_i=A_{i+1}*\Z$, then $H$ is conjugate into $A_{i+1}$ by   one-endedness. 
If $A_i$ is an \'etage of surface type and $H$ is not conjugate into the bottom group $A_{i+1}$, Lemma \ref{passurf} and Remark  \ref{passurf2} imply that  $H$ has a cyclic splitting with a surface-type vertex. The associated surface   $\Sigma$ covers a non-exceptional surface by Remark  \ref{passurf2}, so is non-exceptional.     Lemma \ref{compajsj} implies 
 that the JSJ decomposition of $H$   has a   surface   $S$ containing $\Sigma$ as an incompressible subsurface. This surface $S$ must be $N_3$ or non-exceptional,  contradicting our assumptions on $\core(G)$.
\end{proof}

\begin{rem} The proof shows that every one-ended finitely generated subgroup of $G$ whose cyclic JSJ decomposition has no non-exceptional surface-type vertex is   isomorphic to $\pi_1(N_3)$ or conjugate into   $\core(G)$.
\end{rem}

 \subsection{Elementary equivalence and quasi-isometry}
 It is well-known that elementarily equivalent groups may be very diverse. We illustrate this by comparing elementary equivalence with quasi-isometry  (see   \cite{GhHa} 
 for basics about quasi-isometry). 
 
\begin{prop}
 If $G$ is any non-abelian  torsion-free  hyperbolic group, there are infinitely many groups $G_n$ which are elementarily equivalent to $G$ but are all distinct up to quasi-isometry.
\end{prop}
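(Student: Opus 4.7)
The plan is, for each $n\ge 1$, to build a simple étage of surface type $G_n$ over a one-ended torsion-free hyperbolic group equivalent to $G$, using a once-punctured orientable surface of genus~$n$. Elementary equivalence $G_n\equiv G$ will follow from ``Tarski'' (Theorem~\ref{Tarski}), and the quasi-isometry classes will be distinguished by the total Euler characteristic of the surfaces at the surface-type vertices of the canonical cyclic JSJ decomposition $\Gcan(G_n)$, which is a QI invariant by Bowditch's theorem \cite{Bo_cut}.

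First I would reduce to the case where $G$ is one-ended. By Corollary~\ref{equivcore}, $G\equiv\ecore(G)$; if $\ecore(G)$ is one-ended and non-abelian, replace $G$ by it. Otherwise $\ecore(G)=A_1*\cdots*A_p$ is a free product of one-ended non-abelian prototypes (or $\F_2$ in the elementarily free case, which may be replaced by $\pi_1(S_2)$ using Example~\ref{csurf2}), and iterating the $K(\cdot,\cdot,\cdot,\cdot)$-construction of Proposition~\ref{lebut} produces a one-ended torsion-free hyperbolic group $G'$ elementarily equivalent to $\ecore(G)$, hence to $G$; replace $G$ by $G'$. So assume from now on that $G$ itself is one-ended and non-abelian.

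Choose $a,b\in G$ with $c:=[a,b]\neq 1$. For $n\ge 1$, let $\Sigma_n=S_{n,1}$ denote the once-punctured orientable surface of genus~$n$, non-exceptional for every $n\ge 1$ by Definition~\ref{except}; its fundamental group is free on $x_1,y_1,\dots,x_n,y_n$ with boundary word $g_n=[x_1,y_1]\cdots[x_n,y_n]$. Define
$$G_n := G *_{c=g_n} \pi_1(\Sigma_n).$$
The map $\rho_n\colon G_n\to G$ extending $\id_G$ by $x_1\mapsto a$, $y_1\mapsto b$, and $x_i\mapsto 1$, $y_i\mapsto 1$ for $i\ge 2$ is a well-defined retraction (since $\rho_n(g_n)=[a,b]=c$) whose restriction to $\pi_1(\Sigma_n)$ has non-abelian image $\langle a,b\rangle$. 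Hence $G_n$ is a simple étage of surface type over $G$ (Definition~\ref{dfn_simple_etage}), and Theorem~\ref{Tarski} gives $G_n\equiv G$.

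It remains to show the $G_n$'s pairwise distinct up to quasi-isometry. Each $G_n$ is one-ended and torsion-free hyperbolic (combination along a maximal cyclic subgroup, $g_n$ being a filling commutator in $\pi_1(\Sigma_n)$). By Lemma~\ref{compajsj}, $\Gcan(G_n)$ has at least one surface-type vertex $v$ whose associated surface $\Sigma_v$ contains $\Sigma_n$ as an incompressible subsurface; since merging $\Sigma_n$ with an existing QH surface of $\Gcan(G)$ along one circle yields a surface of Euler characteristic $\chi(S)+\chi(\Sigma_n)$, the total complexity $\sum_v|\chi(\Sigma_v)|$ over the surface-type vertices of $\Gcan(G_n)$ is at least $|\chi(\Sigma_n)|=2n-1$, and so tends to infinity with~$n$. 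By Bowditch's theorem \cite{Bo_cut}, $\Gcan(G_n)$ --- including the homeomorphism types of its QH surfaces --- is determined by the Gromov boundary $\partial G_n$, which is a quasi-isometry invariant; hence $\sum_v|\chi(\Sigma_v)|$ is a QI invariant, and the $G_n$'s lie in infinitely many QI classes. The main technical obstacle is to invoke Bowditch's theorem and the universal compatibility of $\Gcan$ (Subsection~\ref{readjsj}) with sufficient care to rule out any possibility that the contribution of $\Sigma_n$ to $\Gcan(G_n)$ collapses in an $n$-independent way.
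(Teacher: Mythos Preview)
There is a genuine gap in your argument: the claim that Bowditch's theorem \cite{Bo_cut} determines ``the homeomorphism types of the QH surfaces'' from the boundary, and hence that $\sum_v|\chi(\Sigma_v)|$ is a quasi-isometry invariant, is not correct. What Bowditch proves is that the JSJ \emph{tree} (as a simplicial tree, with its cut-point/necklace structure) is encoded in $\partial G$; this determines which vertices are surface-type and how they are attached, but not the genus of the surfaces. Indeed, a bounded hyperbolic surface has free fundamental group, and all nonabelian free groups are quasi-isometric, so no QI information distinguishes $\pi_1(S_{n,1})$ from $\pi_1(S_{m,1})$ as vertex groups. In your construction each $\Sigma_n$ has a single boundary component, so the quotient graph $\Gcan(G_n)$ and its Bass-Serre tree have the same combinatorial shape for every $n$ (e.g.\ when $G$ is rigid and $c$ root-closed: a valence-$2$ cyclic vertex between two vertices of infinite valence). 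Thus the invariant you are appealing to does not vary with $n$, and in fact one expects the $G_n$ to be pairwise quasi-isometric in many cases (same tree, QI vertex stabilizers).

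The paper's proof avoids this trap by arranging for the \emph{tree itself} to change with $n$: the auxiliary groups $H_n$ are built by gluing $n$ once-punctured tori along a single circle, so the cyclic vertex in $\Gcan(H_n)$ has valence exactly $n$ in the Bass-Serre tree. That valence is visible in the simplicial tree, hence in $\partial H_n$ by Bowditch, so the $H_n$ are QI-distinct. The paper then sets $G_n=G*H_n$, getting $G_n\equiv G$ directly from ``Tarski'' (since $H_n$ is elementarily free) and QI-distinctness from Papasoglu--Whyte \cite{PaWh_QI} on free products. This also sidesteps your entire reduction to the one-ended case, which, while plausible, adds complications you do not need. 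If you want to repair your approach, you should modify the construction so that the combinatorics of $\Tcan(G_n)$ genuinely depend on $n$ --- for instance by varying the number of boundary components rather than the genus.
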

 
\begin{proof}
 For $n\ge3$, let $H_n$ be the fundamental group of the space obtained by gluing $n$ copies $T_i$ of a once-punctured torus along their boundary $C_i$. It is a one-ended torsion-free hyperbolic group, whose canonical cyclic JSJ splitting has $n$ vertices carrying the groups $\pi_1(T_i)$ and one vertex carrying $\pi_1(C_i)$, a cyclic group independent of $i$ (see Section 3.9 of \cite{GL_JSJ} for the case $n=3$; the general case is similar). 
 
 The Bass-Serre tree of this JSJ splitting has vertices of infinite valence and vertices of valence $n$, so depends on $n$ (as a simplicial tree, with no group action on it). It follows from \cite{Bo_cut} that   the groups $H_n$ have non-homeomorphic Gromov boundaries, hence  are quasi-isometrically distinct.
 On the other hand, $H_n$ is a simple tower over 
 the fundamental group of the closed orientable surface of genus 2, so is elementarily equivalent to  $\F_2$.
 
 In general, the groups $G_n=G*H_n$ are elementarily equivalent to $G$, but are quasi-isometrically distinct by \cite{PaWh_QI}. 
\end{proof}

In the other direction, we have proved \cite{GLS_finite_index} that, for many torsion-free hyperbolic groups $G$, there exist finite index subgroups $G_n$ of $G$ which are all distinct up to elementary equivalence (recall that finite index subgroups of $G$ are quasi-isometric to $G$).  This applies in particular if $G$ is cubulable or is a hyperbolic limit group, but is not a free product of cyclic groups and surface groups. In full generality, we  do not know whether, given a non-abelian hyperbolic group $G$, there always exist infinitely many torsion-free groups which are quasi-isometric to $G$ (hence hyperbolic)  but not elementarily equivalent.

\subsection{Parachutes}\label{parac}

\begin{figure}[ht!]
  \centering
  \includegraphics{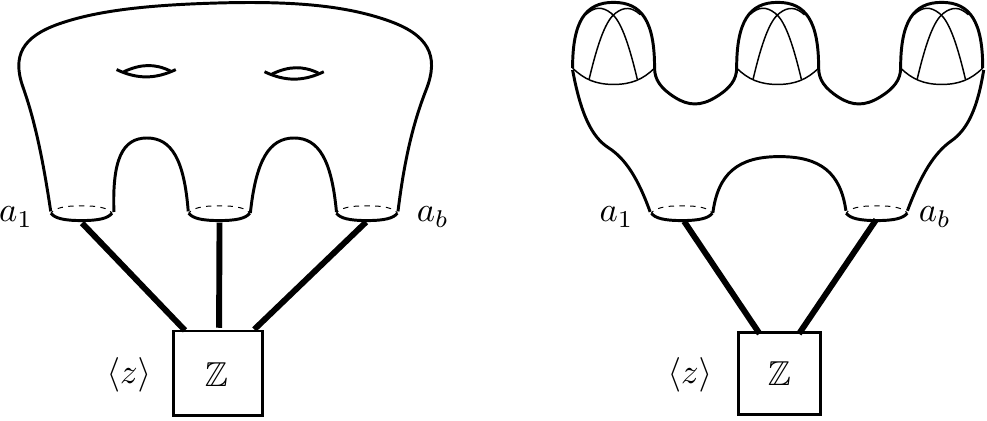}
  \caption{Two parachutes}
  \label{fig_def_parachute}
\end{figure}

  We now consider parachutes, i.e.\ groups $G$ having a simple  centered   splitting $\Gamma$ with base $\Z$ (Figure \ref{fig_def_parachute}).
  We allow exceptional surfaces and non-retractable splittings.
These groups were studied in \cite{GLS_finite_index} (where they were called \emph{socket groups with identified sockets}). 
They are torsion-free hyperbolic groups, and are one-ended by  Lemma \ref{facgru}. 
 We shall show (Corollary \ref{synth}) that they are   either  prototypes or elementarily free groups.

If the surface $\Sigma$ associated to $\Gamma$ has genus $g$ and   $b$ boundary components, 
  $G$ is  presented as 
  $$\left\langle
    \begin{matrix}a_1,\dots,a_b\\
      z,t_1,\dots,t_b\\
      u_1,v_1,\dots,u_g,v_g
    \end{matrix}\ \middle|\ 
    \begin{matrix}
      a_1\cdots a_b=[u_1,v_1]\cdots[u_g,v_g]\\
      t_iz^{d_i}t_i\m=a_i\\
      t_1=1
  \end{matrix}
\right \rangle$$
or
$$\left\langle
\begin{matrix}
  a_1,\dots,a_b\\
  z,t_1,\dots,t_b\\
  u_1, \dots,u_g
\end{matrix}
\ \middle| \ 
\begin{matrix}
  a_1\cdots a_b=u_1^2 \cdots u_g^2\\
  t_iz^{d_i}t_i\m=a_i\\
  t_1=1
\end{matrix}
\right\rangle,$$
depending on whether $\Sigma$ is orientable or not, with indices $d_i \ne0$   (possibly negative).  

Minimality of   $\Gamma$ implies $\sum | d_i | \ge2$. The case when $\sum | d_i |=2$ is special, since $G$ is  then  a closed  surface group
(the surface is the union of $\Sigma$ and an annulus or a M\"obius band).
We have seen (Example \ref{csurf}) that such groups are  elementarily free,  except  that  $\pi_1(N_3)$ is a prototype.

\begin{lem}\label{bonjsj}
If $\sum | d_i | \ge3$, then $\Gamma$ is the   cyclic  JSJ decomposition of $G$.  In particular, if $\sum |d_i|\geq 3$ and $\Sigma$ is exceptional, then $G$ is a prototype.
\end{lem}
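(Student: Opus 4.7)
The plan is to identify $\Gamma$ with the canonical cyclic JSJ decomposition $\Gcan$ of $G$ via Lemma \ref{compajsj}, and then to derive the \emph{in particular} statement from the identification.

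First, $G$ is one-ended by Lemma \ref{facgru}, so $\Gcan$ is defined. The hypothesis $\sum\abs{d_i}\geq 3$ rules out $G$ being a closed surface group: if $\sum\abs{d_i}=2$, then either $b=1$ with $\abs{d_1}=2$, or $b=2$ with $\abs{d_1}=\abs{d_2}=1$, and in both cases $\Sigma$ is capped off by an annulus or M\"obius band into a closed surface. Hence Lemma \ref{compajsj} applies: there exists a surface-type vertex $w$ of $\Gcan$ with associated surface $S$ such that $\Sigma$ is an incompressible subsurface of $S$, and $\Gamma$ is obtained from $\Gcan$ by blowing up $w$ using a splitting of $\pi_1(S)$ with a vertex $v$ carrying $\pi_1(\Sigma)$, then collapsing all edges not at $v$.

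I would next show $\Sigma=S$ (up to isotopy). If some component of $S\setminus\Sigma$ had negative Euler characteristic, its fundamental group would be a non-abelian free or surface group contained in the bottom vertex group of $\Gamma$ after collapsing, contradicting that this bottom is the cyclic group $\grp{z}$. So all components of $S\setminus\Sigma$ are annuli or M\"obius bands, and we may assume $\Sigma=S$ with trivial blowup. Then $\Gamma$ is the collapse of $\Gcan$ onto the star of $w$, with a single bottom vertex carrying $\grp{z}$, meaning the subgraph of $\Gcan$ obtained by removing the open star of $w$ is connected with fundamental group $\grp{z}$. Using the bipartite structure of $\Gcan$ for hyperbolic groups (Subsection \ref{JSJ}), this subgraph can contain neither a non-cyclic vertex (whose group would inject into $\grp{z}$) nor two cyclic vertices (which, being non-adjacent by bipartiteness, would require a non-cyclic vertex between them). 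Hence the subgraph consists of a single cyclic vertex $u$ carrying $\grp{z}$, joined to $w$ by edges with edge groups $\grp{z^{d_i}}$ matching the boundary components of $\Sigma$. This is $\Gamma$, so $\Gcan=\Gamma$. The most delicate step here is the matching of the non-$w$ part of $\Gcan$ with the single cyclic bottom vertex of $\Gamma$.

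For the \emph{in particular} statement, $G$ is one-ended so has no cyclic free factor; by Lemma \ref{eqprot} it suffices to show $G$ is not an extended \'etage of surface type, \ie that $G$ has no retractable centered splitting. Suppose $\Lambda$ is one; by Definition \ref{dfn_retractable} it is non-exceptional, and by Lemma \ref{compajsj} applied with $\Gcan=\Gamma$, its surface is an incompressible subsurface $\Sigma'$ of the unique surface $\Sigma$ of $\Gcan$. But $\Sigma$ is exceptional with $\chi(\Sigma)=-1$ and non-empty boundary, so $\Sigma\in\{S_{0,3},N_{1,2},N_{2,1}\}$; every proper incompressible subsurface of such $\Sigma$ is a union of annuli and M\"obius bands with $\chi\geq 0$, hence not a valid hyperbolic surface for a centered splitting. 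So $\Sigma'=\Sigma$, contradicting the non-exceptionality of $\Lambda$. Therefore $G$ is a prototype.
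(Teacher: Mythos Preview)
Your approach to the main claim is genuinely different from the paper's and largely sound, but there are two gaps.

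The paper establishes $\Gamma=\Gcan$ by showing directly that $\grp{z}$ is universally elliptic, using Bowditch's description of the JSJ tree from the topology of $\partial G$: the pair of fixed points $\Lambda_z$ of $z$ disconnects $\partial G$ into at least $\sum|d_i|\geq 3$ pieces, which by \cite{Bo_cut} forces $\grp{z}$ to be elliptic in every cyclic splitting. Your route via Lemma~\ref{compajsj} avoids boundary topology entirely and stays within elementary JSJ combinatorics, which is a legitimate alternative.

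However, the step ``we may assume $\Sigma=S$'' is incomplete. You correctly rule out components of $S\setminus\Sigma$ with negative Euler characteristic, but annuli and M\"obius bands do not disappear by fiat. You need to observe that a M\"obius band, or an annulus with both boundary circles interior to $S$, is a vertex of $\hat\Gamma$ whose only edges go to $v$; after removing the open star of $v$ it becomes an isolated component, giving $\Gamma$ a second bottom vertex (since $G$ is not a closed surface group, there is at least one other component coming from $\Gcan\setminus\{w\}$). Annular collars of $\partial S$ can be absorbed by isotopy. With this addition the argument goes through.

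Your argument for the \emph{in particular} contains a false claim: it is not true that every proper incompressible subsurface of $S_{0,3}$, $N_{1,2}$, or $N_{2,1}$ has $\chi\geq 0$. For instance, cutting $N_{2,1}$ along its special curve $c_{N_{2,1}}$ yields a pair of pants $S_{0,3}$ with $\chi=-1$. The correct observation is that the only \emph{non-exceptional} surface with $\chi=-1$ is the once-punctured torus $S_{1,1}$, and this does not embed incompressibly in any of the three exceptional bounded surfaces (by an orientability and boundary-count argument). Alternatively, and more simply, invoke Lemma~\ref{preconf} as the paper does: once $\Gcan=\Gamma$ is known, a retractable centered splitting would force $\Gcan$ itself to be retractable, impossible since its unique surface is exceptional.
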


\begin{proof}
  The proof is based on Bowditch's  construction of the cyclic JSJ tree $\Tcan$ from the topology of 
the Gromov boundary
  $\partial G$  of $G$ \cite{Bo_cut}.
  
Let $  T$ be the Bass-Serre tree of $\Gamma$. Let $E$ be the open star of the vertex with stabilizer $\grp{z}$.
It consists of exactly $N=\sum |d_i|$ edges, all fixed by a power of $z$.

Let $\Lambda_z\subset\partial G$ be the pair of fixed points of $\grp{z}$.
By \cite{Bo_cut},  $\partial G\setminus \Lambda_z$ 
can be partitioned into $N$ disjoint open sets corresponding to the  connected components of $T\setminus E$.
It follows that $\partial G\setminus \Lambda_z$ has at least $N$ components.

Since $N\geq 3$, \cite[Proposition 5.13]{Bo_cut} and \cite[Lemma 4.3]{Gui_reading} show that $\grp{z}$ is universally elliptic (elliptic in every cyclic splitting of $G$). In particular, the edge stabilizers of $T$ are universally elliptic, so $T$ is  dominated by the cyclic JSJ tree $\Tcan$. On the other hand, 
  QH vertex stabilizers are elliptic in   $\Tcan$ by \cite[Prop.~5]{GL_JSJ}, so  $T$ dominates 
$\Tcan$. 
It follows that $T$ is a JSJ tree.
Since it is its own tree of cylinders, it coincides with $\Tcan$ (see \cite{GL_JSJ}).

If $\sum |d_i|\geq 3$ and $\Sigma$ is exceptional, then 
 $G$ has no 
 retractable centered splitting by Lemma \ref{preconf}, so is a prototype
(recall that, by Definition \ref{dfn_retractable}, a retractable centered splitting is non-exceptional).
\end{proof}

\begin{rem}\label{rem_multiparachute}
  The argument applies in the following more general setting:  $G$ has a centered splitting $\Gamma$ with all bottom groups cyclic,
  and   for each bottom group the sum of the indices of the incident edge groups is at least 3. 
\end{rem}

We proved in \cite{GLS_finite_index} that, depending on the values of $g,b$, and the $d_i$'s,  either $\Gamma$ is retractable and $G$ is elementarily free, or $\Gamma$ is not retractable.   
When retractable, the splitting $\Gamma$ expresses $G$ as an extended \'etage over $\Z$, but not as a simple \'etage (see Remark \ref{pasz}). As in Proposition \ref{3simple}, we get a simple \'etage if $\Sigma$ is complicated enough.

\begin{prop} \label{3simple2}
 Assume that $\Gamma$ is  retractable, and 
 the   surface $\Sigma$  either has Euler characteristic satisfying   $ | \chi(\Sigma) | \ge  3$ or   is  
a  twice-punctured torus. Then $G$ is a simple \'etage over $\bbF_2$. 

\end{prop}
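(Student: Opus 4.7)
The plan is to adapt the pair-of-pants trick from Proposition \ref{3simple} to the parachute setting. First I cut a pair of pants out of $\Sigma$: if $b\ge 2$, pick two boundary components $C_i,C_j$ of $\Sigma$ and an arc $\gamma\subset\Sigma$ joining them, and let $P$ be a regular neighborhood of $C_i\cup\gamma\cup C_j$, a pair of pants with third, interior boundary $C$; if $b=1$, take $\gamma$ to be a $2$-sided arc in $\Sigma$ with both endpoints on $C_1$ chosen so that a regular neighborhood $P$ of $C_1\cup\gamma$ is a pair of pants (possible because $|\chi(\Sigma)|\ge 3$ provides enough topological complexity), and $P$ then has one original boundary $C_1$ and two new ones $C',C''$. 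In both cases let $\Sigma'=\Sigma\setminus P$ and form the new centered splitting $\Gamma'$ of $G$ with central vertex carrying $\pi_1(\Sigma')$ and a single bottom vertex carrying $B'_1:=\grp{z,\pi_1(P)}\subset G$. A direct Tietze computation shows $B'_1\simeq\bbF_2$: when $b\ge 2$ the edge relations $x=z^{d_i}$ and $y=t_j z^{d_j} t_j^{-1}$ (with $t_j$ the stable letter of $e_j$) let one eliminate the generators $x,y$ of $\pi_1(P)$, leaving $B'_1=\grp{z,t_j}$; the case $b=1$ is analogous, yielding $B'_1=\grp{z,y}$. In particular, the new base is non-abelian.

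Next, $\Sigma'$ is non-exceptional: since $\chi(\Sigma')=\chi(\Sigma)+1$, the hypothesis $|\chi(\Sigma)|\ge 3$ gives $|\chi(\Sigma')|\ge 2$ while all exceptional surfaces have $|\chi|=1$; in the special case $\Sigma=S_{1,2}$, the complement is $\Sigma'=S_{1,1}$, which is non-exceptional. Hence $\Gamma'$ is a non-exceptional simple centered splitting of $G$ with non-abelian base, and by Proposition \ref{equivsimple} it suffices to exhibit a retraction $\rho':G\to B'_1$ whose image on $\pi_1(\Sigma')$ is non-abelian. I build $\rho'$ directly: set $\rho'|_{B'_1}=\id$, send the other stable letters $t_\ell$ ($\ell\ne i,j$) to $1\in B'_1$ (so that $\rho'(C_\ell)=z^{d_\ell}$), and pick lifts $\rho'(u_k),\rho'(v_k)\in B'_1$ of the genus generators of $\Sigma'$ realizing the surface relation of $\pi_1(\Sigma')$; the retractability of $\Gamma$ is precisely what guarantees the homological identity ($\sum_\ell d_\ell=0$ in the orientable case, or $\sum d_\ell\in 2\bbZ$ in the non-orientable case) needed for such lifts to exist inside the commutator, resp.\ the squares, subgroup of $\bbF_2$.

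The hard part will be ensuring that the resulting image $\rho'(\pi_1(\Sigma'))$ is non-abelian in $\bbF_2=\grp{z,t_j}$: unlike in Proposition \ref{3simple}, where non-abelianity came for free from having two distinct bottom groups, here everything lives in a single free group of rank $2$, so the non-commutativity between $z$ and the absorbed stable letter $t_j$ (or the absorbed loop $y$ when $b=1$) must do all the work. The model case is $\Sigma=S_{1,2}$: retractability of $\Gamma$ forces $d_j=-d_i$, the new boundary $C$ is identified in $B'_1$ with $z^{d_i}t_j z^{-d_i}t_j^{-1}=[z^{d_i},t_j]\ne 1$, and choosing $\rho'(u)=z^{d_i}$, $\rho'(v)=t_j$ makes the commutator $[\rho'(u),\rho'(v)]$ non-trivial, so $\rho'(\pi_1(\Sigma'))=\grp{z^{d_i},t_j}$ is a non-abelian free subgroup of $\bbF_2$. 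In general, $\rho'(C)=z^{d_i}t_j z^{d_j}t_j^{-1}z^{D}$ (with $D=\sum_{\ell\ne i,j}d_\ell$) is a non-trivial element of the commutator or squares subgroup of $\bbF_2$, so when it is distributed across the commutators or squares imposed by the surface relation of $\Sigma'$, at least one pair of lifts $\rho'(u_k),\rho'(v_k)$ must fail to commute, yielding the required non-abelian image; Proposition \ref{equivsimple} then identifies $G$ as a simple étage over $B'_1\simeq\bbF_2$.
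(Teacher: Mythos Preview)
Your overall strategy matches the paper's: remove a pair of pants touching $\partial\Sigma$, collapse to get a simple centered splitting with base $\langle z,t_j\rangle\simeq\bbF_2$, and build a retraction by hand. For orientable $\Sigma$ of genus $\ge 1$ and for $S_{1,2}$ your argument is essentially the paper's Case~1, and it works. But there are real gaps in the remaining cases.

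\textbf{Existence of lifts is not guaranteed by the homological identity alone.} After removing the pair of pants, the surface relation of $\Sigma'$ forces
\[
z^{d_1}\, t_2 z^{d_2} t_2^{-1}\, z^{D} \;=\; \prod_k [\tilde u_k,\tilde v_k]\quad\text{or}\quad \prod_k \tilde u_k^{\,2}
\]
in $\bbF_2$, with $g$ factors on the right. If $\Sigma$ is a sphere with $b\ge 5$ punctures (so $g=0$ and $|\chi|\ge 3$), there are no genus generators, and the equation becomes $z^{d_1} t_2 z^{d_2} t_2^{-1} z^{D}=1$, which is false. If $\Sigma=N_{1,b}$ with $b\ge 4$, you would need the left side to be a single square in $\bbF_2$, which it generally is not. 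If $\Sigma=N_{2,b}$ with $b\ge 3$, you need two squares, and this again fails in general (the paper's Case~4 gets around this by sending $t_3$ to $t_2$ rather than to $1$, after permuting indices so that $d_2$ or $d_2+d_3$ is even). So your blanket choice ``$t_\ell\mapsto 1$ for $\ell\ne i,j$'' does not always define a homomorphism. The paper handles the planar and $N_{1,b}$ cases by a completely different mechanism you omit (its ``Case~0''): it starts from an actual \ndbpm, takes a maximal family $\calc$ of pinched curves, and observes that in these cases $\calc$ necessarily separates two boundary components, so the pair-of-pants trick of Proposition~\ref{3simple} applies directly.

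\textbf{The non-abelianity argument is wrong in the non-orientable case.} You write that a non-trivial product of squares forces some pair of lifts to fail to commute, but $z^2\cdot z^2=z^4\ne 1$ with $z,z$ commuting. When $b\ge 3$ the image is non-abelian anyway (it contains $z^{d_\ell}$ for some $\ell\ne i,j$ together with $z^{d_i}t_j z^{d_j}t_j^{-1}$, which do not commute), but when $b=2$ you genuinely need to exhibit non-commuting $\tilde u_k$'s. The paper does this by writing down explicit $\tilde u_1,\tilde u_2,\tilde u_3$ (using the identity underlying $T^2\# P^2\simeq P^2\# P^2\# P^2$) and checking non-abelianity directly.

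\textbf{The $b=1$ case needs a different subsurface.} When $\Sigma=N_{g,1}$ the paper does not cut off a pair of pants but a twice-punctured projective plane containing $C_1$, giving base $\langle z,u_1\rangle$; your pair-of-pants construction here is left vague (what is $y$, and why is the resulting relation solvable in $\langle z,y\rangle$?).

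In short: the skeleton is right, but the proof does require the case-by-case work (and the preliminary Case~0 using pinched curves) that the paper carries out.
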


The conclusion of the proposition implies that $G$ has a proper elementarily embedded subgroup (isomorphic to $\bbF_2$).
It  applies whenever the   
surface $\Sigma$ is 
 not a 4-punctured sphere  $S_{0,4}$, a   thrice-punctured projective plane $N_{1,3}$, a   twice-punctured Klein bottle $N_{2,2}$, or $N_{3,1}$   (note that $\Sigma$ cannot be a   once-punctured torus  or an exceptional surface since $\Gamma$ is retractable). 
In these special cases we reach the opposite conclusion:

\begin{prop}\label{prop_parachute}
Assume that 
$\Sigma$ has Euler characteristic satisfying
$ | \chi(\Sigma) | \leq  2$, and   $\Sigma$ is not a twice-punctured torus.
Then    
$G$ is not a simple \'etage  (over a non-abelian group)  and
$G$ has no proper elementarily embedded subgroup.
\end{prop}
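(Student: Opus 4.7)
The plan is to exploit the fact that, when $\sum_i |d_i|\geq 3$, the splitting $\Gamma$ coincides with the canonical cyclic JSJ decomposition $\Gcan$ of $G$ (Lemma \ref{bonjsj}), so by Lemma \ref{compajsj} every centered splitting of $G$ arises from $\Gamma$ by choosing an incompressible subsurface $\Sigma'\subseteq\Sigma$. First I dispose of the degenerate case $\sum_i|d_i|=2$: then $G$ is the fundamental group of a closed surface obtained from $\Sigma$ by capping off with a M\"obius band (if $b=1$) or identifying the two boundary components (if $b=2$), and under our hypotheses a simple Euler-characteristic computation shows that $G\simeq\pi_1(N_3)$ or $\pi_1(N_4)$. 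Both conclusions then follow from Example \ref{csurf} combined with Corollary \ref{pasel} and Lemma \ref{lem_N4}.

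Assume henceforth $\sum_i|d_i|\geq 3$. The heart of the argument is to prove that the only retractable centered splitting of $G$ is $\Gamma$ itself, whose base is $\bbZ$. When $\Sigma$ is exceptional, Lemma \ref{bonjsj} directly yields that $G$ is a prototype, and Corollary \ref{pasel} concludes. When $\Sigma$ is non-exceptional, retractability forces $\Sigma'$ to be non-exceptional as well. Since $\chi(\Sigma)\in\{-1,-2\}$, either $\Sigma'=\Sigma$ or $\chi(\Sigma')=-1$ with $\Sigma'$ either exceptional (excluded) or equal to the once-punctured torus $S_{1,1}$.

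A short topological case check rules out $S_{1,1}$ as an incompressible subsurface of $S_{0,4}$ (which is planar), of $N_{1,3}$ (any complement would have Euler characteristic $-1$ with at least four boundary components, which is impossible), and of $N_{2,2}$ (the only matching decomposition would be $N_{2,2}=S_{1,1}\cup_\partial S_{0,3}$, forcing orientability, a contradiction). The case $\Sigma=S_{1,1}$ is handled by an abelianization obstruction: a retraction as required by Proposition \ref{equivpassimple} would have to send the defining relation $[u,v]=z^{d_1}$ to a commutator in $\tilde B_1*\langle t\rangle$, but $z^{d_1}$ has non-zero image in the abelianization, so $\Gamma$ itself is not retractable and, by the subsurface analysis above, no other centered splitting exists. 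The main obstacle is $\Sigma=N_{3,1}$, where $S_{1,1}$ does embed incompressibly via the decomposition $N_{3,1}=S_{1,1}\cup_\partial N_{1,2}$, producing a candidate simple centered splitting $\Gamma'$ of $G$ with single bottom group $B'=\pi_1(N_{1,2})*_{c_1=z^{d_1}}\langle z\rangle$. I rule out retractability of $\Gamma'$ by the same abelianization argument applied in $B'$: the other boundary circle $c_2$ of $N_{1,2}$ (which represents the boundary commutator of $S_{1,1}$ in $G$) retains a non-zero image in the abelianization of $B'$, since the relation $c_1=d_1 z$ does not annihilate the $\pi_1(N_{1,2})^{\mathrm{ab}}$-contribution of $c_2$, and therefore it cannot be a commutator in $B'$.

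Given that the only retractable centered splitting of $G$ is $\Gamma$, with cyclic base $\bbZ$, the group $G$ is not a simple \'etage of surface type over a non-abelian group, and being one-ended $G$ is not a simple \'etage of free product type either. For the second assertion I invoke Theorem \ref{thcl}: an elementarily embedded proper subgroup $H$ (necessarily non-abelian, since $G$ is non-abelian and $H\equiv G$) would make $G$ an extended tower over $H$ relative to $H$, whose first \'etage is extended surface-type (one-endedness excludes free product type) and whose bottom groups contain a conjugate of $H$. But the uniqueness of the retractable centered splitting of $G$ forces the single bottom group of this first \'etage to be $\bbZ$, making $H$ cyclic and hence abelian, a contradiction.
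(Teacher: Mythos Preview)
Your approach mirrors the paper's closely: reduce to $\sum |d_i|\geq 3$, use Lemma \ref{bonjsj} so that $\Gamma=\Gcan$, then analyse subsurfaces $\Sigma'\subset\Sigma$ via Lemma \ref{compajsj}. The $N_{3,1}$ case is handled by the same abelianization obstruction (your $c_2$ not being a commutator in $B'$ is exactly the paper's observation that $z^d u'^{-2}$ is not a commutator in $\langle z,u'\rangle$).

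There is, however, a genuine gap in your case analysis. Your dichotomy ``either $\Sigma'=\Sigma$ or $\chi(\Sigma')=-1$'' is false: when $\chi(\Sigma)=-2$ there exist proper incompressible subsurfaces $\Sigma'\subsetneq\Sigma$ with $\chi(\Sigma')=-2$, obtained by removing M\"obius bands or annuli (e.g.\ $N_{1,3}\subset N_{2,2}$, or $N_{2,2}\subset N_{3,1}$). These give centered splittings of $G$ that you have not ruled out, and some of them may well be retractable, so your intermediate claim that ``$\Gamma$ is the only retractable centered splitting of $G$'' is not established (and is likely false in general; see Remark \ref{exte}).

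The paper sidesteps this entirely. Rather than claiming uniqueness of the retractable splitting, it argues directly with the condition ``relative to a non-abelian group'': if $H$ is non-abelian and conjugate into a bottom group, then some bottom group is non-abelian, which forces $\Sigma\setminus\Sigma'$ to contain a component of negative Euler characteristic (annuli and M\"obius bands only produce cyclic bottom groups), hence $\chi(\Sigma')=-1$. This immediately eliminates the $\chi(\Sigma')=-2$ subsurfaces without needing to decide whether they are retractable. Your argument is easily repaired along these lines: replace the uniqueness claim by the weaker (and true) statement that every centered splitting of $G$ other than the $N_{3,1}$ candidate has all bottom groups cyclic.

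A minor remark: your separate treatment of $\Sigma=S_{1,1}$ (showing $\Gamma$ is not retractable) is correct but superfluous. Since $\chi(S_{1,1})=-1$, there is no proper non-exceptional subsurface at all, so no \'etage relative to a non-abelian group can exist regardless of whether $\Gamma$ is retractable.
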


 Combining Lemma \ref{preconf},   Corollary \ref{cor_tarski}, and the propositions, we get:

\begin{cor}\label{synth}
 Let $G$ be a parachute.
 \begin{itemize}
 \item If $\Gamma$ is not retractable
   (in particular if $\Sigma$ is exceptional), then $G$ is a prototype. 
\item If $\Gamma$ is retractable, $G$ is elementarily free; it 
 has a proper elementarily embedded subgroup if and only if   $\Sigma$ satisfies 
  $ | \chi(\Sigma) | \ge  3$ 
  or is a twice-punctured torus. \qed
 \end{itemize}
\end{cor}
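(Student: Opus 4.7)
The argument is a case analysis on whether $\Gamma$ is retractable, assembling the preceding propositions together with Lemma \ref{preconf} and Corollary \ref{cor_tarski}. Throughout, $G$ is one-ended by Lemma \ref{facgru}, so by Lemma \ref{eqprot} $G$ is a prototype if and only if it has no retractable cyclic splitting (having no cyclic free factor is automatic from one-endedness).

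Suppose first that $\Gamma$ is retractable. Then by Proposition \ref{equivpassimple}, $G$ is an extended étage over $B_1\simeq\Z$; since $\Z\simeq\{1\}*\Z$ is itself an étage of free product type over $\{1\}$, we obtain $G\tge\{1\}$, so $\core(G)=\{1\}$ and $G$ is elementarily free by Corollary \ref{cor_tarski}. The existence of a proper elementarily embedded subgroup is then read off from Propositions \ref{3simple2} and \ref{prop_parachute}: the former gives a simple étage structure of $G$ over $\F_2$---and hence, via ``Tarski'' (Theorem \ref{Tarski}), a proper elementarily embedded copy of $\F_2$---whenever $|\chi(\Sigma)|\geq 3$ or $\Sigma$ is a twice-punctured torus, while the latter rules out any proper elementarily embedded subgroup in the complementary range.

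Suppose now that $\Gamma$ is not retractable. Assume toward contradiction that $G$ admits some retractable cyclic splitting. Setting aside the closed-surface-group case (corresponding to $\sum|d_i|=2$, which is handled directly via Example \ref{csurf} since the only non-exceptional closed surface prototype is $\pi_1(N_3)$), Lemma \ref{preconf} applies and forces the canonical cyclic JSJ decomposition $\Gcan$ to be retractable. When $\sum|d_i|\geq 3$, however, Lemma \ref{bonjsj} identifies $\Gcan$ with $\Gamma$, so $\Gamma$ itself would be retractable---a contradiction. Hence $G$ has no retractable cyclic splitting and is a prototype.

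The main obstacle is the organization of the case analysis around Lemma \ref{preconf}: one must ensure its hypothesis (that $G$ is not a closed surface group) is met, which forces a brief separate treatment of $\sum|d_i|=2$ using Example \ref{csurf}. Once this is handled, the identification $\Gamma=\Gcan$ from Lemma \ref{bonjsj} together with the characterization of elementarily free groups via Corollary \ref{cor_tarski} assemble the remainder with no further serious work.
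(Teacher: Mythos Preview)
Your proof is correct and follows the same route as the paper: the corollary is assembled from Lemma \ref{preconf}, Corollary \ref{cor_tarski}, and Propositions \ref{3simple2} and \ref{prop_parachute}, with Lemma \ref{bonjsj} identifying $\Gamma$ with $\Gcan$ when $\sum|d_i|\ge 3$ so that Lemma \ref{preconf} applies. One wording slip: $\pi_1(N_3)$ is the \emph{exceptional} closed surface group, not a non-exceptional one; the correct statement is that among closed hyperbolic surface groups, only $\pi_1(N_3)$ is a prototype.
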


 We shall now prove 
the propositions, starting  with the easier one.
They are true if  $\sum |d_i|=2$ (i.e.\ $G$ is a surface group), as explained in Subsection \ref{nst}. 
 We therefore   assume $\sum  | d_i | \ge3$.  Lemma \ref{bonjsj} then  implies that $\Gamma$ is the   cyclic JSJ decomposition of $G$.

\begin{proof}[Proof of Proposition \ref{prop_parachute}]
  If $\Sigma$ is exceptional, then  $G$ is a prototype by Lemma \ref{bonjsj} and the proposition holds.
 We are left with the surfaces $N_{3,1}$, $N_{2,2}$, $N_{1,3}$, $S_{0,4}$.
By Theorem \ref{thcl}, 
  it suffices to show that $G$ is not an extended \'etage 
over a non-abelian group relative to a non-abelian group. Suppose it is. 
The associated surface $\Sigma'$ may be viewed as a proper   incompressible subsurface of $\Sigma$ (see   Lemma \ref{compajsj}); moreover, the complement of $\Sigma'$ in $\Sigma$ cannot consist only of annuli and M\"obius bands because the \'etage is relative to a non-abelian group, so $\chi(\Sigma')=-1$.
Since $\Sigma'$ is non-exceptional  and $\Sigma$ is not   a twice-punctured torus,  
the only possibility is that $\Sigma'$ is a once-punctured torus, $\Sigma$ is $N_{3,1}$,
and $\Sigma\setminus \Sigma'$ is a twice-punctured projective plane.

We then write
$$   G=\grp{u,v,u',z\mid  [u,v]u'^2=z^d}=\grp{u,v,u',z\mid [u,v]=z^{d}u'^{-2} }$$
where $\pi_1(\Sigma')=\grp{u,v}$ and the bottom group of the splitting defining the extended \'etage 
is  $\grp{z,u'}\simeq \bbF_2$.
But this splitting cannot be retractable since   $z^{d}u'^{-2}$ is not a commutator in $\grp{z,u'}$.
This concludes the proof.
\end{proof}

\begin{rem} \label{exte}
 Some of the groups in Proposition \ref{prop_parachute} are (non-relative) extended \'etages over $\F_2$, but most of them  are not. For instance, as in Example \ref{example_not_extended_tower}, consider parachutes with $\Sigma=N_{2,2}$. If the indices $d_1$ and $d_2$ are even, $G$ is elementarily free. We claim that it is not an extended tower over $\F_2$ if $ | d_1 | \ne | d_2 | $. 

The argument is the same as in the previous proof, but we now have to consider the case when $\chi(\Sigma')=-2$. It may be a three-punctured projective plane or a 4-punctured sphere, obtained from $\Sigma$ by removing  one M\"obius band,   two M\"obius bands, or a non-separating  annulus.  Using Lemma \ref{lemcle4}, one checks that the corresponding splittings cannot be retractable.
  \end{rem}

\begin{proof}[Proof of Proposition \ref{3simple2}]
Given a \ndbpm\  $p:\pi_1(\Sigma)\to  G$, let $\calc$ be a maximal family of  pinched curves on $\Sigma$.
It  is non-empty by Lemma \ref{lemcle4}.

  \emph{Case 0: there exist two boundary components $C_1$, $C_2$ of $\Sigma$
  that are separated by $\calc$.} In this case, we may argue as in the proof of Proposition \ref{3simple},
using   an arc $\gamma$ from $C_1$ to $C_2$ meeting a curve $\delta\in\calc$ exactly once.
The surface $\hat \Sigma$ obtained by removing  a pair of pants obtained as
a regular neighborhood of $C_1\cup\gamma\cup C_2$  is non-exceptional because
$\chi(\hat\Sigma)=\chi(\Sigma)+1\le-2$ (when $\Sigma$ is a   twice-punctured torus,   $\hat \Sigma$
is a   once-punctured torus, hence non-exceptional).
  As in the proof of Proposition \ref{3simple},  this defines a retractable simple centered splitting  with base  $\bbF_2$ and surface $\hat \Sigma$,
so the proposition holds in this case.

\begin{figure}[ht!]
  \centering
\includegraphics{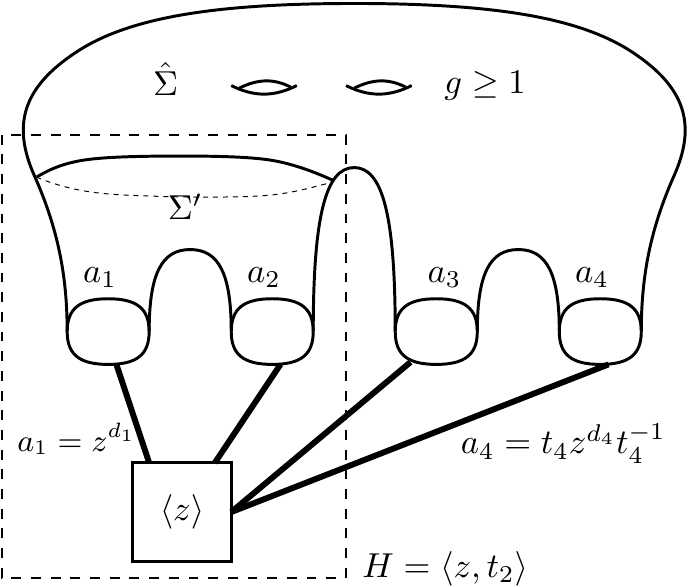}
  \caption{Case 1:  the new étage when $\Sigma$ is orientable}
    \label{fig_orient}
\end{figure}

\emph{The remaining cases: $\calc$ does not separate boundary components.}
We shall distinguish  several cases.
In each case we define a new étage structure
by
writing the surface $\Sigma$ as the union of two subsurfaces $\Sigma=\Sigma'\cup \hat \Sigma$ intersecting in a simple closed curve.
The surface $\Sigma'$ satisfies $\chi(\Sigma')=-1$ and contains at least one component of $\partial \Sigma$, and $\hat \Sigma$ is associated to a simple centered splitting with base $H\simeq \F_2$.

The retractability of $\Gamma$ implies $d_1+\dots +d_b=0$ (and in particular $b\geq 2$) when $\Sigma$ is orientable, and $d_1+\dots +d_b$  even when $\Sigma$ is non-orientable (\cite{GLS_finite_index}, Propositions 6.7 and 6.8).
Using this, we 
construct  a retraction $\rho:G\to H$ such that the central group $\pi_1(\hat\Sigma)$ has non-abelian image.   

  We use the presentation of $G$ given   at the beginning of this subsection, with 
  $g$  the genus of $\Sigma$  and $b$  the number of boundary components. 
  It suffices to define $\rho$ on the generators $z,t_i,u_j,v_j$ since $a_i= t_iz^{d_i}t_i\m$.

\emph{Case 1: $\Sigma$ is orientable} (see Figure \ref{fig_orient}).
If   $g=0$, then any curve in $\calc$ separates two boundary components of $\Sigma$ and we are done.
Assume therefore that   $g\ge 1$.
Recall that $b\ge2$. Decomposing $\Sigma$ into a pair of pants  $\Sigma'$ with fundamental group $\grp{a_1,a_2}$ and its complement $\hat\Sigma$,
one gets a simple centered splitting with bottom group $H=\grp{z,t_2}$ 
 and central vertex group $Q=\grp{a_3,\dots,a_b,u_1,v_1,\dots u_g,v_g}$.  As in case 0, the surface $\hat \Sigma$ is non-exceptional.

One then defines a retraction $\rho:G\to H$ by 
\begin{align*}
  z,t_2&\mapsto z,t_2 \\
 t_3,\dots,t_b,   u_2,\dots,u_g,v_2,\dots,v_g&\mapsto 1 
\end{align*}
and there remains to define the images $\tilde u_1$, $\tilde v_1\in H$ of $u_1,v_1$ satisfying the equation
$$z^{d_1}t_2z^{d_2}t_2\m z^{d_3+\dots +d_b}=[\tilde u_1,\tilde v_1].$$
Since $d_1+\dots+d_b=0$, we may take  $\tilde u_1=z^{d_1}t_2 $ and $\tilde v_1=z^{d_2}$. 
Finally, $\rho(Q)$ contains $\grp{\tilde u_1, \tilde v_1}
$,
which is non-abelian since $d_2\neq 0$.

\begin{figure}[ht!]
  \centering
\includegraphics{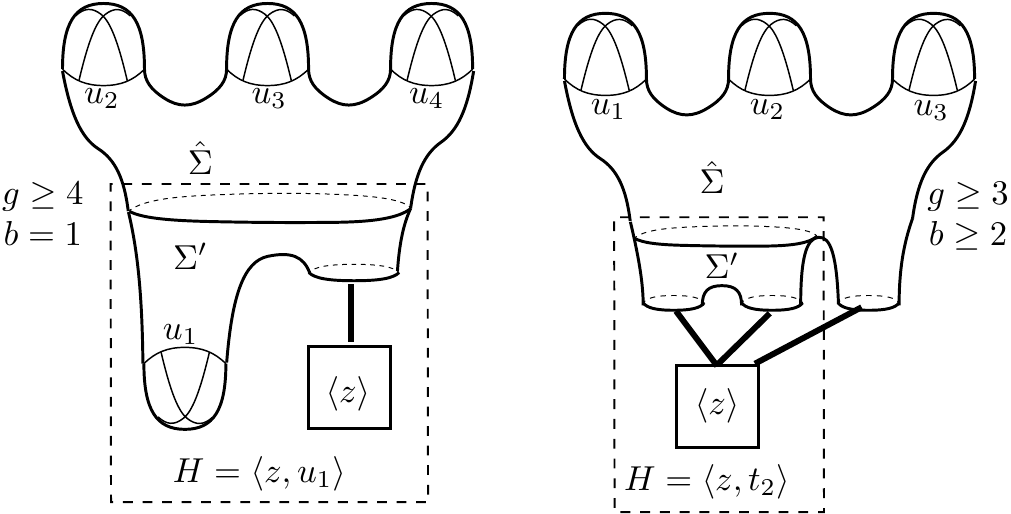} 
  \caption{Cases 2 and 3: $\Sigma$ is non-orientable of genus $g\geq 3$}
    \label{fig_nonorient}
\end{figure}

\emph{Case 2: $\Sigma$ is non-orientable with a single boundary component} (see Figure \ref{fig_nonorient}).
Note that   $|\chi(\Sigma)|\geq 3$ implies $g\ge4$,
and recall that  $d_1$ is even. 
In this case,  we take for $\Sigma'$ a twice-punctured projective plane   containing $\partial\Sigma$
(see Figure \ref{fig_nonorient}).
Its complement $\hat \Sigma$ in $\Sigma$  is homeomorphic to $N_{g-1,1}$, hence non-exceptional because $g\geq 4$.
The group  $G$ has a simple centered splitting with bottom group $H=\grp{z,u_1}$ and  
central vertex group $Q=\pi_1(\hat\Sigma)=\grp{u_2,\dots,u_g}$.

One   defines   $\rho:G\to H$ by 
\begin{align*}
  z,u_1&\mapsto z,u_1 \\
  u_4,\dots,u_g&\mapsto 1, 
\end{align*}
and there remains to define the images $\tilde u_2$, $\tilde u_3$ of $u_2,u_3 $ satisfying the equation
$z^{d_1}=u_1^2\tilde u_2^2\tilde u_3^2$.
  Since $d_1$ is even and non-zero, one may take $\tilde u_2=u_1\m$ and $\tilde u_3=z^{
 d_1/2}$,
and $\rho(Q)=\grp{\tilde u_2,\tilde u_3}$ is non-abelian.

\emph{Case 3: $\Sigma$ is non-orientable of genus $g\geq 3$ with $b\geq 2$ boundary components}
(see Figure \ref{fig_nonorient}).
As in case 1,   we take for $\Sigma'$ a pair of pants,
and we get a simple centered splitting of $G$ with bottom group $H=\grp{z,t_2}$
and central vertex group $Q=\grp{a_3,\dots,a_b,u_1,\dots,u_g}$.

One  defines $\rho$ 
by 
\begin{align*}
  z,t_2&\mapsto z,t_2 \\
 t_3,\dots,t_b,   u_4,\dots,u_g&\mapsto 1 
\end{align*}
and there remains to define the images  $\tilde u_1,\tilde u_2,\tilde u_3  $ of $u_1,u_2,u_3$ in $H$ satisfying the equation
$$  \tilde u_1^2\tilde u_2^2\tilde u_3^2=z^{d_1}t_2z^{d_2}t_2\m z^{d_3+\dots +d_b}=[z^{d_1}t_2,z^{d_2}]z^{2k}$$
with $d_1+\dots +d_b=2k$. 
This is possible since $[a,b]c^2$ is a product of three squares in any group 
(e.g.\ $[a,b]c^2=(ac\m b)^2(b\m c)^2(c\m b a\m b\m c^2)^2$,
which can be viewed as a consequence of the homeomorphism $T^2\# P^2\simeq P^2\#P^2\#P^2$).
One may thus take $\tilde u_1=z^{d_1}t_2 z^{d_2-k}$, $\tilde u_2=z^{k-d_2}$, and $\tilde u_3=z^{-k+d_2}t_2\m z^{2k-d_1-d_2}$. 
Note that 
$\grp{\tilde u_1,\tilde u_2,\tilde u_3}$ is non-abelian unless $k=d_2$ and $2k -d_2=0$, i.e. $k=d_2=0$, which is impossible.

\begin{figure}[ht!]
  \centering
\includegraphics{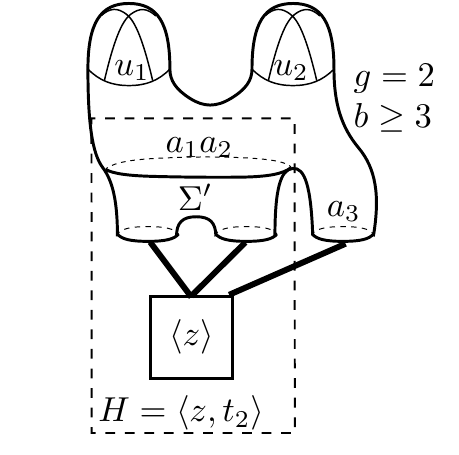} 
  \caption{Case 4: $\Sigma$ is non-orientable with genus $g\leq 2$, and $b\geq 3$}
    \label{fig_nonorient2}
\end{figure}

\emph{Case 4: $\Sigma$ is non-orientable of genus $g= 2$}  (see Figure \ref{fig_nonorient2}).
Since   $|\chi(\Sigma)|\geq 3$, we have $b\ge3$. 
Since  $d_1+\dots+d_b$ is even,  
one may assume (after renumbering) that   $d_2$ or $d_2+d_3$ is even.
As above,   one may take for $\Sigma'$ a pair of pants, so that $G$ has a simple centered splitting with bottom group $H=\grp{z,t_2}$. 
One   defines $\rho$ 
by 
\begin{align*}
z,t_2 & \mapsto z,t_2 &
t_3&\mapsto 1\text{ or }t_2 &
t_4,\dots, t_b&\mapsto 1,
\end{align*}
where we map $t_3$ to $1$ if $d_2$ is even and to $t_2$ otherwise. We treat the case where $d_2$ is even,
the other case being   similar.
To complete the definition of $\rho$, we need to find   images $\tilde u_1,\tilde u_2 $ of $u_1,u_2$ in $H$
  which satisfy the equation $$z^{d_1}t_2z^{d_2}t_2\m z^{d_3+\dots +d_b}= \tilde u_1^2\tilde u_2^2.$$
This is possible because $d_2$ and $d_3+\dots +d_b+d_1$ are even.
  Since   $\rho( a_1  a_2)=z^{ d_1}t_2z^{d_2}t_2\m$ does not commute with $\rho(a_3)$, which is conjugate to $z^{d_3}$, the image $\rho(Q)$ is non-abelian.

\emph{Case 5: $\Sigma$ is non-orientable of  genus 1.}  We show that this cannot happen under the assumption that $\calc$ does not separate boundary components. 
  Being two-sided, all curves in $\calc$ are separating. The components of $\Sigma\setminus \calc$ which do not meet $\partial\Sigma$
are punctured projective planes  or spheres bounded by pinched curves, and their fundamental group 
has trivial  image under $p:\pi_1(\Sigma)\to G$ because $G$ is torsion-free. The connected component containing $\partial \Sigma$ has cyclic image by Lemma \ref{lemcle4} and 
this forces $p$ to have an abelian image, a contradiction.
  \end{proof}

\paragraph{Multi-parachutes.}
  More generally, suppose that $G$ has a  (possibly 
non-simple) centered splitting $\Gamma$ with bottom groups all isomorphic to $\Z$. We call such a group a multi-parachute. For instance, one may connect all boundary components of $\Sigma$ to  distinct cyclic vertex groups;  groups obtained in this way were studied in  \cite {GLS_finite_index} under the name of  socket groups, and elementarily free socket groups are described in \cite[Proposition 6.2]{GLS_finite_index}.

We now study   general multi-parachutes. In particular, we show that they are  either prototypes or elementarily free groups,   and we determine  whether
they have proper elementarily embedded subgroups.
 
  Like parachutes, multi-parachutes are one-ended hyperbolic groups.
 After gluing annuli and M\"obius bands to the surface if needed, we may assume that $\sum d_i(v)\ge3$ for each bottom vertex $v$ (we denote by $d_i(v)$ the indices of incident edge groups in the vertex group), so $\Gamma$ is the cyclic JSJ decomposition of $G$ by Lemma \ref{bonjsj} and Remark \ref{rem_multiparachute}. 
The case of surface groups and parachutes has already been studied, so assume that $\Gamma$ has at least two (cyclic) bottom groups.

If  $\Gamma$ is  not retractable (in particular if $\Sigma$ is exceptional), $G$ is a prototype   by Lemma \ref{preconf},
 so  we now assume that $\Gamma$ is retractable.   In this case $G$ is an extended étage over a free group, hence elementarily free by Corollary \ref{cor_tarski}.  We distinguish three cases, depending on $\Sigma$. 

$\bullet$  If 
$|\chi(\Sigma)|\ge 3$, Proposition \ref{3simple} shows that $G$ is a simple tower over a non-abelian free group,
thus providing an elementarily embedded proper free subgroup  by ``Tarski''.

$\bullet$
If 
$\Sigma$ is a twice-punctured torus, then $\Gamma$ has to be simple 
by Proposition 6.2 of  \cite {GLS_finite_index} (see Remark \ref{ineg}).
 Thus $G$ is a parachute, and it has  an elementarily embedded proper free subgroup by Proposition \ref{3simple2}.

$\bullet$  If 
 $ | \chi(\Sigma) | \le2$ with $\Sigma$  non-exceptional,   not a twice-punctured torus,
 then $G$ is not an  extended \'etage 
over a non-abelian group relative to a non-abelian group (so has no proper elementarily embedded subgroup by Theorem \ref{thcl}).
This is proved in the same way as   Proposition \ref{prop_parachute}.  

 We have proved the following   generalisation of Corollary \ref{synth}.
   \begin{cor}\label{synth2}
     Let $\Gamma$ be a multi-parachute with $\sum d_i(v)\geq 3$ at each bottom vertex.
 \begin{itemize}
 \item If $\Gamma$ is not retractable
   (in particular if $\Sigma$ is exceptional), then $G$ is a prototype. 
 \item If $\Gamma$ is retractable, then $G$ is elementarily free;   in this case, it
   has a 
   proper elementarily embedded subgroup if and only if $|\chi(\Sigma)|\geq 3$ or $\Sigma$ is   a twice-punctured torus. \qed
 \end{itemize}
\end{cor}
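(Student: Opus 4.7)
The plan is to recycle the strategy of Corollary \ref{synth} (for parachutes) and Propositions \ref{3simple}, \ref{3simple2}, \ref{prop_parachute}, after first normalizing the splitting so that $\Gamma$ coincides with the canonical cyclic JSJ decomposition of $G$. The preliminary observation is that $G$ is one-ended (Lemma \ref{facgru}) and not cyclic, so we may attach annuli or Möbius bands at each bottom vertex to reduce to $\sum d_i(v)\geq 3$ everywhere without affecting retractability or the set of extended \'etage/tower structures. Under this normalization, the Bowditch-style argument of Lemma \ref{bonjsj}, in the form of Remark \ref{rem_multiparachute}, guarantees that $\Gamma$ is the cyclic JSJ of $G$.

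For the first bullet, if $\Gamma$ is not retractable, then by Lemma \ref{preconf} $G$ has no retractable cyclic splitting at all; since $G$ is also one-ended (in particular has no cyclic free factor), Lemma \ref{eqprot} gives that $G$ is a prototype. For the second bullet, assume $\Gamma$ is retractable. Then the base of $\Gamma$ is a free product of cyclic groups, hence free, so $G$ is an extended étage over a free group and condition 5 of Corollary \ref{cor_tarski} applies, showing that $G$ is elementarily free.

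It remains to analyze when $G$ admits a proper elementarily embedded subgroup. By Theorem \ref{thcl}, such a subgroup exists if and only if $G$ is an extended tower over some non-abelian subgroup $H$ relative to $H$. If $|\chi(\Sigma)|\geq 3$ and $\Gamma$ is non-simple, Proposition \ref{3simple} upgrades $\Gamma$ to a simple \'etage of surface type over a free group, giving by ``Tarski'' an elementarily embedded free subgroup; if $\Gamma$ is simple, Proposition \ref{3simple2} does the same. If $\Sigma$ is a twice-punctured torus, Proposition 6.2 of \cite{GLS_finite_index} (invoked via Remark \ref{ineg}) forces a retractable $\Gamma$ to be simple (so $G$ is actually a parachute), and Proposition \ref{3simple2} again produces an elementarily embedded $\bbF_2$.

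The expected main obstacle is the remaining direction: when $\Sigma$ is non-exceptional with $|\chi(\Sigma)|\leq 2$ and not a twice-punctured torus, one has to prove that $G$ has no proper elementarily embedded subgroup. Here we adapt the proof of Proposition \ref{prop_parachute}. Assume for contradiction that such an $H$ exists. By Theorem \ref{thcl} and Lemma \ref{compajsj} applied to the relative tower, the surface $\Sigma'$ of the top \'etage is an incompressible non-exceptional proper subsurface of $\Sigma$, and relativity to a non-abelian group prevents $\Sigma\setminus\Sigma'$ from being a union of annuli and Möbius bands alone, which forces $|\chi(\Sigma')|\geq 1$. The low-complexity list of admissible $\Sigma$ (namely $S_{0,4}$, $N_{1,3}$, $N_{2,2}$, $N_{3,1}$ together with their multi-parachute analogues) then leaves only a handful of candidates for $\Sigma'$, each of which pins down explicit generators in the presentation of $G$. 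For every such candidate, the retractability equation at the new \'etage would require the boundary word---an explicit expression of the form $z^{d_{i_1}}\cdots z^{d_{i_r}}$ twisted by stable letters---to be a commutator or a product of $g$ squares in a free group, which the arithmetic of the indices $d_i$ prevents, exactly as in the argument of Remark \ref{exte}. This yields the desired contradiction and completes the proof.
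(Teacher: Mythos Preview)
Your approach matches the paper's line for line: Remark \ref{rem_multiparachute} for the JSJ identification, Lemma \ref{preconf} and Corollary \ref{cor_tarski} for the two bullets, and Propositions \ref{3simple}, \ref{3simple2} together with an adaptation of Proposition \ref{prop_parachute} for the trichotomy on proper elementarily embedded subgroups. Two small corrections to your final paragraph: the inequality should read $|\chi(\Sigma')|\le 1$ (hence $\chi(\Sigma')=-1$), not $\ge 1$, which forces $\Sigma'$ to be a once-punctured torus; and for the genuine multi-parachute surfaces $S_{0,4}$, $N_{1,3}$, $N_{2,2}$ there is then no such incompressible subsurface with hyperbolic complement, so the argument ends before any retractability computation is needed---the commutator check you allude to (from Proposition \ref{prop_parachute}, not Remark \ref{exte}) is only required in the parachute case $\Sigma=N_{3,1}$.
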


One may decide under what conditions $\Gamma$ is retractable. This is done in Propositions 6.2, 6.7, and 6.8 of \cite {GLS_finite_index} for parachutes and socket groups. The other cases may also be worked out, we leave details to the reader.
  The complete list of elementarily free  multi-parachutes having no proper elementarily embedded subgroup (called minimal in the next section) is shown on Figure \ref{fig_minimal}.
We will see that, together with $\F_2$ and $\pi_1(N_4)$, this 
describes all minimal finitely generated elementarily free groups (Theorem \ref{minimal}).

\begin{figure}[ht!]
  \centering
  \includegraphics{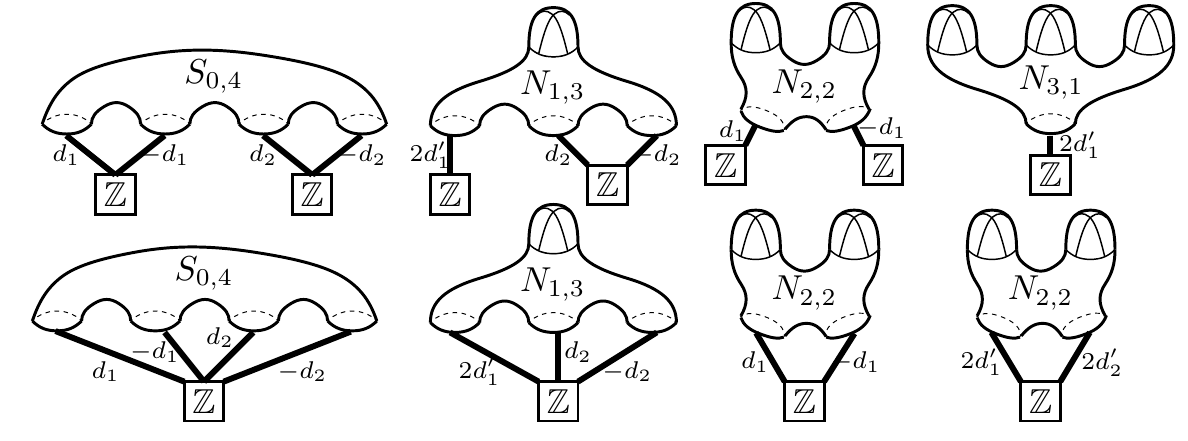}
  \caption{The complete list of finitely generated, minimal, elementarily free groups other than $\F_2$ and $\pi_1(N_4)$;
    the sum of the indices at   each cyclic group should be $\geq 3$.
  }\label{fig_minimal} 
\end{figure}

\section{Minimal and maximal models}\label{sec_minmax}

Although the definitions and facts that follow make sense in the broader context of any first-order structure in a countable language, we restrict ourselves to groups (in the first-order language of groups).  We refer to \cite{Marker_model,Hodges_model} for basic facts.

A group is called {\em minimal} if it has no proper elementarily embedded subgroup. For example, the group of integers is minimal.

A group  $H$ 
is called {\em prime} if it elementarily embeds into any other  group elementarily equivalent to it; equivalently,  
$H$ is countable and, for any tuple $\ul  h\in H^k$, there is a formula $\sigma$ (without constants) such that, given another tuple $\ul  h'\in H^k$, 
one has $H\models \sigma(\ul  h')$ if and only if
$\ul  h$ and $\ul  h'$ are in the same $\Aut(H)$-orbit;
in other words, $H$ is countable and, for all $k\geq 1$ and all tuples $\ul  h\in H^k$, the $\Aut(H)$-orbit of $\ul  h$ is $\es$-definable (i.e.\ definable without constants). 

We shall characterize 
 prime torsion-free hyperbolic groups and minimal elementarily free groups.

A countable group $H$ is \emph{homogeneous} if, for any $k\geq 1$,
  any two tuples $\ul  h,\ul  h'\in H^k$ satisfying the same first-order formulas (without constants) are in the same $\Aut(H)$-orbit.
Equivalently, the $\Aut(H)$-orbit of every finite tuple 
is {\em $\emptyset$-type definable}, i.e.\  definable by infinitely many first-order formulas (without constants).
Prime groups are homogeneous;  conversely, finitely generated free groups are homogeneous \cite{PeSk_homogeneity} but not prime.

A sequence of elementary embeddings $H_1\preceq_e H_2\preceq_e \dots\preceq_e H_n\preceq_e \cdots$ is called 
an \emph{elementary chain} of groups. 

Using a Fra\"{i}ss\'e  construction, 
 we   prove that,
given a   non-abelian 
torsion-free hyperbolic group $G$, there exists a  homogeneous countable (infinitely generated) group $\calm_G\equiv G$
which is the union of an elementary chain of finitely generated subgroups and in which all finitely generated groups elementarily equivalent to $G$ embed elementarily; moreover, $\calm_G$    is unique up to isomorphism. 
 This extends Theorem 5.9 of \cite{KMS_fraisse}.

\subsection{Primality}\label{primi}
Primality is characterized as follows.

\begin{thm}\label{prime}
A non-abelian torsion-free hyperbolic group  $G$ is   prime if and only if $G$ is a one-ended prototype. 
\end{thm}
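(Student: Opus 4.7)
The plan is to establish both directions of the equivalence.

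For the forward implication, we show that primality forces $G$ to be a one-ended prototype. Since $G\equiv\ecore(G)$ by Corollary~\ref{equivcore}, primality supplies an elementary embedding $G\preceq_e\ecore(G)$. If $\core(G)\ne\{1\}$, then $\ecore(G)=\core(G)$ is itself a prototype and by Corollary~\ref{pasel} admits no proper elementarily embedded subgroup, forcing $G\simeq\core(G)$ to be a prototype. If instead $\core(G)=\{1\}$, then $\ecore(G)=\bbF_2$, and the embedding $G\preceq_e\bbF_2$ combined with Corollary~\ref{cor_FF} makes $G$ a non-abelian free factor of $\bbF_2$, so $G\simeq\bbF_2$; but $\bbF_2\equiv\pi_1(N_4)$ and Lemma~\ref{lem_N4} says $\pi_1(N_4)$ has no proper elementarily embedded subgroup, contradicting primality since $\bbF_2\not\simeq\pi_1(N_4)$. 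Thus $G$ is a prototype, and if it had more than one end Theorem~\ref{contrex} would produce a hyperbolic $G'\equiv G$ into which $G$ does not elementarily embed, again contradicting primality. Hence $G$ is a one-ended prototype.

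For the backward implication, let $G$ be a one-ended prototype. As $G$ is finitely generated and hence countable, the equivalent characterization of primality recalled at the start of the section reduces the task to showing that, for every $\bar a\in G^n$, the $\Aut(G)$-orbit of $\bar a$ is $\es$-definable in $G$. The crucial observation is that an extended étage structure on $G$ relative to any subgroup would in particular make $G$ an absolute extended étage, so a one-ended prototype satisfies $\core(G,\bar a)=G$ for every tuple $\bar a$. Corollary~\ref{type_char} applied with $G'=G$ then identifies the $\Aut(G)$-orbits on $G^n$ with the types realized in $G$.

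To isolate each type by a single formula we adapt the strong engulfing machinery of Subsection~\ref{foc}. Fix $\bar a$ and a finite generating tuple $\bar g=(g_1,\dots,g_k)$ of $G$ containing $\bar a$, and let $F_{G,G}\subset G\setminus\{1\}$ be the shortening set supplied by Lemma~\ref{shortening}. Let $\phi_{\bar a}(\bar x)$ be the formula asserting the existence of a tuple $\bar y$ extending $\bar x$, satisfying the relations of a fixed finite presentation of $G$, such that no morphism $G\to G$ related (with respect to $\tilde\Gamma_G$) to $g_i\mapsto y_i$ kills any element of $F_{G,G}$. As in Lemma~\ref{lem_engulf_ee}, relatedness together with the condition on $F_{G,G}$ translates into a first-order statement, using equational Noetherianity of hyperbolic groups to finitize the relevant system of equations. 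By Lemma~\ref{shortening} any realization yields an injective morphism $\mu:G\to G$, which by cohopfianity of one-ended torsion-free hyperbolic groups is an automorphism sending $\bar a$ to $\bar x$. Hence $\phi_{\bar a}$ defines the $\Aut(G)$-orbit of $\bar a$ in $G$, and $G$ is prime.

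The main obstacle lies in the backward direction, specifically in the construction and verification of the isolating formula $\phi_{\bar a}$. The translation of relatedness (a geometric condition involving the modified JSJ decomposition $\tilde\Gamma_G$) into a first-order statement is technical and follows the pattern of Section~\ref{interp}; once this is in hand, Lemma~\ref{shortening} together with cohopfianity of one-ended torsion-free hyperbolic groups closes the argument.
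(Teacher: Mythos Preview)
Your forward direction coincides with the paper's proof.

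For the backward direction you take a genuinely different route. The paper first shows, via Corollary~\ref{cor_core_unbout}, that a one-ended prototype $P$ elementarily embeds into every \emph{finitely generated} model of its theory, and then invokes an external reference \cite[Th.~5.3]{PeSk_forkingI} to upgrade this to arbitrary models. You instead use the equivalent characterization of primality recalled at the start of Section~\ref{sec_minmax} ($\Aut(G)$-orbits of tuples are $\emptyset$-definable) and build the isolating formula directly from the strong-engulfing and shortening machinery already set up in Section~\ref{interp}. This is arguably preferable here: it keeps the argument self-contained within the paper rather than outsourcing the decisive step.

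Your argument is correct, but two points should be stated explicitly. First, you check that every realization of $\phi_{\bar a}$ lies in the orbit of $\bar a$, but you also need that $\bar a$ itself satisfies $\phi_{\bar a}$: taking $\bar y=\bar g$, the condition that no morphism related to the identity kills an element of $F_{G,G}$ is exactly Lemma~\ref{lem_proto_engulf} with $C=G$ and $p=1$; the full orbit then satisfies $\phi_{\bar a}$ because any parameter-free definable set is $\Aut(G)$-invariant. Second, the appeal to Corollary~\ref{type_char} is harmless but unnecessary: your formula $\phi_{\bar a}$ defines the orbit directly, so there is no need to first identify orbits with types.
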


\begin{proof}
 We first prove that prime groups are prototypes.
    If $G$ is prime and elementarily free, it embeds elementarily into $\F_2$, hence is isomorphic to $\F_2$ by Corollary \ref{cor_FF}.
  But $\F_2$ is not prime because it does not embed elementarily into  $\pi_1(N_4)$ by Lemma \ref{lem_N4} (though $\pi_1(N_4)\equiv\F_2$). This proves that no elementarily free group is prime.

Now suppose that $G$ is prime and not elementarily free  (so that $\ecore(G)=\core(G)\ne\{1\}$). Since   $G$  is elementarily equivalent to  $\ecore(G)$ by Corollary \ref{equivcore},
it embeds elementarily into $\ecore(G)$ by primeness.
But prototypes have no proper elementarily embedded subgroup by Corollary \ref{pasel}, so  this embedding is an isomorphism and $G$ must be a prototype.
This proves that only prototypes may be prime.

 Theorem \ref{contrex}   implies that infinitely-ended prototypes are not prime, and we 
  complete the proof by showing that any one-ended prototype $P$ is prime. 
By Corollary \ref{cor_core_unbout},
  $P$ elementarily embeds into any \emph{finitely generated}   (hence hyperbolic by \cite{Sela_diophantine7}) group $G$ elementarily equivalent to $P$. 
  This is not enough  because it does not apply if $G$ is an infinitely generated   
  group, so we quote  \cite[Th.~5.3]{PeSk_forkingI} to deduce that $P$ is prime.
 \end{proof}

  Though infinitely-ended prototypes are not prime,
  they satisfy a weak ``prime model'' property.

\begin{prop}[Weak prime models]
\label{prop_prime}
A non-abelian torsion-free hyperbolic group $G$ is the free product of a prototype and a free group if and only if, for any finitely generated group $K$ elementarily equivalent to $G$,
the group  $G$ elementarily embeds  into $K*\bbF_r$ for some $r\geq 0$.
\end{prop}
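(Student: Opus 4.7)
The plan is to prove both directions through the core and the relation $\tge$.

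For the forward direction, suppose $G\simeq P*\bbF_s$ with $P$ a prototype, and let $K$ be a finitely generated group with $K\equiv G$. By Corollary \ref{cor_eqv_elem} we have $\core(K)\simeq\core(G)$; since cores are compatible with free products (Remark \ref{cpl}), $\core(G)=\core(P)*\core(\bbF_s)=P$, so $K\tge P$. When $P$ is non-trivial (hence non-abelian), Corollary \ref{ett} gives a free group $F$ and a subgroup $\tilde{P}<K*F$ isomorphic to $P$ such that $K*F$ is a simple tower over $\tilde{P}$; by ``Tarski'' (Theorem \ref{Tarski}) one has $\tilde{P}\preceq_e K*F$, and combining this with the identity embedding of $\bbF_s$ into itself via Corollary \ref{ajf} yields $G\simeq\tilde{P}*\bbF_s\preceq_e(K*F)*\bbF_s=K*\bbF_r$ for some $r$. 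When $P=\{1\}$ we have $G\simeq\bbF_s$ with $s\geq 2$; Lemma \ref{stab_core} gives $r'$ such that $K*\bbF_{r'}$ is a simple tower over a subgroup isomorphic to $\bbF_2$, and the same ``Tarski''-plus-Corollary \ref{ajf} argument applied to the decomposition $\bbF_s=\bbF_2*\bbF_{s-2}$ yields $G\preceq_e K*\bbF_{r'+s-2}$.

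For the reverse direction, apply the hypothesis to $K:=\ecore(G)$, which is finitely generated, hyperbolic, and satisfies $K\equiv G$ by Corollary \ref{equivcore}. We obtain an elementary embedding $G\preceq_e K*\bbF_r$ for some $r$; since $G$ is non-abelian, Theorem \ref{thcl} implies that $K*\bbF_r$ is an extended tower over $G$ relative to $G$, so $K*\bbF_r\tge G$. Corollary \ref{redu2} then gives $r(K*\bbF_r)\tge r(G)$ for the Grushko-type operator $r(\cdot)$ introduced in Subsection \ref{sec_grushko}. Set $P:=\core(G)$. If $P\neq\{1\}$, then $K=P$ is a prototype with no cyclic free factor (Lemma \ref{eqprot}), so $r(K*\bbF_r)=r(P*\bbF_r)=P$; if $P=\{1\}$, then $K*\bbF_r=\bbF_{2+r}$ is free, so $r(K*\bbF_r)=\{1\}=P$. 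In either case $P\tge r(G)$. Minimality of the prototype $P$ for $\tge$ (Proposition \ref{dcc2}) forces $r(G)\simeq P$, and uniqueness of the Grushko decomposition up to isomorphism gives $G\simeq P*\bbF_t$ for some $t\geq 0$.

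The main bookkeeping subtlety is handling the degenerate case $\core(G)=\{1\}$ uniformly with the general case: on the $K$-side the trivial core must be replaced by $\bbF_2$ (the elementary core), while on the $G$-side the prototype $P$ remains trivial and $G$ itself is then forced to be free. Channelling the two cases through $\ecore(G)$ and the Grushko operator $r(\cdot)$ unifies them cleanly, and the substantive content comes from minimality of prototypes for $\tge$ together with uniqueness of the Grushko decomposition.
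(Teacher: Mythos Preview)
Your proof is correct. The forward direction is essentially the paper's argument, though you introduce an unnecessary case split on whether $P$ is trivial: the paper simply applies Corollary~\ref{ett} in its general form to get $K*F^2$ a simple tower over a subgroup isomorphic to $P*F^3$, then tacks on $F^1$ via Remark~\ref{rk_prodlib} and invokes ``Tarski'' once, avoiding Corollary~\ref{ajf} entirely.

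For the reverse direction you take a slightly different path. The paper sets $K=\core(G)$ and quotes Proposition~\ref{ssgpeprot} to conclude immediately that $G$ is a co-free free factor of $\core(G)*\bbF_r$. You instead go through Theorem~\ref{thcl} to obtain $K*\bbF_r\tge G$, apply the Grushko operator via Corollary~\ref{redu2} to get $P\tge r(G)$, and then use minimality of the prototype $P$ to force $r(G)\simeq P$. Both arguments ultimately rest on the same Grushko-blowup machinery (Proposition~\ref{Grus}), so this is a repackaging rather than a genuinely new idea. That said, your use of $\ecore(G)$ rather than $\core(G)$ is cleaner: the paper's choice $K=\core(G)$ is not literally elementarily equivalent to $G$ when $\core(G)=\{1\}$, and your treatment makes the elementarily free case transparent. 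A minor quibble: the minimality you invoke is Definition~\ref{dfn_proto}, not Proposition~\ref{dcc2}.
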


\begin{example}
This applies to the free group   $G=\bbF_n$ for $n\geq 2$ (see Corollary \ref{cor_tarski}).
\end{example}

\begin{proof}
Suppose $G=P*F^1$ with $P$ a prototype and $F^ 1$ free. If $K\equiv G$, then $P$ is the core of $K$, so $K\tge P$. By  Corollary \ref{ett},
there exist free groups $F^2,F^3$ such that $K*F^2$ is a simple tower over   a subgroup isomorphic to $P*F^3$, 
so $K*F^2*F^1$ is a simple tower over   a subgroup isomorphic to $P*F^3*F^1$ and the result follows.

To prove the converse, we let $K$ be  the core $c(G)$ and we assume that $G$ elementarily embeds into $c(G)*\bbF_r$.  By Proposition \ref{ssgpeprot}, $G$ is a co-free free factor of $c(G)*\bbF_r$, hence is isomorphic to the free product of $c(G)$ with a free group.
\end{proof}

\subsection{Minimality} \label{minima}

We saw that prototypes are minimal 
  (Corollary \ref{pasel}).  Conversely: 
  
\begin{thm}\label{minimal}
 Let $G$ be a non-abelian torsion-free hyperbolic group. 
\begin{enumerate}
 \item If $\core(G)$ is one-ended, then $G$ is minimal if and only if $G=\core(G)$ (i.e.\ $G$ is a prototype).
 \item If $\core(G)$ is infinitely-ended, there are infinitely many minimal hyperbolic groups elementarily equivalent to $G$.
 \item If $G$ is elementarily free, then $G$ is minimal if and only if $G$ is isomorphic 
   to $\F_2$,   $\pi_1(N_4)$, or one of the multi-parachutes shown on  Figure \ref{fig_minimal}.
\end{enumerate}
In particular, there are infinitely many minimal groups elementarily equivalent to $G$ (up to isomorphism) if and only if $\core(G)$ is 
  trivial or  infinitely ended.
\end{thm}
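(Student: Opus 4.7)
The plan is to handle the three parts separately; the ``in particular'' statement will then follow by combining them.

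\textbf{Part 1} is immediate: prototypes are minimal by Corollary \ref{pasel}, and if $G$ is minimal with one-ended core, Corollary \ref{cor_core_unbout} gives an elementary embedding $\core(G) \hookrightarrow G$, which minimality forces to be an isomorphism, so $G$ is a prototype.

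For \textbf{Part 2}, I will exhibit an explicit infinite family. Writing $\core(G) = A_1 * \cdots * A_p$ with $A_i$ one-ended prototypes and $p \geq 2$, pick $b_i \in A_i \setminus \{1\}$ ($i=1,2$) such that $A_i$ has no cyclic splitting relative to $b_i$ (Lemma \ref{nospl}), and for each integer $m \geq 1$ set
$$G_m := K(A_1, A_2, b_1^{2m}, b_2^2) * A_3 * \cdots * A_p.$$
By Remark \ref{rk_prodlib} and Corollary \ref{equivtour}, $G_m \equiv \core(G) \equiv G$. A Grushko-blowup analysis as in the proof of Theorem \ref{contrex} (using Proposition \ref{Grus}) shows that the only retractable splittings of $G_m$ come from the Klein bottle factor, so the only nontrivial extended tower structure on $G_m$ is a single étage with two bottom groups and base isomorphic to the prototype $\core(G)$; by Remark \ref{rem_last_simple} this étage cannot be the last étage of a tower relative to any proper subgroup, so Theorem \ref{thcl} forces $G_m$ to be minimal. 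Remark \ref{indice} shows that $2m$ can be read off the canonical cyclic JSJ of $G_m$, so the $G_m$'s are pairwise non-isomorphic.

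For \textbf{Part 3}, sufficiency follows from existing results: $\F_2$ by Corollary \ref{cor_FF} (its proper free factors are cyclic, not $\equiv \F_2$); $\pi_1(N_4)$ by Lemma \ref{lem_N4}; and the multi-parachutes of Figure \ref{fig_minimal} by Corollary \ref{synth2}. For the converse, let $G$ be minimal elementarily free with $G \not\simeq \F_2$. First I would rule out cyclic free factors, since $G = H * \Z$ with $H$ non-abelian gives $H \preceq_e G$ properly by Theorem \ref{Tarski}; and if the Grushko decomposition has $p \geq 2$ one-ended factors $A_1, \ldots, A_p$, then each $A_i$ is elementarily free (Remark \ref{cpl}), so $H := A_2 * \cdots * A_p$ satisfies $H \tge \{1\}$. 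Remark \ref{rk_prodlib} then gives $G = A_1 * H \tge A_1$ relative to $A_1$, and Theorem \ref{thcl} yields $A_1 \preceq_e G$ properly, contradicting minimality. Hence $G$ is one-ended.

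Finally, with $G$ one-ended elementarily free and minimal, consider the canonical cyclic JSJ $\Gcan$. If $\Gcan$ is trivial, $G$ is a non-exceptional closed surface group (rigid one-ended groups being prototypes, which are not elementarily free), and by Example \ref{csurf} together with Theorem \ref{Tarski} the only minimal option is $G = \pi_1(N_4)$. Otherwise, Lemma \ref{preconf} shows $\Gcan$ is retractable. \emph{The main obstacle} will be to exclude non-cyclic rigid vertices from $\Gcan$: if $R$ is such a vertex group, universal ellipticity of $R$ should allow one to arrange a tower $G > G_1 > \cdots$ (obtained by iteratively retracting surface-type vertices of $\Gcan$ via Lemma \ref{compajsj}) in which $R$ sits in a bottom group at every level, yielding $G$ as a tower over $R$ relative to $R$ and hence $R \preceq_e G$ properly by Theorem \ref{thcl}, contradicting minimality. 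Once all non-cyclic vertices of $\Gcan$ are surface-type and all bottom vertices cyclic, $G$ is a multi-parachute (Remark \ref{rem_multiparachute}); Propositions \ref{3simple} and \ref{3simple2} then force every associated surface to satisfy $|\chi(\Sigma)| \leq 2$ and not to be a twice-punctured torus, else $G$ would be a simple étage over a non-cyclic free group and Theorem \ref{Tarski} would produce a proper elementary embedding. Corollary \ref{synth2} now identifies $G$ with a group from Figure \ref{fig_minimal}. The ``in particular'' statement then follows: Parts 1 and 2 give $1$ and $\infty$ minimal groups for one-ended and infinitely-ended core respectively, while the trivial-core case yields $\infty$ via the infinite family of multi-parachutes in Figure \ref{fig_minimal} obtained by varying the integer indices $d_i$.
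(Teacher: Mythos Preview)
Parts 1 and 2 are correct and essentially match the paper's argument (the paper varies both powers $a_i^n$ simultaneously, but your one-parameter family works equally well via Remark \ref{indice}).

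In Part 3 your approach diverges from the paper's and has gaps. After reducing to the one-ended case you attempt to analyse $\Gcan$: rule out non-cyclic rigid vertices, then conclude that $G$ is a multi-parachute. Two problems arise. First, the rigid-vertex step is only sketched; the phrase ``universal ellipticity of $R$ should allow one to arrange a tower\dots'' hides the difficulty that universal ellipticity is a property relative to $G$, not to the intermediate groups $G_i$, so it is not immediate that $R$ stays in a bottom group at every level. Second, even granting that all non-cyclic vertices of $\Gcan$ are surface-type, you have not excluded the possibility that $\Gcan$ has \emph{several} surface vertices; in that case collapsing around one of them yields a centered splitting with a non-cyclic bottom group, and $G$ is not literally a multi-parachute. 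Remark \ref{rem_multiparachute} does not help here, since it still concerns centered splittings with a single central surface.

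The paper sidesteps both issues by never going to the JSJ. It takes \emph{any} retractable centered splitting $\Gamma$ of $G$ (which exists since $G$ is a non-prototype without cyclic free factor), and argues directly: if $|\chi(\Sigma)|\ge 3$ then Propositions \ref{3simple}/\ref{3simple2} give a simple \'etage over a non-abelian group, contradicting minimality; and if some bottom group $B_i$ of $\Gamma$ is non-abelian, then since $\core(B_j)=\{1\}$ for all $j$, one has $G>_1 B_\Gamma\tge B_i$ relative to $B_i$ (via Remark \ref{rk_prodlib}), whence $B_i\preceq_e G$ properly by Theorem \ref{thcl}. Thus minimality forces $|\chi(\Sigma)|\le 2$ and all bottom groups cyclic, so $G$ is already a multi-parachute (or becomes $N_4$ after absorbing annuli/M\"obius bands). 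This single observation about non-abelian bottom groups simultaneously handles your rigid-vertex obstacle and the multiple-surface gap, and makes the separate Grushko reduction to the one-ended case unnecessary.
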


\begin{proof}
    Assertion 1 
  follows from Corollaries \ref{cor_core_unbout} and  \ref{pasel}.  
 If $\core(G)$ is infinitely-ended, we write its Grushko decomposition  $\core(G)\simeq A_1*\cdots*A_p$  with $p\ge2$ and we 
consider $G(a_1,a_2)=K(A_1,A_2,a_1,a_2)*A_3*\cdots*A_p$ as in 
    Theorem \ref{contrex}. We have seen that $G(a_1,a_2)$ is minimal and elementarily equivalent to $G$ if $a_i$ is a square and $A_i$ has no cyclic splitting relative to $a_i$.
    
     In order to get infinitely many different groups, we fix $a_1,a_2$ and for $n\ge1$ we consider $G_n=     G(a_1^n,a_2^n)=K(A_1,A_2,a_1^n,a_2^n)*A_3*\cdots*A_p$. These groups are non-isomorphic because by Remark  \ref{indice} their canonical JSJ decompositions are different: they are distinguished by indices of edge groups in vertex groups.

  To prove    Assertion 3, suppose that $G$ is elementarily free, minimal, not $\F_2$. It cannot have a cyclic free factor, so it is an extended \'etage of surface type.
     Let $\Gamma$ be the associated splitting. First suppose that  the associated surface satisfies   $|\chi(\Sigma)|\ge 3$. By Propositions \ref{3simple}
     (if $\Gamma$ is not simple) and \ref{3simple2} (if $G$ is a parachute), $G$ is a simple \'etage over a non-abelian group so is not minimal. Thus $|\chi(\Sigma)|\le 2$. 

  Since $G$ is elementarily free, all bottom groups have trivial core.     
     If some bottom group $H$ is non-abelian, then
$G$ is a tower over $H$ relative to $H$ so is not minimal  by Theorem \ref{thcl}.
It follows that $G$ is an \'etage with all bottom groups isomorphic to $\Z$, i.e.\    a multi-parachute.

 Let $\Sigma'$ be the surface obtained by gluing to $\Sigma$ all annuli or M\"obius bands corresponding to bottom vertices $v$ with $\sum d_i(v)=2$. It satisfies $|\chi(\Sigma')|\le 2$.

If it is  closed, 
it  must be $N_4$ because  
$\pi_1(N_3)$ is not elementarily free and $\pi_1(S_2)$ contains an  elementarily embedded subgroup isomorphic to $\F_2$.  Thus $G\simeq \pi_1(N_4)$.
Otherwise, we get a new multi-parachute 
with $\sum d_i(v)\geq 3$ at each vertex.
 As explained at the end of Section \ref{examp}, $G$ is on the list of Figure \ref{fig_minimal}.

This proves one direction of Assertion 3. The converse follows from Corollary \ref{cor_FF} (if $G$ is free), Lemma \ref{lem_N4} (if $G\simeq \pi_1(N_4)$), and Corollary \ref{synth2}  (if $G$ is a multi-parachute).  
\end{proof}

\subsection{Universality} \label{univ} 
 This subsection is devoted to the following result.

\begin{thm} \label{univers}
Given any  non-abelian  torsion-free hyperbolic group  $G$,  
there exists a unique countable  (infinitely generated) group $\calm_G$ elementarily equivalent to $G$ with the following properties:
\begin{enumerate}
\item $\calm_G$ is homogeneous;
\item  there exists an elementary chain of finitely generated subgroups $G_1\preceq_e G_2\preceq_e \dots\preceq_e G_n\preceq_e \cdots$
  such that $\calm_G=\bigcup_{i\geq 1} G_i$;
\item  all torsion-free hyperbolic groups elementarily equivalent to $G$ elementarily embed into $\calm_G$.  
\end{enumerate}
\end{thm}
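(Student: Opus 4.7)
The approach is to build $\calm_G$ as the Fraïssé limit of the class $\calc_G$ of finitely generated torsion-free hyperbolic groups elementarily equivalent to $G$, with elementary embeddings as morphisms, and then read off the three required properties from the general theory of elementary Fraïssé classes (\cite{KMS_fraisse}).

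First I would check that $\calc_G$ is an elementary Fraïssé class. Countability of the class up to isomorphism is routine since finitely generated hyperbolic groups are finitely presented. The joint embedding property is Proposition \ref{prop_lattice}. For the amalgamation property, let $A,B,C \in \calc_G$ with elementary embeddings $A \preceq_e B$ and $A \preceq_e C$; by Theorem \ref{thcl} there exist finitely generated free groups $F_B,F_C$ such that $B*F_B$ and $C*F_C$ are simple towers over the respective images of $A$. Form the amalgam $D = (B*F_B)*_A (C*F_C)$ along these common copies of $A$. By Remark \ref{rk_amalgam}, $D$ is a simple tower over $B*F_B$ and also over $C*F_C$, so by ``Tarski'' (Theorem \ref{Tarski}) both $B$ and $C$ embed elementarily in $D$ compatibly over $A$, and $D \in \calc_G$ since $D \equiv B \equiv G$.

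Next, the standard Fraïssé construction produces a unique countable $\calm_G$ realised as the union of a chain $G_1 \preceq_e G_2 \preceq_e \cdots$ in $\calc_G$ with the property that every $H \in \calc_G$ elementarily embeds into some $G_i$, and any elementary embedding between members of the chain can be extended, after going further along the chain, to match a prescribed one. Assertion (2) is then by construction, (3) is universality (every hyperbolic group $\equiv G$ is in $\calc_G$ since being hyperbolic is preserved under elementary equivalence by \cite{Sela_diophantine7}), and $\calm_G \equiv G$ follows from Tarski-Vaught applied to the elementary chain. Uniqueness of $\calm_G$ up to isomorphism is a standard back-and-forth using universality and homogeneity.

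The main obstacle is homogeneity in the model-theoretic sense stated in (1): two tuples $\bar u, \bar u'$ of the same type in $\calm_G$ should lie in the same $\Aut(\calm_G)$-orbit. Given such tuples, choose $i$ with $\bar u, \bar u' \in G_i$; the map $u_k \mapsto u'_k$ extends to an isomorphism $f:\grp{\bar u}\to\grp{\bar u'}$ which is a partial elementary map from $G_i$ to itself. By Theorem \ref{lethm} (applied inside $G_i$), $f$ extends to an isomorphism between the cores of $G_i$ relative to $\grp{\bar u}$ and $\grp{\bar u'}$, and using the amalgamation argument above one enlarges further to an elementary isomorphism between two elementarily embedded copies of some $G_j$ inside $\calm_G$. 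The usual back-and-forth, powered by the Fraïssé-extension property of the chain, then lifts this to an automorphism of $\calm_G$ carrying $\bar u$ to $\bar u'$. The delicate point throughout is keeping track of the identifications of copies of the base group $A$ during amalgamation, so that the resulting diagram of elementary embeddings commutes; this is where Theorem \ref{lethm} does the real work.
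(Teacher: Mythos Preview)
Your overall strategy matches the paper's: verify that the class $\calk_G$ of finitely generated groups elementarily equivalent to $G$ is an elementary Fra\"iss\'e class, and read off the conclusions from the general machinery (Theorem~\ref{KMS_fraisse}). The JEP argument via Proposition~\ref{prop_lattice} and the amalgam $(B*F_B)*_A(C*F_C)$ is exactly what is wanted.

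There is, however, a genuine gap in the amalgamation step. What you prove is amalgamation over \emph{elementary embeddings of members of the class}: given $A\preceq_e B$ and $A\preceq_e C$ with $A,B,C\in\calk_G$, you produce $D$. But the Fra\"iss\'e theorem that yields model-theoretic homogeneity requires the \emph{strong} e-AP of Definition~\ref{e-Fraisse}: one must amalgamate over an arbitrary partial elementary map $f:A_1\to K_2$ with $A_1$ a finitely generated subgroup of $K_1$, where $A_1$ need not belong to $\calk_G$ and need not be elementarily embedded in $K_1$. Your weaker e-AP only gives ultrahomogeneity for elementary embeddings of members, not homogeneity for types.

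You recognise this and try to recover homogeneity directly, but the argument breaks. After extending $f$ to an isomorphism $F:C\to C'$ between relative cores via Theorem~\ref{lethm}, you write that ``using the amalgamation argument above one enlarges further to an elementary isomorphism between two elementarily embedded copies of some $G_j$''. This does not follow: the tower from $G_i$ to $C$ is relative to $\grp{\bar u}$, not relative to $C$, so Theorem~\ref{thcl} does \emph{not} give $C\preceq_e G_i$, and hence your amalgamation (which needs the base to be elementarily embedded) cannot be invoked with $C$ as the base.

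The fix is to do what the paper does: prove strong e-AP directly. Given a partial elementary $f:A_1\to K_2$, apply Theorem~\ref{lethm} to get an isomorphism between the relative cores $C_1\subset K_1$ and $C_2\subset K_2$; then use Corollary~\ref{ett} and Remark~\ref{rk_tour_rel} to stabilise so that $K_i*F_i$ is a simple tower over (a copy of) $C_i$ containing $A_1$ or $f(A_1)$; finally form the amalgam over $C_1\simeq C_2$ exactly as you did over $A$. This is the same amalgam construction, just with the relative cores playing the role of the common base --- and that is precisely where Theorem~\ref{lethm} does the work. Once strong e-AP is in hand, homogeneity comes for free from Theorem~\ref{KMS_fraisse} and no separate back-and-forth is needed.
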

     
The proof relies on a Fra\"{i}ss\'e construction. 
The following definition and  Theorem \ref{KMS_fraisse} extend Fra\"{i}ss\'{e}'s original result.  The theorem can be proved following the same line of thought and is considered folklore; nevertheless, a more detailed explanation is given in  Section 2.3 of \cite{KMS_fraisse}.

\begin{dfn}[strong elementary Fra\"{i}ss\'{e}]\label{e-Fraisse}
Let $\mathcal{K}$ be a 
non-empty class of   finitely generated groups 
which is countable (up to isomorphism) and
has the following properties:
\begin{itemize}
 \item  the class $\mathcal{K}$ is closed under isomorphism \emph{(isomorphism property, IP)};
 \item  the class $\mathcal{K}$ is closed under passing to finitely generated elementarily embedded subgroups, 
i.e. if  $K\in \mathcal{K}$ and $K'\preceq_e K$ 
 is a finitely generated elementarily embedded subgroup 
of $K$, then $K'\in\mathcal{K}$ \emph{(elementary hereditary property, e-HP)};
 \item  if $K_1$, $K_2$ are in $\mathcal{K}$, then there is $K$ in $\mathcal{K}$ 
 and elementary embeddings $f_i:K_i\rightarrow K$ for $i\leq 2$ \emph{(elementary joint embedding property, e-JEP)};
 \item  if $K_0$, $K_1$, $K_2$ are in $\mathcal{K}$ and  $f_i:A\rightarrow K_i$,  for $i=1,2$, are partial elementary maps of 
 some    finitely generated subgroup  $A\subset K_0$,  
 then there is a group $K$ in $\mathcal{K}$ and elementary embeddings $g_i:K_i\rightarrow K$ for $i=1, 2$ with $g_1\circ f_1=g_2\circ f_2$ \emph{(strong  
elementary amalgamation property, strong e-AP)}.  
\end{itemize}
Then $\mathcal{K}$ is a \emph{strong elementary Fra\"{i}ss\'{e} class} (strong $e$-Fra\"{i}ss\'{e} class for short). 
\end{dfn}

See Definition \ref{pem} for partial elementary maps.

\begin{rem}
    Note that, by e-JEP, all groups in $\calk$ are elementarily equivalent.
    When 
    $A=\{1\}$,   strong e-AP reduces to e-JEP.
\end{rem}

\begin{rem}\label{rem_eAP}
 Strong e-AP can be reformulated as follows with no reference to $K_0$: 
given $K_1,K_2\in\calk$, a   finitely generated subgroup $A_1\subset K_1$ and a partial elementary map $f:A_1\ra K_2$,
there exists a group $K \in \calk$ and elementary embeddings $g_i:K_i\ra K$ for $i=1,2$ with $g_1=g_2\circ f$.
\end{rem}

\begin{thm}\label{KMS_fraisse}
Let $\mathcal{K}$ be a strong $e$-Fra\"{i}ss\'{e} class. There exists a countable group 
$\mathcal{M}$ such that:
\begin{itemize}
 \item the class of finitely generated groups that elementarily embed in $\calm$ is exactly $\mathcal{K}$; 
 \item the group $\mathcal{M}$ is homogeneous;
 \item the group $\mathcal{M}$ is the union of an elementary chain of groups 
in $\mathcal{K}$.
\end{itemize}
Moreover, any other countable group with the above properties is isomorphic to $\mathcal{M}$. \qed
\end{thm}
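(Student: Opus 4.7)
The plan is to apply Theorem \ref{KMS_fraisse} to the class
$$\calk_G := \{K : K \text{ is a finitely generated torsion-free hyperbolic group with } K \equiv G\},$$
and let $\calm_G$ be the resulting Fraïssé limit. The preliminary observation is that $\calk_G$ is countable up to isomorphism (a subclass of the countable collection of isomorphism classes of finitely presented groups) and non-empty (it contains $G$). I would verify the four axioms of Definition \ref{e-Fraisse}: (IP) is trivial; (e-HP) follows because any finitely generated $K' \preceq_e K \in \calk_G$ satisfies $K' \equiv G$ and is hyperbolic by the preservation of hyperbolicity under elementary equivalence of finitely generated torsion-free groups \cite{Sela_diophantine7,Andre_hyperbolicity}; (e-JEP) is exactly the first bullet of Proposition \ref{prop_lattice}.

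The main step is to verify the strong elementary amalgamation property, and this is where Theorem \ref{lethmintro} is essential. Given $K_1, K_2 \in \calk_G$, a finitely generated subgroup $A \subset K_1$, and a partial elementary map $f : A \to K_2$, I apply Theorem \ref{lethmintro} to obtain an isomorphism $F : C_1 \to C_2$ extending $f$, where $C_i$ is a core of $K_i$ relative to $A$ (resp.\ $f(A)$). By Corollary \ref{ett} (applied with a little care when $C_i$ is abelian) there exist finitely generated free groups $F_i$ such that $K_i * F_i$ is a simple tower over a subgroup $\tilde C_i$ isomorphic to $C_i$; moreover $K_i \preceq_e K_i * F_i$ by ``Tarski''. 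Using $F$ to identify $\tilde C_1$ with $\tilde C_2$, form the amalgam
$$K := (K_1 * F_1) *_{\tilde C_1 = \tilde C_2} (K_2 * F_2).$$
By Remark \ref{rk_amalgam}, $K$ is a simple tower over $K_2 * F_2$ and symmetrically over $K_1 * F_1$, so ``Tarski'' gives elementary embeddings $g_i : K_i \hookrightarrow K_i * F_i \hookrightarrow K$ which, by construction, satisfy $g_1|_A = g_2 \circ f$. The group $K$ is hyperbolic by the Bestvina--Feighn combination theorem \cite{BF_combination} (the amalgamated subgroup is the quasiconvex hyperbolic group $\tilde C_i$), finitely generated, and elementarily equivalent to $G$ because it contains an elementarily embedded copy of $K_1 * F_1 \equiv G$. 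Hence $K \in \calk_G$.

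Having verified that $\calk_G$ is a strong $e$-Fraïssé class, Theorem \ref{KMS_fraisse} yields a countable group $\calm_G$ whose class of finitely generated elementarily embedded subgroups is exactly $\calk_G$, which is homogeneous, and which is a union of an elementary chain drawn from $\calk_G$; in particular $\calm_G \equiv G$, establishing properties (1) and (2). Property (3) follows since every torsion-free hyperbolic group is finitely generated by convention, so every such group elementarily equivalent to $G$ lies in $\calk_G$ and hence embeds elementarily into $\calm_G$. That $\calm_G$ is infinitely generated follows from Corollary \ref{fini2}: a finitely generated hyperbolic group would admit only finitely many isomorphism classes of elementarily embedded subgroups, yet $\calk_G$ already contains the infinitely many non-isomorphic groups $G * \bbF_n$ (distinct by e.g.\ first Betti number).

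For uniqueness, I would check that any countable $\calm$ satisfying (1), (2), (3) of the theorem also satisfies the uniqueness-characterizing hypotheses of Theorem \ref{KMS_fraisse}, namely that its class of finitely generated elementarily embedded subgroups is exactly $\calk_G$. One direction is property (3); the other direction uses that any such subgroup $K \preceq_e \calm$ satisfies $K \equiv \calm \equiv G$ by (2), so $K$ is hyperbolic and thus belongs to $\calk_G$. The main obstacle throughout is strong e-AP, whose verification rests on the combination of Theorem \ref{lethmintro} (to extend the partial elementary map to an isomorphism of relative cores) and ``Tarski'' (used twice: to upgrade each $K_i$ to a simple tower over $\tilde C_i$ via the free stabilizations $K_i * F_i$, and to promote the inclusions in the amalgam to elementary embeddings).
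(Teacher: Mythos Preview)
You are not proving the stated theorem. Theorem~\ref{KMS_fraisse} is the abstract Fra\"iss\'e-limit result for strong $e$-Fra\"iss\'e classes: given any such class $\calk$, a homogeneous limit $\calm$ exists, is unique, and has the listed properties. The paper does not prove this theorem; note the \texttt{\textbackslash qed} immediately after its statement and the sentence preceding it, which says the result ``is considered folklore'' and refers to \cite{KMS_fraisse} for details. A proof would require the standard back-and-forth construction of $\calm$ as a direct limit along an elementary chain built from an enumeration of $\calk$, followed by verification of homogeneity and uniqueness via a Fra\"iss\'e-style zig-zag argument. Your proposal contains none of this.

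What you have written is instead a proof of Theorem~\ref{univers}, the application of Theorem~\ref{KMS_fraisse} to the class $\calk_G$ of finitely generated groups elementarily equivalent to a given hyperbolic $G$. Read that way, your argument matches the paper's own proof of Theorem~\ref{univers} closely: both verify the four axioms of Definition~\ref{e-Fraisse}, with (e-JEP) coming from Proposition~\ref{prop_lattice} and strong (e-AP) from Theorem~\ref{lethm} combined with the amalgam-over-core construction (Remark~\ref{rk_amalgam}). You spell out the strong (e-AP) step in more detail than the paper (which says only that it is ``proved in a similar way'' to e-JEP, ``replacing Theorem~\ref{sensfac} by Theorem~\ref{lethm}''), and you additionally justify that $\calm_G$ is infinitely generated and address uniqueness, neither of which the paper makes explicit. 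But this is all downstream of Theorem~\ref{KMS_fraisse}, not a proof of it.
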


 Using this theorem, we prove Theorem \ref{univers} by
  showing   that, given   a torsion-free hyperbolic group $G$, the class $\mathcal{K}_G$ of all finitely generated groups
  elementarily equivalent to $G$ 
  is an elementary Fra\"{i}ss\'{e} class. We just have to check the four properties of Definition \ref{e-Fraisse}.
Recall that $\calk_G$ consists of torsion-free hyperbolic groups \cite{Sela_diophantine7}. 
   
  The isomorphism property (IP) and the elementary hereditary property (e-HP) are obvious. The elementary joint embedding property (e-JEP) follows from   the first assertion of Proposition \ref{prop_lattice},
 which was proved as follows: if $G_1\equiv G_2$, then their cores $H_1,H_2$ are isomorphic by Theorem \ref{sensfac}, and 
elementarily embedded in $G'_1=G_1*\F_{r_1}$, $G'_2=G_2*\F_{r_2}$ for some free groups $\F_{r_1},\F_{r_2}$.
We then defined $G'=G'_1*_{H_1=H_2}G'_2$. 

  The strong  
elementary amalgamation property (strong e-AP)
 may be viewed as a relative version of (e-JEP). It is proved in a similar way,   using Remark \ref{rem_eAP} and replacing Theorem \ref{sensfac} by Theorem \ref{lethm}. 

\appendix
\section{Appendix: Towers 
are limit groups} 

The goal of this section is to prove Corollary \ref{ceta} saying that towers over a torsion-free hyperbolic
group $H$ are $H$-limit groups. This basic fact is stated in \cite{Sela_diophantine7}, but some details  of the proof are missing.
The same arguments  show that, if a finitely generated group has a strict resolution over a hyperbolic group $H$, then it is an $H$-limit group.

\begin{thm} \label{eta}
If $G$ is a simple étage of surface type over a torsion-free hyperbolic group $H$,   there exists a discriminating sequence of retractions $\rho_n:G\ra H$.
\end{thm}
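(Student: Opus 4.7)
The plan is to build $\rho_n$ of the form $\rho \circ \sigma_n$, where $\rho : G \to H$ is a retraction with $\rho(Q)$ non-abelian provided by Proposition \ref{equivsimple}, $Q = \pi_1(\Sigma)$ is the central group of the simple centered splitting $\Gamma$ defining the étage, and $\sigma_n \in \Aut(G)$ is induced by an iterated Dehn twist on $\Sigma$. Since $\Sigma$ is non-exceptional, the mapping class group $\mathrm{MCG}(\Sigma, \partial\Sigma)$ contains plenty of Dehn twists, each of which acts as the identity on every boundary subgroup of $Q$; by Lemma \ref{extid}, any automorphism of $Q$ arising from such a mapping class extends by the identity on $H$ to an automorphism of $G$.

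The first step is to choose the curves along which to twist. Apply Theorem \ref{thm_courbes} to the homomorphism $f = \rho|_Q : Q \to H$: it is boundary-preserving, injective on boundary subgroups, and has non-abelian image, so the theorem produces a family $\calc = \{c_1, \dots, c_s\}$ of disjoint, pairwise non-parallel, non-null-homotopic, two-sided simple closed curves on $\Sigma$ such that the restriction of $\rho$ to $\pi_1(S_j)$ is injective for every component $S_j$ of $\Sigma \setminus \calc$. In particular $\rho(c_i)$ is non-trivial in $H$ for each $i$; since $H$ is torsion-free hyperbolic, $\rho(c_i)$ has infinite order and cyclic (malnormal) centralizer. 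Let $\tau_i$ be the Dehn twist along $c_i$, let $\phi_{\ul k} = \tau_1^{k_1} \cdots \tau_s^{k_s}$ for $\ul k = (k_1, \dots, k_s) \in \bbZ^s$, and let $\sigma_{\ul k}$ be its extension by the identity on $H$. Finally, set $\rho_{\ul k} = \rho \circ \sigma_{\ul k}$, which is a retraction since $\sigma_{\ul k}|_H = \id$.

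The second step is discrimination. Given a finite set $F \subset G \setminus \{1\}$, put each $g \in F$ in normal form with respect to the Bass-Serre tree of $\Gamma$: an alternating product of $H$-factors and non-peripheral $Q$-factors corresponding to a reduced path. Applying $\sigma_{\ul k}$ leaves $H$-factors untouched and replaces each $Q$-factor $q$ by $\phi_{\ul k}(q)$. Each such $\phi_{\ul k}(q)$, after choosing a basepoint and lifting the decomposition $\Sigma = \bigsqcup S_j$, decomposes in $Q$ as a product of elements of the groups $\pi_1(S_j)$ separated by powers $c_i^{\pm k_i}$. Applying $\rho$ then yields a product in $H$ whose "letters" are $\rho(c_i)^{\pm k_i}$ (of infinite order) alternating with elements of $\rho(\pi_1(S_j))$ on which $\rho$ is injective. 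A standard Baumslag-type argument \cite{Baumslag_residually} in the hyperbolic group $H$ shows that, along any sequence $\ul k_n$ with $\min_i k_{n,i} \to \infty$ fast enough, no such product can collapse to $1$ in $H$, uniformly over the finitely many $g \in F$. Thus $\rho_{\ul k_n}(g) \ne 1$ eventually for every $g \in F$, giving the discriminating sequence.

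The main obstacle is the Baumslag step. One must verify that the "letters" produced by the twist--and--retract procedure are genuinely long enough and sufficiently generic in $H$ to prevent cancellation, regardless of the combinatorics of the normal form of $g$. The two crucial inputs are: injectivity of $\rho$ on each $\pi_1(S_j)$ (so that the non-$c_i$ letters do not collapse), and the fact that each $\rho(c_i)$ is an infinite-order element of the torsion-free hyperbolic group $H$ (so its high powers behave like a Baumslag "discriminating element"). Everything else--building $\sigma_{\ul k}$, checking that $\rho_{\ul k}$ restricts to the identity on $H$, reducing discrimination over a finite set $F$ to the analysis of a single sequence of twist exponents--is routine once Theorem \ref{thm_courbes} is available.
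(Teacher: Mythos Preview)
Your overall strategy matches the paper's: cut $\Sigma$ using Theorem~\ref{thm_courbes} so that $\rho$ becomes injective on each piece, then twist and feed everything into a Baumslag argument. But there is a genuine gap in your Baumslag step, and it is not just a matter of checking routine hypotheses.

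You twist only along the internal curves $c_i\in\calc$, never along the edges of $\Gamma$ joining $\Sigma$ to $H$. Take $g=h\cdot c_i\cdot h'$ with $h,h'\in H$ chosen so that $h\,\rho(c_i)\,h'=1$ in $H$ and so that $g$ is in reduced normal form for $\Gamma$ (this is possible since $c_i$ is not boundary-parallel, hence not in any edge group of $\Gamma$). Every Dehn twist along a curve of $\calc$ fixes the conjugacy class of $c_i$; when $\calc$ consists of a single curve (as it does for the once-punctured torus) it fixes $c_i$ itself, so $\sigma_{\ul k}(g)=g$ for every $\ul k$ and $\rho_{\ul k}(g)=\rho(g)=1$. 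Your sequence fails to discriminate this $g$. The same obstruction appears whenever a $Q$-syllable lies in $\langle c_i\rangle$: no large powers are produced and the Baumslag hypothesis cannot be verified.

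The paper repairs this in two moves that you omit. First, it folds the edges of $\Gamma$ so that every edge group becomes \emph{maximal} cyclic in both neighbouring vertex groups; this turns $Q$ into a surface-with-sockets group $\hat Q$ (leaving $H$ unchanged), and Theorem~\ref{thm_courbes} is applied to $\hat Q$, not $Q$. Second, in the refined splitting $\hat\Gamma_c$ the twist used is the product of twists along \emph{all} edges, including those between the surface pieces and $H$, and Corollary~\ref{cor_twist} is invoked. Twisting along the $H$-side edges is exactly what inserts large powers into elements like the $g$ above; the maximal-cyclic condition is exactly what guarantees, via injectivity of $\rho$ on each vertex group, that no intermediate letter can commute with the relevant twist generator, so that the Baumslag hypotheses hold at every junction. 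Adding these two steps to your outline recovers the paper's argument.
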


Discriminating means that, given any finite subset $F\inc G$, the map $\rho_n$ is injective on $F$   for all $n$  large enough.  Recall that $H$ must be non-abelian (Remark \ref{pasz}).

\begin{cor} \label{ceta}
If $G$ is a 
tower (simple or extended) over a   non-abelian torsion-free hyperbolic group $H$, then it is an $H$-limit group.
\end{cor}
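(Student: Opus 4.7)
The plan is to reduce to simple towers via Corollary \ref{ett}, handle simple étages of free-product type directly with Baumslag's lemma, and then combine the discriminating sequences at each étage through the tower by a diagonal argument. Throughout, recall that a finitely generated group $L$ is an $H$-limit group if and only if it admits a discriminating sequence of morphisms $L \to H$, and that the restriction of a discriminating sequence to a subgroup remains discriminating.

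First, I would reduce to the simple-tower case. Since $H$ is non-abelian, Corollary \ref{ett} provides a free group $F$ and a subgroup $H' \subset G * F$ isomorphic to $H$ such that $G * F$ is a simple tower over $H'$. A discriminating sequence $G * F \to H'$ restricts to one on $G$, and composing with an isomorphism $H' \cong H$ shows that $G$ is an $H$-limit group as soon as $G * F$ is an $H'$-limit group. So it suffices to prove the corollary for simple towers.

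Next I would dispose of simple étages of free-product type. If $G' = H_0 * \langle t \rangle$ with $H_0$ non-abelian hyperbolic, define retractions $\rho_n : G' \to H_0$ by $\rho_n|_{H_0} = \mathrm{id}$ and $\rho_n(t) = h^n$, where $h \in H_0$ is of infinite order. Given any finite subset $\Phi \subset G'$, write each element in normal form with syllables in $H_0$ and in $\langle t \rangle$; choosing $h$ not commuting with any of the finitely many $H_0$-syllables appearing, a standard application of Baumslag's lemma guarantees that $\rho_n$ is injective on $\Phi$ for $n$ large. Combined with Theorem \ref{eta} for surface-type étages, this yields a discriminating sequence of retractions at every étage of any simple tower over a non-abelian hyperbolic group.

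Finally, I would combine the étages. Writing the simple tower as $G * F = G'_0 > G'_1 > \dots > G'_k = H'$, each $G'_{i+1}$ contains $H'$ and is therefore non-abelian, so the preceding steps supply discriminating sequences of retractions $\sigma^i_n : G'_i \to G'_{i+1}$. Given a finite $\Phi \subset G * F$, inductively choose $n_i$ so that $\sigma^i_{n_i}$ is injective on $\sigma^{i-1}_{n_{i-1}} \circ \cdots \circ \sigma^0_{n_0}(\Phi)$; the composition $\sigma^{k-1}_{n_{k-1}} \circ \cdots \circ \sigma^0_{n_0} : G * F \to H'$ is then injective on $\Phi$. Enumerating finite subsets as balls of increasing radius in a Cayley graph of $G * F$ produces the required discriminating sequence. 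The only genuine obstacle here is Theorem \ref{eta} itself, whose proof is the substance of the appendix; the corollary is otherwise pure bookkeeping.
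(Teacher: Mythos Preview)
Your argument is correct and follows essentially the same route as the paper: reduce extended towers to simple ones via Corollary~\ref{ett}, produce discriminating retractions at each \'etage (Theorem~\ref{eta} for surface type, an elementary argument for free-product type), and compose through the tower. Your diagonal argument in step~3 is exactly what the paper's phrase ``composing these retractions'' means in practice.

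One small wrinkle in your free-product step: as written, you first fix $h$ and define $\rho_n(t)=h^n$, then later say ``choosing $h$ not commuting with\dots'' depending on $\Phi$. These cannot both hold. Indeed, for a \emph{fixed} $h$ the sequence $\rho_n$ is not discriminating: take $H_0=\langle a,b\rangle$ free, $h=a$, and $g=ta^{-1}t^{-1}a$; then $\rho_n(g)=1$ for all $n$. What you actually establish is that for every finite $\Phi$ there is \emph{some} retraction $G'\to H_0$ injective on $\Phi$ (by choosing $h$ outside the finitely many maximal cyclic subgroups containing the $H_0$-syllables of $\Phi$, then $n$ large). That is equivalent to the existence of a discriminating sequence and is all you need for the diagonal argument, so the proof goes through; just make the quantifier order explicit. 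The paper avoids this by sending $t$ to a sequence of small-cancellation elements rather than powers of a single one.
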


We first deduce the corollary from the theorem. The remainder of this appendix   will be  devoted to the proof of the theorem.

\begin{proof}
First suppose that $G >G_1>\dots>G_n=H$ is a simple tower.
All groups $G_i$ are hyperbolic, and there exists a discriminating sequence of retractions from $G_{i+1}$ to $G_i$ by Theorem \ref{eta} 
(it is well known that this also holds if $G_{i+1}=G_i*\Z$ is an \'etage of free product type with $G_i$ non-abelian, 
by sending the generator of the cyclic factor to a sequence of small cancellation elements,
see for instance \cite[Th.\ 1.9]{Andre_Tarski}).
Composing these retractions gives a discriminating sequence of retractions from $G $ to $H$, so $G$ is an $H$-limit group. 

If the tower is not simple, some $G*\F_r$ is an $H$-limit group by Corollary \ref{ett}, 
and $G$ is an $H$-limit group because it is a finitely generated subgroup of an $H$-limit group.
\end{proof}

\subsection{Applying Baumslag's lemma}

To 
prove Theorem \ref{eta}, we first need the following generalization of Baumslag's lemma \cite{Baumslag_residually}. For $g\ne1$ in $H$, we denote by $g^{\pm\infty}=\lim_{n\to\pm\infty}g^n$ the limit of $g^n$  in the Gromov boundary $\partial H$.

\begin{lem}[{\cite[Prop.\ 2.19]{Andre_hyperbolicity}}]\label{lem_baumslag}
Let    $a_0,a_1,\dots,a_k,c_1,\dots,c_k$ 
be elements of   a torsion-free hyperbolic group $H$, with  $c_i\ne1$. 
Assume that $a_i\, c_{i+1}^{+\infty}\neq  c_{i}^{-\infty}$ for $i=1,\dots,k-1$.
Then there exists $C\geq 0$ such that 
the element
$$a_0c_1^{p_1}a_1\cdots c_{k}^{p_{k}}a_k$$
is non-trivial for all $p_1,\dots,p_k\geq C$.\qed
\end{lem}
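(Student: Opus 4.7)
The plan is to prove the lemma geometrically. Writing $g := a_0 c_1^{p_1} a_1 \cdots c_k^{p_k} a_k$ and fixing a basepoint $o$ in the Cayley graph $X$ of $H$ (a $\delta$-hyperbolic geodesic space), the non-triviality of $g$ is equivalent to $g \cdot o \ne o$, so it suffices to exhibit a path from $o$ to $g \cdot o$ that is a global quasi-geodesic of length tending to infinity with $\min_i p_i$. Since $H$ is torsion-free hyperbolic and each $c_i \ne 1$, the element $c_i$ acts loxodromically on $X$ with quasi-axis $\lambda_i$, positive stable translation length $\tau_i$, and distinct boundary fixed points $c_i^{\pm\infty} \in \partial H$; the orbit $n \mapsto c_i^n o$ is a quasi-isometric embedding $\Z \hookrightarrow X$ with endpoints $c_i^{\pm\infty}$.

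Set $v_0 := a_0$ and $v_i := a_0 c_1^{p_1} a_1 \cdots c_i^{p_i} a_i$ for $i \ge 1$, so that $g \cdot o = v_k \cdot o$. I would form a path $\Gamma$ from $o$ to $v_k \cdot o$ by concatenating, in order: a bounded segment from $o$ to $v_0 \cdot o$ realising $a_0$; then for $i = 1, \ldots, k$, a quasi-geodesic $\sigma_i$ from $v_{i-1} \cdot o$ to $v_{i-1} c_i^{p_i} \cdot o$ that fellow-travels the translated axis $v_{i-1} \lambda_i$ (its boundary endpoints being $v_{i-1} c_i^{\pm\infty}$), followed by a bounded segment of length $|a_i|$ from $v_{i-1} c_i^{p_i} \cdot o$ to $v_i \cdot o$ realising $a_i$. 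Each $\sigma_i$ has length $\asymp p_i \tau_i$, so $\Gamma$ has length bounded below by a linear function of $\sum_i p_i$ up to an additive constant, which tends to infinity as $\min_i p_i \to \infty$.

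The decisive step is to show that $\Gamma$ is a global $(\lambda', c')$-quasi-geodesic once all $p_i \ge C$ for some $C$. I would invoke the standard local-to-global principle in $\delta$-hyperbolic geometry: a concatenation of $(\lambda, c)$-quasi-geodesic pieces, each of length at least a threshold $L$, is a global quasi-geodesic, provided that at every internal junction point $y$ the Gromov product $(\alpha \mid \beta)_y$ of the incoming and outgoing boundary directions $\alpha, \beta$ is uniformly bounded above by a constant $K$ depending only on $\delta, \lambda, c$. In our setting the junctions lie at the points $v_i \cdot o$ for $i = 1, \ldots, k-1$, with incoming direction $\alpha_i = v_{i-1} c_i^{-\infty}$ (the far endpoint of the axis just traversed, seen from $v_i \cdot o$) and outgoing direction $\beta_i = v_i c_{i+1}^{+\infty}$ (the endpoint approached by $\sigma_{i+1}$). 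Using left-invariance of Gromov products under $v_i^{-1} v_{i-1} = a_i^{-1} c_i^{-p_i}$ and the fact that $c_i$ fixes $c_i^{-\infty}$, one computes
\[
  (v_{i-1} c_i^{-\infty} \mid v_i c_{i+1}^{+\infty})_{v_i \cdot o} \;=\; (a_i^{-1} c_i^{-\infty} \mid c_{i+1}^{+\infty})_o,
\]
an expression manifestly independent of all the $p_j$'s and finite precisely because $a_i^{-1} c_i^{-\infty} \ne c_{i+1}^{+\infty}$, equivalently $c_i^{-\infty} \ne a_i c_{i+1}^{+\infty}$ --- exactly the standing hypothesis. Let $K$ be the maximum of these finitely many Gromov products; choosing $C$ large enough that every $\sigma_i$ exceeds the threshold $L$ associated to this $K$ (with the bounded $a_i$-segments absorbed into the quasi-isometry constants of the concatenation), the path $\Gamma$ is a global quasi-geodesic, so $o \ne g \cdot o$ and $g \ne 1$.

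The main technical obstacle is the careful identification of the boundary directions at each junction, and the recognition that the hypothesis $a_i c_{i+1}^{+\infty} \ne c_i^{-\infty}$ is \emph{precisely} the non-degeneracy condition that the local-to-global lemma requires at the junction $v_i \cdot o$; once this translation between the algebraic and geometric formulations is made, the remaining analysis is quantitative and routine.
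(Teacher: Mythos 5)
Your argument is correct. Note that the paper itself gives no proof of this lemma: it is quoted from the cited reference of André (Prop.\ 2.19) and closed with a tombstone, so there is no internal proof to compare against; your write-up in effect supplies the missing argument, and it follows the standard geometric route that such statements are proved by (concatenation of long quasi-geodesic pieces along translated axes, plus a local-to-global/broken-geodesic lemma). The two genuinely load-bearing points are both handled correctly: the identification of the incoming and outgoing directions at the junction $v_i\cdot o$ as $v_{i-1}c_i^{-\infty}$ and $v_i c_{i+1}^{+\infty}$, and the computation, using equivariance of the Gromov product together with the fact that $c_i^{-p_i}$ fixes $c_i^{-\infty}$, that the junction product equals $(a_i^{-1}c_i^{-\infty}\mid c_{i+1}^{+\infty})_o$ --- independent of all exponents and finite exactly under the hypothesis $a_i\,c_{i+1}^{+\infty}\neq c_i^{-\infty}$. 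Two small points deserve to be made explicit in a complete write-up, though neither is a gap: first, the broken-geodesic lemma is usually phrased in terms of Gromov products of the actual far endpoints of adjacent pieces, so one needs the routine estimate that once each piece is long, $(v_{i-1}o\mid v_{i+1}o)_{v_i\cdot o}$ exceeds the boundary Gromov product by at most a constant depending on $\delta$ and the quasi-geodesic constants of the axes; second, torsion-freeness is precisely what guarantees that each $c_i\neq 1$ is loxodromic, so that $c_i^{\pm\infty}$ exist and $n\mapsto c_i^n o$ is a quasi-isometric embedding (and in the degenerate case where $H$ is cyclic the statement still holds with $\partial H$ a two-point set). With those remarks the quantitative details are indeed routine, and the conclusion $d(o,g\cdot o)>0$, hence $g\neq 1$, follows for all $p_1,\dots,p_k$ at least the threshold $C$.
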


\begin{cor}\label{cor_twist} Suppose that a torsion-free hyperbolic group $G$ 
 is   the fundamental group of a graph of
  groups $\Gamma$ such that all edge groups are   infinite cyclic,  and maximal cyclic in adjacent vertex groups (hence in $G$).   Let   $\hat\tau\in\Out(G)$ be the product of the twists around all edges of $\Gamma$, and $\tau$ a representative of $\hat\tau$ in $\Aut(G)$.
  
If   $f:G\ra H$ is a homomorphism which is injective on each vertex group of $\Gamma$, with 
  $H$   a torsion-free hyperbolic group,
then the sequence of homomorphisms $f\circ\tau^n:G\ra H $ is discriminating.
\end{cor}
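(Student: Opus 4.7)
The plan is to apply Baumslag's lemma (Lemma \ref{lem_baumslag}) to the $f$-image of a Bass--Serre normal form for $\tau^n(g)$, where $g\in G\setminus\{1\}$ is arbitrary. Without loss of generality take $\tau$ to be the composition of the elementary Dehn twists $\tau_e$ around each edge $e$ of $\Gamma$. If $g$ is elliptic in the Bass--Serre tree of $\Gamma$, then $g$ lies in some vertex group $G_v$, and $\tau|_{G_v}$ is a conjugation in $G$ (each $\tau_e$ acts on $G_v$ as conjugation by $c_e$ or as the identity according to whether $e$ is incident to $v$); hence $\tau^n(g)$ is conjugate to $g$ and $f(\tau^n(g))\ne 1$ for every $n$, by injectivity of $f$ on $G_v$.

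Now suppose $g$ is not elliptic and fix a reduced Bass--Serre normal form
\[
g = g_0 \cdot s_1 \cdot g_1 \cdot s_2 \cdots s_k \cdot g_k,
\]
with $k\ge 1$, each $s_i$ the stable letter of an edge $e_i$ of $\Gamma$, each $g_i$ in a vertex group, and no sub-word $s_i g_i s_i^{-1}$ with $g_i$ in an edge group. Unwinding the definition of each $\tau_e$, the automorphism $\tau^n$ replaces every $s_i$ by $s_i\, c_{e_i}^{\varepsilon_i n}$ with $\varepsilon_i\in\{\pm 1\}$ fixed by orientation. Applying $f$ and absorbing each $f(s_i)$ together with the surrounding vertex-group images, we arrive at an expression in $H$ of the shape
\[
a_0 \cdot d_1^{n} \cdot a_1 \cdot d_2^{n} \cdots d_k^{n} \cdot a_k,
\]
where $d_i = f(c_{e_i})^{\varepsilon_i}$ is non-trivial (since $c_{e_i}$ lies in a vertex group and $f$ is injective there) and each $a_i\in H$ is the $f$-image of a product of vertex group elements and stable letters.

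To invoke Lemma \ref{lem_baumslag} it suffices to verify $a_i d_{i+1}^{+\infty}\ne d_i^{-\infty}$ for $1\le i\le k-1$: if this failed, then in the torsion-free hyperbolic group $H$ the elements $a_i d_{i+1} a_i^{-1}$ and $d_i$ would share a fixed point at infinity, hence lie in a common maximal cyclic subgroup, and in particular commute. Tracing this back through the injectivity of $f$ on each vertex group and the hypothesis that $\langle c_e\rangle$ is maximal cyclic in each adjacent vertex group of $G$, one produces a vertex-group element lying in $\langle c_{e_i}\rangle$ or $\langle c_{e_{i+1}}\rangle$, which is exactly the forbidden pattern in a reduced normal form, contradicting the choice of $g$. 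Baumslag's lemma then yields $f(\tau^n(g))\ne 1$ for all $n$ above a threshold $C(g)$, and for any finite $F\subset G$ the uniform bound obtained by maximizing $C$ over $g\in(F\cdot F^{-1})\setminus\{1\}$ ensures $f\circ\tau^n$ is injective on $F$ for large $n$. The main obstacle is the last algebraic pull-back: the image $f(\langle c_e\rangle)$ is in general \emph{not} maximal cyclic in $H$, so the maximality hypothesis cannot be used directly in $H$ and must instead be applied inside each vertex group of $G$, after invoking the injectivity of $f$ on that vertex group to transport the commutation relation back from $H$.
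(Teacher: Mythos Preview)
Your approach is the same as the paper's: write $\tau^n(g)$ in Bass--Serre normal form and feed its $f$-image to Baumslag's lemma. The paper only carries this out for a single amalgam $A*_{\langle c\rangle}B$ and explicitly leaves the general graph of groups to the reader; what you have written is an attempt at that general case.

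There is, however, a real gap in your verification of the Baumslag hypothesis. Your claim is that failure of $a_i\,d_{i+1}^{+\infty}\ne d_i^{-\infty}$ forces the intervening vertex-group letter to lie in an edge group, ``which is exactly the forbidden pattern in a reduced normal form.'' This is only correct at \emph{backtracking} steps of the normal form --- precisely the steps where reducedness imposes a constraint. In an amalgam every step is a backtrack, so the paper's short argument goes through; but already for an HNN extension the pattern $\dots t\,g_i\,t\dots$ (consecutive stable letters with the same sign) carries no reducedness constraint on $g_i$, and in a tree of groups such as $A*_{\langle c\rangle}B*_{\langle c\rangle}C$ the letter $b\in B$ between an $A$-step and a $C$-step is likewise unconstrained. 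In those situations your coefficient $a_i$ genuinely contains a stable-letter factor (e.g.\ $a_i=f(g_i t)$), and what one actually deduces from a failure of the Baumslag hypothesis is a conjugacy relation like $g_i\langle c'\rangle g_i^{-1}=\langle c\rangle$ between the two incident edge groups inside the vertex group. To finish, you must then invoke the Bass--Serre edge relation (to rewrite $f(t)f(c)^{+\infty}$ as $f(c')^{+\infty}$, so the pull-back lands entirely in one vertex group) together with malnormality of maximal cyclic subgroups in the torsion-free hyperbolic group $G$ --- not the reduced-form condition. Your final paragraph correctly flags that the pull-back is the delicate point, but the resolution you sketch (``produces a vertex-group element lying in an edge group'') is not what actually happens in the non-backtracking cases.
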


Edge groups $G_e$ being infinite cyclic, each edge $e$ carries a twist $\hat\tau_e$; the pair $\{\hat\tau_e,  \hat\tau_e\m\}$  is well-defined in $\Out(G)$, and it does  not matter  for the theorem whether we use $\hat\tau_e$ or $\hat\tau_e\m$ to define $\hat\tau$,  or which representative $\tau$ of $\hat\tau$ in $\Aut(G)$ we use.

The proof of Corollary \ref{cor_twist} is a standard   application of Baumslag's lemma (see for instance \cite[Prop.\ 4.11]{CG_compactifying}).
We prove it in the case of an amalgam $G=A*_{\grp{c}} B$ and leave the general case to the reader.

\begin{proof}[Proof in the case of an amalgam]  
  We choose the representative $\tau$ of $\hat\tau$ equal to the identity on $A$ and to conjugation by $c$ on $B$. It suffices to prove that,  given any $g\in G\setminus \{1\}$, we have  $(f\circ\tau^n)(g)\neq 1$ for all $n$ large enough.
This is obvious if $g$ is contained in $A$ or $B$ (up to conjugacy). 
Otherwise, up to conjugation, $g$ may be written as $g=a_1b_1\dots a_kb_k$ with $a_i\in A_i\setminus\grp{c}$ and $b_i\in B_i\setminus\grp{c}$, and $k\ge1$.

Now $$\tau^n(g)=a_1 c^n b_1 c^{-n} a_2 c^n\dots c^{-n}a_k c^n b_k c^{-n}$$
and 
$$(f\circ\tau^n)(g)=f(a_1) f(c)^n f(b_1) f(c)^{-n}  \cdots f(a_k) f(c)^n f(b_k) f(c)^{-n}.$$

 By assumption, $\grp{c}$ is maximal abelian in $A$ and in $B$, so
$a_i$ and $b_i$ do not commute with $c$.
Since $f$ is injective on $A$ and $B$, the elements $f(a_i)$ and $f(b_i)$ do not commute with $f(c)$,
so  none of the elements $f(a_i).f(c)^{\pm \infty},f(b_i).f(c)^{\pm \infty}$ belongs to $ \{f(c)^{\pm\infty}\}$.
Thus Lemma \ref{lem_baumslag} applies and concludes the proof.
\end{proof}

The idea to prove Theorem \ref{eta} is to apply Corollary \ref{cor_twist} to the   centered splitting $\Gamma$ associated to the \'etage, with $f$ a retraction $\rho:G\to H$, but there are two problems. First, the edge groups of $\Gamma$, though maximal cyclic subgroups of the central group, do not have to be maximal cyclic in bottom groups  (as in Example \ref{ctrex}). This is taken care of by introducing surfaces with sockets. The second problem is that the retraction $\rho:G\to H$ to the bottom group of $\Gamma$ does not have to be injective on the central group. To fix this we will have to refine $\Gamma$, using a multicurve on the surface associated to $\Gamma$, so that $\rho$ is injective on the new vertex groups.

\subsection{Surfaces with sockets}

A \emph{surface group with sockets} $\hat Q$ is the amalgam of the fundamental group of a compact  hyperbolic surface $\Sigma$
with cyclic groups over boundary subgroups of  $\pi_1(\Sigma)$. More precisely, it is the fundamental group
of a (possibly non-minimal) tree of groups $\Lambda$ with a central surface-type vertex $v$ (in the sense of Definition \ref{stype}) carrying the group $Q=\pi_1(\Sigma)$ for some hyperbolic surface   $\Sigma$,
and terminal vertices $v_1,\dots, v_n$ carrying $\bbZ$. We say that $\hat Q$ is \emph{non-exceptional} if the surface $\Sigma$ is non-exceptional.

This is similar to a centered splitting with all bottom groups $\Z$, with two differences. First,   there is a single edge $e_i$ joining $v$ to a given $v_i$. 
Second, $\Lambda$ does not have to be minimal: $G_{e_i}$ may be equal to $G_{v_i}$;  if this does not happen, $\hat Q$ is a 
 particular kind of a multi-parachute in the sense of Subsection \ref{parac}.

The group carried by $e_i$ is the fundamental group of a component $C_i$ of $\partial \Sigma$, and $C_i\ne C_j$ for $i\ne j$. It has finite index $d_i$ in the vertex group $G_{v_i}$. We   allow 
$d_i=1$ (i.e.\ $G_{e_i}=G_{v_i}$) and  $d_i=2$ 
(this corresponds to gluing a M\"obius band
to $C_i$).

In topological terms,  $\hat Q$ is the fundamental group of a graph of spaces $X_{\hat Q}$ obtained by gluing each boundary component $C_i$ to a circle by a covering of degree $d_i$.

The \emph{socket subgroups} of $\hat Q$ are the conjugates of    the vertex groups $G_{v_i}$.
  A homomorphism from $\hat Q$ to an arbitrary group is injective on each boundary subgroup of $\pi_1(\Sigma)$ if and only if  it is injective on each socket subgroup of $\hat Q$.

An \emph{essential multicurve} in $\Sigma$ is  a collection   $\calc$ of disjoint non-parallel 2-sided simple closed curves, each of which is not boundary parallel
and does not  bound a M\"obius band   or a disc. 
The cyclic splitting of $\pi_1(\Sigma)$ dual to $\calc$ induces a cyclic splitting of $\hat Q$. The  
vertex groups of this splitting
are  fundamental groups of  connected components of $X_{\hat Q}\setminus \calc$, they have 
 a natural structure of a surface with sockets (curves in   $\calc$ give rise to sockets of index 1). 

\subsection{A reduction}
Following Sela, the proof of Theorem \ref{eta} is based on the following result. 

\begin{thm} \label{thm_courbes}
Let $\hat Q$ be a non-exceptional surface group with sockets, with underlying surface $\Sigma$, and let $H$ be a torsion-free hyperbolic group.
Let $f:\hat Q\ra H$ be a homomorphism with non-abelian image, which is injective on each socket subgroup of $\hat Q$. 

There exists an essential multicurve $\calc$ in $\Sigma$ such that 
the restriction of $f$ 
to the fundamental group of each connected component of $X_{\hat Q}\setminus \calc$
  is injective.   
\end{thm}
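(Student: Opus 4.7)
The plan is to argue by induction on the complexity of $\Sigma$, using the measure $c(\Sigma) := -\chi(\Sigma) + n_{\ge 2}$, where $n_{\ge 2}$ is the number of sockets of index at least $2$; cutting along an essential simple closed curve strictly decreases $c$ on each component, and the new sockets created have index $1$ (they are index-$1$ cyclic groups, so injectivity of $f$ on them will be automatic once the corresponding boundary curve lies in $\ker f$). If $f$ is already injective, take $\calc = \es$. Otherwise the goal is to produce a single essential simple closed curve $c \subset \Sigma$ with $\pi_1(c) \subset \ker f$, cut along it, and recurse on each piece; the final $\calc$ is the union of $\{c\}$ and the multicurves produced on each piece by induction.

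The crux is producing $c$. Fix a finite generating set $S$ of $\hat Q$ and set $\ell(f') := \sum_{s\in S} |f'(s)|_H$. Consider the orbit $\calo = \{\mathrm{ad}_h \circ f\circ \sigma : \sigma \in \mathrm{Mod}(\hat Q),\, h\in H\}$, where $\mathrm{Mod}(\hat Q)$ is generated by Dehn twists around essential simple closed curves of $\Sigma$. Since $\ell$ takes values in $\bbN$, there is a minimizer $f_0 \in \calo$. If $f_0$ is injective, we are done (with $\calc=\es$). If not, we apply a Bestvina--Paulin style rescaling argument to a sequence $f_0\circ\tau_n$ where $\tau_n$ is a product of Dehn twists pushing $f_0$ toward further length reduction; after rescaling the metric on $H$, we extract a non-trivial minimal isometric action of $\hat Q$ on an $\R$-tree $T$. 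The minimality of $f_0$ forces each socket subgroup to be elliptic in $T$, and the non-abelian image hypothesis rules out a limit that is just a line with axial stabilizers. Rips' structure theorem for stable actions of surface-type groups then identifies $T$ as dual to a measured lamination on $\Sigma$. A standard argument shows this lamination must contain an essential simple closed leaf $c$ (a minimizer $f_0$ admitting only irrational foliated limits could be further shortened by Dehn twisting along a curve approximating the lamination, contradicting minimality); the stabilizer of the corresponding edge of $T$ contains $\pi_1(c)$, which gives $f_0(\pi_1(c))=1$.

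After replacing $f$ by $f_0$ and cutting along $c$, each piece is a surface with sockets of strictly smaller complexity on which $f$ still has the required properties: injectivity on old sockets is preserved, injectivity on the new (index-$1$) sockets coming from $c$ is automatic, and if the restriction to some piece has abelian image then the piece is cyclic (handled trivially by including no further curves). The induction hypothesis then supplies a multicurve on each piece, and their union with $\{c\}$ is the desired $\calc$. The main obstacle is the Bestvina--Paulin/Rips step: one must adapt stability of the limit action to the surface-with-sockets setting, verify that the simple closed leaf $c$ extracted from the lamination is genuinely essential (not boundary-parallel nor the boundary of a disc or Möbius band), and confirm that the non-exceptionality of $\Sigma$ makes $\mathrm{Mod}(\hat Q)$ rich enough for the minimization step to succeed. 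The non-abelian image assumption on $f$ is exactly what prevents degenerate axial limits that would otherwise block the argument.
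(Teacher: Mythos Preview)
Your approach has a fundamental misconception about what the multicurve $\calc$ must look like. You are searching for a curve $c$ with $\pi_1(c)\subset\ker f$ (a pinched curve), but such a curve cannot appear in $\calc$: after cutting, $\pi_1(c)$ becomes a boundary subgroup of one of the pieces, so if $f(\pi_1(c))=1$ then $f$ is certainly not injective on that piece. The curves of $\calc$ must be \emph{non-pinched}. This also breaks your induction: after cutting along a pinched $c$, the new index-$1$ socket groups are killed by $f$, so the pieces no longer satisfy the hypothesis that $f$ is injective on socket subgroups, and you cannot recurse. Your parenthetical ``injectivity of $f$ on them will be automatic once the corresponding boundary curve lies in $\ker f$'' is exactly backwards.

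The Bestvina--Paulin/Rips step is also not doing what you want. Even granting that the limit is dual to a measured lamination on $\Sigma$ and that this lamination contains a simple closed leaf $c$, the conclusion $f_0(\pi_1(c))=1$ does not follow: a curve acting elliptically in the limiting $\R$-tree need not be in the kernel of the approximating maps. What the shortening argument actually delivers is a modular automorphism that shortens $f_0$, contradicting minimality; it does not hand you a specific simple closed curve in $\ker f_0$. And even when pinched curves do exist, a non-injective $f$ need not pinch any simple closed curve at all (e.g.\ on a once-punctured torus one can have $f$ non-injective with no pinched curve).

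The paper's proof is entirely different and constructive: it first decomposes $\Sigma$ into pairs of pants (and twice-punctured projective planes) along \emph{non-pinched} curves so that each piece has non-abelian image (Lemma~\ref{lem_nonab}), then reduces to the once-punctured torus and the $4$-punctured sphere, and in those two cases uses explicit Dehn twists together with ping-pong lemmas in $H$ to find a decomposition on which $f$ is injective.
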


\begin{rem}
Theorem \ref{thm_courbes} also applies if $\Sigma$ is  the closed non-orientable surface $N_3$ of genus $3$ (with no sockets), see Remark \ref{ntrois}.
\end{rem}

\begin{proof}[Proof of Theorem \ref{eta} from Theorem \ref{thm_courbes}]
 Let $\Gamma$ be the simple centered splitting  of $G$ associated to the \'etage, with bottom group $H$ and central group $Q$. 
  Perform  folds to transform $\Gamma$ into
  a splitting $\hat\Gamma$ over maximal cyclic subgroups; this replaces  $Q$  by a surface group with sockets $\hat Q$ without changing 
  $H$  (compare Remark \ref{indice}, where one obtains $\hat \Gamma$ from $\Gcan$ by collapsing the edges $vv_i$).
  
Let $\rho:G\ra H$ be a retraction associated to the étage,    as in Proposition \ref{equivsimple}. Its restriction to $\hat Q$ satisfies the assumptions of Theorem \ref{thm_courbes}, so we can 
refine the splitting $\hat \Gamma$ using the essential multicurve provided by the theorem.
This provides a new splitting $\hat \Gamma_c$  over maximal cyclic subgroups such that $\rho$ is injective in restriction
to every vertex group. 

We apply  Corollary \ref{cor_twist} to this splitting.
Since $H$ is a vertex group of $\hat\Gamma_c$, the outer automorphism $\hat\tau$ has a representative  $\tau$ in $\Aut(G)$ which
 is the identity on $H$ (the assumption that the \'etage is simple is used here). 
 Thus the maps $\rho_n:=\rho\circ \tau^n:G\ra H$ are retractions. By Corollary \ref{cor_twist}, this sequence of retractions is discriminating.
\end{proof}

The following subsections are devoted to the proof of Theorem \ref{thm_courbes}.
We first prove in subsection \ref{sec_nonab} that one can decompose the surface $\Sigma$  into pairs of pants and twice-punctured projective planes whose fundamental groups 
all  have non-abelian image. We then prove the theorem in Subsection \ref{sec_tore} by   using  the fact that $\Sigma$ is non-exceptional and
  reducing the general case to two special cases: 
the   once-punctured torus and the 4-punctured sphere. The proof in these special cases requires ping-pong lemmas which are proved in Subsection \ref{ping}.

\subsection{Decomposing into pieces with non-abelian image}\label{sec_nonab}

The first step in the proof of Theorem \ref{thm_courbes} is the following lemma. Recall (Definition \ref{pinc}) that a  two-sided simple closed curve is pinched under a homomorphism if   its fundamental group is contained in the kernel.

\begin{lem}\label{lem_nonab}
Let $H$ be a torsion-free CSA group,
and let $f:\hat Q\ra H$ be a homomorphism with non-abelian image, with $\hat Q$   a   surface group with sockets. Assume that no component of $\partial \Sigma$ is pinched.

There exists an essential multicurve     $\calc_0$ 
consisting of non-pinched curves
such that every connected component of $\Sigma\setminus \calc_0$
is a pair of pants or a twice-punctured projective plane whose fundamental group has non-abelian image under $f$.
\end{lem}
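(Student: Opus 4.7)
My strategy is to select an essential multicurve $\calc_0$, consisting of non-pinched curves and such that every component of $\Sigma\setminus\calc_0$ has non-abelian image under $f$, with $|\calc_0|$ maximal among such multicurves; then I would show every component is a pants or a twice-punctured projective plane. The collection of admissible multicurves is non-empty because the empty multicurve qualifies: if $f(\pi_1(\Sigma))$ were abelian it would lie in the unique maximal abelian subgroup $M$ of $H$ containing its (non-trivial) image, and for each socket generator $t_i$, malnormality of $M$ combined with $f(t_i)$ commuting with $f(c_i)\in M\setminus\{1\}$ would force $f(t_i)\in M$, making $f(\hat Q)\subset M$ abelian, contrary to hypothesis.

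Fix such a maximal $\calc_0$ and suppose for contradiction that some component $P$ has $\chi(P)\leq -2$, or is $S_{1,1}$ or $N_{2,1}$ (the only surfaces with $\chi=-1$ other than the target ones, the case $\Sigma=N_3$ being excluded). The goal is to produce an essential, 2-sided, non-pinched simple closed curve $\gamma\subset P$ along which cutting preserves the non-abelian image property, contradicting maximality. The key CSA ingredient is that whenever $\gamma$ is non-pinched and non-separating, the HNN complement has non-abelian image: if $f(\pi_1(P\setminus\gamma))$ lay in a maximal abelian $M'$, then $f(\pi_1(\gamma))\subset M'$ and malnormality would force the HNN stable letter into $M'$, giving $f(\pi_1(P))\subset M'$ abelian. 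A parallel argument handles separating $\gamma$ provided one of the two sides is already known to be non-abelian.

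The existence of such a $\gamma$ splits by the topology of $P$. When $P$ is non-planar, I seek $\gamma$ non-separating. Then $\pi_1(P)$ is free (the closed case $\calc_0=\es$, $P=\Sigma$, is handled similarly using standard generators of $\pi_1(S_g)$ or $\pi_1(N_g)$); by considering curves of the form $a_i c^n$ with $a_i$ a generator and $c\subset\partial P\subset \calc_0\cup\partial\Sigma$ (hence non-pinched), one shows that if every non-separating 2-sided simple closed curve were pinched, then $f(c)=1$ would follow, contradicting the non-pinching of boundaries. When $P$ is planar, necessarily $b\geq 4$; writing $g_i=f(c_i)\neq 1$, commutativity transitivity from CSA partitions $\{1,\dots,b\}$ into cliques $A_1,\dots,A_k$ such that, within each clique, the $g_i$'s all lie in a common maximal abelian subgroup, and distinct cliques correspond to maximal abelians with trivial pairwise intersection. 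Non-abelianity forces $k\geq 2$, and the boundary relation $\prod_i g_i=1$ combined with trivial intersection forces each clique to have size $\leq b-2$ (otherwise the remaining $g_{i_0}$ would lie in the same maximal abelian). For $b\geq 4$ this permits a partition $\{S,S^c\}$ of sizes $\geq 2$ each meeting at least two cliques, giving non-abelian image on both sides; the separating curve $\gamma_S$ is non-pinched because $\prod_{i\in S}g_i=1$ would force equality of elements of two distinct maximal abelians, hence some $g_i=1$, absurd. The remaining cases $P\in\{S_{1,1},N_{2,1}\}$ are handled by cutting along a distinguished 2-sided non-separating curve with orientable pants complement (any primitive element for $S_{1,1}$, the special curve $c_{N_{2,1}}$ for $N_{2,1}$), non-pinched and yielding a pants with non-abelian image by the CSA argument above.

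The main obstacle is the planar case, where one cannot appeal to the ``cut along a non-separating curve and use CSA'' trick; instead the argument must combine the clique structure coming from CSA (commutativity transitivity together with trivial intersection of distinct maximal abelians) with the global boundary relation $\prod_i g_i=1$ to produce a separating curve that is simultaneously non-pinched and splits the boundary components so that both resulting subgroups $\grp{g_i:i\in S}$ and $\grp{g_i:i\in S^c}$ are non-abelian.
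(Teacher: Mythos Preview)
Your maximal-multicurve strategy is sound and closely parallels the paper's inductive argument; the CSA observation that a non-pinched non-separating 2-sided curve automatically yields a non-abelian complement is exactly the paper's Observation~1, and your handling of $S_{1,1}$ and $N_{2,1}$ is fine.

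There is, however, a genuine gap in your non-planar case: you assume $P$ contains a 2-sided non-separating simple closed curve, but the punctured projective planes $N_{1,b}$ with $b\ge 3$ (so $\chi(P)\le-2$) contain none. Indeed, cutting $N_{1,b}$ along such a curve would produce a connected surface of Euler characteristic $1-b$ with $b+2$ boundary components, and no surface---orientable or not---has these invariants. Your proposed curves $a_i c^n$ cannot help: the core curve $a_i$ is 1-sided and boundary components have trivial $w_1$, so $a_i c^n$ stays 1-sided for every $n$. The paper treats $N_{1,b}$ by a different reduction: first locate a 1-sided simple closed curve whose fundamental group is not killed by $f$ (if the obvious M\"obius core is killed, replace it by its connected sum with a non-pinched boundary component), then view $N_{1,b}$ as a $(b{+}1)$-punctured sphere with an index-$2$ socket and apply the planar argument.

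Your planar argument is recoverable but more intricate than needed, and the non-pinching justification for $\gamma_S$ is incomplete when $|S|\ge 3$ with interleaved cliques. The paper's version is shorter: pick adjacent $a_i,a_{i+1}$ whose images do not commute; if the complementary side $\grp{a_{i+2},\dots,a_{i-1}}$ is already non-abelian, done, and otherwise all of $g_{i+2},\dots,g_{i-1}$ lie in a single maximal abelian $A$ not containing $g_i$ or $g_{i+1}$, whence $g_{i+1},g_{i+2}$ do not commute and the pair-of-pants $\grp{a_{i+1},a_{i+2}}$ works instead.
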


We do not assume that the underlying surface $\Sigma$ is non-exceptional.

\begin{rem}\label{rk_socket}
  As a preliminary remark, we note that, if $\hat Q$ is obtained from a
  group $Q=\pi_1(\Sigma)$ by adding sockets,
and $f:\hat Q \ra H$ takes values in a torsion-free commutative
  transitive group,
    then   $f(\hat Q)$ is non-abelian if and only if  $f(Q)$ is
  non-abelian. We give the argument when there is only one socket. Write $\hat Q=\grp{Q,u}$ with $u^d\in Q$, and assume that $f(Q)$ is abelian. 
  If $f(u^d)$
  is trivial,   so is $f(u)$ because $H$ is torsion-free, and $f(\hat Q)=f(Q)$ is abelian.
  Otherwise, both $f(Q)$ and $f(u)$ commute with $f(u^d)$, hence
  commute   by commutative transitivity, so $f(\hat Q)$ is abelian.
\end{rem}
This remark shows that we may forget about sockets in Lemma \ref{lem_nonab}: 
it suffices to prove the lemma for the fundamental group of a surface with boundary.
It is an immediate consequence of the following.

\begin{lem}[{\cite[Lemma 5.13]{Sela_diophantine1}, \cite[Lemma 1.32]{Sela_diophantine7}}]
Let $\Sigma$ be a 
compact surface 
which is not homeomorphic to a pair of pants or   a twice-punctured projective plane. Let $H$ be a torsion-free CSA group,
and $f:\pi_1(\Sigma)\ra H$   a homomorphism with non-abelian image.  Assume that no component of $\partial \Sigma$ is pinched.

There exists  a non-pinched essential two-sided simple closed curve $\gamma\subset \Sigma$ 
  such that the fundamental group of every connected component of $\Sigma\setminus \gamma $ has 
non-abelian image under $f$.
\end{lem}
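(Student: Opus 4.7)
The strategy is a case analysis on the topological type of $\Sigma$, combined with the CSA property of $H$ to convert ``abelian image'' into rigid constraints on $f$. The base cases are the surfaces of small complexity: $S_{1,1}$, $N_{2,1}$, $N_{3,0}$ (those with $\chi=-1$ not excluded by hypothesis) together with a handful of $\chi=-2$ surfaces; these I would handle by explicit computation using the concrete combinatorics of their essential two-sided simple closed curves. For instance, on $S_{1,1}$ with $\pi_1=\grp{a,b}$ and $\gamma$ representing $a$, the complement is a pair of pants with fundamental group $\grp{a,\,bab^{-1}}$, and if its $f$-image were abelian then $f(b)f(a)f(b)^{-1}$ would commute with $f(a)$, so the CSA property would force $f(b)$ to centralize $f(a)$, contradicting the non-abelianness of $f(\pi_1(\Sigma))$.

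For general $\Sigma$ I would first show the existence of a non-pinched essential two-sided simple closed curve. If every such curve were pinched, one could pinch a maximal disjoint collection of them simultaneously, factoring $f$ through the fundamental group of a disjoint union of pairs of pants, twice-punctured projective planes, cyclic pieces, and possibly closed components. A short CSA/commutative-transitivity argument, using torsion-freeness of $H$ and the hypothesis that no boundary component of $\Sigma$ is pinched, then forces $f(\pi_1(\Sigma))$ to be abelian, contradicting the hypothesis. Once a non-pinched essential curve exists, I would select $\gamma$ to minimize a natural complexity, for example the number of connected components of $\Sigma\setminus\gamma$ whose $f$-image is abelian. If this minimum is zero, $\gamma$ witnesses the lemma.

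Otherwise, with a bad component $\Sigma_1$ of $\Sigma\setminus\gamma$, writing $\pi_1(\Sigma)$ as an amalgam or HNN extension over $\grp{\gamma}$ yields that every element of $f(\pi_1(\Sigma_1))$ centralizes $f(\gamma)$; by commutative transitivity in the CSA group $H$, the maximal subgroup of $f(\pi_1(\Sigma))$ centralizing $f(\gamma)$ is abelian, so the non-abelianness of $f(\pi_1(\Sigma))$ concentrates all non-commutation on the other side of $\gamma$. This rigidity lets one replace $\gamma$ by either a non-pinched essential curve inside $\Sigma_1$, when $\Sigma_1$ has enough complexity, or a non-separating curve that crosses $\gamma$ and lies mostly in the non-abelian side, in each case strictly decreasing the chosen complexity. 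The main obstacle will be verifying that this modification terminates while preserving essentiality and non-pinching at every step; when a component of the complement is already a pair of pants or an $N_{1,2}$ there is no further room to modify inside it, and one must instead combine the base-case computations with the precise CSA structure of the centralizer of $f(\gamma)$ to directly exclude the abelian-component scenario.
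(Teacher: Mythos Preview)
Your proposal is incomplete rather than wrong: you explicitly flag the complexity-decrease step as ``the main obstacle,'' and indeed you give no mechanism that actually reduces the number of abelian-image components. Replacing $\gamma$ by a curve inside the bad component $\Sigma_1$ does not obviously help (pieces of the new complement lying in $\Sigma_1$ still have abelian image), and producing a transverse curve that works is exactly the hard part you have not done.

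The key idea you are missing is already present in your $S_{1,1}$ computation, but you do not exploit it in general: if $\gamma$ is \emph{non-separating} and two-sided, the complement is connected, and the HNN decomposition $\pi_1(\Sigma)=J*_{\pi_1(\gamma)}$ together with CSA forces $f(J)$ to be non-abelian (if $f(J)$ were abelian, the stable letter would centralize $f(\pi_1(\gamma))\subset f(J)$, making the whole image abelian). So any non-pinched non-separating two-sided curve immediately solves the lemma --- no minimization is needed.

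The paper combines this with a second trick: if $\partial\Sigma\neq\emptyset$ and some two-sided non-separating curve $\gamma$ is pinched, the band-sum of $\gamma$ with a boundary component along an arc produces a non-pinched non-separating curve (its image is conjugate to that of the boundary component, hence non-trivial). These two observations dispose of every surface containing a two-sided non-separating curve when $\partial\Sigma\neq\emptyset$, and of closed orientable surfaces (whose $\pi_1$ is generated by such curves). What remains is a short explicit case analysis --- $n$-punctured spheres, $n$-punctured projective planes, and closed non-orientable surfaces --- each handled by a direct CSA argument rather than by an abstract descent.
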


\begin{proof}
Let $Q=\pi_1(\Sigma)$.  We start with two observations.

Obervation 1: \emph{if there exists a non-separating two-sided non-pinched curve $\gamma$,
then the fundamental group  $J$ of  
$\Sigma\setminus \gamma$ automatically has
non-abelian image.} Indeed, one has an HNN decomposition $Q=J*_{\pi_1(\gamma)}$ and, if $f(J)$ is abelian,
then the image of the stable letter has to commute with $f(J)$ by the CSA property.

The lemma follows in  the case when  $\Sigma$ is orientable with at most one boundary component, because $\pi_1(\Sigma)$ 
is generated by fundamental groups of non-separating essential simple closed curves.

Observation 2:  \emph{if $\Sigma$ contains a  two-sided non-separating simple closed curve $\gamma$ which is pinched, and $\partial \Sigma\neq\es$,
  one can find another such curve $\gamma'$ which is non-pinched} as follows:  
take an arc $I$ joining $\gamma$ to a boundary component $b$, and consider the curve $\gamma'$ obtained as the connected sum of $b$ and $\gamma$ along $I$.
Since $\gamma$ is pinched,  the elements of $\pi_1(\Sigma)$ corresponding to $b$ and $\gamma'$ have conjugate images under $f$, so $\gamma'$ is not pinched. 
Moreover, if $I$ is chosen disjoint from some simple closed curve $\delta$ with $i(\delta,\gamma)=1$,
then $i(\delta,\gamma')=1$ so $\gamma'$ is   non-separating.

Using these two observations repeatedly, we see that there remain three cases to analyze:
the $n$-punctured sphere ($n\ge4$), the $n$-punctured projective plane ($n\ge3$), and the   closed non-orientable surfaces.

$\bullet$ If $\Sigma$ is an $n$-punctured sphere with $n\geq 4$, then $Q=\grp{a_1,\dots,a_n\mid a_1\cdots a_n=1}$.
Since $H$ is CSA, there exists $i$ such that $f(a_i)$ does not commute with $f(a_{i+1})$,
and we may assume $i=1$ without loss of generality.
The subgroup $\grp{a_1,a_2}$ is the fundamental group of a pair of pants contained in $\Sigma$; it has  non-abelian image
and its third boundary curve, whose fundamental group is generated by   $a_1a_2$, is not pinched.

The fundamental group of the complement of this pair of pants is   generated by $a_3,\dots,a_n$  and contains $a_1a_2$.
We are done if it has non-abelian image under $f$, so assume otherwise.
Then  $f(a_3), \dots,f(a_n),f(a_1a_2)$ are non-trivial commuting elements. 
Let $A$ be the maximal abelian group containing $f(\grp{a_3,\dots,a_n})$ (hence also $f(a_1a_2)$).

We have $f(a_2)\notin A$ since otherwise   $f(Q)=A$, 
contradicting  the non-abelianity of $f(Q)$. 
By commutative transitivity, $f(a_2)$ does not commute with $f(a_3)$, and similarly  $f(a_n)$ does not commute with $f(a_1)$.
We then let $\gamma$ be a curve bounding 
a pair of pants with fundamental group
$\grp{a_2,a_3}$.

$\bullet$ If $\Sigma$ is an $n$-punctured projective plane with $n\geq 3$, we claim that there exists an embedded M\"obius band  $M\subset\Sigma$
whose fundamental group is not killed by $f$. 
If the claim is true, then one can write $\Sigma$ as an $(n+1)$-punctured sphere $\Sigma'$ with $M$ glued along a boundary component, which can be
viewed as $\Sigma'$ with a socket of index 2.
Since $n+1\geq 4$ and  
boundary components of $\Sigma'$ are not pinched, 
one reduces
to the previous case  using Remark \ref{rk_socket}.

To prove the claim, consider  a one-sided simple closed curve $\gamma$ (whose regular neighborhood is a M\"obius band),
and assume that $ \pi_1(\gamma)$ is killed.
Let $I$ be an arc joining $\gamma$ to a boundary component $b$. Then the concatenation $\gamma.I.b.\ol I$
is homotopic to a simple closed curve which is one-sided, and whose image under $f$ is conjugate to that of $b$ hence is non-trivial.

$\bullet$ If $\Sigma$ is a closed non-orientable surface   (of genus at least 3 because its  fundamental group  has non-abelian image in $H$), one can write it as the union of an orientable surface $\Sigma'$ with one boundary component
and  a M\"obius band or a punctured Klein bottle. If $\Sigma'$ contains a non-pinched non-separating curve, we are done.
Otherwise, being generated by such curves, $\pi_1(\Sigma')$ has trivial image under $f$, so $f$ factors through the fundamental group
of a projective plane or a Klein bottle. Since $H$ is CSA, this implies that the image of $f$ is abelian, a contradiction.
\end{proof}

\subsection{Proof of Theorem \ref{thm_courbes}}\label{sec_tore}

Consider the decomposition of the surface $\Sigma$  (assumed to be non-exceptional) into pairs of pants and twice-punctured projective planes 
given by Lemma \ref{lem_nonab}.
We first claim that, up to adding extra sockets, we may assume that all pieces of this decomposition are pairs of pants.

Indeed, 
one can view a twice-punctured projective plane $P$ as a pair of pants $P'$ with a M\"obius band
glued to one of its boundary components; this corresponds to adding a socket of index 2 to $P'$.
The fundamental group of the M\"obius band (and of the corresponding boundary component of $P'$) have non-trivial image by $f$ since
otherwise $\pi_1(P)$ would have abelian image.
By Remark \ref{rk_socket}, the image of the fundamental group of $P'$ is also non-abelian.
  The claim is then proved by replacing each twice-punctured projective plane by  a pair of pants with sockets.

We thus have a decomposition of the space $X_{\hat Q}$ into pairs of pants with sockets $P_i$ whose fundamental groups have non-abelian
images under $f$. We are done if $f$ is injective on every $\pi_1(P_i)$, so 
let $P=P_i$ be a pair of pants with sockets   such that $f$ is not injective on  $\pi_1(P)$.

  If  the Euler characteristic of $\Sigma$ is $-1$, then $\Sigma$ is a   once-punctured torus because it is assumed to be non-exceptional. This is a special case which will be treated below.
Otherwise, there exists a pair of pants $P'\neq P$ adjacent to $P$ in the decomposition,   and $P\cup P'$ is 
a 4-punctured sphere (possibly with boundary components identified). 
If we prove the theorem when $\Sigma$ is either a   once-punctured torus or a 4-punctured sphere, the general case follows by induction on the number of pairs of pants $P$ such that $f$ is not injective on  $\pi_1(P)$.

$\bullet$ The punctured torus.

Let $d\geq 1$ and let $\hat Q=\grp{a,b,c\mid [a,b]=c^d}$ be the fundamental group of a punctured torus with a socket of index $d$ (with $d=1$ if there is no socket),
where the fundamental group $Q$ of the punctured torus $\Sigma$ is the subgroup $\grp{a,b}$,    the curve $C$ provided by Lemma \ref{lem_nonab} 
has fundamental group $\grp{a}$, and 
cutting $\Sigma$ along $C$ produces a pair of pants $P$   whose fundamental  group $\grp{a,bab\m}=\grp{a,c^d}=\grp{bab\m,c^d}$ has non-abelian image.

Let $\gamma_n$ be the curve obtained by twisting the curve representing $b$ 
  $n$ times around $C$. 
  Cutting the graph of spaces $X_{\hat Q}$ representing $\hat Q$  along $\gamma_n$ creates a pair of pants with socket whose fundamental group is  a free group $\hat P_n=\grp{ba^n,c}$.

When applied to $(u,v,w)=(f(a),f(b),f(c))$, Corollary \ref{lem_twist_tore}, stated and proved  in the next subsection,  ensures that 
$f(\hat P_n)$ is free of rank 2   for all $n$ large enough, so the restriction of $f$ to $\hat P_n$ is injective. This proves Theorem \ref{thm_courbes} when $\Sigma$ is a   once-punctured torus.

$\bullet$ The 4-punctured sphere.

Using Lemma \ref{lem_nonab}, we may assume that 
  $G$ has a presentation of the form
  $\grp{a_1,\dots,a_4\mid a_1^{d_1}a_2^{d_2}a_3^{d_3}a_4^{d_4}=1}$, with   $\pi_1(\Sigma)=\grp{a_1^{d_1},\dots,a_4^{d_4}}$,  such that  the simple closed curve $\gamma$ represented by $g:=a_1^{d_1}a_2^{d_2}=(a_3^{d_3}a_4^{d_4})\m$ is not pinched by $f$, and the associated pairs of pants with sockets   $ \grp{a_1,a_2}$ and $ \grp{a_3,a_4}$ have non-abelian image by $f$.
 Note that $f(g)$ does not commute with any $f(a_i)$:  if for instance it commutes with $f(a_1)$,
then it   also commutes with $f(a_2^{d_2})=f(a_1^{-d_1} g)$ and $f(\grp{a_1,a_2})$ is abelian by commutative transitivity, 
a contradiction.

  We consider   a second    decomposition of $X_{\hat Q}$ into two pairs of pants with sockets, corresponding to $P_0=\grp{a_2,a_3}$ and $P'_0=\grp{a_4,a_1}$. Twisting it $n$ times around $\gamma$, we obtain a decomposition into   $P_n=\grp{a_2,g^n a_3 g^{-n}}$ and
  $P'_n=\grp{ g^n a_4 g^{-n},a_1}$.
 Since $f(g)$ commutes with no $f(a_i)$, Lemma \ref{lem_pingpong2} applies and ensures that $f$ is injective on $P_n$ and $P'_n$ for $n$ large.

\begin{rem} \label{ntrois}
    To prove the theorem when  $\hat Q$ is the    fundamental group of the closed exceptional surface   $N_3$, one views $\hat Q$ as a     once-punctured torus with a socket of  index $d=2$. One just has to check that $f$ is injective on 
the socket group. This is true  
since otherwise  $f$ would factor by $\bbZ^2$, the fundamental group of the closed torus,
and would therefore have abelian image. 
\end{rem}

\subsection{Ping-pong lemmas} \label{ping}

  Let $H$ be a 
hyperbolic group (torsion is allowed).  We fix a finite generating set and we let $\calg$ be the corresponding Cayley graph. Let $\bar\calg=\calg\cup\bo\calg$ be its compactification, obtained by adding the Gromov boundary $\bo\calg=\bo H$ (see \cite{GhHa}).   The action of $H$ on itself by left-translations extends to a continuous action on $\bar\calg$.

For $g$ of infinite order, we denote by  $g^{\pm\infty}=\lim_{g\to\pm\infty} g^n$
 the attracting and repelling fixed points of $g$ in   $\partial \calg$.  It is a standard fact that $g$ has North-South dynamics on 
$\bar\calg$: given neighborhoods $W^\pm$ of $g^{\pm\infty}$, one has $g^n(\bar\calg\setminus W^-)\inc W^+$ and $g^{-n}(\bar\calg\setminus W^+)\inc W^-$ for $n$ large.
If $g,h$ have infinite order and $\grp{g,h}$ is not virtually cyclic,
the four points $g^{\pm\infty},  h^{\pm\infty}$ are distinct  (this applies in particular if $H$ is torsion-free and $g,h$ do not commute).

   The following lemmas are proved by standard arguments, but we have not found them in the literature.

\begin{lem}\label{lem_vun}
Let   $u,v,w\in H$, with $u,w$ of infinite order (we allow $v=1$).  Let $\xi^+=vu^{+\infty}$ and $\xi^-=u^{-\infty}$. Assume that $\xi^{+},\xi^{-}, w^{+ \infty},w^{-\infty}$ are distinct, 
and  $w^i\xi^{-}\neq \xi^+$ for all integers $i$.
For $n$ large enough, $\grp{vu^n,w}$ is    a free subgroup freely generated by $vu^n$ and $w$.
\end{lem}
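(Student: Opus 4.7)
The plan is to prove the lemma by a ping-pong argument on the compactified Cayley graph $\bar\calg = \calg \cup \partial H$ of $H$, exploiting the fact that for $n$ large, $g_n := vu^n$ has arbitrarily strong North--South dynamics while $w$ has a fixed North--South action. Since the action of $H$ on $\bar\calg$ is faithful, it suffices to exhibit, for every nontrivial reduced word $W$ in $\{g_n,w\}^{\pm 1}$, a point $x_0 \in \bar\calg$ with $W\cdot x_0 \neq x_0$, which will imply $W\neq 1$ in $H$.

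First I will control the dynamics of $g_n$. Since $u$ has infinite order and torsion elements of a hyperbolic group have bounded word length, $g_n$ has infinite order for $n$ large, hence is loxodromic, with attracting and repelling fixed points $\alpha_n,\rho_n \in \partial H$. The sequences $g_n \to \xi^+ = vu^{+\infty}$ and $g_n^{-1} = u^{-n}v^{-1} \to u^{-\infty} = \xi^-$ in $\bar\calg$, together with the convergence-group property of the $H$-action on $\bar\calg$, imply that $g_n(x)\to\xi^+$ uniformly on compacta of $\bar\calg\setminus\{\xi^-\}$ and $g_n^{-1}(x)\to\xi^-$ uniformly on compacta of $\bar\calg\setminus\{\xi^+\}$; applied to $\alpha_n,\rho_n$ (after extracting convergent subsequences), this forces $\alpha_n\to\xi^+$ and $\rho_n\to\xi^-$.

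Next I will set up the ping-pong sets. Using the four-point distinctness, I choose pairwise disjoint compact neighborhoods $U^+, U^-, W^+, W^-$ of $\xi^+,\xi^-,w^{+\infty},w^{-\infty}$ respectively, and set $X_1 = U^+ \cup U^-$, $X_2 = \bar\calg \setminus X_1$. I will then verify $(i)$ $g_n^k(X_2) \subset X_1$ for all $k \neq 0$ and $n$ large, and $(ii)$ $w^k(X_1) \subset X_2$ for all $k \neq 0$. For $(i)$, the case $k = \pm 1$ uses the convergence from the previous step directly (since $X_2$ is compact and disjoint from $\xi^\pm$), and the general case follows by induction, using $g_n(U^+) \subset U^+$ and $g_n^{-1}(U^-) \subset U^-$ for $n$ large. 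For $(ii)$, I will use the hypothesis $w^i \xi^- \neq \xi^+$, together with its symmetric consequence $w^i \xi^+ \neq \xi^-$ (obtained by applying $w^{-i}$), and the fact that $\xi^\pm$ are not fixed by any $w^i$ with $i \neq 0$ (since $\xi^\pm \notin \{w^{\pm\infty}\}$). These inequalities, combined with the observation that the orbits $\{w^i \xi^\pm\}_{i\in\bbZ}$ have closure in $\bar\calg$ accumulating only at $w^{\pm\infty}$ (which lie in $W^\pm$, outside $X_1$), allow me, after shrinking $U^\pm$, to ensure $w^k(U^+ \cup U^-) \cap (U^+ \cup U^-) = \emptyset$ for every $k \neq 0$.

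For the conclusion, take a nontrivial reduced word $W = y_k\cdots y_1$ whose letters $y_j$ alternate between powers of $g_n$ and of $w$, and trace the trajectory of a point $x_0$ under the successive applications $y_1, y_2, \ldots, y_k$. Choose $x_0 \in X_2$ if $y_1$ is a $g_n$-power, and $x_0 \in X_1$ otherwise; then by $(i)$ and $(ii)$ the trajectory alternates between $X_1$ and $X_2$. When the parities force the endpoint into the set opposite to $x_0$'s, $W\cdot x_0 \neq x_0$ is immediate. In the opposite case, the endpoint lies in a small subset of the form $w^b(X_1)$ (or $g_n^a(X_2)$); since $X_2$ has many points outside any such translate, a suitable choice of $x_0$ still yields $W\cdot x_0 \neq x_0$. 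Hence $W \neq 1$ in $H$, so $\langle vu^n, w\rangle$ is freely generated by $vu^n$ and $w$. The main technical obstacle is step $(ii)$: upgrading the pointwise inequalities on $w^i\xi^\pm$ to uniform neighborhood disjointness simultaneously for all $i \in \bbZ\setminus\{0\}$, which is precisely where the hypothesis $w^i\xi^- \neq \xi^+$ is essential.
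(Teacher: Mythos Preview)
Your approach is correct and is essentially the same ping-pong argument as the paper's: both choose small neighborhoods $U^\pm$ (the paper calls them $W^\pm$) of $\xi^\pm$ whose $w$-translates are pairwise disjoint, and use that $vu^n$ eventually maps $\bar\calg\setminus U^-$ into $U^+$ and $(vu^n)^{-1}$ maps $\bar\calg\setminus U^+$ into $U^-$. The paper's version is slightly slicker in two spots: it tracks a single basepoint $x\in\calg$ chosen so that all $w^jx$ miss $W^\pm$, which lets it treat every reduced word of the form $v_n^{n_k}w^{p_{k-1}}\cdots v_n^{n_1}w^{p_0}$ uniformly and avoids your end-parity case analysis; and it deduces that $v_n$ has infinite order directly from the inclusion $v_n^i(\bar\calg\setminus W^-)\subset W^+$, whereas your appeal to ``torsion elements have bounded word length'' is not literally correct (conjugates of torsion elements can be arbitrarily long), though the conclusion follows from your own condition~$(i)$ anyway.
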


\begin{lem} \label{lem_pingpong2}
Let   $u,v, w\in H$ have infinite order. Assume    that 
  $\{w^{\pm \infty}\}\cap \{u^{\pm\infty} \}=\{w^{\pm \infty}\}\cap \{v^{\pm\infty} \}=\es$ (we allow $u^{\pm\infty} =v^{\pm\infty} $).
For $n$ large enough, $\grp{u,w^n vw^{-n}}$ is    a free subgroup freely generated by $u$ and $w^n vw^{-n}$.
\end{lem}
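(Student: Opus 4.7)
The plan is to apply the Klein ping-pong lemma on the compactified Cayley graph $\bar\calg$, exploiting the fact that conjugating $v$ by a large power of $w$ pushes the fixed points of the conjugate toward $w^{+\infty}$, away from those of $u$. Set $v_n := w^n v w^{-n}$, so that $v_n^{\pm\infty} = w^n v^{\pm\infty}$. Since $v^{\pm\infty} \ne w^{-\infty}$ by hypothesis, North-South dynamics of $w$ on $\bar\calg$ forces $v_n^{+\infty}$ and $v_n^{-\infty}$ to both converge to $w^{+\infty}$ as $n \to \infty$. Because $w^{+\infty} \notin \{u^{+\infty}, u^{-\infty}\}$, for $n$ large the pair $v_n^{\pm\infty}$ becomes arbitrarily close to $w^{+\infty}$, hence disjoint from any fixed neighborhoods of $u^{\pm\infty}$.

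First, using North-South dynamics of $u$, I will choose disjoint open neighborhoods $U^+, U^-$ of $u^{+\infty}, u^{-\infty}$ in $\bar\calg$ small enough that the one-step attracting property $u(\bar\calg \setminus U^-) \subset U^+$ (and symmetrically $u^{-1}(\bar\calg \setminus U^+) \subset U^-$) holds. By a direct induction, this one-step property propagates: $u^k(\bar\calg \setminus U^-) \subset U^+$ for every $k \ge 1$. I will further shrink $U^\pm$ so that their union avoids $w^{+\infty}$, which is possible since $w^{+\infty} \notin \{u^{\pm\infty}\}$.

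Next, using North-South dynamics of $v$, I will choose disjoint neighborhoods $V^+, V^-$ of $v^{+\infty}, v^{-\infty}$ with the analogous one-step property for $v$, small enough to be disjoint from a fixed neighborhood of $w^{-\infty}$; here I use $v^{\pm\infty} \ne w^{-\infty}$. Setting $V_n^\pm := w^n V^\pm$, the equivariance gives $v_n(\bar\calg \setminus V_n^-) \subset V_n^+$, and hence $v_n^k(\bar\calg \setminus V_n^-) \subset V_n^+$ for every $k \ge 1$. Applying North-South dynamics of $w$ to the set $V^+ \cup V^-$ (contained in the complement of a neighborhood of $w^{-\infty}$), for all $n$ large enough the translates $V_n^\pm$ are contained in an arbitrarily small neighborhood of $w^{+\infty}$, and in particular are disjoint from $U^+ \cup U^-$.

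With these choices, the Klein ping-pong lemma applies to the disjoint sets $X_A := U^+ \cup U^-$ and $X_B := V_n^+ \cup V_n^-$: one has $u^k(X_B) \subset X_A$ for all $k \ne 0$ (since $X_B \subset \bar\calg \setminus U^\mp$) and $v_n^k(X_A) \subset X_B$ for all $k \ne 0$, so $\langle u, v_n\rangle = \langle u\rangle * \langle v_n\rangle$ is free of rank $2$. The main delicate point is the combined bookkeeping of three sets of dynamics (those of $u$, $v$, and $w$) and making sure the ``one-step'' neighborhoods for $v_n$ transported along $w^n$ shrink to a point disjoint from $U^\pm$; the rest is routine verification of the ping-pong hypotheses.
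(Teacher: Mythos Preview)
Your overall strategy---Klein ping-pong with $X_A=U^+\cup U^-$ near $u^{\pm\infty}$ and $X_B=V_n^+\cup V_n^-$ shrinking to $w^{+\infty}$---is sound and is a genuine alternative to the paper's argument. But the construction of $U^\pm$ (and likewise $V^\pm$) has a real gap. The one-step property $u(\bar\calg\setminus U^-)\subset U^+$ together with $U^+\cap U^-=\emptyset$ forces $U^-\cup u^{-1}(U^+)=\bar\calg$: the two sets must essentially cover the whole space up to one shift by $u$. So they cannot be chosen ``small enough'', and you cannot ``further shrink $U^\pm$'' to avoid $w^{+\infty}$ without destroying the one-step property. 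The same problem occurs when you ask $V^\pm$ to avoid a neighborhood of $w^{-\infty}$; in fact you then use that $\overline{V^+\cup V^-}$ misses $w^{-\infty}$ to push $V_n^\pm$ toward $w^{+\infty}$, and this is exactly what the one-step property obstructs.

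Such $U^\pm$ do exist, but producing them requires an argument (e.g.\ take half-spaces orthogonal to a quasi-axis of $u$, with the two dividing walls placed on either side of the projection of $w^{+\infty}$ and within one translation length of each other). An easier fix is to drop the one-step requirement: since $\{u^k w^{+\infty}:k\ne 0\}\cup\{u^{\pm\infty}\}$ is compact and misses $w^{+\infty}$, take $X_A$ an open neighborhood of this set whose closure avoids $w^{+\infty}$; then for a sufficiently small neighborhood $W$ of $w^{+\infty}$ one has $u^k(W)\subset X_A$ for all $k\ne 0$ (finitely many $k$ handled individually, large $|k|$ by North--South), and one chooses $n$ so that $V_n^\pm\subset W$. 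The paper avoids all of this by working instead with \emph{small} neighborhoods $W^\pm$ of $w^{\pm\infty}$ and tracking the orbit of a basepoint: the required conditions there (the $u^iW^+$ pairwise disjoint, the $v^iW^-$ pairwise disjoint, the $u$-orbit of the basepoint avoiding $W^+$) genuinely follow from taking $W^\pm$ small, so no delicate construction is needed.
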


\begin{proof}[Proof of Lemma \ref{lem_vun}] 
Let $x$ be a basepoint in $\calg$. Using North-South dynamics of $w$, we can find  neighborhoods   $W^{\pm}$ of $\xi^{\pm}$ in $\bar\calg$  such that the neighborhoods $w^i W^{\pm}$ for $i\in\Z$ and the points $w^jx$ for $j\in\Z$ are all disjoint.
Denoting $v_n=vu^n$, we have $v_n(\bar\calg\setminus W^-)\subset W^+$ and  $v_n\m (\bar\calg\setminus W^+)\subset W^-$ for $n$ larger than some $n_0$: this follows from North-South dynamics of $u$, noting that $v\m W^+ $ is a neighborhood of $u^{+\infty}$.  
Note that, for $n>n_0$, one has  
 $v_n^i(\bar\calg\setminus W^-)\subset W^+$ and  $v_n ^{-i}(\bar\calg\setminus W^+)\subset W^-$ for all $i>0$.   
In particular, $v_n $  has infinite order.

We can now play ping-pong. For $n>n_0$, consider an element
$$g=v_n^{n_k}w^{p_{k-1}}\cdots v_n^{n_2}w^{p_1}v_n^{n_1}w^{p_0}$$
in $\grp{v_n,w}$, with all exponents nonzero and $k\ge1$. Let  $\varepsilon_i\in\{\pm\}$ be the sign of $n_i$. We show $gx\in W^{\varepsilon_k}$ (this implies $gx\ne x$, hence $g\ne 1$).
The point $w^{p_0}x$ is not in $W^{-\varepsilon_1}$, so $v_n^{n_1}w^{p_0}x\in W^{\varepsilon_1}$. We then have $w^{p_1}v_n^{n_1}w^{p_0}\in w^{p_1}W^{\eps_1}$, so 
$w^{p_1}v_n^{n_1}w^{p_0}\notin  W^{-\eps_2}$ and  $v_n^{n_2} w^{n_2}v_n^{n_1}w^{p_0}\in W^{\eps_2}$. Iterating gives the desired result.
\end{proof}

\begin{proof}[Proof of Lemma \ref{lem_pingpong2}]
The proof is similar.
We choose neighborhoods $W^{\pm}$ of $w^{\pm\infty}$ in $\bar\calg$ such that
  no $u^ix$ belongs to $W^{+}$,   the neighborhoods $u^i W^+$ for $i\in\Z$ are disjoint, and the neighborhoods $v^iW^-$ are disjoint. For $n$ larger than some $n_0$, we have $w^n(\bar\calg\setminus W_-)\subset W_+$
and $w^{-n}(\bar\calg\setminus W_+)\subset W_-$.
  
We consider an element  $$g=w^n v^{p_k}w^{-n} \cdots u^{n_2}w^n v^{p_1}w^{-n} u^{n_1}$$
in $\grp{u,w^nvw^{-n}}$ and we show $gx\in W^+$ (as above, $k\ge1$ and all exponents are nonzero).
We have $u^{n_1}x\notin W^+$, hence $w^{-n} u^{n_1}x\in W^-$, so $ v^{p_1}w^{-n} u^{n_1}x\notin  W^- $ and $w^n v^{p_1}w^{-n} u^{n_1}x\in  W^+ $. Applying $u^{n_2}$ takes the point outside of $W^+$ and we can iterate.
\end{proof}

\begin{cor}\label{lem_twist_tore}
Let  $H$ be  torsion-free. Let $u,v,w\in H$, with $u,w$   non-trivial.  Assume that $w$   commutes  with  none of  $u$ or $vuv\m$.
For $n$ large enough, $\grp{vu^n,w}$ is a free subgroup freely generated by $vu^n$ and $w$.
\end{cor}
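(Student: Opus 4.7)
The plan is to apply Lemma~\ref{lem_vun} directly with the given $u$, $v$, $w$; all the work is in verifying its hypotheses. Note first that $u$ and $w$ have infinite order because $H$ is torsion-free and they are non-trivial, so $u^{\pm\infty}$ and $w^{\pm\infty}$ are defined in $\partial H$. Set $\xi^-=u^{-\infty}$ and $\xi^+=vu^{+\infty}$, so that $\xi^+$ is the attracting fixed point of $vuv^{-1}$. The easy step is to check that $\xi^+$, $\xi^-$, $w^{+\infty}$, $w^{-\infty}$ are disjoint from $\{w^{\pm\infty}\}$. The stabilizer of any boundary point of a torsion-free hyperbolic group is trivial or infinite cyclic, so two non-trivial elements share a boundary fixed point if and only if they lie in a common cyclic subgroup, equivalently if and only if they commute. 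Since $w$ commutes with neither $u$ nor $vuv^{-1}$, we get $\{u^{\pm\infty}\}\cap\{w^{\pm\infty}\}=\{vu^{\pm\infty}\}\cap\{w^{\pm\infty}\}=\es$.

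The main obstacle is verifying the remaining hypothesis $w^i\xi^-\neq\xi^+$ for every $i\in\bbZ$ (which also subsumes $\xi^-\neq\xi^+$, via $i=0$). I would argue by contradiction: suppose $w^iu^{-\infty}=vu^{+\infty}=:\eta$ for some $i$. Then $\eta$ is a common fixed point of the two hyperbolic elements $g_1:=w^iuw^{-i}$ and $g_2:=vuv^{-1}$, hence both lie in the (cyclic) stabilizer $\grp{h}$ of $\eta$, and translation-length comparisons with $u$ give $g_1=h^p$, $g_2=h^q$ with $|p|=|q|$. A careful comparison of attracting and repelling fixed points --- $\eta$ is the repelling fixed point of $g_1$ but the attracting fixed point of $g_2$ --- rules out the cases where $p$ and $q$ have the same sign, so $p=-q$ and $g_1=g_2^{-1}$. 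Setting $g:=v^{-1}w^i$, this rewrites as $g\,u\,g^{-1}=u^{-1}$.

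To finish, I would show that the relation $gug^{-1}=u^{-1}$ with $u\neq 1$ cannot occur in a torsion-free hyperbolic group, which is a standard ``no Klein bottle subgroup'' calculation. The element $g^2$ centralizes $u$; since $H$ contains no copy of $\bbZ^2$, the abelian subgroup $\grp{u,g^2}$ is infinite cyclic, say $\grp{u,g^2}\subset\grp{c}$ with $u=c^a$, $g^2=c^b$. From $gc^ag^{-1}=c^{-a}$ and uniqueness of roots in torsion-free hyperbolic groups, $gcg^{-1}=c^{-1}$; applying this to $g^2=c^b$ yields $g^2=c^{-b}$, so $c^{2b}=1$, and hence $b=0$ since $c$ has infinite order. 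Thus $g^2=1$, and torsion-freeness forces $g=1$, so $u=gug^{-1}=u^{-1}$ gives $u=1$, a contradiction. With $w^i\xi^-\neq\xi^+$ now established for every $i\in\bbZ$, Lemma~\ref{lem_vun} applies and yields that $\grp{vu^n,w}$ is free of rank $2$ for all sufficiently large $n$.
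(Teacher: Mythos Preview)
Your proof is correct and follows essentially the same route as the paper's: both apply Lemma~\ref{lem_vun}, check that $\{\xi^+,\xi^-\}\cap\{w^{\pm\infty}\}=\es$ via non-commutation, and handle the condition $w^i\xi^-\neq\xi^+$ by showing that no element of a torsion-free hyperbolic group can swap $u^{+\infty}$ and $u^{-\infty}$. The only difference is packaging: the paper states this last fact in one sentence (``Since $H$ is torsion-free, no element exchanges the points $u^{+\infty}$ and $u^{-\infty}$, so $\xi^+$ and $\xi^-$ are not in the same $H$-orbit''), deducing the stronger conclusion that $\xi^+$ and $\xi^-$ lie in distinct $H$-orbits, whereas you unpack the Klein-bottle calculation explicitly.
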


\begin{proof} 
We check that Lemma \ref{lem_vun} applies. The points $w^{+ \infty},w^{-\infty}$ are distinct from $\xi^-= u^{-\infty}$ and $\xi^+ =(vuv\m)^{+\infty}$ because $w$ does not commute with 
$u$ and $vuv\m$.
  Since $H$ is torsion-free, no element exchanges the points $u^{+\infty}$ and $u^{-\infty}$,     
so  $\xi^+ $ and $\xi^- $ are not in the same $H$-orbit. 
 \end{proof}

\bibliography{published,unpublished} 

 \bigskip

\begin{flushleft}

Vincent Guirardel\\
  Univ Rennes, CNRS, IRMAR - UMR 6625, F-35000 Rennes, France\\
\emph{e-mail: }\texttt{vincent.guirardel@univ-rennes1.fr}\\[8mm]

Gilbert Levitt\\
Laboratoire de Math\'ematiques Nicolas Oresme, Universit\'e de Caen et CNRS (UMR 6139)\\
(Pour Shanghai : Normandie Univ, UNICAEN, CNRS, LMNO, 14000 Caen, France)\\
 \emph{e-mail: }\texttt{levitt@unicaen.fr}\\[8mm]

Rizos Sklinos\\ 
Department of Mathematical Sciences\\
Stevens Institute of Technology\\
1 Castle Point Terrace, Hoboken, 07030, NJ, USA\\
\emph{e-mail: }\texttt{rizozs@gmail.com}\\[8mm]

\end{flushleft}

\end{document}